\documentclass[12pt,reqno]{amsart}
\topmargin=-0.7in \hoffset=-1cm \voffset=2cm \textheight=220mm
\textwidth=150mm

\usepackage{graphicx}
\usepackage{amssymb}
\usepackage{amsfonts}
\usepackage{amsmath,latexsym,amssymb,amsfonts,amsbsy, amsthm}
\usepackage[usenames]{color}
\usepackage{xspace,colortbl}
\usepackage{mathrsfs}
\usepackage[colorlinks,linkcolor=blue,citecolor=blue]{hyperref}
\usepackage[titletoc]{appendix}
\allowdisplaybreaks

\setcounter{equation}{0}

\newcommand{\beq}{\begin{equation}}
\newcommand{\eeq}{\end{equation}}
\newcommand{\ben}{\begin{eqnarray}}
\newcommand{\een}{\end{eqnarray}}
\newcommand{\beno}{\begin{eqnarray*}}
\newcommand{\eeno}{\end{eqnarray*}}
\newcommand{\R}{\mathbb{R}}
\newtheorem{thm}{Theorem}[section]
\newtheorem{defi}[thm]{Definition}
\newtheorem{lem}[thm]{Lemma}
\newtheorem{prop}[thm]{Proposition}
\newtheorem{coro}[thm]{Corollary}
\newtheorem{rmk}[thm]{Remark}

\newtheorem*{sublem}{Sub-Lemma}

\title[Nonlinear heat equation]{Refined blowup analysis and nonexistence of Type II blowups  for   an energy critical nonlinear heat equation}

\author[K. Wang]{Kelei Wang$^\dag$}
\address{$^\dag$School of Mathematics and Statistics \\ Wuhan University\\
Wuhan 430072, China}
\email{wangkelei@whu.edu.cn}

\author[J. Wei]{Juncheng Wei$^\ast$}
\address{$^\ast$Department of Mathematics\\ University of British
Columbia\\ Vancouver, B.C., Canada, V6T 1Z2.
 }
\email{jcwei@math.ubc.ca}


\thanks{K. Wang's research was supported by the National Natural Science Foundation of China (No.~11871381).J. Wei's research is partially supported by NSERC of Canada.}

\keywords{Nonlinear heat equation; critical exponent; blow up; bubbling analysis.}

\subjclass[2020]{35B33; 35B44; 35K58.}

\begin{document}

\begin{abstract}
We consider the energy critical semilinear heat equation
$$
\left\{\begin{aligned}
&\partial_t u-\Delta u =|u|^{\frac{4}{n-2}}u   &\mbox{in } \R^n\times(0,T),\\
&u(x,0)=u_0(x),
\end{aligned}\right.
$$
where $ n\geq 3$,  $u_0\in L^\infty(\R^n)$, and  $T\in\R^+$ is the first blow up time. We prove that if $ n \geq 7$ and $ u_0 \geq 0$,  then  any blowup must be of Type I, i.e.,
\[\|u(\cdot, t)\|_{L^\infty(\R^n)}\leq C(T-t)^{-\frac{1}{p-1}}.\]
A similar result holds for bounded convex domains. The proof relies on a reverse inner-outer gluing mechanism and delicate analysis of bubbling behavior (bubbling tower/cluster).
\end{abstract}

\maketitle

\tableofcontents


\section{Introduction}\label{sec introduction}
\setcounter{equation}{0}

In this paper we consider the blowup problem for  the nonlinear heat equation
\begin{equation}\label{eqn}
    \partial_tu-\Delta u=|u|^{p-1}u
\end{equation}
where $ p>1$.

\subsection{Cauchy problem}

We first consider the Cauchy problem
\begin{equation}\label{Cauchy}
  \left\{\begin{aligned}
&\partial_t u-\Delta u =|u|^{p-1}u   &\mbox{in } \R^n\times(0,T),\\
&u(x,0)=u_0(x).
\end{aligned}\right.
\end{equation}
Here $u_0\in L^\infty(\R^n)$,  and  $T\in\R^+$ is the first blow up time, that is, $u\in L^\infty(\R^n\times[0,t])$ for any $t<T$, while
\[\lim_{t\to T}\|u(t)\|_{L^\infty(\R^n)}=+\infty.\]

Problem (\ref{Cauchy}) is one of classical nonlinear parabolic equations which has been studied extensively in recent decades. See the monograph by Quitter and Souplet \cite{Quittner-Souplet} for backgrounds and references therein.  Following the seminal work of Fujita \cite{Fujita}, it is well-known that finite time blow-up must occur when the initial datum is sufficiently
large (in some suitable sense). An important and fundamental  question is the classification of the blowup near the first blowup time $T$.  Since the equation \eqref{eqn} is invariant under the scaling
\begin{equation}\label{scaling invariance}
  u^\lambda(x,t):=\lambda^{\frac{2}{p-1}}u(\lambda x,\lambda^2 t),
\end{equation}
the blow up is divided into two types: it is {\bf Type I} if there exists a constant $C$ such that for any $t<T$,
\begin{equation}\label{Type I}
  \|u(\cdot, t)\|_{L^\infty(\R^n)}\leq C(T-t)^{-\frac{1}{p-1}},
\end{equation}
otherwise it is called {\bf Type II}.

When $p$ is subcritical (i.e. $1<p<+\infty$ for $n=1,2$, or $1<p<\frac{n+2}{n-2}$ for $n\geq 3$), in classical works of Giga-Kohn \cite{Giga-Kohn2} (in the case of $ u_0 \geq 0$)  and   Giga-Matsui-Sasayama \cite{Giga04} (in the case of sign-changing $u_0$),  it is shown   that any
blow up is of Type I.

In contrast, less is known  about the energy {\bf critical} case, i.e. $n\geq 3$ and  $p=\frac{n+2}{n-2}$, see e.g. \cite[Open Problem 2.8 in Appendix I]{Quittner-Souplet}. In this paper we establish the following
\begin{thm}\label{main result for Cauchy}
  If $n\geq 7$, $p=\frac{n+2}{n-2}$ and $u_0\geq0$, then any blow up to (\ref{Cauchy}) is of Type I.
\end{thm}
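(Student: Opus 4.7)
The plan is to argue by contradiction. Assume $u$ is a Type II blowup, so $M(t) := \|u(\cdot,t)\|_{L^\infty(\R^n)}$ satisfies $M(t)(T-t)^{1/(p-1)} \to +\infty$ along a sequence $t_k \uparrow T$. Pick concentration points $x_k$ with $u(x_k, t_k) = M(t_k)$ and scales $\lambda_k := M(t_k)^{-(p-1)/2}$, so that $\lambda_k/\sqrt{T-t_k} \to 0$. Under the rescaling
\[
v_k(y,s) := \lambda_k^{2/(p-1)} u(x_k + \lambda_k y, t_k + \lambda_k^2 s),
\]
the $v_k$ are nonnegative solutions of \eqref{eqn} bounded by one on expanding space-time domains. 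Parabolic compactness, the Giga--Kohn-type monotonicity, and the Caffarelli--Gidas--Spruck classification of positive entire steady states of $\Delta W + W^p = 0$ together identify the limit with a time-independent Aubin--Talenti bubble $W_{\mu,\xi}$, giving a first quantitative description of the singular profile.

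The next step is a full profile decomposition near $T$. Adapting Struwe's bubbling theorem to the parabolic setting, I would establish, in the rescaled window around $(x_k, t_k)$, a representation
\[
u(x,t) = \sum_{i=1}^{N} W_{\lambda_i(t), \xi_i(t)}(x) + \varphi(x,t),
\]
with $N \geq 1$ bubbles at well-separated parameters and a remainder $\varphi$ small in a suitable weighted norm. Positivity forces all bubbles to share the sign of $u_0$, which eliminates configurations relying on cancellation. The Type II assumption then leaves only two scenarios: a \emph{bubble tower} (bubbles with a common center and scales satisfying $\lambda_i/\lambda_{i+1} \to 0$) or a \emph{bubble cluster} (bubbles at distinct, separating centers), in either case with $\lambda_1(t)/\sqrt{T-t} \to 0$.

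The heart of the proof is to rule out both scenarios by a reverse inner--outer gluing analysis. In contrast to the forward gluing scheme used by del Pino--Musso--Wei to \emph{construct} Type II solutions (prescribe an ansatz, drive the residual to zero), here the inner equations at each bubble (linearized by $\mathcal{L}_i = \Delta + p W_{\lambda_i, \xi_i}^{p-1}$) and the outer equation for $\varphi$ must be extracted a posteriori from the given solution. Projecting the PDE onto the scaling and translation directions in $\ker \mathcal{L}_i$ yields modulation ODEs for $(\lambda_i(t), \xi_i(t))$ whose right-hand sides are dominated by bubble-to-bubble interaction integrals controlled by the $|y|^{-(n-2)}$ tail of $W$. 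Under $u \geq 0$ these interactions carry a definite sign, and for $n \geq 7$ the decay exponent is large enough that this leading interaction dominates all subleading corrections. The resulting monotone drift on the $\lambda_i$ is incompatible with a Type II rate, producing the contradiction.

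The main obstacle is the a posteriori control of $\varphi$ at the precision required for the modulation ODEs to faithfully describe the dynamics. This demands weighted norms carefully adapted to the multi-scale bubble structure, a coercivity estimate for $\mathcal{L}_i$ on the orthogonal complement of its kernel in those norms, and a bootstrap that, crucially, cannot rely on a prescribed ansatz. The threshold $n \geq 7$ is expected to be essential precisely at this stage: it is the regime in which the bubble tail decays rapidly enough to decouple inner and outer scales quantitatively, and in which the weighted spectral theory for $\mathcal{L}_i$ is favorable enough to close the estimates. Once these ingredients are in place, integrating the modulation ODEs delivers the contradiction with the Type II scaling $\lambda_1(t) = o(\sqrt{T-t})$.
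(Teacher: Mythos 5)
Your overall architecture — argue by contradiction, rescale at concentration points to extract Aubin--Talenti bubbles, take a Struwe-type profile decomposition, and then run a \emph{reverse} inner--outer gluing with modulation ODEs for $(\lambda_i,\xi_i)$ — is indeed the strategy of the paper. But there is a genuine gap at the decisive step. Your dichotomy ``bubble tower or bubble cluster'' silently assumes $N\geq 2$, and your proposed mechanism for the contradiction (bubble-to-bubble interaction integrals which ``carry a definite sign'' under positivity and produce a monotone drift) is vacuous in the case $N=1$ of a single, isolated, simple bubble. That case is not excludable by soft arguments: it is exactly the regime in which Type~II solutions exist for $n\leq 6$. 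In the paper the single-bubble case is killed by a purely \emph{quantitative} modulation estimate, with no sign information: the reverse gluing yields
\[
\bigl|\lambda'(t)\bigr|\ \lesssim\ (T-t)^{\frac{n-10}{4}}\,\lambda(t)^{\frac{n-4}{2}},
\]
where the drift comes from the coupling of the bubble to the \emph{outer} solution (controlled via Gaussian bounds for heat kernels of $\partial_t-\Delta-\delta|x|^{-2}$), not from other bubbles. Since $\frac{n-4}{2}>1$ and $\frac{n-10}{4}>-1$ precisely when $n\geq 7$, one integrates $\frac{d}{dt}\lambda^{-\frac{n-6}{2}}$ and finds $\lambda(t)$ bounded away from $0$, contradicting blowup. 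Your sketch never identifies what forces the contradiction when there is nothing for the bubble to interact with, and ``monotone drift incompatible with the Type~II rate'' is not the mechanism that closes this case.

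A second, related gap: excluding \emph{towers} of positive bubbles is not achieved by noting that same-sign bubbles ``eliminate cancellation.'' The paper devotes two entire parts to this. The key input is a linearization of the Pohozaev identity around the bubble, yielding a weak form of Schoen's Harnack inequality: whenever a single bubble concentrates on top of a smooth nonnegative background $u_\infty$, necessarily $u_\infty\equiv 0$; towering is then excluded by induction on the number of bubbles, and clustering by a Green-function construction whose regular part satisfies $h(0)>0$, contradicting the Pohozaev sign — the classical Yamabe compactness mechanism, but requiring the full strength of the a priori estimates on the error $\varphi$ (uniform $L^\infty$, Lipschitz, and time-derivative bounds) to linearize the identity. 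Your proposal correctly anticipates that the a posteriori control of $\varphi$ is the main obstacle, but the missing ingredient is not only coercivity of $\mathcal{L}_i$ in weighted norms; it is the Pohozaev/Harnack step that converts those estimates into the structural conclusions ($u_\infty=0$, no towers, $h(0)=0$) on which the contradiction ultimately rests.
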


Some remarks on Theorem \ref{main result for Cauchy} are in order.  First we remark that the dimension restriction in Theorem \ref{main result for Cauchy} is {\bf optimal}: when $n \leq 6$ and  $ p=\frac{n+2}{n-2}$, Type II blowup solutions to \eqref{Cauchy} do exist. This was first predicted and proved formally in the pioneering work of  Filippas-Herrero-Vel\'{a}zquez in \cite{Filippas-H-V} via method of matched asymptotics. The first rigorous proof of Type II blowup solutions in the radial setting is given  by Schweyer in \cite{Schweyer2012TypeII} for  $n=4$. (For nonradial setting and multiply blowups in dimension $n=4$ we refer to \cite{dP-M-W-Z-4D}.)  For the remaining dimensions rigorous construction of Type II blowups are established recently: for $n=3$, see  \cite{wei-3Dfinite}, and for $n=5$, we refer to \cite{dP-M-W-5D, Harada-5D}. For $n=6$, see Harada \cite{harada-6D}. We should mention that when $p=\frac{n+2}{n-2}$  all the type II blowup solutions to  \eqref{Cauchy}  constructed so far are sign-changing. It is an open question if there are Type II blowups for positive solutions in low dimensions $n=3,4,5,6$.

Second we remark that  the exponent restriction in Theorem \ref{main result for Cauchy} is  also {\bf optimal}: when $ p>\frac{n+2}{n-2}$ many types of Type II blowup solutions have been found. The first example was discovered in the radial setting by Herrero and Velazquez in \cite{HerreroV-1992} for $p>p_{JL}$ where    $p_{JL}$ is the Joseph-Lundgreen exponent,
\[p_{JL}=
\begin{cases}
1+\frac{4}{n-4-2\sqrt{n-1}} & \text{if $n\ge11$}\\
 + \infty, & \text{if $ n\le 10$}.\\
\end{cases} \]
 See also Mizoguchi \cite{Mizoguchi2005} for the case of a ball, Seki \cite{Seki-2018} for the case of $p=p_{JL}$, and Collot \cite{Collot2017} for the case of general domains with the restriction
that $p>p_{JL}$  and $p$ is an odd integer. A new  and interesting anisotropic Type II blow-up solutions is also constructed recently for $ p>p_{JL} (n-1), n\geq 14$ by Collot, Merle and Raphael in \cite{Collot2020anisotropic}.
In the intermediate exponent regime $ \frac{n+2}{n-2} <p<p_{JL}$,  Matano and Merle \cite{Matano-Merle04, Matano-Merle09} proved that no type II blow-up is present for radial solutions (under some extra technical conditions). However type II blow-up  also exists in this intermediate regime.  In \cite{wei-geometry}, the authors successfully constructed non-radial type II blow-up solution  in the Matano-Merle range $p =\frac{n+1}{n-3} \in (\frac{n+2}{n-2}, p_{JL} (n))$. Another type of non-radial Type blow-up with shrinking spheres is recently found for $ n\geq 5, p=4$ in \cite{wei-tube}.

Theorem \ref{main result for Cauchy} is the first instance of classification in the critical exponent case for general initial datum. In the pioneering work of Filippas-Herrero-Vel\'{a}zquez \cite{Filippas-H-V} it is shown that blowup is type I if the initial data is positive and radially decreasing, when $n\geq 3$.  For initial datum with low energy, we mention that in \cite{Collot2017ground}, Collot, Merle and Raphael proved a result similar to Theorem \ref{main result for Cauchy} under the condition that  $ \| u_0 -W\|_{\dot{H}^1 (\R^n)} \ll 1$, where $W$ is a positive Aubin-Talenti solution satisfying \eqref{stationary eqn}, i.e. positive  steady state of  \eqref{Cauchy}. More precisely assume that $ \| u_0 -W\|_{\dot{H}^1 (\R^n)} \ll 1$ and the dimension $ n\geq 7$, then there is a trichotomy to solution to \eqref{Cauchy}: it either dissipates to zero, or approaches to a rescaled Aubin-Talenti solution, or blows up in finite time and the blow-up is of Type I.  Note that in Theorem \ref{main result for Cauchy}, we have assumed neither decaying nor energy condition.

\subsection{Outline of proof}

The proof of  Theorem \ref{main result for Cauchy} consists of the following five steps.
\begin{enumerate}
  \item First we perform tangent flow analysis at a possible blow up point, see Part \ref{part first time singularity}. This is in the same spirit of Giga-Kohn \cite{Giga-Kohn1,Giga-Kohn2,Giga-Kohn3}, but we rewrite it in the form which is more familiar in geometric measure theory such as the  tangent cone analysis/ blowing up analysis/tangent flow analysis used in the study of minimal surfaces, mean curvature flows and many other geometric variational problems.
  \item Bubbles may appear during this blow up procedure. A general theory on the energy concentration phenomena in \eqref{eqn} is then developed in Part \ref{part energy concentration}, where we mainly follow the treatment (on harmonic map heat flows) in Lin-Wang \cite{Lin-Wang1,Lin-Wang2,Lin-Wang3}, see also their monograph \cite{Ling-Wang-book}.
  \item We then perform a refined analysis of this bubbling phenomena, first in the case of one bubble (\emph{multiplicity one case}), see Part \ref{part one bubble}. Here we mainly use \emph{the inner-outer gluing mechanism} developed by the second author with Davila, del Pino and Musso in \cite{delPino-JEMS}, \cite{Davila-dP-Wei}, \cite{dP-M-W-5D}, \cite{dP-M-W-3D}, \cite{wei-signchanging}, \cite{wei-3Dfinite} (see also \cite{DelPino-survey} for a survey);
  \item In Part \ref{part many bubbles}, we combine the analysis in Part \ref{part energy concentration} and Part \ref{part one bubble} to establish the refined analysis in the general case (\emph{higher multiplicity}), where in particular we will exclude the {\bf bubble towering} formation in this energy concentration phenomena. Arguments used here are motivated by  those used in the study of Yamabe problems through pioneering work of Schoen \cite{Schoen1989}, including secondary blow ups, construction of Green functions, next order expansion of Pohozaev identities, see for example Schoen \cite{Schoen-course, Schoen1989}, Kuhri-Marques-Schoen \cite{K-Marques-Schoen}, Li-Zhu \cite{Li-Zhu}, Li-Zhang \cite{Li-Zhang2}.
  \item Finally, the refined analysis in Part \ref{part many bubbles} and Part \ref{part one bubble} are applied to the first time singularity problem. Results in   Part \ref{part many bubbles} are used to exclude blow ups with {\bf bubble clustering}, while results in Part \ref{part one bubble} are used to exclude blow ups with only one bubble.
\end{enumerate}

\subsection{Cauchy-Dirichlet problem}

Our method can also be applied to the initial-boundary value problem
\begin{equation}\label{Cauchy-Dirichlet}
  \left\{\begin{aligned}
&\partial_t u-\Delta u =|u|^{p-1}u   &\mbox{in } \Omega^T:=\Omega\times(0,T),\\
&u=0                  &\mbox{on } \partial\Omega\times(0,T),\\
&u(x,0)=u_0(x).
\end{aligned}\right.
\end{equation}
Here $\Omega\subset\R^n$   is a bounded domain with $C^2$ boundary, $u_0\in L^q(\Omega)$ for some $q\geq n(p-1)/2$. (The exponent $\frac{n(p-1)}{2}$ is optimal. See \cite{Mizoguchi-Souplet}.)  By \cite[Theorem 15.2]{Quittner-Souplet} or \cite{Brezis-Cazenave} and \cite{Weissler1980},
there exists a $T>0$ such that $u\in L^\infty(\overline{\Omega}\times(0,t])$ for any $t<T$.  Here we assume $u$ blows up in finite time, and $T$ is the first blow up time, that is,
\[\lim_{t\to T}\|u(\cdot, t)\|_{L^\infty(\Omega)}=+\infty.\]

For this Cauchy-Dirichlet problem we prove
\begin{thm}\label{main result for Cauchy-Dirichlet}
  If $n\geq 7$, $p=\frac{n+2}{n-2}$ and $u_0\geq0$, the first time singularity in the interior  must be  Type I, that is, for any $\Omega^\prime\Subset \Omega$, there exists a constant $C$ such that
\[\|u(\cdot, t)\|_{L^\infty(\Omega^\prime)}\leq C(T-t)^{-\frac{1}{p-1}}, \quad \forall 0<t<T.\]
\end{thm}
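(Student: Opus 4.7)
The plan is to reduce Theorem \ref{main result for Cauchy-Dirichlet} to the analysis performed for the Cauchy problem (Theorem \ref{main result for Cauchy}) via localization, since the desired Type I bound is required only on interior subdomains $\Omega'\Subset\Omega$. Note that the maximum principle together with $u_0\geq 0$ and the zero Dirichlet data forces $u\geq 0$ throughout $\Omega^T$, so all arguments from the Cauchy setting that exploit positivity remain available.

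First I would localize away from the boundary. Fix $\Omega'\Subset\Omega$ and set $d:=\mathrm{dist}(\Omega',\partial\Omega)>0$. Using the $C^2$ regularity of $\partial\Omega$, the vanishing Dirichlet data, and the parabolic $L^q$--$L^\infty$ smoothing estimates from the Brezis--Cazenave--Weissler theory applied in a boundary collar, one obtains uniform $L^\infty$ control on $u$ in a neighborhood of $\partial\Omega$ for $t<T$. Consequently the blow-up set at time $T$ is compactly contained in $\Omega$, and it suffices to control $u$ near any interior blow-up point $x_0\in\overline{\Omega'}$ by the Type I rate.

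Next I would verify that the first three steps of the five-step program underlying Theorem \ref{main result for Cauchy} transfer essentially verbatim to interior blow-up points in the bounded-domain setting. The tangent-flow analysis (Part \ref{part first time singularity}), the energy concentration theory (Part \ref{part energy concentration}), and the one-bubble refined analysis via inner-outer gluing (Part \ref{part one bubble}) are all local in space-time: they operate in parabolic cylinders of radius $r<\mathrm{dist}(x_0,\partial\Omega)$, use $\varepsilon$-regularity with compactly supported cutoffs, and involve inner-outer gluing at the bubble scale, which is much smaller than $d$. Since $\partial\Omega$ is invisible at these scales, these steps apply without substantial modification once one replaces the Giga--Kohn backward heat kernel by a localized version and absorbs the commutator errors from the cutoff into the perturbation.

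The delicate adaptation occurs in Step 4, the multi-bubble analysis of Part \ref{part many bubbles}, which in the Cauchy setting rests on global Pohozaev identities and the fundamental solution of $-\Delta$ on $\R^n$. On $\Omega$ these must be replaced by the Dirichlet Green function $G_\Omega$ and its regular part (Robin function), and the Pohozaev identities pick up boundary integrals of the form $\int_{\partial\Omega}|\partial_\nu u|^2(x\cdot\nu)$. I expect the main obstacle to be controlling these boundary terms, which naturally carry the favorable sign when $\Omega$ is convex (consistent with the abstract's remark). Under such a sign condition, the positive-mass-type contribution coming from the regular part of $G_\Omega$ in dimensions $n\geq 7$ supplies the strict inequality needed to exclude bubble clustering, after which Step 5 transfers to rule out single-bubble Type II blow-ups and yields the Type I estimate on $\Omega'$.
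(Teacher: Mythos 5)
Your proposal misses the actual key reduction the paper uses and misreads how the multi-bubble step works. On the first point: to invoke the local machinery at an interior point one needs the a priori space-time bound $\int_{-1}^{t}\int_{B_1}(|\nabla u|^2+|u|^{p+1})\leq K(T-t)^{-K}$ of \eqref{integral bound up to blow up time}, which is what makes the localized monotonicity formula and the tangent-flow analysis of Part \ref{part first time singularity} run at all. The paper obtains this from the Blatt--Struwe estimates (Lemma \ref{lem integral estiamte}), namely $H(t)\lesssim (T-t)^{-2/(p-1)}$ and $\int_{T/2}^{t}\int_{\Omega}(|\nabla u|^2+|u|^{p+1})\lesssim (T-t)^{-2/(p-1)}$; Theorem \ref{main result for Cauchy-Dirichlet} is then just Theorem \ref{thm local analysis for first time singularity} plus a covering of $\Omega'$. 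Your proposal never says where this local energy control comes from. On the second point: your Step 4 is wrong in substance. The multi-bubble analysis of Part \ref{part many bubbles} and Section \ref{sec bubble cluster} is entirely interior: the Pohozaev identities are computed on small balls $B_{r_i}$ centered at the blow-up point, and the ``Green function'' $\widehat{u}_\infty(x)=c(n)\sum_j m_j^{(n-2)/2}|x-P_j|^{2-n}$ arises as a blow-up limit on all of $\R^n$ of rescalings of $u_i$ near a bubble cluster; the positivity $h(0)>0$ that produces the contradiction comes from the neighboring bubbles $P_2,\dots$, not from the Robin function of the domain. No Dirichlet Green function of $\Omega$, no boundary Pohozaev terms, and no convexity hypothesis enter. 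Your route would at best prove the theorem for convex domains, whereas the statement (and the paper's proof) covers general bounded $C^2$ domains.

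A smaller but genuine error: your claimed uniform $L^\infty$ control of $u$ in a boundary collar amounts to excluding boundary blow-up, which the paper explicitly notes is open in general and known only in special cases (e.g.\ convex $\Omega$). It is also unnecessary for the interior conclusion, since the local theorem makes no reference to what happens near $\partial\Omega$.
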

The proof of this theorem is similar to the one for Theorem \ref{main result for Cauchy}.
It is also generally believed that there is no boundary blow up. This, however, is only known in some special cases, e.g. when $\Omega$ is convex (cf. \cite[Theorem 5.3]{Giga-Kohn3}, \cite{Giga04convex}).

Once we have this Type I blow up bound, it will be interesting to know if the set of blow up points enjoy further regularities, and if the blow up profiles satisfy the uniform, refined estimates as  in the subcritical case, see for example Liu \cite{Liu}, Filippas-Kohn \cite{Filippas-Kohn}, Filippas-Liu \cite{Filippas-Liu}, Vel\'{a}zquez \cite{Velazquez-higher, Velazquez-classification, Velazquez-Hausdorff}, Merle-Zaag \cite{Merle-Zaag-refined, Merle-Zaag-stability, Merle-Zaag-optimal, Merle-Zaag-Liouville}, Zaag \cite{Zaag-first,Zaag-onedimensional, Zaag-second} and Kammerer-Merle-Zaag \cite{Merle-Zaag-dynamical}.

Our proof also implies that the energy collapses and the $L^{p+1}$ norm blows up at $T$, cf.  Giga \cite{Giga1986bound}, Zaag \cite{Zaag-energy} and Du \cite{Du3}.
\begin{coro}\label{coro energy collapsing}
  Under the assumptions of Theorem \ref{main result for Cauchy-Dirichlet}, if there exists an interior blow up point, then
  \begin{equation}\label{energy collapsing}
  \lim_{t\to T}\int_{\Omega}\left[\frac{1}{2}|\nabla u(x,t)|^2-\frac{1}{p+1}u(x,t)^{p+1}\right]dx=-\infty,
\end{equation}
and
\begin{equation}\label{blow up of p+1 norm}
   \lim_{t\to T}\int_{\Omega}u(x,t)^{p+1}dx=+\infty.
\end{equation}
\end{coro}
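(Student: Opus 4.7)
The plan is to derive both conclusions from the Type~I bound of Theorem~\ref{main result for Cauchy-Dirichlet} via a contradiction argument driven by the Lyapunov structure of \eqref{eqn} in Giga--Kohn similarity variables, in the spirit of \cite{Giga1986bound,Zaag-energy,Du3}. The starting observation is that $\tfrac{d}{dt}E(u(t))=-\int_{\Omega}|u_t|^2\,dx\le 0$, so $E(u(t))$ is non-increasing and tends to a limit in $[-\infty,E(u_0)]$, while the identity
\[
\tfrac{1}{2}\tfrac{d}{dt}\int_\Omega u^2\,dx+\int_\Omega|\nabla u|^2\,dx=\int_\Omega u^{p+1}\,dx,
\]
obtained by testing \eqref{eqn} against $u$, ties \eqref{energy collapsing} and \eqref{blow up of p+1 norm} together. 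I argue by contradiction: suppose either $E(u(t))\ge -M$ or $\int_\Omega u^{p+1}dx\le M$ holds for all $t<T$.

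Fix the interior blow up point $x_0$ and pass to similarity variables
\[
w(y,s):=(T-t)^{1/(p-1)}u(x_0+y\sqrt{T-t},t),\qquad s=-\log(T-t).
\]
By Theorem~\ref{main result for Cauchy-Dirichlet}, $\|w(\cdot,s)\|_{L^\infty}\le C$ uniformly in $s$. Under either contradiction hypothesis, an $L^2_\rho$ dissipation in $s$—coming either from $\int_0^T\!\!\int_\Omega|u_t|^2dx\,dt\le E(u_0)+M$ or from the Giga--Kohn monotonicity of the weighted Lyapunov $\int_{\mathbb{R}^n}\bigl[\tfrac12|\nabla w|^2+\tfrac{w^2}{2(p-1)}-\tfrac{1}{p+1}w^{p+1}\bigr]\rho\,dy$ with $\rho(y)=e^{-|y|^2/4}$—yields
\[
\int_{s_0}^{\infty}\!\!\int_{\mathbb{R}^n}|\partial_s w|^2\,\rho(y)\,dy\,ds<\infty.
\]
Standard compactness then extracts $s_k\to\infty$ along which $w(\cdot,s_k)\to w^\infty$ locally uniformly, with $w^\infty\ge 0$ a bounded classical solution of the stationary similarity equation $\Delta w^\infty-\tfrac{y}{2}\cdot\nabla w^\infty-\tfrac{w^\infty}{p-1}+(w^\infty)^p=0$ on $\mathbb{R}^n$.

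Now the refined bubble analysis of Parts~\ref{part energy concentration}--\ref{part many bubbles} enters. Combined with Liouville-type classification of critical-exponent stationary similarity profiles in the non-negative case, it produces a dichotomy. If $w^\infty\equiv 0$, the energy-concentration, single-bubble and bubble-tower/bubble-cluster exclusions developed in Parts~\ref{part energy concentration}, \ref{part one bubble} and \ref{part many bubbles} together contradict the persistence of a genuine blow up at $x_0$. If instead $w^\infty\not\equiv 0$, matching back to the outer region forces the limit profile $u^*(x):=\lim_{t\to T^-}u(x,t)$ to carry a singularity $u^*(x)\gtrsim|x-x_0|^{-(n-2)/2}$ at $x_0$; since $(p+1)(n-2)/2=n$, this means $(u^*)^{p+1}\gtrsim|x-x_0|^{-n}$ is not locally integrable, and Fatou's lemma yields $\int_\Omega u^{p+1}(\cdot,t)dx\to+\infty$, contradicting the bound $M$. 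Plugging this divergence into the energy identity together with Sobolev's inequality $\int u^{p+1}\le S^{-(p+1)/2}\bigl(\int|\nabla u|^2\bigr)^{(p+1)/2}$ and the monotonicity of $E$ also forces $E(u(t))\to-\infty$, contradicting $E\ge -M$. This settles both \eqref{energy collapsing} and \eqref{blow up of p+1 norm}.

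The main obstacle is the rigidity step: identifying which $w^\infty$ can arise under Type~I blow up with $u_0\ge 0$ in the critical case. This is exactly the content of Parts~\ref{part energy concentration}--\ref{part many bubbles}, and in particular of the bubble-tower/bubble-cluster exclusion that is the paper's core technical input. Once this rigidity is available, the rest of the argument—matched asymptotics for the singular profile, Fatou's lemma and the elementary energy identities above—is routine.
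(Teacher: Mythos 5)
Your overall strategy (reduce to the classification of the tangent flow at a blow-up point, which under Theorem \ref{main result for Cauchy-Dirichlet} must be the ODE profile $(p-1)^{-\frac{1}{p-1}}(-t)^{-\frac{1}{p-1}}$) is the same as the paper's, but two of your key steps have genuine gaps. First, for \eqref{blow up of p+1 norm} you route the argument through the final-time profile $u^*(x)=\lim_{t\to T}u(x,t)$ and the claimed lower bound $u^*(x)\gtrsim |x-x_0|^{-\frac{n-2}{2}}$. That bound is a refined final-time asymptotic in the outer region; it is not established in this paper (the introduction explicitly lists such refined profile estimates as open in the critical case), it is borderline even in spirit because the expected profile carries logarithmic corrections, and it is not needed. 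The convergence $w(\cdot,s)\to\kappa$ in the self-similar region already gives, for every fixed $R$, $\int_{B_{R\sqrt{T-t}}(x_0)}u(x,t)^{p+1}dx\to c\,R^n$ (here one uses that $2\frac{p+1}{p-1}=n$ at the critical exponent, so the rescaling is $L^{p+1}$-invariant), and letting $R\to\infty$ yields \eqref{blow up of p+1 norm} with no reference to $u^*$. This is exactly what the paper does.

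Second, your derivation of \eqref{energy collapsing} does not close. Sobolev's inequality gives $\int|\nabla u|^2\geq S\left(\int u^{p+1}\right)^{\frac{2}{p+1}}$, i.e.\ a \emph{lower} bound on the positive part of $E$, so divergence of $\int u^{p+1}$ together with monotonicity of $E$ does not force $E\to-\infty$ (a priori both terms of $E$ could diverge with bounded difference). Moreover, in your contradiction branch ``$E\geq -M$'' you only conclude finiteness of the \emph{Gaussian-weighted} dissipation $\int\int|\partial_s w|^2\rho$, which is finite even for the exact ODE blow-up ($\partial_s w\equiv 0$ there), so no contradiction is reached in that branch. The missing ingredient is the unweighted statement \eqref{divergence of time deriavatives}: since $\partial_t u_\infty\neq 0$ for the ODE tangent flow, each backward cylinder $Q^-_{\lambda}(x_0,T-\lambda^2)$ carries a uniform amount $c>0$ of $\int\int|\partial_t u|^2$ (this is scale-invariant because $m=n$), and summing over a sequence of disjoint such cylinders gives $\int_0^T\int_\Omega|\partial_t u|^2=+\infty$; \eqref{energy collapsing} then follows from the exact energy identity $E(t)=E(0)-\int_0^t\int_\Omega|\partial_t u|^2$. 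You should replace both the $u^*$/Fatou step and the Sobolev step by these self-similar-region computations.
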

In fact, we prove that
\begin{equation}\label{divergence of time deriavatives}
 \lim_{t\to T}\int_{0}^{t}\int_{\Omega}|\partial_tu(x,t)|^2dx=+\infty.
\end{equation}
Then \eqref{energy collapsing} follows from the standard energy identity for $u$.  A local version of \eqref{divergence of time deriavatives} and \eqref{energy collapsing} also hold for the solution of the Cauchy problem \eqref{Cauchy}.

We do not know if the $L^2(\Omega)$ norm of $\nabla u(t)$ blows up as $t\to T$. We  conjecture that the blow up must be complete, i.e. the solution cannot be extended beyond $T$ as a weak solution, cf. Baras-Cohen \cite{Baras-Cohen}, Galaktinov-Vazquez \cite{Galaktionov-Vazquez}, Martel \cite{Martel1998} and \cite[Section 27]{Quittner-Souplet}.

\subsection{List of notations and conventions used throughout the paper.}

\begin{itemize}

\item The open ball in $\R^n$ is denoted by $B_r(x)$, and by $B_r$ if the center is the origin $0$.

\item The parabolic cylinder is $Q_r(x,t):=B_r(x)\times(t-r^2,t+r^2)$, the forward parabolic cylinder is $Q_r^+(x,t):=B_r(x)\times(t,t+r^2)$, and the backward parabolic cylinder is $Q_r^-(x,t):=B_r(x)\times(t-r^2,t)$. If the center is the origin $(0,0)$, it will not be written down explicitly.

\item The parabolic distance is
\[ \mbox{dist}_P\left((x,t),(y,s)\right):= \max\{|x-y|, \sqrt{|t-s|}\}.\]
H\"{o}lder, Lipschitz continuous functions with respect to this distance is defined as usual.

\item Given a domain $\Omega\subset\R^n$, $H^1(\Omega)$ is the Sobolev space endowed with the norm
\[ \left[\int_{\Omega}\left(|\nabla u|^2+u^2\right)dx\right]^{1/2}.\]
Given an interval $\mathcal{I}\subset\R$, we use $L^2_tH^1_x$ to denote the space $L^2(\mathcal{I}; H^1)$.

\item A bubble is an entire solution of the stationary equation
\begin{equation}\label{stationary eqn}
  -\Delta W=|W|^{\frac{4}{n-2}}W    \quad \mbox{in } \R^n
\end{equation}
with finite energy
\[ \int_{\R^n}|\nabla W|^2=\int_{\R^n}|W|^{\frac{2n}{n-2}}<+\infty.\]
By the translational and scaling invariance, if $W$ is a bubble, so is
\[ W_{\xi,\lambda}(x):=\lambda^{\frac{n-2}{2}}W\left(\frac{x-\xi}{\lambda}\right), \quad \xi\in\R^n, ~~ \lambda\in\R^+.\]

\item
By Caffarelli-Gidas-Spruck \cite{Caffarelli-Gidas-Spruck}, all entire positive solutions to the stationary equation \eqref{stationary eqn}
are given by Aubin-Talenti bubbles
\begin{equation}\label{bubble}
  W_{\xi,\lambda}(x):= \left(\frac{\lambda}{\lambda^2+\frac{|x-\xi|^2}{n(n-2)}}\right)^{\frac{n-2}{2}}, \quad \quad \quad \lambda>0, \quad \xi\in\R^n.
\end{equation}
They have finite energy, which are always equal to
\begin{equation}\label{total energy}
  \Lambda:=\int_{\R^n}|\nabla W_{\xi,\lambda}|^2=\int_{\R^n}W_{\xi,\lambda}^{p+1}.
\end{equation}
For simplicity, denote $W:=W_{0,1}$.

\item We use $C$ (large) and $c$ (small) to denote universal constants.  They could be different from line to line.  Given two quantities $A$ and $B$, if  $A\leq CB$ for some universal constant $C$, we simply write  $A\lesssim B$. If the constant $C$ depends on a quantity $D$, it will be written as $C(D)$ or $\lesssim_D$.
\end{itemize}

\newpage
\part{Energy concentration behavior}\label{part energy concentration}

\section{Setting}
\setcounter{equation}{0}

In this part  we assume that $p>1$ (which may not  necessarily be the critical exponent), and that the solutions could be  sign-changing. Here we study the energy concentration behavior of  the nonlinear heat equation \eqref{eqn}.

First we need to define what we call as a solution. 

\begin{defi}[Suitable weak solution]
  A function $u$ is a suitable weak solution of \eqref{eqn} in $Q_1$, if $\partial_tu,\nabla u\in L^2(Q_1)$, $u\in L^{p+1}(Q_1)$, and
  \begin{itemize}
    \item  $u$ satisfies \eqref{eqn} in the weak sense, that is,  for any $\eta\in C_0^\infty(Q_1)$,
  \begin{equation}\label{weak solution I}
    \int_{Q_1}\left[\partial_t u\eta+\nabla u\cdot\nabla \eta-|u|^{p-1}u\eta\right]=0;
  \end{equation}
    \item    $u$ satisfies the localized energy inequality: for any $\eta\in C_0^\infty(Q_1)$,
  \begin{equation}\label{energy inequality I}
  \int_{Q_1}\left[ \left(\frac{|\nabla u|^2}{2}-\frac{|u|^{p+1}}{p+1}\right)\partial_t\eta^2-|\partial_tu|^2\eta^2-2 \eta\partial_tu\nabla u\cdot\nabla\eta\right]\geq 0;
\end{equation}
    \item $u$ satisfies the stationary condition: for any $Y\in C_0^\infty(Q_1, \R^n)$,
    \begin{equation}\label{stationary condition I}
    \int_{Q_1} \left[\left(\frac{|\nabla u|^2}{2}-\frac{|u|^{p+1}}{p+1}\right)\mbox{div}Y-DY(\nabla u,\nabla u)+\partial_tu \nabla u\cdot Y\right]=0.
    \end{equation}
  \end{itemize}
\end{defi}

A smooth solution satisfies all of these conditions. (In fact, \eqref{energy inequality I} will become an equality, which is just the standard energy identity.) But   a suitable weak solution need  not to be smooth everywhere.
 \begin{defi}
  Given a weak solution $u$ of \eqref{eqn}, a point $(x,t)$ is a regular point of $u$ if there exists an $r>0$ such that $u\in C^\infty(Q_r(x,t))$, otherwise it is a singular point of $u$. The corresponding sets are denoted by $\mathcal{R}(u)$ and $\mathcal{S}(u)$.
 \end{defi}
 By definition,  $\mathcal{R}(u)$ is open and $\mathcal{S}(u)$ is closed.

In this part,  $u_i$ denotes a sequence of suitable weak solutions to \eqref{eqn} in $Q_1$, satisfying a uniform energy bound
\begin{equation}\label{energy bound I}
   \limsup_{i\to+\infty}\int_{Q_1}\left[|\nabla u_i|^2+|u_i|^{p+1}+|\partial_tu_i|^2\right]dxdt<+\infty.
\end{equation}
The integral bound on $\partial_tu_i$ in \eqref{energy bound I} can be deduced by substituting the bounds on the first two integrands into \eqref{energy inequality I}, if we shrink the domain $Q_1$ a little.

We will state the results about the energy concentration behavior   after introducing some necessary notations. The main results in this part are
\begin{enumerate}
  \item  Theorem \ref{thm energy concentration set}, where we establish basic properties about this energy concentration behavior;
  \item  Theorem \ref{thm Marstrand}  about the case $2\frac{p+1}{p-1}$ is not an integer, where we prove a strong convergence result;
  \item Theorem \ref{thm Brakke flow} about the case $2\frac{p+1}{p-1}$ is  an integer, where we show that the limiting problem is a generalized Brakke's flow;
\item  Theorem \ref{thm energy quantization I}, which is about the energy quantization result in the critical case $p=\frac{n+2}{n-2}$.
\end{enumerate}
Our treatment in this part mainly follows the work of Lin and Wang \cite{Lin-Wang1}, \cite{Lin-Wang2}, \cite{Lin-Wang3}. See also their monograph \cite{Ling-Wang-book}.

There are still many problems remaining open about this energy concentration behavior, such as the energy quantization result in the general case (cf. Lin-Riviere \cite{Lin-Riviere} for the corresponding results for harmonic maps, and Naber-Valtorta \cite{Naber-Valtorta} for Yang-Mills fields), Brakke type regularity result for the limiting problem (cf. Brakke \cite{Brakke}, Kasai-Tonegawa \cite{Tonegawa-MCF}). But we will content with such a preliminary analysis of this energy concentration phenomena, because the main goal of this part is to providing  a setting for  later parts.

\subsection{List of notations and conventions used in this part.}

\begin{itemize}

\item For any $\lambda>0$ and each set $A\subset \R^n\times\R$, we use $\lambda A$ to denote the parabolic scaling of $A$, which is the set
\[ \left\{(\lambda x, \lambda^2 t): (x,t)\in A \right\}.\]

\item Given a set $A\subset\R^n\times \R$, its time slices are $A_t:=A\cap \left(\R^n\times\{t\}\right)$.

  \item Denote $m:=\frac{2 (p+1)}{p-1}$. Note that $m>2$, and $p=\frac{m+2}{m-2}$. If $m$ is an integer, then $p$ is the Sobolev critical exponent in dimension $m$.


  \item For $s\geq 0$, $\mathcal{P}^s$ denotes the $s$-dimensional parabolic Hausdorff measure. The dimension of a subset of $\R^n\times\R$ is always understood to be the parabolic Hausdorff dimension.

\end{itemize}

\section{Monotonicity formula and $\varepsilon$-regularity}\label{sec tools}
\setcounter{equation}{0}

In this section, $u$ denotes a fixed suitable weak solution of \eqref{eqn} in $Q_1$. Here we recall the monotonicity formula and $\varepsilon$-regularity theorems. We also derive a Morrey space estimate, which will be used below in Section \ref{sec tangent flow} to preform the tangent flow analysis. Our use of Morrey space estimates follows closely Chou-Du-Zheng \cite{Du1} and Du \cite{Du2, Du3}, which is different from the ones used in Blatt-Struwe \cite{Struwe3} and Souplet \cite{Souplet-Morrey}.

Fix a function $\psi\in C^\infty_0(B_1)$  such that $0\leq\psi\leq 1$, $\psi\equiv 1$ in $B_{3/4}$ and $|\nabla\psi|+|\nabla^2\psi|\leq 100$.
For $t>0$ and $x\in\R^n$, let
\[G(x,t):=\left(4\pi t\right)^{-\frac{n}{2}}e^{-\frac{|x|^2}{4t}}\]
be the standard heat kernel on $\R^n$.

Take a large constant $C$ and a small constant $c$. For each $(x,t)\in B_{3/4}\times(-3/4,1]$ and $s\in(0, 1/4)$, define
\begin{eqnarray*}
 \Theta_s(x,t;u)&:= &s^{\frac{p+1}{p-1}}\int_{B_1}\left[\frac{|\nabla u(y,t-s)|^2 }{2}-\frac{|u(y,t-s)|^{p+1}}{p+1}\right]G(y-x,s)\psi(y)^2 dy\\
   &+&\frac{1}{2(p-1)}s^{\frac{2}{p-1}}\int_{B_1}u(y,t-s)^2 G(y-x,s)\psi(y)^2 dy+Ce^{-cs^{-1}}.
\end{eqnarray*}
\begin{rmk}\label{rmk on monotonicity quantity}
Because $u$ and $\nabla u$ are only integrable in space-time, rigorously we should integrate one more time  in $s$,  e.g. to consider the quantity
\[ \overline{\Theta}_s(x,t;u):=s^{-1}\int_s^{2s}\Theta_\tau(x,t;u)d\tau.\]
However, to simplify notations we will not use this quantity.
\end{rmk}

The following is a localized version of the monotonicity formula of Giga and Kohn, see \cite{Giga-Kohn1, Giga-Kohn3} and \cite{Giga04}.
\begin{prop}[Localized monotonicity formula]\label{prop monotoncity formula}
If $C$ is universally large and $c$ is universally small, then for any $(x,t)\in B_{3/4}\times(-3/4,1]$  and $0<s_1<s_2<1/4$,
\begin{eqnarray*}
 && \Theta_{s_2}(x,t)-\Theta_{s_1}(x,t)\\
  &\geq & \int_{s_1 }^{s_2} \tau^{\frac{2}{p-1}-1}\int_{B_1}\Big|(t-\tau)\partial_tu(y,t-\tau)+\frac{u(y,t-\tau)}{p-1}+\frac{y}{2}\cdot\nabla u(y,t-\tau)\Big|^2\\
&&  \quad \quad \quad \quad  \quad \quad  \times G(y-x, \tau)\psi(y)^2 dyd\tau.
\end{eqnarray*}
\end{prop}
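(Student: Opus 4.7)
The plan is to prove the monotonicity in two stages. First, I would establish the classical Giga–Kohn identity for smooth solutions on all of $\R^n$, where the Gaussian weight is unadorned by any cutoff. Second, I would insert the spatial cutoff $\psi^2$ and show that the correction terms it generates are exponentially small in $s$, hence absorbable by the additive piece $Ce^{-c/s}$ provided $C$ and $c$ are chosen appropriately.

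For the first stage, the cleanest route passes through self-similar variables. Setting $w(y,\sigma) := s^{1/(p-1)} u(x + s^{1/2}y,\, t-s)$ with $\sigma = -\log s$, the equation \eqref{eqn} becomes
\[
\partial_\sigma w - \Delta_y w + \tfrac{1}{2}\, y\cdot\nabla_y w + \tfrac{w}{p-1} = |w|^{p-1}w,
\]
for which the Gaussian-weighted energy
\[
E[w] := \int_{\R^n}\Bigl[\tfrac{1}{2}|\nabla w|^2 - \tfrac{1}{p+1}|w|^{p+1} + \tfrac{w^2}{2(p-1)}\Bigr]\rho(y)\,dy, \qquad \rho(y) := (4\pi)^{-n/2}e^{-|y|^2/4},
\]
satisfies the classical identity $\tfrac{d}{d\sigma}E[w] = -\int |\partial_\sigma w|^2\,\rho\,dy$, obtained by multiplying the equation by $\partial_\sigma w\,\rho$ and integrating by parts. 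Undoing the change of variables, the left-hand side becomes (up to the $\psi^2$ factor) $\Theta_s$, while the right-hand side becomes the weighted integral on the right of the Proposition, with the Jacobian producing exactly the prefactor $\tau^{2/(p-1)-1}$.

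For the second stage, I would redo the differentiation $\tfrac{d}{ds}\Theta_s$ directly in the original variables with $\psi^2$ inserted, using $\partial_s G(y-x,s) = \Delta_y G$, $\nabla_y G = -\tfrac{y-x}{2s}G$, and $\partial_s u(y,t-s) = -\partial_t u$, and carefully tracking every integration by parts. The additional terms are precisely those in which a derivative lands on $\psi$, and are therefore supported in $\mathrm{supp}(\nabla\psi)$, which after a minor adjustment of $\psi$ we may assume lies in $\{7/8 \le |y| \le 1\}$. For $(x,t) \in B_{3/4}\times(-3/4,1]$ this annulus is separated from $x$ by a fixed distance $\delta > 0$, so $G(y-x,s) \le s^{-n/2}e^{-\delta^2/(4s)}$ there; paired against the $L^2$ bounds on $u, \nabla u, \partial_t u$ furnished by the definition of suitable weak solution, every error term is bounded in modulus by $C_0 e^{-c_0/s}$ for some $C_0, c_0 > 0$. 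Choosing $c < c_0$ and $C$ large enough that $\partial_s[Ce^{-c/s}] = C c s^{-2}e^{-c/s}$ dominates this error pointwise in $s \in (0,1/4)$ delivers the claimed inequality.

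The genuine obstacle is that for a merely suitable weak solution we cannot differentiate $\Theta_s$ in $s$ at all. The correct approach is to reinterpret the smooth calculation as a combination of two integrated identities: testing the localized energy inequality \eqref{energy inequality I} against a scalar of the form $\eta^2 = \chi(t-\tau)\,G(y-x,\tau)\psi(y)^2$ with $\chi$ a smoothed indicator of $(s_1,s_2)$ yields the $|\partial_t u|^2$ term of the perfect square with the correct sign, while testing the stationary condition \eqref{stationary condition I} against the vector field $Y = (y-x)\,G(y-x,\tau)\psi(y)^2$ reproduces the remaining cross-terms between $\partial_t u$, $\tfrac{y-x}{2}\cdot\nabla u$ and $\tfrac{u}{p-1}$ after integration by parts in $y$. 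The main difficulty is the algebraic bookkeeping required to discover the exact linear combination of these two identities that reconstructs the increment of $\Theta_s$, and in justifying the manipulations by approximating the test functions (which involve Gaussians) by smooth, compactly supported ones with errors that vanish in the limit. Once this is done, the localization argument of the previous paragraph applies without change.
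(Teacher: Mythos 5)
The paper does not actually prove this proposition; it is recalled from Giga--Kohn and Giga--Matsui--Sasayama, and your two-stage plan (the classical self-similar identity, followed by absorption of the cutoff errors into $Ce^{-cs^{-1}}$, with the weak-solution case handled through the integrated identities rather than pointwise differentiation in $s$) is exactly the standard argument behind those references. Your observation that $\psi$ must be adjusted so that $\operatorname{supp}\nabla\psi$ is uniformly separated from $B_{3/4}$ is not cosmetic: with the paper's literal choice ($\psi\equiv 1$ only on $B_{3/4}$ and base points $x\in B_{3/4}$), the Gaussian is not uniformly exponentially small on $\operatorname{supp}\nabla\psi$, so some such modification (or a restriction of the base points, as the paper in fact imposes in Proposition \ref{prop Morrey} onward) is needed for the error terms to be $O(e^{-c_0/s})$.

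One point is incomplete as written. The energy inequality \eqref{energy inequality I} (tested against $\eta^2=\chi\,G\psi^2$) and the stationary condition \eqref{stationary condition I} (tested against $Y=(y-x)G\psi^2$) do not by themselves close the algebra: the increment of the zeroth-order piece $\frac{1}{2(p-1)}s^{2/(p-1)}\int u^2G\psi^2$ produces the terms $\int u\,\partial_tu\,G\psi^2$ and $\int u^2\,\Delta G\,\psi^2$, and matching these against the $|\nabla u|^2$ and $|u|^{p+1}$ terms left over from the other two identities requires, in addition, the weak formulation \eqref{weak solution I} tested against $u\,G\psi^2$ (the same identity \eqref{energy partition 1} used later in Lemma \ref{lem energy partition}). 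This third ingredient is available by the definition of a suitable weak solution, so the gap is one of bookkeeping rather than of substance, but the claimed "linear combination of these two identities" should be a combination of three. Finally, note that your derivation yields the self-similarity defect centered at the base point, $\bigl|-\tau\,\partial_tu+\frac{u}{p-1}+\frac{y-x}{2}\cdot\nabla u\bigr|^2$; the factors $(t-\tau)$ and $\frac{y}{2}$ in the printed statement appear to be typographical, as comparison with Proposition \ref{prop monotoncity formula IV} (where $t=0$) suggests.
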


This almost monotonicity allows us to define
\[\Theta(x,t):=\lim_{s\to 0^+}\Theta_s(x,t).\]
For each $s>0$, from the definition we see $s^{-1}\int_{s}^{2s}\Theta_\tau(x,t) d\tau$ is a continuous function of $(x,t)$. Combining this fact with the  monotonicity formula, we get
\begin{lem}\label{lem u.s.c.}
  $\Theta(x,t)$ is  upper semi-continuous in $(x,t)$.
\end{lem}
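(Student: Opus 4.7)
The plan is to exhibit $\Theta$ as the pointwise infimum of a family of continuous functions, after which upper semi-continuity is automatic. The obstruction to continuity is that $\Theta_s(x,t)$ itself may not be continuous: it is defined via instantaneous time-slice integrals of $|\nabla u|^2$ and $|u|^{p+1}$, but these quantities are only known to lie in $L^1_{t,x}$, not in $L^1_x$ for every fixed $t$. This is exactly what Remark 2.1 warns about, and what the auxiliary quantity in the statement is designed to remedy.

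First, I would set $\Phi_s(x,t):=s^{-1}\int_s^{2s}\Theta_\tau(x,t)\,d\tau$. By expanding $\Theta_\tau$ and applying Fubini, $\Phi_s$ becomes a genuine space-time integral of $|\nabla u|^2$, $|u|^{p+1}$ and $u^2$ against a kernel that is jointly smooth in $(x,t,y,\sigma)$ on the relevant region (because $\tau\in[s,2s]$ stays bounded away from $0$, so the heat kernel $G(y-x,\tau)$ is smooth and uniformly bounded). Since $|\nabla u|^2,\,|u|^{p+1},\,u^2\in L^1(Q_1)$ by the suitable weak solution hypothesis, dominated convergence gives continuity of $\Phi_s$ in $(x,t)\in B_{3/4}\times(-3/4,1]$.

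Next, I would use the monotonicity formula (Proposition 2.2), which says $\Theta_\tau(x,t)$ is nondecreasing in $\tau$. This yields the sandwich
\begin{equation*}
\Theta_s(x,t)\;\leq\;\Phi_s(x,t)\;\leq\;\Theta_{2s}(x,t),
\end{equation*}
and in particular, letting $s\to 0^+$, $\Phi_s(x,t)\to\Theta(x,t)$. Moreover, by monotonicity $\Theta(x,t)=\inf_{s>0}\Theta_s(x,t)\leq \Theta_s(x,t)\leq \Phi_s(x,t)$ for every $s>0$, so combining with $\lim_{s\to 0^+}\Phi_s=\Theta$ we obtain
\begin{equation*}
\Theta(x,t)\;=\;\inf_{s\in(0,1/4)}\Phi_s(x,t).
\end{equation*}

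Finally, since the pointwise infimum of a family of continuous functions is upper semi-continuous, $\Theta$ is upper semi-continuous in $(x,t)$, as desired. The only substantive step is the first one, verifying continuity of $\Phi_s$; this is routine once one observes that the $\tau$-integration converts the time-slice integrals into absolutely convergent space-time integrals against a smooth kernel. The remainder is bookkeeping with the monotonicity inequality.
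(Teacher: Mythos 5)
Your proof is correct and takes essentially the same route as the paper: the paper's own one-line argument is precisely that the time-averaged quantity $\overline{\Theta}_s(x,t)=s^{-1}\int_s^{2s}\Theta_\tau(x,t)\,d\tau$ from Remark \ref{rmk on monotonicity quantity} is continuous in $(x,t)$, and that the monotonicity formula then exhibits $\Theta$ as the infimum (equivalently, decreasing limit) of these continuous functions. You have simply supplied the Fubini/dominated-convergence details and the sandwich $\Theta_s\leq\overline{\Theta}_s\leq\Theta_{2s}$ that the paper leaves implicit.
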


The following Morrey space bound is essentially Giga-Kohn's \cite[Proposition 2.2]{Giga-Kohn2} or \cite[Proposition 3.1]{Giga-Kohn3}.
\begin{prop}\label{prop Morrey}
  For any $(x,t)\in Q_{1/2}$ and $r\in(0,1/4)$,
\begin{eqnarray}\label{Morrey 1}
 && r^{m-2-n}\int_{Q_r^-(x,t-r^2)}\left(|\nabla u|^2+|u|^{p+1}\right)+r^{m-n}\int_{Q_r^-(x,t-r^2)}|\partial_t u|^2 \nonumber\\
 &\leq& C\max\left\{\Theta_{4r^2}(x,t),\Theta_{4r^2}(x,t)^{\frac{2}{p+1}}\right\}.
\end{eqnarray}
\end{prop}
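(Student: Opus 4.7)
The plan is to combine the three defining properties of a suitable weak solution --- the almost monotonicity formula of Proposition \ref{prop monotoncity formula}, the PDE (used both as the equation $\partial_t u = \Delta u + |u|^{p-1} u$ for an energy identity and as the stationary condition \eqref{stationary condition I}), and the localized energy inequality \eqref{energy inequality I} --- to convert the upper bound on $\Theta_{4r^2}$ into a Morrey bound at scale $r$. Both sides of the claimed inequality are invariant under the parabolic rescaling $v(z,\sigma) := r^{2/(p-1)} u(x+rz, t+r^2\sigma)$: under this substitution one verifies $\Theta_{4r^2}(x,t;u) = \Theta_4(0,0;v)$ modulo the super-polynomially small correction $Ce^{-c/(4r^2)}$ in $\Theta$ and the replacement of the cutoff $\psi(y)$ by $\tilde\psi(z) := \psi(x+rz)$. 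Since $(x,t)\in Q_{1/2}$ and $r<1/4$, this $\tilde\psi$ still equals $1$ on a universally large ball, so the monotonicity formula remains valid for $v$ on a time interval containing $(-4,0]$. It therefore suffices to prove the unit-scale bound
\begin{equation*}
\int_{Q_1^-(0,-1)} \!\bigl(|\nabla v|^2+|v|^{p+1}+|\partial_\sigma v|^2\bigr) \;\lesssim\; \max\bigl\{\Theta_4(0,0;v),\,\Theta_4(0,0;v)^{2/(p+1)}\bigr\}.
\end{equation*}

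The first step extracts a weighted-energy bound from $\Theta$. Since the Gaussian weight $G(z,\tau)\tilde\psi(z)^2$ is bounded below by a positive universal constant on $B_1\times(1,2)$, integrating the definition of $\Theta_\tau^v(0,0)$ in $\tau\in(1,2)$ and using the monotonicity $\Theta_\tau\leq\Theta_4$ yields
\begin{equation*}
\int_{Q_1^-(0,-1)}\Bigl[\tfrac{|\nabla v|^2}{2}-\tfrac{|v|^{p+1}}{p+1}\Bigr]+\int_{Q_1^-(0,-1)}v^2 \;\lesssim\; \Theta_4^v(0,0)+1
\end{equation*}
together with time-slice bounds $\int_{B_1} v(\cdot,\sigma)^2\,dy\lesssim\Theta_4^v$ for $\sigma\in(-2,-1)$, extracted from the nonnegative $L^2$ part of $\Theta_\tau$. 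The second step separates $|\nabla v|^2$ from $|v|^{p+1}$: multiplying the equation by $\chi(\sigma)\eta(z)^2 v$ with smooth cutoffs $\chi,\eta$ equal to $1$ on $(-2,-1)$ and on $B_1$ respectively, and integrating over $Q_{5/4}^-$, produces the identity
\begin{equation*}
\int\!\!\int \chi\eta^2\bigl(|\nabla v|^2-|v|^{p+1}\bigr) \;=\; \int\!\!\int \chi'\,\tfrac{\eta^2 v^2}{2} - 2\!\int\!\!\int \chi\,\eta v\,\nabla\eta\cdot\nabla v.
\end{equation*}
A suitable linear combination of this identity with the Step~1 estimate, after absorbing the cross term via $|2\eta v\,\nabla\eta\cdot\nabla v|\leq\tfrac12\eta^2|\nabla v|^2+C|\nabla\eta|^2 v^2$, isolates individual $L^1$-bounds on $|\nabla v|^2$ and $|v|^{p+1}$ on $Q_1^-$. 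The residual annular terms $\int|\nabla\eta|^2 v^2$ and $\int\chi'\eta^2 v^2$ are handled by iterating on a slightly larger cylinder and using the H\"older interpolation $\int v^2\lesssim(\int|v|^{p+1})^{2/(p+1)}$. Finally, the third step controls $\partial_\sigma v$: testing the energy inequality \eqref{energy inequality I} against $\eta^2$ with $\eta$ a space-time cutoff equal to $1$ on $Q_1^-(0,-1)$ and supported in $Q_{5/4}^-$, and absorbing $2\eta\partial_\sigma v\,\nabla v\cdot\nabla\eta$ by Young's inequality, leaves $\int_{Q_1^-}|\partial_\sigma v|^2\lesssim\int_{Q_{5/4}^-}(|\nabla v|^2+|v|^{p+1})$, which is controlled by Steps~1--2 at the slightly larger scale.

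The hardest step is the second one: since the monotone quantity sees only the indefinite combination $|\nabla v|^2/2-|v|^{p+1}/(p+1)$, extracting individual Morrey bounds on each term requires both the PDE multiplier identity above and a careful absorption of the $L^2$ boundary and annular contributions. The sub-linear power $2/(p+1)$ in the final right-hand side is precisely what emerges from the H\"older interpolation used to close this absorption when $\Theta$ is small: in the large-$\Theta$ regime the linear branch dominates, while in the small-$\Theta$ regime the sub-linear branch $\Theta^{2/(p+1)}$ is what controls the residual interpolated terms.
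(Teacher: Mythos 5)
The paper does not prove this proposition itself; it points to Giga--Kohn's \cite[Proposition 2.2]{Giga-Kohn2} (or \cite[Proposition 3.1]{Giga-Kohn3}), so your proposal should be measured against that argument. Your reduction to unit scale by parabolic rescaling is correct (the exponents $m-2-n$ and $m-n$ are exactly the scaling weights, and $\Theta_{r^2\tau}(x,t;u)=\Theta_\tau(0,0;v)$ up to the cutoff and the $Ce^{-cs^{-1}}$ correction), and your Steps 2--3 use the right multipliers. But Step 1 contains a genuine gap at the crux of the whole proposition: you claim the time-slice bound $\int_{B_1}v(\cdot,\sigma)^2\,dy\lesssim\Theta_4^v$ is ``extracted from the nonnegative $L^2$ part of $\Theta_\tau$.'' The quantity $\Theta_\tau$ is the sum of a \emph{sign-indefinite} energy term $\tau^{\frac{p+1}{p-1}}\int\bigl[\frac{|\nabla v|^2}{2}-\frac{|v|^{p+1}}{p+1}\bigr]G\psi^2$ and the nonnegative $L^2$ term; an upper bound $\Theta_\tau\leq\Theta_4$ on the sum gives no bound on the nonnegative summand alone, since the energy term may be arbitrarily negative a priori (the lower bound $\Theta_\tau\geq 0$ coming from $\varepsilon$-regularity only bounds the energy term below by \emph{minus} the $L^2$ term, which is circular). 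Since your Step 2 needs exactly this $L^2$ control to absorb the annular and boundary terms, the argument does not close as written.

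The missing idea is the Riccati-type ODE argument that is the heart of Giga--Kohn's proof. Setting $I(\tau):=\tau^{\frac{2}{p-1}}\int_{B_1}v(\cdot,-\tau)^2G(\cdot,\tau)\psi^2$, one tests the equation against $vG\psi^2$ (your Step 2 multiplier, but with the self-similar Gaussian weight and kept in differential rather than integrated form) and eliminates $\int|\nabla v|^2G\psi^2$ using the definition of $\Theta_\tau$ together with $\Theta_\tau\leq\Theta_4$. Jensen's inequality then gives
\begin{equation*}
-\frac{d}{d\tau}\bigl(\tau^{-\kappa}I(\tau)\bigr)\;\gtrsim\;-C\,\Theta_4^v+c\,I(\tau)^{\frac{p+1}{2}}-Ce^{-c/\tau}
\end{equation*}
for a suitable $\kappa$, and the fact that $I$ is finite on the whole interval $\tau\in(1,4)$ forbids the finite-time blow-up this inequality would force if $I$ ever exceeded $C\max\bigl\{\Theta_4^v,(\Theta_4^v)^{2/(p+1)}\bigr\}$; this is simultaneously where the $L^2$ bound and the exponent $2/(p+1)$ come from, with \emph{universal} constants. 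Your proposed substitute --- iterating the sublinear recursion $B_k\lesssim\Theta+B_{k+1}^{2/(p+1)}$ over nested cylinders --- cannot replace this: with only finitely many scales it requires an a priori bound at the outermost cylinder, which depends on $u$ rather than being universal, and which your argument never removes. Once the ODE step is in place, your Steps 2 and 3 (separating $|\nabla v|^2$ from $|v|^{p+1}$ via the multiplier identity, and controlling $\partial_\sigma v$ via the localized energy inequality) go through essentially as you describe.
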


Combining this estimate with the monotonicity formula, we obtain a Morrey space estimate of $u$.
\begin{coro}[Morrey space bound]\label{coro Morrey}
There exists a universal constant $M$ depending on $\int_{Q_1}\left[|\nabla u|^2+|u|^{p+1}\right]$  such that for any  $(x,t)\in Q_{1/2}$ and $r\in(0,1/4)$,
\begin{equation}\label{Morrey I}
  r^{m-2-n}\int_{Q_r(x,t)}\left(|\nabla u|^2+|u|^{p+1}\right)+r^{m-n}\int_{Q_r(x,t)}|\partial_t u|^2 \leq M.
\end{equation}
\end{coro}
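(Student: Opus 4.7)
The plan is to combine Proposition \ref{prop Morrey} with the monotonicity formula of Proposition \ref{prop monotoncity formula}. Proposition \ref{prop Morrey} already gives the desired estimate on the \emph{backward} cylinder $Q_r^-(x,t-r^2)$, up to the multiplicative factor $\max\{\Theta_{4r^2}(x,t),\Theta_{4r^2}(x,t)^{2/(p+1)}\}$. Two things therefore remain: (i) bound $\Theta_{4r^2}(x,t)$ by a universal constant depending only on the global energy, and (ii) pass from the backward cylinder to the symmetric cylinder $Q_r(x,t)$.

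For step (i), I would exploit that by Proposition \ref{prop monotoncity formula} the map $s\mapsto\Theta_s(x,t)$ is monotone nondecreasing (modulo the small exponential correction), so $\Theta_{4r^2}(x,t)\le\Theta_\tau(x,t)$ for every $\tau\ge 4r^2$. A single value of $\Theta_\tau$ is a spatial integral at the slice $t-\tau$ and cannot in general be controlled by the space-time energy. I would get around this via the averaging device of Remark \ref{rmk on monotonicity quantity}: for $r$ small, choose $s_0\in[4r^2,1/8]$ and write
\[\Theta_{4r^2}(x,t)\le\frac{1}{s_0}\int_{s_0}^{2s_0}\Theta_\tau(x,t)\,d\tau.\]
Substituting the definition of $\Theta_\tau$ and noting that for $(x,t)\in Q_{1/2}$, $y\in B_1$ and $\tau\in[s_0,2s_0]$ the heat kernel $G(y-x,\tau)$ is bounded above by a universal constant, the right-hand side is controlled by
\[\int_{Q_1}\!\left[|\nabla u|^2+|u|^{p+1}+u^2\right]+C,\]
which is in turn bounded by $\int_{Q_1}[|\nabla u|^2+|u|^{p+1}]+C$ using $u^2\lesssim 1+|u|^{p+1}$ for $p>1$. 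This yields a constant $M_0$ depending only on $\int_{Q_1}[|\nabla u|^2+|u|^{p+1}]$ with $\Theta_{4r^2}(x,t)\le M_0$. The regime where $r$ is comparable to $1/4$ is trivial, since in that case the left-hand side of \eqref{Morrey I} is already controlled by a fixed multiple of the global energy.

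For step (ii), I would apply Proposition \ref{prop Morrey} at the two shifted base points $(x,t+r^2)$ and $(x,t+2r^2)$. The two resulting backward cylinders are $B_r(x)\times(t-r^2,t)$ and $B_r(x)\times(t,t+r^2)$ respectively, and their union is exactly the symmetric cylinder $Q_r(x,t)$. Summing the two estimates and replacing the $\Theta$-factor by $\max\{M_0,M_0^{2/(p+1)}\}$ via step (i) yields \eqref{Morrey I} with a universal constant $M$. Applicability of the proposition at the shifted base points requires them to stay inside the admissible domain, which holds for $r$ sufficiently small; the complementary regime is again handled trivially.

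The main obstacle is the single-slice nature of $\Theta_s$: a priori it cannot be bounded by a space-time integral of $u$. The key technical device is the averaging in $s$ of Remark \ref{rmk on monotonicity quantity}, which combined with the monotonicity of $\Theta_s$ converts the slice integral into a genuine space-time integral carrying a harmless Gaussian weight. Everything else is bookkeeping involving how the forward cylinder is covered by two backward ones.
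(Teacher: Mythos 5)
Your proof is correct and is essentially the argument the paper intends: the paper offers only the one-line remark that the corollary follows by "combining [Proposition \ref{prop Morrey}] with the monotonicity formula", and your two steps — bounding $\Theta_{4r^2}$ at unit scale via monotonicity together with the $s$-averaging of Remark \ref{rmk on monotonicity quantity}, then covering $Q_r(x,t)$ by two backward cylinders from shifted base points — are exactly the intended fleshing-out. The only cosmetic caveat is that in the "trivial" large-$r$ regime the $\int|\partial_t u|^2$ term is controlled by the gradient and $L^{p+1}$ energies only via the localized energy inequality \eqref{energy inequality I} (as the paper notes after \eqref{energy bound I}), which your phrase "fixed multiple of the global energy" implicitly invokes.
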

These Morrey space estimates are invariant under the scaling \eqref{scaling invariance}.

Combining \eqref{Morrey 1} and \eqref{Morrey I}, we also get
\begin{coro}[Change of base point]\label{coro change of base point}
For any $\varepsilon>0$, there exist $0<\delta(\varepsilon)\ll \theta(\varepsilon) \ll 1$ and $C(\varepsilon)$ so that the following holds.
  For any $(x,t)\in Q_{1/2}$, $r<1/4$ and $(y,s)\in  Q_{\delta r}(x,t)$,
\[\Theta_{\theta r^2}(y,s)\leq C\max\left\{\Theta_{r^2}(x,t),\Theta_{r^2}(x,t)^{\frac{2}{p+1}}\right\}+\varepsilon.\]
\end{coro}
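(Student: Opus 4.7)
I would combine the monotonicity formula (Proposition~\ref{prop monotoncity formula}) with the Morrey-space estimate (Proposition~\ref{prop Morrey}) in a three-step argument, using a scale-averaging trick to convert the time-slice Gaussian integrals appearing in $\Theta_\sigma$ into genuine space-time integrals amenable to Morrey bounds.

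First, by monotonicity, $\sigma\mapsto\Theta_\sigma(y,s)$ is essentially non-decreasing, so for $\theta\le 1/4$,
$$\Theta_{\theta r^2}(y,s) \le \frac{4}{r^2}\int_{r^2/4}^{r^2/2}\Theta_\sigma(y,s)\,d\sigma + Ce^{-cr^{-2}}.$$
Unpacking the definition of $\Theta_\sigma$ and changing variables $\tau = s-\sigma$ in the outer integral turns the right-hand side into a single space-time integral of the shape
$$Cr^{m-2-n}\int_{s-r^2/2}^{s-r^2/4}\int_{B_1}(|\nabla u|^2+|u|^{p+1})(z,\tau)\,(s-\tau)^{(p+1)/(p-1)}G(z-y,s-\tau)\psi(z)^2\,dz\,d\tau,$$
plus a parallel $u^2$ contribution weighted by $(s-\tau)^{2/(p-1)}$. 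This is the key structural gain from averaging: a single Morrey-type object rather than a one-parameter family.

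Second, I localize spatially using Gaussian decay: split the $z$-integration into a core $|z-y|\le Kr$ and a tail $|z-y|>Kr$. On the tail, $G(z-y,s-\tau)\le Cr^{-n}e^{-K^2/8}$; a dyadic-annular decomposition centered at $y$ together with the uniform Morrey bound in Corollary~\ref{coro Morrey} on each annulus produces a convergent geometric series $\sum_j CM(2^j K)^{n+2-m}e^{-(2^jK)^2/8}$ that decays to $0$ as $K\to\infty$, so fixing $K=K(\varepsilon)$ large makes the tail $\le\varepsilon$ uniformly in $r$ and in the base point. On the core, since $(y,s)\in Q_{\delta r}(x,t)$ with $\delta$ chosen much smaller than $\theta^{1/2}$, the integration region sits inside a backward parabolic half-cylinder based at $(x,t)$ of scale comparable to $r$; applying Proposition~\ref{prop Morrey} at $(x,t)$ at scale $r/2$ (so the right-hand side involves $\Theta_{r^2}(x,t)$ rather than a larger scale) bounds the core contribution by $C(K)\max\{\Theta_{r^2}(x,t),\Theta_{r^2}(x,t)^{2/(p+1)}\}$. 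The $\max$ structure with exponent $2/(p+1)$ enters through the lower-order $u^2$ term via H\"older, $s^{2/(p-1)}\int u^2 G\psi^2\le C(s^{(p+1)/(p-1)}\int|u|^{p+1}G\psi^2)^{2/(p+1)}$, exactly matching the form already present in Proposition~\ref{prop Morrey}'s right-hand side.

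The main obstacle I anticipate is the tension between making $K$ large for tail control and keeping the core inside $B_{r/2}(x)$, where Proposition~\ref{prop Morrey} at scale $r/2$ delivers the required $\Theta_{r^2}(x,t)$; the naive choice $K\sim 1$ leaves a "middle" annulus $B_{(K+1)r}(x)\setminus B_{r/2}(x)$ that is not in the strong Gaussian-decay regime. Resolving this requires applying Proposition~\ref{prop Morrey} at a second, larger scale (or applying Corollary~\ref{coro Morrey} with a quantitative gain in the annular regime) and using monotonicity to consolidate all contributions into $\Theta_{r^2}(x,t)$ plus an error that is absorbed into the additive $\varepsilon$. This bookkeeping is what forces the hierarchy $\delta(\varepsilon)\ll\theta(\varepsilon)\ll 1$ in the statement: $\theta$ measures the concentration of the source Gaussian (hence the sensitivity of $\Theta_{\theta r^2}$ to base-point shifts), while $\delta$ must be smaller than the natural length scale set by $\theta$ so that comparing Gaussians across base points introduces only an $O(\delta/\sqrt\theta)$ relative error.
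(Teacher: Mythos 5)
The paper itself gives no proof of this corollary beyond the phrase ``combining \eqref{Morrey 1} and \eqref{Morrey I}'', so I am comparing your proposal against what a complete argument must contain. Your skeleton is the right one: monotonicity plus scale-averaging to convert $\Theta_{\theta r^2}(y,s)$ into a space--time integral over a slab of times near $t-r^2/3$, a core/tail split in space, the H\"older reduction of the $u^2$ term to the $|u|^{p+1}$ term (which correctly reproduces the exponent $2/(p+1)$), and Gaussian decay against Corollary~\ref{coro Morrey} on the far tail. The tail estimate for $|z-x|>Kr$ is correct as written.

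The genuine gap is exactly the ``middle annulus'' $r/2\le |z-x|\le Kr$ that you flag, and your proposed repair does not close it. Applying Proposition~\ref{prop Morrey} at a larger radius $R\in[r/2,Kr]$ produces $\Theta_{4R^2}(x,t)$ on the right, and monotonicity gives $\Theta_{4R^2}(x,t)\ge\Theta_{r^2}(x,t)$ --- the wrong direction --- so there is no way to ``consolidate'' these contributions into $\Theta_{r^2}(x,t)$; moreover $Q_R^-(x,t-R^2)$ lives at times $(t-2R^2,t-R^2)$, which for $R>r$ does not even contain the slab $(t-r^2/2,t-r^2/4)$ you need. The only remaining tool, Corollary~\ref{coro Morrey}, yields on the $j$-th dyadic annulus a contribution $\sim M\,2^{j(n+2-m)}e^{-c4^j}$, and since this sum starts at $j=0$ (where the Gaussian at scale $\sim r$ gives no decay) the middle region contributes a fixed multiple of $M$. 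That is fatal: in the regime where the corollary is actually used ($\Theta_{r^2}(x,t)\le\varepsilon_\ast\ll M$, feeding the $\varepsilon$-regularity theory), an $O(M)$ error can be absorbed into neither $C\max\{\Theta_{r^2}(x,t),\Theta_{r^2}(x,t)^{2/(p+1)}\}$ nor the additive $\varepsilon$. The correct fix is to use the \emph{weighted} form of the Giga--Kohn space--time estimate (the actual content of \cite[Proposition 2.2]{Giga-Kohn2}, of which \eqref{Morrey 1} is the unweighted restriction to $B_r(x)$): it bounds
$r^{-2}\int_{t-r^2/2}^{t-r^2/4}\int_{B_1}\left(|\nabla u|^2+|u|^{p+1}\right)(t-\tau)^{\frac{m}{2}}G(z-x,t-\tau)\psi^2\,dz\,d\tau$
by $C\max\{\Theta_{r^2}(x,t),\Theta_{r^2}(x,t)^{2/(p+1)}\}$, so the entire region $|z-x|\le Kr$ is controlled after a pointwise comparison of the two kernels $G(z-y,\sigma)$ and $G(z-x,t-\tau)$ on that set; this comparison is where the constraint $\delta\le c/K$, hence $\delta=\delta(\varepsilon)$, genuinely enters. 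Relatedly, your final paragraph assigns $\theta$ the role of a Gaussian concentration scale, but your Step~1 averages over $\sigma\in[r^2/4,r^2/2]$, so $\theta$ never appears in your argument beyond $\theta\le1/4$; this internal inconsistency is a symptom of the same unresolved issue.
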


Next we recall two standard $\varepsilon$-regularity theorems. The first one is a reformulation of Du \cite[Theorem 3.1]{Du3}.
\begin{thm}[$\varepsilon$-regularity I]\label{thm ep regularity I}
  There exist three universal constants $\varepsilon_{\ast}$, $\delta_\ast$ and $c_{\ast}$ so that the following holds.
 If
 \[ \Theta_{r^2}(x,t)\leq \varepsilon_{\ast},\]
 then
 \[\sup_{Q_{\delta_\ast r}(x,t)}|u|\leq c_{\ast} r^{-\frac{2}{p-1}}.\]
\end{thm}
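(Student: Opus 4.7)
The plan is to exploit the smallness of $\Theta_{r^2}(x,t)$ by first propagating it to a small parabolic neighborhood \emph{at all smaller scales}, then converting this into a small Morrey-type norm for $|u|^{p+1}$, and finally running a parabolic Moser iteration on the equation viewed as a linear heat equation with a small potential $V=|u|^{p-1}$. By the parabolic scaling invariance of both the equation and of $\Theta_s$, one may assume throughout that $r=1$ and $(x,t)=(0,0)$; the task is then to produce universal $\varepsilon_\ast,\delta_\ast,c_\ast$ so that $\Theta_1(0,0)\leq\varepsilon_\ast$ forces $\sup_{Q_{\delta_\ast}}|u|\leq c_\ast$.

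\textbf{Smallness propagation and Morrey bound.} Fix auxiliary constants $\eta,\theta,\delta$ with $\delta\ll\theta\ll 1$ furnished by Corollary \ref{coro change of base point} applied with $\varepsilon=\eta$ and scale $r=1$. That corollary yields
\[
\Theta_\theta(y,s)\leq C\max\bigl\{\varepsilon_\ast,\,\varepsilon_\ast^{2/(p+1)}\bigr\}+\eta\qquad \forall\,(y,s)\in Q_\delta,
\]
and the monotonicity formula (Proposition \ref{prop monotoncity formula}) gives $\Theta_\tau(y,s)\leq \Theta_\theta(y,s)$ for $0<\tau\leq\theta$ (up to the harmless additive $Ce^{-c\tau^{-1}}$ baked into $\Theta_\tau$). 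Choosing $\varepsilon_\ast$ small relative to $\eta$ thus enforces
\[
\Theta_\tau(y,s)\leq 2\eta\qquad \forall\,(y,s)\in Q_\delta,\ \forall\,\tau\in(0,\theta].
\]
Feeding this into Proposition \ref{prop Morrey} at every such base point and every $\rho\leq\sqrt{\theta}/2$ produces the scaling-critical Morrey bound
\[
\rho^{m-2-n}\!\!\int_{Q_\rho^-(y,s-\rho^2)}\!\!\bigl(|\nabla u|^2+|u|^{p+1}\bigr)+\rho^{m-n}\!\!\int_{Q_\rho^-(y,s-\rho^2)}|\partial_t u|^2\ \lesssim\ \eta^{\min\{1,\,2/(p+1)\}},
\]
with a constant independent of the base point.

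\textbf{Moser iteration.} Writing the equation as $\partial_t u-\Delta u=Vu$ with potential $V:=|u|^{p-1}$, the bound above says that $V$ sits in a \emph{small} parabolic Morrey class at the natural scaling. This places the equation in the regime where the standard parabolic Moser iteration (as in \cite[Theorem 3.1]{Du3}, or the analogous treatment in Lin--Wang) applies with absorbable right-hand side, giving
\[
\sup_{Q_{\delta/4}}|u|\ \leq\ C\left(\int_{Q_{\delta/2}}|u|^{p+1}\right)^{1/(p+1)}\ \lesssim\ \eta^{\,c(n,p)}.
\]
Choosing $\eta$ (hence $\varepsilon_\ast$) sufficiently small and setting $\delta_\ast:=\delta/4$ yields $\sup_{Q_{\delta_\ast}}|u|\leq c_\ast$, which unscales to the claimed $r^{-2/(p-1)}$ bound.

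\textbf{Main obstacle.} The delicate point is the exponent loss $2/(p+1)<1$ in Corollary \ref{coro change of base point}: Moser iteration demands the Morrey smallness $\eta$ to be small in an absolute sense, which in turn forces $\varepsilon_\ast$ to be taken much smaller than $\eta$, and the two smallness parameters have to be chosen in the correct order. A conceptually cleaner contradiction scheme would rescale at a near-maximum point to obtain a limiting suitable weak solution $u_\infty$ on $\R^n\times(-\infty,0]$ with $\Theta(0,0;u_\infty)=0$ but $|u_\infty(0,0)|=1$, and invoke a zero-density Liouville statement to conclude $u_\infty\equiv 0$; however, the compactness needed to pass to the limit on $u_\infty$ already rests on an $\varepsilon$-regularity of the present form, so the direct Morrey--Moser route is the efficient one.
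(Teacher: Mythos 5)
The paper gives no proof of this theorem at all; it simply records it as a reformulation of Du \cite[Theorem 3.1]{Du3}. Your argument — propagating the smallness of $\Theta$ to nearby base points and to all smaller scales via Corollary \ref{coro change of base point} and the monotonicity formula, converting this into a small scale-invariant Morrey bound through Proposition \ref{prop Morrey}, and then absorbing $V=|u|^{p-1}$ in the small critical parabolic Morrey class by Moser iteration — is precisely the standard route taken in that reference, and it is correct as outlined, including your observation that $\eta$ must be fixed first (by the needs of the iteration) and $\varepsilon_\ast$ chosen afterwards to beat the exponent loss $\varepsilon_\ast^{2/(p+1)}$ and the $\eta$-dependent constant in Corollary \ref{coro change of base point}.
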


Once we have an $L^\infty$ bound on $u$, higher order regularity follows by applying standard parabolic estimates to \eqref{eqn}. As a consequence, we get
\begin{coro}
  $\mathcal{R}(u)=\{\Theta=0\}$ and $\mathcal{S}(u)=\{\Theta\geq \varepsilon_{\ast}\}$.
\end{coro}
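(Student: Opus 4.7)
The plan is to prove the equivalent dichotomy $\Theta(x,t) \in \{0\} \cup [\varepsilon_{\ast}, +\infty)$, with the two cases corresponding respectively to points of $\mathcal{R}(u)$ and $\mathcal{S}(u)$. It suffices to establish two one-sided inclusions: (a) $\mathcal{R}(u) \subset \{\Theta = 0\}$ and (b) $\{\Theta < \varepsilon_{\ast}\} \subset \mathcal{R}(u)$. Chaining them yields $\mathcal{R}(u) \subset \{\Theta = 0\} \subset \{\Theta < \varepsilon_{\ast}\} \subset \mathcal{R}(u)$, so all three sets coincide; taking complements (in the relevant interior domain where $\Theta$ is defined) then gives $\mathcal{S}(u) = \{\Theta \geq \varepsilon_{\ast}\}$.

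For (a), if $(x,t) \in \mathcal{R}(u)$ then $u$, $\nabla u$ and $\partial_t u$ are uniformly bounded on some cylinder $Q_{r_0}(x,t)$. I would split each spatial integral appearing in the definition of $\Theta_s(x,t)$ into the regions $|y-x| \leq r_0/2$ and $|y-x| > r_0/2$. On the inner piece the pointwise bounds combined with $\int G(\cdot,s)\,dy \leq 1$ leave a clean factor of $s^{(p+1)/(p-1)}$ in the first term and $s^{2/(p-1)}$ in the second; on the outer piece, the Gaussian weight is dominated by $(4\pi s)^{-n/2} e^{-r_0^2/(16 s)}$, which absorbs the ambient spacetime $L^2$ and $L^{p+1}$ norms supplied by \eqref{energy bound I}. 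As $s \to 0^+$, all of these contributions vanish, as does the exponential tail $Ce^{-cs^{-1}}$, giving $\Theta(x,t) = 0$.

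For (b), suppose $\Theta(x,t) < \varepsilon_{\ast}$. Proposition \ref{prop monotoncity formula} asserts that $s \mapsto \Theta_s(x,t)$ is non-decreasing on $(0,1/4)$, so $\Theta(x,t) = \inf_s \Theta_s(x,t) = \lim_{s \to 0^+} \Theta_s(x,t)$, and one can pick some $r \in (0, 1/4)$ with $\Theta_{r^2}(x,t) \leq \varepsilon_{\ast}$. Theorem \ref{thm ep regularity I} then furnishes $|u| \leq c_{\ast} r^{-2/(p-1)}$ on $Q_{\delta_{\ast} r}(x,t)$. Since the nonlinearity $|u|^{p-1}u$ is bounded there, a standard parabolic $L^q$/Schauder bootstrap applied to \eqref{eqn} upgrades $u$ to $C^\infty$ on a slightly smaller cylinder centered at $(x,t)$, placing $(x,t)$ in $\mathcal{R}(u)$.

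The only real technical nuance lives in step (a), where $u$ at the fixed time $t-s$ is only controlled via spacetime integrability rather than pointwise in $s$. This is handled either by working with the averaged monotonicity quantity $\overline{\Theta}_s$ of Remark \ref{rmk on monotonicity quantity}, or by averaging the tail estimates over $\tau \in [s, 2s]$ and invoking Fubini; in either case, the heat-kernel localization is robust enough that the conclusion is unchanged. Once (a) and (b) are in place, the rest of the corollary is purely formal set chasing.
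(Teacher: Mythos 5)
Your argument is correct and matches the paper's (very terse) justification: the inclusion $\{\Theta<\varepsilon_\ast\}\subset\mathcal{R}(u)$ is exactly Theorem \ref{thm ep regularity I} plus the parabolic bootstrap the paper invokes, and your converse direction $\mathcal{R}(u)\subset\{\Theta=0\}$ — which the paper leaves implicit — is handled correctly, including the genuine subtlety that the time-slice integrals are only controlled after averaging in $s$ as in Remark \ref{rmk on monotonicity quantity}. No gaps; the set chasing at the end is fine.
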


The second $\varepsilon$-regularity theorem is Chou-Du-Zheng \cite[Theorem 2]{Du1} (see also Du \cite[Proposition 4.1]{Du3}).
\begin{thm}[$\varepsilon$-regularity II]\label{thm ep regularity II}
After decreasing $\varepsilon_{\ast }$, $\delta_{\ast}$, and enlarging $c_{\ast }$ further,
the following holds.
There exists a constant $\theta_\ast>0$ such that, if
 \[ \int_{Q_r^-(x,t-\theta_\ast r^2)}\left(|\nabla u|^2+|u|^{p+1}\right)\leq \varepsilon_{\ast }r^{n+2-m},\]
 then
 \[\sup_{Q_{\delta_{\ast } r}(x,t)}|u|\leq c_{\ast } r^{-\frac{2}{p-1}}.\]
\end{thm}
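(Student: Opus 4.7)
The plan is to reduce Theorem \ref{thm ep regularity II} to the first $\varepsilon$-regularity Theorem \ref{thm ep regularity I} by showing that the small space-time energy on the hypothesis cylinder forces the Giga--Kohn density $\Theta_s$ at every point of $Q_{\delta_\ast r}(x,t)$ to be small at a suitably chosen scale. First I would exploit the parabolic rescaling $v(y,\tau):=r^{2/(p-1)}u(x+ry,t+r^2\tau)$, under which \eqref{eqn} is invariant; a direct calculation using $4/(p-1)=m-2$ shows that the exponent $n+2-m$ is exactly the one that makes the integral hypothesis scale-invariant. Consequently I may assume $r=1$ and $(x,t)=(0,0)$, so that the task becomes: given $\int_{Q_1^-(0,-\theta_\ast)}(|\nabla v|^2+|v|^{p+1})\leq \varepsilon_\ast$, produce $\delta_\ast,c_\ast$ such that $\sup_{Q_{\delta_\ast}(0)}|v|\leq c_\ast$.

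For the main step, I would fix any $(y,\tau)\in Q_{\delta_\ast}(0)$ and work with the averaged monotonicity quantity from Remark \ref{rmk on monotonicity quantity},
\[\overline{\Theta}_s(y,\tau)\;:=\;\frac{1}{s}\int_s^{2s}\Theta_{s'}(y,\tau)\,ds'.\]
I would choose $\theta_\ast\in(0,1/4)$ small (to be fixed below) and then a scale $s\in(0,1/8)$ comparable to $\theta_\ast$, arranged so that the time slab $\{\tau-s':s'\in[s,2s]\}$ lies inside the hypothesis interval $(-\theta_\ast-1,-\theta_\ast)$; this is possible whenever $\delta_\ast\ll\theta_\ast$, since the target interval has length $1$. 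Substituting the definition of $\Theta_{s'}$, swapping the order of integration in $(z,s')$, using the pointwise estimate $G(\cdot,s')\leq (s')^{-n/2}$ for the heat kernel, and bounding the lower-order $u^2$-term by H\"older against $|u|^{p+1}$ on the slab, the computation should yield an inequality of the form
\[\overline{\Theta}_s(y,\tau)\;\leq\; C(\theta_\ast)\bigl(\varepsilon_\ast+\varepsilon_\ast^{2/(p+1)}\bigr)+Ce^{-c/\theta_\ast}.\]

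I would then fix $\theta_\ast$ small enough that $Ce^{-c/\theta_\ast}$ lies strictly below, say, one third of the threshold $\varepsilon_\ast^{(1)}$ supplied by Theorem \ref{thm ep regularity I}, and afterwards shrink $\varepsilon_\ast$ and $\delta_\ast$ so that the remaining two contributions are also at most $\varepsilon_\ast^{(1)}/3$. By the mean-value property of the average, some $s^\star\in[s,2s]$ satisfies $\Theta_{s^\star}(y,\tau)\leq \varepsilon_\ast^{(1)}$, and applying Theorem \ref{thm ep regularity I} at the radius $\sqrt{s^\star}\sim\sqrt{\theta_\ast}$ yields $|v(y,\tau)|\leq c_\ast^{(1)}(s^\star)^{-1/(p-1)}\leq c_\ast^{(1)}\theta_\ast^{-1/(p-1)}=:c_\ast$. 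Since $(y,\tau)\in Q_{\delta_\ast}(0)$ is arbitrary, this is the required uniform bound.

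The real obstacle is that the hypothesis provides only a space-time integral while $\Theta_s$ is a single-slice spatial quantity, so no single time-slice estimate is available \emph{a priori} from the hypothesis alone. The averaging device in Remark \ref{rmk on monotonicity quantity} is precisely what bridges this gap: it replaces the slice by a slab $B_1\times[\tau-2s,\tau-s]$, which can be placed inside the hypothesis cylinder by an appropriate choice of $\theta_\ast$ relative to $\delta_\ast$. The parameters must be fixed in the correct order ($\theta_\ast$ first to annihilate the exponential residual, then $\varepsilon_\ast$ and $\delta_\ast$ to control the algebraic terms), but this bookkeeping is routine once the basic geometric picture is set.
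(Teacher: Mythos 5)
Your argument is correct and is essentially the standard reduction of the second $\varepsilon$-regularity theorem to the first via the averaged localized monotonicity quantity; note that the paper does not prove this statement itself but quotes it from Chou--Du--Zheng \cite{Du1}, whose proof follows the same scheme. The one point worth making explicit is that after the parabolic rescaling the cutoff $\psi$ in $\Theta_{s'}(y,\tau;v)$ is supported in the \emph{same} unit ball that carries the hypothesis energy, so no spatial tail outside the hypothesis cylinder ever appears, and then the crude bound $G(\cdot,s')\le (4\pi s')^{-n/2}$ together with your placement of the slab $B_1\times[\tau-2s,\tau-s]$ inside $B_1\times(-1-\theta_\ast,-\theta_\ast)$ (requiring $\delta_\ast^2\lesssim\theta_\ast$) yields exactly the claimed estimate, with the parameters fixed in the order $\theta_\ast$, then $\varepsilon_\ast$ and $\delta_\ast$, as you indicate.
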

A standard covering argument then gives
\begin{coro}\label{coro dim of singular set I}
  If $u$ is a suitable weak solution of \eqref{eqn} in $Q_1$, then
  \[\mathcal{P}^{n+2-m}\left(\mathcal{S}(u)\right)=0.\]
  In particular, $\mbox{dim}\left(\mathcal{S}(u)\right)\leq n+2-m$.
\end{coro}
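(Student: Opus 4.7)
The plan is to combine the $\varepsilon$-regularity theorem (Theorem \ref{thm ep regularity II}) with a parabolic Vitali-type covering argument, and then bootstrap from ``finite measure'' to ``zero measure'' using absolute continuity of the Lebesgue integral.

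First I would observe that by the contrapositive of Theorem \ref{thm ep regularity II}, at every singular point $(x,t)\in\mathcal{S}(u)$ and every sufficiently small $r>0$ (with $Q_r^-(x,t-\theta_\ast r^2)\Subset Q_1$) one has the energy lower bound
\[
\int_{Q_r^-(x,t-\theta_\ast r^2)}\left(|\nabla u|^2+|u|^{p+1}\right) > \varepsilon_\ast\, r^{n+2-m}.
\]
Fix $\eta>0$, set $\Sigma:=\mathcal{S}(u)\cap Q_{1-\eta}$, and fix $\delta$ small. For each $(x,t)\in\Sigma$ choose such an $r_{x,t}<\delta$; the family of parabolic cylinders $\{Q_{r_{x,t}}^-(x,t-\theta_\ast r_{x,t}^2)\}$ is a fine cover of $\Sigma$. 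Using the standard Vitali $5r$-covering lemma adapted to the parabolic distance (the cylinders $Q_r^-$ form a doubling family with respect to $\mathrm{dist}_P$, up to harmless constants coming from the fixed shift $\theta_\ast r^2$), I extract a disjoint subfamily $\{Q_{r_i}^-(x_i,t_i-\theta_\ast r_i^2)\}_{i}$ whose uniformly enlarged cylinders $\{Q_{Kr_i}(x_i,t_i)\}$ still cover $\Sigma$, for some universal $K$ depending only on $\theta_\ast$.

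Summing the lower bound along the disjoint family gives
\[
\varepsilon_\ast\sum_i r_i^{n+2-m}
 \leq \sum_i \int_{Q_{r_i}^-(x_i,t_i-\theta_\ast r_i^2)}\left(|\nabla u|^2+|u|^{p+1}\right)
 \leq \int_{V}\left(|\nabla u|^2+|u|^{p+1}\right),
\]
where $V\subset Q_1$ is the (disjoint) union. Since the enlarged cylinders have parabolic radii $\leq K\delta$, this bounds the $(n+2-m)$-dimensional parabolic Hausdorff pre-measure at scale $K\delta$ by $C\,E(u)$ with $E(u):=\int_{Q_1}(|\nabla u|^2+|u|^{p+1})<\infty$. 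Letting $\delta\to 0$ one obtains $\mathcal{P}^{n+2-m}(\Sigma)\leq CE(u)<\infty$.

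The last step bootstraps finiteness to zero. Because $n+2-m<n+2$ (the parabolic dimension of $Q_1$), the finite $(n+2-m)$-measure already forces $\mathcal{L}^{n+1}(\Sigma)=0$. Hence by absolute continuity of the integral, for any $\varepsilon>0$ there is an open neighborhood $U\supset\Sigma$ with $\int_U(|\nabla u|^2+|u|^{p+1})<\varepsilon$. Rerunning the Vitali argument but now requiring all selected cylinders to lie in $U$ yields $\sum_i r_i^{n+2-m}\leq \varepsilon/\varepsilon_\ast$, so $\mathcal{P}^{n+2-m}(\Sigma)\leq C\varepsilon$. Since $\varepsilon$ is arbitrary, $\mathcal{P}^{n+2-m}(\Sigma)=0$, and letting $\eta\to 0$ finishes the proof. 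The only genuinely technical point is the parabolic Vitali lemma with the shifted base point $t-\theta_\ast r^2$; this is routine but must be set up so that the disjointness of the selected $Q_{r_i}^-$ really implies control of a family of enlarged cylinders that still covers $\Sigma$.
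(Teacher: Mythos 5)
Your proof is correct and is precisely the ``standard covering argument'' that the paper invokes without writing out: the contrapositive of Theorem \ref{thm ep regularity II} gives the uniform energy lower bound at singular points, a parabolic Vitali extraction bounds the $(n+2-m)$-dimensional pre-measure by the total energy, and the bootstrap from finite to zero measure via $\mathcal{L}^{n+1}(\mathcal{S}(u))=0$ and absolute continuity of the integral is the standard Caffarelli--Kohn--Nirenberg-type refinement needed to get $\mathcal{P}^{n+2-m}(\mathcal{S}(u))=0$ rather than mere finiteness. No gaps.
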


\section{Defect measures}\label{sec defect measure}
\setcounter{equation}{0}

From now on until Section \ref{sec critical exponent}, we will be concerned with the energy concentration behavior in \eqref{eqn}. In this section we define the defect measure and prove some basic properties about it.

\subsection{Definition and basic properties} By \eqref{energy bound I}, after passing to a subsequence, we may assume $u_i$ converges weakly to a limit $u_\infty$ in $L^{p+1}(Q_1)$, $\nabla u_i$ and $\partial_tu_i$ converges weakly to $\nabla u_\infty$ and $\partial_tu_\infty$ respectively in $L^2(Q_1)$.

By Sobolev embedding theorem and an interpolation argument, for any $1\leq q<p+1$, $u_i$ converges to $u_\infty$ strongly in $L^q_{loc}(Q_1)$. As a consequence, $u_\infty$ is a weak solution of \eqref{eqn}.

There exist three Radon measures $\mu_1,\mu_2$ and $\nu$ such that
\[
 \left\{\begin{aligned}
   & |u_i|^{p+1}dxdt\rightharpoonup |u_\infty|^{p+1}dxdt+\mu_1, \\
  & |\nabla u_i|^2dxdt\rightharpoonup |\nabla u_\infty|^2dxdt+\mu_2, \\
  & |\partial_tu_i|^2dxdt\rightharpoonup |\partial_t u_\infty|^2dxdt+\nu.
\end{aligned}\right.
\]
These measures, called \emph{defect measures}, characterize the failure of corresponding strong convergence.

We also let
\[
 \nabla u_i\otimes \nabla u_i dxdt\rightharpoonup \nabla u_\infty\otimes \nabla u_\infty dxdt+\mathcal{T} d\mu_2,
\]
where $\mathcal{T}$ is a matrix valued $\mu_2$-measurable functions. Furthermore, $\mathcal{T}$ is symmetric, semi-positive definite and its trace equals $1$ $\mu_2$-a.e.

\begin{lem}[Energy partition]
  $\mu_1=\mu_2$.
\end{lem}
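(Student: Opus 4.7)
The plan is to derive, for each $i$, the \emph{tested identity}
\[
-\frac{1}{2}\int_{Q_1} u_i^2\,\partial_t(\eta^2) + \int_{Q_1}|\nabla u_i|^2\eta^2 + 2\int_{Q_1} u_i\eta\,\nabla u_i\cdot\nabla\eta = \int_{Q_1}|u_i|^{p+1}\eta^2, \qquad \eta\in C_0^\infty(Q_1),
\]
by inserting the test function $v=u_i\eta^2$ into the weak formulation \eqref{weak solution I} and integrating by parts in $t$ on the parabolic term. The suitable weak solution hypotheses together with \eqref{energy bound I} give $u_i\in L^{p+1}$ and $\nabla u_i,\partial_t u_i\in L^2$ on $Q_1$, so $u_i\eta^2$ lies in $L^{p+1}\cap L^2_t H^1_x$ with compact support and is admissible by a standard density argument. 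The same identity then holds for $u_\infty$, which is a weak solution of \eqref{eqn} inheriting the requisite integrability from the weak and strong limits.

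Next I would pass to the limit $i\to\infty$ term by term. By an Aubin--Lions compactness argument using the uniform $L^2$ bound on $\partial_t u_i$ from \eqref{energy bound I}, together with the Sobolev embedding and the $L^{p+1}$ bound, one has the strong convergence $u_i\to u_\infty$ in $L^q_{\mathrm{loc}}(Q_1)$ for every $q<p+1$. This takes care of the $u_i^2\partial_t(\eta^2)$ term and, via strong $L^2$ convergence of $u_i\eta\nabla\eta$ paired with the weak $L^2$ convergence of $\nabla u_i$, of the cross term $\int u_i\eta\,\nabla u_i\cdot\nabla\eta$ as well. The two quadratic terms pick up the defect measures, giving
\[
\int_{Q_1}|\nabla u_i|^2\eta^2 \longrightarrow \int_{Q_1}|\nabla u_\infty|^2\eta^2 + \int_{Q_1}\eta^2\,d\mu_2,\qquad
\int_{Q_1}|u_i|^{p+1}\eta^2 \longrightarrow \int_{Q_1}|u_\infty|^{p+1}\eta^2 + \int_{Q_1}\eta^2\,d\mu_1.
\]
Subtracting the analogous tested identity for $u_\infty$ from the limit of the tested identity for $u_i$ eliminates all the smooth contributions and yields
\[
\int_{Q_1}\eta^2\,d\mu_2 \;=\; \int_{Q_1}\eta^2\,d\mu_1 \qquad \forall\,\eta\in C_0^\infty(Q_1),
\]
whence $\mu_1=\mu_2$ as Radon measures on $Q_1$.

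The main obstacle, as is typical in energy-partition arguments of this type, is a functional-analytic one rather than a computational one: justifying that $u_i\eta^2$ (and $u_\infty\eta^2$) is an admissible test function in \eqref{weak solution I}, and obtaining the strong $L^q_{\mathrm{loc}}$ convergence with some $q>2$ that is needed to pass to the limit in the linear-in-$u$ cross term. Both rely crucially on the uniform $L^2$ control of $\partial_t u_i$ built into the hypothesis \eqref{energy bound I}, which secures the temporal compactness underlying Aubin--Lions; once these points are settled the identification $\mu_1=\mu_2$ is automatic, and one notes in passing that the defect tensor $\mathcal{T}$ plays no role in the statement.
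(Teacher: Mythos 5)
Your proposal is correct and follows essentially the same route as the paper: test the equation with $u_i\eta^2$, pass to the limit using the strong $L^q_{\mathrm{loc}}$ convergence ($q<p+1$) already established from \eqref{energy bound I}, and subtract the corresponding identity for $u_\infty$ so that only $\int\eta^2\,d(\mu_1-\mu_2)=0$ survives. The extra care you take with the admissibility of the test function and the Aubin--Lions step is a fuller justification of what the paper does in one line, not a different argument.
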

\begin{proof}
For any $\eta\in C_0^\infty(Q_1)$, multiplying the equation \eqref{eqn} by $u_i\eta^2$ and integrating by parts, we obtain
\begin{equation}\label{energy partition 1}
  \int_{Q_1}\left[-u_i^2\eta\partial_t\eta+|\nabla u_i^2|\eta^2-|u_i|^{p+1}\eta^2+2\eta u_i\nabla u_i\nabla\eta\right]=0.
\end{equation}
Letting $i\to\infty$, and noting that $u_\infty$ also satisfies \eqref{energy partition 1}, we obtain
\[\int_{Q_1}\eta^2 d\left(\mu_1-\mu_2\right)=0.\]
Since this holds for any $\eta\in C_0^\infty(Q_1)$, $\mu_1=\mu_2$ as Radon measures.
\end{proof}
By this lemma,  we can write $\mu_1$ and $\mu_2$ just as $\mu$. Denote $\Sigma:=\mbox{spt}(\mu)$, and define
\[\Sigma^\ast:=\left\{(x,t):  ~~ \forall r>0, ~~ \limsup_{i\to\infty}r^{m-2-n}\int_{Q_r^-(x,t)}\left(|\nabla u_i|^2+|u_i|^{p+1}\right)\geq\varepsilon_\ast/2\right\}\]
to be \emph{the blow up locus}.

Now we state some basic properties on this energy concentration phenomena.
\begin{thm}\label{thm energy concentration set}
Suppose $u_i$ is a sequence of suitable weak solutions of \eqref{eqn} in $Q_1$, satisfying \eqref{energy bound I}. Define $\mu,\nu$, $\Sigma$ and $\Sigma^\ast$ as above. Then the following holds.
\begin{enumerate}
  \item (Morrey space bound) For any $(x,t)\in Q_{1/2}$ and  $r\in(0,1/4)$,
\begin{equation}\label{Morrey bound for the limit}
\left\{\begin{aligned}
   & \mu(Q_r(x,t))+\int_{Q_r(x,t)}\left(|\nabla u_\infty|^2+|u_\infty|^{p+1}\right)\leq M r^{n+2-m},  \\
  & \nu(Q_r(x,t)) +\int_{Q_r(x,t)}|\partial_t u_\infty|^2\leq M r^{n-m}.
\end{aligned}\right.
\end{equation}
Here $M$ is the constant in Corollary \ref{coro Morrey}.

  \item   The blow up locus $\Sigma^\ast$ is closed.

  \item (Smooth convergence)  $u_i$ converges to $u_\infty$ in $C_{loc}^\infty(Q_1\setminus \Sigma^\ast)$. As a consequence,
  \begin{itemize}
    \item $u_\infty\in C^\infty(Q_1\setminus \Sigma^\ast)$, that is, $\mathcal{S}(u_\infty)\subset \Sigma^\ast$.
    \item  $\Sigma\subset\Sigma^\ast$ and $\mbox{spt}(\nu)\subset \Sigma^\ast$.
  \end{itemize}

 \item  (Measure estimate of the blow up locus)  For any $(x,t)\in \Sigma^\ast\cap Q_{1/2}$ and $0<r<1/2$,
\[\frac{\varepsilon_\ast}{C} r^{n+2-m}\leq \mathcal{P}^{n+2-m}(\Sigma^\ast\cap Q_r(x,t))\leq \frac{C}{\varepsilon_\ast} r^{n+2-m}.\]

\item  (Lower density bound)  For $\mu$-a.e. $(x,t)\in\Sigma\cap Q_{1/2}$ and $r\in(0,1/2)$,
  \begin{equation}\label{lower density bound}
    \mu(Q_r(x,t))\geq\mu(Q_{r/2}^-(x,t-\theta_\ast r^2))\geq\varepsilon_\ast r^{n+2-m}.
  \end{equation}

\item  $\mathcal{P}^{n+2-m}\left(\Sigma^\ast \setminus \Sigma\right)=0 $.

\item  There exists a measurable function $\theta$ on $\Sigma$ such that
\[ \mu= \theta(x,t) \mathcal{P}^{n+2-m}\lfloor_{\Sigma}.\]
Moreover,
\[ \frac{\varepsilon_\ast}{C}\leq \theta \leq C \quad \quad \mathcal{P}^{n+2-m}-\mbox{a.e. in} ~~ \Sigma.\]

\end{enumerate}
\end{thm}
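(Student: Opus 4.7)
The plan is to combine the $\varepsilon$-regularity theorems of Section~\ref{sec tools} with standard Radon measure / covering arguments, treating the seven items in the order they are stated. For (1), I would simply pass to the limit in the uniform Morrey bound of Corollary~\ref{coro Morrey}: weak lower semicontinuity of the $L^2$ and $L^{p+1}$ norms handles the pieces supported on $u_\infty$ and $\partial_t u_\infty$, while the Portmanteau inequality for the weak-$\ast$ convergence of Radon measures handles $\mu$ and $\nu$. Part (2), closedness of $\Sigma^\ast$, is immediate from the $\limsup$ definition: if $(x_k,t_k)\to(x,t)$, then for every fixed $r>0$ and all large $k$ the inclusion $Q_{r/2}^-(x_k,t_k)\subset Q_r^-(x,t)$ transfers the lower bound on the $\limsup$. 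For part (3), at every point $(x,t)\notin\Sigma^\ast$ the definition gives an $r>0$ with $\limsup_i r^{m-2-n}\int_{Q_r^-(x,t)}(|\nabla u_i|^2+|u_i|^{p+1})<\varepsilon_\ast/2$; Theorem~\ref{thm ep regularity II} then produces a uniform $L^\infty$ bound $|u_i|\leq c_\ast r^{-2/(p-1)}$ on $Q_{\delta_\ast r}(x,t)$ (adjusting the base point to absorb the $\theta_\ast$ offset), and standard parabolic Schauder estimates bootstrap this to $C^\infty_{\mathrm{loc}}$ convergence. Smooth convergence forces $\mu$ and $\nu$ to vanish off $\Sigma^\ast$, giving $\Sigma\cup\mathrm{spt}(\nu)\subset\Sigma^\ast$ and $\mathcal{S}(u_\infty)\subset\Sigma^\ast$.

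For (4), the upper bound comes from a Vitali-type covering: any relatively fine parabolic cover of $\Sigma^\ast\cap Q_r(x,t)$ by cylinders of size $\rho_j$ satisfies $\sum_j \rho_j^{n+2-m}\lesssim \varepsilon_\ast^{-1}\sum_j \rho_j^{m-2-n}\int_{Q_{\rho_j}}(|\nabla u_i|^2+|u_i|^{p+1})\lesssim \varepsilon_\ast^{-1}r^{n+2-m}$ by the Morrey bound applied to $u_i$ for large $i$. The lower bound $\mathcal{P}^{n+2-m}(\Sigma^\ast\cap Q_r(x,t))\gtrsim \varepsilon_\ast r^{n+2-m}$ follows similarly by noting that, at each point of $\Sigma^\ast$, the defining energy lower bound persists to a definite scale and so any efficient cover must have sufficiently many balls. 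For (5), I argue by contradiction: if $(x,t)\in\Sigma$ satisfies $\mu(Q_{r/2}^-(x,t-\theta_\ast r^2))<\varepsilon_\ast r^{n+2-m}$ along a sequence $r\to 0$, then since $\int_{Q_r^-}(|\nabla u_\infty|^2+|u_\infty|^{p+1})=o(r^{n+2-m})$ at $\mu$-a.e. point (a Lebesgue-type differentiation statement, where we restrict to $\mu$-a.e.\ points that are $\mathcal{P}^{n+2-m}$-Lebesgue points of the absolutely continuous contribution from $u_\infty$), the full energy integral for $u_i$ over the shifted cylinder is strictly below $\varepsilon_\ast r^{n+2-m}$ for large $i$. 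Theorem~\ref{thm ep regularity II} then yields uniform $L^\infty$ and $C^\infty$ bounds on $Q_{\delta_\ast r}(x,t)$, forcing $\mu=0$ on this cylinder and contradicting $(x,t)\in\Sigma=\mathrm{spt}(\mu)$.

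For (6), at any $(x,t)\in\Sigma^\ast\setminus\Sigma$ there exists $\rho>0$ with $\mu(Q_\rho(x,t))=0$, so $|\nabla u_i|^2+|u_i|^{p+1}$ converges strongly in $L^1(Q_\rho(x,t))$; hence $u_i\to u_\infty$ in the sense required for $u_\infty$ to be a suitable weak solution there, and $\Sigma^\ast\cap Q_\rho(x,t)\subset\mathcal{S}(u_\infty)$. Corollary~\ref{coro dim of singular set I} gives $\mathcal{P}^{n+2-m}(\Sigma^\ast\cap Q_\rho(x,t))=0$, and a countable cover of $\Sigma^\ast\setminus\Sigma$ by such cylinders completes the step. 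For (7), combining the lower density bound from (5) with the Morrey upper bound from (1) gives, for $\mu$-a.e.\ point of $\Sigma$,
\[
\frac{\varepsilon_\ast}{C}\leq \liminf_{r\to 0}\frac{\mu(Q_{r/2}^-(x,t-\theta_\ast r^2))}{r^{n+2-m}}\leq \limsup_{r\to 0}\frac{\mu(Q_r(x,t))}{r^{n+2-m}}\leq C M.
\]
A differentiation theorem for Radon measures in the parabolic metric (via the Morse/Besicovitch covering property of parabolic cylinders) then identifies $\mu$ with $\theta\,\mathcal{P}^{n+2-m}\lfloor_\Sigma$ for a density $\theta$ pinched between universal constants.

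The main technical obstacle I expect is item (5): one has to reconcile the \emph{forward}-shifted cylinder $Q_{r/2}^-(x,t-\theta_\ast r^2)$ appearing in the hypothesis of Theorem~\ref{thm ep regularity II} with the symmetric cylinder natural for $\mu$-density considerations, and one must ensure that the absolutely continuous contribution of $|\nabla u_\infty|^2+|u_\infty|^{p+1}$ can indeed be absorbed at $\mu$-almost every point — this requires mild care because the reference measure for differentiation is $\mu$, not Lebesgue measure, and the two are mutually singular where $\mu$ concentrates. A secondary care point is checking that the parabolic-distance balls used have the Vitali covering property needed to invoke Besicovitch-type differentiation in the last step.
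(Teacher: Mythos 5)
Your proposal is correct and follows essentially the same route as the paper: limit passage in the Morrey bound for (1), $\varepsilon$-regularity (Theorem \ref{thm ep regularity II}) plus parabolic bootstrapping for (2)--(3), Vitali coverings combining the upper Morrey bound with the lower density bound built into the definition of $\Sigma^\ast$ for (4)--(6), the Federer--Ziemer vanishing lemma (Lemma \ref{lem vanishing of Morrey}) to absorb the $u_\infty$ contribution at $\mu$-a.e.\ (equivalently $\mathcal{P}^{n+2-m}$-a.e., thanks to the Morrey upper bound for $\mu$) point --- which is exactly how the ``technical obstacle'' you flag in (5) is resolved --- and the differentiation theorem for measures for (7). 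The only cosmetic caveats are that your direct closedness argument in (2) should use cylinders of radius $(1-\delta)r$ with $\delta\to 0$ rather than $r/2$, so as not to lose a factor of $2^{m-2-n}$ against the threshold $\varepsilon_\ast/2$ (in any case (2) also follows from the openness of the complement that your item (3) establishes, which is the paper's route), and your detour through $\mathcal{S}(u_\infty)$ and Corollary \ref{coro dim of singular set I} in (6) is a harmless variant of the paper's direct covering argument.
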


Before presenting the proof, we first note the following Federer-Ziemer type result \cite{Federer-Ziemer}. It can be proved by a   Vitali covering argument.
\begin{lem}\label{lem vanishing of Morrey}
\begin{enumerate}
  \item For $\mathcal{P}^{n+2-m}$ a.e. $(x,t)\in Q_{1/2}$,
  \[ \lim_{r\to0}r^{m-2-n}\int_{Q_r(x,t)}\left(|\nabla u_\infty|^2+|u_\infty|^{p+1}\right)=0.\]
  \item For $\mathcal{P}^{n-m}$ a.e. $(x,t)\in Q_{1/2}$,
  \[ \lim_{r\to0}r^{m-n}\int_{Q_r(x,t)}|\partial_t u_\infty|^2=0.\]
\end{enumerate}
\end{lem}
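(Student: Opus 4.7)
The plan is to reduce both statements to a single measure-theoretic density principle: if $f\in L^1(Q_1)$ and $s<n+2$, then for $\mathcal{P}^s$-almost every $(x,t)\in Q_{1/2}$,
\[
\lim_{r\to0}r^{-s}\int_{Q_r(x,t)}|f|\,dy\,d\tau=0.
\]
The hypotheses are met in both parts: by the Morrey bound of Theorem~\ref{thm energy concentration set}(1), covering $Q_{1/2}$ by finitely many parabolic cylinders shows that $|\nabla u_\infty|^2+|u_\infty|^{p+1}$ and $|\partial_t u_\infty|^2$ both lie in $L^1(Q_{1/2})$. Since $m>2$, we have $n+2-m<n+2$ and $n-m<n+2$, so the principle applies with $s=n+2-m$ for part (1) and $s=n-m$ for part (2). (If $s<0$, which occurs in part (2) when $m>n$, the conclusion is trivial because $r^{-s}\to 0$ as $r\to 0$.)

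To establish the general principle, fix $\varepsilon,\delta>0$ and decompose $f=g+h$ with $g\in C_c(Q_1)$ and $\|h\|_{L^1}<\varepsilon$. The continuous piece is harmless:
\[
r^{-s}\int_{Q_r(x,t)}|g|\le \|g\|_\infty |Q_1|\,r^{n+2-s}\longrightarrow 0
\]
uniformly in $(x,t)$ since $s<n+2$. For the $L^1$-small piece, set
\[
E_\delta:=\Bigl\{(x,t)\in Q_{1/2}:\ \limsup_{r\to0}r^{-s}\int_{Q_r(x,t)}|h|>\delta\Bigr\}.
\]
At each point of $E_\delta$ there are arbitrarily small $r$ with $\int_{Q_r(x,t)}|h|>\delta r^s$. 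A parabolic Vitali covering argument extracts a countable disjoint subfamily $\{Q_{r_i}(x_i,t_i)\}$ whose $5$-dilations cover $E_\delta$, yielding
\[
\mathcal{P}^s(E_\delta)\le C\sum_i r_i^s\le \frac{C}{\delta}\sum_i\int_{Q_{r_i}(x_i,t_i)}|h|\le \frac{C\|h\|_{L^1}}{\delta}<\frac{C\varepsilon}{\delta}.
\]
Letting $\varepsilon\to 0$ gives $\mathcal{P}^s(E_\delta)=0$ for every $\delta>0$; writing the bad set $\{\limsup>0\}=\bigcup_k E_{1/k}$ then shows the upper $s$-dimensional density of $|f|$ vanishes $\mathcal{P}^s$-a.e., as claimed.

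The argument contains no genuine obstacle: it is the classical Federer--Ziemer density lemma transported to the parabolic setting. The only technical ingredient is the parabolic Vitali covering lemma, which holds because parabolic cylinders are doubling in the metric $\mbox{dist}_P$ of the excerpt; this is a standard tool in parabolic regularity theory. All PDE content has been absorbed into the Morrey bound of Theorem~\ref{thm energy concentration set}(1), and the remainder of the proof is soft measure theory.
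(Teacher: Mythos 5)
Your proof is correct and is exactly the route the paper intends: the lemma is stated there as a Federer--Ziemer type result "proved by a Vitali covering argument," and your continuous-plus-$L^1$-small decomposition combined with the parabolic $5r$-covering lemma is precisely that classical argument, with the PDE input reduced to the Morrey bound. The only cosmetic point is that $E_\delta$ as written depends on $h$ (hence on $\varepsilon$); one should phrase the conclusion for the $\varepsilon$-independent set $\{\limsup_{r\to0}r^{-s}\int_{Q_r}|f|>\delta\}$, which is contained in $E_\delta$ for every decomposition and therefore has $\mathcal{P}^s$-measure at most $C\varepsilon/\delta$ for all $\varepsilon$.
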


\begin{proof}[Proof of Theorem \ref{thm energy concentration set}]
\begin{enumerate}
\item  This Morrey space bound  follows directly by passing to the limit in \eqref{Morrey 1} (for $u_i$).

  \item  By definition,  for any $(x,t)\notin \Sigma^\ast$, there exists an $r>0$ such that for all $i$ large,
  \[ \int_{Q_r^-(x,t)}\left(|\nabla u_i|^2+|u_i|^{p+1}\right)<\varepsilon_\ast r^{n+2-m}.\]
  By Theorem \ref{thm ep regularity II} and standard parabolic regularity theory, $u_i$ are uniformly bound in $C^k(Q_{\delta_\ast r}(x,t))$ for each $k\in\mathbb{N}$. By the weak convergence of $u_i$ and Arzela-Ascolli theorem, they converge  to  $u_\infty$ in  $C^\infty(Q_{\delta_\ast r}(x,t))$.
  \item  This follows directly from the previous point.

  \item This follows from a standard Vitali type covering argument, by utilizing the upper density bound in \eqref{Morrey bound for the limit} and the lower density bound coming from the definition of $\Sigma^\ast$, that is, for any $(x,t)\in\Sigma^\ast\cap Q_{1/2}$ and $0<r<1/2$,
      \begin{equation}\label{lower density bound full}
        \mu(Q_r(x,t))+\int_{Q_r(x,t)}\left(|\nabla u_\infty|^2+|u_\infty|^{p+1}\right)\geq \frac{\varepsilon_\ast}{2} r^{n+2-m}.
      \end{equation}
  \item This follows by combining \eqref{lower density bound full} and Lemma \ref{lem vanishing of Morrey}.
  \item This follows from a Vitali type covering argument, by using \eqref{lower density bound full} and Lemma \ref{lem vanishing of Morrey}.
  \item This follows from differentiation theorem for measures, and an application of \eqref{Morrey bound for the limit} and \eqref{lower density bound}. \qedhere
\end{enumerate}
\end{proof}

\subsection{Definition of the mean curvature} In this subsection we define a mean curvature type term for defect measures.
\begin{lem}\label{lem decomposition of mean curvature}
There exists an $\R^n$ valued function $\mathbf{H}\in L^2(Q_1,d\mu)$ such that
  \[ \partial_tu_i\nabla u_i dxdt \rightharpoonup \partial_tu_\infty\nabla u_\infty dxdt+\frac{1}{m}\mathbf{H} d\mu\]
  weakly as Radon measures.
\end{lem}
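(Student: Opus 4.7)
The plan is to construct $\mathbf{H}$ as the Radon--Nikodym density of an $\R^n$-valued defect measure arising from $\partial_t u_i \nabla u_i$, and then to upgrade $L^1(d\mu)$ to $L^2(d\mu)$ integrability via a weighted Cauchy--Schwarz estimate combined with Lebesgue differentiation. First, by \eqref{energy bound I}, the densities $\partial_t u_i \nabla u_i$ are uniformly bounded in $L^1(Q_1;\R^n)$ because $|\partial_t u_i \nabla u_i|\leq \tfrac{1}{2}(|\partial_t u_i|^2+|\nabla u_i|^2)$, so after passing to a further subsequence they converge weak$\ast$ to a bounded vector Radon measure. Setting $v_i:=u_i-u_\infty$ and expanding
\[\partial_t u_i \nabla u_i = \partial_t u_\infty \nabla u_\infty + \partial_t v_i \nabla u_\infty + \partial_t u_\infty \nabla v_i + \partial_t v_i \nabla v_i,\]
the middle two cross terms vanish against any $C_0$ test function, since $\partial_t v_i,\nabla v_i \rightharpoonup 0$ in $L^2$ while $\partial_t u_\infty,\nabla u_\infty \in L^2$. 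Denoting by $\vec{\Theta}$ the weak$\ast$ limit of $\partial_t v_i \nabla v_i$, the task reduces to showing $\vec{\Theta}=\tfrac{1}{m}\mathbf{H}\,d\mu$ with $\mathbf{H}\in L^2(d\mu;\R^n)$.

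The key estimate comes from a carefully weighted Cauchy--Schwarz. For any vector test function $\vec{\phi}\in C_0(Q_1;\R^n)$ and scalar cutoff $\eta\in C_0(Q_1)$ with $\eta\equiv 1$ on $\mathrm{supp}(\vec{\phi})$,
\[\left|\int_{Q_1}\vec{\phi}\cdot\partial_t v_i \nabla v_i\right|^2 \leq \left(\int_{Q_1}\eta^2|\partial_t v_i|^2\right)\left(\int_{Q_1}|\vec{\phi}|^2 |\nabla v_i|^2\right).\]
Mimicking the energy-partition argument, the scalar measures $|\nabla v_i|^2\rightharpoonup d\mu$ and $|\partial_t v_i|^2\rightharpoonup d\nu$, because the mixed terms in $|\nabla u_i|^2-|\nabla u_\infty|^2-|\nabla v_i|^2$ vanish in the weak $L^1$ limit. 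Passing $i\to\infty$ and then shrinking $\eta\searrow \chi_{\mathrm{supp}(\vec{\phi})}$ yields
\[\left|\int \vec{\phi}\cdot d\vec{\Theta}\right|^2 \leq \nu(\mathrm{supp}(\vec{\phi}))\int |\vec{\phi}|^2\, d\mu.\]
Varying $\vec{\phi}$ with $|\vec{\phi}|\leq 1$ supported in an open neighborhood of a compact $K\Subset Q_1$, and invoking Radon regularity of $\mu,\nu$, I would deduce
\[|\vec{\Theta}|(K)^2 \leq \nu(K)\,\mu(K),\]
which extends to all Borel sets by standard outer/inner approximation.

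This inequality immediately gives $\vec{\Theta}\ll\mu$, so by Radon--Nikodym, $\vec{\Theta}=\tfrac{1}{m}\mathbf{H}\,d\mu$ for some $\mathbf{H}\in L^1(d\mu;\R^n)$. To upgrade to $L^2$, apply Lebesgue's differentiation theorem on Euclidean balls in $\R^{n+1}$ (valid via the Besicovitch covering theorem): for $\mu$-a.e. $(x,t)$,
\[\tfrac{1}{m^2}|\mathbf{H}(x,t)|^2 = \lim_{r\to 0}\frac{|\vec{\Theta}|(B_r(x,t))^2}{\mu(B_r(x,t))^2} \leq \lim_{r\to 0}\frac{\nu(B_r(x,t))}{\mu(B_r(x,t))} = \frac{d\nu^{\mathrm{ac}}}{d\mu}(x,t),\]
where $\nu^{\mathrm{ac}}$ denotes the absolutely continuous part of $\nu$ with respect to $\mu$. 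Integrating,
\[\tfrac{1}{m^2}\int |\mathbf{H}|^2\, d\mu \leq \nu^{\mathrm{ac}}(Q_1) \leq \nu(Q_1) < +\infty,\]
which closes the argument. The main obstacle in this plan is the asymmetric weighting inside Cauchy--Schwarz: one must couple $\vec{\phi}$ to $\nabla v_i$ and the cutoff $\eta$ to $\partial_t v_i$, so that the resulting limit inequality involves $\int|\vec{\phi}|^2\,d\mu$ rather than the wrong target $\int|\vec{\phi}|^2\,d\nu$; only with this precise form does Lebesgue differentiation convert the measure inequality into the pointwise bound $|\mathbf{H}|^2\lesssim d\nu^{\mathrm{ac}}/d\mu$ needed for $L^2(d\mu)$ integrability, rather than the merely $L^1(d\mu)$ bound Radon--Nikodym alone provides.
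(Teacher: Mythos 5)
Your proof is correct, and it reaches the conclusion by a genuinely different route than the paper. The paper identifies the absolutely continuous part of the limit measure $\xi$ by invoking the smooth convergence of $u_i$ off the blow-up locus $\Sigma^\ast$ together with the fact that $\Sigma^\ast$ is Lebesgue-null (Points (3)--(4) of Theorem \ref{thm energy concentration set}), and then obtains the $L^2(d\mu)$ bound on $\mathbf{H}$ by viewing $(\mathbf{H}_i,|\nabla u_i|^2dxdt)$ with $\mathbf{H}_i=\partial_tu_i\nabla u_i/|\nabla u_i|^2$ as measure--function pairs, applying Hutchinson's compactness and Fatou's lemma. You instead split $u_i=u_\infty+v_i$, kill the cross terms by weak $L^2$ convergence (which also shows $|\nabla v_i|^2dxdt\rightharpoonup\mu$ and $|\partial_tv_i|^2dxdt\rightharpoonup\nu$ without reference to $\Sigma^\ast$), derive the localized Cauchy--Schwarz inequality $|\vec{\Theta}|(K)^2\leq\nu(K)\mu(K)$, and then use Besicovitch differentiation to turn it into the pointwise bound $m^{-2}|\mathbf{H}|^2\leq d\nu^{\mathrm{ac}}/d\mu$, hence $\mathbf{H}\in L^2(d\mu)$. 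Your argument is more elementary and self-contained: it avoids both Hutchinson's theory and the structure theory of $\Sigma^\ast$, and your pointwise estimate is in fact a sharper local statement that already contains Corollary \ref{coro coarse estimate on mean curvature} (and is in the spirit of Lemma \ref{lem relation between H and nu}). What the paper's route buys in exchange is the additional representation $\xi=\widetilde{\mathbf{H}}\left(|\nabla u_\infty|^2dxdt+\mu\right)$ with $\widetilde{\mathbf{H}}=\partial_tu_\infty\nabla u_\infty/|\nabla u_\infty|^2$ Lebesgue-a.e., which your decomposition does not directly produce but which is not needed for the statement of the lemma.
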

\begin{proof}
By Cauchy-Schwarz inequality,
\begin{equation}\label{control of mean cuvature 0}
  \int_{Q_1}|\partial_tu_i||\nabla u_i|\leq \left(\int_{Q_1}|\partial_tu_i|^2\right)^{1/2}\left(\int_{Q_1}|\nabla u_i|^2\right)^{1/2}
\end{equation}
are bounded as $i\to+\infty$. Therefore we may assume
\[\partial_tu_i\nabla u_i dxdt \rightharpoonup \xi \quad \mbox{weakly as vector valued Radon measures},\]
where $\xi$ is an $\R^n$-valued Radon measure.

Take the Radon-Nikodym decomposition of $\xi$ with respect to the  Lebesgue measure, $\xi=\xi^a+\xi^s$, where $\xi^a$ is the absolute continuous part, and $\xi^s$ is the singular part.

By Point (3) in  Theorem \ref{thm energy concentration set},
\[ \xi= \partial_tu_\infty \nabla u_\infty dxdt \quad \mbox{outside} ~~ \Sigma^\ast.\]
In view of Point (4) in  Theorem \ref{thm energy concentration set}, this is just the absolutely continuous part $\xi^a$. In other words,
\begin{equation}\label{absolute part of mean curvature measure}
  \xi^a=\partial_tu_\infty \nabla u_\infty dxdt .
\end{equation}

On the other hand, we can also define
\[ \mathbf{H}_i:=
\begin{cases}
   \frac{\partial_tu_i \nabla u_i}{|\nabla u_i|^2}, & \mbox{if } |\nabla u_i|\neq 0 \\
  0, & \mbox{otherwise}.
\end{cases}
\]
It satisfies
\[ \int_{Q_1}|\mathbf{H}_i|^2 |\nabla u_i|^2 dxdt\leq \int_{Q_1}|\partial_tu_i|^2dxdt.\]
By Hutchinson \cite{Hutchinson}, we may assume $(\mathbf{H}_i,|\nabla u_i|^2dxdt)$ converges to $(\widetilde{\mathbf{H}},|\nabla u_\infty|^2dxdt+\mu)$ weakly as measure-functions pairs.
By Fatou lemma,
\begin{equation}\label{application of Fatou lemma}
 \int_{Q_1}|\widetilde{\mathbf{H}}|^2|\nabla u_\infty|^2dxdt+\int_{Q_1}|\widetilde{\mathbf{H}}|^2d\mu<+\infty.
\end{equation}

Since $\xi$ is the weak limit of $\partial_tu_i\nabla u_idxdt$, we have
\begin{equation}\label{representation of mean curvature measure}
  \xi=\widetilde{\mathbf{H}}\left(|\nabla u_\infty|^2dxdt+\mu\right).
\end{equation}
By  \eqref{absolute part of mean curvature measure},
\[ \widetilde{\mathbf{H}}=\begin{cases}
   \frac{\partial_tu_\infty \nabla u_\infty}{|\nabla u_\infty|^2}, & \mbox{if } |\nabla u_\infty|\neq 0 \\
  0, & \mbox{otherwise},
\end{cases}
\]
a.e. with respect to the Lebesgue measure in $Q_1$. (In fact, this holds everywhere in $Q_1\setminus \Sigma^\ast$.) Hence in view of \eqref{absolute part of mean curvature measure}, we get
\[\xi^s=\widetilde{\mathbf{H}}d\mu.\]
In particular, $\widetilde{\mathbf{H}}$  is the Radon-Nikodym derivative $d\xi^s/d\mu$.
The proof is complete by defining $\mathbf{H}:=m\widetilde{\mathbf{H}}$.
\end{proof}
Similar to \eqref{control of mean cuvature 0}, we obtain
\begin{coro}\label{coro coarse estimate on mean curvature}
  For each $Q_r(x,t)\subset Q_1$,
  \[ \frac{1}{m^2}\int_{Q_r(x,t)}|\mathbf{H}|^2d\mu \leq  \nu(Q_r(x,t))\mu(Q_r(x,t)).\]
\end{coro}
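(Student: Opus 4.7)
The corollary is a localized Cauchy--Schwarz bound for the defect measures, and the plan follows the pattern of \eqref{control of mean cuvature 0}, but carried out on the cylinder $Q_r(x,t)$ and then passed to the limit $i\to\infty$. First I would fix a cutoff $\eta\in C_0^\infty(Q_1)$ with $\eta\equiv 1$ on $Q_r(x,t)$ and supported in a slightly enlarged cylinder, and write the pointwise Cauchy--Schwarz bound
\[
\Big|\int (\partial_t u_i)(\mathbf{a}\cdot\nabla u_i)\,\eta^2\,dxdt\Big|
 \leq \Big(\int|\partial_t u_i|^2\eta^2\Big)^{1/2}\Big(\int|\mathbf{a}|^2|\nabla u_i|^2\eta^2\Big)^{1/2}
\]
for every smooth vector-valued test field $\mathbf{a}$. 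Using a scalar test field would only capture $|\mathbf H|$ in one coordinate direction, so the vector variant is essential in order to extract $|\mathbf H|$ inside the integral.

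Next I would pass to the limit $i\to\infty$. By Lemma \ref{lem decomposition of mean curvature} the left-hand side converges to $\int\mathbf{a}\cdot\partial_tu_\infty\,\nabla u_\infty\,\eta^2+\frac{1}{m}\int\eta^2\,\mathbf{a}\cdot\mathbf{H}\,d\mu$, while the two factors on the right converge exactly, by the definitions of $\nu$ and $\mu$, to $\int|\partial_tu_\infty|^2\eta^2+\int\eta^2 d\nu$ and $\int|\mathbf{a}|^2|\nabla u_\infty|^2\eta^2+\int|\mathbf{a}|^2\eta^2 d\mu$. A standard density argument then lets one replace the smooth $\mathbf{a}$ by $\mathbf{H}/|\mathbf{H}|$ (extended by zero on $\{\mathbf{H}=0\}$), turning the left side into $\frac{1}{m}\int\eta^2|\mathbf{H}|\,d\mu$ plus a lower-order absolutely continuous piece. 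Finally one lets $\eta\nearrow\chi_{Q_r(x,t)}$, choosing $r$ outside the at-most-countable set of radii where $\mu$ or $\nu$ charge $\partial Q_r$.

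To produce the $L^2(d\mu)$ norm on the left-hand side rather than just an $L^1(d\mu)$ norm, the cleanest route is to start from the sharper pointwise estimate $|\mathbf{H}_i|^2|\nabla u_i|^2\leq|\partial_tu_i|^2$ that is already used in the proof of Lemma \ref{lem decomposition of mean curvature}, integrate it against $\eta^2$, and apply Hutchinson's lower semicontinuity for measure--function pairs to the convergent pair $(\mathbf{H}_i,|\nabla u_i|^2dxdt)\rightharpoonup(\widetilde{\mathbf{H}},|\nabla u_\infty|^2dxdt+\mu)$. This yields
\[
\int\eta^2|\widetilde{\mathbf{H}}|^2\,d\bigl(|\nabla u_\infty|^2dxdt+\mu\bigr)
 \leq \int|\partial_tu_\infty|^2\eta^2+\int\eta^2\,d\nu,
\]
so in particular $\frac{1}{m^2}\int_{Q_r}|\mathbf{H}|^2\,d\mu$ is controlled by the right-hand $\nu$-integral. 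Combining this with one additional Cauchy--Schwarz on $L^2(d\mu)$ of the form $\bigl(\int_{Q_r}|\mathbf{H}|\,d\mu\bigr)^2\leq \mu(Q_r)\int_{Q_r}|\mathbf{H}|^2\,d\mu$, and tracking the absolutely continuous contributions in $Q_r$, will give a bound of the desired product form $\nu(Q_r)\mu(Q_r)$.

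The main obstacle in the plan is cleanly isolating the singular contribution $\mathbf{H}\,d\mu$ from the absolutely continuous contribution $\partial_tu_\infty\nabla u_\infty\,dxdt$ inside the limiting vector-valued measure $\xi$, uniformly in the cutoff $\eta$; this is precisely the point where the Hutchinson measure--function pair convergence used in Lemma \ref{lem decomposition of mean curvature} becomes indispensable. The secondary technical nuisance is the handling of boundary behavior of $\eta$ at $\partial Q_r$, which is taken care of by restricting attention to radii $r$ where $\mu$ and $\nu$ do not charge the boundary.
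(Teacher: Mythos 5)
Your overall strategy (Cauchy--Schwarz on the approximating sequence, then pass to the limit) is the same as the paper's, whose entire proof is the remark that one repeats \eqref{control of mean cuvature 0} on $Q_r(x,t)$: from $\int_{Q_r}|\partial_tu_i||\nabla u_i|\leq\bigl(\int_{Q_r}|\partial_tu_i|^2\bigr)^{1/2}\bigl(\int_{Q_r}|\nabla u_i|^2\bigr)^{1/2}$ and lower semicontinuity of the total variation under the weak convergence $\partial_tu_i\nabla u_i\,dxdt\rightharpoonup\xi=\partial_tu_\infty\nabla u_\infty\,dxdt+\frac{1}{m}\mathbf{H}\,d\mu$ one gets $\frac{1}{m}\int_{Q_r}|\mathbf{H}|\,d\mu\leq|\xi|(Q_r)\leq\nu(Q_r)^{1/2}\mu(Q_r)^{1/2}$ (up to the absolutely continuous contributions), i.e.\ the product bound with $\bigl(\int_{Q_r}|\mathbf{H}|\,d\mu\bigr)^2$ on the left. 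Your test-field and cutoff apparatus in the first two paragraphs is not needed for this; the scalar Cauchy--Schwarz already controls the total variation of the limiting vector measure, from which the singular part $\frac{1}{m}\mathbf{H}\,d\mu$ is extracted by Lemma \ref{lem decomposition of mean curvature}.

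The genuine gap is in your last step. The Hutchinson lower-semicontinuity route correctly yields $\frac{1}{m^2}\int_{Q_r}\eta^2|\mathbf{H}|^2\,d\mu\leq\int\eta^2\,d\nu+\int|\partial_tu_\infty|^2\eta^2$ --- which is essentially the sharper Lemma \ref{lem relation between H and nu} that the paper defers to Section \ref{sec rectifiability} --- but you cannot ``combine'' this with the additional Cauchy--Schwarz $\bigl(\int_{Q_r}|\mathbf{H}|\,d\mu\bigr)^2\leq\mu(Q_r)\int_{Q_r}|\mathbf{H}|^2\,d\mu$ to reach the product form for $\int_{Q_r}|\mathbf{H}|^2\,d\mu$: that inequality runs in the wrong direction, bounding the first-power quantity by the second-power one. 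What it does give is $\bigl(\frac{1}{m}\int_{Q_r}|\mathbf{H}|\,d\mu\bigr)^2\leq\nu(Q_r)\mu(Q_r)$ modulo the absolutely continuous terms, which is exactly what the paper's one-line argument produces and is how the corollary's left-hand side should be read (the placement of the square in the displayed statement is best understood as $\bigl(\int|\mathbf{H}|\,d\mu\bigr)^2$; the literal $\int|\mathbf{H}|^2\,d\mu\leq\nu(Q_r)\mu(Q_r)$ does not follow from either your argument or the paper's, and is not what is used later --- only the qualitative consequence that $\mathbf{H}\,d\mu$ vanishes or concentrates wherever $\nu$ does). So: your intermediate estimates are sound and in fact prove the stronger Lemma \ref{lem relation between H and nu}, but the claimed final synthesis does not close, and the statement you should be proving is the squared-integral version, for which the short localized Cauchy--Schwarz suffices.
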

A more precise estimate will be given in Lemma \ref{lem relation between H and nu} below.

Passing to the limit in \eqref{energy inequality I} gives the limiting energy inequality:
\begin{eqnarray}\label{energy identity limit}
0&\leq & \int_{Q_1} \left[\left(\frac{|\nabla u_\infty|^2}{2}-\frac{|u_\infty|^{p+1}}{p+1}\right)\partial_t\eta^2-|\partial_tu_\infty|^2\eta^2-2 \eta\partial_tu_\infty\nabla u_\infty\cdot\nabla\eta \right] \nonumber\\
&+& \frac{1}{m}\int_{Q_1}\partial_t\eta^2 d\mu-\int_{Q_1}\eta^2d\nu-\frac{1}{m}\int_{Q_1}\nabla\eta^2\cdot \mathbf{H}d\mu.
\end{eqnarray}
Passing to the limit in \eqref{stationary condition I} gives the limiting stationary condition: for any $Y\in C_0^\infty(Q_1,\R^n)$,
\begin{eqnarray}\label{stationary condition limit}
  0 &=&  \int_{Q_1}\left[\left(\frac{|\nabla u_\infty|^2}{2}-\frac{|u_\infty|^{p+1}}{p+1}\right)\mbox{div}Y-DY(\nabla u_\infty,\nabla u_\infty)+\partial_tu_\infty \nabla u_\infty\cdot Y \right] \nonumber \\
  &+&\frac{1}{m}\int_{Q_1}\left[\left(I-m\mathcal{T}\right)\cdot DY +\mathbf{H}\cdot Y\right] d\mu.
    \end{eqnarray}

\subsection{Limiting monotonicity formula} In this section we establish the monotonicity formula for the limit $(u_\infty,\mu)$. For this purpose, we need first to define the time slices of $\mu$.
\begin{lem}\label{lem disintegration of defect measure}
There exists a family of Radon measures$\mu_t$ on $B_1$ (defined for a.e. $t\in(-1,1)$) such that
 \begin{equation}\label{disintegration}
   \mu=\mu_t  dt.
 \end{equation}
\end{lem}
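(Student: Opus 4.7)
The plan is to deduce the disintegration \eqref{disintegration} from the classical disintegration theorem for Radon measures. The latter produces the family $\{\mu_t\}$ as soon as one shows that the push-forward $\pi_\ast\mu$ of $\mu$ under the time projection $\pi:Q_1 \to (-1,1)$ is absolutely continuous with respect to Lebesgue measure $dt$. The substantive content of the lemma is thus the absolute continuity $\pi_\ast \mu \ll dt$.

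First I would establish a uniform $BV$ bound for localized parabolic energies along the time direction, following the approach of Lin--Wang \cite{Lin-Wang1}. For $\phi \in C_0^\infty(B_1; [0,\infty))$, set
\[
G_i^\phi(t) := \int_{B_1} \phi(x) \left[\frac{|\nabla u_i(x,t)|^2}{2}-\frac{|u_i(x,t)|^{p+1}}{p+1}\right]dx.
\]
Inserting $\eta^2(x,t) = \phi(x)\chi^2(t)$ with $\chi \in C_0^\infty(-1,1)$, $\chi \geq 0$, into the energy inequality \eqref{energy inequality I} gives, in the distributional sense on $(-1,1)$,
\[
(G_i^\phi)'(t) \leq - \int_{B_1} \phi\,|\partial_t u_i|^2\,dx - \int_{B_1} \partial_t u_i\, \nabla u_i \cdot \nabla \phi\, dx.
\]
The right-hand side is uniformly bounded in $L^1(-1,1)$ by Cauchy--Schwarz and \eqref{energy bound I}. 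Since $G_i^\phi \in L^1(-1,1)$ uniformly, a Chebyshev-type argument produces $t_0(i) \in [-1/2,1/2]$ at which $|G_i^\phi(t_0(i))|$ is uniformly bounded, and combined with the above one-sided distributional inequality this yields a uniform $BV$ bound for $G_i^\phi$ on every compact subinterval of $(-1,1)$.

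Next, a diagonal application of Helly's selection theorem over a countable dense family $\{\phi_k\}\subset C_0^\infty(B_1;[0,\infty))$ extracts a subsequence, still indexed by $i$, along which $G_i^{\phi_k}(t) \to g^{\phi_k}(t)$ pointwise a.e.\ on $(-1,1)$ for every $k$, with each $g^{\phi_k}\in BV_{\mathrm{loc}}(-1,1)$. Testing the weak convergences $|\nabla u_i|^2\,dxdt \rightharpoonup |\nabla u_\infty|^2\,dxdt + \mu$ and $|u_i|^{p+1}\,dxdt \rightharpoonup |u_\infty|^{p+1}\,dxdt + \mu$ (recall the energy partition $\mu_1 = \mu_2 = \mu$) against $\phi_k(x)\chi(t)$ and using the identity $\tfrac{1}{2}-\tfrac{1}{p+1} = \tfrac{1}{m}$, we obtain for every $\chi\in C_0(-1,1)$
\[
\frac{1}{m} \int_{Q_1} \phi_k\, \chi\, d\mu = \int_{-1}^1 \chi(t) [g^{\phi_k}(t) - G_\infty^{\phi_k}(t)]\, dt,
\]
where $G_\infty^{\phi_k}(t) := \int_{B_1}\phi_k[\tfrac{|\nabla u_\infty|^2}{2}-\tfrac{|u_\infty|^{p+1}}{p+1}]dx \in L^1(-1,1)$. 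This shows that the Radon measure $\pi_\ast(\phi_k\, d\mu)$ on $(-1,1)$ is absolutely continuous with respect to $dt$. Finally, taking $\phi_k \nearrow \mathbf{1}_{B_1}$ and invoking monotone convergence gives $\pi_\ast\mu \ll dt$, and the classical disintegration theorem completes the proof.

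The main difficulty I anticipate is extracting a uniform $BV$ bound for $G_i^\phi$ from only the one-sided inequality \eqref{energy inequality I}, since the standard two-sided energy identity would require testing \eqref{eqn} against $\phi\,\partial_t u_i$, which is not an admissible test function at the level of regularity of suitable weak solutions. The Chebyshev-type anchor, combined with the monotonicity structure of the dominant term on the right-hand side, bypasses this issue without needing the full identity.
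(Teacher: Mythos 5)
Your proof is correct and rests on essentially the same mechanism as the paper's: the localized energy inequality forces the time marginal of $\mu$ (localized in space) to be of bounded variation in $t$, hence absolutely continuous with respect to $dt$, after which the classical disintegration theorem yields \eqref{disintegration}. The only difference is one of bookkeeping — the paper reads off the BV property directly from the already-established limiting energy inequality \eqref{energy identity limit}, whereas you derive uniform BV bounds at the level of the sequence $u_i$ and pass to the limit via Helly's theorem, which is a somewhat longer but equally valid justification of the same key claim.
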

\begin{proof}
Denote the projection onto the time axis by $\pi$.
For each $r<1$, let $\mu^r$ be the restriction of $\mu$ to $B_r\times(-1,1)$.

Take a function $\eta_r\in C_0^\infty(B_1)$, $0\leq \eta_r \leq 1$ and $\eta_r\equiv 1$ in $B_r$.
The limiting energy inequality \eqref{energy identity limit} implies that
\[ \int_{B_1}\left(\frac{1}{2}|\nabla u_\infty(x,t)|^2-\frac{1}{p+1}|u_\infty(x,t)|^{p+1}\right)\eta_r(x)^2dx+\frac{1}{m}\pi_\sharp\left(\eta_r \mu\right)\]
is a BV function on $(-1,1)$. Hence there exists a function $\overline{e}_r(t)\in L^1(-1,1)$ such that
\[\pi_\sharp\left(\eta_r \mu\right)=\overline{e}_r(t)dt,\]

Because
\[0\leq \pi_\sharp\mu^r\leq \pi_\sharp\left(\eta_r \mu\right),\]
we find another function $e_r(t)\in L^1(-1,1)$ such that
\[\pi_\sharp\mu^r=e^r(t)dt.\]
By the disintegration theorem, there exists a family of probability measure $\overline{\mu}_t$ on $(-1,1)$ such that
\[ \mu^r=  \overline{\mu}_t  d\left(\pi_\sharp\mu^r\right).\]
By defining
\[\mu_t:=e_r(t)\overline{\mu}_t,\]
we get \eqref{disintegration}.
\end{proof}
\begin{rmk}
Unlike harmonic map heat flows, because the energy density for \eqref{eqn} is sign-changing, we do not know if
 \[ \left[\frac{1}{2}|\nabla u_\infty(x,t)|^2-\frac{1}{p+1}|u_\infty(x,t)|^{p+1}\right]dx+\mu_t\]
 is a well-defined measure \emph{for all $t$}. Similarly, we also do not know if there is an estimate on the Hausdorff measure of $\Sigma^\ast_t$.
\end{rmk}

Define $\Theta_s(x,t;u_\infty,\mu)$ to be
\begin{eqnarray*}
 &&s^{\frac{p+1}{p-1}}\int_{B_1}\left[\frac{|\nabla u_\infty(y,t-s)|^2 }{2}-\frac{|u_\infty(y,t-s)|^{p+1}}{p+1}\right]G(y-x,s)\psi(y)^2 dy \\
   &+&\frac{1}{2(p-1)}s^{\frac{2}{p-1}}\int_{B_1}u_\infty(y,t-s)^2 G(y-x,s)\psi(y)^2 dy \\
   &+&\frac{1}{m}s^{\frac{p+1}{p-1}}\int_{\R^n} G(y-x,s)\psi(y)^2 d\mu_{t-s}(y)+Ce^{-cs^{-1}}.
\end{eqnarray*}
As in Remark \ref{rmk on monotonicity quantity}, rigourously we should integrate one more time in $s$.

Passing to the limit in the monotonicity formula for $u_i$, we obtain
\begin{prop}[Limiting, localized monotonicity formula]\label{prop monotoncity formula II}
For any $(x,t)\in Q_{1/2}$ and a.a. $0<s_1<s_2<1/4$,
\begin{eqnarray*}
 && \Theta_{s_2}(x,t;u_\infty,\mu)-\Theta_{s_1}(x,t;u_\infty,\mu)\\
  &\geq & \int_{s_1 }^{s_2} \tau^{\frac{2}{p-1}-1}\int_{B_1}\Big|(t-\tau)\partial_tu_\infty(y,t-\tau)+\frac{u_\infty(y,t-\tau)}{p-1}+\frac{y}{2}\cdot\nabla u_\infty(y,t-\tau)\Big|^2\\
&&  \quad \quad \quad \quad  \quad \quad  \times G(y-x, \tau)\psi(y)^2 dyd\tau  \\
&+& \int_{s_1 }^{s_2}\int_{B_1}\tau^{\frac{2}{p-1}-1}(t-\tau)^2 G(y-x, \tau)\psi(y)^2 d\nu(y,\tau)\\
&+&\int_{s_1 }^{s_2}\int_{B_1}\tau^{\frac{2}{p-1}-1}\frac{|y|^2}{4} G(y-x, \tau)\psi(y)^2 d\mu(y,\tau)\\
&+&\frac{1}{m}\int_{s_1 }^{s_2}\int_{B_1}\tau^{\frac{2}{p-1}-1}(t-\tau) y\cdot\mathbf{ H}(y,\tau)G(y-x, \tau)\psi(y)^2d\mu(y,\tau).
\end{eqnarray*}
\end{prop}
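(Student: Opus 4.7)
My plan is to take the monotonicity formula of Proposition~\ref{prop monotoncity formula} applied to each $u_i$ and pass to the weak limit $i\to\infty$. As pointed out in Remark~\ref{rmk on monotonicity quantity}, the definition of $\Theta_s(x,t;u_i)$ integrates over the single time-slice $\{t-s\}$, so weak spacetime convergence does not directly yield convergence of $\Theta_s(x,t;u_i)$; moreover the defect measure $\mu$ has no intrinsic slice-wise meaning. To circumvent this I would work throughout with the $\tau$-averages
$$\overline{\Theta}_s(x,t;u):=s^{-1}\int_s^{2s}\Theta_\tau(x,t;u)\,d\tau,\qquad \overline{D}_{s_1,s_2}(x,t;u):=\sigma_1^{-1}\sigma_2^{-1}\int_{\sigma_1}^{2\sigma_1}\!\int_{\sigma_2}^{2\sigma_2}\! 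I_{s_1,s_2}(x,t;u)\,ds_2ds_1,$$
where $I_{s_1,s_2}$ denotes the dissipation integral of Proposition~\ref{prop monotoncity formula}; both are genuine spacetime integrals against smooth, compactly supported test functions on $Q_1$, and recovery of the pointwise statement at a.e.\ pair $(s_1,s_2)$ is a routine Lebesgue-differentiation step at the end.

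\textbf{Convergence of the left-hand side.} In $\overline{\Theta}_s(x,t;u_i)$ the three quadratic pieces $|\nabla u_i|^2$, $|u_i|^{p+1}$, $u_i^2$ are tested against smooth weights of the form $s^{-1}\mathbf{1}_{[s,2s]}(t-r)(t-r)^{a}G(y-x,t-r)\psi(y)^2$ for appropriate exponents $a$. The weak convergences defining $\mu$, combined with the Energy Partition Lemma ($\mu_1=\mu_2=:\mu$), give
$$\tfrac12|\nabla u_i|^2-\tfrac1{p+1}|u_i|^{p+1}\;\rightharpoonup\;\tfrac12|\nabla u_\infty|^2-\tfrac1{p+1}|u_\infty|^{p+1}+\tfrac1m\,\mu,$$
since $\tfrac12-\tfrac1{p+1}=\tfrac1m$. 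The $u_i^2$-piece passes by strong $L^2_{\mathrm{loc}}$ convergence of $u_i$, and the constant $Ce^{-cs^{-1}}$ is unchanged. Summing these contributions identifies $\lim_i\overline{\Theta}_s(x,t;u_i)$ with $\overline{\Theta}_s(x,t;u_\infty,\mu)$, the corresponding average of the quantity stated in the proposition (via the disintegration $\mu=\mu_t\,dt$ of Lemma~\ref{lem disintegration of defect measure}).

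\textbf{Convergence of the right-hand side.} I would expand the square
$$\Big|(t-\tau)\partial_tu_i+\tfrac{u_i}{p-1}+\tfrac{y}{2}\cdot\nabla u_i\Big|^2$$
into six pieces and take the limit in each. The pure squares $(t-\tau)^2|\partial_tu_i|^2$ and $\tfrac{|y|^2}{4}|\nabla u_i|^2$ produce, by the definitions of $\nu$ and $\mu$, precisely the two singular summands $(t-\tau)^2\,d\nu$ and $\tfrac{|y|^2}{4}\,d\mu$ in the stated dissipation. The cross term $(t-\tau)\partial_tu_i\,(y\cdot\nabla u_i)$ is the one in which $\mathbf{H}$ enters: Lemma~\ref{lem decomposition of mean curvature} gives $\partial_tu_i\nabla u_i\rightharpoonup\partial_tu_\infty\nabla u_\infty+\tfrac1m\mathbf{H}\,d\mu$, yielding exactly the $\tfrac1m(t-\tau)\,y\cdot\mathbf{H}\,d\mu$ summand. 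The remaining cross terms $u_i\partial_tu_i$ and $u_i\nabla u_i$, together with the $u_i^2/(p-1)^2$ piece, pass to the limit by strong-weak pairing: $u_i\to u_\infty$ strongly in $L^q_{\mathrm{loc}}(Q_1)$ for every $q<p+1$, while $\partial_tu_i\rightharpoonup\partial_tu_\infty$ and $\nabla u_i\rightharpoonup\nabla u_\infty$ weakly in $L^2$; none of these contributes any singular part.

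\textbf{Conclusion and main obstacle.} Combining the two limits in the averaged inequality $\overline{\Theta}_{\sigma_2}(\cdot;u_i)-\overline{\Theta}_{\sigma_1}(\cdot;u_i)\ge\overline{D}_{\sigma_1,\sigma_2}(\cdot;u_i)$ yields the averaged form of the desired inequality for $(u_\infty,\mu)$, and Lebesgue differentiation in the two $\sigma$-variables then delivers Proposition~\ref{prop monotoncity formula II} for a.e.\ $0<s_1<s_2<1/4$, as stated. The principal obstacle is the cross term $\partial_tu_i\cdot\nabla u_i$: neither factor converges strongly, so its weak limit is not the product of the weak limits, and the defect is precisely the $\tfrac1m\mathbf{H}\,d\mu$ isolated in Lemma~\ref{lem decomposition of mean curvature}. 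Without that identification this contribution would remain an uncontrolled vector-valued Radon measure and the monotonicity formula could not be closed in the limit.
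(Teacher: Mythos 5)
Your overall strategy coincides with the paper's (the paper's ``proof'' is the single sentence ``passing to the limit in the monotonicity formula for $u_i$''), and most of the details you supply are right: the $\tau$-averaging device is exactly the fix suggested by Remark~\ref{rmk on monotonicity quantity} for the slicing problem; the energy-partition lemma together with $\tfrac12-\tfrac1{p+1}=\tfrac1m$ correctly identifies the limit of the left-hand side via the disintegration $\mu=\mu_t\,dt$; the defect of $(t-\tau)^2|\partial_tu_i|^2$ is $(t-\tau)^2\,d\nu$; the cross term $(t-\tau)\,y\cdot(\partial_tu_i\nabla u_i)$ contributes $\tfrac1m(t-\tau)\,y\cdot\mathbf{H}\,d\mu$ by Lemma~\ref{lem decomposition of mean curvature}; and the remaining cross terms pass by strong--weak pairing.

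There is, however, a genuine gap in your treatment of the gradient square. You write that ``the pure square $\tfrac{|y|^2}{4}|\nabla u_i|^2$ produces the summand $\tfrac{|y|^2}{4}\,d\mu$'', but the term being squared is $\tfrac{y}{2}\cdot\nabla u_i$, whose square is the quadratic form $\tfrac14\,(y\otimes y):(\nabla u_i\otimes\nabla u_i)$, not $\tfrac{|y|^2}{4}|\nabla u_i|^2$. Its weak limit is governed by the matrix $\mathcal{T}$ introduced after the definition of the defect measures, and the singular part is $\tfrac14\,(y\cdot\mathcal{T}y)\,d\mu$. Since $\mathcal{T}$ is nonnegative with $\operatorname{tr}\mathcal{T}=1$ $\mu$-a.e., one has $y\cdot\mathcal{T}y\le|y|^2$, so the quantity your argument actually produces on the right-hand side of the ``$\ge$'' is \emph{smaller} than the stated $\tfrac{|y|^2}{4}\,d\mu$; replacing it by the larger term is not justified and cannot be obtained from Cauchy--Schwarz, which goes the wrong way. (This discrepancy is present in the statement itself; note also that if the dissipation really contained $\int\tfrac{|y|^2}{4}\,d\mu$, constancy of $\Theta_s$ would force $\mu$ to be supported on the time axis, contradicting the mere parabolic self-similarity asserted in Lemma~\ref{lem backwardly self-similar}.) Your argument therefore proves the proposition with $\tfrac14\,(y\cdot\mathcal{T}y)\,d\mu$ in place of $\tfrac{|y|^2}{4}\,d\mu$, which is the version that the limit actually yields and that suffices for the later applications; as written, the step identifying the gradient defect is incorrect and should either be corrected to the $\mathcal{T}$-contracted form or supplemented by an argument you do not provide.
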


\section{Tangent flow analysis, I}\label{sec tangent flow}
\setcounter{equation}{0}

In this section we perform the tangent flow analysis for $(u_\infty,\mu)$.

For any $(x,t)\in \Sigma^\ast$ and  a sequence $\lambda_i\to 0$, define the blowing up sequence
\[
  \left\{\begin{aligned}
&u_\infty^{\lambda_i}(y,s):=\lambda_i^{\frac{2}{p-1}}u_\infty(x+\lambda_i y, t+\lambda_i^2s),\\
&\mu^{\lambda_i}(A):=\lambda_i^{m-2-n}\mu(\lambda_i A), \quad \nu^{\lambda_i}(A):=\lambda_i^{m-n}\nu(\lambda_i A), \quad \mbox{for any}~~ A\subset \R^n\times\R.
\end{aligned}\right.
\]
By scaling  \eqref{Morrey bound for the limit}, we see that with the same constant $M$ in Corollary \ref{coro Morrey}, for any $Q_R\subset\R^n\times\R$, we have
\begin{equation}\label{Morrey scaled}
\left\{\begin{aligned}
   & \mu^{\lambda_i}(Q_R)+\int_{Q_R}\left(|\nabla u^{\lambda_i}_\infty|^2+|u_\infty^{\lambda_i}|^{p+1}\right)\leq M R^{n+2-m},  \\
  & \nu^{\lambda_i}(Q_R) +\int_{Q_R}|\partial_t u_\infty^{\lambda_i}|^2\leq M R^{n-m}.
\end{aligned}\right.
\end{equation}
Therefore there exists a subsequence (still denoted by $\lambda_i$) such that $u_\infty^{\lambda_i}\rightharpoonup u_\infty^0$ weakly in $ L^2_tH^1_{x,loc}(\R^n\times\R)\cap L^{p+1}_{loc}(\R^n\times\R)$, and
\[
 \left\{\begin{aligned}
& |\nabla u_\infty^{\lambda_i}|^2dxdt+\mu^{\lambda_i}\rightharpoonup |\nabla u_\infty^0|^2dxdt+\mu^0,\\
& |\partial_tu_\infty^{\lambda_i}|^2dxdt+\nu^{\lambda_i}\rightharpoonup |\partial_t u_\infty^0|^2dxdt+\nu^0
\end{aligned}\right.
 \]
 weakly as Radon measures on any compact set of $\R^n\times\R$.

\begin{rmk}\label{rmk tangent flow at special pt}
By Lemma \ref{lem vanishing of Morrey}, we can avoid a set of zero $\mathcal{P}^{n+2-m}$ measure so that
\begin{equation}\label{vanishing Morrey condition}
  \lim_{r\to0} \left[r^{m-2-n}\int_{Q_r(x,t)}\left(|\nabla u_\infty|^2+|u_\infty|^{p+1}\right)+ r^{m-n}\int_{Q_r(x,t)}|\partial_tu_\infty|^2\right]=0.
\end{equation}
Under this assumption, $u_\infty^{\lambda_i}$ converges to $0$ strongly in $L^2_tH^1_{x,loc}(\R^n\times\R)\cap L^{p+1}_{loc}(\R^n\times\R)$, $\partial_tu_\infty^{\lambda_i}$ converges to $0$ strongly in $L^2_{loc}(\R^n\times\R)$, so
\[
  \mu^{\lambda_i}\rightharpoonup  \mu^0, \quad  \nu^{\lambda_i}\rightharpoonup  \nu^0.
 \]
This special choice will be used in Section \ref{sec Marstrand theorem}.
\end{rmk}

Because $\mu$ is the defect measure coming from $u_i$, we can take a further subsequence of $u_i$  so that, by defining
\[ u_i^{\lambda_i}(y,s):=\lambda_i^{\frac{2}{p-1}}u_i\left(x+\lambda_i y, t+\lambda_i^2s\right),\]
we have
\[
\left\{\begin{aligned}
&|\nabla u_i^{\lambda_i}|^2dyds\rightharpoonup |\nabla u_\infty^0|^2dxdt+\mu^0,\\
&|\partial_t u_i^{\lambda_i}|^2dyds\rightharpoonup |\partial_tu_\infty^0|^2dxdt+\nu^0.
\end{aligned}\right.
\]
As a consequence, all results in Section \ref{sec defect measure} hold for $u_\infty^0$, $\mu^0$ and $\nu^0$. In particular,
\begin{enumerate}
  \item  by Lemma \ref{lem disintegration of defect measure}, there exists a family of Radon measures $\mu^0_t$ on $\R^n$ (for a.a. $t\in\R$) such that
\begin{equation}\label{disintegration 2}
  \mu^0=\mu^0_t  dt;
\end{equation}
  \item  there exists an $\mathbf{H}^0\in L^2_{loc}(\R^n\times\R, d\mu^0)$ such that
  \[\partial_tu_i^{\lambda_i}\nabla u_i^{\lambda_i}dxdt\rightharpoonup \partial_tu_\infty^0\nabla u_\infty^0dxdt+\frac{1}{m}\mathbf{H}^0d\mu^0;\]
  \item  for any $(x,t)\in\R^n\times\R$ and $s>0$, we  define $\Theta_s(x,t;u_\infty^0,\mu^0)$ to be
\begin{eqnarray*}
 &&s^{\frac{p+1}{p-1}}\int_{\R^n}\left[\frac{|\nabla u_\infty^0(y,t-s)|^2 }{2}-\frac{|u_\infty^0(y,t-s)|^{p+1}}{p+1}\right]G(y-x,s)dy \\
   &+&\frac{1}{2(p-1)}s^{\frac{2}{p-1}}\int_{\R^n}u_\infty^0(y,t-s)^2 G(y-x,s)dy +\frac{1}{m}s^{\frac{p+1}{p-1}}\int_{\R^n} G(y-x,s)d\mu_{t-s}^0(y),
\end{eqnarray*}
which is still non-decreasing in $s>0$.
\end{enumerate}
For any $(x,t)\in\R^n\times\R$, we still define
\[\Theta(x,t;u_\infty^0,\mu^0):=\lim_{s\to0}\Theta_s(x,t;u_\infty^0,\mu^0).\]

By the scaling invariance of $\Theta$, we obtain
\begin{equation}\label{constancy of monotonocity quantity}
  \Theta_s(0,0;u_\infty^0,\mu^0)=\Theta(x,t; u_\infty,\mu), \quad \forall s>0.
\end{equation}
Then an application of Proposition \ref{prop monotoncity formula II} gives
\begin{lem}\label{lem backwardly self-similar}
\begin{enumerate}
  \item The function $u_\infty^0$ is backwardly self-similar in the sense that
  \[ u_\infty^0(\lambda x, \lambda^2 t)=\lambda^{-\frac{2}{p-1}} u_\infty(x,t), \quad \forall (x,t)\in\R^n\times\R^-, ~~ \lambda>0.\]
  \item
The measure $\mu^0$ is backwardly self-similar in the sense that
\[ \mu^0(\lambda A)=\lambda^{n+2-m}\mu^0(A), \quad \forall\lambda>0, ~~~A\subset \R^n\times\R^-.\]
\end{enumerate}
\end{lem}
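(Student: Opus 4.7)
The driving input is the constancy \eqref{constancy of monotonocity quantity}: since $\Theta_s(0,0;u_\infty^0,\mu^0)$ does not depend on $s>0$, applying the limiting monotonicity formula (Proposition~\ref{prop monotoncity formula II}) to $(u_\infty^0,\mu^0)$ at the origin on all of $\R^n\times\R$ forces every non-negative contribution on its right-hand side to vanish identically on $\R^n\times\R^-$. Both parts of the lemma are to be extracted from this rigidity.

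For part (1): at $(x,t)=(0,0)$ one has $t-\tau=-\tau$, so the vanishing of the first squared integrand in Proposition~\ref{prop monotoncity formula II} gives, after setting $s=-\tau<0$,
\[
s\,\partial_t u_\infty^0(y,s)+\frac{u_\infty^0(y,s)}{p-1}+\frac{y}{2}\cdot\nabla u_\infty^0(y,s)=0 \qquad\text{a.e. on }\R^n\times\R^-.
\]
This is the infinitesimal generator of parabolic rescaling. A direct computation shows $\tfrac{d}{d\lambda}\bigl[\lambda^{2/(p-1)}u_\infty^0(\lambda y,\lambda^2 s)\bigr]\equiv 0$ a.e. in $\lambda$ for a.e. fixed $(y,s)$, and integration in $\lambda$ yields (1).

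For part (2), my starting point is the tautological identity $\mu^{\lambda_i}(\lambda A)=\lambda^{n+2-m}\mu^{\lambda\lambda_i}(A)$, read straight off the definition of $\mu^{\lambda_i}$. Sending $\lambda_i\to 0$, the left-hand side converges to $\mu^0(\lambda A)$ while the right, along any subsequence, tends to $\lambda^{n+2-m}\tilde\mu(A)$ for some tangent Radon measure $\tilde\mu$ of $\mu$ at $(x,t)$ sharing the same density $\Theta(x,t;u_\infty,\mu)$ and the Morrey bound \eqref{Morrey scaled}. To identify $\tilde\mu=\mu^0$ I would invoke the remaining vanishing terms in Proposition~\ref{prop monotoncity formula II} --- the $(t-\tau)^2\,d\nu^0$, the $|y|^2\,d\mu^0$, and the signed $(t-\tau)\,y\cdot\mathbf{H}^0\,d\mu^0$ cross-term --- grouped, following the Lin--Wang treatment \cite{Lin-Wang1} of harmonic map flows, into a manifestly non-negative combination schematically of the form $\bigl|(t-\tau)\mathbf{H}^0+\tfrac{y}{2}\bigr|^2$ integrated against the Gaussian and $d\mu^0$. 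Its vanishing simultaneously pins down the backward cone structure of $\mathrm{spt}(\mu^0)$ and an algebraic compatibility between $\mathbf{H}^0$ and the radial position, yielding (2) after disintegrating $\mu^0=\mu^0_t\,dt$ via Lemma~\ref{lem disintegration of defect measure}. The main obstacle is precisely isolating this correct non-negative grouping, since the mean-curvature cross-term is signed and cannot be treated in isolation; this is the key subtlety in the Lin--Wang strategy that must be imported here.
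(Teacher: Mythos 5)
Your overall strategy is the paper's: the proof given there is literally the one\,-line remark that the lemma is ``an application of Proposition~\ref{prop monotoncity formula II}'', powered by the constancy \eqref{constancy of monotonocity quantity}. Your part (1) is complete and correct: the vanishing of the first squared integrand at the base point $(0,0)$ gives $s\,\partial_tu_\infty^0+\frac{u_\infty^0}{p-1}+\frac{y}{2}\cdot\nabla u_\infty^0=0$ a.e.\ on $\R^n\times\R^-$, and integrating along parabolic rays (absolute continuity along a.e.\ ray follows from $u_\infty^0\in L^2_tH^1_{x,loc}$ and $\partial_tu_\infty^0\in L^2_{loc}$) yields the self-similarity of $u_\infty^0$ on $\R^n\times\R^-$.

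Part (2) is where the gap sits, and it is twofold. First, the nonnegative grouping you appeal to does work but needs a stated input: one must know $|\mathbf{H}^0|^2\,d\mu^0\le m\,d\nu^0$ as measures (the content of Lemma~\ref{lem relation between H and nu}, which is obtained from lower semicontinuity of the measure--function pairs $(\mathbf{H}_i,|\nabla u_i|^2dxdt)$ and does not require $m$ to be an integer); with it, and $m>1$, one gets
\[
(t-\tau)^2d\nu^0+\tfrac1m(t-\tau)\,y\cdot\mathbf{H}^0\,d\mu^0+\tfrac{|y|^2}{4}d\mu^0\;\ge\;\tfrac1m\Big|(t-\tau)\mathbf{H}^0+\tfrac{y}{2}\Big|^2d\mu^0\;\ge\;0 .
\]
Second, and more seriously, the vanishing of this square only yields the \emph{infinitesimal} conditions $\mathbf{H}^0(y,s)=\frac{y}{2s}$ for $\mu^0$-a.e.\ $(y,s)$, $s<0$, together with $m\,d\nu^0=|\mathbf{H}^0|^2d\mu^0$ there. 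Converting these into the finite scaling identity $\mu^0(\lambda A)=\lambda^{n+2-m}\mu^0(A)$ is a genuine extra step: one must integrate the radial flow through the measure, which uses the limiting energy identity (the measure part of \eqref{energy identity limit}, an equality for the tangent flow since the $u_i^{\lambda_i}$ are smooth) together with the disintegration $\mu^0=\mu^0_t\,dt$ --- this is the Lin--Wang argument you cite, but your sketch stops exactly where it begins. Finally, the ``tautological identity'' $\mu^{\lambda_i}(\lambda A)=\lambda^{n+2-m}\mu^{\lambda\lambda_i}(A)$ with which you open part (2) is inert on its own: the subsequential limit of $\mu^{\lambda\lambda_i}$ need not a priori coincide with $\mu^0$, and it is only the rigidity above that identifies it, so that identity cannot serve as the starting point of the proof.
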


By \eqref{constancy of monotonocity quantity}, following the proof of Lin-Wang \cite[Lemma 8.3.3]{Ling-Wang-book}, we obtain
\begin{lem}\label{lem invariant subspace}
For any $(x,t)\in\R^n\times\R$,
\[\Theta(x,t;u_\infty^0,\mu^0)\leq \Theta(0,0;u_\infty^0,\mu^0).\]
Moreover, if the equality is attained at $(x,t)$, then $u_\infty^0$ and $\mu^0$ are translational invariant in the $(x,t)$-direction.
\end{lem}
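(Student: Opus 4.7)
The plan is to combine the limiting monotonicity formula (Proposition \ref{prop monotoncity formula II}) with the backward self-similarity of $(u_\infty^0, \mu^0)$ from Lemma \ref{lem backwardly self-similar} and the constancy identity \eqref{constancy of monotonocity quantity}. The overall structure follows the harmonic map heat flow model in Lin--Wang \cite[Lemma 8.3.3]{Ling-Wang-book}.

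For the inequality, I would exploit monotonicity: $\Theta(x,t;u_\infty^0,\mu^0) \leq \Theta_s(x,t;u_\infty^0,\mu^0)$ for every $s>0$, so it suffices to prove
\[
\lim_{s\to\infty}\Theta_s(x,t;u_\infty^0,\mu^0) \;=\; \Theta(0,0;u_\infty^0,\mu^0).
\]
To compute this limit I would perform the parabolic change of variables $y=\sqrt{s}\,z$ in each summand defining $\Theta_s$. By Lemma \ref{lem backwardly self-similar}, for $s>|t|$ one has $u_\infty^0(\sqrt{s}\,z, t-s) = s^{-\frac{1}{p-1}}u_\infty^0(z, t/s-1)$ and an analogous identity for $\mu^0$ via its scaling law. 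A direct bookkeeping of exponents shows that every factor of $s$ cancels, while the heat kernel $G(y-x,s)$ becomes $(4\pi)^{-n/2}e^{-|z-x/\sqrt{s}|^2/4}\,dz$. Since $x/\sqrt{s}\to 0$ and $t/s\to 0$ as $s\to\infty$, the limit equals $\Theta_1(0,0;u_\infty^0,\mu^0)$, which by \eqref{constancy of monotonocity quantity} is $\Theta(0,0;u_\infty^0,\mu^0)$.

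For the equality case, the assumption forces $s\mapsto \Theta_s(x,t;u_\infty^0,\mu^0)$ to be constant (on its set of definition). Proposition \ref{prop monotoncity formula II} with base point $(x,t)$ then forces every non-negative error term on the right to vanish, yielding (i) the Giga--Kohn self-similar identity for $u_\infty^0$ based at $(x,t)$, (ii) the vanishing of the $\nu^0$ and $|y-x|^2$ contributions, and (iii) the vanishing of the $\mathbf{H}^0$-term against every backward heat kernel centered at $(x,t)$. Together these imply that $(u_\infty^0,\mu^0)$ is backwardly self-similar about $(x,t)$ in addition to $(0,0)$. A function and a locally finite measure that are backwardly self-similar about two distinct parabolic base points are necessarily invariant under translation in the direction joining them: iterating the two dilations with carefully matched ratios produces translations of arbitrary amplitude, and a compactness argument passes to the limit.

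The main obstacle is handling the defect-measure term cleanly, since the slices $\mu^0_t$ are only defined for a.e.\ $t$ (Lemma \ref{lem disintegration of defect measure}) and $\mathbf{H}^0$ is only an $L^2(d\mu^0)$ object. I would therefore work throughout with the time-averaged monotone quantity $\overline{\Theta}_s$ from Remark \ref{rmk on monotonicity quantity}, which is continuous in $s$ and agrees with $\Theta_s$ for a.e.\ $s$, and justify the change of variables in the $\mu^0$-contribution by testing against continuous compactly supported functions and invoking the scaling law $\mu^0(\lambda A)=\lambda^{n+2-m}\mu^0(A)$. The translation invariance in the equality case must be extracted for the pair $(u_\infty^0,\mu^0)$ simultaneously, the most delicate point being the consistent interpretation of the $\mathbf{H}^0$-identity on the (possibly lower-dimensional) support of $\mu^0$; this is where the framework of Lin--Wang developed in Section \ref{sec defect measure} is essential.
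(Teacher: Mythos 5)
Your argument is the one the paper intends: the paper's "proof" is just the citation of Lin--Wang \cite[Lemma 8.3.3]{Ling-Wang-book} together with \eqref{constancy of monotonocity quantity}, and both halves of your proposal — the parabolic rescaling showing $\lim_{s\to\infty}\Theta_s(x,t;u_\infty^0,\mu^0)=\Theta_1(0,0;u_\infty^0,\mu^0)=\Theta(0,0;u_\infty^0,\mu^0)$ via Lemma \ref{lem backwardly self-similar}, and the two-base-point self-similarity composed into translations — are exactly that argument. One local imprecision in your equality case should be fixed: the four terms on the right of Proposition \ref{prop monotoncity formula II} are not each non-negative. The cross term $\frac{1}{m}\int(t-\tau)\,y\cdot\mathbf{H}^0\,G\,d\mu^0$ has no sign, so you may not conclude that it, or the $|y-x|^2\,d\mu^0$ term, vanishes separately; the latter conclusion would wrongly force $\mathrm{spt}\,\mu^0$ onto the vertical line through $x$, which is false for a general self-similar defect measure. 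The correct reading is that, after Cauchy--Schwarz absorbs the cross term into the $\nu^0$- and $|y-x|^2$-contributions (using the control of $|\mathbf{H}^0|^2\,d\mu^0$ by $d\nu^0$ from the framework of Section \ref{sec defect measure}), the last three terms dominate
\[
\int \Big|\tfrac{t-\tau}{m}\,\mathbf{H}^0+\tfrac{y-x}{2}\Big|^2 G\,d\mu^0
\]
plus a non-negative remainder; constancy of $s\mapsto\Theta_s(x,t)$ kills this square, and it is precisely this identity — not the separate vanishing of its pieces — that encodes the backward self-similarity of $\mu^0$ about $(x,t)$. Since the final sentence of that paragraph of yours asserts exactly this self-similarity, the proof goes through once the step is stated this way.
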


By this lemma,
\[\mathcal{L}(u_\infty^0,\mu^0):=\left\{(x,t): ~~ \Theta(x,t;u_\infty^0,\mu^0)=\Theta(0,0;u_\infty^0, \mu^0)\right\}\]
is a linear subspace of $\R^n\times\R$, which is called {\it the invariant subspace of $(u_\infty^0,\mu^0)$}.

\begin{defi}
The invariant dimension of $(u_\infty^0,\mu^0)$ is
\[
\begin{cases}
  k+2, & \mbox{if } \mathcal{L}_{\mu_0}=\R^k\times\R \\
  k, & \mbox{otherwise}.
\end{cases}
\]
\end{defi}
Using these notations we can define a stratification of $\Sigma^\ast$, and give a dimension estimate on these stratifications as in White \cite{White-stratification}, see also \cite[Section 8.3]{Ling-Wang-book}.

\section{The case $m$ is not an integer}\label{sec Marstrand theorem}
\setcounter{equation}{0}

In this section, we use Marstrand theorem (\cite{Marstrand}, see also  \cite[Theorem 1.3.12]{Lin-book}) to study the case when  $m$ is not an integer. This is similar to the elliptic case studied in Du \cite{Du-singular} and the authors \cite{Wang-Wei2015}. The main result in this case is
\begin{thm}\label{thm Marstrand}
  Suppose $m$ is not an integer. Then under the assumptions of Theorem \ref{thm energy concentration set},  $u_i$ converges strongly in $L^{p+1}_{loc}(Q_1)$ and $\nabla u_i$ converges strongly to $\nabla u_\infty$ in $L^2_{loc}(Q_1)$. In other words, $\mu=0$.
   As a consequence, $u_\infty$ is a suitable weak solution of \eqref{eqn} in $Q_1$.
\end{thm}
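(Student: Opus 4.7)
The plan is to argue by contradiction: suppose $\mu \neq 0$. By Theorem \ref{thm energy concentration set}(7), $\mu = \theta\,\mathcal{P}^{n+2-m}\lfloor_{\Sigma}$ with $\theta \in [\varepsilon_\ast/C, C]$, so $\mu$ admits a positive finite $(n+2-m)$-parabolic density at $\mu$-a.e.\ point. Combining with Lemma \ref{lem vanishing of Morrey}, I can select a base point $(x_0, t_0) \in \Sigma$ at which \emph{simultaneously} (i) this density exists and is positive, and (ii) the local Morrey densities of both $|\nabla u_\infty|^2 + |u_\infty|^{p+1}$ and $|\partial_t u_\infty|^2$ vanish in the sense of \eqref{vanishing Morrey condition}. (Recall that $\mathcal{P}^{n-m}$-null sets are also $\mathcal{P}^{n+2-m}$-null, so point (2) of Lemma \ref{lem vanishing of Morrey} is compatible with full $\mu$-measure.) Such base points form a set of full $\mu$-measure.

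Next I would perform the tangent flow analysis of Section \ref{sec tangent flow} at $(x_0, t_0)$ along a sequence $\lambda_i \to 0$. By Remark \ref{rmk tangent flow at special pt}, the vanishing Morrey condition forces $u_\infty^{\lambda_i} \to 0$ strongly in $L^2_tH^1_{x,\mathrm{loc}}\cap L^{p+1}_{\mathrm{loc}}$ and $\partial_t u_\infty^{\lambda_i} \to 0$ strongly in $L^2_{\mathrm{loc}}$, so the tangent object has the pure form $(0, \mu^0)$. Lemma \ref{lem backwardly self-similar} tells me that $\mu^0$ is backwardly parabolically self-similar, $\mu^0(\lambda A) = \lambda^{n+2-m}\mu^0(A)$. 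Passing \eqref{Morrey scaled} and the lower density bound \eqref{lower density bound} to the limit yields two-sided uniform estimates
\[
c R^{n+2-m} \;\leq\; \mu^0(Q_R(x,t)) \;\leq\; C R^{n+2-m}, \qquad \forall\,(x,t) \in \mbox{spt}(\mu^0),\ R > 0.
\]
In particular $\mu^0 \neq 0$ (the positive density at $(x_0,t_0)$ is inherited by the origin), and $\mu^0$ has positive finite $(n+2-m)$-parabolic density \emph{everywhere} on its support.

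At this point I would invoke the parabolic analogue of Marstrand's theorem, cited in the statement as \cite[Theorem 1.3.12]{Lin-book}: a Radon measure on $\R^n\times\R$ (equipped with the parabolic metric) whose $s$-density is positive and finite at a.e.\ point must have $s \in \mathbb{N}$. Since $m = 2(p+1)/(p-1)$ is not an integer by hypothesis, $n+2-m$ is not an integer, contradicting the existence of $\mu^0$. Hence $\mu \equiv 0$. This immediately upgrades the weak convergences to strong convergences in $L^{p+1}_{\mathrm{loc}}$ and $L^2_{\mathrm{loc}}$; then the defect/curvature terms in \eqref{energy identity limit} and \eqref{stationary condition limit} drop, and $u_\infty$ inherits \eqref{weak solution I}, \eqref{energy inequality I} and \eqref{stationary condition I}. (The residual term $-\int\eta^2 d\nu$ in \eqref{energy identity limit} has a favorable sign, so the inequality survives.)

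The main obstacle is the passage from tangent flow analysis to a clean Marstrand contradiction. One must check rigorously that $\mu^0$ is nonzero (this is why the base point must have strictly positive $(n+2-m)$-density, not merely lie in $\Sigma$) and that the two-sided density bounds remain uniform after the parabolic rescaling. A second subtlety is the parabolic version of Marstrand itself: because of the anisotropic $(x,\lambda x;\ t,\lambda^2 t)$ scaling, either one reproduces the classical density argument with Besicovitch-type coverings by parabolic cylinders, or one uses the time disintegration $\mu^0 = \mu^0_t\,dt$ of Lemma \ref{lem disintegration of defect measure} together with the scaling $\mu^0_{\lambda^2 t}(\lambda B) = \lambda^{n-m}\mu^0_t(B)$ to reduce to spatial Marstrand on each slice, invoking the non-integrality of $n-m$. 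Either route requires some care to handle the fact that the invariant subspace $\mathcal{L}(u_\infty^0, \mu^0)$ need not contain the time axis.
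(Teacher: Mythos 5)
Your overall strategy (blow up at a point where \eqref{vanishing Morrey condition} holds, get a nontrivial self-similar tangent measure $\mu^0$, and contradict Marstrand) is the right skeleton, but there is a genuine gap at the decisive step: Marstrand's theorem requires the $s$-density $\lim_{r\to0}r^{-s}\mu(B_r(x))$ to \emph{exist} (and be positive and finite) on a set of positive measure. The two-sided estimates you derive,
\[
cR^{n+2-m}\leq \mu^0(Q_R(x,t))\leq CR^{n+2-m},
\]
only bound the upper and lower densities; they do not produce an existing limit, and bounded-but-oscillating density ratios are perfectly compatible with non-integer dimension (self-similar Cantor-type measures are the standard example). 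So as written your appeal to Marstrand does not close the argument. Your fallback of disintegrating $\mu^0=\mu^0_t\,dt$ and applying spatial Marstrand slice-by-slice has the same problem: for a merely backwardly self-similar (not static) $\mu^0$ there is no reason the spatial densities of $\mu^0_t$ exist.

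The paper fills this gap with a \emph{secondary blow-up}: it picks a point $(x_1,-1)\in\mathrm{spt}(\mu^0)$ and blows up again, using the constancy of $\Theta$ along self-similar rays together with Lemma \ref{lem invariant subspace} to show the second tangent measure $\mu^1$ is \emph{static} ($\partial_t\mu^1=0$, whence $\nu^1=0$ and $\mathbf{H}^1=0$). The limiting stationary condition \eqref{stationary condition limit} then reduces to the statement that $\mu^1$ is a stationary measure on $\R^n$ in Moser's sense, and stationarity yields \emph{monotonicity} of $r\mapsto r^{m-n}\mu^1(B_r(x))$. Monotonicity plus the two-sided bounds gives existence of the density $\mu^1$-a.e., and only then does the classical (Euclidean, spatial) Marstrand theorem apply to force $m\in\mathbb{N}$. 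This dimension-reduction step — static tangent measure, stationarity, monotone density ratio — is the essential ingredient missing from your proposal; without it the contradiction cannot be reached. Your concluding paragraph on upgrading weak to strong convergence and recovering the suitability of $u_\infty$ is fine once $\mu=0$ is established.
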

Here we do not claim the strong convergence of $\partial_tu_i$.

To prove this theorem, we use the tangent flow analysis in the previous section, but only performed at a point $(x,t)$ satisfying the condition \eqref{vanishing Morrey condition}. Hence by Remark \ref{rmk tangent flow at special pt}, $u_\infty^0=0$ and
\[ \mu^\lambda\rightharpoonup\mu^0, \quad \nu^\lambda\rightharpoonup \nu^0.\]

By \eqref{lower density bound} in Theorem \ref{thm energy concentration set}, $\mu^0$ is not identically zero in $\R^n\times\R^-$.
Then by the self-similarity of $\mu^0$,  there exists a point $(x_0,-1)\in\mbox{spt}(\mu^0)$. We blow up $\mu^0$ again at $(x_0,-1)$ as in Section \ref{sec tangent flow}, producing a tangent measure $\mu^1$ to $\mu^0$ at this point.
\begin{lem}
The measure  $\mu^1$ is static, that is,
\[\partial_t\mu^1=0 \quad \mbox{in the distributional sense}.\]
\end{lem}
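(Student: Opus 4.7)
The strategy is to exploit the backward self-similarity of $\mu^0$ at the spacetime origin to produce an additional translation symmetry of the tangent measure $\mu^1$ at $(x_0,-1)$: infinitesimal parabolic dilations at $(0,0)$ will, after the blow-up at $(x_0,-1)$, degenerate into pure time translations because the point $(x_0,-1)$ is at $t=-1\ne 0$.

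Concretely, I would introduce the parabolic dilation $P_\rho(x,t):=(\rho x,\rho^2 t)$ and the blow-up map $Q_\lambda(x,t):=\bigl((x-x_0)/\lambda,(t+1)/\lambda^2\bigr)$, so that $\mu^{0,\lambda}=\lambda^{-(n+2-m)}(Q_\lambda)_\ast\mu^0$, and by Lemma \ref{lem backwardly self-similar}, $(P_\rho)_\ast\mu^0=\rho^{-(n+2-m)}\mu^0$ for every $\rho>0$, at least on $\R^n\times\R^-$. For any fixed $\tau\in\R$, choose $\rho_\lambda:=\sqrt{1+\lambda^2\tau}$ and consider the conjugated map $\Phi_{\lambda,\tau}:=Q_\lambda\circ P_{\rho_\lambda}\circ Q_\lambda^{-1}$. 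A direct computation yields
\[
\Phi_{\lambda,\tau}(y,s)=\Bigl(\rho_\lambda y+\tfrac{\rho_\lambda-1}{\lambda}\,x_0,\ \rho_\lambda^2 s+\tfrac{1-\rho_\lambda^2}{\lambda^2}\Bigr),
\]
and since $\rho_\lambda\to 1$, $(\rho_\lambda-1)/\lambda\to 0$, and $(1-\rho_\lambda^2)/\lambda^2\to -\tau$, the maps $\Phi_{\lambda,\tau}$ converge, uniformly on compact subsets of $\R^n\times\R$, to the time translation $T_{-\tau}(y,s):=(y,s-\tau)$.

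Combining the self-similarity with the pushforward identity $\mu^{0,\lambda}=\lambda^{-(n+2-m)}(Q_\lambda)_\ast\mu^0=\rho_\lambda^{n+2-m}\lambda^{-(n+2-m)}(Q_\lambda\circ P_{\rho_\lambda})_\ast\mu^0$, one gets for any $\phi\in C_0^\infty(\R^n\times\R)$ (whose support, through $Q_\lambda^{-1}$, is eventually contained in $\R^n\times\R^-$ for $\lambda$ small)
\[
\int\phi\,d\mu^{0,\lambda}=\rho_\lambda^{n+2-m}\int\phi\circ\Phi_{\lambda,\tau}\,d\mu^{0,\lambda}.
\]
Letting $\lambda=\lambda_i\to 0$ along the subsequence defining $\mu^1$, the left-hand side tends to $\int\phi\,d\mu^1$, while $\rho_{\lambda_i}^{n+2-m}\to 1$ and the uniform convergence $\Phi_{\lambda_i,\tau}\to T_{-\tau}$ on compact sets, together with the weak convergence $\mu^{0,\lambda_i}\rightharpoonup\mu^1$, yield on the right-hand side $\int\phi\circ T_{-\tau}\,d\mu^1=\int\phi\,d(T_{-\tau})_\ast\mu^1$. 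Thus $(T_{-\tau})_\ast\mu^1=\mu^1$ for every $\tau\in\R$, which is precisely $\partial_t\mu^1=0$ in the distributional sense.

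The one technical point to be careful with is the passage to the limit in $\int\phi\circ\Phi_{\lambda_i,\tau}\,d\mu^{0,\lambda_i}$: weak convergence of measures pairs against a \emph{fixed} continuous function, so one must combine it with the uniform convergence of $\Phi_{\lambda_i,\tau}\to T_{-\tau}$ on the compact set containing $\mathrm{spt}(\phi)$ and $T_{-\tau}^{-1}(\mathrm{spt}(\phi))$, using the Morrey-type upper bound in \eqref{Morrey scaled} to ensure the masses of $\mu^{0,\lambda_i}$ on this compact set are uniformly controlled and tails are negligible. Apart from this, the argument is bookkeeping.
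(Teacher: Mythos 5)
Your argument is correct, but it is not the route the paper takes. The paper's proof never manipulates the measures directly: it observes that, by the self-similarity of $\mu^0$ and the scaling invariance of $\Theta_s$, the density $\Theta(\cdot\,;\mu^0)$ is constant along the parabolic ray $\{(\lambda x_0,-\lambda^2):\lambda>0\}$ through $(x_0,-1)$; after blowing up at $(x_0,-1)$ this ray opens up into the time axis, so $\Theta(0,t;\mu^1)=\Theta(0,0;\mu^1)$ for all $t$, and the rigidity statement of Lemma \ref{lem invariant subspace} (attainment of the maximal density in a direction forces translation invariance in that direction) then yields $\partial_t\mu^1=0$. Your proof instead conjugates the parabolic dilations preserving $\mu^0$ by the blow-up maps $Q_\lambda$ and shows that the conjugated maps $\Phi_{\lambda,\tau}$ degenerate to the time translation $T_{-\tau}$, passing to the limit in the weak convergence $\mu^{0,\lambda_i}\rightharpoonup\mu^1$; this is the classical Preiss-type argument that tangent measures of a self-similar measure at a point away from the center of dilation acquire an extra translation symmetry. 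Your computation of $\Phi_{\lambda,\tau}$, the choice $\rho_\lambda=\sqrt{1+\lambda^2\tau}$, the restriction to $\R^n\times\R^-$ for small $\lambda$, and the handling of the limit (uniform convergence of the test functions on a fixed compact set combined with the uniform mass bound \eqref{Morrey scaled}) are all sound. What your approach buys is independence from the monotonicity-formula machinery: you need only the scaling identity of Lemma \ref{lem backwardly self-similar} and weak convergence, whereas the paper's argument leans on the limiting monotonicity formula, the convergence and upper semicontinuity of $\Theta$ under the second blow-up, and the rigidity lemma; what it costs is that it gives only the translation invariance itself, while the paper's route simultaneously records the density information that is reused in the surrounding Marstrand argument.
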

\begin{proof}
By Lemma \ref{lem backwardly self-similar} and the scaling invariance of $\Theta_s$, for any $\lambda>0$,
\[\Theta_{\lambda^2s}(\lambda x_1,-\lambda^2;\mu^0)=\Theta_s(x_1,-1;\mu^0).\]
Letting $s\to0$, we obtain
\[\Theta(\lambda x_1,-\lambda^2;\mu^0)=\Theta(x_1,-1;\mu^0).\]
After blowing up to $\mu^1$, this equality implies that
\[\Theta(0,t;\mu^1)=\Theta(0,0;\mu^1), \quad \forall t\in\R.\]
By Lemma \ref{lem invariant subspace}, $\mu^1$ is invariant under translations in the time direction.
\end{proof}
\begin{coro}
  $\nu^1=0$ and $\mathbf{H}^1=0$ $\mu^1$-a.e..
\end{coro}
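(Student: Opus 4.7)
The plan is to exploit the rigidity of the limiting monotonicity formula in its equality case. By the previous lemma, $\mu^1 = \bar\mu\otimes dt$ is static, and by the backward self-similarity inherited from the tangent-measure construction (cf. Lemma \ref{lem backwardly self-similar}), $\bar\mu$ is Euclidean-homogeneous of degree $n-m$. Moreover, the choice of blow-up base point in Remark \ref{rmk tangent flow at special pt} guarantees $u_\infty^1 \equiv 0$, so everything reduces to a statement about $(\mu^1,\nu^1,\mathbf{H}^1)$. A direct change of variables $y = \sqrt{s}\,z$ and the homogeneity of $\bar\mu$ yield, for any $t_0\in\R$ and any $s>0$,
\begin{align*}
\Theta_s(0,t_0;0,\mu^1) = \frac{s^{\frac{p+1}{p-1}}}{m}\int G(y,s)\,d\bar\mu(y) = \frac{(4\pi)^{-n/2}\,s^{\frac{p+1}{p-1}-m/2}}{m}\int e^{-|z|^2/4}\,d\bar\mu(z),
\end{align*}
and since $\frac{p+1}{p-1} = \frac{m}{2}$ exactly, $\Theta_s$ is independent of both $s$ and $t_0$.

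Next I would apply the limiting monotonicity formula (Proposition \ref{prop monotoncity formula II}) at an arbitrary center $(0,t_0)$ with $u^1 = 0$. Constancy of $\Theta_s$ makes the LHS vanish, so the RHS must vanish too. Completing the square in the spatial $d\mu^1$-terms via
\begin{align*}
\frac{|y|^2}{4} + \frac{t_0-\tau}{m}\,y\cdot\mathbf{H}^1 = \Bigl|\frac{y}{2}+\frac{t_0-\tau}{m}\mathbf{H}^1\Bigr|^2 - \frac{(t_0-\tau)^2}{m^2}|\mathbf{H}^1|^2,
\end{align*}
the RHS becomes
\begin{align*}
\int_{s_1}^{s_2}\tau^{\frac{2}{p-1}-1}\int G\Bigl\{\Bigl|\frac{y}{2}+\frac{t_0-\tau}{m}\mathbf{H}^1\Bigr|^2 d\mu^1 + (t_0-\tau)^2\Bigl[d\nu^1 - \frac{|\mathbf{H}^1|^2}{m^2}d\mu^1\Bigr]\Bigr\}\,d\tau = 0.
\end{align*}
Invoking the pointwise Cauchy--Schwarz estimate $|\mathbf{H}^1|^2 d\mu^1 \leq m^2 d\nu^1$ (the natural pointwise refinement of Corollary \ref{coro coarse estimate on mean curvature}, coming from the Hutchinson measure-function pair representation used in the proof of Lemma \ref{lem decomposition of mean curvature}), both bracketed integrands are non-negative, and so each must vanish $\mu^1$-a.e.

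The vanishing of the first bracket forces $\mathbf{H}^1(y, t_0-\tau) = -\frac{my}{2(t_0-\tau)}$ on $\mathrm{spt}(\mu^1)$ for every admissible $t_0$; fixing $(y,\sigma)\in\mathrm{spt}(\mu^1)$ and varying $t_0 > \sigma$ forces $y = 0$, since the right-hand side varies with $t_0$ while the left does not. Since $\bar\mu$, being $(n-m)$-homogeneous with $m$ not an integer (so in particular $m\neq n$), cannot be a point mass at the origin, we conclude $\mathbf{H}^1 = 0$ $\mu^1$-a.e. Plugging this back into the second bracket gives $\int (t_0-\tau)^2 G\,d\nu^1 = 0$ for every admissible center, whence $\nu^1 = 0$. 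The main delicate step is justifying the pointwise bound $|\mathbf{H}^1|^2 \leq m^2\,d\nu^1/d\mu^1$: Corollary \ref{coro coarse estimate on mean curvature} only supplies its integrated form, but the forthcoming sharpening of that corollary (indicated in the text immediately after it) provides exactly the pointwise control needed.
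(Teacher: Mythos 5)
Your route — forcing the equality case in the limiting monotonicity formula — is genuinely different from the paper's, and as written it has two gaps, the second of which is fatal.

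First, the pointwise bound $|\mathbf{H}^1|^2\,d\mu^1\le m^2\,d\nu^1$ is not available here. Corollary \ref{coro coarse estimate on mean curvature} gives only $\frac{1}{m^2}\int_{Q_r}|\mathbf{H}|^2d\mu\le\nu(Q_r)\,\mu(Q_r)$, a product of total masses, which is not a density bound; and the ``forthcoming sharpening'' you invoke is Lemma \ref{lem relation between H and nu}, which lives in the section where $m$ \emph{is} an integer and is derived there from the rectifiability of $\Sigma_t$ — it cannot be borrowed in the non-integer setting where this corollary is used. (The bound you want can in fact be extracted from the Hutchinson measure-function-pair lower semicontinuity in the proof of Lemma \ref{lem decomposition of mean curvature}, using $u_\infty^1\equiv 0$, but that argument has to be supplied.) Second, and more seriously, your endgame is a non sequitur. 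Taking the defect terms of Proposition \ref{prop monotoncity formula II} at face value, your completed square forces \emph{both} $\mathbf{H}^1(y,\sigma)=-\tfrac{m}{2c}\,y$ \emph{and}, after varying the center, $y=0$ for $\mu^1$-a.e.\ $(y,\sigma)$ — i.e.\ $\mathrm{spt}\,\bar\mu=\{0\}$, which is absurd for a nontrivial $(n-m)$-homogeneous measure. You cannot then discard the absurd half of the conclusion and keep $\mathbf{H}^1=0$: if the implication chain produces a falsehood, a hypothesis is wrong. The culprit is the term $\tfrac{|y|^2}{4}\,d\mu$: the genuine limit of $|\tfrac{y}{2}\cdot\nabla u_i|^2$ produces $\tfrac14\,y^{T}\mathcal{T}y\,d\mu$, so the equality case only constrains the component of $y$ seen by $\mathcal{T}$ (the analogue of $y^{\perp}$ for a cone), and in the non-integer case no tangential structure is available to run this argument. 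A further fragility: whether the coefficient multiplying $y\cdot\mathbf{H}^1$ actually varies with the center time $t_0$ depends on reading $(t-\tau)$ as relative rather than absolute time; under the formula as literally written it does not vary, and your ``vary $t_0$'' step collapses.

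The paper's proof avoids all of this: since $\mu^1$ is static, the energy identity $\frac{d}{dt}\int\eta\,d\mu^1_t=-m\int\eta\,d\nu^1-\int\nabla\eta\cdot\mathbf{H}^1\,d\mu^1_t$ has vanishing left-hand side; controlling the flux term by Cauchy–Schwarz and Corollary \ref{coro coarse estimate on mean curvature} yields $\nu^1=0$, and then the same corollary immediately gives $\mathbf{H}^1=0$ $\mu^1$-a.e. I recommend you adopt that argument.
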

\begin{proof}
This follows from the energy identity for $\mu^1$,
\[ \frac{d}{dt}\int_{\R^n}\eta d\mu^1_t=-m\int \eta d\nu^1-\int_{\R^n}\nabla \eta\cdot \mathbf{H}^1 d\mu^1_t.\]
Note that $\mathbf{H}^1$ can be controlled by $\nu^1$ as in Corollary \ref{coro coarse estimate on mean curvature}.
\end{proof}

By the previous lemma, we can view $\mu^1$ as a Radon measure on $\R^n$. Now the stationary condition \eqref{stationary condition limit} reads as
\begin{equation}\label{stationary condition limit 2}
  \int_{\R^n} \left( I-m\mathcal{T}\right)\cdot DY  d\mu^1=0, \quad \forall Y\in C_0^\infty(\R^n,\R^n).
\end{equation}
Following Moser  \cite{Moser}, this is called a stationary measure.
Similar to  \cite[Lemma 2.1]{Moser}, we have
\begin{lem}
  For any $x\in\R^n$,
\[ \Theta_r(x;\mu^1):=r^{m-n}\mu_1(B_r(x))\]
is non-decreasing in $r>0$.
\end{lem}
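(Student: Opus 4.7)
The plan is to test the stationary identity \eqref{stationary condition limit 2} against a radial vector field centered at $x$ and extract a differential inequality for $F(R) := \mu^1(B_R(x))$. After a translation I may assume $x=0$ and write $r := |y|$. For a smooth non-increasing cut-off $\phi_\varepsilon \colon [0,\infty)\to[0,1]$ approximating $\chi_{[0,R]}$ (so that $\phi_\varepsilon'\le 0$ is concentrated near $r=R$), I take $Y(y) := \phi_\varepsilon(r)\,y$. A direct calculation gives
\[
DY_{ij} \;=\; \phi_\varepsilon(r)\,\delta_{ij} + \phi_\varepsilon'(r)\,\frac{y_iy_j}{r}, \qquad \operatorname{tr}(DY) \;=\; n\phi_\varepsilon(r) + r\phi_\varepsilon'(r),
\]
and, using $\operatorname{tr}\mathcal{T}=1$, $\mathcal{T}\cdot DY = \phi_\varepsilon(r) + \phi_\varepsilon'(r)\,\mathcal{T}(y,y)/r$. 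Substituting into \eqref{stationary condition limit 2} yields the identity
\[
(n-m)\int_{\R^n}\phi_\varepsilon(r)\,d\mu^1 \;+\; \int_{\R^n} \phi_\varepsilon'(r)\Bigl[r - m\,\frac{\mathcal{T}(y,y)}{r}\Bigr]d\mu^1 \;=\; 0.
\]

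The decisive observation is that $\mathcal{T}$ is semi-positive definite, so $\mathcal{T}(y,y)\ge 0$; combined with $\phi_\varepsilon'\le 0$, the cross term $-m\,\phi_\varepsilon'(r)\,\mathcal{T}(y,y)/r$ is non-negative, and can therefore be dropped to give
\[
(n-m)\int_{\R^n}\phi_\varepsilon(r)\,d\mu^1 \;\le\; -\int_{\R^n}\phi_\varepsilon'(r)\,r\,d\mu^1.
\]
Letting $\varepsilon\to 0$, the left-hand side tends to $(n-m)F(R)$, while the right-hand side — viewed as a Stieltjes-type integral against $dF$ after choosing e.g.\ an affine cut-off on $[R-\varepsilon,R]$ — converges to $R\,F'(R)$ at every Lebesgue point of $F'$ (which exist a.e., since $F$ is non-decreasing). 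Hence
\[
(n-m)F(R) \;\le\; R\,F'(R)\qquad \text{for a.e. } R>0,
\]
equivalent to $\tfrac{d}{dR}\bigl[R^{m-n}F(R)\bigr]\ge 0$ almost everywhere. The monotonicity of $F$ upgrades this a.e.\ differential inequality to the pointwise non-decrease of $R\mapsto R^{m-n}\mu^1(B_R(x))$ on all of $(0,\infty)$ by a standard BV argument.

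The only step requiring genuine care is the limiting procedure $\varepsilon\to 0$, since $\mu^1$ need not have nice integral expressions on spheres; this is handled by choosing the affine cut-off and appealing to Lebesgue differentiation for the monotone function $F$. The sign computation isolating the favorable term $-m\phi_\varepsilon'(r)\,\mathcal{T}(y,y)/r\ge 0$ is the analogue, in the stationary-measure setting of Moser \cite{Moser}, of the classical ``radial direction'' trick from varifold monotonicity: although $\mathcal{T}$ is not a tangent-plane projection (its trace is $1$ rather than $n-m$), the positivity of $\mathcal{T}$ alone is enough to produce the classical monotonicity of density.
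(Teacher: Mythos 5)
Your proof is correct and is essentially the argument the paper is invoking when it defers to Moser's Lemma 2.1: test the stationarity identity \eqref{stationary condition limit 2} with the radial field $Y=\phi(r)y$, use $\operatorname{tr}\mathcal{T}=1$ together with $\mathcal{T}\geq 0$ to discard the term $-m\phi'(r)\mathcal{T}(y,y)/r\geq 0$, and integrate the resulting differential inequality $(n-m)F(R)\leq RF'(R)$. The technical points you flag (approximating the characteristic cut-off and upgrading the a.e.\ inequality using that the singular part of $dF$ is nonnegative) are handled exactly as you describe.
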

As a consequence,
\[ \Theta(x;\mu^1):=\lim_{r\to0} r^{m-n}\mu_1(B_r(x))\]
exists. By \eqref{Morrey bound for the limit}, \eqref{lower density bound} and  Lemma \ref{lem vanishing of Morrey},
\[ \frac{\varepsilon_\ast}{2}\leq \Theta(x;\mu^1)\leq C, \quad \mu^1-\mbox{a.e. in} ~~ \R^n.\]
By Marstrand theorem, $m$ must be an integer. In other words, if $m$ is not an integer, then $\mu^1$, and consequently, $\mu^0$ and $\mu$, must be trivial. This finishes the proof of Theorem \ref{thm Marstrand}.

\section{Partial regularity of suitable weak solutions}\label{sec partial regularity}
\setcounter{equation}{0}

In this section, as an application of Theorem \ref{thm Marstrand},  we prove the following partial regularity result for a fixed, suitable weak solution.
This is almost the same with the one obtained in \cite{Du1}, with a small improvement.
\begin{thm}\label{thm partial regularity for suitable weak solutions}
Suppose $u$ is a suitable weak solution of \eqref{eqn} in $Q_1$. Then
  \begin{itemize}
    \item If $1<p<\frac{n+2}{n-2}$, $u$ is smooth, that is, $\mathcal{S}(u)=\emptyset$.
    \item If there exists an integer $k$ with $3\leq k<n$ such that $\frac{k+3}{k-1}< p<\frac{k+2}{k-2}$, then $\mbox{dim}_{\mathcal{P}}(\mathcal{S}(u))\leq n-k+2$.
    \item If $m$ is an integer, then $\mathcal{P}^{n-m+2}(\mathcal{S}(u))=0$.
  \end{itemize}
\end{thm}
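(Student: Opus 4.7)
The plan is a tangent-flow analysis at any putative singular point, combined with the Marstrand-type rigidity of Theorem \ref{thm Marstrand} when $m\notin\mathbb{Z}$, and with the Giga--Kohn classification of backwardly self-similar solutions in the subcritical range. Fix $(x_0, t_0)\in\mathcal{S}(u)$; by Theorem \ref{thm ep regularity I} we have $\Theta(x_0, t_0; u)\ge\varepsilon_\ast$. Consider the parabolic rescalings $u^\lambda(y,s):=\lambda^{2/(p-1)} u(x_0+\lambda y,\, t_0+\lambda^2 s)$ as $\lambda\to 0$; applying the framework of Part \ref{part energy concentration} to a subsequence $\{u^{\lambda_i}\}$ on any fixed $Q_R$ produces weak limits $(u_\infty^0,\mu^0,\nu^0)$. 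By Proposition \ref{prop monotoncity formula II} and Lemma \ref{lem backwardly self-similar}, both $u_\infty^0$ and $\mu^0$ are backwardly self-similar with vertex $(0,0)$.

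The third assertion, $\mathcal{P}^{n-m+2}(\mathcal{S}(u))=0$ when $m\in\mathbb{Z}$, is exactly Corollary \ref{coro dim of singular set I}. For the second assertion, the hypothesis $k<m<k+1$ means $m\notin\mathbb{Z}$, so Theorem \ref{thm Marstrand} applied to $\{u^{\lambda_i}\}$ gives $\mu^0\equiv 0$ and strong convergence $u^{\lambda_i}\to u_\infty^0$; in particular $u_\infty^0$ is itself a suitable weak solution. Applying Corollary \ref{coro dim of singular set I} directly yields $\dim_{\mathcal{P}}\mathcal{S}(u)\le n+2-m<n-k+2$, as desired.

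For the first assertion, $1<p<(n+2)/(n-2)$ rephrases as $m>n$. When $m>n+2$, Corollary \ref{coro dim of singular set I} already forces $\dim\mathcal{S}(u)\le n+2-m<0$, hence $\mathcal{S}(u)=\emptyset$. The remaining range $n<m\le n+2$ is handled by contradiction: assuming $(x_0,t_0)\in\mathcal{S}(u)$, I first show $\mu^0\equiv 0$ even when $m\in\mathbb{Z}\cap(n,n+2]$. Mimicking the double blow-up in Section \ref{sec Marstrand theorem}, if $\mu^0\not\equiv 0$ a second blow-up at a point of $\mbox{spt}(\mu^0)$ produces a static tangent measure $\mu^1$ on $\R^n$ whose $m$-density is at least $\varepsilon_\ast/2$ on its support; combined with the monotonicity of $\Theta_r(x;\mu^1)=r^{m-n}\mu^1(B_r(x))$ this gives $\mu^1(B_r(x))\ge c\,r^{n-m}\to\infty$ as $r\to 0^+$, contradicting local finiteness of $\mu^1$. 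Strong convergence then makes $u_\infty^0$ a backwardly self-similar suitable weak solution, smooth on $\R^n\times\R\setminus\{(0,0)\}$ by Theorem \ref{thm ep regularity II} and scale invariance. Writing $u_\infty^0(x,t)=(-t)^{-1/(p-1)} w(x/\sqrt{-t})$ for $t<0$ reduces matters to a bounded smooth solution $w$ on $\R^n$ of the Giga--Kohn elliptic profile equation; in the subcritical range, Giga--Kohn's Liouville theorem forces $w\in\{0,\pm\kappa\}$ with $\kappa=(p-1)^{-1/(p-1)}$. A nonzero constant profile would give $\int_{Q_1^-}|u_\infty^0|^{p+1}=\kappa^{p+1}|B_1|\int_0^1 \tau^{-(p+1)/(p-1)}\,d\tau=+\infty$, violating the Morrey bound. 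Hence $w\equiv 0$, so $u_\infty^0\equiv 0$ and $\Theta(x_0,t_0;u)=\Theta(0,0;u_\infty^0)=0$, contradicting $(x_0,t_0)\in\mathcal{S}(u)$.

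The main obstacle is upgrading the Morrey-type control on $u_\infty^0$ to a uniform $L^\infty$ bound on the slice $\{t=-1\}$, which is what the Giga--Kohn Liouville theorem actually requires. This will be achieved by iterating the $\varepsilon$-regularity of Theorem \ref{thm ep regularity II} along the orbits of the parabolic scaling to first establish smoothness of $u_\infty^0$ off $(0,0)$ and then to deduce a pointwise bound on each time-slice $\{t=\mathrm{const}<0\}$; an auxiliary subtlety, hidden in the reduction, is that the strong convergence guaranteed by $\mu^0\equiv 0$ is genuinely needed to preserve the density identity $\Theta(x_0,t_0;u)=\Theta(0,0;u_\infty^0)$ in the limit.
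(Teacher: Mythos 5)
Your overall strategy for the first and third bullets coincides with the paper's: the third bullet is read off from Corollary \ref{coro dim of singular set I}, and the first is proved by tangent-flow analysis at a putative singular point, killing the defect measure, and then invoking the backward self-similarity of $u^0_\infty$ together with Theorem \ref{thm Liouville for backward self-similar sol.} and the finiteness of $\int_{Q_1}|u^0_\infty|^{p+1}$ to force $u^0_\infty\equiv 0$, contradicting $\Theta(x_0,t_0)\ge\varepsilon_\ast$ (which, as you correctly stress, survives the limit only because the convergence is strong). Two places where you genuinely diverge are worth recording. First, the paper disposes of $\mu^0$ solely via Theorem \ref{thm Marstrand}, i.e.\ only when $m\notin\mathbb{Z}$; your extra density argument (a static second tangent measure $\mu^1$ with $\mu^1(B_r(x))\ge \tfrac{\varepsilon_\ast}{2}r^{n-m}\to\infty$ when $m>n$) covers the integer values $m=n+1,\,n+2$ in the subcritical range, which the paper's written proof silently skips; for $m=n+2$ you could even avoid this, since $\mathcal{P}^{0}(\mathcal{S}(u))=0$ already forces $\mathcal{S}(u)=\emptyset$. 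Second, for the middle bullet you simply observe that $k<m$ gives $n+2-m<n-k+2$, so the stated bound is an immediate consequence of Corollary \ref{coro dim of singular set I}; the paper instead runs a dimension-reduction argument through White's stratification theorem, whose payoff is not visible in the bound as literally stated. Your shortcut proves the theorem as written.

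The one step I would push back on is your ``main obstacle.'' Theorem \ref{thm Liouville for backward self-similar sol.} as formulated in the paper replaces the $L^\infty$ hypothesis of Giga--Kohn by the integral condition \eqref{bounded integral condition}, which for $u^0_\infty$ follows directly by scaling the Morrey bound \eqref{Morrey I} and passing to the limit; no upgrade to a pointwise bound on $\{t=-1\}$ is required. Moreover, the upgrade you sketch -- iterating Theorem \ref{thm ep regularity II} along scaling orbits -- would need \emph{smallness} of the scaled energy $r^{m-2-n}\int_{Q_r^-}(|\nabla u^0_\infty|^2+|u^0_\infty|^{p+1})$ at every point of the slice, whereas the Morrey estimate only provides boundedness; so as described that step would not go through. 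Since the integral-bound version of the Liouville theorem renders it unnecessary, this is a detour rather than a fatal gap, but you should route the argument through \eqref{bounded integral condition} rather than through an $L^\infty$ bound.
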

We have already obtained a dimension bound on the singular set $\mathcal{S}(u)$ in Corollary \ref{coro dim of singular set I}. Here we need only to improve this bound when $m$ is not an integer. This follows from a standard dimension reduction argument, cf. \cite{Lin-mapping}, \cite{Wang-Wei2015}.

Suppose $(x,t)\in \mathcal{S}(u)\cap Q_{1/2}$. By Theorem \ref{thm ep regularity II}, for any $r\in(0,1/2)$,
\begin{equation}\label{nondegenaracy condition for blow up}
  \int_{Q_r^-(x,t)}\left(|\nabla u|^2+|u|^{p+1}\right)\geq \varepsilon_\ast r^{n+2-m}.
\end{equation}
For  $\lambda\to0$, define the blowing up sequence
\[ u^\lambda(y,s):=\lambda^{\frac{2}{p-1}}u(x+\lambda y, t+\lambda^2s).\]
As in Section \ref{sec tangent flow}, we can take a subsequence so that $u^\lambda\rightharpoonup u^0$ weakly in $ L^2_tH^1_{x,loc}(\R^n\times\R)\cap L^{p+1}_{loc}(\R^n\times\R)$, and
\[
 \left\{\begin{aligned}
& |\nabla u^\lambda|^2dxdt\rightharpoonup |\nabla u^0|^2dxdt+\mu^0,\\
& |\partial_tu^\lambda|^2dxdt \rightharpoonup |\partial_t u^0|^2dxdt+\nu^0
\end{aligned}\right.
 \]
 weakly as Radon measures on any compact set of $\R^n\times\R$.

Furthermore, because $m$ is not an integer, by Theorem \ref{thm Marstrand}, we deduce that $\mu^0=0$ and $u^\lambda$ in fact converges strongly. Because tangent flows are always nontrivial,  $u^0\neq 0$.

By Lemma \ref{lem backwardly self-similar}, $u^0$ is backwardly self-similar.
For these solutions, the following Liouville theorem was established by Giga-Kohn in \cite{Giga-Kohn1}.
\begin{thm}\label{thm Liouville for backward self-similar sol.}
When $p\leq (n+2)/(n-2)$, if $u$ is a backward self-similar solution of \eqref{eqn} in $\R^n\times\R^-$, satisfying
\begin{equation}\label{bounded integral condition}
 \sup_{x\in\R^n} \int_{Q_1^-(x,-1)}\left(|\nabla u|^2+|u|^{p+1}\right)<+\infty,
\end{equation}
 then either $u\equiv 0$, or $u\equiv \pm (p-1)^{-\frac{1}{p-1}}(-t)^{-\frac{1}{p-1}}$.
\end{thm}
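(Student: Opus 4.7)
Using backward self-similarity under the parabolic scaling \eqref{scaling invariance}, write $u(x,t) = (-t)^{-1/(p-1)} w(y)$ with $y = x/\sqrt{-t}$ for $t<0$. A direct computation shows that $w$ solves
\[
\Delta w - \tfrac{1}{2}\, y\cdot\nabla w - \tfrac{1}{p-1}\,w + |w|^{p-1}w = 0 \quad \text{in } \R^n,
\]
which, with Gaussian weight $\rho(y)=e^{-|y|^2/4}$, is the Euler--Lagrange equation of the Giga--Kohn Lyapunov functional
\[
E(w) = \int_{\R^n}\rho\Bigl[\tfrac{1}{2}|\nabla w|^2 + \tfrac{1}{2(p-1)}w^2 - \tfrac{1}{p+1}|w|^{p+1}\Bigr]dy.
\]
Condition \eqref{bounded integral condition}, translated through the self-similar ansatz, gives $\int \rho\bigl(|\nabla w|^2+w^2+|w|^{p+1}\bigr)dy < \infty$. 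Moser iteration adapted to the drift operator $\Delta - \tfrac{1}{2}y\cdot\nabla$ then yields $w\in L^\infty \cap C^\infty$ with Gaussian decay of $w$ and every derivative, which is more than enough to justify the weighted integrations by parts used below.

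\textbf{Two integral identities.} Testing the elliptic equation against $w\rho$, the drift contribution $-\tfrac{1}{2}(y\cdot\nabla w)\,w\rho$ is antisymmetric and integrates to zero, yielding
\begin{equation}\label{eq:plan-GKE}
A + \tfrac{1}{p-1} B = C, \qquad A:=\int\rho|\nabla w|^2,\;\; B:=\int\rho\,w^2,\;\; C:=\int\rho|w|^{p+1}.
\end{equation}
Next, testing against $(y\cdot\nabla w)\rho$ and integrating by parts carefully, using $\nabla\rho=-\tfrac{1}{2}y\rho$ to handle every drift- and $\rho$-induced boundary term, one obtains a Gaussian-weighted Pohozaev identity. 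Eliminating $C$ via \eqref{eq:plan-GKE} collapses this into a relation of the schematic form
\begin{equation}\label{eq:plan-GKP}
\Bigl(\tfrac{n}{p+1}-\tfrac{n-2}{2}\Bigr)A \;+\; \tfrac{1}{2}\int\rho\,(y\cdot\nabla w)^2\,dy \;=\; 0,
\end{equation}
where the coefficient $\tfrac{n}{p+1}-\tfrac{n-2}{2}$ is nonpositive precisely in the allowed range $p\le (n+2)/(n-2)$, with equality in the critical case.

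\textbf{Conclusion.} In \eqref{eq:plan-GKP} both terms on the left are nonpositive (respectively nonnegative with a consistent sign), so both must vanish. In the strictly subcritical case this gives $A = 0$, hence $\nabla w\equiv 0$. In the critical case the $A$-coefficient degenerates but $\int\rho(y\cdot\nabla w)^2=0$ still forces $y\cdot\nabla w\equiv 0$, i.e.\ $w$ is constant along every ray from the origin; smoothness at $y=0$ then upgrades this to $w\equiv c$ on $\R^n$. Substituting the constant $c$ into the reduced equation gives $c/(p-1) = |c|^{p-1}c$, whose only real solutions are $c=0$ and $c=\pm(p-1)^{-1/(p-1)}$, translating back to the claimed $u\equiv 0$ or $u\equiv \pm(p-1)^{-1/(p-1)}(-t)^{-1/(p-1)}$.

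\textbf{Main obstacle.} The heart of the argument is the bookkeeping in the Pohozaev identity \eqref{eq:plan-GKP}: several $|y|^2$-weighted integrals appear from the interaction between the drift $-\tfrac{1}{2}y\cdot\nabla w$, the Gaussian $\nabla\rho=-\tfrac{1}{2}y\rho$, and the test field $y\cdot\nabla w$, and these must telescope so that only the displayed terms with the \emph{correct} sign in the range $p\le (n+2)/(n-2)$ remain. This is exactly the Giga--Kohn algebra and is the delicate step; the critical case is borderline and relies crucially on the nonnegative term $\int\rho(y\cdot\nabla w)^2$ surviving with a positive coefficient. A subsidiary issue is verifying enough Gaussian decay of $w,\nabla w,y\cdot\nabla w$ to justify the integrations by parts, which is handled by the $L^\infty$ bound (from Theorem \ref{thm ep regularity I} applied to $u$ on any slab $\{t\le -\varepsilon\}$) together with standard elliptic regularity for the drift operator.
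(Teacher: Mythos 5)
Your route is the same as the paper's: the paper does not reprove this statement but quotes Giga--Kohn's Pohozaev computation and only observes that the original $L^\infty$ hypothesis can be replaced by \eqref{bounded integral condition}, the latter being exactly what is needed to make the Gaussian-weighted integrals finite and the integrations by parts legitimate. Two points in your write-up are nevertheless inaccurate. First, you cannot have ``Gaussian decay of $w$ and every derivative'': the nonzero constants $\pm(p-1)^{-1/(p-1)}$ are among the admissible solutions and do not decay at all. What \eqref{bounded integral condition} really gives --- being a bound uniform over unit balls, hence summable against $e^{-|y|^2/4}$ --- is finiteness of $\int\rho\,(1+|y|^2)\bigl(|\nabla w|^2+w^2+|w|^{p+1}\bigr)$ together with interior smoothness (Trudinger/Brezis--Kato in the critical case); it is the weight $\rho$ sitting inside the integrands, not pointwise decay of $w$, that kills the boundary terms. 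Second, your schematic identity is not the one the computation produces: the $\int\rho(y\cdot\nabla w)^2$ contributions coming from the drift and from $\nabla\rho=-\tfrac{y}{2}\rho$ cancel exactly, and what survives, after eliminating $\int\rho|y|^2w^2$ and $\int\rho|y|^2|w|^{p+1}$ by means of the additional test function $\tfrac{|y|^2}{2}w\rho$, is
\[
\frac{(n+2)-(n-2)p}{2(p+1)}\int_{\R^n}\rho\,|\nabla w|^2\,dy+\frac{p-1}{4(p+1)}\int_{\R^n}\rho\,|y|^2|\nabla w|^2\,dy=0 .
\]
Here the first coefficient is nonnegative (not nonpositive, as you wrote) precisely for $p\le\frac{n+2}{n-2}$ and the second is strictly positive, so $\nabla w\equiv0$ follows even at the critical exponent, and the identification of the constants is as you state. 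A remainder of the form $\int\rho(y\cdot\nabla w)^2$ would also have sufficed had it actually appeared, but since you yourself single out this bookkeeping as the heart of the matter, the argument is incomplete until the identity above (or an equivalent sign-definite one) is genuinely derived.
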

Here the original $L^\infty(\R^n)$ assumption in \cite[Theorem 1]{Giga-Kohn1} is replaced by an integral one \eqref{bounded integral condition}.  With this condition, we are still able to do the same computation in \cite{Giga-Kohn1} to deduce the Pohozaev identity \cite[Proposition 1]{Giga-Kohn1}. For $u^0$, the condition \eqref{bounded integral condition} follows by scaling \eqref{Morrey I} and passing to the limit.

By this theorem, if $p<(n+2)/(n-2)$ is subcritical, because $u^0$ also satisfies
\[ \int_{Q_1}|u^0|^{p+1}<+\infty,\]
we must have $u^0=0$. This contradiction implies that there is no singular point of $u$. In other words, $u$ is smooth.

If $p>(n+2)/(n-2)$ is supcritical, by Point (3) in Theorem \ref{thm energy concentration set}, the strong convergence of $u^\lambda$ implies that if $\lambda$ is small enough, then $\lambda^{-1}\left(\mathcal{S}(u)-(x,t)\right)$ is contained in a small neighborhood of $\mathcal{S}(u^0)$. Since $u^0$ is backwardly self-similar (see Lemma \ref{lem backwardly self-similar}), applying White's stratification theorem in \cite{White-stratification}, we  conclude the proof of Theorem \ref{thm partial regularity for suitable weak solutions}.

\section{The case $m$ is an integer}\label{sec rectifiability}
\setcounter{equation}{0}

In this section we continue the analysis of energy concentration behavior, now under the assumption that $m$ is an integer. For this case we prove
\begin{thm}\label{thm Brakke flow}
If $m$ is an integer, then $(u_\infty,\mu)$ is a generalized Brakke's flow in the following sense:
  For any $\varphi\in C_0^\infty(B_1;\R^+)$ and $-1<t_1<t_2<1$,
  \begin{eqnarray}\label{Brakke flow}
     &&\left[\int_{B_1}\left(\frac{1}{2}|\nabla u_\infty(t_2)|^2-\frac{1}{p+1}|u_\infty(t_2)|^{p+1}\right)\varphi dx+\frac{1}{m}\int_{B_1}\varphi d\mu_{t_2}\right] \nonumber\\
     && -\left[\int_{B_1}\left(\frac{1}{2}|\nabla u_\infty(t_1)|^2-\frac{1}{p+1}|u_\infty(t_1)|^{p+1}\right)\varphi dx+\frac{1}{m}\int_{B_1}\varphi d\mu_{t_1}\right]  \nonumber\\
     &\leq& -\int_{t_1}^{t_2}\int_{B_1}\left[|\partial_tu_\infty|^2\varphi+\partial_tu_\infty\nabla u_\infty\cdot\nabla\varphi \right]dxdt\\
     && -\frac{1}{m}\int_{t_1}^{t_2}\int_{\Sigma_t}\left[|\mathbf{H}_t|^2\varphi-\nabla\varphi\cdot \mathbf{H}_t\right]d\mu_tdt. \nonumber
  \end{eqnarray}
\end{thm}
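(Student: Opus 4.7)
My plan is to obtain the generalized Brakke inequality \eqref{Brakke flow} by integrating the limiting energy inequality \eqref{energy identity limit} against a tensorized test function, and then replacing the time-derivative defect $\nu$ by the mean-curvature energy $\tfrac{1}{m^2}|\mathbf{H}|^2\mu$ via the pointwise comparison announced as Lemma \ref{lem relation between H and nu}.

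First I would substitute into \eqref{energy identity limit} the test function $\eta^2(x,t)=\varphi(x)\chi_\varepsilon(t)$, where $\chi_\varepsilon\in C^\infty_c(\R)$ approximates $\mathbf 1_{[t_1,t_2]}$ and is affine on $[t_1-\varepsilon,t_1]\cup[t_2,t_2+\varepsilon]$. As $\varepsilon\to 0$, the contributions carrying $\partial_t\eta^2$ combine, using the algebraic identity $\tfrac12-\tfrac1{p+1}=\tfrac1m$ together with the disintegration $\mu=\mu_t\,dt$ of Lemma \ref{lem disintegration of defect measure}, into the difference of the left-hand side of \eqref{Brakke flow} at $t_2$ and $t_1$. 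The BV regularity of $t\mapsto\int\varphi\,d\mu_t+\int(\tfrac12|\nabla u_\infty|^2-\tfrac1{p+1}|u_\infty|^{p+1})\varphi\,dx$ (which produced the disintegration) makes this limit well defined at a.e.\ pair $t_1<t_2$, and a density argument based on Proposition \ref{prop monotoncity formula II} extends the inequality to every $t_1<t_2$. The cross-term $-2\eta\partial_tu_\infty\nabla u_\infty\cdot\nabla\eta$ becomes $-\int_{t_1}^{t_2}\int\partial_tu_\infty\nabla u_\infty\cdot\nabla\varphi$, while the singular piece $-\tfrac1m\nabla\eta^2\cdot\mathbf{H}\,d\mu$ becomes $-\tfrac1m\int_{t_1}^{t_2}\int\nabla\varphi\cdot\mathbf{H}\,d\mu$, both via Lemma \ref{lem decomposition of mean curvature}. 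The outcome is an intermediate inequality identical to \eqref{Brakke flow} except that the $\Sigma_t$-integral is replaced by $-\int\int\varphi\,d\nu-\tfrac1m\int\int\nabla\varphi\cdot\mathbf{H}\,d\mu$.

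To convert this intermediate bound into \eqref{Brakke flow} I plan to invoke Lemma \ref{lem relation between H and nu}, which under the integer assumption $m\in\mathbb Z$ upgrades the soft Cauchy--Schwarz bound of Corollary \ref{coro coarse estimate on mean curvature} into a pointwise inequality of the form
\[
 d\nu\;\ge\;\tfrac{1}{m^2}|\mathbf{H}|^2\,d\mu\qquad\text{on }\Sigma,
\]
together with a complementary stationarity identity (obtained from the limiting stationary condition \eqref{stationary condition limit} tested against vector fields built from $\varphi$ and $\mathbf{H}/|\mathbf{H}|$) that reorganizes the gradient cross-term into the form appearing in \eqref{Brakke flow}. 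My approach to the pointwise comparison is a second tangent-flow analysis in the spirit of Section \ref{sec tangent flow}: at $\mu$-a.e.\ point $(x_0,t_0)\in\Sigma$, Lemma \ref{lem vanishing of Morrey} forces the smooth Morrey densities of $u_\infty$ to vanish, so by Remark \ref{rmk tangent flow at special pt} every tangent flow is a backwardly self-similar pair $(0,\mu^0,\nu^0,\mathbf{H}^0)$. A further blow-up in the style of Section \ref{sec Marstrand theorem} produces a static tangent measure, stationary in the sense of \eqref{stationary condition limit 2}, and the integrality of $m$ now lets Marstrand's theorem identify this tangent as an $(n-m)$-plane of integer density rather than forcing $\mu\equiv 0$ as in Theorem \ref{thm Marstrand}. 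A direct inspection of the Cauchy--Schwarz inequality underlying Lemma \ref{lem decomposition of mean curvature} saturates on such planar tangents, giving $d\nu^0=\tfrac{1}{m^2}|\mathbf{H}^0|^2\,d\mu^0$, and the differentiation theorem for Radon measures transfers this back to $(\mu,\nu,\mathbf{H})$.

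The main obstacle is exactly this rectifiability-type analysis: the Cauchy--Schwarz bound of Corollary \ref{coro coarse estimate on mean curvature} is genuinely too coarse to yield a pointwise estimate, and the enhancement depends essentially on $m\in\mathbb Z$, so that Marstrand's theorem produces a nontrivial planar tangent rather than annihilating $\mu$ as in Theorem \ref{thm Marstrand}. Once the pointwise comparison and the compatible stationarity identity are available, substituting them into the intermediate inequality from the first step and invoking $\varphi\ge 0$ collects every term into the form \eqref{Brakke flow}.
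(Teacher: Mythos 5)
Your first step is sound and matches the paper's final assembly: tensorize the test function in \eqref{energy identity limit}, let $\chi_\varepsilon\to\mathbf{1}_{[t_1,t_2]}$, and use the disintegration $\mu=\mu_t\,dt$ of Lemma \ref{lem disintegration of defect measure} to produce the intermediate inequality with $-\int\!\!\int\varphi\,d\nu$ on the right. The gap is in your proposed proof of the comparison between $\nu$ and $|\mathbf{H}|^2\mu$. A tangent-flow/Marstrand argument cannot deliver it: under the parabolic rescaling one has $\mu^\lambda(A)=\lambda^{m-2-n}\mu(\lambda A)$ and $\nu^\lambda(A)=\lambda^{m-n}\nu(\lambda A)$, so the density ratio transforms as $d\nu^\lambda/d\mu^\lambda=\lambda^{2}\,(d\nu/d\mu)(\lambda\,\cdot)$, and likewise $\mathbf{H}^\lambda=\lambda\,\mathbf{H}(\lambda\,\cdot)$. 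Both quantities are therefore annihilated in the blow-up limit at $\mu$-a.e.\ point --- indeed the paper's own secondary blow-up in Section \ref{sec Marstrand theorem} produces a static tangent with $\nu^1=0$ and $\mathbf{H}^1=0$ $\mu^1$-a.e. Your ``saturation of Cauchy--Schwarz on planar tangents'' is thus the vacuous identity $0=0$, and no differentiation theorem can transfer a relation between two quantities that have both been scaled away.

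Moreover, even the constant you target is insufficient. Localizing the crude Cauchy--Schwarz of Corollary \ref{coro coarse estimate on mean curvature} gives at best $\frac{1}{m^2}|\mathbf{H}|^2\mu\le\nu$; substituting this into your intermediate inequality replaces $-\int\!\!\int\varphi\,d\nu$ by $-\frac{1}{m^2}\int\!\!\int|\mathbf{H}|^2\varphi\,d\mu$, which is strictly weaker than the $-\frac{1}{m}\int\!\!\int|\mathbf{H}|^2\varphi\,d\mu$ demanded by \eqref{Brakke flow}. The missing factor of $m$ is exactly what the paper's route supplies and is where the hypothesis $m\in\mathbb{Z}$ enters: from the time-sliced stationarity condition \eqref{stationary condition limit time slice} and the lower density bound \eqref{lower density bound} one obtains countable $(n-m)$-rectifiability of $\Sigma_t$ and the identity $I-m\mathcal{T}=T_x\Sigma_t$, i.e.\ $\mathcal{T}=\frac{1}{m}$ times the normal projection, and then $\mathbf{H}\perp T_x\Sigma_t$ (Lemma \ref{lem mean curvature vector}). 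Applying Cauchy--Schwarz to $\partial_tu_i\,(\nabla u_i\cdot e)$ only in the single normal direction $e=\mathbf{H}/|\mathbf{H}|$, where the gradient defect carries density $e\cdot\mathcal{T}e\,\mu=\frac{1}{m}\mu$ rather than $\mu$, gains the extra factor of $m$ and yields the required bound $\frac{1}{m}|\mathbf{H}|^2\mu\le\nu$ (Lemma \ref{lem relation between H and nu}). So the rectifiability and the anisotropy of $\mathcal{T}$ must be used as inputs to a refined Cauchy--Schwarz performed on the approximating sequence $u_i$, not extracted as the conclusion of a blow-up saturation.
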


By Lemma \ref{lem disintegration of defect measure}, for a.e. $t\in(-1,1)$ and any $Y\in C_0^\infty(B_1,\R^n)$,
\begin{eqnarray}\label{stationary condition limit time slice}
  0 &=&  \int_{B_1}\left[\left(\frac{|\nabla u_\infty(x,t)|^2}{2}-\frac{|u_\infty(x,t)|^{p+1}}{p+1}\right)\mbox{div}Y-DY\left(\nabla u_\infty(x,t),\nabla u_\infty(x,t)\right)\right] dx \nonumber\\
  &+&\int_{B_1}\partial_tu_\infty(x,t) \nabla u_\infty(x,t)\cdot Y dx   \\
  &+&\int_{B_1}\left[\left(\frac{1}{m}I-\mathcal{T}(x,t)\right)\cdot DY +\frac{1}{m}\mathbf{H}(x,t)\cdot Y\right] d\mu_t(x). \nonumber
    \end{eqnarray}
In view of the lower density bound in \eqref{lower density bound},  as in \cite{Ambrosio-Soner} or \cite{Lin-annals} (see also \cite{Moser} or \cite[Proposition 3.1]{DePhilippis}), we deduce that $\mu_t$ is countably $(n-m)$-rectifiable. In other words, $\Sigma_t$ is countably $(n-m)$-rectifiable, and
\begin{equation}\label{representation of stress-energy tensor}
 I-m\mathcal{T}=T_x\Sigma_t, \quad \mathcal{H}^{n-m} ~ \mbox{a.e.  on} ~ \Sigma_t,
\end{equation}
where $T_x\Sigma_t$ is the weak tangent space (identified with the projection map onto it) of $\Sigma_t$ at $x$.

Similar to Lin-Wang \cite[Lemma 9.2.2]{Ling-Wang-book}, we get
\begin{lem}\label{lem mean curvature vector}
  For a.e. $t\in(-1,1)$,
  \[\mathbf{H}(x,t)\bot T_x\Sigma_t, \quad \mbox{for} ~ \mathcal{H}^{n-m} ~ \mbox{a.e.  } ~ x\in\Sigma_t.\]
\end{lem}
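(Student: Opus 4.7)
The claim is the standard perpendicularity of the generalized mean curvature on a rectifiable stationary varifold, and I would prove it by adapting the Brakke--Allard argument to our modified stationary condition, exactly as in the cited Lin--Wang \cite[Lemma 9.2.2]{Ling-Wang-book}. The key ingredients are the time-slice stationary condition \eqref{stationary condition limit time slice}, the rectifiability \eqref{representation of stress-energy tensor} of $\mu_t$ for a.e.\ $t$, the $L^2$-integrability of $\mathbf{H}(\cdot, t)$, and the fact that the ambient data $|\nabla u_\infty|^2 dx$, $|u_\infty|^{p+1} dx$, and $|\partial_t u_\infty \nabla u_\infty|\, dx$ are absolutely continuous with respect to Lebesgue measure and therefore $\mu_t$-singular.

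I would fix a generic $t \in (-1, 1)$. At a $\mu_t$-generic point $x_0 \in \Sigma_t$ three properties hold: (a) the approximate tangent plane $T_{x_0}\Sigma_t$ exists with positive density $\theta(x_0, t)$; (b) $\mathbf{H}(x_0, t)$ is the $\mu_t$-Lebesgue value of $\mathbf{H}$ by Besicovitch differentiation; (c) by the mutual singularity of $\mu_t$ and Lebesgue measure, the Radon--Nikodym derivatives of the ambient Lebesgue measures with respect to $\mu_t$ vanish at $x_0$, i.e.,
\[
\int_{B_r(x_0)} \bigl(|\nabla u_\infty|^2 + |u_\infty|^{p+1} + |\partial_t u_\infty \nabla u_\infty|\bigr)(y, t)\, dy = o\bigl(\mu_t(B_r(x_0))\bigr) \quad \mbox{as } r \to 0.
\]

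For each tangential vector $v_0 \in T_{x_0}\Sigma_t$, I would test \eqref{stationary condition limit time slice} with $Y(x) = \phi_r(x)\, v_0$, where $\phi_r(x) = \rho((x-x_0)/r)$ for a fixed nonnegative bump $\rho \in C_0^\infty(\R^n)$, and pass to the limit $r \to 0$ after rescaling by $r^{m-n}$. By (c), all ambient terms are negligible in the limit. The measure part contributes
\[
\frac{1}{m}\int \bigl[v_0 \cdot T_x\Sigma_t\, \nabla\phi_r + \phi_r\, \mathbf{H}\cdot v_0\bigr]\, d\mu_t.
\]
Using $v_0 \in T_{x_0}\Sigma_t$, I decompose $T_x\Sigma_t\, v_0 = v_0 + (T_x\Sigma_t - T_{x_0}\Sigma_t) v_0$: the leading tangential piece $\int v_0 \cdot \nabla\phi_r\, d\mu_t$ vanishes in the rescaled limit by the tangential fundamental theorem of calculus on $T_{x_0}\Sigma_t$ (since $v_0$ is tangent and $\rho$ is compactly supported), while the residual term involving the variation of the tangent plane is controlled by the quantitative approximate tangent plane property of rectifiable measures. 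What survives is
\[
\theta(x_0, t)\, \mathbf{H}(x_0, t) \cdot v_0 \int_{T_{x_0}\Sigma_t} \rho\, d\mathcal{H}^{n-m} = 0,
\]
forcing $\mathbf{H}(x_0, t) \cdot v_0 = 0$. Since $v_0 \in T_{x_0}\Sigma_t$ is arbitrary, $\mathbf{H}(x_0, t) \perp T_{x_0}\Sigma_t$.

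The main obstacle is the quantitative control of the tangent-plane residual and of the ambient absolutely continuous contributions at the correct scale, in particular dealing with the $r^{-1}$ factor coming from $|\nabla \phi_r|$. This is resolved by the standard rectifiability machinery together with the Besicovitch differentiation theorem; as this is well-trodden ground, I would follow the precise mechanics of \cite[\S 9.2]{Ling-Wang-book} essentially verbatim, substituting our stress tensor $I - m \mathcal{T}$ in place of the harmonic-map one and using the $\mu_t$-negligibility in (c) to discard the extra $u_\infty$-terms in \eqref{stationary condition limit time slice}.
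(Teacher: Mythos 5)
There is a genuine gap. Your argument uses only two inputs: the first-variation identity \eqref{stationary condition limit time slice} and the rectifiability of $\mu_t$. But perpendicularity of the generalized mean curvature is \emph{false} for general rectifiable varifolds with $L^2$ first variation: take $\mu_t=\theta(x_1)\,\mathcal{H}^1\lfloor(\R\times\{0\})\subset\R^2$ with $\theta$ smooth, positive and non-constant; its generalized mean curvature is $(\theta'/\theta)\,e_1$, which is purely tangential. Brakke's perpendicularity theorem rescues this only for \emph{integral} varifolds, via a Lipschitz-approximation argument that is nothing like your sketch, and integrality (or even the quantization of $\theta$, which in this paper is only proved later and only for $p=\frac{n+2}{n-2}$) is not available here. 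Concretely, the step that fails is the treatment of $\int v_0\cdot\nabla\phi_r\,d\mu_t$ and of the residual $(T_x\Sigma_t-T_{x_0}\Sigma_t)v_0\cdot\nabla\phi_r$: after multiplying by $r^{m-n}$ these are $r^{-1}$ times quantities that tend to zero only in the sense of weak convergence of the blow-ups, with no rate; the ``quantitative approximate tangent plane property'' you invoke to absorb the $r^{-1}$ does not exist for general rectifiable measures. So the two error terms are $O(1)$ at best, and nothing forces them to vanish.

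The proof must therefore use where $\mathbf{H}$ and $\mathcal{T}$ come from, not just the varifold identity they end up satisfying, and this is what the cited Lin--Wang argument does. Since $\mathbf{H}_i=\partial_tu_i\nabla u_i/|\nabla u_i|^2$ is parallel to $\nabla u_i$, Cauchy--Schwarz gives, for every fixed unit vector $e$ and every $\phi\ge 0$,
\[
\Big|\int \phi\,\partial_tu_i\,(\nabla u_i\cdot e)\Big|^2\le \int\phi\,|\partial_tu_i|^2\cdot\int\phi\,(\nabla u_i\cdot e)^2 .
\]
Passing to the limit and differentiating all three measures with respect to $\mu$ (Besicovitch; the Lebesgue-absolutely-continuous parts drop out $\mu$-a.e.\ because $\mu\perp dx\,dt$) yields
\[
\frac{1}{m^2}\,|\mathbf{H}\cdot e|^2\le \frac{d\nu}{d\mu}\,\bigl(e\cdot\mathcal{T}e\bigr)\qquad \mu\text{-a.e.},
\]
first for a countable dense set of $e$ and then for all $e$. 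By \eqref{representation of stress-energy tensor}, $m\mathcal{T}=I-T_x\Sigma_t$ is the projection onto the normal space, so $e\cdot\mathcal{T}e=0$ whenever $e\in T_x\Sigma_t$, and hence $\mathbf{H}\cdot e=0$. It is exactly this use of the identity $I-m\mathcal{T}=T_x\Sigma_t$ (the concentration of $\nabla u_i$ along normal directions), combined with the $L^2$ bound on $\partial_tu_i$, that replaces the integrality hypothesis in Brakke's theorem; your proposal uses \eqref{representation of stress-energy tensor} only to rewrite the tangential divergence, which is not enough.
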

Similar to Lin-Wang \cite[Lemma 9.2.7]{Ling-Wang-book}, we also get
\begin{lem}\label{lem relation between H and nu}
  For any $\eta\in C_0^\infty(Q_1)$,
  \[\int_{Q_1}\eta^2|\mathbf{H}(x,t)|^2d\mu_t(x)dt\leq \frac{1}{m}\int_{Q_1}\eta^2 d\nu.\]
\end{lem}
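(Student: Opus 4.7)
The plan is to apply Cauchy--Schwarz at the level of the sequence $u_i$, pass to the weak limits of the relevant measures, localize to kill the Lebesgue contributions away from $\Sigma^\ast$, and finally substitute $Y=\mathbf{H}$ by density, using the structural identity $\mathcal{T}(\mathbf{H},\mathbf{H})=\tfrac{1}{m}|\mathbf{H}|^2$ supplied by Lemma \ref{lem mean curvature vector} together with \eqref{representation of stress-energy tensor}.

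For any smooth vector field $Y\in C_0^\infty(Q_1,\R^n)$, writing $\eta^2\partial_tu_i(Y\cdot\nabla u_i)=(\eta\partial_tu_i)(\eta Y\cdot\nabla u_i)$ and applying Cauchy--Schwarz in $L^2(dxdt)$ gives
\[\Bigl|\int_{Q_1}\eta^2\partial_tu_i(Y\cdot\nabla u_i)\,dxdt\Bigr|^2 \leq \int_{Q_1}\eta^2|\partial_tu_i|^2\,dxdt\cdot\int_{Q_1}\eta^2(Y\cdot\nabla u_i)^2\,dxdt.\]
Passing $i\to\infty$ uses $\partial_tu_i\nabla u_i\,dxdt\rightharpoonup\partial_tu_\infty\nabla u_\infty\,dxdt+\tfrac{1}{m}\mathbf{H}d\mu$ on the LHS, $|\partial_tu_i|^2 dxdt\rightharpoonup|\partial_tu_\infty|^2 dxdt+\nu$ on the first RHS factor, and $\nabla u_i\otimes\nabla u_i\,dxdt\rightharpoonup\nabla u_\infty\otimes\nabla u_\infty\,dxdt+\mathcal{T}d\mu$ on the second (tested against the smooth matrix $\eta^2 Y\otimes Y$ via $(Y\cdot\nabla u_i)^2=(Y\otimes Y):(\nabla u_i\otimes\nabla u_i)$).

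I then localize: replace $Y$ by $\chi_\delta Y$ with $\chi_\delta\in C_0^\infty(Q_1)$ equal to $1$ in the $\delta/2$-neighborhood of $\Sigma^\ast$ and vanishing outside its $\delta$-neighborhood. Because $\mathrm{spt}(\mu)\cup\mathrm{spt}(\nu)\subset\Sigma^\ast$, the $\mu$- and $\nu$-integrals are unaffected, while all three Lebesgue integrals involving $\chi_\delta^2$ vanish as $\delta\to 0$ by absolute continuity (the integrands lie in $L^1(Q_1)$ by the Morrey bound \eqref{Morrey bound for the limit} and Cauchy--Schwarz for the cross term). The surviving inequality reads
\[\tfrac{1}{m^2}\Bigl|\int\eta^2 Y\cdot\mathbf{H}\,d\mu\Bigr|^2 \leq \int\eta^2\,d\nu\cdot\int\eta^2\mathcal{T}(Y,Y)\,d\mu.\]
Since $\mathbf{H}\in L^2(Q_1,d\mu)$, I approximate $\mathbf{H}$ by smooth $Y_k\to\mathbf{H}$ in $L^2(d\mu)$ and pass to the limit (legitimate since $0\leq\mathcal{T}\leq I$, as $\mathcal{T}$ is symmetric positive semidefinite with $\mathrm{tr}\,\mathcal{T}=1$). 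Combining $I-m\mathcal{T}=T_x\Sigma_t$ with $\mathbf{H}\perp T_x\Sigma_t$ gives $\mathcal{T}(\mathbf{H},\mathbf{H})=\tfrac{1}{m}|\mathbf{H}|^2$ $\mu$-a.e., so the inequality reduces to
\[\tfrac{1}{m^2}\Bigl(\int\eta^2|\mathbf{H}|^2\,d\mu\Bigr)^2 \leq \tfrac{1}{m}\int\eta^2\,d\nu\cdot\int\eta^2|\mathbf{H}|^2\,d\mu,\]
and cancelling one factor of $\int\eta^2|\mathbf{H}|^2\,d\mu$ (trivial when it vanishes) delivers the claim.

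The main obstacle is the identification $\mathcal{T}(\mathbf{H},\mathbf{H})=\tfrac{1}{m}|\mathbf{H}|^2$ at the final step: it rests on rectifiability of $\Sigma_t$ (proved earlier in this section via Allard's theorem applied to the stationary condition \eqref{stationary condition limit time slice} together with the lower density bound \eqref{lower density bound}) and on the orthogonality of $\mathbf{H}$ to $T_x\Sigma_t$. Without rectifiability one could only invoke the crude bound $\mathcal{T}(Y,Y)\leq|Y|^2$, which recovers just the weaker Hutchinson-type estimate already contained in Corollary \ref{coro coarse estimate on mean curvature}.
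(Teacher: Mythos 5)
Your route --- Cauchy--Schwarz on $\int\eta^2\partial_tu_i\,(Y\cdot\nabla u_i)$, passage to the three weak limits, localization near $\Sigma^\ast$ to kill the absolutely continuous parts, then $Y\to\mathbf{H}$ by density combined with $\mathcal{T}(\mathbf{H},\mathbf{H})=\tfrac{1}{m}|\mathbf{H}|^2$ from \eqref{representation of stress-energy tensor} and Lemma \ref{lem mean curvature vector} --- is precisely the Lin--Wang argument the paper invokes (the paper supplies no proof of its own), and the structure is sound. One technical slip: replacing $Y$ by $\chi_\delta Y$ inserts the cutoff into the left-hand side and into the second factor, but \emph{not} into $\int\eta^2|\partial_tu_i|^2$, so the Lebesgue term $\int\eta^2|\partial_tu_\infty|^2\,dxdt$ survives in the first factor and spoils the bound. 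You must instead replace $\eta$ by $\eta\chi_\delta$ throughout, i.e.\ apply Cauchy--Schwarz to $(\eta\chi_\delta\partial_tu_i)(\eta\chi_\delta\,Y\cdot\nabla u_i)$; since $\chi_\delta\equiv1$ on $\mathrm{spt}(\mu)\cup\mathrm{spt}(\nu)$ and $\Sigma^\ast$ is Lebesgue-null, all three absolutely continuous contributions then vanish as $\delta\to0$ while the measure integrals are unchanged. This is fixable.

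The substantive issue is the final constant. From
\[
\frac{1}{m^2}\Bigl(\int_{Q_1}\eta^2|\mathbf{H}|^2\,d\mu\Bigr)^2\leq\int_{Q_1}\eta^2\,d\nu\cdot\frac{1}{m}\int_{Q_1}\eta^2|\mathbf{H}|^2\,d\mu,
\]
cancellation yields $\int\eta^2|\mathbf{H}|^2\,d\mu\leq m\int\eta^2\,d\nu$, which is weaker by a factor $m^2$ than the stated $\tfrac{1}{m}\int\eta^2\,d\nu$; your assertion that this ``delivers the claim'' is therefore false as written. This is not a defect of the method: the statement with $\tfrac{1}{m}$ cannot hold. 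For a concentration point translating with velocity $\gamma'$ one finds, using $\mathcal{T}=\tfrac{1}{m}(I-T_x\Sigma_t)$, that $\nu=\tfrac{1}{m}|\gamma'|^2\mu$ and $\mathbf{H}=-\gamma'$, so $\int|\mathbf{H}|^2d\mu=m\int d\nu$ exactly --- your inequality is sharp, the stated one fails. Moreover the only place the lemma is used, ``Plugging this estimate into \eqref{energy identity limit}, we obtain \eqref{Brakke flow}'', requires precisely $\tfrac{1}{m}\int\eta^2|\mathbf{H}|^2d\mu\leq\int\eta^2d\nu$, i.e.\ your version. So the constant in the lemma is a misprint, and your argument (after the localization fix) proves the correct, needed inequality; but you should have flagged the mismatch instead of claiming the literal statement follows.
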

Plugging this estimate into \eqref{energy identity limit}, we obtain \eqref{Brakke flow}. This finishes the proof of Theorem \ref{thm Brakke flow}.

\section{The case $p=(n+2)/(n-2)$}\label{sec critical exponent}
\setcounter{equation}{0}

In this section we assume $p=(n+2)/(n-2)$ is the Sobolev critical exponent, and all of the solutions $u_i$ are  \emph{smooth}. Note that now $m=2(p+1)/(p-1)=n$.
The main result of this section is  about the quantization of energy.
\begin{thm}\label{thm energy quantization I}
  For any $(x,t)\in\Sigma$, there exist finitely many bubbles $W^k$, $k=1,\cdots, N$, such that
  \[\Theta(x,t)=\frac{1}{n\left(4\pi\right)^{n/2}}\sum_{k=1}^{N}\int_{\R^n}|\nabla W^k|^2.\]
  Furthermore, if all solutions are positive, then there exists an $N\in\mathbb{N}$ such that
  \[ \Theta(x,t)=N\frac{\Lambda }{n\left(4\pi\right)^{n/2}}.\]
\end{thm}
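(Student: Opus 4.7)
The plan is to combine the tangent flow analysis of Section~\ref{sec tangent flow} with an iterated Struwe-type bubble extraction at the blow-up point $(x,t)\in\Sigma$.

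First, I blow up the sequence $u_i$ at $(x,t)$. Any tangent flow $(u_\infty^0,\mu^0)$ obtained from the limit $u_\infty$ is backwardly self-similar by Lemma~\ref{lem backwardly self-similar}; since $p=(n+2)/(n-2)$ and $m=n$, Theorem~\ref{thm Liouville for backward self-similar sol.} forces the smooth part $u_\infty^0$ to be either $0$ or a space-independent profile $\pm(p-1)^{-1/(p-1)}(-t)^{-1/(p-1)}$, neither of which carries bubble energy. To detect actual bubbles one must rescale the sequence $u_i$ itself at its natural concentration scales. Choose the ``top'' scale $\lambda_i^1\to 0$ (e.g.\ the largest $\lambda$ with $\int_{Q_\lambda^-(x,t)}(|\nabla u_i|^2+|u_i|^{p+1})\ge\varepsilon_\ast\lambda^{n+2-m}$) so that the rescaled sequence $v_i^1(y,s):=(\lambda_i^1)^{(n-2)/2}u_i(x+\lambda_i^1 y,t+(\lambda_i^1)^2 s)$ admits a nontrivial weak limit $W^1$. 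By the smooth convergence off the blow-up locus (Theorem~\ref{thm energy concentration set}(3)), $W^1$ is an entire smooth finite-energy solution of \eqref{stationary eqn}, i.e.\ a bubble.

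Second, I iterate: after removing $W^1$, the residual sequence still carries defect, localized at strictly smaller scales $\lambda_i^2\ll\lambda_i^1$. Rescaling by $\lambda_i^2$ extracts a second bubble $W^2$, and so on. Since every bubble has energy at least the best Sobolev constant, the uniform bound \eqref{energy bound I} forces the procedure to terminate after finitely many steps, producing $W^1,\dots,W^N$.

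Third, I compute the contribution of each bubble to $\Theta(x,t)$. For a bubble concentrated at scale $\lambda_i^k$ near $(x,t)$, with $(\lambda_i^k)^2\ll s\ll 1$, the heat kernel satisfies $G(y-x,s)\approx(4\pi s)^{-n/2}$ on the bubble's parabolic support, and the prefactor $s^{(p+1)/(p-1)}=s^{n/2}$ exactly balances this Gaussian normalization; the $u^2$-term is of lower order, and the Pohozaev identity $\int|\nabla W^k|^2=\int|W^k|^{p+1}$ yields a contribution of
\[
\Bigl(\tfrac12-\tfrac{1}{p+1}\Bigr)\frac{1}{(4\pi)^{n/2}}\int_{\R^n}|\nabla W^k|^2=\frac{1}{n(4\pi)^{n/2}}\int_{\R^n}|\nabla W^k|^2.
\]
The main obstacle is the ``no neck loss'' property: one must show that the energy in the parabolic annular neck regions $\lambda_i^{k+1}\ll|y-x|\lesssim\lambda_i^k$ vanishes in the limit, so that $\Theta(x,t)$ equals exactly $\sum_k\frac{1}{n(4\pi)^{n/2}}\int|\nabla W^k|^2$ with no residual defect from the intermediate scales. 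This is established by combining the almost monotonicity formula of Proposition~\ref{prop monotoncity formula} with the parabolic $\varepsilon$-regularity Theorem~\ref{thm ep regularity II}: by the maximality in the choice of $\lambda_i^k$, no bubble lives in the neck, so the neck energy on each dyadic scale is forced below $\varepsilon_\ast$; the Morrey estimate of Corollary~\ref{coro Morrey} then propagates this smallness across all intermediate scales. Finally, in the positive case $u_0\ge0$ gives $u_i\ge0$ by the maximum principle, so each $W^k\ge0$; by Caffarelli-Gidas-Spruck every such bubble equals some $W_{\xi_k,\lambda_k}$ with $\int|\nabla W^k|^2=\Lambda$, yielding $\Theta(x,t)=N\Lambda/(n(4\pi)^{n/2})$.
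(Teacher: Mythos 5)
Your overall architecture (tangent flow analysis, triviality of the smooth part via Theorem~\ref{thm Liouville for backward self-similar sol.}, the $\frac{1}{n(4\pi)^{n/2}}$ normalization via the Pohozaev identity, and Caffarelli--Gidas--Spruck in the positive case) matches the paper, but the step you yourself flag as ``the main obstacle'' --- no neck loss --- is not actually closed by what you propose, and this is the heart of the theorem. The combination of $\varepsilon$-regularity, the monotonicity formula and the Morrey bound of Corollary~\ref{coro Morrey} only yields, on each dyadic annulus $B_{2r}\setminus B_r$ of the neck, the pointwise bound $|u_i|\lesssim r^{-(n-2)/2}$, $|\nabla u_i|\lesssim r^{-n/2}$, hence a \emph{time-slice} energy $\int_{B_{2r}\setminus B_r}|\nabla u_i(\cdot,t)|^2=O(1)$ per annulus --- small in the $\varepsilon_\ast$ sense but not summable over the unboundedly many dyadic scales between $\lambda_i^{k+1}$ and $\lambda_i^k$. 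The Morrey estimate does give that the \emph{space-time} neck energy is $O((\lambda_i^k)^2)$, but $\Theta$ and the defect mass $M$ are time-slice quantities (via $\mu^0_{-s}$), so this does not rule out an $O(1)$ residual contribution from the neck at fixed time. This is exactly the difficulty the paper sidesteps: it first shows (Lemma~\ref{lem concentration of time slice}) that for a.e.\ $t$ the localized energy $E_{\eta,i}(t)$ is uniformly $BV$ in time and $\int_{B_1}|\partial_tu_i(\cdot,t)|^2\to0$, which yields a uniform $H^1(B_1)$ bound on the time slice $u_i(\cdot,t)$, and then invokes \emph{Struwe's elliptic global compactness theorem} (Proposition~\ref{prop bubble tree convergence I}) for that fixed time slice; the no-neck-loss statement is packaged inside Struwe's theorem for the critical elliptic equation and is not re-derived from parabolic $\varepsilon$-regularity.

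Two secondary points. First, your extraction only iterates over a tower of scales at one spatial point; bubbles may also cluster at distinct points of the rescaled spatial variable, which Struwe's decomposition handles but your induction does not address. Second, to identify each extracted limit as a \emph{stationary} bubble you need $\partial_s v_i^k\to0$ in $L^2_{loc}$ after rescaling; this does follow from hypothesis (3) of Lemma~\ref{lem energy quantization I} together with $m=n$, but it should be said, since otherwise the limit could a priori be a genuinely time-dependent ancient solution. The cleanest repair is to follow the paper: pass to a good time slice, obtain the uniform $H^1$ bound, and let Struwe's theorem deliver both the finite bubble decomposition and the energy identity $\int_{B_1}|\nabla u_i(\cdot,t)|^2=\sum_k\int|\nabla W^k|^2+o_i(1)$, after which your $\Theta$ computation goes through as written.
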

 We first prove a local version of this proposition in Subsection \ref{subsec local quantization}, then prove this theorem in Subsection \ref{subsec proof of energy quantization}. During this course, some further properties of defect measures will also be established in Subsection \ref{subsec proof of energy quantization} and Subsection \ref{subsec further properties of defect measure}, in particular, for applications in Part \ref{part one bubble}, Part \ref{part many bubbles} and Part \ref{part first time singularity}, the special case of positive solutions will be discussed.

\subsection{A local quantization result}\label{subsec local quantization}

In this subsection we prove the following
\begin{lem}\label{lem energy quantization I}
Given a constant $M>0$, suppose a sequence of smooth solutions $u_i$ to \eqref{eqn} in $Q_1$ satisfies
 \[
 \left\{\begin{aligned}
& |\nabla u_i|^2dxdt\rightharpoonup M\delta_0\otimes dt,\\
& |u_i|^{p+1}dxdt\rightharpoonup M\delta_0\otimes dt,\\
& \int_{Q_1}|\partial_tu_i|^2dxdt\to0.
\end{aligned}\right.
\]
Then there exist finitely many bubbles $W^k$ such that
\[ M=\sum_{k}\int_{\R^n}|\nabla W^k|^2dx.\]
\end{lem}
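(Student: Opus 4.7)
The strategy is to reduce the parabolic problem to an elliptic profile decomposition at a well-chosen time slice, then apply a Struwe-type bubble decomposition for the critical stationary equation $-\Delta W = |W|^{p-1}W$ on $\R^n$. First, by Fubini's theorem and $\int_{Q_1}|\partial_tu_i|^2 \to 0$, I extract (along a subsequence) times $t_i \in (-1/2, 1/2)$ with $\int_{B_1}|\partial_tu_i(\cdot,t_i)|^2dx \to 0$. Combined with the concentrated weak convergence of the energy densities to $M\delta_0 \otimes dt$ and an averaging-in-time argument (using the time continuity of slice energies coming from $\partial_tu_i \in L^2$), I additionally arrange $\int_{B_{1/2}}|\nabla u_i(\cdot,t_i)|^2 dx \to M$ with mass concentrating at $0$, and similarly $\int_{B_{1/2}}|u_i(\cdot,t_i)|^{p+1} dx \to M$. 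Setting $v_i := u_i(\cdot,t_i)$ and $f_i := \partial_tu_i(\cdot,t_i)$, the slice solves
\[
-\Delta v_i = |v_i|^{p-1}v_i - f_i \quad \text{in } B_1, \qquad \|f_i\|_{L^2(B_1)} \to 0.
\]

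Since $v_i$ is bounded in $\dot H^1$, concentrates at the origin, and $\|f_i\|_{\dot H^{-1}(B_1)} \lesssim \|f_i\|_{L^{2n/(n+2)}(B_1)} \lesssim \|f_i\|_{L^2(B_1)} \to 0$ (via $L^2 \hookrightarrow L^{2n/(n+2)}$ on bounded domains and the dual critical Sobolev embedding), the sequence $v_i$ is an almost Palais--Smale sequence for the critical Sobolev functional. I then apply the Struwe-type profile decomposition (adapted to the small $\dot H^{-1}$ source $f_i$): there exist a finite integer $N$, bubbles $W^1,\ldots,W^N$ solving $-\Delta W^k = |W^k|^{p-1}W^k$ on $\R^n$ with finite energy, and mutually separated concentration parameters $\xi_i^k \to 0$, $\lambda_i^k \to 0$, such that
\[
\Bigl\| v_i - \sum_{k=1}^{N} W^k_{\xi_i^k,\lambda_i^k} \Bigr\|_{\dot H^1(\R^n)} \to 0.
\]
The standard Struwe energy identity then yields $\int|\nabla v_i|^2 \to \sum_{k=1}^N \int_{\R^n}|\nabla W^k|^2$; combined with $\int_{B_{1/2}}|\nabla v_i|^2 \to M$ we conclude $M = \sum_{k=1}^N \int_{\R^n}|\nabla W^k|^2$. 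Finiteness of $N$ follows from the universal lower bound on the energy of any nontrivial finite-energy solution to $-\Delta W = |W|^{p-1}W$.

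The main obstacle is justifying the Struwe decomposition in the presence of the small $L^2$ source $f_i$. Two points require care: (i) after rescaling to a bubble scale $\lambda_i^k$, the renormalized source becomes $(\lambda_i^k)^{(n+2)/2} f_i(\xi_i^k + \lambda_i^k\,\cdot)$, which preserves its $L^{2n/(n+2)}$ norm by the scale-invariance of this norm at the critical exponent, and hence still vanishes, so the renormalized $v_i$ converges in $\dot H^1_{\mathrm{loc}}$ to a genuine finite-energy bubble; (ii) the iteration must terminate with zero defect, which follows from the $\varepsilon$-regularity (Theorem \ref{thm ep regularity II}) applied to $u_i$ in regions of sub-threshold energy density, ensuring that once all concentration profiles have been extracted, the residual carries no further defect measure. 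Combined with the standard scale/location separation in Struwe's iteration, this yields the claimed identity.
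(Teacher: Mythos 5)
Your proposal follows essentially the same route as the paper: select a time slice where $\int_{B_1}|\partial_t u_i|^2dx\to 0$, observe that the slice is an almost Palais--Smale sequence for the critical elliptic functional, and invoke Struwe's global compactness theorem together with its energy identity. The one step you assert rather than prove is that at the chosen slice the gradient and potential energies \emph{separately} converge to $M$ (equivalently, that $v_i$ is bounded in $\dot H^1$) — the space-time hypotheses and the energy identity only control the combined density $\tfrac12|\nabla u_i|^2-\tfrac1{p+1}|u_i|^{p+1}$ in a BV-in-time manner, and the paper decouples the two terms by additionally multiplying the equation by $u_i\eta^2$ and integrating by parts, using $\|\partial_t u_i(t)\|_{L^2}\to 0$ and $u_i(t)\to 0$ in $L^2_{loc}$.
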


First let us present some immediate consequences of the assumptions in this lemma.
\begin{itemize}
\item An application of the $\varepsilon$-regularity theorem (Theorem \ref{thm ep regularity II}) implies that
\begin{equation}\label{smooth convergence outside blow up locus, localized}
  u_i\to 0 \quad \mbox{in} ~ C^\infty_{loc}\left((B_1\setminus\{0\})\times(-1,1)\right).
\end{equation}
  \item By the $L^2(Q_1)$ bound on $\partial_tu_i$, we deduce that $u_i\to 0$ in $C_{loc}(-1,1;L^2_{loc}(B_1))$.
  \item  By  Fatou lemma,
\[ \int_{-1}^{1}\lim_{i\to+\infty}\left(\int_{B_1}|\partial_tu_i(x,t)|^2dx\right) dt\leq \lim_{i\to+\infty}\int_{Q_1}|\partial_tu_i|^2dxdt=0.\]
Hence for a.e. $t\in(-1,1)$,
\begin{equation}\label{bound on time derivative I}
\lim_{i\to+\infty}\int_{B_1}|\partial_tu_i(x,t)|^2dx=0.
\end{equation}
\end{itemize}

The following lemma describes the energy concentration behavior for a.e. time slice of $u_i$, under the assumptions in Lemma \ref{lem energy quantization I}.
\begin{lem}\label{lem concentration of time slice}
 For a.e. $t\in(-1,1)$,
  \[ \left\{\begin{aligned}
& |\nabla u_i(x,t)|^2dx\rightharpoonup  M\delta_0,\\
& |u_i(x,t)|^{p+1}dx\rightharpoonup  M\delta_0.
\end{aligned}\right.
\]
 \end{lem}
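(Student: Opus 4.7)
The plan is to pass from the space-time weak convergence to weak convergence at a.e.\ fixed time by combining two identities obtained from the PDE, together with the smooth decay of $u_i$ away from $\{0\}$ and the $L^2$ decay of $\partial_t u_i$. Fix $\varphi\in C_c^\infty(B_1)$ with $\varphi\equiv 1$ on $B_{1/2}$, so that $\mathrm{supp}(\nabla\varphi)\subset\subset B_1\setminus\{0\}$, and set
$$F_i(t):=\int_{B_1}\left(\frac{1}{2}|\nabla u_i|^2-\frac{1}{p+1}|u_i|^{p+1}\right)\varphi^2\,dx.$$
Multiplying \eqref{eqn} by $\partial_t u_i\,\varphi^2$ and integrating in space yields $F_i'(t)=-\int|\partial_t u_i|^2\varphi^2-2\int\varphi\,\partial_t u_i\,\nabla u_i\cdot\nabla\varphi$; integrating in time and using Cauchy-Schwarz,
$$|F_i(b)-F_i(a)|\le \int_{-1}^1\!\!\int|\partial_t u_i|^2\varphi^2+2\!\left(\int_{-1}^1\!\!\int|\partial_t u_i|^2\right)^{\!1/2}\!\!\left(\int_{-1}^1\!\!\int|\nabla u_i|^2|\nabla\varphi|^2\right)^{\!1/2}.$$
The first summand and the first factor vanish by hypothesis; the remaining factor tends to $0$ because $\nabla\varphi$ is supported where $u_i\to 0$ smoothly by \eqref{smooth convergence outside blow up locus, localized} (and, testing the spacetime weak limit $M\delta_0\otimes dt$ against $|\nabla\varphi|^2\eta$ gives $0$ since $|\nabla\varphi(0)|=0$). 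Hence $F_i(b)-F_i(a)=o_i(1)$ uniformly in $a,b\in(-1,1)$. On the other hand, the weak convergences of $|\nabla u_i|^2\,dxdt$ and $|u_i|^{p+1}\,dxdt$ give $\int\eta F_i\to L\int\eta$ for every $\eta\in C_c^\infty(-1,1)$, with $L:=\frac{p-1}{2(p+1)}M$. Testing against a mollifier $\eta_\delta$ at any $t_0$ and using the uniform equismallness just obtained, I conclude that $F_i(t_0)\to L$ pointwise in $(-1,1)$.

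Next, multiplying \eqref{eqn} by $u_i\varphi^2$ yields the Pohozaev-type identity
$$G_i(t):=\int_{B_1}\left(|\nabla u_i|^2-|u_i|^{p+1}\right)\varphi^2\,dx=-\int u_i\partial_t u_i\varphi^2-2\int u_i\varphi\,\nabla u_i\cdot\nabla\varphi.$$
Fix $t_0$ in the full-measure subset of $(-1,1)$ on which \eqref{bound on time derivative I} holds. By Cauchy-Schwarz and the fact that $u_i\to 0$ in $C_{loc}(-1,1;L^2_{loc}(B_1))$, the first term tends to $0$; the second tends to $0$ because its integrand is supported in a compact subset of $(B_1\setminus\{0\})\times(-1,1)$, where $u_i$ and $\nabla u_i$ converge smoothly to $0$. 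Hence $G_i(t_0)\to 0$. From $F_i(t_0)\to L$ and $G_i(t_0)\to 0$ the (invertible) $2\times 2$ linear system gives
$$\int|\nabla u_i(\cdot,t_0)|^2\varphi^2\,dx\to M\quad\text{and}\quad\int|u_i(\cdot,t_0)|^{p+1}\varphi^2\,dx\to M.$$
Since $\varphi\equiv 1$ on $B_{1/2}$ and $u_i\to 0$ smoothly on the compact set $\mathrm{supp}(\varphi)\setminus B_{1/2}$, the same limits hold without the $\varphi^2$ weight. A routine approximation argument in test functions (write $\psi\in C_c^\infty(B_1)$ as $\psi(0)\chi+(\psi-\psi(0)\chi)$ for a cutoff $\chi$ concentrated at $0$ and use the modulus of continuity of $\psi$ together with the smooth decay away from $0$) then upgrades this to $|\nabla u_i(\cdot,t_0)|^2\,dx\rightharpoonup M\delta_0$ and $|u_i(\cdot,t_0)|^{p+1}\,dx\rightharpoonup M\delta_0$ on $B_1$, which is the desired conclusion.

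The main obstacle is the passage from the distributional convergence $\int\eta F_i\to L\int\eta$ to pointwise convergence $F_i(t_0)\to L$. The key enabler is the geometric fact that $\nabla\varphi$ vanishes identically in a neighborhood of the concentration point $0$: this makes the spacetime limit of $|\nabla u_i|^2|\nabla\varphi|^2\,dxdt$ against any time cutoff equal to zero, which combined with $\int|\partial_t u_i|^2\to 0$ forces the flux term in the parabolic energy identity to vanish uniformly in $a,b$, so that $F_i$ is essentially time-constant up to $o_i(1)$. Without this cancellation, $F_i$ could oscillate wildly near the blow-up time and the pointwise limit would fail; the rest of the proof is then an algebraic consequence of this rigidity plus the vanishing of $\partial_t u_i$ at a.e.\ time slice.
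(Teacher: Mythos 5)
Your proof is correct and follows essentially the same route as the paper's: the localized energy identity shows the weighted energy $F_i(t)$ is asymptotically constant in time (the paper phrases this as a uniform $BV$ bound with vanishing total variation, you as a uniform $o_i(1)$ oscillation bound, which is the same mechanism), and the identity obtained by testing the equation with $u_i\varphi^2$ at a time slice where $\int_{B_1}|\partial_t u_i(\cdot,t)|^2\to 0$ balances the gradient and potential terms, after which the smooth convergence away from the origin localizes the mass at $0$. The only cosmetic difference is that you solve an explicit $2\times 2$ system for the two time-slice integrals where the paper first extracts the uniform $H^1$ bound \eqref{H1 bound for time slice} and then identifies the concentration constant by varying the cutoff; both yield $M(t)=M$ for a.e. $t$.
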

\begin{proof}
For any $\eta\in C_0^\infty(B_1)$, by the energy identity for $u_i$, the function
\[ E_{\eta,i}(t):= \int_{B_1}\left(\frac{|\nabla u_i(x,t)|^2}{2}-\frac{|u_i(x,t)|^{p+1}}{p+1}\right)\eta(x)^2 dx\]
is uniformly bounded in $BV_{loc}(-1,1)$. After passing to a subsequence, they converge in $L^1_{loc}(-1,1)$ and a.e. in $(-1,1)$ to the limit
$(M/n)\eta(0)^2$.

Another consequence of this uniform BV bound is, $E_{\eta,i}$ are uniformly bounded in any compact set of $(-1,1)$. If \eqref{bound on time derivative I} holds at $t$, then
\[ \int_{B_1}\left(|\nabla u_i(t)|^2-|u_i(t)|^{p+1}\right)\eta^2=-\int_{B_1}\left[\partial_t u_i(t) u_i(t) \eta^2+2\eta u_i(t)\nabla u_i(t)\cdot\nabla\eta\right]\]
are also bounded as $i\to+\infty$. Therefore
\begin{equation}\label{H1 bound for time slice}
 \limsup_{i\to+\infty}\int_{B_1}\left[|\nabla u_i(t)|^2+|u_i(t)|^{p+1}\right]dx<+\infty.
\end{equation}
Because $u_i(t)\to 0$ in $L^2(B_1)$, with the help of \eqref{smooth convergence outside blow up locus, localized}, we find a nonnegative constant $M(t)$ such that
\[|\nabla u_i(x,t)|^2dx\rightharpoonup  M(t)\delta_0, \quad  |u_i(x,t)|^{p+1}dx\rightharpoonup  M(t)\delta_0.\]
Then using the above a.e. convergence of $E_{\eta,i}(t)$, and letting $\eta$ vary in $C_0^\infty(B_1)$, we deduce that $M(t)=M$.
\end{proof}

For those $t$ satisfying \eqref{bound on time derivative I}, we have the uniform $H^1(B_1)$ bound \eqref{H1 bound for time slice}, hence by Struwe's global compactness theorem (\cite{Struwe1984compactness}), the following bubble tree convergence holds for $u_i(t)$.
\begin{prop}[Bubble tree convergence]\label{prop bubble tree convergence I}
There exist $N(t)$ points $\xi_{ik}^\ast(t)$, positive constants $\lambda_{ik}^\ast(t)$, $k=1,\cdots, N(t)$, all converging to $0$  as $i\to+\infty$, and  $N(t)$ bubbles $W^k$, such that
   \[ u_i(x,t)=\sum_{k=1}^{N(t)}W^k_{\xi_{ik}^\ast(t),\lambda_{ik}^\ast(t)}(x)+o_i(1),\]
   where $o_i(1)$ are measured in $H^1(B_1)$.

As a consequence,
\begin{equation}\label{energy quantization for time slice}
 \int_{B_1}|\nabla u_i(x,t)|^2dx=\sum_{k=1}^{N(t)}\int_{\R^n}|\nabla W^k|^2+o_i(1).
\end{equation}
\end{prop}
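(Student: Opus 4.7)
The plan is to reduce the claim to an application of Struwe's global compactness theorem \cite{Struwe1984compactness} at the fixed time slice $t$. Write $v_i := u_i(\cdot, t)$. By hypothesis \eqref{bound on time derivative I}, the function $f_i := -\partial_t u_i(\cdot, t)$ tends to $0$ in $L^2(B_1)$, so $v_i$ solves the perturbed elliptic equation
\[
-\Delta v_i = |v_i|^{p-1} v_i + f_i \quad \text{in } B_1 .
\]
Combined with \eqref{H1 bound for time slice}, this shows $\{v_i\}$ is an $H^1$-bounded almost Palais--Smale sequence for the Sobolev-critical functional. Moreover, Lemma \ref{lem concentration of time slice} gives $|\nabla v_i|^2\,dx \rightharpoonup M\delta_0$, which forces $v_i \rightharpoonup 0$ weakly in $H^1_{\mathrm{loc}}(B_1)$; and \eqref{smooth convergence outside blow up locus, localized} gives $v_i \to 0$ in $C^\infty_{\mathrm{loc}}(B_1\setminus\{0\})$.

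Next I would transfer the problem to $\R^n$. Fix a cutoff $\chi\in C_0^\infty(B_1)$ with $\chi\equiv 1$ on $B_{3/4}$, and set $\widetilde v_i := \chi v_i$, extended by zero. The smooth convergence away from the origin implies
\[
\bigl\| \widetilde v_i - v_i \bigr\|_{H^1(B_{3/4})} = 0, \qquad \widetilde v_i - v_i \to 0 \ \text{ in } H^1(B_1\setminus B_{3/4}),
\]
and the commutator $[\Delta,\chi]v_i = 2\nabla\chi\cdot\nabla v_i + (\Delta\chi)v_i$ as well as the difference $|\widetilde v_i|^{p-1}\widetilde v_i - \chi|v_i|^{p-1}v_i$ both tend to $0$ in $H^{-1}(\R^n)$, since they are supported where $v_i\to 0$ smoothly. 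Consequently,
\[
-\Delta \widetilde v_i - |\widetilde v_i|^{p-1}\widetilde v_i \longrightarrow 0 \quad \text{in } H^{-1}(\R^n),
\]
so $\{\widetilde v_i\}$ is a Palais--Smale sequence in $\dot H^1(\R^n)$ for the critical energy $J(u) = \tfrac12\int|\nabla u|^2 - \tfrac{1}{p+1}\int|u|^{p+1}$.

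Struwe's decomposition then yields finitely many bubbles $W^k$ and parameters $(\xi_{ik},\lambda_{ik})$ satisfying the standard pairwise separation
\[
\frac{\lambda_{ik}}{\lambda_{i\ell}} + \frac{\lambda_{i\ell}}{\lambda_{ik}} + \frac{|\xi_{ik}-\xi_{i\ell}|^2}{\lambda_{ik}\lambda_{i\ell}} \longrightarrow \infty \quad (k\neq \ell),
\]
together with a weak limit $u_0$ solving $-\Delta u_0 = |u_0|^{p-1}u_0$, such that
\[
\widetilde v_i = u_0 + \sum_{k=1}^{N(t)} W^k_{\xi_{ik},\lambda_{ik}} + o(1) \quad \text{in } \dot H^1(\R^n) .
\]
Since $v_i\rightharpoonup 0$ we have $u_0=0$. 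The concentration $|\nabla v_i|^2\,dx \rightharpoonup M\delta_0$ combined with the smooth convergence \eqref{smooth convergence outside blow up locus, localized} forces each bubble's concentration point and scale to satisfy $\xi_{ik}\to 0$ and $\lambda_{ik}\to 0$: otherwise a positive amount of energy would be deposited either at infinity, at a point $\neq 0$, or survive in the weak limit, contradicting Lemma \ref{lem concentration of time slice}. Setting $\xi_{ik}^\ast(t):=\xi_{ik}$ and $\lambda_{ik}^\ast(t):=\lambda_{ik}$ and absorbing $\widetilde v_i - v_i = o(1)$ in $H^1$ gives the stated expansion, and the asymptotic $H^1$-orthogonality of the bubble profiles yields \eqref{energy quantization for time slice}.

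The main point requiring care will be the verification of the $H^{-1}$ Palais--Smale property for $\widetilde v_i$ on $\R^n$: this rests on the combination of the $L^2$-smallness of $\partial_t u_i(\cdot,t)$ with the smooth convergence of $u_i(\cdot,t)$ to zero away from the origin, which together ensure both the commutator terms and the cutoff error in the nonlinearity are negligible in $H^{-1}$. Once this is in hand, the bubble decomposition and energy additivity are exactly Struwe's theorem applied verbatim.
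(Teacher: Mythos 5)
Your proposal is correct and follows exactly the route the paper takes: the paper's entire justification for this proposition is the one-line observation that, at any $t$ satisfying \eqref{bound on time derivative I}, the uniform bound \eqref{H1 bound for time slice} holds and hence Struwe's global compactness theorem \cite{Struwe1984compactness} applies. Your write-up simply supplies the standard details (the perturbed elliptic equation with $f_i=-\partial_t u_i(\cdot,t)\to 0$ in $L^2$, the cutoff to a Palais--Smale sequence, and the localization of all concentration points and scales at the origin via Lemma \ref{lem concentration of time slice} and \eqref{smooth convergence outside blow up locus, localized}), all of which is consistent with what the paper implicitly assumes.
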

\begin{rmk}\label{rmk bubble tree for positive solutions}
  If $u_i$ are positive, then all bubbles constructed in this proposition are positive, see \cite[Section 3.2]{Hebey}. In view of the Liouville theorem of Caffarelli-Gidas-Spruck \cite{Caffarelli-Gidas-Spruck}, we can take all of these $W^k$ to be the standard Aubin-Talenti bubble $W$. As a consequence, \eqref{energy quantization for time slice} reads as
   \begin{equation}\label{energy quantization for time slice, positive}
 \int_{B_1}|\nabla u_i(x,t)|^2dx=N(t)\Lambda+o_i(1).
\end{equation}
\end{rmk}
\begin{rmk}\label{rmk bubble clustering and towering}
About bubble tree convergence, there are two phenomena  that we will investigate more closely latter.
\begin{description}
\item [Bubble towering] Two bubbles at $\xi_{ik}^\ast(t)$ and $\xi_{i\ell}^\ast(t)$ are towering  if
       \[\limsup_{i\to+\infty}\frac{|\xi_{ik}^\ast(t)-\xi_{i\ell}^\ast(t)|}
 {\max\{\lambda_{ik}^\ast(t),\lambda_{i\ell}^\ast(t)\}}<+\infty.\]
       In this case, either $\frac{\lambda_{ik}^\ast(t)}{\lambda_{i\ell}^\ast(t)}\to+\infty$ or $\frac{\lambda_{i\ell}^\ast(t)}{\lambda_{ik}^\ast(t)}\to+\infty$. Hence these two bubbles are located at almost the same point (with respect to the bubble scales), but the height of one bubble is far larger than the other one's.
\item [Bubble clustering] If  for some $k\neq \ell$,
\[ \lim_{i\to+\infty}|\xi_{ik}^\ast(t)-\xi_{i\ell}^\ast(t)|=0\]
but
       \[\lim_{i\to+\infty}\frac{|\xi_{ik}^\ast(t)-\xi_{i\ell}^\ast(t)|}
 {\max\{\lambda_{ik}^\ast(t),\lambda_{i\ell}^\ast(t)\}}=+\infty,\]
       we say these bubbles are clustering.
\end{description}
Using the terminology introduced in the study of Yamabe problem (see Schoen \cite{Schoen-course}), if there is no bubble clustering and towering, the blow up is called \emph{isolated and simple}.
\end{rmk}

Combining Lemma \ref{lem concentration of time slice} with Proposition \ref{prop bubble tree convergence I}, we conclude the proof of Lemma \ref{lem energy quantization I}. Furthermore, if all solutions are positive, by Remark \ref{rmk bubble tree for positive solutions}, there exists an $N\in\mathbb{N}$ such that
\begin{equation}\label{energy quantization I, positive}
    M=N\Lambda, \quad \mbox{and}\quad N(t)=N \quad \mbox{a.e. in }~~ (-1,1).
\end{equation}

\subsection{Proof of Theorem \ref{thm energy quantization I}}\label{subsec proof of energy quantization}

Under the critical exponent assumption,   the tangent flow analysis in Section \ref{sec tangent flow} can give more information.

First, by noting that $m=n$, in the definition of the blowing up sequence, we have
  \[\nu^\lambda(A):= \nu\left((x,t)+\lambda A\right), \quad \forall A\subset \R^n\times\R.\]
  In the same way, for any $R>0$,
\begin{equation}\label{time derivative to 0 I}
 \int_{Q_R}|\partial_tu_\infty^\lambda|^2=\int_{Q_{\lambda R}}|\partial_tu_\infty|^2\to 0 \quad \mbox{as} ~ \lambda\to0.
\end{equation}
  Therefore by defining
  \begin{equation}\label{definition of atom}
   Q :=\lim_{\lambda\to 0}\nu\left(Q_\lambda(x,t)\right),
  \end{equation}
  we get
  \begin{equation}\label{limiting Dirac}
   \nu^0=Q \delta_{(0,0)}.
  \end{equation}
There is an atom of $\nu$ at $(x,t)$ if and only if  $Q >0$.

By Corollary \ref{coro coarse estimate on mean curvature}, we find that
\begin{equation}\label{limiting Dirac of mean curvature}
 \mathbf{H}^0d\mu= \mathbf{H}^0(0,0)\delta_{(0,0)}.
\end{equation}

Recall that $u_\infty$ is backwardly self-similar (Lemma \ref{lem backwardly self-similar}). By combining Theorem \ref{thm Liouville for backward self-similar sol.}  with \eqref{time derivative to 0 I}, we deduce that
 \[u_\infty^0\equiv 0 \quad \mbox{in} ~~ \R^n\times\R.\]
  Hence in view of \eqref{limiting Dirac} and \eqref{limiting Dirac of mean curvature}, the energy identity \eqref{energy identity limit} for $(u_\infty^0,\mu^0)$ reads as
\begin{equation}\label{energy identity limit+critical 1}
\frac{1}{m}\int_{Q_1}\partial_t\eta  d\mu^0=Q\eta(0,0) +\frac{1}{m}\nabla\eta(0,0)\cdot \mathbf{H}^0(0,0), \quad \forall \eta\in C_0^\infty(\R^n\times\R).
\end{equation}
In particular,
\[\partial_t\mu^0=0 \quad \mbox{in the distributional sense in } ~ \left(\R^n\times\R\right)\setminus\{(0,0)\}.\]
Combining this fact with the backward self-similarity of $\mu^0$ (see Lemma \ref{lem backwardly self-similar}), we deduce that there exists a constant $M\geq Q$ such that
\begin{equation}\label{tangent measure critical}
  \mu^0=M\delta_0 \otimes dt\lfloor_{\R^-}+\left(M-Q\right)\delta_0 \otimes dt\lfloor_{\R^+}.
\end{equation}
By choosing $\eta(x,t)=\varphi(x)\psi(t)x$ in \eqref{energy identity limit+critical 1}, where $\varphi\in C_0^\infty(\R^n)$ and $\psi\in C_0^\infty(\R)$, we also deduce that
\begin{equation}\label{blow up of mean curvature}
  \mathbf{H}^0(0,0)=0.
\end{equation}

Now we come to the proof of Theorem \ref{thm energy quantization I}.
\begin{proof}

As in Section \ref{sec tangent flow}, there exist two sequences of solutions to \eqref{eqn} (with $p=(n+2)/(n-2)$) satisfying the assumptions in Lemma \ref{lem energy quantization I} in $Q_1(0,-2)$ and $Q_1(0,2)$. Therefore there exist two groups of finitely many bubbles, $\{W^k\}$ and $\{W^\ell\}$, such that
\begin{equation}\label{energy quantization general}
 M=\sum_{k}\int_{\R^n}|\nabla W^k|^2dx,  \quad M-Q=\sum_{\ell}\int_{\R^n}|\nabla W^\ell|^2dx.
\end{equation}

Furthermore, if all solutions are positive, then there exist  $N_1, N_2\in\mathbb{N}$ such that
\begin{equation}\label{energy quantization positive}
  M=N_1\Lambda, \quad  M-Q=N_2\Lambda.
\end{equation}

By the weak convergence of $|\nabla u_\infty^\lambda|^2dxdt+\mu^\lambda$ etc., we get
\begin{eqnarray*}
 \Theta(x,t;u_\infty,\mu)&=&  \lim_{\lambda\to0}\int_{1}^{2}\Theta_{\lambda^2s}(x,t;u_\infty,\mu)ds \\
   &=&  \lim_{\lambda\to 0}\int_{1}^{2}\Theta_s(0,0; u_\infty^\lambda, \mu^\lambda)ds\\
   &=& \frac{1}{n}\int_{1}^{2}s^{\frac{n}{2}}\left[\int_{\R^n} G(y,1)d\mu^0_{-s}(y)dy\right]ds\\
   &=&\frac{M}{n\left(4\pi\right)^{n/2}}.
\end{eqnarray*}
Substituting \eqref{energy quantization general} into this equality, we conclude the proof.
\end{proof}

A consequence of Theorem \ref{thm energy quantization I} is the following relation between $\Theta$ and $\theta$.
\begin{coro}\label{coro relation between densities}
  For $\mu$-a.e. $(x,t)$,
  \[\Theta(x,t;u_\infty,\mu)= \frac{1}{n\left(4\pi\right)^{\frac{n}{2}}} \theta(x,t),\]
and consequently, there exist finitely many bubbles $W^k$, $k=1,\cdots, N$, such that
  \[\theta(x,t)=\sum_{k=1}^{N}\int_{\R^n}|\nabla W^k|^2.\]
Moreover, if all solutions are positive, there exist an $N(x,t)\in \mathbb{N}$ such that
  \[\theta(x,t)= N(x,t)\Lambda \quad \mu-a.e..\]
\end{coro}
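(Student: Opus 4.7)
The plan is to deduce the identity $\Theta = \theta/(n(4\pi)^{n/2})$ at $\mu$-almost every point by combining Theorem \ref{thm energy quantization I} with a differentiation argument applied to the decomposition $\mu = \theta\,\mathcal{P}^{n+2-m}\lfloor_\Sigma$ provided by Theorem \ref{thm energy concentration set}(7). Under the critical exponent hypothesis $m=n$, this reads $\mu = \theta\,\mathcal{P}^2\lfloor_\Sigma$, and the task reduces to matching two descriptions of the density of $\mu$ at a typical point: the Radon--Nikodym derivative $\theta(x,t)$ and the mass $M$ appearing in the self-similar tangent measure \eqref{tangent measure critical}.

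Recall that the computation carried out in the proof of Theorem \ref{thm energy quantization I} already yields $\Theta(x,t;u_\infty,\mu) = M/(n(4\pi)^{n/2})$ for every $(x,t) \in \Sigma$, regardless of how the blow-up decomposes into bubbles. Therefore it suffices to prove that $M = \theta(x,t)$ for $\mu$-almost every $(x,t)$. I would fix such a base point at which (i)~the classical differentiation theorem for Radon measures applies, (ii)~Lemma \ref{lem vanishing of Morrey} holds so that $u_\infty^0\equiv 0$ in the tangent flow, and (iii)~$\theta$ behaves like a Lebesgue point of $\mathcal{P}^2\lfloor_\Sigma$.

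To identify $M$ with $\theta$, I would exploit the structural information from Theorem \ref{thm Brakke flow}: in the critical case $n-m = 0$, each slice $\Sigma_t$ is countably $0$-rectifiable, hence at most countable, so $\Sigma$ locally looks like a union of time-lines. At a $\mu$-typical base point $(x,t)$, only a single time-line can contribute to the lower density bound \eqref{lower density bound}, so the tangent measure of $\mathcal{P}^2\lfloor_\Sigma$, with the standard normalization making the $\mathcal{P}^2$-mass of a unit time-interval equal to one, is precisely $\delta_0\otimes dt$. Since $\mu = \theta\,\mathcal{P}^2\lfloor_\Sigma$ and $\theta$ is approximately continuous at $(x,t)$, the tangent measure of $\mu$ is $\theta(x,t)\,\delta_0\otimes dt$. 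Comparing with \eqref{tangent measure critical}, the past part gives $M = \theta(x,t)$ (and as a by-product the future part forces $Q=0$, so $\nu$ has no atom at a $\mu$-typical point).

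Once $M = \theta(x,t)$ is established, the main identity of the corollary follows at once, and the bubble representation of $\theta$ is obtained by plugging this into the first conclusion of Theorem \ref{thm energy quantization I}. In the positive case, the second conclusion of that theorem yields $\Theta = N\Lambda/(n(4\pi)^{n/2})$, which translates immediately into $\theta = N(x,t)\,\Lambda$ with $N(x,t)\in\mathbb{N}$. The step I expect to be most delicate is the tangent-measure identification for $\mathcal{P}^2\lfloor_\Sigma$: it requires ruling out, at $\mu$-typical points, the possibility that several distinct time-lines of $\Sigma$ accumulate at the base point, which I would handle through a Besicovitch-type covering argument combining the rectifiability from Theorem \ref{thm Brakke flow} with the upper and lower density bounds in Theorem \ref{thm energy concentration set}.
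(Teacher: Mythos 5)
Your proposal is correct in substance and follows the same skeleton as the paper's argument: both rest on the identity $\Theta(x,t;u_\infty,\mu)=M/(n(4\pi)^{n/2})$ already extracted in the proof of Theorem \ref{thm energy quantization I}, and both reduce the corollary to showing $M=\theta(x,t)$ at a $\mu$-generic point via differentiation and the tangent-flow structure \eqref{tangent measure critical}. The one place where you diverge is also the place where you make life harder for yourself. You identify $M$ with $\theta$ by first computing the tangent measure of $\mathcal{P}^{2}\lfloor_{\Sigma}$ and arguing it equals $\delta_0\otimes dt$ exactly; this requires a parabolic density-one statement for $\mathcal{P}^{2}\lfloor_{\Sigma}$, and your appeal to the $0$-rectifiability of the slices $\Sigma_t$ plus a Besicovitch covering does not by itself control the two-dimensional parabolic density of the full space-time set $\Sigma$ near a base point (several ``time-lines'' accumulating is exactly what must be excluded, and countability of each slice says nothing about this). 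The paper sidesteps the issue entirely: it takes $\theta(x,t)=\lim_{\lambda\to0}\mu(Q_\lambda(x,t))/(2\lambda^2)$, which by the weak convergence $\mu^{\lambda}\rightharpoonup\mu^{0}$ equals $\tfrac12\mu^{0}(Q_1)$, and it removes the atom $Q$ of $\nu$ beforehand by discarding the at most countable (hence $\mu$-null, since $\mu$ has no atoms by the Morrey bound) set where $\nu$ has an atom, so that $\mu^{0}(Q_1)=2M$ directly. Your derivation of $Q=0$ as a by-product of the symmetry of the tangent measure is an attractive observation, but as written it inherits the unproved density-one step; I would either supply that density argument or adopt the paper's shortcut of excluding the atoms of $\nu$ at the outset.
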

\begin{proof}
For those $(x,t)$ satisfying the condition \eqref{vanishing Morrey condition}, we have
\[ \theta(x,t)= \lim_{\lambda\to 0}\frac{\mu\left(Q_\lambda(x,t)\right)}{2\lambda^2}=\frac{1}{2}\mu^0(Q_1).\]
We can also assume there is no atom for $\nu$ at $(x,t)$ by avoiding a at most countable set. Then by the above structure theory of tangent flows and Theorem \ref{thm energy quantization I}, we get
\[ \mu^0(Q_1)=2M=2 n\left(4\pi\right)^{\frac{n}{2}} \Theta(x,t). \qedhere\]
\end{proof}

\subsection{Further properties of  defect measures}\label{subsec further properties of defect measure}

Some consequences follow from the above structure result on tangent flows. The first one is
\begin{lem}\label{lem time slice isolated}
For each $t$, $\Sigma_t^\ast$ is isolated.
\end{lem}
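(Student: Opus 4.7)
\smallskip

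\noindent\textbf{Proof proposal.} I would argue by contradiction using the tangent flow structure established in Subsection \ref{subsec proof of energy quantization}. Suppose $\Sigma_t^\ast$ is not isolated at some $x_0\in\Sigma_t^\ast\cap Q_{1/2}$, so there exist $x_i\in\Sigma_t^\ast$ with $x_i\ne x_0$ and $x_i\to x_0$. Set $\lambda_i:=|x_i-x_0|\to 0$ and $y_i:=(x_i-x_0)/\lambda_i$; after passing to a subsequence, $y_i\to y_0\in S^{n-1}$. I would then perform the tangent flow analysis of Section \ref{sec tangent flow} at $(x_0,t)$ with the specific scale $\lambda_i$, extracting a further subsequence so that $u_\infty^{\lambda_i}\rightharpoonup u_\infty^0$, $\mu^{\lambda_i}\rightharpoonup\mu^0$, $\nu^{\lambda_i}\rightharpoonup\nu^0$. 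Because $\partial_tu_\infty\in L^2(Q_1)$, the argument at the start of Subsection \ref{subsec proof of energy quantization} applies at \emph{every} point: $\int_{Q_R}|\partial_tu_\infty^{\lambda_i}|^2\to 0$, and together with Theorem \ref{thm Liouville for backward self-similar sol.} this forces $u_\infty^0\equiv 0$; the structure result \eqref{tangent measure critical} then gives
\[
\mu^0 \;=\; M\,\delta_0\otimes dt\lfloor_{\R^-}+(M-Q)\,\delta_0\otimes dt\lfloor_{\R^+},
\]
which is supported on the time axis $\{0\}\times\R$.

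Next I would translate the lower density bound \eqref{lower density bound full} at $(x_i,t)$ into the rescaled picture. Since $m=n$, the parabolic scaling is exactly critical: a direct change of variables gives
\[
\mu^{\lambda_i}\!\left(Q_R(y_i,0)\right)+\int_{Q_R(y_i,0)}\!\left(|\nabla u_\infty^{\lambda_i}|^2+|u_\infty^{\lambda_i}|^{p+1}\right) \;\ge\; \tfrac{\varepsilon_\ast}{2}R^{2},
\]
valid whenever $\lambda_i R<1/2$. Fix $R=1/8$ (any small constant works), and let $K:=\overline{Q_{1/4}(y_0,0)}$. For all $i$ large, $Q_{1/8}(y_i,0)\subset K$, so
\[
\mu^{\lambda_i}(K)+\int_K\!\left(|\nabla u_\infty^{\lambda_i}|^2+|u_\infty^{\lambda_i}|^{p+1}\right) \;\ge\; c_0>0.
\]

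Finally I would take the upper semi-continuity of mass on compact sets under weak convergence of Radon measures. Since $u_\infty^0\equiv 0$, the defect measure identifications from Section \ref{sec defect measure} give
\[
|\nabla u_\infty^{\lambda_i}|^2dxdt \rightharpoonup \mu^0,\qquad |u_\infty^{\lambda_i}|^{p+1}dxdt \rightharpoonup \mu^0,\qquad \mu^{\lambda_i}\rightharpoonup\mu^0,
\]
hence $\limsup_{i\to\infty}\bigl[\mu^{\lambda_i}(K)+\int_K(|\nabla u_\infty^{\lambda_i}|^2+|u_\infty^{\lambda_i}|^{p+1})\bigr]\le 3\mu^0(K)$. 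Therefore $\mu^0(K)\ge c_0/3>0$. But $|y_0|=1$ implies $K\cap(\{0\}\times\R)=\emptyset$, whereas $\mu^0$ is supported on $\{0\}\times\R$, so $\mu^0(K)=0$. This contradiction proves the lemma.

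\smallskip

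The main obstacle is verifying that the structural conclusion $\mu^0\,\text{spt}\subset\{0\}\times\R$ genuinely holds at the specific base point $(x_0,t)$ with the specific scale $\lambda_i=|x_i-x_0|$, rather than just along a generic scale or at a generic point of $\Sigma^\ast$. Fortunately the derivation of \eqref{tangent measure critical} only uses backward self-similarity of $\mu^0$, $u_\infty^0\equiv 0$, and $\nu^0=Q\delta_{(0,0)}$, all of which are available at any point of $\Sigma^\ast\cap Q_{1/2}$ in the critical case $m=n$, so the argument goes through without appealing to any exceptional-set condition such as \eqref{vanishing Morrey condition}. The rest is bookkeeping with parabolic rescaling and Portmanteau-type inequalities for weakly converging measures.
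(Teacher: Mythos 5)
Your proof is correct and follows essentially the same route as the paper: blow up at the accumulation point, use that the tangent measure is supported on $\{0\}\times\R$ with $u_\infty^0\equiv 0$, and contradict the persistence of concentration at the rescaled points converging to the unit sphere. The only (harmless) difference is that you close the argument with the raw lower density bound \eqref{lower density bound full} and Portmanteau-type upper semicontinuity of mass on compact sets, whereas the paper invokes the upper semicontinuity of the density $\Theta$ together with the $\varepsilon$-regularity threshold.
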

\begin{proof}
Assume by the contrary, there exists a sequence of points $(x_j,t)\in \Sigma_{t}^\ast$ converging to a limit point $(x,t)$. Because $\Sigma^\ast$ is closed, $(x,t)\in\Sigma^\ast$. Denote
\[
  \lambda_j:=|x_j-x|\to 0.
\]
Define the blow up sequence $(u_\infty^{\lambda_j}, \mu^{\lambda_j})$ with respect to the bast point $(x,t)$ as before. Assume without loss of generality that
\[ \frac{x_j-x}{\lambda_j}\to x_\infty\in\partial B_1.\]
By the upper semi-continuity of $\Theta$, we get
\begin{equation}\label{singular pt on unit sphere}
 \Theta(x_\infty,0;\mu^0)\geq \limsup_{j\to+\infty}\Theta(x_j,t;u_\infty,\mu)\geq \varepsilon_\ast.
\end{equation}
On the other hand,the tangent flow analysis  shows that $\mbox{spt}(\mu^0)=\{0\}\times\R$. This fact, together with the $\varepsilon$-regularity theorem and the fact that $u_\infty^0=0$, implies that
\begin{itemize}
  \item  $u_\infty^{\lambda_j}\to 0$ in $C^\infty_{loc}\left((\R^n\setminus\{0\})\times\R\right)$,
  \item and  for all $\lambda_j$ small, $\mbox{spt}(\mu^{\lambda_j})$ is contained in $B_{1/2}\times \R$.
\end{itemize}
Therefore,
\[ \Theta(x_\infty,0;\mu^0)=\lim_{j\to+\infty}\Theta\left(\frac{x_j-x}{\lambda_j},0;u_\infty^{\lambda_j}, \mu^{\lambda_j}\right)=0.\]
This is a contradiction with \eqref{singular pt on unit sphere}. In other words, there does not exist converging sequences in $\Sigma_t^\ast$.
\end{proof}

The next one is about the form of the stress-energy tensor $\mathcal{T}$.
\begin{lem}\label{lem stress-energy tensor}
$\mathcal{T}=I/n$ $\mu$-a.e..
\end{lem}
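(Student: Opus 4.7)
The plan is to exploit the fact that in the critical case $p=(n+2)/(n-2)$ one has $m=n$, so each time-slice concentration set $\Sigma_t$ has rectifiable dimension $n-m=0$. The cleanest route is to feed this dimensional information into the rectifiability/stationarity identity already established for integer $m$ in Section \ref{sec rectifiability}.

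First I would invoke Theorem \ref{thm Brakke flow}, which applies because $m=n\in\mathbb{Z}$, and in particular the representation \eqref{representation of stress-energy tensor}
\[
I - m\,\mathcal{T}(x,t) = T_x\Sigma_t, \qquad \mathcal{H}^{n-m}\text{-a.e. } x\in\Sigma_t,
\]
for a.e. $t\in(-1,1)$, where the right-hand side is the orthogonal projection onto the approximate tangent space of the $(n-m)$-rectifiable set $\Sigma_t$ at the point $x$. Next I would observe that $n-m=0$, so $\Sigma_t$ is a $0$-dimensional rectifiable set; this is corroborated by Lemma \ref{lem time slice isolated}, which shows that $\Sigma_t^\ast$ is locally isolated, and since $\Sigma\subset\Sigma^\ast$ by Point (3) of Theorem \ref{thm energy concentration set}, so is $\Sigma_t\subset\Sigma_t^\ast$.

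The approximate tangent space to a locally discrete set at any of its points is the trivial subspace $\{0\}$, so as a projection matrix $T_x\Sigma_t=0$ at every $x\in\Sigma_t$. Substituting into the displayed identity yields $I-n\,\mathcal{T}(x,t)=0$, i.e., $\mathcal{T}(x,t)=I/n$ at every point of $\Sigma_t$, for a.e. $t$. Finally, using the disintegration $\mu=\mu_t\,dt$ from Lemma \ref{lem disintegration of defect measure} together with $\mathrm{spt}(\mu_t)\subset\Sigma_t$, this upgrades to $\mathcal{T}=I/n$ $\mu$-a.e., as claimed.

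The only potentially delicate point---and it is already dealt with---is the zero-dimensionality of the spatial slices $\Sigma_t$. This is precisely the content of Lemma \ref{lem time slice isolated}, whose proof rests on the tangent-measure structure from Subsection \ref{subsec proof of energy quantization}: blowing up at any candidate accumulation point produces a tangent flow with $\mu^0=M\delta_0\otimes dt$ and $u_\infty^0\equiv 0$, so by upper semicontinuity of $\Theta$ and the $\varepsilon$-regularity theorem the concentration locus near a generic time is forced to lie on a single vertical timeline, ruling out nearby singular points. Beyond invoking this fact, no further obstacle arises, and the lemma follows by the dimension count $n-m=0$ combined with the structural identity \eqref{representation of stress-energy tensor}.
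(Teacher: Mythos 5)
Your argument is correct, and it is precisely the route the paper gestures at in the first sentence of its proof (``This is \eqref{representation of stress-energy tensor} in this special case''), but it is not the proof the paper actually writes out. The paper's detailed argument is a direct one: pick a $\mu$-a.e.\ point where $\mathcal{T}$ is approximately continuous and Lemma \ref{lem vanishing of Morrey} holds, blow up, use approximate continuity to get $\mathcal{T}^\lambda d\mu^\lambda\rightharpoonup\mathcal{T}(x,t)\,d\mu^0$, pass to the limit in the stationarity condition to obtain $\int[I-n\mathcal{T}(x,t)]\cdot DY\,d\mu^0=0$ for all $Y$, and then exploit the explicit form \eqref{tangent measure critical} of the tangent measure (Diracs on the time axis) with suitable test fields $Y$ to force $I-n\mathcal{T}(x,t)=0$. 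Your version instead cites the rectifiability identity \eqref{representation of stress-energy tensor} from Section \ref{sec rectifiability} and observes that for $m=n$ the slices $\Sigma_t$ are zero-dimensional (Lemma \ref{lem time slice isolated}), so $T_x\Sigma_t$ is the zero projection and the identity collapses to $\mathcal{T}=I/n$. That is logically valid within the paper's structure, and shorter; the trade-off is that in the degenerate case $n-m=0$ the content of \eqref{representation of stress-energy tensor} is essentially the lemma itself, and the standard derivation of that identity (Ambrosio--Soner, Lin, Moser) at a point of approximate continuity is exactly the blow-up computation the paper performs---so the paper's direct proof is the self-contained version of the step your citation outsources. One small point worth making explicit if you write this up: you need $\mathrm{spt}(\mu_t)\subset\Sigma_t$ for a.e.\ $t$ (a standard disintegration fact) to convert the ``for every $x\in\Sigma_t$, a.e.\ $t$'' conclusion into the stated ``$\mu$-a.e.'' conclusion.
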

\begin{proof}
This is \eqref{representation of stress-energy tensor} in this special case. Here we explain briefly a direct proof. First, because $\mathcal{T}$ is $\mu$-measurable, it is approximately continuous $\mu$-a.e.. Choose a point $(x,t)$ so that $\mathcal{T}$  is approximate continuous and  Lemma \ref{lem vanishing of Morrey} holds
at this point. Take a sequence $\lambda\to0$ and define the tangent flow at this point as before, denoted by $\mu^0$. Let
\[\mathcal{T}^\lambda(y,s):=\mathcal{T}(x+\lambda y, t+\lambda^2s).\]
By the approximate continuity of $\mathcal{T}$ at $(x,t)$, we see
\[\mathcal{T}^\lambda d\mu^\lambda \rightharpoonup \mathcal{T}(x,t)d\mu^0.\]
Then we obtain the stationary condition for the tangent flow,
\[ \int_{\R^n\times\R} \left[I- n\mathcal{T}(x,t)\right]\cdot DY d\mu^0=0, \quad \forall Y\in C_0^\infty(\R^n\times \R, \R^n).\]
In view of the form of $\mu^0$ in \eqref{tangent measure critical}, substituting suitable $Y$ as test functions into this identity we deduce that $\mathcal{T}(x,t)=I/n$.
\end{proof}

The following lemma is a rigourous statement that  zero dimensional objects should have zero mean curvatures.
\begin{lem}\label{lem mean curvature zero}
  $\mathbf{H}=0$ $\mu$-a.e..
\end{lem}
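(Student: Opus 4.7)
The plan is to argue by blow-up at a generic point $(x,t)\in\Sigma$. By the Lebesgue--Besicovitch differentiation theorem applied to the $\R^n$-valued Radon measure $\mathbf{H}\,d\mu$ with respect to $\mu$, $\mu$-a.e.\ $(x,t)\in\Sigma$ is a parabolic Lebesgue point of $\mathbf{H}$:
$$\lim_{r\to 0}\frac{1}{\mu(Q_r(x,t))}\int_{Q_r(x,t)}\bigl|\mathbf{H}(y,s)-\mathbf{H}(x,t)\bigr|\,d\mu(y,s)=0.$$
Further discarding the (at most countable) set of atoms of $\nu$ and the exceptional set of Lemma \ref{lem vanishing of Morrey} (which is $\mu$-null thanks to point (7) of Theorem \ref{thm energy concentration set}), it suffices to prove $\mathbf{H}(x,t)=0$ at every $(x,t)\in\Sigma$ enjoying all three properties.

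Fixing such a good point and performing the tangent flow analysis of Section \ref{sec tangent flow} along some sequence $\lambda_i\to 0$, Remark \ref{rmk tangent flow at special pt} yields strong convergence $u_\infty^{\lambda_i}\to 0$ and weak convergence of defect measures $\mu^{\lambda_i}\rightharpoonup\mu^0$, $\nu^{\lambda_i}\rightharpoonup\nu^0$. Since $(x,t)$ is not an atom of $\nu$, the quantity $Q$ in \eqref{definition of atom} vanishes, so by \eqref{limiting Dirac} and \eqref{tangent measure critical},
$$\nu^0=0\qquad\text{and}\qquad \mu^0=M\,\delta_0\otimes dt\ \text{on all of }\R^n\times\R,$$
with $M\ge\varepsilon_\ast$ by the lower density bound \eqref{lower density bound}. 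Applying Lemma \ref{lem decomposition of mean curvature} to the blown-up sequence produces a mean curvature vector $\mathbf{H}^0\in L^2_{\mathrm{loc}}(d\mu^0)$ for the tangent flow, and the rescaled form of Corollary \ref{coro coarse estimate on mean curvature} gives
$$\frac{1}{m^2}\int_{Q_R}|\mathbf{H}^0|^2\,d\mu^0\le \nu^0(Q_R)\,\mu^0(Q_R)=0\qquad\text{for every }R>0.$$
Hence $\mathbf{H}^0\equiv 0$ $\mu^0$-almost everywhere, so the vector Radon measure $\mathbf{H}^0\,d\mu^0$ vanishes identically.

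On the other hand, the Lebesgue point property ensures that the parabolic blow-up of the vector Radon measure $\mathbf{H}\,d\mu$ at $(x,t)$ equals $\mathbf{H}(x,t)\,d\mu^0$: for any $\varphi\in C_c^\infty(\R^n\times\R)$,
$$\Bigl|\int\varphi\bigl(\mathbf{H}^{\lambda_i}-\mathbf{H}(x,t)\bigr)d\mu^{\lambda_i}\Bigr|\le\|\varphi\|_\infty\,\frac{\mu(Q_{\lambda_i R}(x,t))}{\lambda_i^{n+2-m}}\,\frac{1}{\mu(Q_{\lambda_i R}(x,t))}\int_{Q_{\lambda_i R}(x,t)}|\mathbf{H}-\mathbf{H}(x,t)|\,d\mu\to 0$$
using the upper Morrey bound and the Lebesgue-point condition. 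By construction of $\mathbf{H}^0$ via Lemma \ref{lem decomposition of mean curvature} applied to the rescaled sequence $\{u_i^{\lambda_i}\}$, this same blow-up must coincide with $\mathbf{H}^0\,d\mu^0$. Equating the two identifications yields
$$\mathbf{H}(x,t)\,d\mu^0=\mathbf{H}^0\,d\mu^0=0,$$
and since $\mu^0\neq 0$ we conclude $\mathbf{H}(x,t)=0$, as desired.

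The main obstacle is the consistency statement in the last paragraph: the two ways of obtaining a blow-up of $\mathbf{H}\,d\mu$---direct rescaling of the vector measure together with the Lebesgue-point identification, versus the tangent-flow construction of $\mathbf{H}^0$ from $\{u_i^{\lambda_i}\}$ via Lemma \ref{lem decomposition of mean curvature}---must be shown to produce the same vector Radon measure. This should follow from Hutchinson's measure-function pair convergence theorem (already used to produce $\mathbf{H}$ for the original sequence) combined with a diagonal extraction on the two-parameter family $(i,\lambda_i)$. Once this identification is established, the algebraic conclusion $\mathbf{H}(x,t)=0$ is immediate at $\mu$-a.e.\ point.
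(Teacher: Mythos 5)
The reduction to $\mu$-generic points (Lebesgue points of $\mathbf{H}$, non-atoms of $\nu$, points where Lemma \ref{lem vanishing of Morrey} holds) is fine, and so is the deduction $\nu^0=0\Rightarrow\mathbf{H}^0=0$. But the identification in your last paragraph — which you yourself flag as the main obstacle — is false, and not for a technical reason: the two blow-ups of the mean curvature differ by a factor of $\lambda_i$. Under the parabolic rescaling $u^\lambda(y,s)=\lambda^{\frac{n-2}{2}}u(x+\lambda y,t+\lambda^2 s)$ one has $\partial_s u^\lambda\,\nabla_y u^\lambda=\lambda^{n+1}\,\partial_t u\,\nabla u$ and $dyds=\lambda^{-n-2}dxdt$, so the flux measure rescales with weight $\lambda^{-1}$, whereas $\mu^\lambda(A)=\lambda^{-2}\mu(\lambda A)$. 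Consequently the singular part of the weak limit of $\partial_s u_i^{\lambda}\nabla_y u_i^{\lambda}\,dyds$ is $\frac{\lambda}{m}\,\mathbf{H}(x+\lambda y,t+\lambda^2 s)\,d\mu^{\lambda}$, not $\frac{1}{m}\mathbf{H}(x+\lambda y,\cdot)\,d\mu^{\lambda}$; equivalently, $\mathbf{H}_i=\partial_t u_i\nabla u_i/|\nabla u_i|^2$ has the dimension of an inverse length and rescales as $\mathbf{H}_i^\lambda=\lambda\,\mathbf{H}_i$. Hence along your diagonal sequence
$$\frac{1}{m}\mathbf{H}^0\,d\mu^0=\lim_{i\to\infty}\frac{\lambda_i}{m}\,\mathbf{H}(x+\lambda_i\,\cdot)\,d\mu^{\lambda_i}=0\cdot\mathbf{H}(x,t)\,d\mu^0,$$
and your concluding identity collapses to $0=0$: the tangent-flow limit annihilates all information about $\mathbf{H}(x,t)$. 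This is exactly why the paper, having already established $\mathbf{H}^0(0,0)=0$ in \eqref{blow up of mean curvature} via the tangent flow, still needs a separate argument for $\mathbf{H}=0$.

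The paper's proof does not blow up at all. It disintegrates the stationarity identity \eqref{stationary condition limit+critical} in time to get \eqref{stationary condition limit+critical+time slice} for a.e.\ $t$, then proves — using Lemma \ref{lem time slice isolated} (isolatedness of $\Sigma_t^\ast$) and an elliptic $W^{2,2}$/$L^{2p}$ estimate near each singular point — that $u_\infty(t)$ has enough regularity to justify integration by parts, so that $u_\infty(t)$ \emph{alone} satisfies the stationary identity with no defect term. Subtracting the two identities gives $\int_{B_1}\mathbf{H}\cdot X\,d\mu_t=0$ for all $X\in C_0^\infty(B_1,\R^n)$, i.e.\ $\mathbf{H}=0$ $\mu_t$-everywhere for a.e.\ $t$, and hence $\mu$-a.e.\ by the disintegration $\mu=\mu_t\,dt$. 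To salvage a blow-up argument you would have to extract $\mathbf{H}(x,t)$ as the first-order (in $\lambda$) correction to the rescaled flux, which is precisely the information a tangent-flow limit discards.
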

\begin{proof}
We will show that, for a.e. $t\in(-,1,1)$,
\begin{equation}\label{mean curvature at time slice}
  \mathbf{H}=0  \quad \mu_t-\mbox{everywhere}.
\end{equation}

For a.e. $t$,
\begin{equation}\label{time slice in H1}
  \int_{B_1}\left[|\nabla u_\infty(x,t)|^2+|u_\infty(x,t)|^{p+1} +|\partial_tu_\infty(x,t)|^2\right]dx<+\infty,
\end{equation}
and by Lemma \ref{lem disintegration of defect measure}, $\mu_t$ is a well defined Radon measure.

By the form of $\mathcal{T}$, now the stationary condition \eqref{stationary condition limit} reads as
\begin{eqnarray}\label{stationary condition limit+critical}
  0 &=&  \int_{Q_1}\left[\left(\frac{|\nabla u_\infty|^2}{2}-\frac{|u_\infty|^{p+1}}{p+1}\right)\mbox{div}Y-DY(\nabla u_\infty,\nabla u_\infty)+\partial_tu_\infty \nabla u_\infty\cdot Y \right]\nonumber \\
  &+&\frac{1}{m}\int_{Q_1}\ \mathbf{H}\cdot Y d\mu, \quad \quad \forall Y\in C_0^\infty(Q_1,\R^n).
    \end{eqnarray}
Hence for a.e. $t$, we have the stationary condition
\begin{eqnarray}\label{stationary condition limit+critical+time slice}
  0 &=&  \int_{B_1}\left[\left(\frac{|\nabla u_\infty|^2}{2}-\frac{|u_\infty|^{p+1}}{p+1}\right)\mbox{div}X-DX(\nabla u_\infty,\nabla u_\infty)+\partial_tu_\infty \nabla u_\infty\cdot X\right] \nonumber \\
  &+&\frac{1}{m}\int_{B_1}\ \mathbf{H}\cdot X d\mu_t, \quad \quad \forall X\in C_0^\infty(B_1,\R^n).
    \end{eqnarray}
Now we claim that \\
{\bf Claim.} $u_\infty(t)\in W^{2,2}_{loc}(B_1)\cap L^{2p}_{loc}(B_1)$.

Once we have this claim, by integration by parts, we can show that $u_\infty(t)$ satisfies the stationary condition
\[0 =  \int_{B_1}\left[\left(\frac{|\nabla u_\infty|^2}{2}-\frac{|u_\infty|^{p+1}}{p+1}\right)\mbox{div}X-DX(\nabla u_\infty,\nabla u_\infty)+\partial_tu_\infty \nabla u_\infty\cdot X\right].\]
Combining this identity with \eqref{stationary condition limit+critical+time slice} we get
\[\int_{B_1}\ \mathbf{H}\cdot X d\mu_t=0, \quad \quad \forall X\in C_0^\infty(B_1,\R^n),\]
from which \eqref{mean curvature at time slice} follows.

{\bf Proof of the claim.} By Lemma \ref{lem time slice isolated}, $\Sigma_t^\ast$ is isolated. Since $u_\infty$ is smooth outside $\Sigma^\ast$, we only need to show that  for each $\xi\in\Sigma_t^\ast$, there exists a ball $B_r(\xi)$ such that $u_\infty(t)\in W^{2,2}(B_r(\xi))\cap L^{2p}(B_r(\xi))$.

We take a sufficiently small $\sigma$ and choose this ball so that
\[ \int_{B_r(\xi)}|u_\infty(t)|^{\frac{2n}{n-2}}\leq \sigma.\]
Take a standard cut-off function $\eta$ in $B_r(\xi)$ such that $\eta\equiv 1$ in $B_{r/2}(\xi)$. A direct calculation gives
\[-\Delta\left(u_\infty(t)\eta\right)= |u_\infty(t)|^{\frac{n-2}{2}} u_\infty \eta+ f_\eta,\]
where $f_\eta\in L^2(B_r(\xi))$. Then by the $W^{2,2}$ estimate for Laplacian operator and an application of H\"{o}lder inequality, if $\sigma$ is sufficiently small, we obtain
\[\|u_\infty(t)\eta\|_{W^{2,2}(B_r(\xi))}\lesssim \|u(t)\|_{L^2(B_r(\xi))}+\|f_\eta\|_{L^2(B_r(\xi))}.\]
The $L^{2p}$ estimate on $u_\infty(t)$ follows by combining this estimate with the equation for $u_\infty(t)$. The proof of the claim is complete.
\end{proof}

\begin{coro}
  $u_\infty$ satisfies the stationary condition \eqref{stationary condition I}.
\end{coro}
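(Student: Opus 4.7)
The plan is to obtain the stationary condition for $u_\infty$ as an immediate consequence of the limiting stationary condition \eqref{stationary condition limit} combined with the two preceding lemmas that control the defect measure contributions.

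First I would recall that since $p=(n+2)/(n-2)$ we have $m=n$, so that the trace of $I-m\mathcal{T}$ agrees with the trace of $I-n\mathcal{T}$ and we can directly invoke Lemma \ref{lem stress-energy tensor}. That lemma gives $\mathcal{T}=I/n$ holding $\mu$-almost everywhere, which forces $I-m\mathcal{T}=0$ $\mu$-a.e. Consequently, for any vector field $Y\in C_0^\infty(Q_1,\R^n)$,
\[
\int_{Q_1}\bigl(I-m\mathcal{T}\bigr)\cdot DY\, d\mu = 0.
\]

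Next I would invoke Lemma \ref{lem mean curvature zero}, which gives $\mathbf{H}=0$ $\mu$-almost everywhere, so that
\[
\int_{Q_1}\mathbf{H}\cdot Y\, d\mu = 0
\]
for every such $Y$. Taken together, these two vanishings eliminate the entire defect term $\frac{1}{m}\int_{Q_1}[(I-m\mathcal{T})\cdot DY+\mathbf{H}\cdot Y]\,d\mu$ appearing on the right-hand side of the limiting stationary condition \eqref{stationary condition limit}.

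Finally, I would substitute these identities into \eqref{stationary condition limit} to conclude that
\[
0 = \int_{Q_1}\left[\left(\tfrac{|\nabla u_\infty|^2}{2}-\tfrac{|u_\infty|^{p+1}}{p+1}\right)\mathrm{div}\, Y - DY(\nabla u_\infty,\nabla u_\infty)+\partial_t u_\infty\,\nabla u_\infty\cdot Y\right]
\]
for every $Y\in C_0^\infty(Q_1,\R^n)$, which is precisely \eqref{stationary condition I} for $u_\infty$. There is no real obstacle here beyond quoting the two lemmas correctly; the work has already been done in establishing $\mathcal{T}=I/n$ and $\mathbf{H}=0$ $\mu$-a.e., so the corollary reduces to a one-line cancellation in the identity \eqref{stationary condition limit}.
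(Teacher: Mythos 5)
Your proposal is correct and is essentially the intended argument: since $m=n$ in the critical case, Lemma \ref{lem stress-energy tensor} ($\mathcal{T}=I/n$ $\mu$-a.e.) and Lemma \ref{lem mean curvature zero} ($\mathbf{H}=0$ $\mu$-a.e.) kill the entire defect term in \eqref{stationary condition limit}, leaving exactly \eqref{stationary condition I} for $u_\infty$. The paper reaches the same conclusion (its proof of Lemma \ref{lem mean curvature zero} in fact already establishes the stationary condition for a.e.\ time slice $u_\infty(t)$, which integrates in $t$ to the same identity), so there is no substantive difference.
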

We do not know if the energy inequality \eqref{energy identity limit} can be decoupled in the same way.

Finally, for positive solutions, we note the following fact as a consequence of \eqref{energy quantization positive}:
\begin{itemize}
  \item  either $Q=0$, which implies that there is no atom at $(x,t)$,
  \item or $Q\geq \Lambda$, which implies that there is an atom in $\nu$ at $(x,t)$ and its mass is at least $\Lambda$.
\end{itemize}
In conclusion, we get
\begin{lem}\label{lem atoms in nu}
  If all solutions are positive, then the mass of each atom in $\nu$ is at least $\Lambda$. Consequently, there are at most finitely many atoms in $\nu$.
\end{lem}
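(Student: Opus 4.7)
The proof plan is to reduce both claims to the tangent flow structure established in Subsection \ref{subsec proof of energy quantization}, applied through the positivity refinement \eqref{energy quantization positive} of Lemma \ref{lem energy quantization I}. Fix a candidate atom location $(x_0,t_0)$, meaning $Q := \lim_{\lambda \to 0}\nu(Q_\lambda(x_0,t_0)) > 0$; by \eqref{limiting Dirac} and \eqref{tangent measure critical}, after passing to the tangent flow we have $u_\infty^0 \equiv 0$, $\nu^0 = Q\delta_{(0,0)}$, and
$$\mu^0 = M \delta_0 \otimes dt|_{\R^-} + (M-Q)\delta_0 \otimes dt|_{\R^+}$$
for some $M \geq Q$.

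Next, as in the proof of Theorem \ref{thm energy quantization I}, the rescaled sequence $u_i^{\lambda_i}$ satisfies the hypotheses of Lemma \ref{lem energy quantization I} on each of the two disjoint cylinders $Q_1(0,-2)$ and $Q_1(0,2)$. Positivity of the original $u_i$ is preserved under the parabolic rescaling, so every bubble extracted from Struwe's decomposition for $u_i^{\lambda_i}$ in these cylinders is positive and, by Caffarelli-Gidas-Spruck, must equal an Aubin-Talenti bubble of energy $\Lambda$. Hence there exist non-negative integers $N_1,N_2$ with $M = N_1\Lambda$ and $M - Q = N_2\Lambda$, so that
$$Q = (N_1 - N_2)\Lambda.$$
Since $Q \geq 0$, either $N_1 = N_2$, which means $Q = 0$ and there is no atom, or $N_1 > N_2$, in which case $Q \geq \Lambda$ as claimed.

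For the ``consequently'' part, the Morrey bound \eqref{Morrey bound for the limit}, specialized to the critical case $m = n$, yields $\nu(Q_r(x,t)) \leq M$ uniformly in $r$ and the base point; a finite covering of any relatively compact subset of $Q_1$ by small parabolic cylinders then gives a uniform upper bound on the total mass of $\nu$ there (one can also invoke directly the uniform control $\limsup_i \int_{Q_1}|\partial_t u_i|^2 < \infty$ from \eqref{energy bound I}). Combined with the lower mass bound $\Lambda$ just established for every atom, only finitely many atoms can occur. No step presents a serious obstacle: the argument is a bookkeeping application of the bubble-tree compactness already invoked in Theorem \ref{thm energy quantization I}, with the positivity hypothesis upgrading the energy decomposition of $M$ and $M-Q$ from a sum of bubble energies into an integer multiple of $\Lambda$, which is the whole point.
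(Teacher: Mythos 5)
Your proof is correct and follows essentially the same route as the paper: the paper derives the dichotomy $Q=0$ or $Q\geq\Lambda$ directly from \eqref{energy quantization positive} (i.e.\ $M=N_1\Lambda$, $M-Q=N_2\Lambda$, hence $Q=(N_1-N_2)\Lambda$), and the finiteness of the atoms from the uniform mass bound on $\nu$. Your write-up just spells out the tangent-flow bookkeeping that the paper leaves implicit.
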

The atoms of $\nu$ correspond to the singular point of $\Sigma$. Microscopically, such an atom comes from scalings of a connecting orbit (or a terrace of connecting orbits)
 \[ \left\{\begin{aligned}
& \partial_tu-\Delta u=u^p, \quad \mbox{in} ~~ \R^n\times\R,\\
& \int_{\R^n}\left[\frac{1}{2}|\nabla u|^2-\frac{1}{p+1}u^{p+1}\right]\Bigg|_{+\infty}^{-\infty}= Q.
\end{aligned}\right.
\]
However, we do not know if $\Sigma$ is smooth outside this singular set.

\newpage


\part{Energy concentration with only one bubble}\label{part one bubble}

\section{Setting}\label{sec setting I}
\setcounter{equation}{0}

In this part, $p=(n+2)/(n-2)$ is the Sobolev critical exponent, $u_i$ denotes a sequence of \emph{smooth, positive solutions} of \eqref{eqn} in $Q_1$, satisfying the following three assumptions:
  \begin{description}

  \item[(II.a) Weak limit] $u_i$ converges weakly to $u_\infty$ in $L^{p+1}(Q_1)$, and  $\nabla u_i$ converges weakly to $\nabla u_\infty$ in $L^2(Q_1)$. Here $u_\infty$ is a \emph{smooth solution} of \eqref{eqn} in $Q_1$.

  \item[(II.b) Energy concentration set] weakly as Radon measures
  \[ \left\{\begin{aligned}
& |\nabla u_i|^2dxdt\rightharpoonup |\nabla u_\infty|^2dxdt+\Lambda\delta_0\otimes dt,\\
& u_i^{p+1}dxdt\rightharpoonup u_\infty^{p+1}dxdt+\Lambda\delta_0\otimes dt.
\end{aligned}\right.
\]

 \item[(II.c) Convergence of time derivatives] as $i\to\infty$, $\partial_tu_i$ converges to $\partial_tu_\infty$ strongly in $L^2(Q_1)$.
 \end{description}
The assumption {\bf(II.b)} says there is only one bubble. From the above assumptions,  it is also seen that
  $u_i$ converges to $u_\infty$ in $C(-1,1;L^2(B_1))$.

The main result of this part is the following theorem, which can be viewed as a weak form of Schoen's Harnack inequality for Yamabe problem (see \cite{Schoen-course}, \cite{LiYanYan-Harnack}). As in Yamabe problem, this will be used to prove that there is no bubble towering.
\begin{thm}\label{thm no bubble towering}
Under the above assumptions, we must have $u_\infty\equiv 0$.
\end{thm}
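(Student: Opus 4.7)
The plan is to show $u_\infty$ vanishes identically on the blow-up line $\{0\}\times(-1,1)$; the conclusion $u_\infty\equiv 0$ then follows by the parabolic strong maximum principle applied to the nonnegative smooth solution of $\partial_tu-\Delta u=u^p$, together with backward uniqueness from the vanishing on the open segment.

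First I would establish, for almost every $t\in(-1,1)$, a slice-wise bubble decomposition
\[
u_i(\cdot,t) = u_\infty(\cdot,t) + W_{\xi_i(t),\lambda_i(t)} + \phi_i(\cdot,t),\qquad \|\phi_i(\cdot,t)\|_{H^1(B_1)}\to 0,
\]
with $\xi_i(t)\to 0$, $\lambda_i(t)\to 0^+$, adapting the bubble-tree convergence of Proposition \ref{prop bubble tree convergence I} and Remark \ref{rmk bubble tree for positive solutions} to accommodate a nontrivial weak limit. Positivity and the single-atom structure of the defect (Lemma \ref{lem concentration of time slice} together with (II.b)) rule out any bubble splitting, and $\phi_i(\cdot,t)$ can be chosen orthogonal to the scaling and translation kernels $\partial_\lambda W_i,\partial_{\xi_j}W_i$ by a standard modulation argument.

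Next, I would extract the crucial pointwise constraint by applying the parabolic Pohozaev identity for $v_i:=u_i-u_\infty$ on a small ball $B_r(\xi_i(t))$, with the multiplier $\mathcal{P}_{\xi_i}v_i=(x-\xi_i(t))\cdot\nabla v_i+\tfrac{n-2}{2}v_i$. Since $v_i\approx W_i=W_{\xi_i(t),\lambda_i(t)}$ near $\xi_i(t)$, writing $u_i^p-u_\infty^p\approx v_i^p+p\,v_i^{p-1}u_\infty$ in the inner region and using $\int W_i^p\,\mathcal{P}_{\xi_i}W_i=0$ (conformal invariance of the bubble energy), the dominant surviving elliptic contribution equals
\[
\tfrac{n-2}{2}\Bigl(\int_{\R^n}W_0^p\,dy\Bigr)\,\lambda_i(t)^{(n-2)/2}\,u_\infty(\xi_i(t),t) \;+\; o\bigl(\lambda_i(t)^{(n-2)/2}\bigr),
\]
obtained by Taylor expansion of $u_\infty$ around $\xi_i(t)$. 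The boundary flux, higher-order nonlinear remainders, and the time-derivative term $\int\partial_tv_i\,\mathcal{P}_{\xi_i}v_i\,dx$ are all shown to be of strictly smaller order than $\lambda_i(t)^{(n-2)/2}$---the first two by the smooth convergence of $v_i$ on $\partial B_r(\xi_i)$ together with explicit bubble tail estimates, the last using assumption (II.c) together with quantitative parabolic control of $\phi_i$ from the reverse inner--outer gluing framework. Dividing by $\lambda_i(t)^{(n-2)/2}$ and passing to the limit through a Fubini/selection argument in $t$ yields $u_\infty(\xi_i(t),t)\to 0$ for a.e.\ $t$; since $\xi_i(t)\to 0$ and $u_\infty$ is continuous, $u_\infty(0,t)=0$ almost everywhere in $(-1,1)$, and the strong maximum principle closes the argument.

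The heart of the proof lies in the quantitative control of the parabolic remainder: one must upgrade $\|\phi_i(\cdot,t)\|_{H^1}\to 0$ and the strong $L^2$ convergence of $\partial_tu_i$ to an estimate of the form $\|\partial_t\phi_i\|_{L^2(B_r(\xi_i))}=o(\lambda_i^{(n-2)/2})$ at almost every slice. This is exactly what the inner--outer gluing machinery of \cite{Davila-dP-Wei,delPino-JEMS,dP-M-W-5D} supplies when run in reverse---i.e., used to extract necessary constraints on $u_\infty$ rather than to construct blow-up solutions. Delivering this requires parabolic Schauder/energy estimates on $\phi_i$ well beyond the slice-wise $H^1$ level, precise computation of the kernel integrals (with localized cutoffs in low dimensions where $\|\partial_\lambda W_i\|_{L^2}$ is divergent), and the nonvanishing of the universal interaction constant $\tfrac{n-2}{2}\int_{\R^n}W_0^p\,dy>0$ which gives the final contradiction.
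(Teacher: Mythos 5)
Your strategy coincides with the paper's: a modulated one‑bubble decomposition, quantitative inner--outer gluing estimates on the error and its time derivative, a Pohozaev identity linearized around the bubble to isolate the bubble--background interaction of size $\lambda_i^{(n-2)/2}u_\infty(0)$, and a positivity/Harnack (strong maximum principle) endgame. The only organizational difference is that you subtract $u_\infty$ first and read the interaction off the interior term $\int pW_i^{p-1}u_\infty\,\mathcal{P}_{\xi_i}W_i$, whereas the paper keeps $u_\infty$ inside the error $\phi_i$ and reads the same quantity off the boundary cross term $\int_{\partial B_{r_i}}\nabla W_i\cdot\nabla\phi_i+\cdots$ on an intermediate sphere $\varepsilon_i\ll r_i\ll 1$; these are equivalent after integration by parts, so I would not call it a different route.

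There is, however, one concrete point where the plan as written would not go through: the treatment of the parabolic remainder. The target estimate $\|\partial_t\phi_i\|_{L^2(B_r(\xi_i))}=o\bigl(\lambda_i^{(n-2)/2}\bigr)$ is neither sufficient nor true. It is insufficient because the multiplier $\mathcal{P}_{\xi_i}W_i=\lambda_iZ_{n+1,i}\sim\lambda_i^{(n-2)/2}|x-\xi_i|^{2-n}$ fails to be square integrable near $\xi_i$ when $n\geq 7$, so an unweighted $L^2$ bound on $\partial_t\phi_i$ does not control $\int\partial_t\phi_i\,\mathcal{P}_{\xi_i}W_i$; and it is false because the pointwise bounds the gluing scheme actually delivers (Proposition \ref{prop Schauder estimate on error fct}) give $\|\partial_t\phi_i\|_{L^2(B_r)}$ of order at least $\varepsilon_i^{(n-4)/2}\gg\varepsilon_i^{(n-2)/2}$. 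More importantly, the pairing $\int\partial_t\phi_i\,\lambda_iZ_{n+1,i}$ is in fact \emph{not} of strictly smaller order than $\lambda_i^{(n-2)/2}$: in the inner region one must differentiate the orthogonality condition $\int\phi_i\,\eta_{in}Z_{n+1,i}=0$ in time to trade $\partial_t\phi_i$ for $\phi_i\,\partial_t(\eta_{in}Z_{n+1,i})$, and the outer contribution is then only $O\bigl((K^{-2}+r)\,\varepsilon_i^{(n-2)/2}\bigr)$ --- the same order as the main term, with a small constant. The argument still closes because $K$ and $r$ are free parameters and $u_\infty\geq0$ is either identically zero or bounded below by Harnack, but the mechanism is ``arbitrarily small constant times the main order,'' not ``smaller order,'' and the differentiated‑orthogonality step is an essential ingredient you would need to add.
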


In the following,  we also assume there exists a constant $L$  such that for all $i$,
\begin{equation}\label{Lip assumption}
 \sup_{-1<t<1}\int_{B_1}|\partial_tu_i(x,t)|^2dx\leq L.
\end{equation}
This assumption is in fact a consequence of {\bf(II.a-II.c)}, see Section \ref{sec Lipschitz hypothesis} in Part \ref{part many bubbles} for the proof.

The proof of Theorem \ref{thm no bubble towering} uses mainly a reverse version of \emph{the inner-outer gluing mechanism}.

\begin{figure}[htbp]
    \centering
        \includegraphics[width=0.9\textwidth]{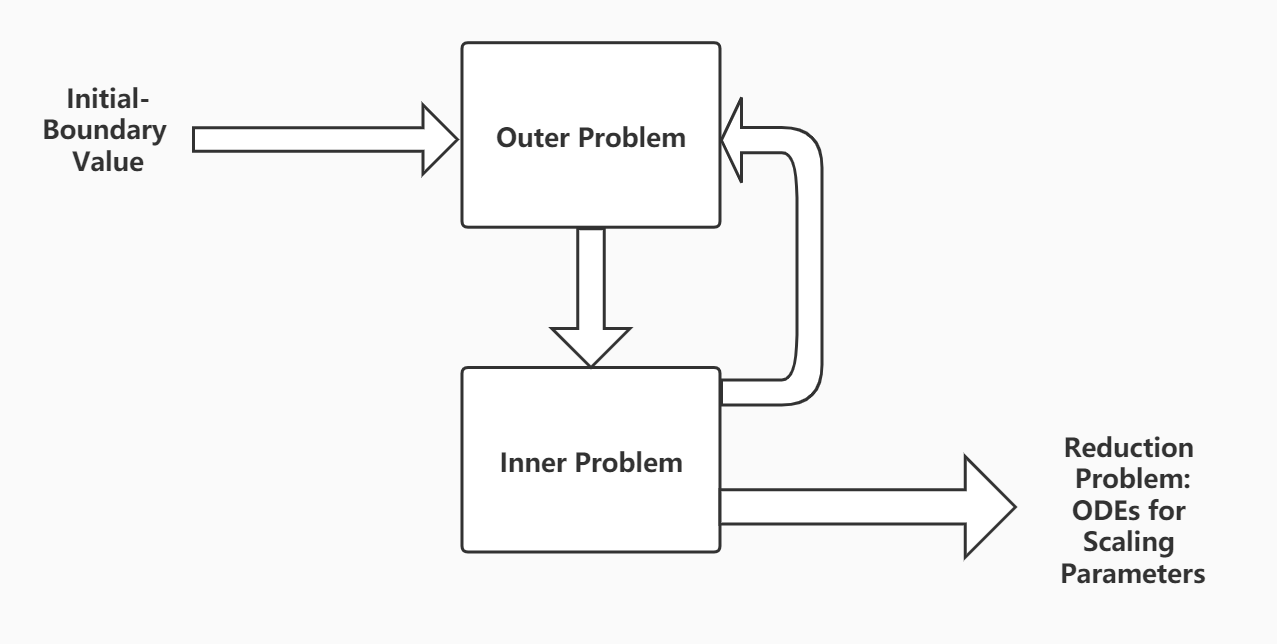}
         \caption{Inner-outer gluing mechanism}
      \end{figure}

\begin{enumerate}
  \item In Section \ref{sec blow up profile}, we describe the blow up profile of $u_i$, i.e. the form of bubbles when energy concentration phenomena appears, see Proposition \ref{prop blow up profile I}. This is the main order term of $u_i$, and it provides us the starting point for the decomposition in the next step.
  \item In Section \ref{sec decomposition}, we take two decompositions for $u_i$: the first one is an orthogonal decomposition where we decompose $u_i$ into a standard bubble (which is the main order term) and an error function (which is the next order term); the second one is the inner-outer decomposition, where we divide the error function into two further parts, the first one (the inner part) is localized near the bubble, and the second one (the outer part) is on the original scale.
  \item In Section \ref{sec inner}, we establish an estimate  for the inner problem, where we mainly use the nondegenaracy of the bubble (see Section \ref{sec bubbles}). Roughly speaking, this estimate reads as
\[ \mathcal{I}\leq A\mathcal{O}+ \mbox{higher order terms from scaling parameters etc.},\]
where $\mathcal{I}$ is a quantity measuring the inner component, $\mathcal{O}$ is  a quantity measuring the outer component, and $A$ is a constant.
  \item In Section \ref{sec outer}, we establish an estimate for the outer problem. This estimate reads  as
\begin{eqnarray*}
  \mathcal{O}&\leq&  B\mathcal{I}+\mbox{effect from initial-boundary value} \\
 &+&\mbox{higher order terms from scaling parameters etc.},
\end{eqnarray*}
where   $B$ is a constant.

The \emph{inner-outer gluing mechanism} works thanks to the fact that
\[ AB<1.\]
This follows from a fast decay estimate away from the bubble domains, where we mainly rely on a Gaussian bound on heat kernels associated to a parabolic operator with small Hardy term, see Moschini-Tesei \cite{Moschini}.

  \item In Section \ref{sec Harnack for lambda}, we combine these two estimates on inner and outer problems to establish an Harnack inequality for the scaling parameter. This gives a uniform in time control on the height of bubbles.
  \item In Section \ref{sec inner and outer 2}, by the estimate in Section \ref{sec Harnack for lambda}, we improve the estimates on inner and outer problems.
  \item In Section \ref{sec regularity on phi}, we improve further the estimates on the error function to an optimal one, such as uniform $L^\infty$ estimate, first order gradient H\"{o}lder estimate and Schauder estimate.
  \item In Section \ref{sec Pohozaev}, with the help of these estimates on the error function, we are able to linearize the Pohozaev identity. This is because a Pohozaev identity holds for $u$, and $u$ is approximated by the bubble at main order, which also satisfies a Pohozaev identity,  then   the next order term in the Pohozaev identity of $u$ gives further information.
  \item In Section \ref{sec Schoen's Harnack inequality}, we use this linearization of Pohozaev identity to establish a weak form of Schoen's Harnack inequality, which also finishes the proof of Theorem \ref{thm no bubble towering}.
  \item In Section \ref{sec bubble cluster}, we use this weak form of Schoen's Harnack inequality to exclude a special case of bubble clustering, where we use the same idea of constructing Green functions in the study of Yamabe problems. This result will be used in Section \ref{sec bubble cluster IV} of Part \ref{part first time singularity}.
\end{enumerate}

\subsection{ List of notations and conventions used in this part}
\begin{itemize}

  \item Given $\theta\in[0,1)$, $\alpha\geq 0$ and $k\in\mathbb{N}$, let $\mathcal{X}_{\alpha}^{k+\theta}$ be the space of  functions $\phi\in C^{k,\theta}(\R^n)$ with  the weighted $C^{k,\theta}$ norm
  \[\|\phi\|_{\alpha,k+\theta}:= \sup_{x\in\R^n}\sum_{\ell=0}^{k-1}\left(1+|x|\right)^{\ell+\alpha}|\nabla^\ell\phi(x)|+\sup_{x\in\R^n}\left(1+|x|\right)^{k+\alpha} \|\nabla^k\phi\|_{C^{\theta}(B_1(x))}.\]
        If $\theta=0$ and $k=0$, this space is written as $\mathcal{X}_\alpha$.

\item Throughout this part, an even function $\eta \in C_0^\infty (-2,2)$ will be fixed, which  satisfies $\eta \equiv 1$ in $(-1,1)$ and $|\eta^\prime|+|\eta^{\prime\prime}|\leq 10$. For any $R>0$, denote
\[\eta_R(y):=\eta \left(\frac{|y|}{R}\right).\]

\item $\alpha:=\frac{n-2}{2}$.

  \item $\bar{p}:=\min\{p,2\}$.

\item Two large constants $K\gg L \gg 1$ will be chosen in Section \ref{sec decomposition}.

\item In this part, unless otherwise stated, it is always assumed that $n\geq 7$, which implies $\alpha>2$.

\end{itemize}

\section{Blow up profile}\label{sec blow up profile}
\setcounter{equation}{0}

In this section we only assume $n\geq 3$. Here we prove
\begin{prop}[Blow up profile]\label{prop blow up profile I}
For any $t\in[-81/100,81/100]$, there exists a  unique maxima point of $u_i(\cdot,t)$ in the interior of $B_1(0)$. Denote this point by $\xi_i^\ast(t)$ and let
$\lambda_i^\ast(t):=u_i(\xi_i^\ast(t),t)^{-\frac{2}{n-2}}$.

As $i\to +\infty$,
\[\lambda_i^\ast(t)\to0, \quad  \xi_i^\ast(t)\to 0,  \quad \mbox{uniformly in } C([-81/100,81/100]),\]
and  the function
\[u_i^t(y,s):=\lambda_i^\ast(t)^{\frac{n-2}{2}}u_i\left(\xi_i^\ast(t)+\lambda_i^\ast(t)y , t+\lambda_i^\ast(t)^2s\right),\]
converges to $W(y)$ in $C^\infty_{loc}( \R^n\times\R)$.
\end{prop}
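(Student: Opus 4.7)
The plan is to combine $\varepsilon$-regularity with the one-bubble hypothesis (II.b) and the strong convergence in (II.c) to locate and rescale the blow-up profile uniformly in time. Since the defect measure is supported on the axis $\{0\}\times(-1,1)$, Theorem \ref{thm ep regularity II} applies at every $(x_0,t_0)$ with $x_0\ne 0$ and yields smooth convergence $u_i\to u_\infty$ in $C^\infty_{loc}\bigl((B_1\setminus\{0\})\times(-1,1)\bigr)$. In particular, for each $\delta>0$, $u_i$ is uniformly bounded on $(B_1\setminus B_\delta)\times[-9/10,9/10]$, so any spatial maximum $\xi_i^*(t)$ that escapes such bounds must lie in $B_\delta$. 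Thus, once the maximum height $M_i(t) := \sup_{B_1} u_i(\cdot,t)$ is shown to tend to $+\infty$ uniformly on $[-81/100,81/100]$, both the existence of an interior maximum and the uniform convergence $\xi_i^*(t)\to 0$ follow at once.

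To prove $M_i(t)\to\infty$ uniformly I argue by contradiction. If not, there are $t_i\to t_0$ with $\sup_{B_1} u_i(\cdot, t_i)\le C$. Forward-in-time local $L^\infty$ estimates for $\partial_t u-\Delta u = u^p$ with bounded initial data at $t_i$ give a uniform bound on $u_i$ on a small forward cylinder $B_{1/2}\times[t_i,t_i+\tau_0]$ with $\tau_0$ depending only on $C$. Testing (II.b) against $\phi(x)\chi(t)$ with $\phi\in C_0^\infty(B_\delta)$, $\phi(0)=1$, and $\chi$ concentrated on $[t_0,t_0+\tau_0/2]$ yields
\[\int u_i^{p+1}\phi\chi\,dxdt \;\longrightarrow\; \int u_\infty^{p+1}\phi\chi\,dxdt + \Lambda\int\chi\,dt,\]
and the concentrated contribution $\simeq\Lambda\tau_0/2$ is incompatible with $u_i\le C'$ on the cylinder once $\delta$ is taken small enough. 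Hence $\lambda_i^*(t) := M_i(t)^{-2/(n-2)}\to 0$ uniformly as well.

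With these preliminaries, the rescaled sequence
\[u_i^t(y,s) = \lambda_i^*(t)^{(n-2)/2}\,u_i\bigl(\xi_i^*(t)+\lambda_i^*(t)\,y,\; t+\lambda_i^*(t)^2 s\bigr)\]
satisfies $u_i^t(0,0) = \sup_y u_i^t(y,0) = 1$ and solves the same equation. The essential use of hypothesis (II.c) is the change of variables
\[\int_{Q_R}|\partial_s u_i^t|^2\,dyds \;=\; \int_{Q_{\lambda_i^*(t)R}(\xi_i^*(t),t)}|\partial_t u_i|^2\,dxdt \;\longrightarrow\; 0,\]
since $\{|\partial_t u_i|^2\}$ is uniformly integrable on $Q_1$ (a consequence of the strong $L^2$ convergence) while the base cylinders shrink. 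The Morrey bound from (II.b), combined with $\varepsilon$-regularity, then upgrades weak convergence to $C^\infty_{loc}$ convergence of a subsequence to a positive, bounded, stationary solution $V$ of $-\Delta V = V^p$ on $\mathbb{R}^n$ with $V(0) = \sup V = 1$; Caffarelli-Gidas-Spruck \cite{Caffarelli-Gidas-Spruck} then identifies $V = W$.

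The uniqueness of the spatial maximum and its continuous dependence on $t$ for $i$ large follow from the non-degeneracy of $W$ at its peak via the implicit function theorem applied to $\nabla u_i(\xi,t)=0$, while uniformity in $t$ of all the convergences is obtained by a standard contradiction argument along sequences $t_j\to t_0$. I expect the main technical obstacle to be the forward-in-time $L^\infty$ propagation in the second paragraph and the passage from pointwise-in-$t$ to uniform-in-$t$ convergence in this last step: both are routine in isolation but must interact carefully with the measure-theoretic nature of (II.b).
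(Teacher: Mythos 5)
Your overall architecture matches the paper's (locate the maximum near the axis, show its height diverges, rescale, identify the limit via Caffarelli--Gidas--Spruck), and your second paragraph is a legitimate, somewhat more elementary substitute for part of the paper's argument: the paper instead derives divergence of $\sup_{B_1}u_i(\cdot,t)$ from the time-slice measure convergence \eqref{measure convergence of time slice}, which it gets as a byproduct of a stronger lemma. However, there is a genuine gap at the central step, the upgrade from weak convergence of $u_i^t$ to $C^\infty_{loc}(\R^n\times\R)$ convergence. Your normalization $u_i^t(0,0)=\sup_y u_i^t(y,0)=1$ controls the rescaled function only on the single time slice $s=0$; for $s\neq 0$ you have no $L^\infty$ or energy bound, and the scale-invariant Morrey bound of Corollary \ref{coro Morrey} carries a constant $M$ that is not below the $\varepsilon$-regularity threshold, so neither Theorem \ref{thm ep regularity I} nor \ref{thm ep regularity II} applies. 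A priori the rescaled sequence could concentrate additional energy (a tower, or a wildly oscillating maximum height at times $t+\lambda_i^*(t)^2 s$), and then no smooth limit exists. Ruling this out is exactly what the paper's Lemma \ref{lem close to one bubble} does: using the localized energy identity, the Lipschitz hypothesis \eqref{Lip assumption}, and the $C(-1,1;L^2)$ convergence, it shows that \emph{every} (not just a.e.) time slice carries energy $\Lambda+o(1)$ in a small ball, whence the scaled bound \eqref{scaling integral of energy}; only then does Struwe's global compactness force the time-slice limits to be a single bubble, after which $\varepsilon$-regularity yields uniform smooth bounds. Your proposal contains no substitute for this step.

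The same omission undermines your uniqueness claim. Non-degeneracy of $W$ plus the implicit function theorem only excludes a second maximum within $B_{R\lambda_i^*(t)}(\xi_i^*(t))$. A competing global maximum of the same height could sit in the neck region $B_{r}\setminus B_{R\lambda_i^*(t)}(\xi_i^*(t))$; excluding it requires the decay estimate of Lemma \ref{lem scaling invariant estimate I} ($u_i\le \sigma(R)|x-\xi_i^*(t)|^{-(n-2)/2}+C(R)$ with $\sigma(R)$ small), whose proof again hinges on the no-neck-energy bound \eqref{small energy on neck} coming from Lemma \ref{lem close to one bubble}. In short: your argument for divergence of the maximum is fine, but the proposal is missing the quantitative, every-time-slice energy control that the paper builds in Lemma \ref{lem close to one bubble}, and without it both the smooth convergence of $u_i^t$ on $\R^n\times\R$ and the uniqueness of the maximum are unproved.
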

Let us first present some preliminary results, which are needed for the proof of this proposition.
The first one is a direct consequence of Theorem \ref{thm ep regularity II} (with the help of {\bf (II.b)}).
\begin{coro}\label{coro smooth convergence outside blow up locus}
As $i\to+\infty$,  $u_i$ are uniformly bounded in $C^\infty_{loc}\left(Q_1\setminus\Sigma\right)$.
\end{coro}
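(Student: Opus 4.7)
The plan is to reduce the corollary to a direct application of the $\varepsilon$-regularity theorem (Theorem \ref{thm ep regularity II}), followed by a standard parabolic bootstrap.

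\textbf{Localization.} Under hypothesis (II.b) the defect measure is $\mu=\Lambda\delta_0\otimes dt$, so $\Sigma=\mathrm{spt}(\mu)=\{0\}\times[-1,1]$. Given any compact $K\Subset Q_1\setminus\Sigma$, set $c_0:=\inf_{(x,t)\in K}|x|>0$; then for every $(x,t)\in K$ and $r\in(0,c_0/4]$ the backward cylinder $Q_r^-(x,t-\theta_\ast r^2)$ is contained in the fixed set $\{|y|\ge c_0/2\}\times(-1,1)$, which is disjoint from $\Sigma$.

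\textbf{Smallness of local energy.} Choose a cutoff $\eta\in C_0^\infty(Q_1)$ with $\eta\equiv 1$ on a neighbourhood of the above cylinder and $\eta\equiv 0$ near $\Sigma$. By (II.b),
\[
\lim_{i\to\infty}\int_{Q_1}\bigl(|\nabla u_i|^2+u_i^{p+1}\bigr)\eta^2
=\int_{Q_1}\bigl(|\nabla u_\infty|^2+u_\infty^{p+1}\bigr)\eta^2,
\]
since the atom at the origin is not seen by $\eta$. In the critical case $m=n$, the required smallness in Theorem \ref{thm ep regularity II} reads $\le\varepsilon_\ast r^{n+2-m}=\varepsilon_\ast r^2$. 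Because $u_\infty$ is smooth by (II.a), the integral of $|\nabla u_\infty|^2+u_\infty^{p+1}$ over $Q_r^-(x,t-\theta_\ast r^2)$ is $O(r^{n+2})$ uniformly in $(x,t)\in K$, hence is $o(r^2)$ as $r\to 0$. Fix $r_0=r_0(K,u_\infty)\in(0,c_0/4]$ small enough that the limit above is bounded by $\varepsilon_\ast r_0^2/2$ uniformly on $K$; then for every sufficiently large $i$ and every $(x,t)\in K$,
\[
\int_{Q_{r_0}^-(x,t-\theta_\ast r_0^2)}\bigl(|\nabla u_i|^2+u_i^{p+1}\bigr)\le \varepsilon_\ast r_0^2.
\]

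\textbf{Applying $\varepsilon$-regularity and bootstrapping.} Theorem \ref{thm ep regularity II} now yields the uniform pointwise bound
\[
\sup_{Q_{\delta_\ast r_0}(x,t)}|u_i|\le c_\ast r_0^{-(n-2)/2}\qquad\text{for all large } i,\ (x,t)\in K.
\]
Covering $K$ by finitely many parabolic cylinders $Q_{\delta_\ast r_0}(x_j,t_j)$ gives a uniform $L^\infty$ bound on a neighbourhood $K'$ of $K$ (for the finitely many remaining indices $i$, $u_i$ is smooth on $Q_1$ by hypothesis, so the bound is automatic). With $u_i$ uniformly bounded on $K'$, the equation $\partial_t u_i-\Delta u_i = u_i^p$ has bounded right-hand side, and standard interior parabolic $L^q$ and Schauder estimates, iterated, upgrade this to uniform $C^k$ bounds on $K$ for every $k\in\mathbb{N}$.

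\textbf{Main obstacle.} There is essentially none: the only point needing care is that the weak convergence of energy densities in (II.b) does not directly give convergence on the open cylinder $Q_r^-$, but the cutoff argument above circumvents this. Everything else is a routine combination of the two $\varepsilon$-regularity statements already established in Section \ref{sec tools} with standard parabolic regularity theory.
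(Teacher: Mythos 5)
Your argument is correct and coincides with the paper's intended proof: the corollary is deduced directly from Theorem \ref{thm ep regularity II} together with hypothesis {\bf (II.b)} (smallness of local energy in small backward cylinders away from $\Sigma$, since the defect measure is supported on $\{0\}\times(-1,1)$ and $u_\infty$ is smooth), followed by standard parabolic bootstrapping. The only point to make explicit is that the choice of ``sufficiently large $i$'' must be uniform over $(x,t)\in K$, which is secured by the same finite covering of $K$ you already invoke later, using one cutoff per cylinder in the finite cover.
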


We also note that there is no concentration  for $\partial_tu_i$.
\begin{lem}\label{lem nonconcentration of time derivative}
  For any $\sigma>0$, there exists an $r(\sigma)>0$ such that for any $(x,t)\in Q_{9/10}$,
  \[\limsup_{i\to+\infty}\int_{Q_{r(\sigma)}(x,t)}|\partial_tu_i|^2<\sigma.\]
\end{lem}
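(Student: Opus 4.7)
\medskip

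\noindent\textbf{Proof proposal.} The plan is to deduce the non-concentration of $\partial_t u_i$ directly from hypothesis \textbf{(II.c)}, namely the strong $L^2(Q_1)$ convergence $\partial_t u_i \to \partial_t u_\infty$, combined with the uniform absolute continuity of the integral $\int |\partial_t u_\infty|^2$. There is no subtle analysis involved; the lemma is essentially a soft consequence of the strong convergence assumption.

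First I would observe that since $\partial_t u_i \to \partial_t u_\infty$ strongly in $L^2(Q_1)$, for every fixed measurable $A \subset Q_1$ one has
\[
\limsup_{i\to\infty}\int_A |\partial_t u_i|^2 = \int_A |\partial_t u_\infty|^2,
\]
because $\bigl|\|\partial_t u_i\|_{L^2(A)} - \|\partial_t u_\infty\|_{L^2(A)}\bigr| \leq \|\partial_t u_i - \partial_t u_\infty\|_{L^2(Q_1)} \to 0$. Applying this with $A = Q_r(x,t)$ (for $r < 1/10$ so that $Q_r(x,t) \subset Q_1$ whenever $(x,t) \in Q_{9/10}$), the problem is reduced to showing that
\[
\lim_{r \to 0}\, \sup_{(x,t) \in Q_{9/10}} \int_{Q_r(x,t)} |\partial_t u_\infty|^2 \; = \; 0.
\]

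This is a standard uniform absolute-continuity property of $L^1$ functions: given any $\sigma > 0$, I would approximate $g := |\partial_t u_\infty|^2 \in L^1(Q_1)$ by a continuous function $\tilde g$ on $\overline{Q_1}$ with $\|g - \tilde g\|_{L^1(Q_1)} < \sigma/2$, and then pick $r(\sigma) > 0$ so small that $|Q_{r(\sigma)}|\cdot \|\tilde g\|_\infty < \sigma/2$. This yields $\int_{Q_{r(\sigma)}(x,t)} g < \sigma$ uniformly in $(x,t) \in Q_{9/10}$, which combined with the limsup identity above concludes the proof. No step here is an obstacle; the only point to mind is choosing $r(\sigma)$ small enough (e.g.\ $r(\sigma) < 1/10$) so that the parabolic cylinders $Q_{r(\sigma)}(x,t)$ for $(x,t) \in Q_{9/10}$ lie inside $Q_1$ where the strong convergence assumption applies.
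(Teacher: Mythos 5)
Your proof is correct and rests on exactly the same two ingredients as the paper's: the strong $L^2(Q_1)$ convergence from \textbf{(II.c)} to pass the mass to $\partial_t u_\infty$, and the absolute continuity of $\int|\partial_t u_\infty|^2$ over small sets. The paper merely packages this as a contradiction argument (extracting a convergent sequence of centers $(x_i,t_i)$ and concentrating the mass at the limit point), whereas you argue directly via uniform absolute continuity; the difference is organizational, not substantive.
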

\begin{proof}
  Assume by the contrary, there exists a $\sigma>0$, a sequence of points $(x_i,t_i)\in Q_{9/10}$ and a sequence of $r_i\to0$ such that
  \begin{equation}\label{absurd assumption I.0}
    \int_{Q_{r_i}(x_i,t_i)}|\partial_tu_i|^2\geq \sigma.
  \end{equation}
  Without loss of generality, assume $(x_i,t_i)\to (x_\infty,t_\infty)$. For any $r>0$ fixed, by {\bf(II.c)}, if $i$ is large enough,
\[ \int_{Q_r(x_\infty,t_\infty)}|\partial_tu_\infty|^2\geq  \int_{Q_{r_i}(x_i,t_i)}|\partial_tu_i|^2\geq \sigma.\]
This is a contradiction with the fact that $\partial_tu_\infty\in L^2(Q_1)$.
\end{proof}

Next is a result about the energy concentration behavior of each time slice $u_i(t)$.
\begin{lem}\label{lem close to one bubble}
  For any $\delta>0$, there exists an $r(\delta)\in(0,1)$ such that for any $t\in[-81/100,81/100]$,
\begin{equation}\label{close to one bubble}
\limsup_{i\to+\infty}\left( \left| \int_{B_{r(\delta)}}|\nabla u_i(x,t)|^2dx-\Lambda\right|+\left| \int_{B_{r(\delta)}} u_i(x,t)^{p+1}dx-\Lambda\right|\right)<\delta.
\end{equation}
\end{lem}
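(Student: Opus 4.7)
The lemma promotes the time-integrated defect structure in (II.b)---the space-time measure $\Lambda\,\delta_0\otimes dt$---to a pointwise-in-$t$ concentration of size $\Lambda$ at each time slice, with a single radius $r(\delta)$ that works for every $t\in[-81/100,81/100]$. The natural input is Proposition~\ref{prop blow up profile I}, which already identifies, for each such $t$, a single small-scale bubble located at $(\xi_i^\ast(t),\lambda_i^\ast(t))$ with the rescaling $u_i^t$ converging to the Aubin--Talenti bubble $W$ in $C^\infty_{\mathrm{loc}}$, and with $\xi_i^\ast(t),\lambda_i^\ast(t)\to 0$ uniformly in $t$.

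\emph{Lower bound.} For any $R>0$, Proposition~\ref{prop blow up profile I} together with the change of variables gives
\[
 \int_{B_{R\lambda_i^\ast(t)}(\xi_i^\ast(t))}|\nabla u_i(x,t)|^2\,dx \;=\; \int_{B_R}|\nabla u_i^t(y,0)|^2\,dy \;\longrightarrow\; \int_{B_R}|\nabla W|^2\,dy
\]
as $i\to\infty$, for each $t$. Fixing $R$ so large that $\int_{B_R}|\nabla W|^2>\Lambda-\delta/4$, and then $r>0$ small so that $R\lambda_i^\ast(t)<r/2$ and $|\xi_i^\ast(t)|<r/4$ hold for all $t$ once $i$ is large (which is possible by the uniform convergence of $\xi_i^\ast,\lambda_i^\ast$), the inclusion $B_{R\lambda_i^\ast(t)}(\xi_i^\ast(t))\subset B_r$ yields $\liminf_i\int_{B_r}|\nabla u_i(\cdot,t)|^2\,dx\geq \Lambda-\delta/4$ pointwise in $t$, and the same reasoning applies to $\int_{B_r} u_i(\cdot,t)^{p+1}\,dx$.

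\emph{Upper bound.} At fixed $t$, I would first obtain per-slice $H^1(B_{3/4})$-boundedness of $u_i(\cdot,t)$ by combining the Lipschitz hypothesis~(\ref{Lip assumption}), the localized energy identity for $u_i$, and the $C(-1,1;L^2(B_1))$ convergence of $u_i$: testing the equation against $u_i\eta^2$ gives
\[
 \int\eta^2\bigl(|\nabla u_i|^2-u_i^{p+1}\bigr)dx \;=\; -\int\partial_tu_i\,u_i\,\eta^2\,dx - 2\int \eta\,u_i\,\nabla u_i\cdot\nabla\eta\,dx,
\]
which, combined with the Lipschitz control of the energy $E_{\eta,i}(t)$, controls $\int\eta^2|\nabla u_i|^2$ and $\int\eta^2 u_i^{p+1}$ separately. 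Applying Struwe's global compactness (Proposition~\ref{prop bubble tree convergence I}) together with Remark~\ref{rmk bubble tree for positive solutions} yields a decomposition $u_i(\cdot,t)=u_\infty(\cdot,t)+\sum_{j=1}^{J(t)}W_{\xi_{i,j}(t),\lambda_{i,j}(t)}+o_i(1)$ in $H^1$, hence $\int_{B_r}|\nabla u_i(\cdot,t)|^2\to \int_{B_r}|\nabla u_\infty|^2+J(t)\Lambda$. The task reduces to proving $J(t)=1$ for every $t$. Proposition~\ref{prop blow up profile I} furnishes $J(t)\geq 1$; if $J(t_0)\geq 2$ at some $t_0$, the second bubble persists on a uniform interval $[t_0-\tau,t_0+\tau]$ by the $L^2$-Lipschitz continuity of $u_i(\cdot,t)$ in $t$ coming from~(\ref{Lip assumption}) and the continuous dependence of the bubble parameters on $t$, contributing at least $2\Lambda\cdot 2\tau$ of defect energy in the window $B_r\times(t_0-\tau,t_0+\tau)$ and contradicting (II.b), which evaluates this limit to $\Lambda\cdot 2\tau$ (up to the negligible smooth piece from $u_\infty$).

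\textbf{Main obstacle.} The technical crux is making the persistence of a putative second bubble rigorous. While~(\ref{Lip assumption}) supplies $L^2$-continuity in $t$, this does not by itself propagate an $H^1$-concentration from $t_0$ to a neighborhood, because a second peak could in principle merge with the primary bubble or dissolve into the neck. The natural route, mirroring the proof of Proposition~\ref{prop blow up profile I}, is to locate the second bubble as a non-degenerate local maximum of $u_i(\cdot,t_0)$ at a definite distance (in rescaled variables) from $(\xi_i^\ast(t_0),\lambda_i^\ast(t_0))$, and then follow it in $t$ via an implicit-function-style argument: the $L^\infty$-height and location of such a local maximum are continuous in $t$ by~(\ref{Lip assumption}) and the smooth dependence of $u_i$ on $t$. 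Carrying this through cleanly, while also handling the coupled system between $\int|\nabla u_i|^2\eta_r^2$ and $\int u_i^{p+1}\eta_r^2$ alluded to above, is the main technical point.
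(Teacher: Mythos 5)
Your proposal is circular as written and leaves its hardest step unproved. First, the circularity: in the paper, Lemma \ref{lem close to one bubble} is one of the \emph{preliminary} results used to prove Proposition \ref{prop blow up profile I} (the lemma yields the uniform slice energy bound \eqref{uniform energy bound} and the slice measure convergence \eqref{measure convergence of time slice}, which are what allow Struwe's compactness to be applied to $u_i^t$ and to identify the limit as a single bubble $W$). So you cannot take Proposition \ref{prop blow up profile I} as input for either the lower bound or the location of the bubble; the logical order must be reversed. Second, even granting per-slice $H^1$ bounds and a Struwe decomposition, your upper bound hinges on showing $J(t)=1$ via ``persistence'' of a putative second bubble over a time interval of definite length. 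You correctly identify this as the main obstacle, but you do not supply the argument, and it is genuinely delicate: a second concentration could live at a scale $\lambda_{i,2}(t_0)$ wildly different from $\lambda_i^\ast(t_0)$ and could in principle appear or disappear instantaneously in the limit (this is exactly the ``atom'' phenomenon for the measure $\nu$ discussed in Part \ref{part energy concentration}); $L^2$-continuity in time from \eqref{Lip assumption} does not by itself transport an $H^1$-concentration to nearby times.

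The paper's proof avoids all of this with a short two-identity argument that never decomposes the slice into bubbles. Writing $A_i(t)=\int|\nabla u_i(t)|^2\eta_r^2$ and $B_i(t)=\int u_i(t)^{p+1}\eta_r^2$: (i) the localized energy identity \eqref{energy identity I}, integrated in time and combined with the nonconcentration of $\partial_t u_i$ (Lemma \ref{lem nonconcentration of time derivative}, after a rescaling so that $\int_{Q_1}|\partial_tu_i|^2\le\sigma$), shows that $\tfrac12 A_i(t)-\tfrac{1}{p+1}B_i(t)$ oscillates in $t$ by at most $O(\sigma^{1/2})$, and its time average is forced by {\bf(II.b)} to be $\Lambda/n+O(r^n)+o_i(1)$, hence this value holds for \emph{every} $t$; (ii) testing \eqref{eqn} with $u_i\eta_r^2$ and using \eqref{Lip assumption} together with the $C_{loc}(-1,1;L^2)$ convergence of $u_i$ gives $A_i(t)-B_i(t)=o_r(1)+o_i(1)$. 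These two linear relations determine $A_i(t)$ and $B_i(t)$ separately, both equal to $\Lambda+O(\sigma^{1/2})+o_r(1)+o_i(1)$, which is the conclusion. You do sketch identity (ii) in your upper bound, but you never combine it with (i) to close the system; instead you detour through the bubble decomposition, which is where the gap lies.
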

\begin{proof}
Take a sufficiently small  $\sigma>0$, and then choose $r(\sigma)$ according to Lemma \ref{lem nonconcentration of time derivative}. Without loss of generality, assume  $t=0$. By the scaling invariance of energy, in the following we need only to prove the corresponding result for the rescaling of $u_i$,
\[ \widetilde{u}_i(x,t):=r(\sigma)^{\frac{n-2}{2}}u_i\left(r(\sigma)x, r(\sigma)^2t\right).\]
However, in order not to complicate notations, we still use   $u_i$ to denote  $\widetilde{u}_i$. In particular, now  $u_i$ also satisfyies
\begin{equation}\label{smallness condition}
  \int_{Q_1}|\partial_t u_i|^2\leq \sigma.
\end{equation}

For each $r\in(0,1)$, recall that $\eta_r(x):=\eta(|x|/r)$ is a standard cut-off function in $B_{2r}$.
 Because $u_i$ is smooth,  we have the standard localized energy identity
 \begin{eqnarray}\label{energy identity I}
 &&\frac{d}{dt}\int_{B_1}\left[\frac{1}{2}|\nabla u_i(x,t)|^2-\frac{1}{p+1}u_i(x,t)^{p+1}\right]\eta_r(x)^2dx\\
  &= & -\int_{B_1}|\partial_tu_i(x,t)|^2\eta_r(x)^2dx+2\int_{B_1}\eta_r(x)\partial_tu_i(x,t)\nabla u_i(x,t)\cdot\nabla\eta_r(x)dx. \nonumber
 \end{eqnarray}
Integrating this identity and applying Cauchy-Schwarz inequality, we get
\begin{eqnarray}\label{oscillation of energy}
\mbox{osc}_{t\in(-1,1)}\int_{B_1}\left[\frac{1}{2}|\nabla u_i(x,t)|^2-\frac{1}{p+1}u_i(x,t)^{p+1}\right]\eta_r(x)^2 &\lesssim & \left(\int_{Q_1}|\partial_tu_i|^2\right)^{1/2} \nonumber\\
&\lesssim& \sigma^{1/2},
\end{eqnarray}
where we have used   \eqref{smallness condition}.

By \eqref{oscillation of energy}, {\bf (II.b)} and the smoothness of $u_\infty$,   we get
\begin{eqnarray}\label{energy level}
 && \int_{B_1}\left[\frac{1}{2}|\nabla u_i(x,t)|^2-\frac{1}{p+1}u_i(x,t)^{p+1}\right] \eta_r(x)^2 dxdt\\
 &=&\frac{\Lambda}{n}+O\left(\sigma^{1/2}\right)+O\left(r^n\right)+o_i(1), \quad \quad \quad \forall t\in(-1,1).\nonumber
\end{eqnarray}

Next, for each $t\in(-1,1)$, multiplying \eqref{eqn} by $u_i(t)\eta_r^2$ and integrating on $B_1$, we get
\begin{eqnarray}\label{integration by parts for eqn 1}
   &&  \int_{B_1}\left[|\nabla u_i(x,t)|^2-u_i(x,t)^{p+1}\right]\eta_r(x)^2dx  \\
    &=&  \int_{B_1}\left[\partial_tu_i(x,t)u_i(x,t)\eta_r(x)^2-2\eta_r(x)u_i(x,t)\nabla\eta_r(x)\cdot\nabla u_i(x,t)\right]dx. \nonumber
\end{eqnarray}
Concerning the right hand side of this equation, the following estimates hold.
\begin{itemize}
  \item  By Cauchy-Schwarz inequality and \eqref{Lip assumption}, we have
\begin{eqnarray*}
  &&\left| \int_{B_1}\partial_tu_i(x,t)u_i(x,t)\eta_r(x)^2 dx\right|\\
   &\lesssim&    \left(\int_{B_{ r}}\partial_tu_i(x,t)^2dx\right)^{\frac{1}{2}}\left(\int_{B_{ r}}u_i(x,t)^2dx\right)^{\frac{1}{2}}\\
  &\lesssim& L^{1/2} \left[ \left(\int_{B_{ r}}u_\infty(x,t)^2dx\right)^{\frac{1}{2}}+o_i(1)\right],
\end{eqnarray*}
where in the second inequality we have used the Lipschitz assumption \eqref{Lip assumption} and the uniform convergence of $u_i(t)$ in $C_{loc}(-1,1;L^2(B_1))$.

  \item Still by the uniform convergence of $u_i(t)$ in $C_{loc}(-1,1;L^2(B_1))$, we obtain
\begin{eqnarray*}
 &&\int_{B_1} \eta_r(x)u_i(x,t)\nabla\eta_r(x)\cdot\nabla u_i(x,t)dx\\
   &=& -\frac{1}{2}\int_{B_1}u_i(x,t)^2 \Delta\eta_r(x)^2dx \\
   &=&-\frac{1}{2} \int_{B_1}u_\infty(x,t)^2 \Delta\eta_r(x)^2dx+o_i(1).
\end{eqnarray*}
\end{itemize}

Substituting these two estimates into \eqref{integration by parts for eqn 1} and noting the smoothness of $u_\infty$, we get
\begin{equation}\label{integration by parts for eqn 2}
  \int_{B_1}\left[|\nabla u_i(x,t)|^2-u_i(x,t)^{p+1}\right]\eta_r(x)^2dx=o_r(1)+o_i(1).
\end{equation}
Plugging this relation into \eqref{energy level}, we obtain
\begin{equation}\label{uniform convergence of local energy}
  \left\{\begin{aligned}
& \int_{B_1} |\nabla u_i(x,t)|^2 \eta_r(x)^2 = \Lambda+O\left(\sigma^{1/2}\right)+o_r(1)+o_i(1),\\
& \int_{B_1}  u_i(x,t)^{p+1} \eta_r(x)^2 = \Lambda+O\left(\sigma^{1/2}\right)+o_r(1)+o_i(1).
\end{aligned}\right.
\end{equation}
Then \eqref{close to one bubble} follows by noting that $\eta_{r}\leq \chi_{B_r}\leq \eta_{2r}$ and choosing a suitable $r$.
\end{proof}
Some consequences follow directly from this lemma.
\begin{enumerate}
  \item   There exists a constant $C(u_\infty,L)$ such that
\begin{equation}\label{uniform energy bound}
  \int_{B_{4/5}}\left[|\nabla u_i(t)|^2+u_i(t)^{p+1}\right] \leq C(u_\infty,L), \quad \forall t\in[-81/100,81/100].
\end{equation}
  \item By the  convergence of $u_i(t)$ in $C^\infty_{loc}(B_1\setminus\{0\})$, we deduce that for any $t\in[-81/100,81/100]$,
\begin{equation}\label{measure convergence of time slice}
   |\nabla u_i(t)|^2dx\rightharpoonup |\nabla u_\infty|^2dx+ \Lambda\delta_0, \quad  u_i(t)^{p+1}dx\rightharpoonup u_\infty^{p+1}dx+ \Lambda\delta_0.
\end{equation}
\end{enumerate}

 Equation \eqref{measure convergence of time slice} implies that for any $t\in[-81/100,81/100]$,
\[\sup_{B_1}u_i(x,t)\to+\infty \quad \mbox{as} \quad i\to+\infty.\]
Because $u_i(t)$ are bounded in $C_{loc}(B_1\setminus\{0\})$, this supremum must be a maxima, and it is attained near $0$. Take one such maximal point $\xi_i^\ast(t)$, and define $\lambda_i^\ast(t)$, $u_i^t$ as in the statement of Proposition \ref{prop blow up profile I}. By definition,
\begin{equation}\label{normalization condition I}
 u_i^t(0,0)=\max_{y\in B_{\lambda_i^\ast(t)^{-1}/2}}u_i^t(y,0)=1.
\end{equation}
Scaling \eqref{Lip assumption} gives
\begin{equation}\label{scaling integral of time derivative}
  \lim_{i\to+\infty}\int_{B_R}\partial_tu_i^t(y,s)^2dy=0, \quad \mbox{for any } R>0, ~~~ s\in\R.
\end{equation}
By scaling \eqref{close to one bubble}, we see for any $R>0$ and $s\in\R$,
\begin{equation}\label{scaling integral of energy}
  \left\{\begin{aligned}
& \limsup_{i\to+\infty}\int_{B_R} |\nabla u_i^t(y,s)|^2dy\leq \Lambda,\\
& \limsup_{i\to+\infty}\int_{B_R}  u_i^t(y,s)^{p+1} dy\leq \Lambda.
\end{aligned}\right.
\end{equation}
Then by Struwe's global compactness theorem (see \cite{Struwe1984compactness}), we deduce that for any $s\in\R$, $u_i^t(\cdot,s)$ converges strongly in $W^{1,2}_{loc}(\R^n)$ and $L^{p+1}_{loc}(\R^n)$. Denote its limit by $u_\infty^t$. By \eqref{scaling integral of time derivative} and \eqref{scaling integral of energy}, $u_\infty^t$ is a   nonnegative solution of \eqref{stationary eqn}. In view of its $H^1$ regularity, it must be smooth. Hence for any $(y,s)\in\R^n\times\R$, there exists an $r>0$ such that
\[ \int_{Q_r^{-}(y,s)}\left[|\nabla u_\infty^t|^2+\left(u_\infty^t\right)^{p+1}\right]\leq \frac{\varepsilon_\ast}{2}r^2.\]
By the above strong convergence of $u_i^t$, we get
\[ \limsup_{i\to+\infty}\int_{Q_r^{-}(y,s)}\left[|\nabla u_i^t|^2+\left(u_i^t\right)^{p+1}\right]<\varepsilon_\ast r^2.\]
Therefore Theorem \ref{thm ep regularity I} is applicable, which implies that $u_i^t$ are uniformly bounded in $C^\infty(Q_{\delta_\ast r}(y,s))$. In conclusion, $u_i^t$ are uniformly bounded in $C^\infty_{loc}(\R^n\times\R)$. Hence it converges to $u_\infty^t$ in $C^\infty_{loc}(\R^n\times\R)$. Then we can let $i\to+\infty$ in \eqref{normalization condition I}, which leads to
\[ u_\infty^t(0)=\max_{y\in \R^n}u_\infty^t(y)=1.\]
By the Liouville theorem of Caffarelli-Gidas-Spruck \cite{Caffarelli-Gidas-Spruck}, we get
\[u_\infty^t\equiv W \quad \mbox{in  } \R^n.\]

Next we give a scaling invariant estimate for $u_i$ away from $\xi_i^\ast(t)$.
\begin{lem}\label{lem scaling invariant estimate I}
For any $R$ large enough, there exist two constants  $\sigma(R)\ll 1$ and $C(R)>0$ such that, outside $ B_{R\lambda_i^\ast(t)}(\xi_i^\ast(t))$ we have
 \begin{equation}\label{scaling invariant estimate I}
  \left\{
\begin{aligned}
&  u_i(x,t)\leq \sigma(R)|x-\xi_i^\ast(t)|^{-\frac{n-2}{2}}+C(R), \\
&|\nabla u_i(x,t)|\leq \sigma(R)|x-\xi_i^\ast(t)|^{-\frac{n}{2}}+C(R) ,\\
&|\nabla^2u_i(x,t)|+|\partial_tu_i(x,t)|\leq \sigma(R) |x-\xi_i^\ast(t)|^{-\frac{n+2}{2}}+C(R).
\end{aligned}\right.
  \end{equation}
\end{lem}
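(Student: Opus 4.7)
\medskip

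\noindent\textbf{Proof proposal.}
My plan is to prove the $L^\infty$ estimate on $u_i$ by a blow-up/contradiction argument, and then deduce the gradient, second-derivative, and time-derivative estimates by standard parabolic regularity applied to appropriately rescaled versions of the equation.

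For the bound on $u_i$ itself, I would argue by contradiction. Suppose there is a fixed $\sigma>0$ such that for every $R$, one can find $i=i(R)\to\infty$, $t_i\in[-81/100,81/100]$ and $x_i\in B_{4/5}$ with $r_i:=|x_i-\xi_i^\ast(t_i)|\geq R\lambda_i^\ast(t_i)$ and $u_i(x_i,t_i)\geq \sigma r_i^{-(n-2)/2}$. To produce a good extraction point I would replace $x_i$ by a near-maximiser of the scale-invariant quantity $|x-\xi_i^\ast(t_i)|^{(n-2)/2}u_i(x,t_i)$ on $B_{4/5}\setminus B_{R\lambda_i^\ast(t_i)}(\xi_i^\ast(t_i))$; then the density $M_i:=r_i^{(n-2)/2}u_i(x_i,t_i)$ is bounded below, and for every $y$ with $|y|\leq r_i/(2\lambda_i)$ one still controls $u_i(x_i+\lambda_i y,t_i)$ from above by a multiple of $M_i r_i^{-(n-2)/2}$, where $\lambda_i:=u_i(x_i,t_i)^{-2/(n-2)}$.

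With this choice the rescaling $v_i(y,s):=\lambda_i^{(n-2)/2}u_i(x_i+\lambda_i y,\,t_i+\lambda_i^2 s)$ satisfies $v_i(0,0)=1$, is uniformly bounded on any fixed compact set, and solves the equation on an expanding domain. By the nonconcentration of $\partial_t u_i$ (Lemma \ref{lem nonconcentration of time derivative}) together with \eqref{Lip assumption}, the rescaled time-derivative $\partial_s v_i$ tends to $0$ in $L^2_{\mathrm{loc}}$, so the $\varepsilon$-regularity theorems (Theorems \ref{thm ep regularity I}--\ref{thm ep regularity II}) yield uniform $C^\infty_{\mathrm{loc}}$ bounds and a subsequential limit $v_\infty$ that is a nonnegative, nontrivial entire solution of $-\Delta v_\infty=v_\infty^p$ on $\R^n$; by Caffarelli--Gidas--Spruck it is an Aubin--Talenti bubble of energy $\Lambda$. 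The main obstacle is checking that this second bubble is genuinely separated from the one captured by Proposition \ref{prop blow up profile I} so that an additional $\Lambda$ of energy appears in the limit. This is what forces $R\to\infty$: since $r_i\geq R\lambda_i^\ast(t_i)$ and $\lambda_i\lesssim r_i$, the ratio $\lambda_i^\ast(t_i)/\lambda_i\to 0$, so in the $v_i$-coordinates the original bubble degenerates to a zero-volume spike located at $(\xi_i^\ast(t_i)-x_i)/\lambda_i$ (possibly escaping to infinity), and hence cannot overlap the new bubble.

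Combining the original bubble at $\xi_i^\ast(t_i)$ with this second bubble and using the almost-orthogonality of their energies, the local Dirichlet and $L^{p+1}$ norms of $u_i(t_i)$ exceed $2\Lambda-o(1)$ on any ball containing both concentration centres. Passing this through Lemma \ref{lem close to one bubble} (or directly through assumption (II.b)) yields strictly more than $\Lambda$ of energy concentrating on $\Sigma$, contradicting the single-bubble assumption (II.b). This forces $\sigma(R)\to 0$ as $R\to\infty$ and establishes the pointwise bound on $u_i$.

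Once the $u_i$ bound is in hand, the gradient, Hessian, and $\partial_t$ estimates follow from parabolic regularity. For a point $x$ with $\rho:=|x-\xi_i^\ast(t_i)|\geq R\lambda_i^\ast(t_i)$, I would rescale on the parabolic cylinder $Q_{\rho/4}(x,t_i)$ by the factor $\rho$, defining $\tilde u_i(y,s):=\rho^{(n-2)/2}u_i(x+\rho y,t_i+\rho^2 s)$, which is a scale-invariant procedure. On the rescaled cylinder $Q_{1/4}$ the new solution is bounded by $\sigma(R)+C(R)\rho^{(n-2)/2}$, so $\tilde u_i^p$ is bounded, and standard interior $L^p$--Schauder estimates for the heat operator give uniform bounds on $\nabla\tilde u_i$, $\nabla^2\tilde u_i$ and $\partial_s\tilde u_i$. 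Scaling back produces exactly the claimed $|x-\xi_i^\ast(t_i)|^{-n/2}$ and $|x-\xi_i^\ast(t_i)|^{-(n+2)/2}$ weights, with the small factor $\sigma(R)$ coming from the $u_i$ estimate and the additive $C(R)$ absorbing the lower-order contribution.
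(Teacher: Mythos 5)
Your strategy is sound but it is genuinely different from the paper's. The paper never extracts a second bubble: it first combines Lemma \ref{lem close to one bubble} with the energy captured by the bubble of Proposition \ref{prop blow up profile I} (see \eqref{bubble energy}) to show that the neck annulus $B_{r(\delta)}\setminus B_{R(\delta)\lambda_i^\ast(t)}(\xi_i^\ast(t))$ carries at most $\delta$ of energy, uniformly in $t$; it then rescales a hypothetical bad point by the \emph{distance} $r_i=|x_i-\xi_i^\ast(t_i)|$ (not by the height $u_i(x_i,t_i)^{-2/(n-2)}$) and applies the $\varepsilon$-regularity theorem (Theorem \ref{thm ep regularity I}) in a small backward cylinder around the rescaled point. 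This delivers the smallness $c(\delta)$ of the rescaled function \emph{and} of its gradient, Hessian and time derivative in one stroke, and, because the smallness hypothesis is an energy bound over a space--time cylinder, it automatically absorbs the fact that $\xi_i^\ast(s)$ moves in $s$ --- an issue your final Schauder step quietly assumes away when you rescale on $Q_{\rho/4}(x,t_i)$ and use the pointwise bound at times $s\ne t_i$. Your route instead goes through a Schoen-type selection, Caffarelli--Gidas--Spruck, and an energy-quantization contradiction with \textbf{(II.b)}; it works, but it costs you the elliptic Liouville theorem, a removable-singularity step, and a separate regularity argument for the derivatives, all of which the paper's single application of $\varepsilon$-regularity replaces.

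One step of your extraction is stated incorrectly and needs repair. Under the contradiction hypothesis the quantity $M_i:=r_i^{(n-2)/2}u_i(x_i,t_i)$ is only bounded \emph{below} by $\sigma$; it need not tend to infinity. Since the old concentration point sits at $P_i:=(\xi_i^\ast(t_i)-x_i)/\lambda_i$ with $|P_i|=M_i^{2/(n-2)}$, when $M_i$ stays bounded the spike lands at a finite point $P_\infty$, so $v_i$ is \emph{not} uniformly bounded on every fixed compact set, your controlled region $B_{r_i/(2\lambda_i)}$ does not exhaust $\R^n$, and the limit $v_\infty$ a priori has a point singularity at $P_\infty$ that must be removed (via the finite local $H^1\cap L^{p+1}$ bound and the zero capacity of a point) before Caffarelli--Gidas--Spruck applies. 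Likewise $\lambda_i^\ast(t_i)/\lambda_i\le M_i^{2/(n-2)}/R$ tends to zero only if you first know $M_i=o(R^{(n-2)/2})$; what actually saves the energy count in all regimes is the Struwe separation $r_i^2/(\lambda_i\lambda_i^\ast(t_i))\ge \sigma^{2/(n-2)}R\to\infty$, which guarantees the two bubbles contribute additively and yields $2\Lambda-o(1)$ of energy in $B_{r(\delta)}$ at time $t_i$, contradicting Lemma \ref{lem close to one bubble}. With these repairs your argument closes; as written, the claims ``uniformly bounded on any fixed compact set'' and ``the ratio $\lambda_i^\ast(t_i)/\lambda_i\to 0$'' are not justified in exactly the regime (bounded $M_i$) that matters for proving smallness rather than mere boundedness of the constant.
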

\begin{proof}

For any $\delta>0$, by the smooth convergence of $u_i^t$ obtained above, there exists an $R(\delta)\gg 1$ such that for any $t\in[-81/100,81/100]$,
\begin{equation}\label{bubble energy}
  \left\{\begin{aligned}
& \int_{B_{R(\delta)\lambda_i^\ast(t)}(\xi_i^\ast(t))}|\nabla u_i(x,t)|^2\geq \Lambda-\delta/4,\\
& \int_{B_{R(\delta)\lambda_i^\ast(t)}(\xi_i^\ast(t))}  u_i(x,t)^{p+1}\geq \Lambda-\delta/4.
\end{aligned}\right.
\end{equation}
By Lemma \ref{lem close to one bubble}, there exists an $r(\delta)\ll 1$ such that for any $t\in[-81/100,81/100]$,
\begin{equation}\label{close to one bubble 2}
  \left\{\begin{aligned}
& \int_{B_{r(\delta)}}|\nabla u_i(x,t)|^2\leq \Lambda+\delta/4,\\
& \int_{B_{r(\delta)}} u_i(x,t)^{p+1}\leq \Lambda+\delta/4.
\end{aligned}\right.
\end{equation}
Combining \eqref{bubble energy} with \eqref{close to one bubble 2}, we get
\begin{equation}\label{small energy on neck}
\sup_{|t| \leq 81/100} \int_{B_{r(\delta)}\setminus B_{R(\delta)\lambda_i^\ast(t)}(\xi_i^\ast(t))}\left[|\nabla u_i(x,t)|^2+u_i(x,t)^{p+1}\right]dx \leq\delta.
\end{equation}

By Corollary \ref{coro smooth convergence outside blow up locus}, there   exists a constant $C(\delta)$ such that
\[u_i(x,t)+|\nabla u_i(x,t)|+|\nabla^2u_i(x,t)|+|\partial_tu_i(x,t)| \leq C(\delta), \quad \mbox{if} ~ x\notin B_{r(\delta)}(\xi_i^\ast(t)). \]

It remains to prove \eqref{scaling invariant estimate I} when  $x\in B_{r(\delta)}\setminus B_{R(\delta)\lambda_i^\ast(t)}(\xi_i^\ast(t))$. We argue by contradiction, so assume there exists a sequence $(x_i,t_i)$, with $t_i\in[-81/100,81/100]$ and $x_i\in  B_{r(\delta)}(0)\setminus B_{R(\delta)\lambda_i^\ast(t_i)}(\xi_i^\ast(t_i))$, but for some $\sigma$ (to be determined below),
\begin{equation}\label{absurd assumption I}
 u_i(x_i,t_i)\geq \sigma |x_i-\xi_i^\ast(t_i)|^{-\frac{n-2}{2}}.
\end{equation}

Let $r_i:=|x_i-\xi_i^\ast(t_i)|$ and
\[\widetilde{u}_i(y,s):=r_i^{\frac{n-2}{2}}u_i\left(\xi_i^\ast(t_i)+r_i y, t_i+r_i^2s\right).\]
Denote
\[\widetilde{x}_i:=\frac{x_i-\xi_i^\ast(t_i)}{r_i}.\]
It lies on $\partial B_1$. Assume it converges to a limit point $\widetilde{x}_\infty\in\partial B_1$.

Scaling \eqref{bubble energy} and \eqref{close to one bubble} leads to
\begin{equation}\label{bubble energy 2}
  \left\{\begin{aligned}
& \int_{B_{R(\delta)^{-1}}}|\nabla \widetilde{u}_i(y,0)|^2\geq \Lambda-\delta/4,\\
& \int_{B_{R(\delta)^{-1}}} \widetilde{u}_i(y,0)^{p+1}\geq \Lambda-\delta/4
\end{aligned}\right.
\end{equation}
and for any $R>0$ and $s\in\R$,
\begin{equation}\label{close to one bubble 3}
  \left\{\begin{aligned}
& \int_{B_R(0)}|\nabla \widetilde{u}_i(y,s)|^2\leq \Lambda+\delta/4,\\
& \int_{B_R(0)} \widetilde{u}_i(y,s)^{p+1}\leq \Lambda+\delta/4.
\end{aligned}\right.
\end{equation}
Then by noting that for any $s$, $\partial_s\widetilde{u}_i(\cdot,s)$ converges to $0$ strongly in $L^2_{loc}(\R^n)$, we can apply Lemma \ref{lem close to one bubble} to find an $\rho(\delta)>0$ such that for any $s\in[-1,1]$,
\[\left| \int_{B_{\rho(\delta)}}|\nabla \widetilde{u}_i(y,s)|^2dy-\Lambda\right|+\left| \int_{B_{\rho(\delta)}} \widetilde{u}_i(y,s)^{p+1}dy-\Lambda\right|<\delta.\]
Combining this estimate with \eqref{close to one bubble 3}, we get
\[ \int_{-1}^{0}\int_{B_{1/2}(\widetilde{x}_i)}\left(|\nabla\widetilde{u}_i|^2+\widetilde{u}_i^{p+1}\right)\leq C\delta.\]
By choosing $C\delta<\varepsilon_\ast$, we can apply Theorem \ref{thm ep regularity I} to deduce that
\[\widetilde{u}_i(\widetilde{x}_i,0)+|\nabla \widetilde{u}_i(\widetilde{x}_i,0)|  + |\nabla^2\widetilde{u}_i(\widetilde{x}_i,0)|+|\partial_s\widetilde{u}_i(\widetilde{x}_i,0)| \leq c(\delta),\]
where $c(\delta)$ is a small constant depending on $\delta$. Scaling back to $u_i$ we get a contradiction with \eqref{absurd assumption I}. In other words, \eqref{scaling invariant estimate I} must hold in $B_{r(\delta)}\setminus B_{R(\delta)\lambda_i^\ast(t)}(\xi_i^\ast(t))$.
\end{proof}

Combining Proposition \ref{prop blow up profile I} and Lemma \ref{lem scaling invariant estimate I}. we get
\begin{coro}\label{coro unique maximal point}
  For each $t\in[-81/100,81/100]$, $\xi_i^\ast(t)$ is the unique maximal point of $u_i(t)$ in $B_1$.
\end{coro}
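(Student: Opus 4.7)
The plan is to show that for all sufficiently large $i$, the maximum of $u_i(\cdot,t)$ on $B_1$ is attained only at $\xi_i^\ast(t)$, uniformly in $t \in [-81/100, 81/100]$. I would split $B_1$ at the bubble scale, ruling out competing maxima in the outer region $B_1 \setminus B_{R\lambda_i^\ast(t)}(\xi_i^\ast(t))$ using the scaling invariant decay of Lemma \ref{lem scaling invariant estimate I}, and in the inner region $B_{R\lambda_i^\ast(t)}(\xi_i^\ast(t))$ using the smooth convergence $u_i^t \to W$ from Proposition \ref{prop blow up profile I} together with the non-degenerate maximum of $W$ at the origin.

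For the outer region, I would fix $R$ so large that $\sigma(R) R^{-(n-2)/2} < 1/2$ in Lemma \ref{lem scaling invariant estimate I}; this is possible because $\sigma(R) \ll 1$ for all large $R$. Then for any $x \in B_1$ with $|x - \xi_i^\ast(t)| \geq R\lambda_i^\ast(t)$ one has
\[
u_i(x,t) \;\leq\; \sigma(R)\,|x-\xi_i^\ast(t)|^{-\frac{n-2}{2}} + C(R) \;\leq\; \tfrac{1}{2}\lambda_i^\ast(t)^{-\frac{n-2}{2}} + C(R).
\]
Since $\lambda_i^\ast(t)\to 0$ uniformly in $t$ by Proposition \ref{prop blow up profile I}, for all $i$ large (uniformly in $t$) this right-hand side is strictly less than $\lambda_i^\ast(t)^{-(n-2)/2} = u_i(\xi_i^\ast(t),t)$, so no competing maximum can lie in the outer region.

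For the inner region, rescale by $y = (x-\xi_i^\ast(t))/\lambda_i^\ast(t)$, so that Proposition \ref{prop blow up profile I} gives $u_i^t(\cdot,0) \to W$ in $C^2(\overline{B_R})$. A direct computation from \eqref{bubble} yields $\nabla^2 W(0) = -\tfrac{1}{n}I$, which is strictly negative definite. Pick $r_0 \in (0,R)$ small enough that $\nabla^2 W$ remains negative definite on $\overline{B_{r_0}}$ and $W \leq 1 - \varepsilon$ on the compact annulus $\overline{B_R}\setminus B_{r_0}$ for some $\varepsilon > 0$. By the $C^2$ convergence, for $i$ large: on $\overline{B_R}\setminus B_{r_0}$ we have $u_i^t(\cdot,0) < 1 - \varepsilon/2 < 1 = u_i^t(0,0)$, so no maximum is attained there; on $B_{r_0}$ the Hessian $\nabla^2 u_i^t(\cdot,0)$ stays negative definite, so $u_i^t(\cdot,0)$ is strictly concave and thus admits a unique maximum, which must be $0$ since the origin already realizes the value $1$. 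Combining the two regions yields that $\xi_i^\ast(t)$ is the unique maximum of $u_i(\cdot,t)$ in $B_1$.

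The only subtlety is ensuring that the thresholds on $i$, and the choices of $R$ and $r_0$, are uniform in $t$; but the convergence $u_i^t \to W$ is uniform in $t \in [-81/100, 81/100]$ by Proposition \ref{prop blow up profile I}, and Lemma \ref{lem scaling invariant estimate I} is already stated with constants independent of $t$, so the uniformity in $t$ is automatic, and no further compactness argument is needed.
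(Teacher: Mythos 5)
Your proof is correct and is exactly the argument the paper intends: the paper's own justification of this corollary is the single line ``combining Proposition \ref{prop blow up profile I} and Lemma \ref{lem scaling invariant estimate I}'', and your outer/inner split (decay estimate away from the bubble, non-degenerate Hessian of $W$ at the origin plus $C^2$ convergence near it) is the standard way to carry that out. The uniformity in $t$ is handled as you say, since both Lemma \ref{lem scaling invariant estimate I} and the compactness argument proving Proposition \ref{prop blow up profile I} are already uniform over $t\in[-81/100,81/100]$.
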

This completes the proof of Proposition \ref{prop blow up profile I}.

\section{Orthogonal and inner-outer decompositions}\label{sec decomposition}
\setcounter{equation}{0}

For simplicity of notations, from here to Section \ref{sec inner and outer 2}  we are concerned with a fixed solution $u_i$ with sufficiently large index $i$, and denote it  by $u$. In this section we define the orthogonal decomposition and inner-outer decomposition for $u$.

\subsection{Orthogonal decomposition}

From now on, until Section \ref{sec Schoen's Harnack inequality}, a constant $K\gg 1$ will be fixed. In the following we will use the notations about bubbles and kernels to their linearized equations, $W_{\xi,\lambda}$ and $Z_{i,\xi,\lambda}$, see Appendix \ref{sec bubbles}.

\begin{prop}[Orthogonal condition]\label{prop orthogonal decompostion}

For any $t\in[-81/100,81/100]$, there exists a unique $(a(t),\xi(t),\lambda(t))\in\R\times\R^n\times \R^+$ with
\begin{equation}\label{deviation form max pt}
\frac{|\xi(t)-\xi^\ast(t)|}{\lambda(t)}+\Big|\frac{\lambda(t)}{\lambda^\ast(t)}-1\Big|+\Big|\frac{a(t)}{\lambda(t)}\Big|=o(1),
\end{equation}
such that for each $i=0,\cdots, n+1$,
\begin{equation}\label{orthogonal condition}
  \int_{B_1}\left[u(x,t)-W_{\xi(t),\lambda(t)}(x)-a(t)Z_{0,\xi(t),\lambda(t)}(x)\right]\eta_K\left(\frac{x-\xi(t)}{\lambda(t)}\right) Z_{i,\xi(t),\lambda(t)}(x)dx =0.
\end{equation}
\end{prop}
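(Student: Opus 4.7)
The statement is a standard orthogonality-based decomposition, and I would prove it by a quantitative implicit function theorem using the approximate parameters $(\xi^\ast(t),\lambda^\ast(t))$ from Proposition \ref{prop blow up profile I} as an initial guess. Define the map
\[
F^t(a,\xi,\lambda):=\bigl(F_0^t,\ldots,F_{n+1}^t\bigr)\in\R^{n+2},
\]
where $F_i^t(a,\xi,\lambda)$ is the left-hand side of \eqref{orthogonal condition}. I would pass to rescaled unknowns $(\tilde a,\tilde\xi,\tilde\lambda):=(a/\lambda^\ast(t),(\xi-\xi^\ast(t))/\lambda^\ast(t),\lambda/\lambda^\ast(t))$ and to the bubble-scale variable $y=(x-\xi^\ast(t))/\lambda^\ast(t)$. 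Using the homogeneity of $W_{\xi,\lambda}$ and $Z_{i,\xi,\lambda}$, each component of $F^t$ becomes a universal power of $\lambda^\ast(t)$ times an integral over $B_{1/\lambda^\ast(t)}$ involving the rescaled profile $u^t(\cdot,0)$ (which converges to $W$ in $C^\infty_{\rm loc}(\R^n)$) against $Z_j$ weighted by the cutoff $\eta_K$.

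At the initial point $(\tilde a,\tilde\xi,\tilde\lambda)=(0,0,1)$ the integrand factor $u^t(\cdot,0)-W$ is uniformly $o(1)$ on $\{|y|\le 2K\}$ by Proposition \ref{prop blow up profile I}, so after the natural normalization
\[
F^t(0,\xi^\ast(t),\lambda^\ast(t))=o(1)\quad\text{as } i\to\infty.
\]
Next I would differentiate $F^t$ in the rescaled parameters at this initial point. The dominant contributions are
\[
\partial_{\tilde a}F_i^t\approx\int_{\R^n}Z_0(y)Z_i(y)\eta_K(y)\,dy,\qquad
\partial_{\tilde\xi_j}F_i^t\approx-\int_{\R^n}Z_j(y)Z_i(y)\eta_K(y)\,dy,
\]
and analogously for $\partial_{\tilde\lambda}F_i^t$ (via the identification of the dilation kernel with $Z_0$ or $Z_{n+1}$ as set up in Section \ref{sec bubbles}). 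Extra terms coming from differentiating the cutoff $\eta_K((x-\xi)/\lambda)$ are supported on $\{K\le|y|\le 2K\}$ and are of lower order relative to the diagonal terms, which grow with $K$ by the pointwise behavior of the $Z_i$. By the nondegeneracy of the bubble (the $Z_i$, $i=0,\ldots,n+1$, are linearly independent kernels of the linearized operator with explicit decay), for $K$ sufficiently large this Gram-type matrix is invertible with a uniform-in-$i$ bound on its inverse.

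With smallness of $F^t$ at the initial guess and uniform invertibility of its linearization, the quantitative implicit function theorem (applied to the contraction $T:=\mathrm{Id}-[DF^t(0,\xi^\ast,\lambda^\ast)]^{-1}F^t$ on a small ball in the rescaled parameters) produces a unique zero $(a(t),\xi(t),\lambda(t))$ in an $o(1)$-neighborhood of $(0,\xi^\ast(t),\lambda^\ast(t))$, which is precisely the conclusion \eqref{deviation form max pt}. Continuity in $t$ is automatic from uniqueness of the implicit function together with the smoothness of $u$.

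The main technical obstacle is verifying uniform invertibility of the linearization at the initial point: this requires carefully computing the derivatives of both $W_{\xi,\lambda}$ and the cutoff $\eta_K((x-\xi)/\lambda)$, and checking that the diagonal contributions $\int Z_iZ_j\eta_K$ (which, after proper normalization of the $Z_i$, form an approximately diagonal matrix with explicit nonzero entries) dominate the cutoff-derivative terms once $K$ is fixed large but independent of $i$. Once this is in hand, the remainder of the argument is purely routine.
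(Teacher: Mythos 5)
Your proposal follows essentially the same route as the paper's proof: rescale to the bubble scale, define the $(n+2)$-dimensional map $F$ in the rescaled parameters, use Proposition \ref{prop blow up profile I} to get $|F|=o(1)$ at the initial guess $(0,\xi^\ast(t),\lambda^\ast(t))$, check that $DF$ is diagonally dominated (hence uniformly invertible) near that point, and conclude by the inverse function theorem. One small inaccuracy worth noting: the diagonal entries $\int Z_i^2\eta_K$ do not grow with $K$ (they converge to $\|Z_i\|_{L^2(\R^n)}^2$, finite precisely because $n\geq 5$ makes $Z_{n+1}\in L^2$); the real issue, which is where the paper invokes $n\geq 7$, is controlling the terms containing $\nabla\eta_K$ in which $\widetilde u$ is only known to decay like $|y|^{-(n-2)/2}$.
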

\begin{proof}
For these $t$, set
\[\widetilde{u}(y):=\lambda^\ast(t)^{\frac{n-2}{2}}u(\xi^\ast(t)+\lambda^\ast(t)y,t).\]

Define a smooth map $F$ from  $B_{1/2}^{n+2}\subset\R^{n+2}$ into $\R^{n+2}$ as
\[F(\xi,\lambda, a):= \left(\int_{\R^n}\left[\widetilde{u}(y)-W_{\xi,1+\lambda}(y)-aZ_{0,\xi,1+\lambda}(y)\right]\eta_K\left(\frac{y-\xi}{1+\lambda}\right) Z_{i,\xi,1+\lambda}(y) dy\right)_{i=0}^{n+1}.\]
The task is reduced to find a solution of $F(\xi,\lambda, a)=0$.

By Proposition \ref{prop blow up profile I}, $|F(0,0,0)|\ll 1$. By a direct calculation and
applying Proposition \ref{prop blow up profile I} once again, we find a fixed small constant $\rho_\ast$ such that, for any $(\xi,\lambda,a)\in B_{\rho_\ast}^{n+2}$, $DF(\xi,\lambda,a)$ is diagonally dominated, and hence invertible. Then the inverse function theorem implies
the existence and uniqueness of the solution $F=0$ in $B_{\rho_\ast}^{n+2}$.
\end{proof}
In the above, in order to show that $DF(\xi,\lambda,a)$ is diagonally dominated, we need $n\geq 5$ to make sure $Z_{n+1}\in L^2(\R^{n})$, and we also need the $n\geq 7$ assumption  because at first we only know that $u$ decays like $|y|^{-(n-2)/2}$. (This term appears in the integral containing $\nabla\eta$.)

For later purpose, we notice a Lipschitz  bound on the parameters $(a(t),\xi(t),\lambda(t))$.
\begin{lem}
For any $t\in[-81/100,81/100]$,
\begin{equation}\label{Lip for parameters}
  \left|a^\prime(t)\right|+\left|\xi^\prime(t)\right|+\left|\lambda^\prime(t)\right|\lesssim \left(\int_{B_1}\partial_tu(x,t)^2dx\right)^{1/2}.
\end{equation}
\end{lem}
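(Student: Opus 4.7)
The plan is to differentiate the orthogonality conditions \eqref{orthogonal condition} with respect to $t$ and reduce the estimate to solving a small linear system whose coefficient matrix is the same Jacobian that appeared in the proof of Proposition \ref{prop orthogonal decompostion}. Set
\[
\phi(x,t):=u(x,t)-W_{\xi(t),\lambda(t)}(x)-a(t)Z_{0,\xi(t),\lambda(t)}(x),\qquad \chi(x,t):=\eta_K\!\left(\tfrac{x-\xi(t)}{\lambda(t)}\right),
\]
and recall that \eqref{orthogonal condition} says $\int_{B_1}\phi\,\chi\,Z_{i,\xi(t),\lambda(t)}\,dx=0$ for $i=0,\dots,n+1$. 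Differentiating in $t$ (all quantities are smooth in $t$) gives
\[
\int_{B_1}\partial_t\phi\cdot\chi\,Z_{i,\xi(t),\lambda(t)}\,dx+\int_{B_1}\phi\cdot\partial_t\!\left(\chi\,Z_{i,\xi(t),\lambda(t)}\right)dx=0.
\]

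Next I would expand the first integrand. Using $\partial_t\phi=\partial_t u-a'(t)Z_{0}-\partial_tW_{\xi(t),\lambda(t)}-a(t)\partial_tZ_{0,\xi(t),\lambda(t)}$ together with the translation/scaling identities
\[
\partial_tW_{\xi(t),\lambda(t)}=-\sum_{j=1}^{n}\xi_j'(t)\,\partial_{x_j}W_{\xi(t),\lambda(t)}+\lambda'(t)\,\partial_\lambda W_{\xi(t),\lambda(t)},
\]
and a similar one for $\partial_tZ_{0,\xi(t),\lambda(t)}$, each time-derivative of a parameter can be pulled outside as a coefficient multiplying an $L^2$ function of the form $Z_{j,\xi(t),\lambda(t)}$ (up to normalization). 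Collecting coefficients gives the $(n+2)\times(n+2)$ linear system
\[
M(t)\begin{pmatrix}a'(t)\\ \xi'(t)\\ \lambda'(t)\end{pmatrix}=\left(\int_{B_1}\partial_t u(x,t)\,\chi(x,t)\,Z_{i,\xi(t),\lambda(t)}(x)\,dx\right)_{i=0}^{n+1},
\]
where
\[
M_{ij}(t)=\int_{B_1}Z_{j,\xi(t),\lambda(t)}\,\chi\,Z_{i,\xi(t),\lambda(t)}\,dx+E_{ij}(t),
\]
and the error term $E_{ij}(t)$ comes from $\int \phi\cdot\partial_t(\chi Z_i)$ and from $a(t)\partial_tZ_0$. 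By Proposition \ref{prop blow up profile I} and \eqref{deviation form max pt}, both $\phi$ and $a(t)$ are small, so $E_{ij}(t)=o(1)$ relative to the diagonal.

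I would then invoke the invertibility of $M(t)$. The matrix $\bigl(\int Z_j\chi Z_i\,dx\bigr)_{ij}$ is (after rescaling to the bubble's own variables) a perturbation of the diagonal matrix $\bigl(\int_{\mathbb{R}^n}Z_i^2\bigr)_{ii}$, which is positive definite precisely when each $Z_i\in L^2(\mathbb{R}^n)$; this requires $n\geq 5$ for the dilation kernel $Z_{n+1}$ and is why the statement places us in $n\geq 7$. This diagonal dominance is exactly the computation carried out in the proof of Proposition \ref{prop orthogonal decompostion}, so $M(t)$ is invertible with a bound on $M(t)^{-1}$ depending only on $K$ (and $n$). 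Finally, Cauchy--Schwarz applied to the right-hand side gives
\[
\left|\int_{B_1}\partial_t u\,\chi\,Z_{i,\xi(t),\lambda(t)}\,dx\right|\leq \|\partial_tu(\cdot,t)\|_{L^2(B_1)}\,\|\chi Z_{i,\xi(t),\lambda(t)}\|_{L^2(\mathbb{R}^n)}\lesssim \|\partial_tu(\cdot,t)\|_{L^2(B_1)},
\]
uniformly in $t$, since the bubble parameters are bounded in the regime of \eqref{deviation form max pt}. Combining these pieces yields \eqref{Lip for parameters}.

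The only real obstacle is bookkeeping the perturbative terms $E_{ij}(t)$ and checking that the rescaling to intrinsic bubble coordinates does not degrade the uniform bound on $M(t)^{-1}$, but this is the same kind of estimate already performed in Proposition \ref{prop orthogonal decompostion} and involves no new ideas.
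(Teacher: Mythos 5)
Your proposal is correct and follows essentially the same route as the paper: differentiate the orthogonality conditions \eqref{orthogonal condition} in $t$, recognize the resulting relations as a diagonally dominant (near-diagonal, by the $L^2$-orthogonality of the $Z_i$ and the smallness of $\phi$ and $a$) linear system in $(a',\xi',\lambda')$, and bound the right-hand side by Cauchy--Schwarz against $\|\partial_tu(\cdot,t)\|_{L^2(B_1)}$. The paper merely presents the inversion componentwise — deriving each estimate with an $o(|a'|+|\xi'|+|\lambda'|)$ error and then summing and absorbing — rather than writing the matrix $M(t)$ explicitly, but the content is identical.
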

\begin{proof}
For each $i=1,\cdots,n$, because $Z_i\eta_K$ is orthogonal to $Z_0$ in $L^2(\R^n)$, the orthogonal condition \eqref{orthogonal condition} reads as
\[
  \int_{B_1}\left[u(x,t)-W_{\xi(t),\lambda(t)}(x)\right]\eta_K\left(\frac{x-\xi(t)}{\lambda(t)}\right) Z_{i,\xi(t),\lambda(t)}(x)dx =0.
\]
Differentiating this equality in $t$, by the $L^2$ orthogonal relations between different  $Z_i$, we obtain
\begin{eqnarray}\label{representation of xi prime}
   &&  \left[\int_{B_1}\eta_K\left(\frac{x-\xi(t)}{\lambda(t)}\right) Z_{i,\xi(t),\lambda(t)}(x)^2dx\right]\xi_i^\prime(t) \nonumber\\
    &=&   \int_{B_1} \partial_tu(x,t)\eta_K\left(\frac{x-\xi(t)}{\lambda(t)}\right) Z_{i,\xi(t),\lambda(t)}(x)dx\\
    &+&  \int_{B_1}\left[u(x,t)-W_{\xi(t),\lambda(t)}(x)\right]\frac{\partial}{\partial t}\left[\eta_K\left(\frac{x-\xi(t)}{\lambda(t)}\right) Z_{i,\xi(t),\lambda(t)}(x)\right]dx. \nonumber
\end{eqnarray}

First, by the definition of $Z_{i,\xi(t),\lambda(t)}$, we have
\begin{equation*}\label{12.1}
  \int_{B_1}\eta_K\left(\frac{x-\xi(t)}{\lambda(t)}\right) Z_{i,\xi(t),\lambda(t)}(x)^2dx=\int_{\R^n}\eta_K(y)Z_i(y)^2dy\geq c.
\end{equation*}
Next, by Cauchy-Schwarz inequality and \eqref{Lip assumption}, we get
\begin{eqnarray*}\label{12.2}
 &&\left| \int_{B_1} \partial_tu(x,t)\eta_K\left(\frac{x-\xi(t)}{\lambda(t)}\right) Z_{i,\xi(t),\lambda(t)}(x)dx \right|  \\
 &\leq& \left(\int_{B_1}\partial_tu(x,t)^2dx\right)^{1/2}\left(\int_{B_1}\eta_K\left(\frac{x-\xi(t)}{\lambda(t)}\right)^2 Z_{i,\xi(t),\lambda(t)}(x)^2dx\right)^{1/2}\\
 &\lesssim& \left(\int_{B_1}\partial_tu(x,t)^2dx\right)^{1/2}.
\end{eqnarray*}
Finally, because in $B_{K\lambda(t)}(\xi(t))$,
\[
  \left\{\begin{aligned}
&\left|u(x,t)-W_{\xi(t),\lambda(t)}(x)\right|=o(1) \lambda(t)^{-\frac{n-2}{2}}\left(1+\frac{|x-\xi(t)|}{\lambda(t)}\right)^{-\frac{n-2}{2}},\\
&\left|\frac{\partial}{\partial t} \eta_K\left(\frac{x-\xi(t)}{\lambda(t)}\right) Z_{i,\xi(t),\lambda(t)}(x)\right|\lesssim \frac{\left|\xi^\prime(t)\right|+\left|\lambda^\prime(t)\right|}{\lambda(t)^{\frac{n+2}{2}}}\left(1+\frac{|x-\xi(t)|}{\lambda(t)}\right)^{1-n},\\
& \left| \eta_K\left(\frac{x-\xi(t)}{\lambda(t)}\right)\frac{\partial}{\partial t} Z_{i,\xi(t),\lambda(t)}(x)\right|\lesssim \frac{\left|\xi^\prime(t)\right|+\left|\lambda^\prime(t)\right|}{\lambda(t)^{\frac{n+2}{2}}}\left(1+\frac{|x-\xi(t)|}{\lambda(t)}\right)^{1-n},
\end{aligned}\right.
\]
we get
\begin{eqnarray*}
   && \left|\int_{B_1}\left[u(x,t)-W_{\xi(t),\lambda(t)}(x)\right]\frac{\partial}{\partial t}\left[\eta_K\left(\frac{x-\xi(t)}{\lambda(t)}\right) Z_{i,\xi(t),\lambda(t)}(x)\right]dx \right| \\
  &\ll & \lambda(t)^{-n}\left(\left|\xi^\prime(t)\right|+\left|\lambda^\prime(t)\right|\right)\int_{B_1}\left(1+\frac{|x-\xi(t)|}{\lambda(t)}\right)^{2-\frac{3n}{2}}\\
  &\lesssim& \left|\xi^\prime(t)\right|+\left|\lambda^\prime(t)\right|.
\end{eqnarray*}

Plugging these three estimates into \eqref{representation of xi prime}, we obtain
\begin{equation}\label{estimate of xi prime}
  |\xi^\prime(t)|\lesssim \left(\int_{B_1}\partial_tu(x,t)^2dx\right)^{1/2}+o\left(\left|\xi^\prime(t)\right|+\left|\lambda^\prime(t)\right|\right).
\end{equation}

In the same way, we get
\begin{equation}\label{estimate of lambda prime}
  |\lambda^\prime(t)|\lesssim \left(\int_{B_1}\partial_tu(x,t)^2dx\right)^{1/2}+o\left(\left|\xi^\prime(t)\right|+\left|\lambda^\prime(t)\right|+\left|a^\prime(t)\right|\right)
\end{equation}
and
\begin{equation}\label{estimate of a prime}
  |a^\prime(t)|\lesssim \left(\int_{B_1}\partial_tu(x,t)^2dx\right)^{1/2}+o\left(\left|\xi^\prime(t)\right|+\left|\lambda^\prime(t)\right|+\left|a^\prime(t)\right|\right).
\end{equation}
Adding these three estimates together we get \eqref{Lip for parameters}.
\end{proof}

\subsection{Inner-outer decomposition}

In the following we denote the error function
\[\phi(x,t):=u(x,t)-W_{\xi(t),\lambda(t)}(x)-a(t)Z_{0,\xi(t),\lambda(t)}(x),\]
\[W_\ast(x,t):=W_{\xi(t),\lambda(t)}(x), \quad Z_{i,\ast}(x,t):=Z_{i,\xi(t),\lambda(t)}(x)\]
and $Z_\ast:=(Z_{0,\ast},Z_{1,\ast},\cdots,Z_{n+1,\ast})$.

Combining Lemma \ref{lem scaling invariant estimate I} and Proposition \ref{prop orthogonal decompostion}, we obtain
\begin{prop}\label{prop scaling invariant estimate for error}
In $B_1$,
  \[
  \left\{
\begin{aligned}
& |\phi(x,t)|=o\left(\left(\lambda(t)+|x-\xi_\ast(t)|\right)^{-\frac{n-2}{2}}\right)+O(1), \\
&|\nabla \phi(x,t)|=o\left(\left(\lambda(t)+|x-\xi_\ast(t)|\right)^{-\frac{n}{2}}\right)+O(1),\\
&|\nabla^2\phi(x,t)|+|\phi_t(x,t)|=o\left(\left(\lambda(t)+|x-\xi_\ast(t)|\right)^{-\frac{n+2}{2}}\right)+O(1).
\end{aligned}\right.
  \]
\end{prop}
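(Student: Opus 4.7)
The plan is to split $B_1$ into an \emph{inner region} $\{|x-\xi(t)| \leq R\lambda(t)\}$ and an \emph{outer region} $\{|x-\xi(t)| > R\lambda(t)\}$, where $R$ is a large parameter, and to show that each of the three terms defining $\phi = u - W_{\xi(t),\lambda(t)} - a(t) Z_{0,\xi(t),\lambda(t)}$ separately satisfies the stated pointwise bounds. Throughout, I will use that $\lambda(t) \sim \lambda^\ast(t)$ and $|\xi(t)-\xi^\ast(t)| = o(\lambda(t))$ from \eqref{deviation form max pt}, so that $\lambda(t)+|x-\xi^\ast(t)|$ and $\lambda(t)+|x-\xi(t)|$ are comparable.

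In the outer region, Lemma \ref{lem scaling invariant estimate I} directly provides the bounds
\[
|u|\leq \sigma(R)|x-\xi^\ast|^{-\frac{n-2}{2}}+C(R), \qquad |\nabla u|\leq \sigma(R)|x-\xi^\ast|^{-\frac{n}{2}}+C(R),
\]
and similarly for $\nabla^2 u$ and $\partial_t u$, with $\sigma(R)\to 0$ as $R\to\infty$. For $W_{\xi(t),\lambda(t)}$, the explicit Aubin--Talenti decay $W_{\xi,\lambda}(x)\lesssim \lambda^{(n-2)/2}|x-\xi|^{-(n-2)}$ for $|x-\xi|\gg\lambda$ implies $W_{\xi(t),\lambda(t)}(x)\leq CR^{-(n-2)/2}|x-\xi|^{-(n-2)/2}$ in this region, which has a small prefactor. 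Derivatives of $W_{\xi,\lambda}$ gain a factor $\lambda^{-1}$ but lose a factor $|x-\xi|/\lambda$ in decay, producing the correctly scaled analogues. The term $a(t)Z_{0,\xi(t),\lambda(t)}$ together with its derivatives is absorbed since $|Z_{0,\xi,\lambda}(x)|\lesssim \lambda^{-1}(\lambda+|x-\xi|)^{-(n-2)/2}$ (with the analogous scaling for derivatives) and $|a(t)/\lambda(t)|=o(1)$ by \eqref{deviation form max pt}. Choosing $R$ large gives the required outer bounds.

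In the inner region, the $C^\infty_{\rm loc}(\R^n\times\R)$ convergence of the rescaled profile $u^t \to W$ from Proposition \ref{prop blow up profile I}, combined with the smallness of the deviations in \eqref{deviation form max pt}, yields
\[
\lambda^\ast(t)^{\frac{n-2}{2}}\phi\bigl(\xi^\ast(t)+\lambda^\ast(t)y,\,t\bigr) = o(1) \qquad\text{in }C^2(B_R)
\]
uniformly in $t\in[-81/100,81/100]$, so unscaling gives the three space-derivative bounds. For $\phi_t$, use $\partial_t u(x,t)=\lambda^\ast(t)^{-(n+2)/2}\partial_s u^t$ and the fact that the limit $W$ is time-independent, which forces $\partial_t u = o(\lambda^{-(n+2)/2})$ in the inner region. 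For $\partial_t W_{\xi(t),\lambda(t)}$ and $\partial_t[a(t)Z_{0,\xi(t),\lambda(t)}]$, expand the time derivative and invoke Lemma \ref{lem Lip for parameters type bound} (equation \eqref{Lip for parameters}) together with \eqref{Lip assumption} to bound $|\xi'(t)|+|\lambda'(t)|+|a'(t)|\lesssim L^{1/2}$; explicit pointwise computation then gives, in the inner region, bounds of the form $(|\xi'|+|\lambda'|)\lambda^{-n/2}$ (and similarly for the $a$-term), which is $o(1)\lambda^{-(n+2)/2}$ thanks to the extra factor $\lambda(t)\to 0$. In the outer region the analogous computation yields bounds proportional to $(|\xi'|+|\lambda'|)\lambda^{(n-4)/2}(\lambda+|x-\xi|)^{-(n-2)}$, and the ratio to $(\lambda+|x-\xi|)^{-(n+2)/2}$ is at most $(|\xi'|+|\lambda'|)\lambda=o(1)$, since $n\geq 7$ guarantees the relevant exponents have the right sign.

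The main obstacle is the time-derivative estimate: unlike the spatial estimates, which reduce to pure scaling plus the smooth convergence in Proposition \ref{prop blow up profile I}, the parameters $\xi'(t),\lambda'(t),a'(t)$ are only known to be uniformly bounded (not small), so the required smallness has to be extracted entirely from the extra factor $\lambda(t)\to 0$ in the pointwise bounds. This is precisely where the dimensional restriction $n\geq 7$ enters, ensuring that the balance of $\lambda$-powers against $(\lambda+|x-\xi|)^{-(n+2)/2}$ produces a genuinely vanishing quantity rather than merely a bounded one.
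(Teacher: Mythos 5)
Your proposal is correct and follows exactly the route the paper intends: the paper gives no written proof, merely stating that the proposition follows by "combining Lemma \ref{lem scaling invariant estimate I} and Proposition \ref{prop orthogonal decompostion}," and your inner/outer splitting — Lemma \ref{lem scaling invariant estimate I} plus the explicit decay of $W_{\xi,\lambda}$ and $aZ_{0,\xi,\lambda}$ in the outer region, the smooth convergence of the rescaled profile together with \eqref{deviation form max pt} in the inner region, and \eqref{Lip for parameters} with \eqref{Lip assumption} for the time-derivative terms — is the standard expansion of that one-liner. The bookkeeping of $\lambda$-powers (in particular the $O(\lambda)=o(1)$ gain for $\partial_t W_{\xi(t),\lambda(t)}$) checks out.
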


The error function $\phi$ satisfies
\begin{equation}\label{error eqn}
  \partial_t\phi-\Delta\phi=pW_\ast^{p-1}\phi+\mathcal{N}+\left(-a^\prime+\mu_0\frac{a}{\lambda^2},\xi^\prime ,\lambda^\prime \right)\cdot Z_\ast-a\partial_tZ_{0,\ast},
\end{equation}
where  $\{\}^\prime$ denotes differentiation in $t$, and the nonlinear term is defined by
\[\mathcal{N}:=\left(W_\ast+\phi+aZ_{0,\ast}\right)^p- W_\ast^p-p W_\ast^{p-1}\left(\phi+aZ_{0,\ast}\right).\]

Now take a further decomposition of $\phi$, \emph{the inner-outer decomposition}, as follows.
Keep $K$ as the large constant used in Proposition \ref{prop orthogonal decompostion}. Take another constant $L$ satisfying $1\ll L \ll K$.
Denote
\begin{equation}\label{definition of cut-off}
   \eta_{in}(x,t):=\eta\left(\frac{x-\xi(t)}{K\lambda(t)}\right), \quad \eta_{out}(x,t):=\eta\left(\frac{x-\xi(t)}{L\lambda(t)}\right).
\end{equation}
Set
\[\phi_{in}(x,t):=\phi(x,t)\eta_K(x,t), \quad \phi_{out}(x,t):=\phi(x,t)\left[1-\eta_{out}(x,t)\right].\]

For later purpose, we introduce two quantities:
\[\mathcal{I}(t):=\left[L\lambda(t)\right]^\alpha\sup_{B_{2L\lambda(t)}(\xi(t))\setminus B_{L\lambda(t)}(\xi(t))}|\phi|+\left[L\lambda(t)\right]^{1+\alpha}\sup_{B_{2L\lambda(t)}(\xi(t))\setminus B_{L\lambda(t)}(\xi(t))}|\nabla\phi|\]
and
\[\mathcal{O}(t):=\left[K\lambda(t)\right]^\alpha\sup_{B_{2K\lambda(t)}(\xi(t))\setminus B_{K\lambda(t)}(\xi(t))}|\phi|+\left[K\lambda(t)\right]^{1+\alpha}\sup_{B_{2K\lambda(t)}(\xi(t))\setminus B_{K\lambda(t)}(\xi(t))}|\nabla\phi|.\]
By Proposition \ref{prop scaling invariant estimate for error}, for any $t\in[-81/100,81/100]$,
\[\mathcal{I}(t)+\mathcal{O}(t)\ll 1.\]
Starting from this smallness, we will improve it to an explicit bound in the following sections.

\section{Inner problem}\label{sec inner}
\setcounter{equation}{0}

In this section we give an $C^{1,\theta}$ estimate on the inner component $\phi_{in}$, see Proposition \ref{prop decay estimate}.

Define a new coordinate system around $\xi(t)$ by
\[y:=\frac{x-\xi(t)}{\lambda(t)}, \quad \quad \tau=\tau(t).\]
Here the new time variable $\tau$ is determined by the relation
\[\tau^\prime(t)=\lambda(t)^{-2}, \quad \quad \tau(0)=0.\]
Because there is a one to one correspondence between $\tau$ and $t$, in the following we will not distinguish between them. For example, we will use $\mathcal{O}(\tau)$ instead of the notation $\mathcal{O}(t(\tau))$.

It is convenient to write $\phi$ in these new coordinates. By denoting
\[\varphi(y,\tau):=\lambda(t)^{\frac{n-2}{2}}\phi(x,t),\]
we get
\begin{eqnarray}\label{inner eqn}
\partial_\tau\varphi-\Delta \varphi
 & =&p W^{p-1}\varphi+\left(-\frac{\dot{a}}{\lambda}+\mu_0\frac{a}{\lambda}, \frac{\dot{\xi}}{ \lambda},\frac{\dot{\lambda}}{\lambda} \right)\cdot Z  \nonumber\\
 &+&\mathcal{N}+\frac{\dot{\xi}}{\lambda}\cdot\nabla\varphi+\frac{\dot{\lambda}}{\lambda}\left(y\cdot\nabla\varphi+\frac{n-2}{2}\varphi\right)\\
 &+&\frac{a}{\lambda}\left[\frac{\dot{\xi}}{\lambda}\cdot\nabla Z_0+\frac{\dot{\lambda}}{\lambda}\left(y\cdot\nabla Z_0+\frac{n}{2}Z_0\right)\right]. \nonumber
\end{eqnarray}
Here  $ \dot{\{\}} $ denotes differentiation in $\tau$, and by abusing notations,
\[\mathcal{N}:=\left( W+\varphi +\frac{a}{\lambda}Z_0\right)^p- W^p-p W^{p-1} \left(\varphi+\frac{a}{\lambda}Z_0\right).\]

We will need the following pointwise estimate on $\mathcal{N}$. Recall that $\bar{p}=\min\{p,2\}$.
\begin{lem}\label{lem estimate on nonlinearity}
The following point-wise inequality holds:
 \[|\mathcal{N}|\lesssim  |\varphi|^{\bar{p}}+\Big|\frac{a}{\lambda}\Big|^{\bar{p}} Z_0^{\bar{p}}.\]
\end{lem}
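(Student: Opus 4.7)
This lemma is a pointwise estimate on the second-order Taylor remainder of $t \mapsto t^p$ based at $W$, evaluated at the increment $B := \varphi + (a/\lambda) Z_0$. Since $u \geq 0$ by hypothesis and $W + B$ is just the rescaling of $u$, the base point of the expansion stays nonnegative, so $(W+B)^p$ is well defined. My plan is the standard two-region split according to whether $|B|$ is small or large compared with $W$.

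In the regime $|B| \leq W/2$, I would write $\mathcal{N}$ using the integral form of Taylor's remainder,
\[
\mathcal{N} \;=\; p(p-1) \int_0^1 (1-s)\,(W+sB)^{p-2}\, B^2\, ds,
\]
and use that $W + sB \asymp W$ on $s \in [0,1]$ to obtain $|\mathcal{N}| \lesssim W^{p-2} B^2$. In the complementary regime $|B| > W/2$, I would bound each of the three terms $(W+B)^p$, $W^p$ and $W^{p-1}|B|$ directly by replacing $W$ with $2|B|$, which gives $|\mathcal{N}| \lesssim |B|^p$ with a universal constant.

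The dimensional assumption $n \geq 7$ enters precisely at this point: it forces $p = (n+2)/(n-2) \leq 9/5 < 2$, so $\bar{p} = p$ and $p-2 \leq 0$. Consequently, in the first region I can further estimate $W^{p-2} B^2 \leq (2|B|)^{p-2} |B|^2 \lesssim |B|^p$, which matches the bound from the second region. A convexity inequality of the form $|x+y|^p \leq 2^{p-1}(|x|^p + |y|^p)$ then converts $|B|^p$ into the sum $|\varphi|^p + |a/\lambda|^p |Z_0|^p$, yielding the claimed estimate.

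I do not anticipate any real obstacle: the statement is a calculus exercise once one splits on $|B|$ versus $W$, and the only structural input is the dimension-driven bound $p \leq 2$, which makes $\bar{p}$ coincide with $p$ and ensures that the $W^{p-2} B^2$ remainder is itself controlled by $|B|^p$ rather than by the mixed expression $W^{p-2} B^2 + |B|^p$ that would be needed in a higher-exponent regime.
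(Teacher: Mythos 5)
Your proof is correct and follows essentially the same route as the paper: a Taylor expansion of $t\mapsto t^p$ about $W$ combined with a case split on whether $|\varphi+\tfrac{a}{\lambda}Z_0|$ is dominated by $W$ or not, using that $n\geq 7$ gives $p<2$ so $\bar p=p$ and the remainder $W^{p-2}B^2$ is itself controlled by $|B|^p$. The only cosmetic difference is that the paper uses the first-order Lagrange form $p\bigl[(W+\vartheta B)^{p-1}-W^{p-1}\bigr]B$ rather than the second-order integral remainder, which changes nothing in the case analysis.
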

\begin{proof}
There exists a $\vartheta\in[0,1]$ such that
\begin{eqnarray*}
     && \left( W+\varphi +\frac{a}{\lambda}Z_0\right)^p- W^p-p W^{p-1} \left(\varphi+\frac{a}{\lambda}Z_0\right) \\
&=& p\left\{\left[ W+\vartheta\left(\varphi +\frac{a}{\lambda}Z_0\right)\right]^{p-1}-W^{p-1}\right\}\left(\varphi+\frac{a}{\lambda}Z_0\right).
  \end{eqnarray*}
Since both $W$ and $W+\varphi +\frac{a}{\lambda}Z_0=u$ are bounded by a universal constant, and $\varphi+\frac{a}{\lambda}Z_0$ is small, by considering the two cases $W\geq |\varphi +\frac{a}{\lambda}Z_0|$ and $W<|\varphi +\frac{a}{\lambda}Z_0|$ separately, we get
\[
| \mathcal{N} |   \lesssim \Big|\varphi+\frac{a}{\lambda}Z_0\Big|^{\bar{p}}
     \lesssim  |\varphi|^{\bar{p}}+\Big|\frac{a}{\lambda}\Big|^{\bar{p}} Z_0^{\bar{p}}. \qedhere
\]
\end{proof}

By scaling the estimates in Proposition \ref{prop scaling invariant estimate for error}, we obtain
\begin{prop}\label{prop initial smallness on error}
For each $\tau$,
  \[K^\alpha\sup_{y\in B_{2K}}\left[|\varphi(y,\tau)|+K|\nabla\varphi(y,\tau)|+K^2\left(|\nabla^2\varphi(y,\tau)|+|\partial_\tau\varphi(y,\tau)|\right)\right]
  =o(1).
  \]
\end{prop}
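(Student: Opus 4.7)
The plan is to obtain the proposition as a direct scaling rewrite of Proposition~\ref{prop scaling invariant estimate for error}, combined with the change of variables $y = (x-\xi(t))/\lambda(t)$, $d\tau/dt = \lambda(t)^{-2}$, and the Lipschitz bounds \eqref{Lip for parameters} on the parameters. The key observation is that for $y \in B_{2K}$ we have $x = \xi(t) + \lambda(t)y$ with $|x-\xi(t)| \leq 2K\lambda(t)$, and \eqref{deviation form max pt} gives $|\xi(t)-\xi^\ast(t)| = o(\lambda(t))$, so
\[ \lambda(t) + |x-\xi^\ast(t)| \leq (1 + 2K + o(1))\lambda(t) \lesssim K\lambda(t). \]
Since $\lambda(t) \to 0$ and $K$ is fixed, the points $x$ lie in $B_1$ for $i$ large, so Proposition~\ref{prop scaling invariant estimate for error} is applicable and gives, for each integer $k \in \{0,1,2\}$,
\[ |\nabla_x^k \phi(x,t)| = o\bigl((K\lambda(t))^{-(\alpha+k)}\bigr) + O(1). \]

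For the spatial derivatives, I would simply differentiate $\varphi(y,\tau) = \lambda(t)^\alpha \phi(x,t)$ to get $\nabla_y^k \varphi = \lambda^{\alpha+k}\nabla_x^k \phi$, so
\[ K^{\alpha+k}|\nabla_y^k\varphi| \leq K^{\alpha+k}\lambda^{\alpha+k}\bigl[o((K\lambda)^{-(\alpha+k)}) + O(1)\bigr] = o(1) + O((K\lambda)^{\alpha+k}) = o(1), \]
since $\lambda \to 0$ with $K$ fixed. This disposes of the $|\varphi|, |\nabla\varphi|, |\nabla^2\varphi|$ terms at once.

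The time derivative is the only piece requiring genuine use of more than scaling. Applying $\partial_\tau = \lambda^2 \partial_t$ and the chain rule,
\[ \partial_\tau \varphi = \alpha \lambda^{\alpha+1}\lambda'\,\phi + \lambda^{\alpha+2}\nabla_x\phi\cdot(\xi' + \lambda' y) + \lambda^{\alpha+2}\partial_t\phi. \]
Combining \eqref{Lip for parameters} with the standing Lipschitz hypothesis \eqref{Lip assumption} yields $|\xi'(t)| + |\lambda'(t)| \lesssim L^{1/2} = O(1)$, hence $|\xi' + \lambda' y| = O(K)$ on $B_{2K}$. Plugging in the bounds on $\phi, \nabla_x\phi, \partial_t\phi$ from Proposition~\ref{prop scaling invariant estimate for error} and using $\lambda + |x-\xi^\ast| \lesssim K\lambda$, each of the three terms above is bounded by $o(K^{-\alpha-2}) + O(\text{positive power of }\lambda)$ after multiplying by $K^{\alpha+2}$, which is $o(1)$ as $i \to \infty$. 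This completes the proof.

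\textbf{Main obstacle.} There is no real obstacle; the statement is essentially a bookkeeping consequence of Proposition~\ref{prop scaling invariant estimate for error} and the Lipschitz bounds \eqref{Lip for parameters}--\eqref{Lip assumption}. The only slightly delicate point is that the time-derivative estimate formally picks up factors of $|\xi'| + |\lambda'||y| = O(K)$; one must check that the extra $K$ is absorbed by the available smallness $\lambda \to 0$, which it is.
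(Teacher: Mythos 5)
Your proposal is correct and is exactly what the paper intends: the paper states this proposition with only the phrase ``by scaling the estimates in Proposition \ref{prop scaling invariant estimate for error}'' and gives no further details. You have filled in the one genuinely nontrivial piece — the chain-rule terms $\alpha\lambda^{\alpha+1}\lambda'\phi$ and $\lambda^{\alpha+2}\nabla_x\phi\cdot(\xi'+\lambda'y)$ in $\partial_\tau\varphi$, controlled via \eqref{Lip for parameters} and \eqref{Lip assumption} — correctly.
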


In $(y,\tau)$ coordinates, the inner error function $\phi_{in}$ is
\[\varphi_K(y,\tau):=\varphi(y,\tau)\eta_K(y).\]
For each $\tau$, the support of $\varphi_K(\cdot,\tau)$ is contained in $B_K$.
 By \eqref{orthogonal condition}, for each $ i=0,\cdots,n+1$,
  \begin{equation}\label{orthogonal in inner}
  \int_{\R^n} \varphi_K(y,\tau)Z_i(y)dy =0.
  \end{equation}

The equation for $\varphi_K$ is
\begin{equation}\label{inner eqn with cut-off}
 \partial_\tau\varphi_K-\Delta_y\varphi_K
  =pW^{p-1}\varphi_K +\lambda^{-1}\left(-\dot{a}+\mu_0a,\dot{\xi},\dot{\lambda} \right)\cdot Z+E_K.
\end{equation}
Here
\begin{eqnarray*}
  E_K &:=& -2\nabla\varphi\cdot\nabla\eta_K-\varphi\Delta\eta_K+\mathcal{N} \eta_K \\
  &+&  \lambda^{-1}\left(-\dot{a}+\mu_0a,\dot{\xi},\dot{\lambda} \right)\cdot Z\left(\eta_K-1\right)\\
  &+&\lambda^{-1}\left[\dot{\lambda} \left(\frac{n-2}{2}\varphi+  y\cdot\nabla_y\varphi\right)+\dot{\xi} \cdot\nabla_y\varphi\right]\eta_K \\
  &+& \frac{a}{\lambda}\left[\frac{\dot{\xi}}{\lambda}\cdot\nabla Z_0+\frac{\dot{\lambda}}{\lambda}\left(y\cdot\nabla Z_0+\frac{n}{2}Z_0\right)\right]\eta_K.
\end{eqnarray*}

The following estimate holds for $E_K$.
\begin{lem}\label{lem estimates on E}
For any $\tau$, we have
\begin{eqnarray*}
 \|E_K(\tau)\|_{2+\alpha} & \lesssim &  \mathcal{O}(\tau) +K^{2+\alpha-\bar{p}\alpha}\|\varphi_K(\tau)\|_{\alpha}^{\bar{p}}+\Big|\frac{a(\tau)}{\lambda(\tau)}\Big|^{\bar{p}} \\
   &+& K^{3+\alpha-n}\left|\frac{\dot{\xi}(\tau)}{\lambda(\tau)}\right|
   +K^{4+\alpha-n}\left|\frac{\dot{\lambda}(\tau)}{\lambda(\tau)}\right|+e^{-cK}\Big|\frac{\dot{a}(\tau)-\mu_0a(\tau)}{\lambda(\tau)}\Big|.
\end{eqnarray*}
\end{lem}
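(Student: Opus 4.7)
The plan is to estimate the weighted $C^0$ norm of each of the five summands in the definition of $E_K$ separately, exploiting cut-off supports, the decay rates of $W$ and the modes $Z_i$, and the a priori qualitative smallness from Proposition \ref{prop initial smallness on error}.

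The commutator terms $-2\nabla\varphi\cdot\nabla\eta_K-\varphi\Delta\eta_K$ are supported on the annulus $K\leq|y|\leq 2K$, where $|\nabla\eta_K|\lesssim K^{-1}$, $|\Delta\eta_K|\lesssim K^{-2}$, and the weight $(1+|y|)^{2+\alpha}$ is comparable to $K^{2+\alpha}$; converting back to $x$-coordinates, these two terms match exactly the scaling-invariant quantity $\mathcal{O}(\tau)$. For $\mathcal{N}\eta_K$, Lemma \ref{lem estimate on nonlinearity} gives $|\mathcal{N}|\lesssim|\varphi|^{\bar p}+|a/\lambda|^{\bar p}Z_0^{\bar p}$. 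On $\operatorname{supp}(\eta_K)$ we have $|\varphi|=|\varphi_K|\leq\|\varphi_K\|_\alpha(1+|y|)^{-\alpha}$, so $(1+|y|)^{2+\alpha}|\varphi|^{\bar p}\leq\|\varphi_K\|_\alpha^{\bar p}(1+|y|)^{2+\alpha-\bar p\alpha}\lesssim K^{2+\alpha-\bar p\alpha}\|\varphi_K\|_\alpha^{\bar p}$, while the exponential decay of $Z_0$ (as the eigenfunction of the linearized operator with negative eigenvalue $\mu_0$) makes $(1+|y|)^{2+\alpha}Z_0^{\bar p}$ bounded, yielding $|a/\lambda|^{\bar p}$.

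For the projection term $\lambda^{-1}(-\dot a+\mu_0 a,\dot\xi,\dot\lambda)\cdot Z(\eta_K-1)$, supported in $|y|\geq K$, the three coefficient factors are multiplied by modes with different decay: the exponential decay of $Z_0$ produces $e^{-cK}|\dot a-\mu_0 a|/\lambda$, while the polynomial decays $|Z_i|\lesssim(1+|y|)^{1-n}$ for $i=1,\dots,n$ and $|Z_{n+1}|\lesssim(1+|y|)^{2-n}$ give, after pairing with the weight and taking the sup over $|y|\geq K$, the factors $K^{3+\alpha-n}|\dot\xi/\lambda|$ and $K^{4+\alpha-n}|\dot\lambda/\lambda|$ respectively. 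The remaining two pieces are supported in $|y|\leq 2K$ and couple $\dot\xi,\dot\lambda$ either with $\nabla\varphi,\,y\cdot\nabla\varphi$ or with $Z_0,\nabla Z_0$. The $aZ_0$-transport piece uses the exponential decay of $Z_0$ and $\nabla Z_0$ together with $|a/\lambda|=o(1)$ to be absorbed into the $|\dot\xi/\lambda|$ and $|\dot\lambda/\lambda|$ contributions. The inner transport piece $\lambda^{-1}[\dot\lambda(\tfrac{n-2}{2}\varphi+y\cdot\nabla\varphi)+\dot\xi\cdot\nabla\varphi]\eta_K$ is handled by first upgrading the pointwise weighted decay on $\varphi_K$ to a matching gradient decay $|\nabla_y\varphi(y)|\lesssim(1+|y|)^{-(1+\alpha)}\|\varphi_K\|_\alpha$ via interior parabolic regularity applied to \eqref{inner eqn}, then using the Lipschitz bound \eqref{Lip for parameters} (which yields $|\dot\xi/\lambda|+|\dot\lambda/\lambda|=O(\lambda)$ once we convert to $\tau$-time) together with the qualitative smallness of Proposition \ref{prop initial smallness on error} to dominate this piece by the stated $|\dot\xi/\lambda|,\,|\dot\lambda/\lambda|$ terms.

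The main obstacle is precisely this gradient estimate on $\varphi_K$, since the weighted $C^0$ norm $\|\cdot\|_\alpha$ encodes no first-order control; bootstrapping the required decay from the interior parabolic regularity for \eqref{inner eqn} is the only nonroutine step, everything else being bookkeeping of polynomial or exponential decay against the cut-off geometry. The arithmetic identity $p\alpha=2+\alpha$ at the critical exponent keeps the exponent $2+\alpha-\bar p\alpha$ nonpositive and closes the nonlinear contribution.
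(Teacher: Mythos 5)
Your decomposition and the treatment of the cut-off commutator, the $(\eta_K-1)Z$ projection terms, and the $aZ_0$ pieces coincide with the paper's proof, and the exponent bookkeeping is right. Two points deserve correction, though neither is fatal.

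First, in the nonlinear term you assert $|\varphi|=|\varphi_K|$ on $\operatorname{supp}(\eta_K)$; this fails on the annulus $K\leq|y|\leq 2K$ where $0<\eta_K<1$, so the bound $|\varphi|\leq\|\varphi_K(\tau)\|_\alpha(1+|y|)^{-\alpha}$ is not justified there. The paper splits $|\varphi|^{\bar p}\eta_K=|\varphi|^{\bar p}\bigl(\eta_K-\eta_K^{\bar p}\bigr)+|\varphi_K|^{\bar p}$ and controls the annulus piece by $K^{2+\alpha}\mathcal{O}(\tau)^{\bar p}\ll\mathcal{O}(\tau)$, which is the clean way to close this term.

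Second, the "main obstacle" you identify --- upgrading the weighted $C^0$ control of $\varphi_K$ to gradient decay via interior parabolic regularity for \eqref{inner eqn} --- is not actually an obstacle in this framework, and the bootstrap you propose is delicate: the right-hand side of \eqref{inner eqn} contains precisely the quantities $\dot\xi/\lambda$, $\dot\lambda/\lambda$ being estimated, and $\|\varphi_K(\tau)\|_\alpha$ is a single-time-slice norm, so parabolic interior estimates do not apply directly. The paper instead invokes Proposition \ref{prop initial smallness on error}, which already records $K^\alpha\sup_{B_{2K}}\bigl[|\varphi|+K|\nabla\varphi|+K^2(|\nabla^2\varphi|+|\partial_\tau\varphi|)\bigr]=o(1)$; this is obtained by rescaling the blow-up-profile estimates of Lemma \ref{lem scaling invariant estimate I} (via Proposition \ref{prop scaling invariant estimate for error}), not from the inner equation. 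With that gradient smallness in hand, the transport terms $\lambda^{-1}[\dot\lambda(\tfrac{n-2}{2}\varphi+y\cdot\nabla\varphi)+\dot\xi\cdot\nabla\varphi]\eta_K$ are dominated by $o(1)K^{4+\alpha-n}(|\dot\xi/\lambda|+|\dot\lambda/\lambda|)$ directly, with no need for the Lipschitz bound \eqref{Lip for parameters}. You should cite Proposition \ref{prop initial smallness on error} for this step rather than attempt the regularity bootstrap.
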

\begin{proof}
  \begin{enumerate}
    \item  Because $\alpha=(n-2)/2$, by the definition of $\mathcal{O}$, we have
    \[K^{2+\alpha} \sup_y\left|2\nabla\varphi(y,\tau)\cdot\nabla\eta_K(y)+\varphi(y,\tau)\Delta\eta_K(y) \right|\lesssim \mathcal{O}(\tau).\]

    \item By Lemma \ref{lem estimate on nonlinearity}, we obtain
    \begin{eqnarray*}
       |\mathcal{N}\eta_K|& \lesssim& |\varphi|^{\bar{p}}\eta_K+\left|\frac{a}{\lambda}\right|^{\bar{p}}Z_0^{\bar{p}} \\
        &=& |\varphi|^{\bar{p}}\left(\eta_K-\eta_K^{\bar{p}}\right)+ |\varphi_K|^{\bar{p}}+\left|\frac{a}{\lambda}\right|^{\bar{p}}Z_0^{\bar{p}}.
    \end{eqnarray*}
    First, because $\mathcal{O}(\tau)\ll 1$,
    \[ \sup_{y\in\R^n}K^{2+\alpha}|\varphi(y,\tau)|^{\bar{p}}\left[\eta_K(y)-\eta_K(y)^{\bar{p}}\right]\lesssim K^{2+\alpha}\mathcal{O}(\tau)^{\bar{p}}\ll \mathcal{O}(\tau).\]
    Secondly, because the support of $\varphi_K$ is contained in $B_K$,
    \[\sup_{y\in\R^n}\left(1+|y|\right)^{2+\alpha}|\varphi_K(y,\tau)|^{\bar{p}}\lesssim K^{2+\alpha-\bar{p}\alpha}\|\varphi_K(\tau)\|_{\alpha}^{\bar{p}}.\]
    Finally, by the exponential decay of $Z_0$ at infinity,
    \[\sup_{y\in\R^n}\left(1+|y|\right)^{2+\alpha}\left|\frac{a(\tau)}{\lambda(\tau)}\right|^{\bar{p}}|Z_0(y)|^{\bar{p}}\lesssim \left|\frac{a(\tau)}{\lambda(\tau)}\right|^{\bar{p}}.\]

    \item For $i=1,\cdots,n$, $Z_i(y)=O\left(|y|^{1-n}\right)$. Hence
    \[\sup_{y\in\R^n}\left(1+|y|\right)^{2+\alpha}\left|\frac{\dot{\xi}(\tau)}{\lambda(\tau)} \cdot Z(y) \left[\eta_K(y)-1\right]\right|\lesssim K^{3+\alpha-n}\left|\frac{\dot{\xi}(\tau)}{\lambda(\tau)}\right|.\]

    \item As in the previous case, because $Z_{n+1}(y)=O\left(|y|^{2-n}\right)$, we get
    \[\sup_{y\in\R^n}\left(1+|y|\right)^{2+\alpha}\left|\frac{\dot{\lambda}(\tau)}{\lambda(\tau)} Z_{n+1}(y) \left[\eta_K(y)-1\right]\right|\lesssim K^{4+\alpha-n}\left|\frac{\dot{\lambda}(\tau)}{\lambda(\tau)}\right|.\]

\item By the exponential decay of $Z_0$, we get
 \[\sup_{y\in\R^n}\left(1+|y|\right)^{2+\alpha}\left|\frac{\dot{a}(\tau)-\mu_0a(\tau)}{\lambda(\tau)} Z_0(y) \left[\eta_K(y)-1\right]\right|\lesssim  e^{-cK}\frac{\left|\dot{a}(\tau)-\mu_0a(\tau)\right|}{\lambda(\tau)}.\]

 \item By Proposition \ref{prop initial smallness on error}, we have
 \[ \sup_{y\in\R^n}\left(1+|y|\right)^{2+\alpha}\left|\frac{\dot{\lambda}(\tau)}{\lambda(\tau)} \left[\frac{n-2}{2}\varphi(y)+  y\cdot\nabla_y\varphi(y)\right]\eta_K(y)\right|\ll K^{4+\alpha-n} \frac{|\dot{\lambda}(\tau)|}{\lambda(\tau)}.\]

 \item As in the previous case,
 \begin{eqnarray*}
 && \sup_{y\in\R^n}\left(1+|y|\right)^{2+\alpha}\left|\frac{a(\tau)}{\lambda(\tau)}\left[\frac{\dot{\xi}(\tau)}{\lambda(\tau)}\cdot\nabla Z_0(y)+\frac{\dot{\lambda}(\tau)}{\lambda(\tau)}\left(y\cdot\nabla Z_0(y)+\frac{n}{2}Z_0(y)\right)\right]\eta_K(y)\right|\\
 &\ll & K^{4+\alpha-n}\left(\frac{|\dot{\xi}(\tau)|}{\lambda(\tau)}+ \frac{|\dot{\lambda}(\tau)|}{\lambda(\tau)}\right).
 \end{eqnarray*}
   \end{enumerate}
Putting these estimates together we finish the proof.
\end{proof}
Because $n\geq 7$ and $\alpha=(n-2)/2$,
\[ 4+\alpha-n<0.\]
Furthermore, by noting that those terms like $\|\varphi_K(\tau)\|_{\alpha}$  are small, this lemma can be restated as
\begin{eqnarray}\label{main order term estimate in E}
 \|E_K(\tau)\|_{2+\alpha}   &\lesssim&   o\left(\|\varphi_K(\tau)\|_{\alpha}+\Big|\frac{\dot{\xi}(\tau)}{\lambda(\tau)}\Big|+\Big|\frac{\dot{\lambda}(\tau)}{\lambda(\tau)}\Big|
 +\Big|\frac{a(\tau)}{\lambda(\tau)}\Big|+\Big|\frac{\dot{a}(\tau)-\mu_0a(\tau)}{\lambda(\tau)}\Big|\right) \nonumber\\
 &+&\mathcal{O}(\tau).
\end{eqnarray}
In particular, the main order term in $E_K$ is $\mathcal{O}(\tau)$, which comes from the outer component.

Now we prove our main estimate in this section.
\begin{prop}[$C^{1,\theta}$ estimates]\label{prop decay estimate}
\begin{itemize}
  \item
For any $\tau$,
   \begin{equation}\label{bound on parameters}
  \left|\frac{\dot{\xi}(\tau)}{\lambda(\tau)}\right|+\left|\frac{\dot{\lambda}(\tau)}{\lambda(\tau)}\right|+\left|\frac{\dot{a}(\tau)-\mu_0a(\tau)}{\lambda(\tau)}
       \right| \lesssim    \mathcal{O}(\tau) +K^{2+\alpha-\bar{p}\alpha}\|\varphi_K(\tau)\|_{\alpha}^{\bar{p}}+\Big|\frac{a(\tau)}{\lambda(\tau)}\Big|^{\bar{p}}.
  \end{equation}

  \item  For any $\sigma>0$, there exist two constants $C$ (universal) and $T(\sigma,K)$ such that for any $\tau_1<\tau_2$,
     \begin{eqnarray}\label{decay of inner component first order}
      \|\varphi_K\|_{C^{(1+\theta)/2}(\tau_1,\tau_2;\mathcal{X}_\alpha^{1+\theta})} & \leq &  \sigma  \|\varphi_K(\tau_1-T(\sigma,K))\|_{\mathcal{X}_\alpha } \\
     &&     +C\left\|\left(\mathcal{O},\left|\frac{a}{\lambda}\right|^{\bar{p}}\right)\right\|_{L^\infty(\tau_1-T(\sigma,K), \tau_2)}. \nonumber
  \end{eqnarray}
 \end{itemize}
 \end{prop}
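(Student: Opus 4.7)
The two estimates are of different natures: (10.6) is a collection of modulation (ODE) identities extracted from the orthogonality conditions (10.4), while (10.7) is a linear parabolic a priori estimate for the equation (10.5). My plan is to prove (10.6) first, and then to feed it into the source-term analysis of (10.5) to obtain (10.7).

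\textbf{Step 1: the modulation identities (10.6).} For each $i=0,\ldots,n+1$, I differentiate the orthogonality condition $\int_{\R^n}\varphi_K Z_i\,dy=0$ in $\tau$ to get $\int\partial_\tau\varphi_K\,Z_i\,dy=0$. Multiplying (10.5) by $Z_i$, integrating over $\R^n$ (allowed since $\varphi_K(\cdot,\tau)$ is supported in $B_K$), and integrating by parts in the Laplacian, and using that $L_0 Z_i=0$ for $i=1,\ldots,n+1$ and $L_0 Z_0=-\mu_0 Z_0$ (with $L_0:=-\Delta-pW^{p-1}$), combined with $\int\varphi_K Z_0=0$, the $\int pW^{p-1}\varphi_K Z_i$ terms on the two sides cancel, leaving
\begin{equation*}
c_i\int_{\R^n} Z_i^2\,dy=-\int_{\R^n} E_K Z_i\,dy,
\end{equation*}
where $c_i$ stands for the corresponding component of $\lambda^{-1}(-\dot a+\mu_0 a,\dot\xi,\dot\lambda)$. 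The decay rates of $Z_0$ (exponential), $Z_i\sim|y|^{1-n}$ for $i=1,\ldots,n$, and $Z_{n+1}\sim|y|^{2-n}$ ensure integrability against the weight $(1+|y|)^{-(2+\alpha)}$, hence $|c_i|\lesssim\|E_K(\tau)\|_{2+\alpha}$. Inserting Lemma 10.2 and noting that $K^{3+\alpha-n}$, $K^{4+\alpha-n}$, $e^{-cK}$ are all small for $n\geq 7$ and $K$ large, the terms on the right involving $\dot\xi/\lambda$, $\dot\lambda/\lambda$, $(\dot a-\mu_0 a)/\lambda$ are absorbed into the left, yielding (10.6).

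\textbf{Step 2: the linear estimate (10.7).} I regard (10.5) as a linear inhomogeneous heat equation for $\varphi_K$ with time-independent potential $pW^{p-1}$ and source
\begin{equation*}
F:=\lambda^{-1}(-\dot a+\mu_0 a,\dot\xi,\dot\lambda)\cdot Z+E_K.
\end{equation*}
Step 1 tells us precisely that the $L^2(\R^n)$-projection of $F$ onto each $Z_i$ vanishes, so $\varphi_K$ evolves in the codimension-$(n+2)$ subspace $\{Z_i\}^{\perp}$. On this subspace $L_0$ has a \emph{positive} spectral gap, so the associated linear semigroup decays exponentially. Writing $\varphi_K$ via Duhamel starting from time $\tau_1-T(\sigma,K)$ and using this exponential decay, the contribution of the ``initial datum'' $\varphi_K(\tau_1-T(\sigma,K))$ acquires a decay factor that is at most $\sigma$ provided $T(\sigma,K)$ is chosen sufficiently large. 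The contribution from $F$ is handled by weighted parabolic Schauder theory for $\partial_\tau+L_0$: the bound $\|E_K(\tau)\|_{2+\alpha}$ represents exactly two derivatives of gain relative to $\mathcal{X}_\alpha^{1+\theta}$, so Duhamel plus Schauder recovers the stated $C^{(1+\theta)/2}_\tau\mathcal{X}_\alpha^{1+\theta}$ bound. Using Step 1 to rewrite the parameter derivatives by the right-hand side of (10.6), and noting that the term $K^{2+\alpha-\bar{p}\alpha}\|\varphi_K\|_\alpha^{\bar{p}}$ in Lemma 10.2 is higher-order and can be absorbed via the smallness of $\|\varphi_K\|_\alpha$, the source effectively reduces to the $\mathcal{O}$ and $|a/\lambda|^{\bar{p}}$ contributions appearing in (10.7).

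\textbf{Main obstacle.} The technical core is implementing weighted parabolic Schauder theory for $\partial_\tau+L_0$ restricted to $\{Z_i\}^{\perp}$, in the polynomially weighted H\"older space $\mathcal{X}_\alpha^{1+\theta}$, uniformly in the cutoff parameter $K$. One must marry the exponential decay coming from the spectral gap of $L_0$ on $\{Z_i\}^{\perp}$ (which eats the initial datum and produces the small prefactor $\sigma$) with enough spatial decay of the fundamental solution to match the prescribed weight $(1+|y|)^{-\alpha}$. The hypothesis $n\geq 7$ (equivalently $\alpha>2$) is used in an essential way, both for integrability in Step 1 and for keeping the decay of the potential $pW^{p-1}\sim|y|^{-4}$ compatible with the weight in Step 2.
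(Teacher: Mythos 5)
Your Step 1 is exactly the paper's argument for \eqref{bound on parameters}: project \eqref{inner eqn with cut-off} onto each $Z_j$, use \eqref{orthogonal in inner} (equivalently, differentiate it in $\tau$) so that the $pW^{p-1}$ contributions cancel against the integration by parts of the Laplacian, read off $c_j\int Z_j^2=-\int E_K Z_j$, bound the pairing by $\|E_K(\tau)\|_{2+\alpha}$ using the decay of the $Z_j$, and absorb the $K^{3+\alpha-n}$, $K^{4+\alpha-n}$, $e^{-cK}$ multiples of the parameter derivatives back into the left (here $n\ge 7$ gives $4+\alpha-n<0$). This is precisely how the paper proceeds (it is spelled out in Lemma \ref{lem estiamte of parameters}), so that bullet is complete.

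For \eqref{decay of inner component first order}, your decomposition is also the paper's: $\varphi_K=\varphi_{K,1}+\varphi_{K,2}$ with $\varphi_{K,1}$ the free evolution of the datum $\varphi_K(\tau_1-T)$ (problem \eqref{linear eqn 1}) and $\varphi_{K,2}$ the inhomogeneous part driven by $E_K$ together with the modulation terms (problem \eqref{linear eqn 2}). The difference is in how the two appendix lemmas are established, and this is where your sketch stops short. First, the small prefactor $\sigma$ does not come from a quantitative exponential decay of the semigroup in $\mathcal{X}_\alpha^{1+\theta}$ — the paper only proves \emph{qualitative} decay (Lemma \ref{lem linear decay estimate 1}), via the two Lyapunov identities $\frac{d}{dt}\int\varphi^2\le 0$ and $\frac{d}{dt}\int(|\nabla\varphi|^2-pW^{p-1}\varphi^2)\le 0$ combined with the classification of bounded ancient solutions (Proposition \ref{prop nondegeneracy}); qualitative decay suffices because $T(\sigma,K)$ is allowed to depend on $K$, and the compact support of $\varphi_K$ lets one trade the $\mathcal{X}_\alpha$ norm for an $\mathcal{X}_{\alpha'}$ norm with $\alpha'>n/2$ at the cost of a power of $K$. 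Second — and this is the genuine gap — the time-uniform bound for $\varphi_{K,2}$ is not obtained by ``Duhamel plus weighted Schauder'': a direct Duhamel bound degrades in time because $-\Delta-pW^{p-1}$ has a kernel and a negative eigenvalue, and the orthogonality must enter the a priori estimate itself. The paper's Lemma \ref{lem linear decay estimate 2} proves it by contradiction and compactness: normalize a sequence violating the estimate, use a heat-kernel/potential computation to show the weighted norm is attained at bounded $|x_k|$, pass to a limit which is a bounded ancient (or finite-time) solution of the homogeneous linearized equation orthogonal to all $Z_i$, and invoke Proposition \ref{prop nondegeneracy} to conclude the limit vanishes, contradicting the normalization. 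You correctly identify this as the main obstacle, but the mechanism you propose for overcoming it is not the one that works in the paper and would require substantial extra work to justify; as written, the second bullet is a plausible outline rather than a proof.
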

 \begin{proof}
 Take a decomposition $\varphi_K=\varphi_{K,1}+\varphi_{K,2}$, where $\varphi_{K,1}$ is the solution of \eqref{linear eqn 1} with initial value (at $\tau_1-T(\sigma)$, with $T(\sigma,K)$ to be determined below) $\varphi_K$, and $\varphi_{K,2}$ is the solution of \eqref{linear eqn 2} with non-homogeneous term $E_K$.

 To apply Lemma \ref{lem linear decay estimate 1}, take an $\alpha^\prime>n/2$.
 Because $\varphi_K$ is supported in $B_K$, we have
 \begin{equation}\label{transfer between different weights}
   \left\|\varphi_K(\tau_1-T(\sigma,K))\right\|_{\alpha^\prime}\leq \left( 2K\right)^{\alpha^\prime-\alpha}\left\|\varphi_K(\tau_1-T(\sigma,K))\right\|_{\alpha}.
 \end{equation}
By Lemma \ref{lem linear decay estimate 1}, there exists a $T(\sigma,K)>0$ such that
\begin{equation}\label{decay estimate in time 1}
  \|\varphi_{K,1}\|_{C^{(1+\theta)/2}(\tau_1,\tau_2;C^{1+\theta}(B_{2K}))} \leq \sigma \left(4K\right)^{-1-\alpha}\left(2K\right)^{\alpha-\alpha^\prime}.
\end{equation}

For $\varphi_{K,2}$,
in view of the estimate in Lemma \ref{lem estimates on E},  a direct application of Lemma \ref{lem linear decay estimate 2} gives
\begin{equation}\label{decay estimate in time 2}
\|\varphi_{K,2}\|_{C^{(1+\theta)/2}(\tau_1,\tau_2;\mathcal{X}_\alpha^{1+\theta})}  \leq  C(\alpha)\left\|\left(\mathcal{O},\left|\frac{a}{\lambda}\right|^{\bar{p}}\right)\right\|_{L^\infty(\tau_1-T(\sigma,K), \tau_2)}.
\end{equation}
 Adding \eqref{decay estimate in time 1} and \eqref{decay estimate in time 2}, by using again the fact that $\varphi_K(\tau)$ is supported in $B_K$, we get \eqref{decay of inner component first order}.
 \end{proof}

For applications in Section \ref{sec Pohozaev}, we give another estimate on the parameters $\dot{\lambda}$ etc. in terms of the $L^\infty$ norm of $\varphi$.
\begin{lem}\label{lem estiamte of parameters}
  For each $\tau$,
\begin{equation}\label{estimate of parameters}
\left|\frac{\dot{a}(\tau)-\mu_0a(\tau)}{\lambda(\tau)}
       \right|+\left|\frac{\dot{\xi}(\tau)}{\lambda(\tau)}\right|+\left|\frac{\dot{\lambda}(\tau)}{\lambda(\tau)}\right| \lesssim \sup_{B_{2K}\setminus B_K}|\varphi(\tau)|+\sup_{B_{2K}}|\varphi(\tau)|^{\bar{p}}+\left|\frac{a(\tau)}{\lambda(\tau)}\right|^{\bar{p}}.
\end{equation}
\end{lem}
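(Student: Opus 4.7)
The plan is to exploit the orthogonality conditions \eqref{orthogonal in inner} directly to derive an algebraic identity for the parameter vector $\beta := (-\dot{a}+\mu_0 a,\dot{\xi},\dot{\lambda})$, rather than going through the weighted norm used to derive \eqref{bound on parameters}. Since $Z_i(y)$ and $\eta_K(y)$ are independent of $\tau$, differentiating \eqref{orthogonal in inner} gives $\int\partial_\tau\varphi_K\,Z_i = 0$ for each $i=0,\ldots,n+1$. Substituting from \eqref{inner eqn with cut-off} and integrating by parts on $\int\Delta\varphi_K\,Z_i$ using the relation $-\Delta Z_i = pW^{p-1}Z_i + \mu_i Z_i$ (with $\mu_i=0$ for $i\geq 1$), the first two terms of \eqref{inner eqn with cut-off} cancel because $\int\varphi_K Z_i = 0$. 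The $L^2$-orthogonality of $\{Z_j\}_{j=0}^{n+1}$ (available since $n\geq 7$ forces $Z_{n+1}\in L^2(\R^n)$) then diagonalizes the $(\beta\cdot Z)$ contribution, leaving the identity
\[
\lambda^{-1}\beta_i\,\|Z_i\|_{L^2}^2 = -\int_{\R^n} E_K(y,\tau)\,Z_i(y)\,dy,\qquad i=0,\ldots,n+1.
\]

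It then remains to estimate each $\int E_K Z_i$ directly in $L^\infty$ terms, using the decomposition of $E_K$ into its five pieces (as listed before Lemma \ref{lem estimates on E}). The cut-off piece $-2\nabla\varphi\cdot\nabla\eta_K-\varphi\Delta\eta_K$ is integrated by parts once to move the derivative off $\varphi$, leaving an integrand supported on the annulus $B_{2K}\setminus B_K$; the pointwise decay $|Z_i|,|\nabla Z_i|\lesssim K^{1-n}$ together with $|\nabla\eta_K|\lesssim K^{-1}$ yields a bound $\lesssim_K \sup_{B_{2K}\setminus B_K}|\varphi|$. The nonlinearity $\mathcal{N}\eta_K$ is controlled pointwise by Lemma \ref{lem estimate on nonlinearity}, producing $\lesssim_K \sup_{B_{2K}}|\varphi|^{\bar p}+|a/\lambda|^{\bar p}$ after integrating against $|Z_i|$ (the contribution from the exponentially-decaying $Z_0$ piece being harmless). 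The tail term $\lambda^{-1}(\beta\cdot Z)(\eta_K-1)$ is supported in $|y|\geq K$, and since $Z_{n+1}\in L^2(\R^n)$, $\int_{|y|\geq K}|Z_j Z_i|\lesssim K^{4-n}$, which is small for $n\geq 7$ and can be absorbed into the left-hand side.

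The remaining two pieces $T_4,T_5$ couple $\dot\lambda,\dot\xi$ (and $a$) back into the right-hand side and require the subtlest handling. For the $T_4$ contribution $\lambda^{-1}\dot\lambda\int(\tfrac{n-2}{2}\varphi+y\cdot\nabla\varphi)Z_i\eta_K + \lambda^{-1}\dot\xi\cdot\int\nabla\varphi\,Z_i\eta_K$, the orthogonality $\int\varphi Z_i\eta_K = 0$ eliminates the bare $\varphi$ term; integration by parts on the $y\cdot\nabla\varphi$ and $\nabla\varphi$ terms then leaves residual integrals of the form $\int\varphi\,(y\cdot\nabla Z_i)\eta_K$ and $\int\varphi\,Z_i\,(y\cdot\nabla\eta_K)$, both dominated by $\sup_{B_{2K}}|\varphi|$ up to a $K$-dependent constant. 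By Proposition \ref{prop initial smallness on error}, $\sup_{B_{2K}}|\varphi|=o(K^{-\alpha})=o(1)$ uniformly in $\tau$, so these contributions are of size $o(1)\lambda^{-1}(|\dot\lambda|+|\dot\xi|)$ and can be absorbed into the left-hand side. The term $T_5$ is directly bounded by $|a/\lambda|\cdot\lambda^{-1}(|\dot\xi|+|\dot\lambda|)$ using the exponential decay of $Z_0,\nabla Z_0$, and $|a/\lambda|=o(1)$ by \eqref{deviation form max pt} allows the same absorption.

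Summing over $i$ and absorbing the $o(1)\lambda^{-1}|\beta|$ contributions yields \eqref{estimate of parameters}. The main obstacle is the absorption in $T_4$, where the integration by parts produces a polynomial factor $|y|$ that must be controlled against the decay of $Z_i$; this is precisely where the dimensional assumption $n\geq 7$ (equivalently $\alpha>2$) is used, ensuring that $\sup_{B_{2K}}|\varphi|\cdot K^{\text{(positive power)}}$ remains $o(1)$ and so the absorption closes.
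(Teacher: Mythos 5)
Your proposal is correct and follows essentially the same route as the paper: test \eqref{inner eqn with cut-off} against each $Z_j$, use the orthogonality \eqref{orthogonal in inner} together with the eigenvalue relation and the $L^2$-orthogonality of the $Z_j$ to obtain the representation $\lambda^{-1}\beta_j\|Z_j\|_{L^2}^2=-\int E_K Z_j$, integrate by parts on the cut-off commutator to produce $\sup_{B_{2K}\setminus B_K}|\varphi|$, bound $\mathcal{N}\eta_K$ via Lemma \ref{lem estimate on nonlinearity}, and absorb the remaining $o(1)$ multiples of the parameter derivatives (via Proposition \ref{prop initial smallness on error}) into the left-hand side. The only cosmetic difference is that you integrate by parts once more in the $y\cdot\nabla\varphi$ piece of $T_4$, whereas the paper bounds it directly by the $o(1)$ pointwise smallness of $\nabla\varphi$ on $B_{2K}$; both close the absorption.
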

\begin{proof}
 For each $j=0,\cdots, n+1$, multiplying \eqref{inner eqn with cut-off} by $Z_j$,  utilizing  the orthogonal condition \eqref{orthogonal in inner}, we get
   \[
  \left\{\begin{aligned}
&\frac{\dot{a}(\tau)-\mu_0a(\tau)}{\lambda(\tau)}=\int_{\R^n}Z_0(y)E_{K}(y,\tau)dy,\\
&\frac{\dot{\xi}_j(\tau)}{\lambda(\tau)}=-\int_{\R^n}Z_j(y)E_{K}(y,\tau)dy, \quad j=1,\cdots, n,\\
& \frac{\dot{\lambda}(\tau)}{\lambda(\tau)}=-\int_{\R^n}Z_{n+1}(y)E_{K}(y,\tau)dy.
\end{aligned}\right.
\]
An integration by parts gives
\begin{eqnarray}\label{main order term in parameters}
 \left|\int_{\R^n}\left(2\nabla\varphi\cdot\nabla\eta_K+\varphi\Delta\eta_K\right)Z_j \right|
  &=& \left| \int_{\R^n}\varphi\left(2\nabla\eta_K\cdot\nabla Z_j+\Delta\eta_K Z_j\right) \right| \nonumber\\
  &\lesssim& \sup_{B_{2K}\setminus B_K}|\varphi|.
\end{eqnarray}
By the at most $|y|^{2-n}$ decay of $Z_j$, Lemma \ref{lem estimate on nonlinearity} and Proposition \ref{prop initial smallness on error}, we find the contribution from other terms in $E_K$ is of the order
\[O\left(\sup_{y\in B_{2K}}|\varphi(y,\tau)|^{\bar{p}}+\left|\frac{a(\tau)}{\lambda(\tau)}\right|^{\bar{p}}\right)+ o\left(\left|\frac{\dot{\xi}(\tau)}{\lambda(\tau)}\right|+\left|\frac{\dot{\lambda}(\tau)}{\lambda(\tau)}\right|+\left|\frac{\dot{a}(\tau)-\mu_0a(\tau)}{\lambda(\tau)}
       \right|\right).\]
       Adding these estimates for $\frac{\dot{a}(\tau)-\mu_0a(\tau)}{\lambda(\tau)}$, $\frac{\dot{\xi}_j(\tau)}{\lambda(\tau)}$ and $\frac{\dot{\lambda}(\tau)}{\lambda(\tau)}$ together, we get \eqref{estimate of parameters}.
 \end{proof}

Finally, we will need the following backward estimate on $a/\lambda$.
\begin{lem}\label{lem representation of a}
For any $\tau_1<\tau_2$, we have
\begin{eqnarray}\label{representation of a}
  \frac{ |a(\tau_1)|}{\lambda(\tau_1)}  \lesssim   e^{-\frac{\mu_0}{2}(\tau_2-\tau_1)} \frac{ |a(\tau_2)|}{\lambda(\tau_2)} + \int_{\tau_1}^{\tau_2}e^{-\frac{\mu_0}{2}(\tau_2-s)}  \left[ \mathcal{O}(s)  + \|\varphi_K(s)\|_{\alpha}^{\bar{p}}\right]ds.
\end{eqnarray}
\end{lem}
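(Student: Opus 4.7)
The strategy is to integrate the approximate scalar ODE for $a(\tau)$ backward from $\tau_2$ to $\tau_1$ via an integrating factor, then convert the result into a bound on $a/\lambda$ by exploiting the slow variation of $\lambda$. The starting point is the pointwise estimate \eqref{bound on parameters} of Proposition \ref{prop decay estimate}, which I rearrange as
\[
\dot a(\tau)=\mu_0\,a(\tau)+g(\tau),\qquad |g(\tau)|\lesssim \lambda(\tau)\,h(\tau),\quad h(\tau):=\mathcal O(\tau)+\|\varphi_K(\tau)\|_\alpha^{\bar p}+\Big|\frac{a(\tau)}{\lambda(\tau)}\Big|^{\bar p}.
\]

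Multiplying this ODE by $e^{-\mu_0\tau}$ and integrating on $[\tau_1,\tau_2]$ yields the Duhamel identity
\[
a(\tau_1)=e^{-\mu_0(\tau_2-\tau_1)}\,a(\tau_2)-\int_{\tau_1}^{\tau_2}e^{-\mu_0(s-\tau_1)}\,g(s)\,ds.
\]
I then divide by $\lambda(\tau_1)$ and invoke the slow variation of $\lambda$: since \eqref{bound on parameters} gives $|\dot\lambda(\tau)/\lambda(\tau)|=o(1)\ll\mu_0/2$, we have
\[
e^{-(\mu_0/2)|\tau-\tau'|}\leq\frac{\lambda(\tau)}{\lambda(\tau')}\leq e^{(\mu_0/2)|\tau-\tau'|},\qquad \tau,\tau'\in[\tau_1,\tau_2].
\]
Applying this to the boundary term gives $e^{-\mu_0(\tau_2-\tau_1)}|a(\tau_2)|/\lambda(\tau_1)\leq e^{-(\mu_0/2)(\tau_2-\tau_1)}|a(\tau_2)|/\lambda(\tau_2)$, and applying it inside the integral converts $e^{-\mu_0(s-\tau_1)}/\lambda(\tau_1)$ into $e^{-(\mu_0/2)(s-\tau_1)}/\lambda(s)$, so that $g(s)/\lambda(\tau_1)$ becomes $h(s)$ with an exponential weight. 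This produces an inequality essentially of the form claimed in \eqref{representation of a}, once one notes that the two equivalent exponential weights $e^{-(\mu_0/2)(s-\tau_1)}$ and $e^{-(\mu_0/2)(\tau_2-s)}$ both integrate to $O(1/\mu_0)$ on $[\tau_1,\tau_2]$ and provide the same $L^\infty$-type source control in subsequent applications.

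The final step is to remove the self-referential term $|a(s)/\lambda(s)|^{\bar p}$ inside $h(s)$. Here I use that $\bar p>1$ together with the uniform smallness $|a(\tau)/\lambda(\tau)|=o(1)$ established in \eqref{deviation form max pt} of Proposition \ref{prop orthogonal decompostion}. Setting $\epsilon:=\sup_\tau|a(\tau)/\lambda(\tau)|^{\bar p-1}\ll 1$ and estimating $|a/\lambda|^{\bar p}\leq \epsilon\,|a/\lambda|$ turns this nonlinear contribution into a linear Volterra perturbation with small kernel constant. A routine backward Gronwall inequality (equivalently, Neumann iteration of the Volterra operator) absorbs the perturbation into the left-hand side, leaving only the $\mathcal O(s)$ and $\|\varphi_K(s)\|_\alpha^{\bar p}$ source terms and giving exactly \eqref{representation of a}. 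The only delicate point is this Gronwall absorption: it depends essentially on $\bar p>1$ \emph{and} on the a priori uniform smallness of $a/\lambda$, since a linear error of comparable magnitude could not be absorbed by the decaying exponential kernel alone.
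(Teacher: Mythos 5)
Your proof is correct and follows essentially the same route as the paper's, just organized differently. The paper differentiates the quotient $a/\lambda$ directly: using \eqref{bound on parameters} it writes $\frac{d}{d\tau}\frac{a}{\lambda}=\frac{\dot a}{\lambda}-\frac{\dot\lambda}{\lambda}\frac{a}{\lambda}=[\mu_0+o(1)]\frac{a}{\lambda}+O\bigl(\mathcal O(\tau)+\|\varphi_K(\tau)\|_\alpha^{\bar p}\bigr)$, where the $o(1)$ in the coefficient already contains both the $\dot\lambda/\lambda$ correction and the absorbed self-term $|a/\lambda|^{\bar p}=|a/\lambda|^{\bar p-1}\cdot|a/\lambda|$, and then integrates this scalar inequality backward in one step. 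Your ordering (Duhamel for $a$ first, then converting $\lambda$-ratios into exponential factors, then a Volterra/Gronwall absorption of the nonlinear term) reaches the same place but is heavier; note also that iterating the Volterra operator convolves the boundary term with the kernel and so mildly degrades the exponential rate, which is harmless since $\epsilon$ is small, but is avoided entirely by absorbing pointwise before integrating. One point you flag is genuinely right: the honest output of either argument is the kernel $e^{-\frac{\mu_0}{2}(s-\tau_1)}$, not the $e^{-\frac{\mu_0}{2}(\tau_2-s)}$ printed in \eqref{representation of a} --- since $-\mu_0$ is the unstable eigenvalue, the backward Duhamel kernel must decay in $s-\tau_1$, and the two kernels are not pointwise comparable. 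The printed orientation appears to be a misprint, and in every downstream use (e.g.\ \eqref{outer to inner} and Proposition \ref{prop uniform estimate}) only the sup-norm of the source and the fact that the kernel integrates to $O(1/\mu_0)$ are exploited, so your version of the estimate suffices.
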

\begin{proof}
By a direct differentiation and then applying \eqref{bound on parameters}, we obtain
  \begin{eqnarray*}
    \frac{d}{d\tau}\frac{a(\tau)}{\lambda(\tau)}  &=&  \frac{\dot{a}(\tau)}{\lambda(\tau)}-\frac{\dot{\lambda}(\tau)}{\lambda(\tau)}\frac{a(\tau)}{\lambda(\tau)}
  \\
     &=& \mu_0 \frac{a(\tau)}{\lambda(\tau)}+ O\left(\mathcal{O}(\tau) +\|\varphi_K(\tau)\|_{\alpha}^{\bar{p}}+\left|\frac{a(\tau)}{\lambda(\tau)}\right|^{\bar{p}}\right)\\
      &=& \left[\mu_0+o(1)\right] \frac{a(\tau)}{\lambda(\tau)}+ O\left(\mathcal{O}(\tau) +\|\varphi_K(\tau)\|_{\alpha}^{\bar{p}}\right).
  \end{eqnarray*}
Integrating this differential inequality gives \eqref{representation of a}.
\end{proof}

\section{Outer problem}\label{sec outer}
\setcounter{equation}{0}

In this section we establish an estimate on the outer component $\phi_{out}$. Our main goal is to obtain an estimate of $\mathcal{O}$ in terms of $\mathcal{I}$ and the parameters $\lambda^\prime$ etc..

\subsection{Decomposition of the outer equation} First we need to take a further decomposition of $\phi_{out}$.
Recall that $\phi_{out}$ satisfies
\begin{eqnarray}\label{outer eqn}
  \partial_t\phi_{out}-\Delta\phi_{out} &=&\left(u^p- W_\ast^p\right)\left(1-\eta_{out}\right) \nonumber\\
  &-&\phi\left(\partial_t\eta_{out}-\Delta\eta_{out}\right)+2\nabla\phi\cdot\nabla\eta_{out}\nonumber\\
  &+&\left(-a^\prime+\mu_0\frac{a}{\lambda^2},\xi^\prime,\lambda^\prime\right)Z_\ast\left(1-\eta_{out}\right)\\
   &-&a\partial_tZ_0^\ast \left(1-\eta_{out}\right).  \nonumber
\end{eqnarray}
Introducing
   \[
    \left\{\begin{aligned}
& V_\ast:=
\begin{cases}
  \frac{u^p- W_\ast^p}{u-W_\ast}\chi_{B_{L\lambda(t)}(\xi(t))^c}, & \mbox{if } u\neq W_\ast,\\
  pu^{p-1}\chi_{B_{L\lambda(t)}(\xi(t))^c}, & \mbox{otherwise},
\end{cases}
\\
& F_2:=-\phi\left(\partial_t\eta_{out}-\Delta\eta_{out}\right)+2\nabla\phi\cdot\nabla\eta_{out},\\
& F_3:=\lambda^\prime Z_{n+1,\ast}\left(1-\eta_{out}\right),\\
& F_4:=\sum_{j=1}^{n}\xi_j^\prime Z_{j,\ast}\left(1-\eta_{out}\right),\\
& F_5:=\left[aV_\ast Z_0^\ast- \left(a^\prime-\mu_0\frac{a}{\lambda^2}\right)Z_0^\ast -a\partial_tZ_0^\ast\right] \left(1-\eta_{out}\right),
\end{aligned}\right.
 \]
 then \eqref{outer eqn} is written as
 \begin{equation}\label{outer eqn simplified}
   \partial_t\phi_{out}-\Delta\phi_{out} = V_\ast\phi_{out}+F_2+F_3+F_4+F_5.
 \end{equation}

Corresponding to this decomposition of the right hand side, we have $\phi_{out}=\phi_1+\phi_2+\phi_3+\phi_4+\phi_5$, with these five functions solving the following five equations respectively: first,
\[
\left\{
\begin{aligned}
& \partial_t\phi_1-\Delta\phi_1=V_\ast \phi_1  \quad & \mbox{in } Q_{9/10},\\
&\phi_1=\phi_{out}, \quad & \mbox{on } \partial^pQ_{9/10};
\end{aligned}\right.
\]
next,
\[
\left\{
\begin{aligned}
& \partial_t\phi_2-\Delta\phi_2=V_\ast \phi_2+F_2  \quad & \mbox{in } Q_{9/10},\\
&\phi_2=0, \quad & \mbox{on } \partial^pQ_{9/10};
\end{aligned}\right.
\]
and $\phi_3,\phi_4,  \phi_5$ solve the same equation with $\phi_2$, but with the right hand side term replaced by
$F_3$, $F_4$ and $F_5$ respectively.

The following estimates hold for $V_\ast$ and $F_2$-$F_5$.
\begin{enumerate}

\item[(i)] Because $u>0$ and $W_\ast>0$, by convexity of the function $x\mapsto x^p$ and Lemma \ref{lem scaling invariant estimate I}, for any $\delta>0$ and $C>0$, there exists a constant $R(\delta)>0$ such that, if $L\geq R(\delta)$, then
\begin{equation}\label{esimate on V}
|V_\ast| \leq    p\left(u^{p-1}+W_\ast^{p-1}\right)
 \leq \frac{\delta}{|x-\xi(t)|^2}+C.
\end{equation}

\item[(ii)]
 By the definition of $\eta_{out}$ in \eqref{definition of cut-off}, we get
\begin{eqnarray}\label{estimate on F2, 1}
 && \big|\phi\left(\partial_t\eta_{out}-\Delta\eta_{out}\right)\big| \nonumber\\
 &\lesssim&  |\phi|\left(\frac{|\lambda^\prime(t)|}{\lambda(t)}+\frac{|\xi^\prime(t)|}{L\lambda(t)}+\frac{1}{L^2\lambda(t)^2}\right)\chi_{B_{2L\lambda(t)}(\xi(t))\setminus B_{L\lambda(t)}(\xi(t))}\\
  &\lesssim& \frac{\mathcal{I}(t)}{ L^\alpha\lambda(t)^\alpha} \left(\frac{|\lambda^\prime(t)|}{\lambda(t)}+\frac{|\xi^\prime(t)|}{L\lambda(t)}+\frac{1}{L^2\lambda(t)^2}\right)\chi_{B_{2L\lambda(t)}(\xi(t))\setminus B_{L\lambda(t)}(\xi(t))}.\nonumber
\end{eqnarray}

In the same way, we get
\begin{eqnarray}\label{estimate on F2, 2}
  2|\nabla\phi||\nabla\eta_{out}| &\lesssim& \frac{ |\nabla\phi|}{L \lambda(t) }\chi_{B_{2L\lambda(t)}(\xi(t))\setminus B_{L\lambda(t)}(\xi(t))} \\
  &\lesssim& \frac{\mathcal{I}(t)}{ L^{2+\alpha}\lambda(t)^{2+\alpha}} \chi_{B_{2L\lambda(t)}(\xi(t))\setminus B_{L\lambda(t)}(\xi(t))}. \nonumber
\end{eqnarray}

Combining \eqref{estimate on F2, 1} and \eqref{estimate on F2, 2}, we obtain
\begin{equation}\label{estimate on F2}
  |F_2|\lesssim \frac{\mathcal{I}(t)}{ L^\alpha\lambda(t)^\alpha} \left[\frac{|\lambda^\prime(t)|}{\lambda(t)}+\frac{|\xi^\prime(t)|}{L\lambda(t)}+\frac{1}{L^2\lambda(t)^2}\right]\chi_{B_{2L\lambda(t)}(\xi(t))\setminus B_{L\lambda(t)}(\xi(t))}.
\end{equation}

 \item[(iii)] By the decay of $Z_{n+1}$ at infinity, we get
  \begin{equation}\label{estimate on F4}
    | F_3|\lesssim  \lambda(t)^{\frac{n-4}{2}}|\lambda^\prime(t)||x-\xi(t)|^{2-n}\chi_{B_{L\lambda(t)}(\xi(t))^c}.
  \end{equation}

  \item[(iv)] By the decay of $Z_1,\cdots, Z_n$ at infinity, we get
  \begin{equation}\label{estimate on F3}
    |F_4|\lesssim   \lambda(t)^{\frac{n-2}{2}}|\xi^\prime(t)||x-\xi(t)|^{1-n}\chi_{B_{L\lambda(t)}(\xi(t))^c}.
  \end{equation}

\item[(v)] By the decay of $Z_0$ at infinity, we get
\begin{equation}\label{estimate on F5,1}
    \Big|\left(a^\prime-\mu_0\frac{a}{\lambda^2}\right)Z_0^\ast\left(1-\eta_{out}\right)\Big|\lesssim \Big|a^\prime(t)-\mu_0\frac{a(t)}{\lambda(t)^2}\Big|\lambda(t)^{-\frac{n}{2}}e^{-c\frac{|x-\xi(t)|}{\lambda(t)}}\chi_{B_{L\lambda(t)}(\xi(t))^c}.
  \end{equation}
Similarly, we have
\begin{equation}\label{estimate on F5,2}
  \Big|a\partial_tZ_0^\ast \left(1-\eta_{out}\right)\Big|\lesssim |a(t)|\lambda(t)^{-\frac{n+2}{2}}\left(|\lambda^\prime(t)|+|\xi^\prime(t)|\right)e^{-c\frac{|x-\xi(t)|}{\lambda(t)}}\chi_{B_{L\lambda(t)}(\xi(t))^c}.
\end{equation}
Finally, by \eqref{esimate on V}, we obtain
\begin{eqnarray}\label{estimate on F5,3}
  \left|aV_\ast Z_0^\ast \left(1-\eta_{out}\right)\right| &\lesssim &\frac{|a(t)|}{|x-\xi(t)|^2}
  \lambda(t)^{-\frac{n}{2}}e^{-c\frac{|x-\xi(t)|}{\lambda(t)}}\chi_{B_{L\lambda(t)}(\xi(t))^c}\\
  &\lesssim & \frac{|a(t)|}{\lambda(t)^2}
  \lambda(t)^{-\frac{n}{2}}e^{-c\frac{|x-\xi(t)|}{\lambda(t)}}\chi_{B_{L\lambda(t)}(\xi(t))^c}. \nonumber
\end{eqnarray}
Combining \eqref{estimate on F5,1}-\eqref{estimate on F5,3}, we get
\begin{eqnarray}\label{estimate on F5}
  |F_5|&\lesssim& \left[\frac{|a(t)|}{\lambda(t)^2}+ \Big|a^\prime(t)-\mu_0\frac{a(t)}{\lambda(t)^2}\Big|  + \frac{|a(t)|}{\lambda(t)}\left(|\lambda^\prime(t)|+|\xi^\prime(t)|\right)\right] \\
  && \quad \quad \times \lambda(t)^{-\frac{n}{2}}e^{-c\frac{|x-\xi(t)|}{\lambda(t)}}\chi_{B_{L\lambda(t)}(\xi(t))^c}. \nonumber
\end{eqnarray}
\end{enumerate}

\medskip

For each $i=1,\cdots,5$, let
\begin{equation}\label{transformation of outer components}
  \overline{\phi}_i(x,t):=\left|\phi_i\left(x-\xi(t),t\right)\right|.
\end{equation}
By the Kato inequality, we obtain
\begin{equation}\label{outer eqn 1}
\partial_t\overline{\phi}_1-\Delta\overline{\phi}_1+\xi^\prime(t)\cdot\nabla\overline{\phi}_1\leq |V_\ast|\overline{\phi}_1
\end{equation}
and for $i=2,3,4,5$,
\begin{equation}\label{outer eqn 3}
\partial_t\overline{\phi}_i-\Delta\overline{\phi}_i+\xi^\prime(t)\cdot\nabla\overline{\phi}_i\leq |V_\ast|\overline{\phi}_i+|F_i|.
\end{equation}

\subsection{Estimates for the outer equation} In this subsection, we establish some pointwise estimates on $\overline{\phi}_1, \cdots, \overline{\phi}_5$.

Denote the Dirichlet heat kernel for the operator $\mathcal{H}:=\partial_t-\Delta-\left(\delta|x|^{-2}+C\right)+\xi^\prime(t)\cdot\nabla$ in $Q_1$  by $ G(x,t;y,s)$ ($t>s$).

Let
\[\gamma:=\frac{n-2}{2}-\sqrt{\frac{(n-2)^2}{4}-4\delta}.\]
Then $w(x):=|x|^{-\gamma}$ is a weak solution of the elliptic equation
\[-\Delta w(x)=\frac{\delta}{|x|^2}w(x).\]

\begin{lem}[Estimate  on $\overline{\phi}_1$]\label{lem estimate outer 1}
$\overline{\phi}_1(x,t)\lesssim |x|^{-\gamma}$ in $Q_{8/9}$.
\end{lem}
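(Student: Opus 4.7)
The estimate is a parabolic comparison argument with the singular profile $|x|^{-\gamma}$ as a barrier, chosen so that it is an \emph{exact} stationary solution of the Hardy-type elliptic problem $-\Delta w = \delta|x|^{-2}w$ (this is the content of the definition of $\gamma$). My candidate supersolution will be
\begin{equation*}
\overline{w}(x,t) := A\,e^{Mt}\,(\kappa^2+|x|^2)^{-\gamma/2},
\end{equation*}
where $\kappa>0$ is a small regularization near the origin, $M$ is a large constant absorbing the zeroth-order term $C$ in \eqref{esimate on V} together with the drift, and $A$ is a large multiplicative constant ensuring domination of $\overline{\phi}_1$ on the parabolic boundary.

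A direct spherical-coordinates computation using the characteristic identity for $\gamma$ shows that $(\kappa^2+|x|^2)^{-\gamma/2}$ is a supersolution of $-\Delta-\delta|x|^{-2}$ outside the origin, with an error localized to $\{|x|\lesssim\kappa\}$ that vanishes as $\kappa\to0$. The drift term satisfies $|\xi'(t)\cdot\nabla\overline{w}|\leq \gamma|\xi'(t)|\,|x|^{-1}\overline{w}$; using the Lipschitz bound $|\xi'(t)|\lesssim 1$ from \eqref{Lip for parameters} together with Young's inequality, this is absorbed into a small fraction of $\delta|x|^{-2}\overline{w}$ plus a constant multiple of $\overline{w}$. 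Taking $M$ large then handles both this remainder and the zeroth-order $C$ in \eqref{esimate on V}, so that
\begin{equation*}
(\partial_t-\Delta+\xi'(t)\cdot\nabla-|V_\ast|)\overline{w}\geq 0 \quad\text{on } Q_{9/10}\setminus\{|x|<\kappa\}.
\end{equation*}

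For the boundary comparison, note that on $\partial^p Q_{9/10}$ the function $\overline{\phi}_1$ coincides, up to the translation $x\mapsto x-\xi(t)$, with $|\phi_{out}|$, which is uniformly bounded by Corollary \ref{coro smooth convergence outside blow up locus} since the bubble sits well inside $Q_{9/10}$; as $|x|\lesssim 1$ there, $\overline{w}\gtrsim A$ and choosing $A$ large forces $\overline{w}\geq\overline{\phi}_1$ on $\partial^pQ_{9/10}$. On the inner cap $\{|x|=\kappa\}$, the scaling-invariant bound of Proposition \ref{prop scaling invariant estimate for error} gives $\overline{\phi}_1=O(\kappa^{-\alpha})\ll A\kappa^{-\gamma}$ provided $\gamma<\alpha=(n-2)/2$, which is built into the definition of $\gamma$. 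Applying the parabolic comparison principle and sending $\kappa\to 0$ yields the desired bound $\overline{\phi}_1(x,t)\lesssim|x|^{-\gamma}$ in $Q_{8/9}$.

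\textbf{Main obstacle.} The delicate point is that the singular potential $\delta/|x|^2$ is precisely Hardy-critical at the origin, and a naive maximum principle is not directly applicable. The argument requires $\delta$ to be strictly subcritical (which by \eqref{esimate on V} is arranged by taking $L$ large, so $\gamma<(n-2)/2$) and invokes the Moschini--Tesei Gaussian heat-kernel bounds \cite{Moschini} to validate the comparison principle for this class of Hardy operators and to justify the removal of the regularization $\kappa\to 0$. A minor additional care is needed to track the $\xi'\cdot\nabla$ term so that it does not spoil the leading-order Hardy cancellation, but this is purely a matter of choosing $M$ after fixing $\delta$.
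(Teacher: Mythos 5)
Your strategy (an explicit barrier plus the parabolic comparison principle) is genuinely different from the paper's, which instead conjugates the equation by the exact singular solution $w=|x|^{-\gamma}$, observes that $\varphi_1=\overline{\phi}_1/w$ is a subsolution of a divergence-form equation degenerate with respect to the weight $w^2\,dx$, and then invokes interior De Giorgi--Nash--Moser estimates for such weighted operators to conclude $\sup\varphi_1<\infty$. That difference matters here, because the Moser route only needs an integral bound ($\sup_{Q_{8/9}}\varphi_1\lesssim \|\varphi_1\|_{L^2(Q_{9/10},w^2dxdt)}=\|\phi_1\|_{L^2(Q_{9/10})}$, which is uniformly controlled), whereas your comparison argument needs \emph{pointwise} domination of $\overline{\phi}_1$ on the entire parabolic boundary and on the inner cap, and this is exactly where the proposal breaks down.

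Two concrete gaps. First, the inner-cap comparison is backwards: since $\gamma<\alpha=\tfrac{n-2}{2}$, we have $\kappa^{-\alpha}\gg\kappa^{-\gamma}$ as $\kappa\to0$, so a bound $\overline{\phi}_1=O(\kappa^{-\alpha})$ on $\{|x|=\kappa\}$ is \emph{not} dominated by $A\kappa^{-\gamma}$ for any fixed $A$; the condition $\gamma<\alpha$ that you cite as making this step work is precisely what makes it fail. Second, the data on the bottom of the cylinder is not uniformly bounded: at $t=-81/100$ the bubble is present, and $\phi_{out}=\phi(1-\eta_{out})$ is of size $o\bigl((L\lambda)^{-\alpha}\bigr)$ for $|x-\xi(t)|\sim L\lambda(t)$ by Proposition \ref{prop scaling invariant estimate for error}; Corollary \ref{coro smooth convergence outside blow up locus} gives uniform bounds only away from the concentration set, not on the whole initial slice. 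Hence your constant $A$ would have to be taken of order $\lambda^{-\alpha}$, and the resulting estimate $\overline{\phi}_1\lesssim\lambda^{-\alpha}|x|^{-\gamma}$ is useless for the later gluing step (Proposition \ref{prop inner to outer} requires the constant in this lemma to be independent of $i$, i.e.\ of $\lambda$). To salvage a barrier argument one would at minimum need to replace the pointwise boundary comparison by an argument that only uses the (uniformly bounded) $L^2$ norm of $\phi_{out}$ on the initial slice, which is essentially what the weighted Moser iteration accomplishes.
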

\begin{proof}
Set $\varphi_1:=\overline{\phi}_1/w$. It is a sub-solution to the parabolic equation
\begin{equation}\label{degenerate eqn}
 \partial_t \varphi=w^{-2}\mbox{div}\left(w^2\nabla \varphi\right)-\xi^\prime(t)\cdot\nabla \varphi+\left[C+\gamma \xi^\prime(t)\cdot\frac{ x}{|x|^2}\right]\varphi.
\end{equation}
We note the following three facts about  the coefficients of this equation.
\begin{enumerate}
  \item Because $\gamma\ll 1$, by \cite[Theorem 3.1]{Moschini}, the volume doubling property and the scaling invariant Poincare inequality holds on $\R^n$ with the weighted measure $w^2dx$. By \cite[Theorem 5.2.3]{Saloff-Coste}, for some $q>2$ (independent of $\gamma$), there is a local Sobolev embedding from $W^{1,2}(B_1,w^2dx)$ into $L^q(B_1,w^2dx)$.

  \item By the Lipschitz hypothesis \eqref{Lip assumption}, $\xi^\prime(t)\in L^\infty(-1,1)$.
  \item Similarly, the zeroth order term $C+\gamma \xi^\prime(t)\cdot\frac{ x}{|x|^2}\in L^\infty(-1,1;L^{n-3\gamma}(B_1, w^2dx))$.
\end{enumerate}
Then by standard  De Giorgi-Nash-Moser estimate (cf. \cite[Section 3]{Moschini} and \cite[Chapter 5]{Saloff-Coste}), we  deduce that $\varphi_1$ is bounded  in $Q_{8/9}$. (The lower order terms in this parabolic operator do not affect this argument by noting their higher integrability.)
\end{proof}

In fact, Moser's Harnack inequality holds for positive solutions of \eqref{degenerate eqn}. This is similar to \cite[Theorem 3.5]{Moschini}. Then by   \cite[Section 5.4.7]{Saloff-Coste} (see also \cite[Theorem 4.3]{Moschini}),  the heat kernel  $G$ satisfies a Gaussian bound
\begin{equation}\label{heat kernel bound}
 G(x,t;y,s)\leq C\left(t-s\right)^{-\frac{n}{2}}e^{-c\frac{|x-y|^2}{t-s}}\left(1+\frac{\sqrt{t-s}}{|x|}\right)^\gamma \left(1+\frac{\sqrt{t-s}}{|y|}\right)^\gamma.
\end{equation}

Hereafter we fix two constants $\beta>1$ (but sufficiently close to $1$) and $\mu\in(0,1)$ (to be determined below). We  choose $\mu$ so   that
\[ \left(\frac{n-2}{2}-2\gamma\right)\left(1-\mu\right)>2.\]
which is denoted by $2+2\kappa$. This inequality is guaranteed  by the assumption that $n\geq 7$.

\begin{lem}[Estimate  on $\overline{\phi}_2$]\label{lem estimate outer 2}
If $K\lambda(t)/2\leq |x|\leq 4K\lambda(t)$, then
\begin{eqnarray*}
  \overline{\phi}_2(x,t) &\lesssim &  \left[K^{-\left(\frac{n-2}{2}-\gamma\right)(\beta-1)}+ \frac{L^{ \frac{n-2}{2}-\gamma}}{K^{\frac{n-2}{2}-(2\beta-1)\gamma}}\right]\left[K\lambda(t)\right]^{-\frac{n-2}{2}}
\|\mathcal{I}\|_{L^\infty(t-\lambda(t)^{2\mu},t)}\\
   &+& \lambda(t)^{-\gamma(1-\mu)-\frac{n-2}{2}\mu}.
\end{eqnarray*}
\end{lem}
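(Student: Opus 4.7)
The plan is to apply Duhamel's principle to the parabolic differential inequality \eqref{outer eqn 3} satisfied by $\overline{\phi}_2$. Since $\phi_2$ vanishes on $\partial^pQ_{9/10}$, the Kato-modified $\overline{\phi}_2$ admits the representation
$$\overline{\phi}_2(x,t)\leq \int_{-9/10}^{t}\!\int_{B_{9/10}} G(x,t;y,s)\,|F_2(y,s)|\,dy\,ds,$$
where $G$ is the heat kernel of $\mathcal{H}$ obeying the Gaussian bound \eqref{heat kernel bound}. Among the three contributions in \eqref{estimate on F2}, the dominating one in the small-$\lambda$ regime is $1/(L^2\lambda^2)$, because $|\lambda'|$ and $|\xi'|$ are uniformly bounded via \eqref{Lip for parameters} together with \eqref{Lip assumption}. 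Hence it suffices to use
$$|F_2(y,s)| \lesssim \frac{\mathcal{I}(s)}{(L\lambda(s))^{2+\alpha}}\,\chi_{\mathcal{A}(s)}(y),$$
where $\mathcal{A}(s)$ denotes the annular support of $F_2$ at time $s$, of inner radius $L\lambda(s)$ and width $L\lambda(s)$ concentrated around the (slowly drifting) image of $\xi(s)$.

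Next I would split the time integral at the cutoff $s=t-\lambda(t)^{2\mu}$, with the near-past portion further partitioned by the parabolic distance $\sqrt{t-s}$ compared to $K^\beta\lambda(t)$. In both near-past subregions the observation point $|x|\sim K\lambda(t)$ is well-separated from the support of $F_2$ since $L\ll K$, so $|x-y|\sim K\lambda(t)$ and $\lambda(s)\sim\lambda(t)$ on the bulk of the window. For $\sqrt{t-s}\leq K^\beta\lambda(t)$ the Gaussian $\exp(-c(K\lambda(t))^2/(t-s))$ controls the singularity in $(t-s)^{-n/2}$; combining with the mild $\gamma$-weights, integrating in $y$ over $\mathcal{A}(s)$ (whose volume is $(L\lambda)^n$ and where $|y|\sim L\lambda$) and then in $s$ produces a bound proportional to $K^{-(\alpha-\gamma)(\beta-1)}(K\lambda(t))^{-\alpha}\|\mathcal{I}\|_{L^\infty(t-\lambda^{2\mu},t)}$. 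For $K^\beta\lambda(t)<\sqrt{t-s}\leq\lambda(t)^\mu$ the exponential is no longer useful but both weights $(1+\sqrt{t-s}/|x|)^\gamma,(1+\sqrt{t-s}/|y|)^\gamma$ are fully active, and a direct integration yields the second contribution $L^{\alpha-\gamma}K^{-(\alpha-(2\beta-1)\gamma)}(K\lambda(t))^{-\alpha}\|\mathcal{I}\|_{L^\infty(t-\lambda^{2\mu},t)}$.

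For the far-past contribution ($s<t-\lambda(t)^{2\mu}$), the only information available on $\mathcal{I}(s)$ is the default bound $\mathcal{I}(s)\lesssim 1$ coming from the scaling-invariant estimate in Proposition~\ref{prop scaling invariant estimate for error}. The $\gamma$-weights are fully active at this range, and integrating in time from $\sqrt{t-s}\sim \lambda(t)^\mu$ outward yields the background term $\lambda(t)^{-\gamma(1-\mu)-\alpha\mu}$.

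The main obstacle is the careful bookkeeping of three competing decay sources --- the Gaussian factor $\exp(-c(K\lambda)^2/(t-s))$, the polynomial factor $(t-s)^{-n/2}$, and the two $\gamma$-weights --- against the annular, drifting support of $F_2$ of width $L\lambda$, without losing the sharp exponents in $K$ and $L$. The choice of $\mu$ satisfying $(\alpha-2\gamma)(1-\mu)>2+2\kappa$ --- which is permitted precisely because $n\geq 7$ so that $\alpha>2$ --- is what guarantees both the convergence of the relevant time integrals in the far-past and the subleading status of the background term $\lambda(t)^{-\gamma(1-\mu)-\alpha\mu}$ relative to $(K\lambda(t))^{-\alpha}$.
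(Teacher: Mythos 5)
Your overall architecture is the same as the paper's: the heat kernel representation for $\overline{\phi}_2$, the reduction of $F_2$ to its dominant piece $\mathcal{I}(s)L^{-2-\alpha}\lambda(s)^{-2-\alpha}\chi_{B_{2L\lambda(s)}\setminus B_{L\lambda(s)}}$ using $|\lambda'|+|\xi'|\ll\lambda^{-1}$, and the three-way time splitting at $t-\lambda(t)^{2\mu}$ and $t-K^{2\beta}\lambda(t)^2$, with the far past handled by the trivial bound $\mathcal{I}\leq C$. However, your treatment of the intermediate region $K^{2\beta}\lambda(t)^2<t-s\leq\lambda(t)^{2\mu}$ has a genuine gap. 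You assert that "$|x-y|\sim K\lambda(t)$ and $\lambda(s)\sim\lambda(t)$ on the bulk of the window" and then integrate pointwise. But comparability $\lambda(s)\sim\lambda(t)$ is only available on time scales $O(K^{2\beta}\lambda(t)^2)$ (via Proposition \ref{prop blow up profile I}, i.e.\ \eqref{differential Harnack at lambda scale 2}); over a window of length $\lambda(t)^{2\mu}$ with $\mu<1$ the ratio $\lambda(s)/\lambda(t)$ is completely uncontrolled at this stage — indeed, establishing such control on macroscopic windows is exactly what the Harnack inequality of Section \ref{sec Harnack for lambda}, which \emph{uses} this lemma, is designed to do. A pointwise integration over the annulus (volume $\sim(L\lambda(s))^n$ against source strength $\mathcal{I}(s)(L\lambda(s))^{-2-\alpha}$) leaves a factor $(L\lambda(s))^{\alpha-\gamma}$ that cannot be replaced by $(L\lambda(t))^{\alpha-\gamma}$; with only the crude bound $\lambda(s)\lesssim\lambda(t)^{\mu}$ the resulting power of $\lambda(t)$ falls short of $-\frac{n-2}{2}$. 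The paper circumvents this by pairing the source with the kernel in $L^{\frac{2n}{n+2}}$--$L^{\frac{2n}{n-2}}$ duality (see \eqref{Gaussian bound}): the $L^{\frac{2n}{n+2}}$ norm of the source is $\lesssim\mathcal{I}(s)$ with the powers of $L\lambda(s)$ cancelling \emph{exactly}, so the estimate is insensitive to the unknown size of $\lambda(s)$.

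A secondary but related issue is that you have attached the two near-past contributions to the wrong sub-regions. The pointwise Gaussian computation is only legitimate on $t-s\leq K^{2\beta}\lambda(t)^2$ (where $\lambda(s)\sim\lambda(t)$ does hold), and there it necessarily produces the factor $(L\lambda)^{n-2-\alpha}=(L\lambda)^{\alpha}$ from the annulus volume against the source strength; that region is therefore the one yielding the $L$-dependent term $L^{\frac{n-2}{2}-\gamma}K^{-(\frac{n-2}{2}-(2\beta-1)\gamma)}[K\lambda(t)]^{-\frac{n-2}{2}}$, not the $L$-free one you assign to it. Conversely, the duality argument on the intermediate region produces the $L$-free term $K^{-(\frac{n-2}{2}-\gamma)(\beta-1)}[K\lambda(t)]^{-\frac{n-2}{2}}$, and your claimed bound $L^{\alpha-\gamma}K^{-(\alpha-(2\beta-1)\gamma)}[K\lambda(t)]^{-\alpha}$ for that region is strictly smaller (for $\beta$ close to $1$ and $L\ll K$) than what any correct argument there can give, hence false as stated. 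The final sum of the two terms coincides with the lemma, but the intermediate claims do not hold region by region, and the missing duality step is needed to close the argument.
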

\begin{proof}
By the heat kernel representation formula, for any $(x,t)$ we have
\[\overline{\phi}_2(x,t)\leq\int_{-81/100}^{t}\int_{B_{2L\lambda(s)}\setminus B_{L\lambda(s)}}G(x,t;y,s)\frac{\mathcal{I}(s)}{ L^\alpha\lambda(s)^\alpha} \left(\frac{|\lambda^\prime(s)|}{\lambda(s)}+\frac{|\xi^\prime(s)|}{L\lambda(s)}+\frac{1}{L^2\lambda(s)^2}\right).\]
Divide this integral into three parts, the first part $\mathrm{I}$ being on $(-81/100, t-\lambda(t)^{2\mu})$, the second part $\mathrm{II}$ involving the integral on $(t-\lambda(t)^{2\mu}, t-K^{2\beta}\lambda(t)^2)$, and the third part $\mathrm{III}$ involving the integral on $(t-K^{2\beta}\lambda(t)^2,t)$.

{\bf Estimate of $\mathrm{I}$.} By \eqref{bound on parameters},
\begin{equation}\label{differential Harnack at lambda scale}
 |\lambda^\prime(s)|+|\xi^\prime(s)|\ll \lambda(s)^{-1}.
\end{equation}
Hence
\[\left\|\frac{\mathcal{I}(s)}{ L^\alpha\lambda(s)^\alpha} \left(\frac{|\lambda^\prime(s)|}{\lambda(s)}+\frac{|\xi^\prime(s)|}{L\lambda(s)}+\frac{1}{L^2\lambda(s)^2}\right)\chi_{B_{2L\lambda(s)}\setminus B_{L\lambda(s)}}\right\|_{L^{\frac{2n}{n+2}}(B_1)}
\lesssim \mathcal{I}(s).\]
By the Guassian bound on $G(x,y;t,s)$ in \eqref{heat kernel bound},
\begin{equation}\label{Gaussian bound}
   \left\|G(x,t;\cdot,s)\right\|_{L^{\frac{2n}{n-2}}(B_1)}\lesssim (t-s)^{-\frac{n+2}{4}}\left(1+\frac{\sqrt{t-s}}{|x|}\right)^\gamma.
\end{equation}
Then by  H\"{o}lder inequality we get
\begin{eqnarray}\label{2I}
\mathrm{I} &\lesssim& |x|^{-\gamma}\int_{-81/100}^{t-\lambda(t)^{2\mu}}(t-s)^{-\frac{n+2}{4}+\frac{\gamma}{2}}\mathcal{I}(s)ds\\
&\lesssim& K^{-\gamma}\lambda(t)^{-(1-\mu)\gamma-\frac{n-2}{2}\mu}, \nonumber
\end{eqnarray}
where in the last step we used only the estimate $|\mathcal{I}(s)|\leq C$.

{\bf Estimate of $\mathrm{II}$.} This case differs from the previous one only in the last step. Now we have
\begin{eqnarray*}
\mathrm{II} &\lesssim& |x|^{-\gamma}\int_{t-\lambda(t)^{2\mu}}^{t-K^{2\beta}\lambda(t)^2}(t-s)^{-\frac{n+2}{4}+\frac{\gamma}{2}}\mathcal{I}(s)\\
&\lesssim& K^{-\frac{n-2}{2}\beta+(\beta-1)\gamma}\lambda(t)^{-\frac{n-2}{2}} \|\mathcal{I}\|_{L^\infty(t-\lambda(t)^{2\mu},t-K^{2\beta}\lambda(t)^2)}.
\end{eqnarray*}

{\bf Estimate of $\mathrm{III}$.} Still by the heat kernel representation formula, $\mathrm{III}$ is bounded by
 \begin{eqnarray*}
  & & \int_{t-K^{2\beta}\lambda(t)^2}^{t}\int_{B_{2L\lambda(s)}\setminus B_{L\lambda(s)}} \left(t-s\right)^{-\frac{n}{2}}e^{-c\frac{|x-y|^2}{t-s}}\left(1+\frac{\sqrt{t-s}}{|x|}\right)^\gamma \left(1+\frac{\sqrt{t-s}}{|y|}\right)^\gamma\\
    &&\quad \quad \quad \quad\quad \quad \quad \quad\quad\quad\times \frac{\mathcal{I}(s)}{ L^\alpha\lambda(s)^\alpha} \left(\frac{|\lambda^\prime(s)|}{\lambda(s)}+\frac{|\xi^\prime(s)|}{L\lambda(s)}+\frac{1}{L^2\lambda(s)^2}\right)dyds\\
  &=& \int_{t-K^{2\beta}\lambda(t)^2}^{t}\frac{\mathcal{I}(s)}{L^\alpha\lambda(s)^\alpha} \left(\frac{|\lambda^\prime(s)|}{\lambda(s)}+\frac{|\xi^\prime(s)|}{L\lambda(s)}+\frac{1}{L^2\lambda(s)^2}\right) \left(1+\frac{\sqrt{t-s}}{|x|}\right)^\gamma \\
  &&\times\left[\int_{B_{2L\lambda(s)}\setminus B_{L\lambda(s)}} \left(t-s\right)^{-\frac{n}{2}}e^{-c\frac{|x-y|^2}{t-s}} \left(1+\frac{\sqrt{t-s}}{|y|}\right)^\gamma dy \right]ds.
\end{eqnarray*}
For $s\in[t-K^{2\beta}\lambda(t)^2,t]$, by Proposition \ref{prop blow up profile I} we have
\begin{equation}\label{differential Harnack at lambda scale 2}
 \lambda(s)\sim\lambda(t).
\end{equation}
Thus by noting that   $K\lambda(t)/2\leq |x| \leq 4K\lambda(t)$ and $K\gg L$, we have
\[|x|\gg |y|  \quad \mbox{for any} ~~  y\in B_{2L\lambda(s)}.\]
Therefore in the integral above we can replace $e^{-c\frac{|x-y|^2}{t-s}} $ by $e^{-c\frac{|x|^2}{t-s}} $.

Then in view of \eqref{differential Harnack at lambda scale}, $\mathrm{III}$ is controlled by
\begin{eqnarray*}
  & & K^{(\beta-1)\gamma} \|\mathcal{I}\|_{L^\infty(t-K^{2\beta}\lambda(t)^2,t)}L^{\frac{n-2}{2}}\lambda(t)^{\frac{n-2}{2}}\\
  && \quad \quad\quad \quad \times \int_{t-K^{2\beta}\lambda(t)^2}^{t}\left(t-s\right)^{-\frac{n}{2}}e^{-c\frac{|x|^2}{t-s}} \left(1+\frac{\sqrt{t-s}}{L\lambda(t)}\right)^\gamma ds\\
  &\lesssim&  K^{(\beta-1)\gamma} \|\mathcal{I}\|_{L^\infty(t-K^{2\beta}\lambda(t)^2,t)}L^{\frac{n-2}{2}}\lambda(t)^{\frac{n-2}{2}} \\ && \quad \quad\quad \quad \left(1+\frac{K^\beta}{L}\right)^\gamma\int_{t-K^{2\beta}\lambda(t)^2}^{t}\left(t-s\right)^{-\frac{n}{2}}e^{-c\frac{|x|^2}{t-s}}  ds\\
 &\lesssim& K^{(2\beta-1)\gamma}L^{\frac{n-2}{2}-\gamma}\|\mathcal{I}\|_{L^\infty(t-K^{2\beta}\lambda(t)^2,t)}\lambda(t)^{\frac{n-2}{2}}
  |x|^{2-n} \\
 &\lesssim & \frac{L^{\frac{n-2}{2}-\gamma}}{K^{\frac{n-2}{2}-(2\beta-1)\gamma}} \left[K\lambda(t)\right]^{-\frac{n-2}{2}}\|\mathcal{I}\|_{L^\infty(t-K^2\lambda(t)^2,t)}.
\end{eqnarray*}

Adding up the estimates for $\mathrm{I}$, $\mathrm{II}$ and $\mathrm{II}$, we finish the proof.
\end{proof}

\begin{lem}[Estimate  on $\overline{\phi}_3$]\label{lem estimate outer 3}
For $K\lambda(t)/2\leq |x|\leq 4K\lambda(t)$,
\[\overline{\phi}_3(x,t)\lesssim \lambda(t)^{-\frac{n-2}{2}\mu+2(\mu-1)\gamma}+ K^{-\frac{n-2}{2}(\beta-1)+2(\beta-1)\gamma }\left[K\lambda(t)\right]^{-\frac{n-2}{2}} \|\lambda\lambda^\prime\|_{L^\infty(t-\lambda(t)^{2\mu},t)}.\]
\end{lem}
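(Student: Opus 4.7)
$\overline{\phi}_3$ satisfies the Kato-type inequality \eqref{outer eqn 3} with source $|F_3|$ and zero data on $\partial^p Q_{9/10}$. Because $|V_\ast| \leq \delta|x|^{-2} + C$ by \eqref{esimate on V}, the Dirichlet heat kernel $G(x,t;y,s)$ of the operator $\mathcal H := \partial_t - \Delta - (\delta|x|^{-2} + C) + \xi'(t)\cdot\nabla$ constructed in the proof of Lemma \ref{lem estimate outer 1} dominates, and Duhamel gives
\[
\overline{\phi}_3(x,t) \leq \int_{-81/100}^{t}\int_{B_{L\lambda(s)}^c \cap B_1} G(x,t;y,s)\,|F_3(y,s)|\,dy\,ds,
\]
with $|F_3(y,s)| \lesssim |\lambda'(s)|\,\lambda(s)^{(n-4)/2}\,|y|^{2-n}$ by \eqref{estimate on F4}. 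The plan is to split the time integral into the same three ranges used in Lemma \ref{lem estimate outer 2},
\[
\mathrm I:\ (-\tfrac{81}{100},t-\lambda(t)^{2\mu}),\qquad \mathrm{II}:\ (t-\lambda(t)^{2\mu},t-K^{2\beta}\lambda(t)^2),\qquad \mathrm{III}:\ (t-K^{2\beta}\lambda(t)^2,t),
\]
and handle each by combining the Gaussian bound \eqref{heat kernel bound} with careful treatment of the pointwise source $|y|^{2-n}$.

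The key spatial estimate to establish is
\[
\int_{B_1} G(x,t;y,s)\,|y|^{2-n}\,dy \;\lesssim\; \bigl(|x|+\sqrt{t-s}\bigr)^{2-n}\bigl(1+\sqrt{t-s}/|x|\bigr)^{2\gamma}.
\]
The first factor reflects the heat-convolution identity $(G_{\mathrm{heat}}(\cdot,\tau)*|y|^{2-n})(x) \lesssim (|x|^2+\tau)^{(2-n)/2}$, which in turn is a consequence of the fact that $|y|^{2-n}$ is (up to constant) the fundamental solution of $-\Delta$. The additional $(1+\sqrt{t-s}/|x|)^\gamma$ (beyond the single factor present in \eqref{Gaussian bound}) is produced by the interaction of the weight $(1+\sqrt{t-s}/|y|)^\gamma$ in \eqref{heat kernel bound} with the non-localized source: splitting the $y$-integration into $|y|<\sqrt{t-s}$ and $|y|\geq\sqrt{t-s}$, and using that $|y|^{1-\gamma}$ remains integrable at the origin in dimension $n\geq 7$, contributes an extra $(\sqrt{t-s}/|x|)^\gamma$ to the final bound.

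With this auxiliary bound in hand, the three time integrals parallel those in Lemma \ref{lem estimate outer 2}. In $\mathrm I$ I would use only the rough bound $|\lambda'(s)|\lambda(s)^{(n-4)/2}\lesssim\lambda(s)^{(n-6)/2}$ from \eqref{differential Harnack at lambda scale}; since $\sqrt{t-s}\gg|x|\sim K\lambda(t)$ throughout, the spatial factor becomes $(t-s)^{(2-n)/2}(\sqrt{t-s}/|x|)^{2\gamma}$, and the $s$-integral is controlled by its boundary at $t-\lambda(t)^{2\mu}$, producing the first stated term $\lambda(t)^{-(n-2)\mu/2 + 2(\mu-1)\gamma}$. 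In $\mathrm{II}$ I would pull $|\lambda(s)\lambda'(s)|\leq \|\lambda\lambda'\|_{L^\infty(t-\lambda(t)^{2\mu},t)}$ out of the integral; the remaining integral is now controlled by its boundary at $t-K^{2\beta}\lambda(t)^2$ and produces the prefactor $K^{-(n-2)(\beta-1)/2 + 2(\beta-1)\gamma}[K\lambda(t)]^{-(n-2)/2}$. In $\mathrm{III}$, $\lambda(s)\sim\lambda(t)$ by \eqref{differential Harnack at lambda scale 2} and $|x|\gg L\lambda(s)$, so the truncation is harmless; direct application of the auxiliary bound together with pulling $\|\lambda\lambda'\|_{L^\infty}$ outside yields a contribution dominated by the same second term.

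The main obstacle I expect is the derivation of the weighted convolution estimate: unlike the annular source handled in Lemma \ref{lem estimate outer 2}, the source $|y|^{2-n}$ is not localized, so the simple Hölder pairing with $\|G(x,t;\cdot,s)\|_{L^{2n/(n-2)}}$ in \eqref{Gaussian bound} is not the efficient tool here; instead one must execute the $y$-split above and track how the Hardy-type weight $(1+\sqrt{t-s}/|y|)^\gamma$ coming from the perturbation in $\mathcal H$ conspires with the non-integrable-looking tail $|y|^{2-n}$ to produce the doubled $\gamma$ appearing in the final exponents. Once this spatial bound is secured, the time integrations in $\mathrm I$--$\mathrm{III}$ are routine variants of those already executed in the proof of Lemma \ref{lem estimate outer 2}.
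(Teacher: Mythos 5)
Your overall architecture (Duhamel with the Dirichlet heat kernel of $\mathcal H$, the same three time ranges, and the weighted convolution bound for the non-localized source) matches the paper's, and your ``key spatial estimate'' is correct: it is exactly the paper's Lemma \ref{lem heat kernel integral 1} with $\nu=n-2$ multiplied by the factor $(1+\sqrt{t-s}/|x|)^\gamma$ already present in \eqref{heat kernel bound}. Region $\mathrm{III}$ is also handled exactly as in the paper. The gap is in regions $\mathrm I$ and $\mathrm{II}$, and it comes precisely from the step where you discard the truncation of $F_3$ to $B_{L\lambda(s)}^c$ and replace $\lambda(s)^{(n-4)/2}|\lambda'(s)|$ by a constant (resp.\ by $\|\lambda\lambda'\|_{L^\infty}$ times $\lambda(s)^{(n-6)/2}\le C$). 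With the pointwise spatial bound the time integrand is then of order $(t-s)^{-\frac{n-2}{2}+\gamma}$, and since $\frac{n-2}{2}>\frac{n+2}{4}$ for $n>6$, the endpoint evaluations give $\mathrm I\lesssim K^{-2\gamma}\lambda(t)^{-(n-4)\mu+2(\mu-1)\gamma}$ and $\mathrm{II}\lesssim K^{-(n-4)\beta+2(\beta-1)\gamma}\lambda(t)^{-(n-4)}\|\lambda\lambda'\|$. Because $n-4>\frac{n-2}{2}$ when $n\ge 7$, both are strictly weaker than the stated bounds; the region-$\mathrm{II}$ loss is fatal, since $\lambda(t)^{-(n-4)}\gg\lambda(t)^{-\frac{n-2}{2}}$ would destroy the inner--outer gluing inequality \eqref{inner to outer 0}. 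You cannot repair this by writing $\lambda(s)\lesssim\lambda(t)$: at this stage only $|\lambda\lambda'|\ll1$ is known, so over a time lapse $\lambda(t)^{2\mu}\gg\lambda(t)^2$ the quantity $\lambda(s)^2$ may drift far above $\lambda(t)^2$.

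The missing ingredient is the paper's dichotomy on the size of $\lambda(s)$ relative to $\lambda(t)$, which is exactly why the truncation $|y|\ge L\lambda(s)$ must be kept. When $L\lambda(s)\ge 6K\lambda(t)$, the support of $F_3(\cdot,s)$ lies well outside $x$, so $|x-y|\gtrsim L\lambda(s)$ and the Gaussian factor $e^{-cL^2\lambda(s)^2/(t-s)}$ absorbs the excess powers $\bigl(\lambda(s)/\sqrt{t-s}\bigr)^{\frac{n-6}{2}}$, upgrading the time integrand to $(t-s)^{-\frac{n+2}{4}+\gamma}|\lambda\lambda'|$, whose integral over $\mathrm I$ and $\mathrm{II}$ produces the two stated terms. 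When $L\lambda(s)\le 6K\lambda(t)$, one instead keeps $\lambda(s)^{\frac{n-6}{2}-\gamma}\lesssim (K\lambda(t)/L)^{\frac{n-6}{2}-\gamma}$, and these extra positive powers of $\lambda(t)$ exactly compensate the worse time exponent $(t-s)^{-\frac{n-2}{2}+\gamma}$ (the paper checks $-(n-4)\mu+\frac{n}{2}-3+2(\mu-1)\gamma>-\frac{n-2}{2}\mu+2(\mu-1)\gamma$, which holds because $\mu<1$). Incidentally, your self-identified ``main obstacle''---the weighted convolution estimate---is the easy part (it is Appendix Lemma \ref{lem heat kernel integral 1}); the real difficulty in this lemma is the interplay between the drifting scale $\lambda(s)$ and the time integration, which your sketch does not address.
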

\begin{proof}
By the heat kernel representation,
 \begin{eqnarray*}
\overline{\phi}_3(x,t)&\lesssim & \int_{-81/100}^{t}\int_{B_{L\lambda(s)}^c} \left(t-s\right)^{-\frac{n}{2}}e^{-c\frac{|x-y|^2}{t-s}}\left(1+\frac{\sqrt{t-s}}{|x|}\right)^\gamma \left(1+\frac{\sqrt{t-s}}{|y|}\right)^\gamma \\
&&\quad \quad \quad \quad  \times  \lambda(s)^{\frac{n-4}{2}}|\lambda^\prime(s)||y|^{2-n}dyds\\
    &=&  \int_{-81/100}^{t}\left(1+\frac{\sqrt{t-s}}{|x|}\right)^\gamma \lambda(s)^{\frac{n-4}{2}}|\lambda^\prime(s)| \\
     && \quad \times \left[\int_{B_{L\lambda(s)}^c} \left(t-s\right)^{-\frac{n}{2}}e^{-c\frac{|x-y|^2}{t-s}} \left(1+\frac{\sqrt{t-s}}{|y|}\right)^\gamma|y|^{2-n}dy\right]ds.
\end{eqnarray*}
We still divide this integral into three parts, $\mathrm{I}$ being on the interval $(-81/100,t- \lambda(t)^{2\mu})$, $\mathrm{II}$ being on the interval $(t-\lambda(t)^{2\mu},t- K^{2\beta}\lambda(t)^2)$, and $\mathrm{III}$ on $(t- K^{2\beta}\lambda(t)^2,t)$.

{\bf Estimate of $\mathrm{I}$.} Direct calculation gives
\begin{eqnarray*}
   \mathcal{P}&:=& \int_{B_{L\lambda(s)}^c} \left(t-s\right)^{-\frac{n}{2}}e^{-c\frac{|x-y|^2}{t-s}} \left(1+\frac{\sqrt{t-s}}{|y|}\right)^\gamma|y|^{2-n}dy \\
 &\lesssim& \left(t-s\right)^{-\frac{n-2}{2}}\left[1+\frac{\sqrt{t-s}}{L\lambda(s)}\right]^\gamma
 \int_{\frac{L\lambda(s)}{\sqrt{t-s}}}^{+\infty}e^{-c\left(\frac{|x|}{\sqrt{t-s}}-r\right)^2}rdr.
\end{eqnarray*}
Recall that $K\lambda(t)/2\leq |x|\leq 4K\lambda(t)$, $t-s\geq K^{2\beta}\lambda(t)^2$. There are two cases.
\begin{itemize}
  \item If $L\lambda(s)\geq 6K\lambda(t)$,
  \[\mathcal{P}\lesssim \left(t-s\right)^{-\frac{n-2}{2}}\left[1+\frac{\sqrt{t-s}}{L\lambda(s)}\right]^\gamma
 e^{-c\frac{L^2\lambda(s)^2}{t-s}}.\]
  \item  If $L\lambda(s)\leq 6K\lambda(t)$,
  \begin{eqnarray*}
  \mathcal{P}&\lesssim & \left(t-s\right)^{-\frac{n-2}{2}}\left[1+\frac{\sqrt{t-s}}{L\lambda(s)}\right]^\gamma\\
  &\lesssim&\left(t-s\right)^{-\frac{n-2-\gamma}{2}}L^{-\gamma}\lambda(s)^{-\gamma}.
  \end{eqnarray*}
\end{itemize}
Hence
 \begin{eqnarray}\label{3I}
 \mathrm{I} &\lesssim &  |x|^{-\gamma}\int_{(-81/100,t-\lambda(t)^{2\mu})\cap\{L\lambda(s)\geq 6K\lambda(t)\}} (t-s)^{-\frac{n-2-\gamma}{2}} \lambda(s)^{\frac{n-4}{2}}|\lambda^\prime(s)| \nonumber\\
 &&\quad \quad \quad \quad \quad \quad \quad \times \left[1+\frac{\sqrt{t-s}}{L\lambda(s)}\right]^\gamma
 e^{-c\frac{L^2\lambda(s)^2}{t-s}}  \\
    &+&L^{-\gamma}|x|^{-\gamma} \int_{(-81/100,t-\lambda(t)^{2\mu})\cap\{L\lambda(s)\leq 6K\lambda(t)\}} (t-s)^{-\frac{n-2}{2}+\gamma} \lambda(s)^{\frac{n-4}{2}-\gamma} |\lambda^\prime(s)| . \nonumber
\end{eqnarray}

Because
\begin{eqnarray*}
   &&   (t-s)^{-\frac{n-2-\gamma}{2}} \lambda(s)^{\frac{n-6}{2}}  \left[1+\frac{\sqrt{t-s}}{L\lambda(s)}\right]^\gamma
 e^{-c\frac{L^2\lambda(s)^2}{t-s}}  \\
  &=& (t-s)^{-\frac{n+2-2\gamma}{4}} \left[\frac{\lambda(s)}{\sqrt{t-s}}\right]^{\frac{n-6}{2}}\left[1+\frac{\sqrt{t-s}}{L\lambda(s)}\right]^\gamma e^{-c\frac{L^2\lambda(s)^2}{t-s}}\\
  &\lesssim&L^{-\frac{n-6}{2}-\gamma}\lambda(s)^{-\gamma}(t-s)^{-\frac{n+2}{4}+\gamma},
\end{eqnarray*}
by noting that we have assumed $L\lambda(s)\geq 6K\lambda(t)$, the first integral is controlled by
\begin{eqnarray*}
    & & L^{-\frac{n-6}{2}} \left[K\lambda(t)\right]^{-2\gamma}\int_{-81/100}^{t-\lambda(t)^{2\mu}} (t-s)^{-\frac{n+2}{4}+\gamma}|\lambda(s)\lambda^\prime(s)| \\
   & \lesssim & L^{-\frac{n-6}{2}} K^{-2\gamma }\lambda(t)^{-\frac{n-2}{2}\mu+2(\mu-1)\gamma}.
\end{eqnarray*}
In the last step  we   used only the estimate $|\lambda\lambda^\prime |\leq C$.

For the second integral, by the estimate $|\lambda\lambda^\prime|\leq C$ again, we obtain
\begin{eqnarray*}
   && L^{-\gamma}|x|^{-\gamma} \int_{(-81/100,t-\lambda(t)^{2\mu})\cap\{L\lambda(s)\leq 6K\lambda(t)\}} (t-s)^{-\frac{n-2}{2}+\gamma} \lambda(s)^{\frac{n-4}{2}-\gamma} |\lambda^\prime(s)|  \\
 &\lesssim & L^{-\frac{n-6}{2}}K^{\frac{n-6}{2}-2\gamma}\lambda(t)^{\frac{n-6}{2}-2\gamma}\int_{-81/100}^{t-\lambda(t)^{2\mu}} (t-s)^{-\frac{n-2}{2}+\gamma}\\
 &\lesssim & L^{-\frac{n-6}{2}}K^{\frac{n-6}{2}-2\gamma}\lambda(t)^{-(n-4)\mu+\frac{n}{2}-3+2(\mu-1)\gamma}.
\end{eqnarray*}

Because
\[-(n-4)\mu+\frac{n}{2}-3+2(\mu-1)\gamma>-\frac{n-2}{2}\mu+2(\mu-1)\gamma,\]
combining these two estimates, we get
\[\mathrm{I}\lesssim   L^{-\frac{n-6}{2}} K^{-2\gamma }\lambda(t)^{-\frac{n-2}{2}\mu+2(\mu-1)\gamma}.\]

{\bf Estimate of $\mathrm{II}$.}
As in the previous case, we have
 \begin{eqnarray*}
 \mathrm{II} &\lesssim &  |x|^{-\gamma}\int_{(t-\lambda(t)^{2\mu},t-K^{2\beta}\lambda(t)^2)\cap\{L\lambda(s)\geq 6K\lambda(t)\}} (t-s)^{-\frac{n-2-\gamma}{2}} \lambda(s)^{\frac{n-4}{2}}|\lambda^\prime(s)|\\
 &&\quad \quad \quad \quad \quad \quad \quad \times \left[1+\frac{\sqrt{t-s}}{L\lambda(s)}\right]^\gamma
 e^{-c\frac{L^2\lambda(s)^2}{t-s}}  \\
    &+&L^{-\gamma}|x|^{-\gamma} \int_{(t-\lambda(t)^{2\mu},t-K^{2\beta}\lambda(t)^2)\cap\{L\lambda(s)\leq 6K\lambda(t)\}} (t-s)^{-\frac{n-2}{2}+\gamma} \lambda(s)^{\frac{n-4}{2}-\gamma} |\lambda^\prime(s)|\\
 &\lesssim&
 L^{-\frac{n-6}{2}} \left[K\lambda(t)\right]^{-2\gamma}\int_{t-\lambda(t)^{2\mu}}^{t-K^{2\beta}\lambda(t)^2} (t-s)^{-\frac{n+2}{4}+\gamma}|\lambda(s)\lambda^\prime(s)| \\
 &+& L^{-\frac{n-6}{2}}K^{\frac{n-6}{2}-2\gamma}\lambda(t)^{\frac{n-6}{2}-2\gamma}\int_{t-\lambda(t)^{2\mu}}^{t-K^{2\beta}\lambda(t)^2} (t-s)^{-\frac{n-2}{2}+\gamma}|\lambda(s)\lambda^\prime(s)|\\
   & \lesssim & L^{-\frac{n-6}{2}}\left[ K^{-\frac{n-2}{2}\beta+2(\beta-1)\gamma }+K^{-\frac{n-2}{2}\beta-\left(\frac{n-6}{2}-2\gamma\right)\left(\beta-1\right)}\right]\\
   && \quad \times\lambda(t)^{-\frac{n-2}{2}} \|\lambda\lambda^\prime\|_{L^\infty(t-\lambda(t)^{2\mu},t-K^{2\beta}\lambda(t)^2)}\\
 &  \lesssim &  K^{-\frac{n-2}{2}\beta+2(\beta-1)\gamma }\lambda(t)^{-\frac{n-2}{2}}  \|\lambda\lambda^\prime\|_{L^\infty(t-\lambda(t)^{2\mu},t-K^{2\beta}\lambda(t)^2)}.
\end{eqnarray*}

{\bf Estimate of $\mathrm{III}$.} By \eqref{differential Harnack at lambda scale 2},
 \begin{eqnarray*}
 \mathrm{III} &\lesssim & \int_{t-K^{2\beta}\lambda(t)^2}^{t}\int_{B_1} \left(t-s\right)^{-\frac{n}{2}}e^{-c\frac{|x-y|^2}{t-s}}\left(1+\frac{\sqrt{t-s}}{|x|}\right)^\gamma \left(1+\frac{\sqrt{t-s}}{|y|}\right)^\gamma\\
 && \quad \quad \quad \quad \quad \times \lambda(s)^{\frac{n-4}{2}}|\lambda^\prime(s)||y|^{2-n}  \\
       &=& \int_{t-K^{2\beta}\lambda(t)^2}^{t} \left(1+\frac{\sqrt{t-s}}{|x|}\right)^\gamma \lambda(s)^{\frac{n-4}{2}}|\lambda^\prime(s)|\\
    &&   \quad \quad \quad \quad \times \left[\underbrace{\int_{B_1}\left(t-s\right)^{-\frac{n}{2}}e^{-c\frac{|x-y|^2}{t-s}} \left(1+\frac{\sqrt{t-s}}{|y|}\right)^\gamma|y|^{2-n}dy}_{\mbox{Lemma \ref{lem heat kernel integral 1}}}\right]ds\\
      &\lesssim&\int_{t-K^{2\beta}\lambda(t)^2}^{t} \left(|x|+\sqrt{t-s}\right)^{2-n}\left(1+\frac{\sqrt{t-s}}{|x|}\right)^{2\gamma} \lambda(s)^{\frac{n-4}{2}}|\lambda^\prime(s)| ds \\
    &\lesssim &\left[K\lambda(t)\right]^{2-n} K^{2(\beta-1)\gamma}\int_{t-K^{2\beta}\lambda(t)^2}^{t} \lambda(s)^{\frac{n-4}{2}}|\lambda^\prime(s)|ds\\
    &\lesssim&  K^{2-n+2\beta+2(\beta-1)\gamma}\lambda(t)^{-\frac{n-2}{2}} \|\lambda\lambda^\prime\|_{L^\infty(t-K^{2\beta}\lambda(t)^2,t)} .
\end{eqnarray*}

Because $n>6$ and $\beta$ is sufficiently close to $1$, we have
\[2-n+2\beta+2(\beta-1)\gamma<-\frac{n-2}{2}\beta+2(\beta-1)\gamma.\]
Adding up estimates for $\mathrm{I}$, $\mathrm{II}$ and $\mathrm{III}$ we finish the proof.
\end{proof}

\begin{lem}[Estimates on $\overline{\phi}_4$]\label{lem estimate outer 4}
For $K\lambda(t)/2\leq |x|\leq 4K\lambda(t)$,
\[\overline{\phi}_4(x,t)\lesssim  \lambda(t)^{-\frac{n-2}{2}\mu+2(\mu-1)\gamma}+K^{-\frac{n-2}{2}(\beta-1)+2(\beta-1)\gamma}\left[K\lambda(t)\right]^{-\frac{n-2}{2}} \|\lambda\xi^\prime\|_{L^\infty(t-\lambda(t)^{2\mu},t)}.\]
\end{lem}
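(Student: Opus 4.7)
The proof will follow the same three-regime time-splitting strategy employed for $\overline{\phi}_3$ in Lemma \ref{lem estimate outer 3}, adapting it to the slightly different structure of $F_4$. The key observation is that although $F_4$ carries $|y|^{1-n}$ decay instead of the $|y|^{2-n}$ decay in $F_3$ (one extra power of $|y|^{-1}$), it also carries the extra factor $\lambda(s)^{1/2}$ (since $|F_4|\lesssim \lambda^{(n-2)/2}|\xi'||y|^{1-n}$ vs $|F_3|\lesssim\lambda^{(n-4)/2}|\lambda'||y|^{2-n}$). At the bubble scale $|y|\sim\lambda$ these balance, so the resulting bounds have the same form as Lemma \ref{lem estimate outer 3} with $\lambda\lambda'$ replaced everywhere by $\lambda\xi'$.

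Concretely, I will begin from the heat-kernel representation
\[
\overline{\phi}_4(x,t)\lesssim\int_{-81/100}^{t}\int_{B_{L\lambda(s)}^c} G(x,t;y,s)\,\lambda(s)^{\frac{n-2}{2}}|\xi^\prime(s)||y|^{1-n}\,dy\,ds,
\]
insert the Gaussian bound \eqref{heat kernel bound}, and split the $s$-integral into $\mathrm{I}=(-81/100,t-\lambda(t)^{2\mu})$, $\mathrm{II}=(t-\lambda(t)^{2\mu},t-K^{2\beta}\lambda(t)^2)$, and $\mathrm{III}=(t-K^{2\beta}\lambda(t)^2,t)$. The inner spatial integral
\[
\mathcal{P}_4(x,t,s):=\int_{B_{L\lambda(s)}^c}(t-s)^{-\frac{n}{2}}e^{-c\frac{|x-y|^2}{t-s}}\Bigl(1+\tfrac{\sqrt{t-s}}{|y|}\Bigr)^{\gamma}|y|^{1-n}\,dy
\]
is evaluated by a scaling change of variables $y=\sqrt{t-s}\,z$, yielding $\mathcal{P}_4\lesssim(t-s)^{-(n-1)/2}\bigl[1+\sqrt{t-s}/(L\lambda(s))\bigr]^{\gamma} e^{-cL^2\lambda(s)^2/(t-s)}$ in the regime $L\lambda(s)\geq 6K\lambda(t)$, and $\mathcal{P}_4\lesssim(t-s)^{-(n-1-\gamma)/2}L^{-\gamma}\lambda(s)^{-\gamma}$ in the regime $L\lambda(s)\leq 6K\lambda(t)$. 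These are exactly the bounds used in Lemma \ref{lem estimate outer 3} with the exponent $(n-2)/2$ improved to $(n-1)/2$, consistent with the extra $|y|^{-1}$ factor.

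Next, combining with the factor $\lambda(s)^{(n-2)/2}|\xi^\prime(s)|=\lambda(s)^{(n-4)/2}|\lambda(s)\xi^\prime(s)|$, and using the bound $|\lambda\xi^\prime|\leq C$ from \eqref{bound on parameters}, I will repeat the arithmetic of Lemma \ref{lem estimate outer 3}. For $\mathrm{I}$, the dominant regime is $L\lambda(s)\geq 6K\lambda(t)$ and one arrives at the bound $L^{-(n-6)/2}K^{-2\gamma}\lambda(t)^{-(n-2)\mu/2+2(\mu-1)\gamma}$, matching the first term in the claimed estimate. For $\mathrm{II}$, both subregimes contribute $K^{-(n-2)\beta/2+2(\beta-1)\gamma}\lambda(t)^{-(n-2)/2}\|\lambda\xi^\prime\|_\infty$; for $\mathrm{III}$, using $\lambda(s)\sim\lambda(t)$ together with Lemma \ref{lem heat kernel integral 1} (with the exponent shifted accordingly) one recovers a bound of order $K^{2-n+2\beta+2(\beta-1)\gamma}\lambda(t)^{-(n-2)/2}\|\lambda\xi^\prime\|_\infty$, which under $n\geq 7$ and $\beta$ near $1$ is absorbed into the $\mathrm{II}$-bound.

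The main obstacle is purely bookkeeping: tracking the exponents of $K$, $L$, $\lambda(t)$, and $(t-s)$ through the two subregimes of each of the three time windows, and verifying that the $n\geq 7$ hypothesis guarantees the exponent inequalities (in particular $2-n+2\beta+2(\beta-1)\gamma<-(n-2)\beta/2+2(\beta-1)\gamma$) needed to collect terms into the stated form. Since the structural computation mirrors Lemma \ref{lem estimate outer 3} line by line, no new analytic ingredient is required beyond substituting the shifted decay rate.
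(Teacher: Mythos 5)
Your proposal is correct and follows essentially the same route as the paper: the identical heat-kernel representation, the same three-window time splitting with the same two subregimes $L\lambda(s)\gtrless 6K\lambda(t)$ for the spatial integral, and the same absorption of the $\mathrm{III}$ contribution into the $\mathrm{II}$ bound. The only discrepancy is cosmetic: in $\mathrm{III}$ the sharp exponent is $K^{1-n+2\beta+2(\beta-1)\gamma}$ rather than your $K^{2-n+2\beta+2(\beta-1)\gamma}$, but as you verify, even the weaker exponent satisfies the inequality needed for absorption when $n\geq 7$ and $\beta$ is near $1$.
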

\begin{proof}
We still have the heat kernel representation
\begin{eqnarray*}
\overline{\phi}_4(x,t) &\lesssim & \int_{-81/100}^{t}\int_{B_{L\lambda(s)}^c} \left(t-s\right)^{-\frac{n}{2}}e^{-c\frac{|x-y|^2}{t-s}}\left(1+\frac{\sqrt{t-s}}{|x|}\right)^\gamma \left(1+\frac{\sqrt{t-s}}{|y|}\right)^\gamma \\
&& \quad \quad\quad\quad\times  \lambda(s)^{\frac{n-2}{2}}|\xi^\prime(s)||y|^{1-n}dyds\\
    &=&  \int_{-81/100}^{t}\left(1+\frac{\sqrt{t-s}}{|x|}\right)^\gamma \lambda(s)^{\frac{n-2}{2}}|\xi^\prime(s)| \\
    && \quad \times \left[\int_{B_{L\lambda(s)}^c} \left(t-s\right)^{-\frac{n}{2}}e^{-c\frac{|x-y|^2}{t-s}} \left(1+\frac{\sqrt{t-s}}{|y|}\right)^\gamma|y|^{1-n}dy\right]ds.
\end{eqnarray*}
We still divide this integral into three parts, $\mathrm{I}$ being on the interval $(-81/100,t-  \lambda(t)^{2\mu})$, $\mathrm{II}$ being on the interval $(t-\lambda(t)^{2\mu},t- K^{2\beta}\lambda(t)^2)$, and $\mathrm{III}$ on the interval $(t-K^{2\beta}\lambda(t)^2,t)$.

{\bf Estimate of $\mathrm{I}$.} Direct calculation gives
\begin{eqnarray*}
   \mathcal{P}&:=& \int_{B_{L\lambda(s)}^c} \left(t-s\right)^{-\frac{n}{2}}e^{-c\frac{|x-y|^2}{t-s}} \left(1+\frac{\sqrt{t-s}}{|y|}\right)^\gamma|y|^{1-n}dy \\
 &\lesssim& \left(t-s\right)^{-\frac{n-1}{2}}\left[1+\frac{\sqrt{t-s}}{L\lambda(s)}\right]^\gamma
 \int_{\frac{L\lambda(s)}{\sqrt{t-s}}}^{+\infty}e^{-c\left(\frac{|x|}{\sqrt{t-s}}-r\right)^2}dr.
\end{eqnarray*}
Recall that $K\lambda(t)/2\leq |x|\leq 4K\lambda(t)$ and $t-s\geq K^{2\beta}\lambda(t)^2$.
There are two cases.
\begin{itemize}
  \item If $L\lambda(s)\geq 6K\lambda(t)$,
  \[\mathcal{P}\lesssim \left(t-s\right)^{-\frac{n-1}{2}}\left[1+\frac{\sqrt{t-s}}{L\lambda(s)}\right]^\gamma
 e^{-c\frac{L^2\lambda(s)^2}{t-s}}.\]
  \item  If $L\lambda(s)\leq 6K\lambda(t)$,
  \[
  \mathcal{P} \lesssim  \left(t-s\right)^{-\frac{n-1-\gamma}{2}}L^{-\gamma}\lambda(s)^{-\gamma}.
  \]
\end{itemize}

Plugging these estimates into the formula for $\mathrm{I}$, we obtain
 \begin{eqnarray}\label{4I}
  \mathrm{I}
    &\lesssim&  |x|^{-\gamma}\int_{(-81/100,t-\lambda(t)^{2\mu})\cap\{L\lambda(s)\geq 6K\lambda(t)\}} (t-s)^{-\frac{n-1-\gamma}{2}} \lambda(s)^{\frac{n-2}{2}}|\xi^\prime(s)| \nonumber\\
   && \quad \quad \quad \quad \quad \quad  \times   \left[1+\frac{\sqrt{t-s}}{L\lambda(s)}\right]^\gamma
 e^{-c\frac{L^2\lambda(s)^2}{t-s}}  \\
    &+& L^{-\gamma}|x|^{-\gamma}\int_{(-81/100,t- \lambda(t)^{2\mu})\cap\{L\lambda(s)\leq 6K\lambda(t)\}} (t-s)^{-\frac{n-1}{2}+\gamma}  \lambda(s)^{\frac{n-2}{2}-\gamma} |\xi^\prime(s)| . \nonumber
\end{eqnarray}

Because
\begin{eqnarray*}
   &&  (t-s)^{-\frac{n-1-\gamma}{2}} \lambda(s)^{\frac{n-4}{2}} \left[1+\frac{\sqrt{t-s}}{L\lambda(s)}\right]^\gamma  e^{-c\frac{L^2\lambda(s)^2}{t-s}} \\
  &=& (t-s)^{-\frac{n+2}{4}+\frac{\gamma}{2}} \left[\frac{\lambda(s)}{\sqrt{t-s}}\right]^{\frac{n-4}{2}}\left[1+\frac{\sqrt{t-s}}{L\lambda(s)}\right]^\gamma e^{-c\frac{L^2\lambda(s)^2}{t-s}}\\
  &\lesssim& L^{-\gamma}\lambda(s)^{-\gamma}(t-s)^{-\frac{n+2}{4}+\gamma},
\end{eqnarray*}
the first integral is controlled by
\begin{eqnarray*}
&& \left[K\lambda(t)\right]^{-2\gamma}\int_{-81/100}^{t- \lambda(t)^{2\mu}}(t-s)^{-\frac{n+2}{4}+\gamma}|
\lambda(s)\xi^\prime(s)| \\
& \lesssim & K^{-2\gamma}\lambda(t)^{-\frac{n-2}{2}\mu+2(\mu-1)\gamma}.
\end{eqnarray*}
In the last step we used only the estimate $| \lambda\xi^\prime|\leq C$.

Similarly, for the second integral,   we have
\begin{eqnarray*}
&& L^{-\gamma}|x|^{-\gamma}\int_{(-81/100,t- \lambda(t)^{2\mu})\cap\{L\lambda(s)\leq 4K\lambda(t)\}} (t-s)^{-\frac{n-1}{2}+\gamma}  \lambda(s)^{\frac{n-2}{2}-\gamma} |\xi^\prime(s)|\\
&\lesssim &L^{-\frac{n-4}{2}}K^{ \frac{n-4}{2}-2\gamma}\lambda(t)^{\frac{n-4}{2}-2\gamma}\int_{-81/100}^{t-\lambda(t)^{2\mu}} (t-s)^{-\frac{n-1}{2}+\gamma}\\
&\lesssim& L^{-\frac{n-4}{2}}K^{ \frac{n-4}{2}-2\gamma}\lambda(t)^{\frac{n-4}{2}-(n-3)\mu+2(\mu-1)\gamma}.
\end{eqnarray*}
Because
\[-\frac{n-2}{2}\mu+2(\mu-1)\gamma<\frac{n-4}{2}-(n-3)\mu+2(\mu-1)\gamma,\]
we get
\[\mathrm{I}\lesssim \lambda(t)^{-\frac{n-2}{2}\mu+2(\mu-1)\gamma}.\]

{\bf Estimate of $\mathrm{II}$.} We have
 \begin{eqnarray*}
  \mathrm{II}
    &\lesssim&  |x|^{-\gamma}\int_{(t-\lambda(t)^{2\mu},t-K^{2\beta}\lambda(t)^2)\cap\{L\lambda(s)\geq 6K\lambda(t)\}} (t-s)^{-\frac{n-1-\gamma}{2}} \lambda(s)^{\frac{n-2}{2}}|\xi^\prime(s)| \\
   && \quad \quad \quad \quad \quad \quad  \times   \left[1+\frac{\sqrt{t-s}}{L\lambda(s)}\right]^\gamma
 e^{-c\frac{L^2\lambda(s)^2}{t-s}}  \\
    &+& L^{-\gamma}|x|^{-\gamma}\int_{(t-\lambda(t)^{2\mu},t-K^{2\beta}\lambda(t)^2)\cap\{L\lambda(s)\leq 6K\lambda(t)\}} (t-s)^{-\frac{n-1}{2}+\gamma}  \lambda(s)^{\frac{n-2}{2}-\gamma} |\xi^\prime(s)| \\
&\lesssim & \left[K\lambda(t)\right]^{-2\gamma}\int_{t-\lambda(t)^{2\mu}}^{t-K^{2\beta}\lambda(t)^2}(t-s)^{-\frac{n+2}{4}+\gamma}|
\lambda(s)\xi^\prime(s)| \\
&+& L^{-\frac{n-4}{2}}K^{\frac{n-4}{2}-2\gamma}\lambda(t)^{\frac{n-4}{2}-2\gamma}\int_{t-\lambda(t)^{2\mu}}^{t-K^{2\beta}\lambda(t)^2}  (t-s)^{-\frac{n-1}{2}+\gamma}  |\lambda(s)\xi^\prime(s)|\\
& \lesssim & \left[K^{-\frac{n-2}{2}\beta+2(\beta-1)\gamma}+L^{-\frac{n-4}{2}}K^{-\beta(n-3-2\gamma)+\frac{n-4}{2}-2\gamma}\right]\\
&& \quad \quad \times \lambda(t)^{-\frac{n-2}{2}} \|\lambda\xi^\prime\|_{L^\infty(t-\lambda(t)^{2\mu},t-K^{2\beta}\lambda(t)^2)}\\
&\lesssim & K^{-\frac{n-2}{2}\beta+2(\beta-1)\gamma}\lambda(t)^{-\frac{n-2}{2}}\|\lambda\xi^\prime\|_{L^\infty(t-\lambda(t)^{2\mu},t-K^{2\beta}\lambda(t)^2)}.
\end{eqnarray*}

{\bf Estimate of $\mathrm{III}$.} As in the $\overline{\phi}_3$ case, we have
 \begin{eqnarray*}
\mathrm{ II} &=& \int_{t-K^{2\beta}\lambda(t)^2}^{t} \left(1+\frac{\sqrt{t-s}}{|x|}\right)^\gamma \lambda(s)^{\frac{n-2}{2}}|\xi^\prime(s)|\\
&&\quad \quad \quad \quad \times \left[\int_{B_1}\left(t-s\right)^{-\frac{n}{2}}e^{-c\frac{|x-y|^2}{t-s}} \left(1+\frac{\sqrt{t-s}}{|y|}\right)^\gamma|y|^{1-n}dy\right]ds\\
      &\lesssim&  |x|^{1-n}\int_{t-K^{2\beta}\lambda(t)^2}^{t}  \left(1+\frac{\sqrt{t-s}}{|x|}\right)^{2\gamma} \lambda(s)^{\frac{n-2}{2}}|\xi^\prime(s)|ds\\
    &\lesssim&   K^{1-n+2\beta+2(\beta-1)\gamma}\lambda(t)^{-\frac{n-2}{2}}\|\lambda\xi^\prime\|_{L^\infty(t-K^{2\beta}\lambda(t)^2,t)}.
\end{eqnarray*}
Because
\[ 1-n+2\beta+2(\beta-1)\gamma<-\frac{n-2}{2}\beta+2(\beta-1)\gamma,\]
adding up estimates for $\mathrm{I}$, $\mathrm{II}$ and $\mathrm{III}$ we finish the proof.
\end{proof}

\begin{lem}[Estimates on $\overline{\phi}_5$]\label{lem estimate outer 5}
For $K\lambda(t)/2\leq |x|\leq 4K\lambda(t)$,
\begin{eqnarray*}
\overline{\phi}_5(x,t) &\lesssim &  \left[K^{-\left(\frac{n-2}{2}-\gamma\right)(\beta-1)}+K^{\frac{n-2}{2}+2\beta+2(\beta-1)\gamma}e^{-cK}+\frac{K^{\frac{n-2}{2}+2\beta+(\beta-1)\gamma}}
{e^{cL}}\right] \\
&& \quad\quad \quad\quad \times \left[K\lambda(t)\right]^{-\frac{n-2}{2}}\left\|\left(\frac{a}{\lambda},\lambda a^\prime-\mu_0\frac{a}{\lambda}, \lambda \lambda^\prime,\lambda\xi^\prime \right)\right\|_{L^\infty(t-\lambda(t)^{2\mu},t)}\\
&+& \lambda(t)^{-\frac{n-2}{2}\mu+(\mu-1)\gamma}.
\end{eqnarray*}
\end{lem}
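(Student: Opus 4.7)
The plan is to follow the same three-regime scheme used in the proofs of Lemmas \ref{lem estimate outer 3} and \ref{lem estimate outer 4}. Starting from the heat-kernel representation for the translated outer component, combined with the Gaussian bound \eqref{heat kernel bound} and the pointwise bound \eqref{estimate on F5}, I write
\[
\overline{\phi}_5(x,t)\lesssim \int_{-81/100}^{t}\mathcal{Q}(s)\lambda(s)^{-\frac{n}{2}}\left(1+\tfrac{\sqrt{t-s}}{|x|}\right)^{\!\gamma}\mathcal{J}(x,t;s)\,ds,
\]
where
\[
\mathcal{Q}(s):=\frac{|a(s)|}{\lambda(s)^2}+\Big|a'(s)-\mu_0\frac{a(s)}{\lambda(s)^2}\Big|+\frac{|a(s)|}{\lambda(s)}\big(|\lambda'(s)|+|\xi'(s)|\big)
\]
and
\[
\mathcal{J}(x,t;s):=\int_{B_{L\lambda(s)}^c}(t-s)^{-n/2}e^{-c|x-y|^2/(t-s)}\Big(1+\tfrac{\sqrt{t-s}}{|y|}\Big)^{\!\gamma}e^{-c|y|/\lambda(s)}\,dy.
\]
The time integral is split as before into $\mathrm{I}=(-81/100,t-\lambda(t)^{2\mu})$, $\mathrm{II}=(t-\lambda(t)^{2\mu},t-K^{2\beta}\lambda(t)^2)$ and $\mathrm{III}=(t-K^{2\beta}\lambda(t)^2,t)$.

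In the far-past regime $\mathrm{I}$, I only use the universal bound $\mathcal{Q}(s)\lesssim 1$ (which follows from \eqref{deviation form max pt}, \eqref{Lip for parameters} and \eqref{bound on parameters}), together with $|x|\sim K\lambda(t)$ and $\sqrt{t-s}\gtrsim \lambda(t)^{\mu}$, to obtain via H\"older and \eqref{Gaussian bound} an $L^{2n/(n-2)}\times L^{2n/(n+2)}$ pairing. The extra factor $e^{-c|y|/\lambda(s)}$ gives $\|e^{-c|y|/\lambda(s)}\|_{L^{2n/(n+2)}}\lesssim \lambda(s)^{(n+2)/2}$, an improvement over Lemma \ref{lem estimate outer 3}; after integrating $(t-s)^{-(n+2)/4+\gamma/2}$ on $\mathrm{I}$ this produces exactly the remainder $\lambda(t)^{-\frac{n-2}{2}\mu+(\mu-1)\gamma}$ (with only a single factor $(\mu-1)\gamma$ because the $L$-integral yields an absolutely convergent integral rather than a power of $L\lambda$).

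In the intermediate regime $\mathrm{II}$, the same H\"older step, now combined with $\|\mathcal{Q}\|_{L^\infty(t-\lambda^{2\mu},t)}$, yields
\[
\mathrm{II}\lesssim |x|^{-\gamma}\|\mathcal{Q}\|_{L^\infty}\int_{t-\lambda^{2\mu}}^{t-K^{2\beta}\lambda^2}(t-s)^{-\frac{n+2}{4}+\frac{\gamma}{2}}\,ds,
\]
which, after evaluating the time integral at the lower endpoint $K^{2\beta}\lambda(t)^{2}$, produces the $K^{-((n-2)/2-\gamma)(\beta-1)}[K\lambda(t)]^{-(n-2)/2}$ factor in the statement (here $\mathcal{Q}$ is bounded by $\lambda^{-1}$ times the parameter norms in the statement, absorbing one factor of $\lambda(t)$). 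The short-time regime $\mathrm{III}$ is handled separately in two sub-regions: for $|y|\sim L\lambda(s)$ the exponential $e^{-c|y|/\lambda(s)}$ gives the $e^{-cL}$ factor and a $y$-integral of size $\lambda(s)^{n-1}L^{n-1}$; for $|y|$ of order $|x|\sim K\lambda(t)$, where the Gaussian $e^{-c|x-y|^2/(t-s)}$ does not help, the exponential provides $e^{-cK}$. Using $\lambda(s)\sim\lambda(t)$ on $\mathrm{III}$ and the bound $(t-s)^{-n/2}\int e^{-c|x|^2/(t-s)}\lesssim|x|^{-n}$ after integrating the time variable over an interval of length $K^{2\beta}\lambda(t)^{2}$, one obtains the two factors $K^{(n-2)/2+2\beta+2(\beta-1)\gamma}e^{-cK}$ and $K^{(n-2)/2+2\beta+(\beta-1)\gamma}e^{-cL}$ multiplying $[K\lambda(t)]^{-(n-2)/2}\|\mathcal{Q}\|_{L^\infty}\lambda(t)$.

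The main obstacle is, as in the previous lemmas, careful bookkeeping of the exponents of $K$, $L$, $\lambda(t)$, $\gamma$, $\mu$, $\beta$ in each sub-regime, so that the three terms listed in the conclusion absorb all the others (in particular that the middle $K^{2-n+2\beta}$-type contributions from $\mathrm{III}$ are dominated by $K^{-\frac{n-2}{2}\beta+2(\beta-1)\gamma}$, using $n\ge 7$ and $\beta$ close to $1$). Once these are verified, adding up $\mathrm{I}+\mathrm{II}+\mathrm{III}$ and rewriting $\mathcal{Q}(s)\lambda(s)$ in terms of the norms $|a/\lambda|$, $|\lambda a'-\mu_0 a/\lambda|$, $|\lambda\lambda'|$ and $|\lambda\xi'|$ gives the stated bound.
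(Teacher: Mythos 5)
Your proposal matches the paper's proof essentially step for step: the same heat-kernel representation built from \eqref{estimate on F5} and \eqref{heat kernel bound}, the same three-way split of the time interval at $t-\lambda(t)^{2\mu}$ and $t-K^{2\beta}\lambda(t)^2$, the same H\"older pairing $L^{2n/(n-2)}\times L^{2n/(n+2)}$ exploiting $\|\lambda(s)^{-(n+2)/2}e^{-c|y|/\lambda(s)}\|_{L^{2n/(n+2)}}\lesssim 1$ in regimes $\mathrm{I}$ and $\mathrm{II}$, and the same two-sub-region treatment of the spatial integral in regime $\mathrm{III}$ (which the paper isolates as Lemma \ref{lem heat kernel integral 2}). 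The only blemish is the intermediate claim $\mathcal{Q}(s)\lesssim 1$: since $\mathcal{Q}$ contains $|a(s)|/\lambda(s)^2$, the correct universal bound is $\lambda(s)\mathcal{Q}(s)\lesssim 1$, but this is harmless because the extra factor $\lambda(s)$ produced by your H\"older pairing is exactly what absorbs the $\lambda(s)^{-1}$, as you yourself use in regimes $\mathrm{II}$ and $\mathrm{III}$.
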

\begin{proof}
By the heat kernel representation, we have
\begin{eqnarray*}
  \overline{\phi}_5(x,t) &\leq & \int_{-81/100}^{t}\int_{B_{L\lambda(s)}^c}G(x,y;t,s)\lambda(s)^{-\frac{n+2}{2} }
 e^{-c\frac{|y|}{\lambda(s)}} \\
 &&  \times\left[\frac{|a(s)|}{\lambda(s)}+\left|\lambda(s)a^\prime(s)-\mu_0\frac{a(s)}{\lambda(s)}\right|+ \frac{|a(s)|}{\lambda(s)}\left(|\lambda(s)\lambda^\prime(s)|+|\lambda(s)\xi^\prime(s)|\right)\right].
\end{eqnarray*}
We still divide this integral into three parts, $\mathrm{I}$ being on the interval $(-81/100,t-\lambda(t)^{2\mu})$, $\mathrm{II}$ being on the interval $(t-\lambda(t)^{2\mu},t- K^{2\beta}\lambda(t)^2)$, and $\mathrm{III}$ on $(t-K^{2\beta}\lambda(t)^2,t)$.

{\bf Estimate of $\mathrm{I}$.}
 For $\mathrm{I}$, just using the estimate
 \[ \left\|\left(\frac{a}{\lambda},\lambda a^\prime-\mu_0\frac{a}{\lambda}, \frac{a}{\lambda}\left(|\lambda \lambda^\prime |+|\lambda \xi^\prime |\right)\right)\right\|_{L^\infty(-1,t)}\leq C,\]
by the operator bound on $G$ (see \eqref{Gaussian bound}) and H\"{o}lder inequality, we get
 \begin{eqnarray*}
 \mathrm{I} &\lesssim& \int_{-81/100}^{t-\lambda(t)^{2\mu}}\int_{B_1}G(x,t;y,s)\lambda(s)^{-\frac{n+2}{2} }
 e^{-c\frac{|y|}{\lambda(s)}} dyds\\
 &\lesssim&   \int_{-81/100}^{t-\lambda(t)^{2\mu}}(t-s)^{-\frac{n+2}{4}}\left(1+\frac{\sqrt{t-s}}{|x|}\right)^\gamma
ds  \\
  &\lesssim& \lambda(t)^{-\frac{n-2}{2}\mu+(\mu-1)\gamma}.
\end{eqnarray*}

{\bf Estimate of $\mathrm{II}$.}
 For $\mathrm{II}$, in the same way we obtain
 \begin{eqnarray*}
 \mathrm{II} &\lesssim& \left\|\left(\frac{a}{\lambda},\lambda a^\prime-\mu_0\frac{a}{\lambda}, \frac{a}{\lambda}\left(|\lambda \lambda^\prime |+|\lambda \xi^\prime |\right)\right)\right\|_{L^\infty(t-\lambda(t)^{2\mu},t-K^{2\beta}\lambda(t)^2)}\\
  &&\quad \times\int_{t-\lambda(t)^{2\mu}}^{t-K^{2\beta}\lambda(t)^2}\int_{B_1}G(x,y;t,s)\lambda(s)^{-\frac{n+2}{2} }
 e^{-c\frac{|y|}{\lambda(s)}} dyds\\
 &\lesssim& \left\|\left(\frac{a}{\lambda},\lambda a^\prime-\mu_0\frac{a}{\lambda}, \lambda \lambda^\prime,\lambda\xi^\prime \right)\right\|_{L^\infty(t-\lambda(t)^{2\mu},t-K^{2\beta}\lambda(t)^2)}\\
  &&\quad \times\int_{-1}^{t-K^{2\beta}\lambda(t)^2}(t-s)^{-\frac{n+2}{4}}\left(1+\frac{\sqrt{t-s}}{|x|}\right)^\gamma ds\\
  &\lesssim& K^{-\frac{n-2}{2}\beta+(\beta-1)\gamma}\lambda(t)^{-\frac{n-2}{2}}  \left\|\left(\frac{a}{\lambda},\lambda a^\prime-\mu_0\frac{a}{\lambda}, \lambda \lambda^\prime,\lambda\xi^\prime \right)\right\|_{L^\infty(t-\lambda(t)^{2\mu},t-K^{2\beta}\lambda(t)^2)}.
\end{eqnarray*}

{\bf Estimate of $\mathrm{III}$.} In this case,  first note that
\begin{eqnarray*}
   && \int_{t-K^{2\beta}\lambda(t)^2}^{t}\int_{B_{L\lambda(s)}^c}G(x,t;y,s)\lambda(s)^{-\frac{n+2}{2} }
 e^{-c\frac{|y|}{\lambda(s)}} dy ds \\
  &\lesssim&\int_{t-K^{2\beta}\lambda(t)^2}^{t}\int_{B_{L\lambda(s)}^c} \left(t-s\right)^{-\frac{n}{2}}e^{-c\frac{|x-y|^2}{t-s}}\left(1+\frac{\sqrt{t-s}}{|x|}\right)^\gamma \left(1+\frac{\sqrt{t-s}}{|y|}\right)^\gamma\\
  && \quad\quad \quad \quad\quad \quad \quad \quad \times \lambda(s)^{-\frac{n+2}{2} }
 e^{-c\frac{|y|}{\lambda(s)}} dy ds \\
 &\lesssim &\int_{t-K^{2\beta}\lambda(t)^2}^{t} \left(1+\frac{\sqrt{t-s}}{|x|}\right)^\gamma \lambda(s)^{-\frac{n+2}{2} }
\\
 &&\quad \times \left[ \underbrace{\int_{B_{L\lambda(s)}^c}\left(t-s\right)^{-\frac{n}{2}}e^{-c\frac{|x-y|^2}{t-s}}\left(1+\frac{\sqrt{t-s}}{|y|}\right)^\gamma e^{-c\frac{|y|}{\lambda(s)}} dy}_{\mbox{ Lemma \ref{lem heat kernel integral 2}}} \right]ds\\
  &\lesssim &\int_{t-K^{2\beta}\lambda(t)^2}^{t} \left(1+\frac{\sqrt{t-s}}{|x|}\right)^\gamma \lambda(s)^{-\frac{n+2}{2} }
\\
 &&\quad \times \left[e^{-c\frac{|x|}{\lambda(s)}}\left(1+\frac{\sqrt{t-s}}{|x|}\right)^\gamma+
 \left(1+\frac{\lambda(s)}{\sqrt{t-s}}\right)^{n+\gamma} e^{-cL-c\frac{|x|^2}{t-s}}   \right] ds\\
 &\lesssim & \left[K^{2\beta+2(\beta-1)\gamma}e^{-cK}+K^{2\beta+(\beta-1)\gamma}e^{-cL}\right]\lambda(t)^{-\frac{n-2}{2}},
\end{eqnarray*}
where we have used   \eqref{differential Harnack at lambda scale 2} to deduce that last inequality.

Then similar to the estimate of $\mathrm{II}$, we get
\begin{eqnarray*}
\mathrm{III}&\lesssim &\left[K^{2\beta+2(\beta-1)\gamma}e^{-cK}+K^{2\beta+(\beta-1)\gamma}e^{-cL}\right]\lambda(t)^{-\frac{n-2}{2}} \\
&& \quad\quad \quad\quad \times \left\|\left(\frac{a}{\lambda},\lambda a^\prime-\mu_0\frac{a}{\lambda}, \lambda \lambda^\prime,\lambda\xi^\prime \right)\right\|_{L^\infty(t-K^{2\beta}\lambda(t)^2,t)}.
\end{eqnarray*}

Adding up estimates for $\mathrm{I}$, $\mathrm{II}$ and $\mathrm{III}$ we finish the proof.
\end{proof}

\subsection{Estimate of $\mathcal{O}(t)$}
As an application of the pointwise estimates on $\overline{\phi}_1, \cdots, \overline{\phi}_5$ obtained in the previous subsection, we give an estimate on $\mathcal{O}(t)$.
\begin{prop}\label{prop inner to outer}
 There exist two constants $C(K,L)$ and $\sigma(K,L)\ll 1$ (depending on $K$ and $L$) such that for any $t\in(-81/100,81/100)$,
\begin{equation}\label{inner to outer 0}
  \mathcal{O}(t) \leq  C(K,L)\lambda(t)^{2+2\kappa}+ \sigma(K,L)  \left\|\left(\mathcal{I} ,\frac{a}{\lambda},\lambda a^\prime-\mu_0\frac{a}{\lambda}, \lambda \lambda^\prime,\lambda\xi^\prime \right)\right\|_{L^\infty(t-\lambda(t)^{2\mu},t)}.
\end{equation}
\end{prop}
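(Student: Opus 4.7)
Since $K \gg L$, on the annular region $A(t) := B_{2K\lambda(t)}(\xi(t)) \setminus B_{K\lambda(t)}(\xi(t))$ the cutoff $\eta_{out}$ vanishes identically, so $\phi \equiv \phi_{out} = \sum_{i=1}^{5}\phi_i$ there. The definition of $\mathcal{O}(t)$ thus requires controlling $[K\lambda(t)]^{(n-2)/2}\sup_A|\phi|$ together with $[K\lambda(t)]^{n/2}\sup_A|\nabla\phi|$. The plan is to obtain the $L^\infty$ bound by simply summing the five pointwise estimates from Lemmas \ref{lem estimate outer 1}--\ref{lem estimate outer 5}, and then to upgrade this to a gradient bound via standard interior parabolic regularity.

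After multiplying the bounds of Lemmas \ref{lem estimate outer 1}--\ref{lem estimate outer 5} by $[K\lambda(t)]^{(n-2)/2}$, the ``tail'' contributions become powers $\lambda(t)^{(1-\mu)((n-2)/2 - c\gamma)}$ with $c \in \{1,2\}$; by the choice of $\mu$ made immediately before Lemma \ref{lem estimate outer 2}, each of these exponents is at least $2+2\kappa$, so (since $\lambda(t) \le 1$) these contributions are dominated by $C(K,L)\lambda(t)^{2+2\kappa}$. The ``recent-time'' contributions, on the other hand, carry prefactors of the form $K^{-(\frac{n-2}{2}-\gamma)(\beta-1)}$, $K^{-\frac{n-2}{2}(\beta-1)+2(\beta-1)\gamma}$, $\tfrac{L^{(n-2)/2-\gamma}}{K^{(n-2)/2-(2\beta-1)\gamma}}$, $K^{O(1)}e^{-cK}$, and $K^{O(1)}e^{-cL}$; taking $K$ sufficiently large first, and then $L$ sufficiently large (so that the hierarchy $1 \ll L \ll K$ required by our earlier choice is respected), each of these coefficients becomes as small as we wish, contributing the factor $\sigma(K,L)$ multiplying the norm of $(\mathcal{I}, a/\lambda, \lambda a'-\mu_0 a/\lambda, \lambda\lambda', \lambda\xi')$ on the right of \eqref{inner to outer 0}.

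For the gradient bound, the plan is to feed the $L^\infty$ bound (on a slightly enlarged annulus) into interior parabolic estimates for the equation \eqref{outer eqn simplified}: on this larger annulus, \eqref{esimate on V} shows that $V_\ast$ is bounded, and the forcing terms $F_2,\ldots,F_5$ satisfy the same pointwise bounds used in the heat-kernel analysis. Rescaling parabolically by $K\lambda(t)$, and using \eqref{differential Harnack at lambda scale}--\eqref{differential Harnack at lambda scale 2} to control the drift $\xi'$ and to ensure $\lambda(s) \sim \lambda(t)$ throughout the rescaling window, the rescaled equation has bounded coefficients on a unit parabolic cylinder, so standard $C^1$ estimates convert the $L^\infty$ bound into the required gradient bound with the correct scaling factor $[K\lambda(t)]^{n/2}$. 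The main obstacle throughout is the bookkeeping---verifying that every $K$- and $L$-exponent from Lemmas \ref{lem estimate outer 1}--\ref{lem estimate outer 5}, once multiplied by $[K\lambda(t)]^{(n-2)/2}$, is either absorbed into the $C(K,L)\lambda(t)^{2+2\kappa}$ term or drives the $\sigma(K,L)$ coefficient to zero in the required order of first taking $K$ large and then $L$ large; this relies on the freedom to choose $\gamma$ and $\beta - 1$ arbitrarily small.
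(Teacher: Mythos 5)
Your proposal follows the paper's proof essentially verbatim: the $L^\infty$ part of $\mathcal{O}(t)$ is obtained by summing the bounds of Lemmas \ref{lem estimate outer 1}--\ref{lem estimate outer 5}, multiplying by $[K\lambda(t)]^{(n-2)/2}$, and sorting the terms into the $C(K,L)\lambda(t)^{2+2\kappa}$ tail and the $\sigma(K,L)$-weighted recent-time contributions, while the gradient part follows from rescaled interior parabolic estimates on a slightly enlarged annulus (the paper applies these to each $\phi_i$ separately, noting $F_2\equiv 0$ there, rather than to the full equation \eqref{outer eqn simplified}, but this is immaterial). The one slip is your stated order of choosing constants: taking $K$ large \emph{first} and then letting $L$ grow would eventually violate $L\ll K$ and make the ratio $L^{(n-2)/2-\gamma}/K^{(n-2)/2-(2\beta-1)\gamma}$ large, so the two constants must be fixed jointly within the hierarchy $1\ll L\ll K$ (e.g.\ with $\log K\ll L\ll K^{1-\delta}$), which is exactly what the paper's standing choice of $K$ and $L$ in Section \ref{sec decomposition} provides.
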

\begin{proof}
  First by the five lemmas in the previous subsection, we obtain
  \begin{eqnarray}\label{estimate of outer 1}
   &&\left[K\lambda(t)\right]^{\frac{n-2}{2}}\sup_{B_{2K\lambda(t)}(\xi(t))\setminus B_{K\lambda(t)}(\xi(t))}|\phi(x,t)|\\
   &\leq&  \left[K\lambda(t)\right]^{\frac{n-2}{2}}\sup_{B_{2K\lambda(t)} \setminus B_{K\lambda(t)} } \left[\overline{\phi}_1(x,t)+\overline{\phi}_2(x,t)+\overline{\phi}_3(x,t)+\overline{\phi}_4(x,t)+\overline{\phi}_5(x,t)\right]\nonumber\\
    &\leq & C(K,L)\lambda(t)^{2+2\kappa}+ \sigma(K,L)  \left\|\left(\mathcal{I} ,\frac{a}{\lambda},\lambda a^\prime-\mu_0\frac{a}{\lambda}, \lambda \lambda^\prime,\lambda\xi^\prime \right)\right\|_{L^\infty(t-\lambda(t)^{2\mu},t)}. \nonumber
  \end{eqnarray}

Next, by a rescaling and standard interior gradient estimates for heat equation, we get
 \begin{eqnarray*}
   &&\left[K\lambda(t)\right]^{\frac{n}{2}}\sup_{B_{2K\lambda(t)}(\xi(t))\setminus B_{K\lambda(t)}(\xi(t))}|\nabla\phi_1(x,t)|\\
   &\lesssim&\left[K\lambda(t)\right]^{\frac{n-2}{2}}\sup_{\left(B_{4K\lambda(t)(\xi(t))} \setminus B_{K\lambda(t)/2(\xi(t))}\right)\times(t-K^2\lambda(t)^2,t) } |\phi_1(y,s)|\\
   &\lesssim&  K^{\frac{n-2}{2}-\gamma}\lambda(t)^{\frac{n-2}{2}-\gamma}.
  \end{eqnarray*}
 The same estimate holds for $\phi_2$, if we note that $F_2\equiv 0$ in $B_{4K\lambda(t)(\xi(t))} \setminus B_{K\lambda(t)/2(\xi(t))}$.

 In the same way, for $\phi_3$ we have
   \begin{eqnarray*}
   &&\left[K\lambda(t)\right]^{\frac{n}{2}}\sup_{B_{2K\lambda(t)}(\xi(t))\setminus B_{K\lambda(t)}(\xi(t))}|\nabla\phi_3(x,t)|\\
   &\lesssim&\left[K\lambda(t)\right]^{\frac{n-2}{2}}\sup_{\left(B_{4K\lambda(t)(\xi(t))} \setminus B_{K\lambda(t)/2(\xi(t))}\right)\times(t-K^2\lambda(t)^2,t) } \left[|\phi_3(y,s)|+\left(K\lambda(t)\right)^2|F_3(y,s)|\right]\\
   &\lesssim&    K^{-\frac{n-2}{2}(\beta-1)+2(\beta-1)\gamma } \|\lambda\lambda^\prime\|_{L^\infty(t-\lambda(t)^{2\mu},t)}+K^{\frac{n-2}{2}} \lambda(t)^{2+2\kappa}\\
    &+& K^{-\frac{n-6}{2}} \left\|\lambda\lambda^\prime\right\|_{L^\infty(t-K^2\lambda(t)^2,t)}.
  \end{eqnarray*}
 The same estimates hold for $\phi_4$ and $\phi_5$.

Putting these gradient estimates together and arguing as in \eqref{estimate of outer 1}, we get the estimate on the remaining terms in $\mathcal{O}(t)$.
\end{proof}

\section{A Harnack inequality for $\lambda$}\label{sec Harnack for lambda}
\setcounter{equation}{0}

In this section, we combine the estimates in the previous two sections to establish an Harnack inequality for $\lambda$.

For any $t\in(-81/100,81/100)$, denote
\[\mathcal{D}(t):= \|\phi_{in}(t)\|_{\alpha;1+\theta}+ \left|\lambda(t)\xi^\prime(t)\right|+\left|\lambda(t)\lambda^\prime(t)\right|+\left|\lambda(t)a^\prime(t)-\mu_0\frac{a(t)}{\lambda(t)}
\right|+\left|\frac{ a(t) }{\lambda(t)}\right|.
\]
For   $r\in(0,81/100)$,  let
\[g(r):=\sup_{-r<t<r}\mathcal{D}(t)\]
and
\[ M(r):=\sup_{-r<t<r}\lambda(t)^2, \quad  m(r):=\inf_{-r<t<r}\lambda(t)^2. \]

By Proposition \ref{prop decay estimate} and Lemma \ref{lem representation of a}, there exist three universal constants $C$, $T\gg 1$ and $\sigma\ll 1$ such that
   \begin{equation}\label{outer to inner}
  \mathcal{D}(t)\leq \sigma \sup_{[t-T\lambda(t)^2,t+T\lambda(t)^2]}\mathcal{D}(s)+C\sup_{[t-T\lambda(t)^2,t+T\lambda(t)^2]}\mathcal{O}(s).
  \end{equation}
By Proposition \ref{prop inner to outer},
\begin{equation}\label{inner to outer}
  \mathcal{O}(t) \leq  C(K,L)\lambda(t)^{2+2\kappa}+ \sigma(K,L)\sup_{[t-\lambda(t)^{2\mu},t]} \mathcal{D}(s).
\end{equation}
Combining these two estimates we get
\begin{equation}\label{decay relation 1}
  \mathcal{D}(t)\leq \frac{1}{2} \sup_{[t-2\lambda(t)^{2\mu},t+2K\lambda(t)^2]}\mathcal{D}(s)+C\sup_{[t-2\lambda(t)^{2\mu},t+2K\lambda(t)^2]} \lambda(s)^{2+2\kappa}.
\end{equation}

For any $r\in(0,1)$, taking supremum over $t\in\left(-r+2M(r)^\mu,r-2M(r)^\mu\right)$ in \eqref{decay relation 1}, we obtain
\begin{equation}\label{decay relation 4}
 g\left(r-2M(r)^{\mu}\right)\leq   \frac{1}{2}g(r)+CM(r)^{1+\kappa}.
\end{equation}
For any $r_1<r_2$, an iteration of this inequality from $r_2$ to $r_1$ in $\lfloor(r_2-r_1)/M(r_2)^\mu\rfloor$ steps leads to
\[g(r_1)\leq g(r_2) e^{-c(r_2-r_1)M(r_2)^{-\mu}}+CM(r_2)^{1+\kappa}.\]
For any $r\in(0,1)$, $M(r)\ll 1$, so
\[ e^{-cM(r)^{-\mu/2}}\lesssim M(r)^{1+\kappa}.\]
Hence by choosing $r_2=r$ and $r_1=r-M(r)^{\mu/2}$, we get
\[g\left(r-M(r)^{\mu/2}\right)\lesssim M(r)^{1+\kappa}.\]
By this estimate, integrating $\lambda\lambda^\prime$ on $[-r+M(r)^{\mu/2}, r-M(r)^{\mu/2}]$, we find a constant $C_H$ such that
\begin{equation}\label{Harnack 1}
  M\left(r-M(r)^{\mu/2}\right)\leq m\left(r-M(r)^{\mu/2}\right)+C_HM(r)^{1+\kappa}.
\end{equation}

\begin{lem}
  There exists an $r_H\in[1/2,3/4]$ such that
\[M\left(r_H-M(r_H)^{\mu/2}\right) \geq 2C_HM(r_H)^{1+\kappa}.\]
\end{lem}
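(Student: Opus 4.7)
The plan is to argue by contradiction: suppose that for every $r \in [1/2, 3/4]$ one has $M(r - M(r)^{\mu/2}) < 2 C_H M(r)^{1+\kappa}$, and then iterate this assumption to produce a sequence of radii on which $\lambda^2$ is forced to zero, contradicting the pointwise positivity of $\lambda$.

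Concretely, I would start from $r_0 := 3/4$ and inductively define $r_{k+1} := r_k - M(r_k)^{\mu/2}$. The contradiction hypothesis would yield, as long as $r_k \in [1/2, 3/4]$, the recursive bound
\begin{equation*}
M(r_{k+1}) < 2 C_H\, M(r_k)^{1+\kappa},
\end{equation*}
from which one immediately obtains, by induction on $k$, a super-exponential decay of the form
\begin{equation*}
M(r_k) \leq (2 C_H)^{\frac{(1+\kappa)^k - 1}{\kappa}}\, M(r_0)^{(1+\kappa)^k}.
\end{equation*}
The starting ingredient here is that $M(r_0) = M(3/4) \ll 1$, which is guaranteed by Proposition~\ref{prop blow up profile I} (together with the relation \eqref{deviation form max pt} between $\lambda$ and $\lambda^\ast$) provided we have fixed $u = u_i$ with $i$ sufficiently large.

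Next I would check that the iteration never leaves $[1/2, 3/4]$, so that \eqref{Harnack 1} is applicable at each step. Since the step sizes $M(r_k)^{\mu/2}$ decay super-geometrically in $k$, the total displacement
\begin{equation*}
\sum_{k \geq 0} M(r_k)^{\mu/2}
\end{equation*}
is dominated by a geometric series with ratio a power of $M(r_0)$, hence is bounded by $2 M(r_0)^{\mu/2}$ once $M(r_0)$ is sufficiently small. In particular $r_k \geq 3/4 - 2 M(r_0)^{\mu/2} > 1/2$ for all $k$, closing the induction. Taking $r_\infty := \lim_k r_k \in [1/2, 3/4]$ and using monotonicity of $M$, we would get $M(r_\infty) \leq \lim_k M(r_k) = 0$, which contradicts $M(r_\infty) \geq \lambda(r_\infty)^2 > 0$ since $\lambda$ is continuous and positive by Proposition~\ref{prop blow up profile I}.

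The only potentially delicate point is the quantitative bookkeeping of how small $M(3/4)$ must be: we need it small enough that the geometric-series bound on $\sum_k M(r_k)^{\mu/2}$ keeps all $r_k$ in $[1/2, 3/4]$, and simultaneously that the iteration of \eqref{Harnack 1} (which itself was established only on the admissible range $r \in (0,1)$ with $M(r) \ll 1$) remains valid at every step. Since $\mu \in (0,1)$ and $\kappa > 0$ are fixed universal constants while $M(r_0) \to 0$ as $i \to \infty$, this smallness is automatic for all sufficiently large $i$, so this is not a true obstacle — it is merely a matter of choosing the index $i$ large enough at the outset.
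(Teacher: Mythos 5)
Your proposal is correct and follows essentially the same route as the paper: a contradiction argument iterating $r_{k+1}=r_k-M(r_k)^{\mu/2}$, using the recursive bound $M(r_{k+1})\leq 2C_H M(r_k)^{1+\kappa}$ and the smallness of $M(3/4)$ to show the radii stay in $[1/2,3/4]$ while $M(r_k)\to 0$, contradicting the positivity of $\lambda$. The only cosmetic difference is that the paper simplifies the super-exponential decay to the cruder bound $a_k\leq a_0^{k+1}$, which suffices for the geometric-series estimate on the total displacement.
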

\begin{proof}
  Assume  that  for any $r\in[1/2,3/4]$,
\begin{equation}\label{fast decay}
  M\left(r-M(r)^{\mu/2}\right) \leq 2C_HM(r)^{1+\kappa}.
\end{equation}
Set
\[r_0:=3/4, \quad a_0:=M(r_0)\]
and for $k\in \mathbb{N}$,
\[r_{k+1}:=r_k-a_k^{\mu/2}, \quad  a_{k+1}:=M(r_{k+1}).\]
Then \eqref{fast decay} says
\[ a_{k+1}\leq 2C_Ha_k^{1+\kappa}.\]
By our assumption, $a_0\ll 1$. An induction gives
\[a_k\leq a_0^{k+1}, \quad  r_k\geq r_0-\sum_{k=0}^{+\infty}a_0^{(k+1)\frac{\mu}{2}}\geq 1/2.\]
As a consequence, $M(1/2)=0$. This is impossible.
\end{proof}
For this $r_H$, \eqref{Harnack 1} can be written as a  Harnack inequality
\begin{equation}\label{Harnack}
  m(r_H)\geq \left[1-C_HM(r_H)^\kappa\right] M(r_H)\geq \frac{1}{2}M(r_H).
\end{equation}

After a scaling $u(x,t)\mapsto r_H^{(n-2)/2}u(r_Hx, r_H^2t)$, from here to Section \ref{sec Pohozaev} we work in the following setting:
\begin{itemize}
  \item we denote
  \begin{equation}\label{determination of the scaling parameter size}
  \varepsilon:=\lambda(0),
\end{equation}
and after a translation in the spatial direction, we   assume $\xi(0)=0$;
  \item
by the above Harnack inequality \eqref{Harnack}, for any $t\in[-1,1]$,
\begin{equation}\label{bound on parameters 1}
  \lambda(t)=\varepsilon+O\left(\varepsilon^{1+\kappa}\right)
  \end{equation}
and
\begin{equation}\label{bound on parameters 2}
   \|\phi_{in}(t)\|+\left|\lambda(t)\lambda^\prime(t)\right|+\left|\lambda(t)\xi^\prime(t)\right|+\left|\lambda(t)a^\prime(t)-\mu_0\frac{a(t)}{\lambda(t)}\right|
+\left|\frac{a(t)}{\lambda(t)}\right| \lesssim \varepsilon^{1+\kappa}.
\end{equation}

  \item
integrating $\xi^\prime$ and using the above  estimate, we obtain
\begin{equation}\label{bound on parameters 3}
  |\xi(t)|\lesssim \varepsilon^{\kappa}, \quad \mbox{for any} ~~ t\in[-1,1].
\end{equation}
\end{itemize}

\section{Inner problem and outer problem  again}\label{sec inner and outer 2}
\setcounter{equation}{0}

In this section, under the assumptions \eqref{determination of the scaling parameter size}-\eqref{bound on parameters 2}, we prove
\begin{prop}\label{prop inner estimate 2}
For any $\gamma>0$, there exists a constant $C(\gamma)$ such that
  \begin{equation}\label{inner estimate 2}
 g\left(\frac{8}{9}\right)\leq C(\gamma) \varepsilon^{\frac{n-2}{2}-\gamma}.
  \end{equation}
\end{prop}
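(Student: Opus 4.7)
The plan is to promote the preliminary bound $g(r)\lesssim\varepsilon^{1+\kappa}$ established in Section~\ref{sec Harnack for lambda} to the target exponent $(n-2)/2-\gamma$ via a bootstrap, shrinking the radius gradually from $1$ down to $8/9$ in finitely many rounds.

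First I would revisit the outer estimates (Lemmas~\ref{lem estimate outer 2}--\ref{lem estimate outer 5}) and track more carefully their dependence on the $L^\infty$ norms of the ``dynamic'' quantities $\mathcal{I}$, $|\lambda\lambda'|$, $|\lambda\xi'|$, $|a/\lambda|$, and $|\lambda a'-\mu_0 a/\lambda|$, all of which are pieces of $\mathcal{D}$ and hence controlled by $g$. Each such lemma naturally splits into two contributions: (i) an intrinsic self-similar term of size $\lambda(t)^{(1-\mu)[(n-2)/2-O(\gamma)]}$, obtained on the long time slice $(-1,t-\lambda^{2\mu})$ using only the crude bound $\mathcal{I}\le C$; and (ii) a dynamic term proportional to the $L^\infty$ norm of the quantities above on the short slice $(t-\lambda^{2\mu},t)$. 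The key point is that the exponent in (i) is tunable: by choosing $\mu=\mu(\gamma)$ sufficiently small one ensures $(1-\mu)[(n-2)/2-O(\gamma)]\ge(n-2)/2-\gamma$, so the self-similar part already saturates the target exponent.

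Next I would iterate. Assume that on an interval $(-r_k,r_k)\subset[-1,1]$ we already have $g(r_k)\le C_k\varepsilon^{\alpha_k}$ for some $\alpha_k<(n-2)/2-\gamma$. Substituting this improved bound into the revised outer estimates, and using Lemma~\ref{lem representation of a} to propagate $a/\lambda$ backwards in time, the dynamic contribution improves from $\varepsilon^{\alpha_k}$ to $\varepsilon^{\alpha_k+\delta}$ with a fixed gain $\delta=\delta(n,\gamma,\mu)>0$, giving $\mathcal{O}(t)\lesssim\varepsilon^{\min(\alpha_k+\delta,\,(n-2)/2-\gamma)}$ on a slightly smaller interval $(-r_{k+1},r_{k+1})$. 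Coupled with the inner estimate Proposition~\ref{prop decay estimate}, this yields $g(r_{k+1})\lesssim\varepsilon^{\min(\alpha_k+\delta,\,(n-2)/2-\gamma)}$. After $O(1/\delta)$ rounds the exponent saturates at $(n-2)/2-\gamma$; by choosing the decrements $r_k-r_{k+1}$ uniformly small we keep $r_k\ge 8/9$ throughout.

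The main difficulty will be controlling the heat kernel integrals over the boundary time slice $s\in(-1,-r_k)$, where the improved bound on the dynamic quantities is not yet available and only the baseline $\lesssim\varepsilon^{1+\kappa}$ holds. For $t\in(-r_{k+1},r_{k+1})$, however, the distance $t-s\ge r_k-r_{k+1}$ is a positive fixed constant, so the heat kernel factor $(t-s)^{-(n+2)/4+\gamma/2}$ together with the spatial weight $|x|^{-\gamma}$ from Lemma~\ref{lem estimate outer 1} and the scaling multiplier $(K\lambda)^{(n-2)/2}$ built into $\mathcal{O}$ produce a contribution of order $\varepsilon^{1+\kappa+(n-2)/2-\gamma}$, strictly higher order than the target; the corresponding piece of $\overline{\phi}_5$ benefits additionally from the exponential decay of $Z_0$. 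Once this bookkeeping is verified at each step the bootstrap closes and Proposition~\ref{prop inner estimate 2} follows.
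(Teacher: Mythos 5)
Your proposal diverges from the paper's argument in a way that leaves a real gap. The paper does not run a bootstrap in the exponent at all. Its key move is to exploit the Harnack inequality already proved in Section~\ref{sec Harnack for lambda}, namely $\lambda(t)=\varepsilon+O(\varepsilon^{1+\kappa})$ \emph{uniformly} on $[-1,1]$ (see \eqref{bound on parameters 1}). With $\lambda(s)\sim\varepsilon$ for \emph{all} $s$, the three-slice decomposition of the heat-kernel integrals becomes unnecessary: Lemmas~\ref{lem estimate on phi 2,2}--\ref{lem estimate on phi 5,2} re-derive the bounds on $\overline{\phi}_2,\dots,\overline{\phi}_5$ over the whole interval $(-1,t)$ in one or two pieces, the weights $\lambda(s)^{(n-4)/2}$, $\lambda(s)^{(n-2)/2}$ contributing the full power of $\varepsilon$ directly and the remaining time integral converging by Lemma~\ref{lem heat kernel integral 1}. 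This yields \eqref{inner to outer 2}, $\mathcal{O}(t)\leq C(K,L)\varepsilon^{\frac{n-2}{2}-\gamma}+\sigma(K,L)\sup_{-1<s<t}\mathcal{D}(s)$, in a single pass; the $\gamma$-loss comes only from the Hardy weight $|x|^{-\gamma}$ in $\overline{\phi}_1$. Combined with Proposition~\ref{prop decay estimate} this gives the contraction \eqref{decay relation 5}, and one time-iteration from $r=1$ to $r=8/9$ (the same scheme as in Section~\ref{sec Harnack for lambda}, with steps of size $\sim\varepsilon^{2}$) finishes.

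The gap in your scheme is the claim that "the dynamic contribution improves from $\varepsilon^{\alpha_k}$ to $\varepsilon^{\alpha_k+\delta}$ with a fixed gain $\delta>0$." Inspecting the dynamic terms in Lemmas~\ref{lem estimate outer 2}--\ref{lem estimate outer 5}, their coefficients are small \emph{constants} (negative powers of $K$ and $L$), not positive powers of $\varepsilon$: feeding in $g\leq C\varepsilon^{\alpha_k}$ returns $\sigma(K,L)\,C\varepsilon^{\alpha_k}$ with the \emph{same} exponent. Only the genuinely nonlinear terms ($\|\varphi_K\|_\alpha^{\bar p}$, $|a/\lambda|^{\bar p}$) gain exponent. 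Consequently a recursion of the form $g_{k+1}\leq C\varepsilon^{\frac{n-2}{2}-\gamma}+\sigma g_k$ run over $O(1/\delta)$ rounds with fixed radius decrements only reduces the inherited term to $\sigma^{O(1/\delta)}\varepsilon^{1+\kappa}$, which is nowhere near $\varepsilon^{(n-2)/2-\gamma}$; pushing the number of rounds to $\sim\log(1/\varepsilon)$ exhausts the radius. The linear dynamic term can only be killed by iterating the $\tfrac12$-contraction over time steps of length $\sim\varepsilon^{2}$ (or $\varepsilon^{2\mu}$), so that within a radius decrement of order one it is multiplied by $e^{-c\varepsilon^{-2\mu}}\ll\varepsilon^{N}$. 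If you insert that time-iteration inside each of your rounds the argument can be repaired, but at that point your step~(i) (tuning $\mu$ so the self-similar forcing already has exponent $(n-2)/2-\gamma$) makes the exponent bootstrap redundant, and you have essentially reconstructed the paper's single-pass argument by a longer route. A second, smaller omission: your treatment of the long time slice still relies on the case analysis $L\lambda(s)\gtrless 6K\lambda(t)$ hidden in the original lemmas; invoking $\lambda(s)\sim\varepsilon$ from \eqref{bound on parameters 1} removes this entirely and is what makes the sharp exponent accessible without tuning $\mu$.
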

This gives an improvement on the estimate of scaling parameters etc.. A more precise estimate on $\phi$   also follows:
\begin{prop}\label{prop outer estimate 2}
For any $\gamma>0$, there exists  a constant $C(\gamma)>0$ such that
  \begin{equation}\label{outer estimate 3}
   |\phi(x,t)|\leq C(\gamma)\left(\varepsilon+|x-\xi(t)|\right)^{-\gamma} \quad \mbox{in} \quad  Q_{7/8}.
  \end{equation}
\end{prop}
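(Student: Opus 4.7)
The idea is to combine the improved parameter control in Proposition~\ref{prop inner estimate 2} with the inner--outer decomposition of Section~\ref{sec decomposition}. Split $Q_{7/8}$ into the inner region $|x-\xi(t)|\le K\lambda(t)$ and the outer region $|x-\xi(t)|\ge K\lambda(t)$, and handle them separately.

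On the inner region $\eta_K\equiv 1$, so pointwise $\phi_{in}=\phi$. Proposition~\ref{prop inner estimate 2} applied with parameter $\gamma$ gives $\|\phi_{in}(t)\|_{\alpha}\le C(\gamma)\varepsilon^{\alpha-\gamma}$ with $\alpha=(n-2)/2$. Writing $\phi(x,t)=\lambda(t)^{-\alpha}\varphi(y,\tau)$ in the inner coordinates and unfolding the weighted norm bound on $\varphi$ yields
\[
|\phi(x,t)|\lesssim \varepsilon^{\alpha-\gamma}\bigl(\lambda(t)+|x-\xi(t)|\bigr)^{-\alpha}.
\]
Since $\lambda(t)\sim\varepsilon$ by \eqref{bound on parameters 1} and $|x-\xi(t)|\le 2K\varepsilon$ on the inner region, this is dominated by $C(\gamma,K)(\varepsilon+|x-\xi(t)|)^{-\gamma}$.

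On the outer region, the fact that $K\gg L$ forces $\eta_{out}\equiv 0$, so $\phi=\phi_{out}=\phi_1+\phi_2+\phi_3+\phi_4+\phi_5$. The term $\phi_1$ is controlled by Lemma~\ref{lem estimate outer 1}: $|\phi_1(x,t)|\lesssim |x-\xi(t)|^{-\gamma}$, and since $|x-\xi(t)|\ge K\varepsilon\ge\varepsilon$ this is automatically $\lesssim (\varepsilon+|x-\xi(t)|)^{-\gamma}$. For $\phi_2,\dots,\phi_5$, the plan is to re-run the heat-kernel integral estimates of Lemmas~\ref{lem estimate outer 2}--\ref{lem estimate outer 5}, but with the sharper bounds $\|\mathcal{I}\|_\infty$, $|\lambda\lambda'|$, $|\lambda\xi'|$, $|\lambda a'-\mu_0 a/\lambda|$, $|a/\lambda|\lesssim\varepsilon^{\alpha-\gamma}$ provided by Proposition~\ref{prop inner estimate 2}, in place of the crude $O(1)$ bounds used in Section~\ref{sec outer}. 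Each right-hand side $F_i$ in the decomposition of the outer equation then picks up an additional factor $\varepsilon^{\alpha-\gamma}$; combined with the Gaussian bound \eqref{heat kernel bound} and the same three-scale splitting of the time integral as in Section~\ref{sec outer}, this converts into $|\phi_i(x,t)|\lesssim \varepsilon^{\alpha-\gamma}$ up to mild polynomial factors in $|x-\xi(t)|$, and hence $\lesssim(\varepsilon+|x-\xi(t)|)^{-\gamma}$ after relabelling $\gamma$.

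The main technical obstacle is that Lemmas~\ref{lem estimate outer 2}--\ref{lem estimate outer 5} were stated only on the thin annulus $K\lambda(t)/2\le|x|\le 4K\lambda(t)$, whereas the conclusion requires uniform control throughout $|x-\xi(t)|\ge K\lambda(t)$ inside $Q_{7/8}$. Extending them amounts to repeating the heat-kernel integrations with $|x-\xi(t)|$ treated as a free variable: on short times $(t-K^{2\beta}\lambda(t)^2,t)$ the Gaussian factor $e^{-c|x-y|^2/(t-s)}$ delivers strong suppression whenever the target point is far from the source supports, and on long times only the Hardy-weight factor $(1+\sqrt{t-s}/|x-\xi(t)|)^\gamma$ coming from \eqref{heat kernel bound} survives, which remains bounded in $Q_{7/8}$. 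Adding the inner and outer estimates then gives \eqref{outer estimate 3} for every $\gamma>0$.
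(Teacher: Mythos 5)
Your proposal is correct and follows essentially the same route as the paper: the paper's proof consists precisely of plugging the bound $g(8/9)\lesssim\varepsilon^{\frac{n-2}{2}-\gamma}$ from Proposition \ref{prop inner estimate 2} into upgraded versions of the outer heat-kernel estimates valid on all of $|x|\geq K\lambda(t)/2$ (Lemmas \ref{lem estimate on phi 2,2}--\ref{lem estimate on phi 5,2} and their corollaries), together with Lemma \ref{lem estimate outer 1} for $\overline{\phi}_1$ and the weighted inner bound on $\phi_{in}$. The "technical obstacle" you flag — extending the annulus estimates of Section \ref{sec outer} to the whole outer region — is resolved in the paper exactly as you sketch, by redoing the heat-kernel integrations with $|x|$ free, using the Harnack inequality $\lambda(t)\sim\varepsilon$ and Lemmas \ref{lem heat kernel integral 1}--\ref{lem heat kernel integral 2}.
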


To prove these propositions, estimates on $\overline{\phi}_2$, $\overline{\phi}_3$, $\overline{\phi}_4$ and $\overline{\phi}_5$ in Section \ref{sec outer} are not sufficient. We need to upgrade them.
\begin{lem}[Estimate on $\overline{\phi}_2$]\label{lem estimate on phi 2,2}
If $|x|\geq  K\lambda(t)/2$, then
\begin{equation}\label{estimate on phi 2,1}
 \overline{\phi}_2(x,t)\lesssim   \left[K^{-\left(\frac{n-2}{2}-\gamma\right)\beta}\frac{\varepsilon^{\gamma}}{|x|^\gamma}+ L^{\frac{n-2}{2}-\gamma} K^{\beta\gamma}\frac{\varepsilon^{n-2}}{ |x|^{n-2}}\right] \varepsilon^{-\frac{n-2}{2}}\|\mathcal{I}\|_{L^\infty(-1,t)}.
\end{equation}
\end{lem}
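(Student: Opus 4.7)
The plan is to repeat the heat-kernel representation argument of Lemma \ref{lem estimate outer 2}, but now take full advantage of the uniform Harnack bound $\lambda(s)\equiv\varepsilon+O(\varepsilon^{1+\kappa})$ and the improved parameter bounds \eqref{bound on parameters 2}. These bounds collapse the three-interval time decomposition used before into a two-interval one, replace every factor $\lambda(s)$ by $\varepsilon$, and upgrade the source estimate \eqref{estimate on F2} to
\[
|F_2(y,s)|\lesssim \frac{\|\mathcal{I}\|_{L^\infty(-1,t)}}{L^\alpha\varepsilon^{\alpha+2}}\,\chi_{B_{2L\varepsilon}(\xi(s))\setminus B_{L\varepsilon}(\xi(s))}(y),
\]
since the coefficient $|\lambda'(s)|/\lambda(s)+|\xi'(s)|/(L\lambda(s))+1/(L^2\lambda(s)^2)$ is now dominated by its last term, of size $(L\varepsilon)^{-2}$.

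First I would write
\[
\overline{\phi}_2(x,t)\le \int_{-1}^{t}\!\int_{B_{2L\varepsilon}(\xi(s))\setminus B_{L\varepsilon}(\xi(s))} G(x,t;y,s)|F_2(y,s)|\,dyds
\]
and split at the threshold $s_0:=t-K^{2\beta}\varepsilon^2$ into a \emph{far} piece $\mathrm{I}$ on $(-1,s_0)$ and a \emph{near} piece $\mathrm{II}$ on $(s_0,t)$. For $\mathrm{I}$, I would apply H\"{o}lder and the $L^{2n/(n-2)}_y$ bound \eqref{Gaussian bound} on $G(x,t;\cdot,s)$, obtaining
\[
\mathrm{I}\lesssim \frac{\|\mathcal{I}\|_{\infty}}{\varepsilon^{\alpha+2}}\int_{-1}^{s_0}(t-s)^{-(n+2)/4}\Bigl(1+\tfrac{\sqrt{t-s}}{|x|}\Bigr)^{\!\gamma} ds\lesssim K^{-(\alpha-\gamma)\beta}\frac{\varepsilon^\gamma}{|x|^\gamma}\,\varepsilon^{-\alpha}\|\mathcal{I}\|_\infty,
\]
where $(1+\sqrt{t-s}/|x|)^\gamma$ is replaced by $(\sqrt{t-s}/|x|)^\gamma$ (permissible since $\sqrt{t-s}\gtrsim K^\beta\varepsilon$ while $|x|\ge K\varepsilon/2$), and the remaining time integral is evaluated from the lower endpoint $s_0$.

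For $\mathrm{II}$, I would use the full Gaussian bound \eqref{heat kernel bound}. Since $|y|\sim L\varepsilon$ is much smaller than $|x|\ge K\varepsilon/2$ (as $K\gg L$), I can replace $|x-y|$ by $|x|$ in the exponential and estimate $(1+\sqrt{t-s}/|y|)^\gamma\lesssim(\sqrt{t-s}/(L\varepsilon))^\gamma$. After extracting the volume factor $|B_{2L\varepsilon}\setminus B_{L\varepsilon}|\sim (L\varepsilon)^n$, this reduces to
\[
\mathrm{II}\lesssim \frac{L^{n-\gamma}\varepsilon^{n-2-\gamma}\|\mathcal{I}\|_{\infty}}{L^\alpha}\!\int_{s_0}^{t}\!(t-s)^{-n/2+\gamma/2}\Bigl(1+\tfrac{\sqrt{t-s}}{|x|}\Bigr)^{\!\gamma}e^{-c|x|^2/(t-s)}ds,
\]
and evaluating the time integral through the standard substitution $u=|x|^2/(t-s)$ gives $|x|^{2-n+2\gamma}$ together with the prefactor $K^{\beta\gamma}$ coming from the $|x|$-weight in the far time endpoint. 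Combining, $\mathrm{II}\lesssim L^{\alpha-\gamma}K^{\beta\gamma}\varepsilon^{n-2}|x|^{-(n-2)}\,\varepsilon^{-\alpha}\|\mathcal{I}\|_\infty$. Summing $\mathrm{I}$ and $\mathrm{II}$ yields \eqref{estimate on phi 2,1}.

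The main obstacle I anticipate is the bookkeeping in the near-time integral $\mathrm{II}$: one must simultaneously track the Gaussian decay $e^{-c|x|^2/(t-s)}$, the two heat-kernel weights $(\sqrt{t-s}/|x|)^\gamma$ and $(\sqrt{t-s}/(L\varepsilon))^\gamma$, and the volume of the annular support of $F_2$, ensuring that the final exponents in $K$, $L$ and $|x|$ come out exactly as stated (this is where the dimension assumption $n\ge 7$ and the choice of $\beta$ close to $1$ enter to absorb the weight-generated losses $K^{2(\beta-1)\gamma}$). The remaining estimates are routine applications of the Gaussian heat-kernel bound of Moschini--Tesei \cite{Moschini}.
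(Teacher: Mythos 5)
Your argument is essentially the paper's proof: the same heat-kernel representation, the same two-interval split at $t-K^{2\beta}\lambda(t)^2$ (with $\lambda\sim\varepsilon$ by the Harnack bound), H\"older together with the $L^{2n/(n-2)}$ bound \eqref{Gaussian bound} on the far interval, and the full Gaussian bound with the annulus volume on the near interval, producing exactly the two stated terms. The only blemish is the intermediate display for $\mathrm{I}$: once H\"older has absorbed the annulus volume $(L\varepsilon)^{(n+2)/2}$ into the $L^{2n/(n+2)}$ norm of the source (which is $\lesssim\mathcal{I}(s)$), the prefactor $\varepsilon^{-(\alpha+2)}$ should not appear; your final exponents are nonetheless correct and agree with the paper's.
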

\begin{proof}
As in the proof of Lemma \ref{lem estimate outer 2}, we have the heat kernel representation for $\overline{\phi}_2$. But now we just divide the integral into two parts, $\mathrm{I}$ being on $(-1,t-K^{2\beta}\lambda(t)^2)$ and $\mathrm{II}$  on $(t-K^{2\beta}\lambda(t)^2,t)$.

For $\mathrm{I}$, similar to \eqref{2I}, we have
\begin{eqnarray*}
  \mathrm{I} &\lesssim & |x|^{-\gamma}\int_{-1}^{t-K^{2\beta}\lambda(t)^2}(t-s)^{-\frac{n+2}{4}+\frac{\gamma}{2}}\mathcal{I}(s)ds \\
  &\lesssim & K^{-\left(\frac{n-2}{2}-\gamma\right)\beta}\lambda(t)^{-\frac{n-2}{2}+\gamma}|x|^{-\gamma}\|\mathcal{I}\|_{L^\infty(-1,t-K^{2\beta}\lambda(t)^2)}\\
  &\lesssim& K^{-\left(\frac{n-2}{2}-\gamma\right)\beta}\varepsilon^{-\frac{n-2}{2}+\gamma}|x|^{-\gamma}\|\mathcal{I}\|_{L^\infty(-1,t-K^{2\beta}\lambda(t)^2)}
\end{eqnarray*}

The estimate for $\mathrm{II}$ is almost the same as the one for $\mathrm{III}$ in the proof of Lemma \ref{lem estimate outer 2}, that is,
\begin{eqnarray*}
 \mathrm{II} &\lesssim &  \int_{t-K^{2\beta}\lambda(t)^2}^{t}\frac{\mathcal{I}(s)}{L^\alpha\lambda(s)^\alpha} \left(\frac{|\lambda^\prime(s)|}{\lambda(s)}+\frac{|\xi^\prime(s)|}{L\lambda(s)}+\frac{1}{L^2\lambda(s)^2}\right) \left(1+\frac{\sqrt{t-s}}{|x|}\right)^\gamma \\
  &&\times\left[\int_{B_{2L\lambda(s)}\setminus B_{L\lambda(s)}} \left(t-s\right)^{-\frac{n}{2}}e^{-c\frac{|x-y|^2}{t-s}} \left(1+\frac{\sqrt{t-s}}{|y|}\right)^\gamma dy \right]\\
   &\lesssim&  \|\mathcal{I}\|_{L^\infty(t-K^{2\beta}\lambda(t)^2,t)}L^{\frac{n-2}{2}}\lambda(t)^{\frac{n-2}{2}}\int_{t-K^{2\beta}\lambda(t)^2}^{t}\left(t-s\right)^{-\frac{n}{2}}e^{-c\frac{|x|^2}{t-s}} \left(1+\frac{\sqrt{t-s}}{L\lambda(t)}\right)^\gamma \\
  &\lesssim& \|\mathcal{I}\|_{L^\infty(t-K^{2\beta}\lambda(t)^2,t)}L^{\frac{n-2}{2}}\lambda(t)^{\frac{n-2}{2}} \left(1+\frac{K^\beta}{L}\right)^\gamma\int_{t-K^{2\beta}\lambda(t)^2}^{t}\left(t-s\right)^{-\frac{n}{2}}e^{-c\frac{|x|^2}{t-s}}\\
 &\lesssim& \|\mathcal{I}\|_{L^\infty(t-K^{2\beta}\lambda(t)^2,t)}L^{\frac{n-2}{2}}\lambda(t)^{\frac{n-2}{2}} \left(1+\frac{K^\beta}{L}\right)^\gamma |x|^{2-n} \\
 &\lesssim &  L^{\frac{n-2}{2}-\gamma} K^{\beta\gamma}\varepsilon^{\frac{n-2}{2}} |x|^{2-n}\|\mathcal{I}\|_{L^\infty(t-K^2\lambda(t)^2,t)}.
\end{eqnarray*}

Putting these two estimates together, we obtain \eqref{estimate on phi 2,2}.
\end{proof}
\begin{coro}\label{coro outer 2}
  If $ |x|\geq   K\lambda(t)/2$, then
\begin{equation}\label{estimate on phi 2,2}
 \overline{\phi}_2(x,t)\lesssim   \left[K^{-\frac{n-2}{2}(\beta-1)+\beta\gamma}+\frac{L^{\frac{n-2}{2}-\gamma}}{K^{\frac{n-2}{2}-\beta\gamma}} \right] \left(K\varepsilon\right)^{-\frac{n-2}{2}}\|\mathcal{I}\|_{L^\infty(-1,t)}.
\end{equation}
\end{coro}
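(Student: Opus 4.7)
The plan is to deduce the corollary as an immediate specialization of Lemma~\ref{lem estimate on phi 2,2} to the region $|x|\geq K\lambda(t)/2$, combined with the Harnack-type control $\lambda(t)\sim\varepsilon$ from \eqref{bound on parameters 1}. Since the lemma already provides a pointwise estimate on $\overline{\phi}_2(x,t)$ with explicit $|x|$-dependence, the only remaining work is to convert the ratios $\varepsilon/|x|$ into negative powers of $K$ and to regroup the resulting powers of $\varepsilon$ into the target form $(K\varepsilon)^{-(n-2)/2}$.

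First I would invoke the normalized setting of Section~\ref{sec Harnack for lambda}, so that by \eqref{bound on parameters 1} the scaling parameter satisfies $\lambda(t)\sim\varepsilon$ uniformly for $t\in[-1,1]$. In particular, the hypothesis $|x|\geq K\lambda(t)/2$ gives $|x|\gtrsim K\varepsilon$, which immediately yields the two elementary bounds
\[
\frac{\varepsilon^{\gamma}}{|x|^{\gamma}}\lesssim 1,\qquad \frac{\varepsilon^{n-2}}{|x|^{n-2}}\lesssim K^{-(n-2)}.
\]
For the first summand of Lemma~\ref{lem estimate on phi 2,2} the weaker inequality suffices: writing $\varepsilon^{-(n-2)/2}=K^{(n-2)/2}(K\varepsilon)^{-(n-2)/2}$, the powers of $K$ collect to $-(n-2)\beta/2+\beta\gamma+(n-2)/2=-(n-2)(\beta-1)/2+\beta\gamma$, matching the first term in the corollary. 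For the second summand the extra decay $K^{-(n-2)}$ is needed: absorbing it and regrouping the factors of $\varepsilon$ in the same way yields $L^{(n-2)/2-\gamma}K^{\beta\gamma-(n-2)/2}(K\varepsilon)^{-(n-2)/2}$, which rewrites as the second term of the corollary.

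No real obstacle is anticipated; the statement is essentially a bookkeeping repackaging of Lemma~\ref{lem estimate on phi 2,2} that trades the pointwise $|x|$-dependence for uniform constants controlled by $K,L,\varepsilon$ in the annular region of interest. The only point deserving a moment of care is that the $\|\mathcal{I}\|_{L^{\infty}(-1,t)}$ appearing on the right of the lemma coincides with the one in the corollary, which follows directly from our definition and the Harnack interval on which \eqref{bound on parameters 1} applies. This packaged form is precisely what is needed to control $\mathcal{O}(t)$ over $B_{2K\lambda(t)}(\xi(t))\setminus B_{K\lambda(t)}(\xi(t))$ in the subsequent inner-outer iteration.
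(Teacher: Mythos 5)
Your proposal is correct and is essentially the paper's (implicit) argument: the corollary is stated as an immediate consequence of Lemma \ref{lem estimate on phi 2,2} obtained by substituting $|x|\gtrsim K\varepsilon$ (valid since $\lambda(t)\sim\varepsilon$ by \eqref{bound on parameters 1}) and regrouping powers of $K$ and $\varepsilon$ into $(K\varepsilon)^{-\frac{n-2}{2}}$. Your exponent bookkeeping for both summands checks out exactly against the stated bound.
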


\begin{lem}[Estimate on $\overline{\phi}_3$]\label{lem estimate on phi 3,2}
If $|x|\geq K\lambda(t)/2$, then
\begin{equation}\label{estimate on phi 3,1}
 \overline{\phi}_3(x,t)\lesssim  \varepsilon^{\frac{n-6}{2}}|x|^{4-n}\|\lambda\lambda^\prime\|_{L^\infty(-1,t)}.
\end{equation}
\end{lem}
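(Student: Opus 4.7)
The argument parallels the proof of Lemma \ref{lem estimate outer 3} but benefits crucially from the Harnack bound \eqref{bound on parameters 1}, which forces $\lambda(s)\sim\varepsilon$ uniformly on $(-1,1)$ and so allows us to extract the scaling parameter dependence cleanly. I would start from the heat kernel representation
\[
\overline{\phi}_3(x,t) \lesssim \int_{-1}^{t}\int_{|y|\geq L\lambda(s)} G(x,t;y,s)\,\lambda(s)^{\frac{n-4}{2}}|\lambda'(s)|\,|y|^{2-n}\,dyds,
\]
write $\lambda(s)^{(n-4)/2}|\lambda'(s)|=\lambda(s)^{(n-6)/2}|\lambda(s)\lambda'(s)|\lesssim\varepsilon^{(n-6)/2}\|\lambda\lambda'\|_{L^\infty(-1,t)}$, and reduce the task to proving
\[
\mathcal{J}(x,t):=\int_{-1}^{t}\int_{|y|\geq L\varepsilon} G(x,t;y,s)\,|y|^{2-n}\,dyds \;\lesssim\; |x|^{4-n}
\]
for $|x|\geq K\varepsilon/2$.

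Ignoring first the small $\gamma$-weights in the Gaussian bound \eqref{heat kernel bound}, this reduces to the standard heat-convolution identity
\[
\int_{\R^n}(4\pi\tau)^{-n/2}e^{-|x-y|^2/(4\tau)}|y|^{2-n}dy\;\lesssim\;(|x|+\sqrt{\tau})^{2-n},
\]
valid because $2-n\in(-n,0)$. Setting $\tau=t-s$, the remaining one-variable integral
\[
\int_0^{t+1}(|x|+\sqrt{\tau})^{2-n}\,d\tau
\]
is split at $\tau=|x|^2$: on $\tau\le|x|^2$ the integrand is $\lesssim|x|^{2-n}$, contributing $|x|^{4-n}$; on $\tau\ge|x|^2$ the integrand is $\lesssim\tau^{(2-n)/2}$, and since $n\geq 7$ the exponent is strictly less than $-1$, producing a tail $\lesssim|x|^{4-n}$ as well.

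The only technical point is propagating the weights $(1+\sqrt{t-s}/|x|)^\gamma(1+\sqrt{t-s}/|y|)^\gamma$ through the argument. I would dominate each factor by $1+(\sqrt{\tau}/r)^\gamma$ and split $\mathcal{J}$ into four pieces. The $1\cdot 1$ piece gives the clean bound; the cross pieces involve extra factors of $\tau^{\gamma/2}|y|^{-\gamma}$ or $\tau^{\gamma/2}|x|^{-\gamma}$, which, because $\gamma$ is fixed and can be taken arbitrarily small, yield at worst $|x|^{4-n-\gamma}$ or $\varepsilon^{-\gamma}|x|^{4-n}$; both are absorbed since $|x|\geq K\varepsilon/2$ and the implicit constant may depend on the fixed parameters $\gamma,K,L,\beta,\mu$. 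The main obstacle is therefore purely bookkeeping: making sure the small $\gamma$-loss from the Hardy-type heat kernel is swallowed by the slack $n\geq 7$ provides in the tail, so that no residual $|x|^{-\gamma}$ or $\varepsilon^{-\gamma}$ survives in the final bound.
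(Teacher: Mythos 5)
Your argument is essentially the paper's proof: the paper also starts from the heat kernel representation, factors out $\lambda(s)^{\frac{n-4}{2}}|\lambda'(s)|=\lambda(s)^{\frac{n-6}{2}}|\lambda(s)\lambda'(s)|\lesssim\varepsilon^{\frac{n-6}{2}}\|\lambda\lambda'\|_{L^\infty(-1,t)}$ via \eqref{bound on parameters 1}, applies the weighted convolution bound of Lemma \ref{lem heat kernel integral 1} (which is precisely your spatial estimate with the $(1+\sqrt{t-s}/|y|)^\gamma$ weight already built in, returning $(|x|+\sqrt{t-s})^{2-n}(1+\sqrt{t-s}/|x|)^{\gamma}$), and concludes with the same splitting of $\int(1+\sqrt{t-s}/|x|)^{2\gamma}(|x|+\sqrt{t-s})^{2-n}\,ds$ at $t-s=|x|^2$. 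One caveat: your assertion that worst-case residuals $|x|^{4-n-\gamma}$ or $\varepsilon^{-\gamma}|x|^{4-n}$ are ``absorbed since $|x|\geq K\lambda(t)/2$'' is not a valid justification (neither factor is bounded by a constant as $\varepsilon\to 0$), but in fact no such loss arises if the bookkeeping is done as you then indicate: on $t-s\le|x|^2$ the weights are $O(1)$, and on the tail $t-s\ge|x|^2$ the extra $(t-s)^{\gamma}|x|^{-2\gamma}$ is exactly compensated because $\frac{2-n}{2}+\gamma<-1$ for $n\ge 7$ and $\gamma$ small, so the clean $|x|^{4-n}$ survives.
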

\begin{proof}
  As in the proof of Lemma \ref{lem estimate outer 3}, we still have the heat kernel representation
\begin{eqnarray*}
    \overline{\phi}_3(x,t) &\lesssim &  \int_{-81/100}^{t} \left(1+\frac{\sqrt{t-s}}{|x|}\right)^\gamma \lambda(s)^{\frac{n-4}{2}}\left|\lambda^\prime(s)\right|\\
   && \times \left[\int_{B_1}(t-s)^{-\frac{n}{2}}  \left(1+\frac{\sqrt{t-s}}{|y|}\right)^\gamma e^{-c\frac{|x-y|^2}{t-s}}|y|^{2-n}dy\right]ds\\
&\lesssim& \varepsilon^{\frac{n-6}{2}} \|\lambda\lambda^\prime\|_{L^\infty(-1,t)}\\
&&\times  \int_{-81/100}^{t} \left(1+\frac{\sqrt{t-s}}{|x|}\right)^{2\gamma} \left(\sqrt{t-s}+|x|\right)^{2-n}ds  \\
&\lesssim & \varepsilon^{\frac{n-6}{2}}\|\lambda\lambda^\prime\|_{L^\infty(-1,t)}|x|^{4-n}.
\end{eqnarray*}
Here we have used  Lemma \ref{lem heat kernel integral 1} to deduce the second inequality.
\end{proof}
\begin{coro}\label{coro outer 3}
  If $|x|\geq  K\lambda(t)/2$, then
\begin{equation}\label{estimate on phi 3,2}
 \overline{\phi}_3(x,t)\lesssim   K^{-\frac{n-6}{2}} \left(K\varepsilon\right)^{-\frac{n-2}{2}}\|\lambda\lambda^\prime\|_{L^\infty(-1,t)}.
\end{equation}
\end{coro}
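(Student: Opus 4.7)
The plan is to deduce Corollary \ref{coro outer 3} as a direct consequence of the preceding Lemma \ref{lem estimate on phi 3,2}, by simply substituting the lower bound on $|x|$ together with the size control on $\lambda(t)$.

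First I would invoke Lemma \ref{lem estimate on phi 3,2}, which already provides the pointwise bound
\[
\overline{\phi}_3(x,t)\lesssim \varepsilon^{\frac{n-6}{2}}|x|^{4-n}\|\lambda\lambda'\|_{L^\infty(-1,t)}
\]
valid for all $|x|\geq K\lambda(t)/2$. The role of the corollary is simply to turn this pointwise estimate into a uniform bound that exhibits the correct powers of $K$ and $\varepsilon$, which is the form actually needed in subsequent sections.

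Next I would use the Harnack-type control \eqref{bound on parameters 1}, which gives $\lambda(t)=\varepsilon+O(\varepsilon^{1+\kappa})$, so that $\lambda(t)\sim \varepsilon$ uniformly in $t\in[-1,1]$. Combined with the hypothesis $|x|\geq K\lambda(t)/2$, this yields $|x|\gtrsim K\varepsilon$, and hence $|x|^{4-n}\lesssim (K\varepsilon)^{4-n}$ since $n\geq 7$ makes the exponent $4-n$ negative.

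Plugging this into the lemma gives
\[
\overline{\phi}_3(x,t)\lesssim \varepsilon^{\frac{n-6}{2}}(K\varepsilon)^{4-n}\|\lambda\lambda'\|_{L^\infty(-1,t)}.
\]
Finally a short algebraic rearrangement
\[
\varepsilon^{\frac{n-6}{2}}(K\varepsilon)^{4-n}
= K^{4-n}\varepsilon^{\frac{n-6}{2}+4-n}
= K^{4-n}\varepsilon^{-\frac{n-2}{2}}
= K^{-\frac{n-6}{2}}(K\varepsilon)^{-\frac{n-2}{2}}
\]
delivers the claimed bound. Since both Lemma \ref{lem estimate on phi 3,2} and \eqref{bound on parameters 1} are already established, there is essentially no obstacle here; the only point worth being careful about is checking the arithmetic of the exponents and recording that the argument needs $n\geq 7$ only through the sign of $4-n$ (which was already used in the lemma) and through the validity of \eqref{bound on parameters 1}.
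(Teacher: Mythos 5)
Your proposal is correct and matches the paper's (implicit) argument: the corollary is stated immediately after Lemma \ref{lem estimate on phi 3,2} precisely because it follows by inserting $|x|\gtrsim K\varepsilon$ (which comes from $|x|\geq K\lambda(t)/2$ together with $\lambda(t)=\varepsilon+O(\varepsilon^{1+\kappa})$ from \eqref{bound on parameters 1}) into the pointwise bound $\varepsilon^{\frac{n-6}{2}}|x|^{4-n}$, and your exponent arithmetic $K^{4-n}\varepsilon^{-\frac{n-2}{2}}=K^{-\frac{n-6}{2}}(K\varepsilon)^{-\frac{n-2}{2}}$ checks out. No gaps.
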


\begin{lem}[Estimate on $\overline{\phi}_4$]\label{lem estimate on phi 4,2}
If $|x|\geq K\lambda(t)/2$, then
\begin{equation}\label{estimate on phi 4,1}
 \overline{\phi}_4(x,t)\lesssim  \varepsilon^{\frac{n-4}{2}}|x|^{3-n}\|\lambda\xi^\prime\|_{L^\infty(-1,t)}.
\end{equation}
\end{lem}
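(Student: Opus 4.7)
The plan is to follow the same strategy used to prove the preceding Lemma for $\overline{\phi}_3$, with the only essential change being that the kernel weight $|y|^{2-n}$ coming from $Z_{n+1}$ is replaced by $|y|^{1-n}$ coming from $Z_j$, $j=1,\dots,n$. So I would begin from the heat-kernel representation
\[
\overline{\phi}_4(x,t) \lesssim \int_{-81/100}^{t}\int_{B^c_{L\lambda(s)}} G(x,t;y,s)\,\lambda(s)^{\frac{n-2}{2}}|\xi'(s)|\,|y|^{1-n}\,dy\,ds,
\]
insert the Gaussian bound \eqref{heat kernel bound}, and factor out $(1+\sqrt{t-s}/|x|)^\gamma$. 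Unlike in the proof of the previous Lemma, the Harnack-type bounds \eqref{bound on parameters 1}--\eqref{bound on parameters 2} are now available, so I would simply estimate $\lambda(s)^{(n-2)/2}\lesssim \varepsilon^{(n-2)/2}$ and pull $\|\lambda\xi'\|_{L^\infty(-1,t)}$ out of the integral at the outset, which removes the need to split the integration interval into the three pieces $(-1,t-\lambda(t)^{2\mu})$, $(t-\lambda(t)^{2\mu},t-K^{2\beta}\lambda(t)^2)$, $(t-K^{2\beta}\lambda(t)^2,t)$ that were required in the proof of Lemma \ref{lem estimate outer 4}.

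Once this reduction is done, the problem collapses to bounding the inner spatial integral
\[
\mathcal{Q}(x,t-s):=\int_{B_1}(t-s)^{-n/2}\Bigl(1+\tfrac{\sqrt{t-s}}{|y|}\Bigr)^\gamma e^{-c\frac{|x-y|^2}{t-s}}|y|^{1-n}\,dy.
\]
This is the direct analogue of the integral treated by Lemma \ref{lem heat kernel integral 1} in the $\overline{\phi}_3$ case; the same argument (splitting into $\{|y|\leq \sqrt{t-s}\}$ and its complement, absorbing the $|y|^{-\gamma}$ factor into the power of $|y|$, and using the standard Gaussian convolution bound) yields
\[
\mathcal{Q}(x,t-s)\lesssim \bigl(\sqrt{t-s}+|x|\bigr)^{1-n}\Bigl(1+\tfrac{\sqrt{t-s}}{|x|}\Bigr)^{\gamma},
\]
the exponent $1-n$ replacing $2-n$ simply because the weight in the integrand has one more power of $|y|^{-1}$.

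Combining these estimates gives
\[
\overline{\phi}_4(x,t)\lesssim \varepsilon^{\frac{n-4}{2}}\|\lambda\xi'\|_{L^\infty(-1,t)}\int_{-81/100}^{t}\Bigl(1+\tfrac{\sqrt{t-s}}{|x|}\Bigr)^{2\gamma}\bigl(\sqrt{t-s}+|x|\bigr)^{1-n}\,ds,
\]
and the remaining time integral is evaluated via the change of variable $u=\sqrt{t-s}$: the factor $(1+\sqrt{t-s}/|x|)^{2\gamma}$ contributes only a harmless bounded piece because when $\sqrt{t-s}\gg |x|$ the decay in $(\sqrt{t-s}+|x|)^{1-n}$ dominates, while when $\sqrt{t-s}\lesssim |x|$ that factor is $O(1)$. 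Since $n\geq 7>3$ the integral $\int_0^{\infty}(u+|x|)^{1-n}\,u\,du\sim |x|^{3-n}$ converges, giving the claimed bound $|x|^{3-n}$.

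I do not anticipate any genuine obstacle: every piece of the argument is strictly parallel to the proof of Lemma \ref{lem estimate outer 3} / Lemma \ref{lem estimate on phi 3,2}, and the only bookkeeping point is to verify that the new exponent $1-n$ (versus $2-n$) is still integrable at infinity in the time variable, which holds for all $n\geq 4$.
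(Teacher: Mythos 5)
Your proposal is correct and follows essentially the same route as the paper: the paper's proof also writes $\lambda(s)^{\frac{n-2}{2}}|\xi'(s)|\lesssim \varepsilon^{\frac{n-4}{2}}\|\lambda\xi'\|_{L^\infty(-1,t)}$ up front (no three-way time splitting), bounds the inner spatial integral by $(\sqrt{t-s}+|x|)^{1-n}(1+\sqrt{t-s}/|x|)^{\gamma}$ via Lemma \ref{lem heat kernel integral 1} (which is already stated for general weight $|y|^{-\nu}$, so no re-derivation is needed for $\nu=n-1$), and evaluates the remaining time integral to get $|x|^{3-n}$.
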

\begin{proof}
  As in the proof of Lemma \ref{lem estimate outer 4}, we still have the heat kernel representation
\begin{eqnarray*}
    \overline{\phi}_4(x,t) &\lesssim &  \int_{-81/100}^{t} \left(1+\frac{\sqrt{t-s}}{|x|}\right)^\gamma \lambda(s)^{\frac{n-2}{2}}\left|\xi^\prime(s)\right|\\
   && \times \left[\int_{B_1}(t-s)^{-\frac{n}{2}}  \left(1+\frac{\sqrt{t-s}}{|y|}\right)^\gamma e^{-c\frac{|x-y|^2}{t-s}}|y|^{1-n}dy\right]ds\\
&\lesssim& \varepsilon^{\frac{n-4}{2}} \|\lambda\xi^\prime\|_{L^\infty(-1,t)}\\
&&\times  \int_{-81/100}^{t} \left(1+\frac{\sqrt{t-s}}{|x|}\right)^{2\gamma} \left(\sqrt{t-s}+|x|\right)^{1-n}ds  \\
&\lesssim & \varepsilon^{\frac{n-4}{2}}\|\lambda\xi^\prime\|_{L^\infty(-1,t)}|x|^{3-n}.
\end{eqnarray*}
Here we have used  Lemma \ref{lem heat kernel integral 1} to deduce the second inequality.
\end{proof}

\begin{coro}
  If $ |x|\geq  K\lambda(t)/2$, then
\begin{equation}\label{estimate on phi 4,2}
 \overline{\phi}_4(x,t)\lesssim   K^{-\frac{n-4}{2}} \left(K\varepsilon\right)^{-\frac{n-2}{2}}\|\lambda\xi^\prime\|_{L^\infty(-1,t)}.
\end{equation}
\end{coro}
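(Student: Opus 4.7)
The plan is to obtain this corollary as a direct consequence of the preceding lemma by invoking the Harnack-type control \eqref{bound on parameters 1} on the scaling parameter $\lambda(t)$. Recall that \eqref{bound on parameters 1} gives $\lambda(t) = \varepsilon + O(\varepsilon^{1+\kappa})$ for all $t \in [-1,1]$, so that in particular $\lambda(t) \geq \varepsilon/2$ once $\varepsilon$ is small enough. Consequently, the hypothesis $|x| \geq K\lambda(t)/2$ of the corollary automatically yields the lower bound $|x| \geq K\varepsilon/4$.

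From Lemma~\ref{lem estimate on phi 4,2} we have the pointwise bound
\[
\overline{\phi}_4(x,t) \lesssim \varepsilon^{\frac{n-4}{2}} |x|^{3-n} \|\lambda \xi'\|_{L^\infty(-1,t)}.
\]
Since $n \geq 7$ and hence $3-n < 0$, the function $|x| \mapsto |x|^{3-n}$ is decreasing, so the lower bound $|x| \geq K\varepsilon/4$ gives $|x|^{3-n} \lesssim (K\varepsilon)^{3-n}$. Substituting this into the displayed inequality yields
\[
\overline{\phi}_4(x,t) \lesssim \varepsilon^{\frac{n-4}{2}} (K\varepsilon)^{3-n} \|\lambda \xi'\|_{L^\infty(-1,t)} = K^{3-n}\, \varepsilon^{-\frac{n-2}{2}} \|\lambda \xi'\|_{L^\infty(-1,t)},
\]
and the right-hand side is exactly $K^{-\frac{n-4}{2}} (K\varepsilon)^{-\frac{n-2}{2}} \|\lambda \xi'\|_{L^\infty(-1,t)}$ after rearranging exponents of $K$.

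There is no real obstacle here: the corollary is a purely algebraic consequence of Lemma~\ref{lem estimate on phi 4,2} together with the uniform two-sided comparison $\lambda(t) \sim \varepsilon$ on $[-1,1]$ supplied by the Harnack inequality \eqref{Harnack} established in Section~\ref{sec Harnack for lambda}. The same pattern handles the earlier two corollaries (Corollaries following Lemmas~\ref{lem estimate on phi 2,2} and~\ref{lem estimate on phi 3,2}) uniformly: in each case one just inserts $|x|^{k-n} \lesssim (K\varepsilon)^{k-n}$ for the appropriate $k \in \{2,3,4\}$ into the preceding pointwise bound.
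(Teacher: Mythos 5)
Your proof is correct and is exactly the (implicit) argument of the paper: the corollary is obtained from Lemma \ref{lem estimate on phi 4,2} by using $\lambda(t)\sim\varepsilon$ from \eqref{bound on parameters 1} to convert the hypothesis $|x|\geq K\lambda(t)/2$ into $|x|\gtrsim K\varepsilon$ and then inserting $|x|^{3-n}\lesssim(K\varepsilon)^{3-n}$, and the exponent bookkeeping $\varepsilon^{\frac{n-4}{2}}(K\varepsilon)^{3-n}=K^{-\frac{n-4}{2}}(K\varepsilon)^{-\frac{n-2}{2}}$ checks out.
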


\begin{lem}[Estimate on $\overline{\phi}_5$]\label{lem estimate on phi 5,2}
If $  |x|\geq  K\lambda(t)/2$, then
\begin{equation}\label{estimate on phi 5,1}
 \overline{\phi}_5(x,t)  \lesssim   \varepsilon^{\frac{n-4}{2}}|x|^{3-n} \left\|\left(\frac{a}{\lambda},\lambda a^\prime-\mu_0\frac{a}{\lambda}, \lambda \lambda^\prime,\lambda\xi^\prime \right)\right\|_{L^\infty(-1,t)}.
\end{equation}
\end{lem}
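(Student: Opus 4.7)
The plan is to mirror, step by step, the proof of Lemma \ref{lem estimate on phi 4,2} for $\overline{\phi}_4$, with the difference that the source weight is now the exponentially localized $e^{-c|y|/\lambda(s)}$ (coming from the decay of $Z_0$) rather than the polynomial $|y|^{1-n}$. The key simplification, as in Lemmas \ref{lem estimate on phi 3,2}--\ref{lem estimate on phi 4,2}, is the uniform bound $\lambda(s)=\varepsilon+O(\varepsilon^{1+\kappa})$ from \eqref{bound on parameters 1}, valid on the whole time interval $(-1,t)$, which allows us to handle the time integration in a single shot rather than splitting into the three subintervals required in the old Lemma \ref{lem estimate outer 5}.

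First, combining the heat-kernel representation for $\overline{\phi}_5$ in \eqref{outer eqn 3}, the Gaussian bound \eqref{heat kernel bound}, the pointwise estimate \eqref{estimate on F5} on $|F_5|$, and $\lambda(s)\sim\varepsilon$, I would write
\[
\overline{\phi}_5(x,t)\lesssim \mathcal{M}\int_{-1}^{t}\Bigl(1+\tfrac{\sqrt{t-s}}{|x|}\Bigr)^{\!\gamma}\varepsilon^{-(n+2)/2}
\underbrace{\int_{B_{L\varepsilon}^c}(t-s)^{-n/2}e^{-c|x-y|^2/(t-s)}\Bigl(1+\tfrac{\sqrt{t-s}}{|y|}\Bigr)^{\!\gamma}e^{-c|y|/\varepsilon}dy}_{=:I(x,t,s)}\,ds,
\]
with $\mathcal{M}:=\bigl\|\bigl(\tfrac{a}{\lambda},\,\lambda a'-\mu_0\tfrac{a}{\lambda},\,\lambda\lambda',\,\lambda\xi'\bigr)\bigr\|_{L^\infty(-1,t)}$.

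Second, I would apply Lemma \ref{lem heat kernel integral 2} from the appendix to control $I(x,t,s)$. This is the analog of Lemma \ref{lem heat kernel integral 1} used in the proofs of Lemmas \ref{lem estimate on phi 3,2}--\ref{lem estimate on phi 4,2}, adapted to the exponential weight centered at the origin. The expected form of the bound, consistent with the $|y|^{1-n}$-type input that produces the $|x|^{3-n}$ output in the $\overline{\phi}_4$ argument, is
\[
I(x,t,s)\lesssim \varepsilon^{n-1}\Bigl(1+\tfrac{\sqrt{t-s}}{|x|}\Bigr)^{\!\gamma}(\sqrt{t-s}+|x|)^{1-n},
\]
where the $\varepsilon^{n-1}$ encodes that the source, although of total $L^1$ mass $\sim\varepsilon^n$, produces a response one order better in the decay exponent than a pure point mass would due to convolution against the heat semigroup at scale $\varepsilon$ (the exclusion $|y|\geq L\varepsilon$ contributes an $e^{-cL}$ which is absorbed into the implicit constant since $L$ is fixed large). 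Substituting this into the preceding display and performing the resulting one-dimensional time integral exactly as in the $\overline{\phi}_4$ case,
\[
\int_{-1}^{t}\Bigl(1+\tfrac{\sqrt{t-s}}{|x|}\Bigr)^{\!2\gamma}(\sqrt{t-s}+|x|)^{1-n}\,ds\lesssim |x|^{3-n},
\]
and collecting the $\varepsilon$-powers $\varepsilon^{-(n+2)/2}\cdot\varepsilon^{n-1}=\varepsilon^{(n-4)/2}$, I would arrive at the claimed bound.

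The main obstacle is the precise estimate for $I(x,t,s)$, i.e.\ the formulation and proof of Lemma \ref{lem heat kernel integral 2} in the above sharp form. The subtle point is that a naive approach based on pulling the source mass out, which would give $I\lesssim \varepsilon^n G(x,t;0,s)$ and hence only $|x|^{2-n}$ decay after time integration, is one power of $|x|^{-1}$ weaker than what the statement requires. Gaining this extra power forces a careful split into the regimes $\sqrt{t-s}\leq \varepsilon$ (where the heat kernel is sharply localized and the effective spatial scale of the convolution is set by $\varepsilon$) and $\sqrt{t-s}>\varepsilon$ (where standard Gaussian estimates apply), together with an analysis of the interference between the Gaussian factor $e^{-c|x-y|^2/(t-s)}$ and the exponential weight $e^{-c|y|/\varepsilon}$. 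Once this estimate is available, the rest of the argument is routine bookkeeping following the template of the preceding four lemmas.
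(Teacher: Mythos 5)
Your proposal is correct and, at bottom, takes the same route as the paper: reduce the exponentially localized source to the $|y|^{1-n}$ source already handled for $\overline{\phi}_4$, then run the identical heat-kernel computation to get $\varepsilon^{(n-4)/2}|x|^{3-n}$. The only difference is \emph{where} the reduction happens. The paper performs it at the level of the source term itself: on the support $|y|\geq L\lambda(s)$ one has the pointwise inequality $\lambda(s)^{-\frac{n+2}{2}}e^{-c|y|/\lambda(s)}\lesssim \lambda(s)^{\frac{n-4}{2}}|y|^{1-n}$ (since $(|y|/\lambda)^{n-1}e^{-c|y|/\lambda}\lesssim 1$ there), so the differential inequality for $\overline{\phi}_5$ becomes verbatim the one for $\overline{\phi}_4$ with coefficient $\mathcal{M}\,\lambda^{\frac{n-4}{2}}$ in place of $\lambda^{\frac{n-2}{2}}|\xi'|$, and the proof literally says ``proceed as in the previous lemma.'' What you flag as the main obstacle — a sharpened form of Lemma \ref{lem heat kernel integral 2} giving $I(x,t,s)\lesssim \varepsilon^{n-1}(\sqrt{t-s}+|x|)^{1-n}(1+\sqrt{t-s}/|x|)^{\gamma}$ — is true, but it dissolves under the same one-line observation: replace $e^{-c|y|/\varepsilon}$ by $\varepsilon^{n-1}|y|^{1-n}$ on $|y|\geq L\varepsilon$ and invoke Lemma \ref{lem heat kernel integral 1} with $\nu=n-1$. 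No splitting into the regimes $\sqrt{t-s}\lessgtr\varepsilon$ and no analysis of the interference between the Gaussian and the exponential weight is required. One small correction to your discussion of the ``naive'' point-mass bound: $\varepsilon^{n}|x|^{2-n}\leq \varepsilon^{n-1}|x|^{3-n}$ on the range $|x|\geq K\varepsilon/2$, so the point-mass heuristic would be \emph{stronger}, not weaker, than the target; the reason one settles for $|x|^{3-n}$ with the prefactor $\varepsilon^{n-1}$ is simply that the crude polynomial domination of the exponential is the cheapest bound that is uniform over the whole support and still closes the estimate. This does not affect the validity of your argument.
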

\begin{proof}
For $|y|\geq L\lambda(s)$,
\[\lambda(s)^{-\frac{n+2}{2}}e^{-c|y|/\lambda(s)}\lesssim \lambda(s)^{\frac{n-4}{2}}|y|^{1-n}.\]
Hence
\begin{eqnarray*}
   &&  \partial_t\overline{\phi}_5-\Delta\overline{\phi}_5 - \left[C+\frac{\delta}{|x|^{ 2}}\right]\overline{\phi}_5+\xi^\prime(t)\cdot \nabla\overline{\phi}_5\\
    &\lesssim& \left[\frac{|a(t)|}{\lambda(t)}+\Big|\lambda(t)a^\prime(t)-\mu_0\frac{a(t)}{\lambda(t)}\Big| + |\lambda(t)\lambda^\prime(t)|+|\lambda(t)\xi^\prime(t)|\right]\lambda(t)^{\frac{n-4}{2}}|x|^{1-n}\chi_{B_{L\lambda(t)}^c}.
\end{eqnarray*}
Then we can proceed as in the previous lemma to conclude.
\end{proof}

\begin{coro}\label{coro outer 5}
  If $  |x|\geq  K\lambda(t)/2$, then
\begin{equation}\label{estimate on phi 5,2}
 \overline{\phi}_5(x,t) \lesssim K^{-\frac{n-4}{2}} \left(K\varepsilon\right)^{-\frac{n-2}{2}} \left\|\left(\frac{a}{\lambda},\lambda a^\prime-\mu_0\frac{a}{\lambda}, \lambda \lambda^\prime,\lambda\xi^\prime \right)\right\|_{L^\infty(-1,t)}.
\end{equation}
\end{coro}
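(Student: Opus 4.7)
The proposed proof is a routine reduction of Lemma \ref{lem estimate on phi 5,2} to the specific regime $|x|\geq K\lambda(t)/2$, exactly paralleling the way Corollary \ref{coro outer 3} is extracted from Lemma \ref{lem estimate on phi 3,2}, and Corollary \ref{coro outer 2} from Lemma \ref{lem estimate on phi 2,2}.

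Write $\mathcal{M}(t)$ as shorthand for the $L^\infty(-1,t)$-norm appearing on the right-hand side. First I would invoke Lemma \ref{lem estimate on phi 5,2} directly to obtain
\[\overline{\phi}_5(x,t)\lesssim \varepsilon^{(n-4)/2}\,|x|^{3-n}\,\mathcal{M}(t).\]
Next, I would use the normalization set up immediately after the Harnack inequality \eqref{Harnack}: by \eqref{bound on parameters 1}, $\lambda(t)=\varepsilon+O(\varepsilon^{1+\kappa})$, so $\lambda(t)\sim\varepsilon$ uniformly in $t\in[-1,1]$. Consequently the hypothesis $|x|\geq K\lambda(t)/2$ forces $|x|\gtrsim K\varepsilon$, hence $|x|^{3-n}\lesssim (K\varepsilon)^{3-n}$, and substituting gives
\[\overline{\phi}_5(x,t)\lesssim \varepsilon^{(n-4)/2}(K\varepsilon)^{3-n}\mathcal{M}(t).\]

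A quick exponent check confirms the claim: $(n-4)/2+(3-n)=-(n-2)/2$ and $-(n-4)/2-(n-2)/2=3-n$, so the previous display rewrites as $K^{-(n-4)/2}(K\varepsilon)^{-(n-2)/2}\mathcal{M}(t)$, which is exactly \eqref{estimate on phi 5,2}. There is no essential obstacle here; the corollary is stated separately only because this cleaner $K^{-(n-4)/2}(K\varepsilon)^{-(n-2)/2}$ form is the one directly fed into the iteration scheme of Proposition \ref{prop inner estimate 2}/Proposition \ref{prop outer estimate 2} and into the linearized Pohozaev identity of Section \ref{sec Pohozaev}, uniform with the analogous Corollaries \ref{coro outer 2} and \ref{coro outer 3}.
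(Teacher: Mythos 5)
Your reduction is correct and is exactly what the paper intends: the corollary follows from Lemma \ref{lem estimate on phi 5,2} by using $\lambda(t)\sim\varepsilon$ from \eqref{bound on parameters 1} to bound $|x|^{3-n}\lesssim (K\varepsilon)^{3-n}$, and your exponent bookkeeping $\varepsilon^{(n-4)/2}(K\varepsilon)^{3-n}=K^{-\frac{n-4}{2}}(K\varepsilon)^{-\frac{n-2}{2}}$ checks out. The paper states the corollary without proof precisely because this is the whole argument, uniform with Corollaries \ref{coro outer 2} and \ref{coro outer 3}.
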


Combining Corollary \ref{coro outer 2}-Corollary \ref{coro outer 5} with Lemma \ref{lem estimate outer 1}, proceeding as in the proof of Proposition \ref{prop inner to outer} (to estimate $|\nabla\phi_1|$-$|\nabla\phi_5|$), we find two constants $C(K,L)$ and $\sigma(K,L)\ll 1$ such that, for any $t\in(-81/100,81/100)$,
\begin{equation}\label{inner to outer 2}
  \mathcal{O}(t)\leq C(K,L)\varepsilon^{\frac{n-2}{2}-\gamma}+\sigma(K,L)\sup_{-1<s<t}\mathcal{D}(s).
\end{equation}
Combining this inequality with Proposition \ref{prop decay estimate}, we obtain (compare this with \eqref{decay relation 4})
\begin{equation}\label{decay relation 5}
  \mathcal{D}(t)\leq \frac{1}{2} \sup_{[t-C\varepsilon^2,t+C\varepsilon^2]}\mathcal{D}(s)+C\varepsilon^{\frac{n-2}{2}-\gamma}.
\end{equation}
Here we have used the fact that, by \eqref{bound on parameters 1} and the definition of inner time variable $\tau$ in Section \ref{sec inner}, we have
\begin{equation}\label{change of time variables}
  \frac{d\tau}{dt}\sim \varepsilon^{-2}.
\end{equation}

Similar to Section \ref{sec Harnack for lambda}, an iteration of \eqref{decay relation 5} from $r=1$ to $r=8/9$ gives \eqref{inner estimate 2}. This finishes the proof of Proposition \ref{prop inner estimate 2}. Plugging \eqref{inner estimate 2} into Lemma \ref{lem estimate on phi 2,2}-Lemma \ref{lem estimate on phi 5,2}, we get Proposition \ref{prop outer estimate 2}.

\section{Improved estimates on $\phi$}\label{sec regularity on phi}
\setcounter{equation}{0}

From now on we need to write down the dependence on $i$ explicitly. For $u_i$, the decomposition in Section \ref{sec decomposition} reads as
\begin{equation}\label{decomposition}
   \phi_i(x,t):=u_i(x,t)-W_{\xi_i(t),\lambda_i(t)}(x)-a_i(t)Z_{0,\xi_i(t),\lambda_i(t)}(x).
\end{equation}
The parameters satisfy   the assumptions \eqref{determination of the scaling parameter size}-\eqref{bound on parameters 2}. In particular,
\[ \varepsilon_i:=\lambda_i(0), \quad \xi_i(0)=0.\]
 In this section we  prove some uniform (in $\varepsilon_i$) estimates on $\phi_i$.

For simplicity of notation, we will denote
\[ W_i(x,t):=W_{\xi_i(t),\lambda_i(t)}(x), \quad Z_{\ast,i}(x,t):=\left(Z_{j,\xi_i(t),\lambda_i(t)}(x)\right)_{j=0,\cdots,n+1}.\]

\subsection{Uniform $L^\infty$ bound}\label{subsec uniform bound}
The first one is a uniform $L^\infty$  bound on $\phi_i$.
\begin{prop}\label{prop zeroth order on phi}
There exists a universal constant $C$ (independent of $i$) such that
  \begin{equation}\label{improved estimate}
   |\phi_i(x,t)|\leq C \quad \mbox{in} \quad  Q_{6/7}.
  \end{equation}
\end{prop}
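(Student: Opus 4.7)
The plan is a blow-up/rescaling contradiction. Suppose $M_i := \|\phi_i\|_{L^\infty(Q_{6/7})} \to +\infty$ and pick $(x_i,t_i) \in \overline{Q_{6/7}}$ with $\phi_i(x_i,t_i)=M_i$ (replacing $\phi_i$ by $-\phi_i$ if necessary). Proposition \ref{prop outer estimate 2} applied with $\gamma$ arbitrarily small gives $M_i \le C(\gamma)(\varepsilon_i+|x_i-\xi_i(t_i)|)^{-\gamma}$, so the scale $r_i := \varepsilon_i+|x_i-\xi_i(t_i)|$ tends to $0$ and $M_i$ grows more slowly than any negative power of $r_i$; Corollary \ref{coro smooth convergence outside blow up locus} moreover forces $|x_i|,|\xi_i(t_i)|\to 0$. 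Rescale around the bubble center
\[
\psi_i(y,s) := M_i^{-1}\phi_i\bigl(\xi_i(t_i)+r_iy,\,t_i+r_i^2 s\bigr),\qquad y_i := (x_i-\xi_i(t_i))/r_i \in \overline{B_1};
\]
then $|\psi_i|\le 1$, $\psi_i(y_i,0)=1$, and by \eqref{error eqn}
\[
\partial_s\psi_i - \Delta_y\psi_i \;=\; r_i^2\,p\,W_i^{p-1}(\xi_i+r_iy,\cdot)\,\psi_i \;+\; r_i^2 M_i^{-1}\bigl(\mathcal{N}_i + \mathrm{mod}_i\bigr).
\]
By Lemma \ref{lem estimate on nonlinearity} and the sub-power growth of $M_i$, $r_i^2 M_i^{-1}|\mathcal{N}_i| \lesssim r_i^2 M_i^{\bar p-1} \to 0$ locally uniformly, while Lemma \ref{lem estiamte of parameters}---which expresses $\dot a,\dot\xi,\dot\lambda$ as explicit integrals of $\phi_i$ against the $Z_j$---together with the orthogonality \eqref{orthogonal condition} reduces the modulation source $r_i^2 M_i^{-1}\mathrm{mod}_i$ to a projection onto $\mathrm{span}\{Z_0,\dots,Z_{n+1}\}$ that vanishes in the limit.

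I distinguish two regimes via $\mu_i := \lambda_i(t_i)/r_i \in (0,1]$. In \textbf{Case I} ($\mu_i \to \mu_\infty > 0$, bubble-scale rescaling), the potential $r_i^2 pW_i^{p-1}(\xi_i+r_iy,\cdot)$ converges smoothly on compact sets to $pW_{0,\mu_\infty}^{p-1}(y)$, so parabolic Schauder estimates yield $\psi_i\to\psi_\infty$ in $C^{2,1}_{loc}(\R^n\times(-\infty,0])$ satisfying the linearized bubble equation, with $|\psi_\infty|\le 1$ and $|\psi_\infty(y_\infty,0)|=1$ where $y_\infty=\lim y_i$. The rescaled orthogonality passes to $\int\psi_\infty(y,0)\,Z_{j,0,\mu_\infty}(y)\,dy=0$ for $j=0,\ldots,n+1$, and by spectral analysis of $-\Delta-pW_{0,\mu_\infty}^{p-1}$ (one negative eigenvalue with eigenfunction $Z_0$, zero eigenvalue spanned by $Z_1,\ldots,Z_{n+1}$, and continuous spectrum in $[0,\infty)$), any bounded ancient solution of the linearized bubble equation lies at every time in $\mathrm{span}\{Z_{0,0,\mu_\infty},\ldots,Z_{n+1,0,\mu_\infty}\}$; the orthogonality then forces $\psi_\infty\equiv 0$, contradicting $|\psi_\infty(y_\infty,0)|=1$.

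In \textbf{Case II} ($\mu_i\to 0$, so $r_i\gg\varepsilon_i$), the rescaled bubble shrinks to the origin and a direct computation gives $\int r_i^2 pW_i^{p-1}(\xi_i+r_iy,\cdot)\,dy \sim \mu_i^{n-2}\to 0$, so the potential vanishes as a measure on compact sets of $\R^n\setminus\{0\}$. Hence $\psi_i\to\psi_\infty$ in $C^{2,1}_{loc}((\R^n\setminus\{0\})\times(-\infty,0])$ satisfying the homogeneous heat equation; boundedness together with the removable-singularity theorem extends $\psi_\infty$ to $\R^n\times(-\infty,0]$, and parabolic Liouville makes it a constant $c$ with $|c|=1$. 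The uniform bound on $\phi_i$ on $\{|x|\ge\delta\}\cap Q_{7/8}$ coming from Corollary \ref{coro smooth convergence outside blow up locus} and the polynomial decay from Proposition \ref{prop outer estimate 2} together yield the vanishing of $\psi_\infty$ at spatial infinity, forcing $c=0$, a contradiction. The main technical obstacle will be the rigorous treatment of the modulation source in Case I and the precise justification of decay at spatial infinity for $\psi_\infty$ in Case II, both of which rely on coupling the pointwise bounds of Section \ref{sec regularity on phi} with the projection structure of the orthogonal decomposition.
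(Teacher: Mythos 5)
Your proposal takes a completely different route from the paper: the paper's proof is quantitative, not a compactness argument. It bounds the dominant outer piece $\phi_{i,1}$ by constructing a radial weight $w_{\varepsilon_i}$ with $-\Delta w_{\varepsilon_i}=C_0\varepsilon_i^2|x|^{-4}\chi_{B_{L\varepsilon_i}^c}w_{\varepsilon_i}$, $1\le w_{\varepsilon_i}\le 2$, dividing by it to get a uniformly parabolic equation for $\overline\phi_{i,1}/w_{\varepsilon_i}$, and running Moser iteration to obtain $\sup_{Q_{6/7}}\phi_{i,1}\lesssim\int_{Q_1}\phi_{i,1}$, which is bounded by the energy hypotheses; the remaining pieces $\phi_{i,2},\dots,\phi_{i,5}$ and the inner component are then re-estimated by the iteration scheme of Section \ref{sec inner and outer 2}. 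Your blow-up argument, as written, does not close.

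The decisive gap is in both of your Liouville steps, and it is the same gap: the rescaled limit $\psi_\infty$ is only known to be \emph{bounded}, not decaying in $y$. In Case I you need Proposition \ref{prop nondegeneracy}, but that result requires $\psi_\infty\in L^\infty(-\infty,0;\mathcal{X}_\alpha)$ with $\alpha>2$; for a merely bounded ancient solution of $\partial_s\psi=\Delta\psi+pW_{0,\mu_\infty}^{p-1}\psi$ the classification fails --- since $pW^{p-1}=O(|y|^{-4})$ and $n\ge 7$, there is a bounded \emph{static} solution tending to a nonzero constant at infinity, which is not in $\mathrm{span}\{Z_0,\dots,Z_{n+1}\}$ and need not violate the (cut-off) orthogonality. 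The decay you would need cannot be extracted from Proposition \ref{prop outer estimate 2}: that estimate gives $|\psi_i(y,s)|\le C(\gamma)M_i^{-1}r_i^{-\gamma}|y|^{-\gamma}$ for $|y|\ge1$, and the prefactor $M_i^{-1}r_i^{-\gamma}$ is \emph{not} bounded (you only know $M_i\le C(\gamma')r_i^{-\gamma'}$ for every $\gamma'$, which gives no lower bound of the form $M_i\ge cr_i^{-\gamma}$). The same defect kills Case II: the parabolic Liouville theorem yields a constant, but the information that $\phi_i=O(1)$ on $\{|x|\ge\delta\}$ lives at $|y|\ge\delta/r_i\to\infty$ and says nothing about the limit on compact sets, so you cannot conclude $c=0$; indeed if $u_\infty\not\equiv0$ the outer field is genuinely of order one and the constant limit is exactly what one expects. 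A secondary but real issue is the modulation source: in Case I, $r_i^2M_i^{-1}|\lambda_i'|\,|Z_{n+1,*}|\sim M_i^{-1}\varepsilon_i^{-\gamma}$ near the bubble (using only the pre-Proposition-\ref{prop uniform estimate} bound $g(8/9)\lesssim\varepsilon_i^{(n-2)/2-\gamma}$), and this product is indeterminate, so the claim that the modulation terms vanish in the limit is unjustified. The missing ingredient in all of these steps is precisely the quantitative control of the outer component by its $L^1$ norm, which is what the paper's barrier-plus-Moser argument supplies.
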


First by \eqref{outer estimate 3}, we get
\begin{equation}\label{bound on u}
  u_i(x,t)\lesssim \left(\frac{\varepsilon_i}{\varepsilon_i^2+|x-\xi_i(t)|^2}\right)^{\frac{n-2}{2}}+|x-\xi_i(t)|^{-\gamma}.
\end{equation}
Hence $\overline{\phi}_{i,1}$ satisfies, for some constant $C_0>0$,
\begin{equation}\label{outer 1,2}
   \partial_t\overline{\phi}_{i,1}-\Delta\overline{\phi}_{i,1}+\xi_i^\prime(t)\cdot\nabla\overline{\phi}_{i,1}\leq C_0\left[\varepsilon_i^2|x|^{-4}\chi_{B_{L\varepsilon_i}^c}+|x|^{-(p-1)\gamma}\right]\overline{\phi}_{i,1}.
\end{equation}

As in the proof of Lemma \ref{lem estimate outer 1}, for each $\varepsilon>0$, consider the problem
\begin{equation}\label{singular eqn}
 \left\{\begin{aligned}
&-\Delta w_\varepsilon =C_0 \varepsilon^2|x|^{-4}\chi_{B_{L\varepsilon}^c}w_\varepsilon \quad & \mbox{in } ~~ B_1,\\
&w_\varepsilon=1 \quad & \mbox{on } ~~ \partial B_1.
\end{aligned}\right.
\end{equation}

\begin{lem}\label{lem estimate on weight}
There exists a  radially symmetric solution $w_\varepsilon$ of \eqref{singular eqn}. Moreover, if $L$ is universally large, for any $\varepsilon>0$,
\[ 1\leq w_\varepsilon\leq 2 \quad \mbox{in } B_1.\]
\end{lem}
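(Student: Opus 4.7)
The plan is to construct $w_\varepsilon$ as a perturbation of the constant function $1$ via a fixed-point argument in $L^\infty(B_1)$, exploiting the smallness of the potential $V(x) := C_0\varepsilon^2|x|^{-4}\chi_{B_{L\varepsilon}^c}$ when tested against the Dirichlet Green's function of $-\Delta$ on $B_1$. Specifically, I would look for $w_\varepsilon = 1 + v_\varepsilon$, where $v_\varepsilon$ satisfies
\[
-\Delta v_\varepsilon = V(1+v_\varepsilon) \quad \text{in } B_1, \qquad v_\varepsilon = 0 \quad \text{on } \partial B_1.
\]
Let $G$ be the Dirichlet Green's function on $B_1$, which satisfies the standard pointwise bound $G(x,y)\lesssim |x-y|^{2-n}$. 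Define the map $T(v)(x) := \int_{B_1} G(x,y)V(y)(1+v(y))\,dy$ and set $\|V\|_* := \sup_{x\in B_1}\int_{B_1} G(x,y)V(y)\,dy$.

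The key estimate, and the main technical point, is to show that $\|V\|_*\leq C/L^2$ uniformly in $\varepsilon\in(0,1)$. I would verify this by a three-region decomposition of $B_1\setminus B_{L\varepsilon}$ around the point $x$. When $|x|\geq 2L\varepsilon$, split the $y$-integral into $\{L\varepsilon\leq |y|\leq |x|/2\}$, $\{|x|/2\leq|y|\leq 2|x|\}$, and $\{|y|\geq 2|x|\}\cap B_1$; a direct computation using $n\geq 7$ (so that the exponents $n-5$ and $-3$ yield finite or controlled integrals) gives contribution $\lesssim \varepsilon^2|x|^{-2}\leq 1/(4L^2)$ in each piece. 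When $|x|\leq 2L\varepsilon$, split instead into $\{|y|\leq 4L\varepsilon\}$ and $\{|y|\geq 4L\varepsilon\}$: in the first region $V\sim \varepsilon^{-2}L^{-4}$ while $\int_{|y-x|\leq 4L\varepsilon}|y-x|^{2-n}dy\sim (L\varepsilon)^2$, yielding $\lesssim L^{-2}$; in the second region $|y-x|\sim|y|$ and the radial integral $\varepsilon^2\int_{4L\varepsilon}^1 r^{-3}dr\sim L^{-2}$ does the job.

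With the uniform bound $\|V\|_*\leq 1/4$ available once $L$ is taken universally large, the affine map $T$ becomes a contraction on the ball $\{\|v\|_\infty\leq 1\}$ of $L^\infty(B_1)$, since
\[
\|T(v_1)-T(v_2)\|_\infty\leq \|V\|_*\|v_1-v_2\|_\infty,\qquad \|T(0)\|_\infty\leq \|V\|_*\leq 1/4.
\]
The unique fixed point $v_\varepsilon$ thus satisfies $\|v_\varepsilon\|_\infty\leq \|V\|_*/(1-\|V\|_*)\leq 1/3$, giving $w_\varepsilon\leq 4/3<2$. Radial symmetry of $w_\varepsilon$ is automatic from uniqueness together with the invariance of the problem under rotations (alternatively, one may run the contraction inside the closed subspace of radial $L^\infty$ functions). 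Finally, for the lower bound I would apply the maximum principle: since $w_\varepsilon\geq 1-1/3>0$ and $-\Delta w_\varepsilon = Vw_\varepsilon\geq 0$ in $B_1$, $w_\varepsilon$ is superharmonic with boundary value $1$, hence $w_\varepsilon\geq 1$ throughout $B_1$.

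The main obstacle is the Green's-function estimate for $\|V\|_*$, because $V$ has two competing singular features (a growing pointwise size $\sim \varepsilon^{-2}L^{-4}$ near the cutoff $\{|y|=L\varepsilon\}$ and slow decay at scale $|y|\lesssim 1$) and one must pay careful attention to the regime where the observation point $x$ lies inside the cut-off ball. Handling this regime is the reason the hypothesis $n\geq 7$ enters: it is precisely what makes the tail integral $\int_{L\varepsilon}^1 r^{n-5}dr$ and the near-$x$ integral combine into a clean $O(L^{-2})$ bound rather than something larger or $\varepsilon$-dependent.
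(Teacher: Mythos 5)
Your proof is correct, but it proceeds quite differently from the paper's. The paper passes to Emden--Fowler coordinates $s=\log(\varepsilon^{-1}|x|)$, reducing \eqref{singular eqn} to the one-dimensional ODE $w''+(n-2)w'+C_0e^{-2s}w=0$ on $[\log L,\infty)$ with $w(\log L)=1$, $w'(\log L)=0$; integrating once gives the explicit bound $0\geq w'(s)\geq -\tfrac{C_0}{n-4}e^{-2s}$, hence $w\geq 1-\tfrac{C_0}{n-4}L^{-2}\geq \tfrac12$ for $L$ large, and $w_\varepsilon$ is then obtained by normalizing $w(\log(\varepsilon^{-1}|x|))$ by its value at $|x|=1$ so that monotonicity of $w$ immediately yields $1\leq w_\varepsilon\leq 2$. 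You instead perturb off the constant $1$ by a contraction in $L^\infty(B_1)$, with the whole weight carried by the Green's-function estimate $\sup_x\int_{B_1}G(x,y)V(y)\,dy\lesssim L^{-2}$ uniformly in $\varepsilon$; your case analysis for that estimate checks out, and the superharmonicity argument for the lower bound $w_\varepsilon\geq 1$ is fine. Two remarks. First, your attribution of the role of $n\geq 7$ is slightly off: the only dimensional input in either argument is $n>4$ (so that $\int_{L\varepsilon}^{\cdot}r^{n-5}\,dr$ is dominated by its upper limit, respectively so that $\tfrac{C_0}{n-4}$ makes sense); nothing here uses $n\geq 7$ specifically. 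Second, the paper's ODE route has a practical payoff your route lacks: the explicit formula for $w'$ is reused verbatim in Corollary \ref{coro gradient for weight fct} to control $\nabla w_\varepsilon$, whereas with your construction that corollary would require a separate (if routine) differentiation of the Green's representation. For the lemma as stated, both arguments are complete.
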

\begin{proof}
Consider the initial value problem
\[
 \left\{\begin{aligned}
&w^{\prime\prime}(s)+(n-2)w^\prime(s)+C_0e^{-2s}w(s)=0 \quad   \mbox{in } ~~ [\log L,+\infty) ,\\
&w(\log L)=1, \quad    w^\prime(\log L)=0.
\end{aligned}\right.
\]
Global existence and uniqueness of the solution follows from  standard ordinary differential equation theory.

If $w>0$ in $[\log L, s_0]$ for some $s_0>\log L$, then  $w^\prime<0$, and consequently, $0<w<1$ in $(\log L,s_0)$. Integrating the equation of $w$ we get
\begin{eqnarray}\label{gradient for weight fct}
 w^\prime(s)&=& -C_0e^{-(n-2)s}\int_{\log L}^{s}e^{(n-4)\tau}w(\tau)d\tau \\
   &\geq & -\frac{C_0}{n-4}e^{-2s}. \nonumber
\end{eqnarray}
Hence, if $L$ is large enough,
\[w(s_0)\geq 1-\frac{C_0}{n-4}L^{-2}\geq \frac{1}{2}.\]
This holds for any $s_0$ provided that $w>0$ in $[\log L, s_0]$, so $w(s)\geq 1/2$ for any $s\geq \log L$.

For each $\varepsilon>0$, the function
\[w_\varepsilon(x):=\frac{w\left(\log(\varepsilon^{-1}|x|)\right)}{w\left(\log\varepsilon^{-1}\right)}\]
 satisfies all of the requirements.
\end{proof}

\begin{coro}\label{coro gradient for weight fct}
  There exists a constant $C$ independent of $\varepsilon$ such that
  \[|\nabla w_\varepsilon|\leq C \quad \mbox{in }~~B_1.\]
\end{coro}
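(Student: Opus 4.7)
The plan is to combine the explicit representation of $w_\varepsilon$ from the previous lemma with the integral form of its first derivative derived in the proof of that lemma. Since $w_\varepsilon$ is radial and depends on $|x|$ only through $s=\log(\varepsilon^{-1}|x|)$, direct differentiation of $w_\varepsilon(x)=w(s)/w(\log\varepsilon^{-1})$ will yield
\[
\nabla w_\varepsilon(x) \;=\; \frac{w'(s)}{w(\log\varepsilon^{-1})}\cdot\frac{x}{|x|^{2}},
\]
so the whole task reduces to controlling $|w'(s)|$ uniformly in $s\geq \log L$ and in $\varepsilon$.

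I would split the analysis into the two natural regions. On the inner region $|x|<L\varepsilon$, the formula from the previous lemma is not defined, but by construction $w_\varepsilon$ is the unique bounded radial harmonic function on that ball matching the outer value at $|x|=L\varepsilon$; hence it is constant there, so $\nabla w_\varepsilon\equiv 0$ on $B_{L\varepsilon}$. The $C^{1}$ matching at $|x|=L\varepsilon$ is a free consequence of the initial condition $w'(\log L)=0$, so there is no jump of the gradient across this interface, and the Corollary holds trivially on the inner ball.

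On the outer region $|x|\geq L\varepsilon$, I would use the integral identity already recorded in \eqref{gradient for weight fct},
\[
w'(s)=-C_{0}\,e^{-(n-2)s}\int_{\log L}^{s}e^{(n-4)\tau}\,w(\tau)\,d\tau,
\]
together with the pointwise bounds $1/2\leq w\leq 1$ supplied by Lemma~\ref{lem estimate on weight}, to obtain
\[
|w'(s)|\;\leq\;\frac{C_{0}}{n-4}\bigl(e^{-2s}-L^{n-4}e^{-(n-2)s}\bigr)\;\leq\;\frac{C_{0}}{n-4}\,e^{-2s}.
\]
Combined with $w(\log\varepsilon^{-1})\geq 1/2$ and the identity $e^{-s}=\varepsilon/|x|$, this will yield the scale-invariant control $|x|\,|\nabla w_\varepsilon(x)|\leq C L^{-2}$, from which the gradient bound follows in all of $B_{1}$ after taking $L$ universally large (so that the constant $C L^{-2}$ is absorbed into a single constant independent of $\varepsilon$).

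The one delicate point will be checking that the estimate is genuinely independent of $\varepsilon$ across the matching at $|x|=L\varepsilon$. This is guaranteed by the vanishing $w'(\log L)=0$ together with the ODE relation $w''(\log L)=-C_{0}L^{-2}$, which also provides the refined Taylor-type bound $|w'(s)|\lesssim L^{-2}\min\{s-\log L,\,e^{-2(s-\log L)}\}$ near $s=\log L$ to rule out any spurious concentration of $|\nabla w_\varepsilon|$ at the interface. Beyond this bookkeeping I do not anticipate any essential obstacle, since everything is driven by the one-dimensional ODE.
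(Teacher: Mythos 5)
Your chain rule is the standard (and correct) one,
\[
|\nabla w_\varepsilon(x)|=\frac{\left|w'\!\left(\log(\varepsilon^{-1}|x|)\right)\right|}{w\left(\log\varepsilon^{-1}\right)}\cdot\frac{1}{|x|},
\]
and your ODE estimate $|w'(s)|\le \frac{C_0}{n-4}e^{-2s}$ is right, so you do obtain the scale-invariant bound $|x|\,|\nabla w_\varepsilon(x)|\lesssim L^{-2}$ on $\{|x|\ge L\varepsilon\}$. The gap is the very last step: you cannot pass from $|x|\,|\nabla w_\varepsilon|\le CL^{-2}$ to $|\nabla w_\varepsilon|\le C$ by "taking $L$ large", because dividing by $|x|$ costs a factor $|x|^{-1}$, which on the inner edge $|x|\sim L\varepsilon$ is of size $(L\varepsilon)^{-1}$ and blows up as $\varepsilon\to 0$. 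Concretely your bounds give only $|\nabla w_\varepsilon(x)|\lesssim \varepsilon^{2}|x|^{-3}\le (L^{3}\varepsilon)^{-1}$, and the refined Taylor bound $|w'(s)|\lesssim L^{-2}\min\{s-\log L,\;e^{-2(s-\log L)}\}$ that you invoke at the interface does not rescue this: at $|x|=2L\varepsilon$ it still yields $|\nabla w_\varepsilon|\sim L^{-2}(\log 2)/(2L\varepsilon)$, i.e.\ of order $\varepsilon^{-1}$. Indeed, from the integral formula one computes $|w'(\log L+1)|\sim c(n)C_0L^{-2}\neq 0$, so $|\nabla w_\varepsilon|$ genuinely reaches size $c(L)\varepsilon^{-1}$ near $|x|\sim L\varepsilon$; the "delicate point" you flag is exactly where the argument breaks, and it cannot be repaired by sharper one-dimensional estimates on $w'$.

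For comparison, the paper's own proof rests on writing
\[
|\nabla w_\varepsilon(x)|=\frac{\left|w'\!\left(\log(\varepsilon^{-1}|x|)\right)\right|}{w\left(\log\varepsilon^{-1}\right)}\cdot\frac{\varepsilon}{|x|},
\]
which carries an extra factor of $\varepsilon$ relative to your formula, and it is precisely this factor (together with $|w'|\le C_0/(n-4)$ and $|x|\ge L\varepsilon$) that produces an $\varepsilon$-independent constant there. Your computation and the paper's are therefore not equivalent up to bookkeeping: they differ by a factor of $\varepsilon$, and with the correct factor $1/|x|$ the stated uniform gradient bound does not follow from the lemma's ODE estimates. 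You should flag this discrepancy explicitly (the statement as written appears to require the paper's version of the identity); as it stands, your proof does not establish the corollary.
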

\begin{proof}
  By definition, we have
  \[|\nabla w_\varepsilon(x)|=\frac{w^\prime\left(\log(\varepsilon^{-1}|x|)\right)}{w\left(\log\varepsilon^{-1}\right)}\frac{\varepsilon}{|x|}.\]
If $|x|\leq L\varepsilon$, $w^\prime=0$ and there is nothing to prove. For $|x|\geq L\varepsilon$, by \eqref{gradient for weight fct},
\[ \left|w^\prime\left(\log(\varepsilon^{-1}|x|)\right)\right|\leq \frac{C_0}{n-4}.\]
Because $w\left(\log\varepsilon^{-1}\right)\geq 1/2$, we get
\[|\nabla w_\varepsilon(x)|\leq  \frac{2C_0}{n-4}. \qedhere\]
\end{proof}

The function $\varphi_i:=\overline{\phi}_{i,1}/w_{\varepsilon_i}$ satisfies
\[ \partial_t\varphi_i- w_{\varepsilon_i}^{-2}\mbox{div}\left(w_{\varepsilon_i}^2\nabla\varphi_i\right)+\xi_i^\prime\cdot\nabla\varphi_i\leq \left[C_0|x|^{-(p-1)\gamma}-\xi_i^\prime\cdot\nabla w_{\varepsilon_i}\right]\varphi_i.\]
By Lemma \ref{lem estimate on weight}, this is a uniformly parabolic equation. Because $\gamma$ is very small, $|x|^{-(p-1)\gamma}\in L^{2n}(B_1)$. By \eqref{Lip assumption} and  Corollary \ref{coro gradient for weight fct}, $\xi_i^\prime\cdot\nabla w_{\varepsilon_i}$ are uniformly bounded in $L^\infty(Q_{6/7})$. Then standard Moser iteration gives
\begin{lem}
There exists a universal constant $C$ independent of $i$ such that
\begin{equation}\label{estimate on phi 1,2}
  \sup_{Q_{6/7}}\varphi_i\lesssim \int_{Q_1}\varphi_i.
\end{equation}
\end{lem}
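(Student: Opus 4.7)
\medskip

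\noindent\textbf{Proof plan.} The plan is to deduce the estimate from parabolic Moser iteration, using that $\varphi_i\geq 0$ is a nonnegative sub-solution of a uniformly parabolic divergence-form equation whose coefficients are controlled independently of $i$. Concretely, the operator
\[
\mathcal{L}_i \varphi := \partial_t \varphi - w_{\varepsilon_i}^{-2}\,\mathrm{div}\bigl(w_{\varepsilon_i}^2\nabla \varphi\bigr)+\xi_i^\prime(t)\cdot\nabla\varphi
\]
is uniformly elliptic with bounded drift: by Lemma~\ref{lem estimate on weight} the weight $w_{\varepsilon_i}$ satisfies $1\leq w_{\varepsilon_i}\leq 2$, while Corollary~\ref{coro gradient for weight fct} gives $|\nabla w_{\varepsilon_i}|\lesssim 1$, and \eqref{Lip for parameters}--\eqref{Lip assumption} provide $\|\xi_i^\prime\|_{L^\infty}\lesssim \sqrt{L}$. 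The zeroth order coefficient $V_i:=C_0|x|^{-(p-1)\gamma}-\xi_i^\prime\cdot\nabla w_{\varepsilon_i}$ is bounded in $L^\infty_t L^{2n}_x(Q_1)$ uniformly in $i$, because $(p-1)\gamma$ can be taken arbitrarily small.

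\medskip

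\noindent The first step is the standard Caccioppoli/energy estimate for sub-solutions. Given a cutoff $\eta\in C_0^\infty(Q_1)$ with $\eta\equiv 1$ on $Q_{6/7}$, I would test the inequality $\mathcal{L}_i\varphi_i\leq V_i\varphi_i$ against $(\varphi_i+\delta)^{q-1}\eta^2$ for $q\geq 2$, integrate by parts against the weight $w_{\varepsilon_i}^2\,dxdt$, absorb the drift terms using the $L^\infty$ bound on $\xi_i^\prime$ and $\nabla\log w_{\varepsilon_i}$, and let $\delta\to 0$. This yields
\[
\sup_{t}\int (\varphi_i\eta)^q\,dx + \iint |\nabla(\varphi_i^{q/2}\eta)|^2\,dxdt\;\lesssim\; q^2\iint \varphi_i^q\bigl(\eta^2+|\nabla\eta|^2+\eta|\partial_t\eta|+|V_i|\eta^2\bigr)dxdt.
\]
The next step is the parabolic Sobolev embedding in the form $\|u\|_{L^{2(1+2/n)}(Q_1)}\lesssim \|u\|_{L^\infty_t L^2_x}+\|\nabla u\|_{L^2}$, applied to $u=\varphi_i^{q/2}\eta$. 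Together with H\"older's inequality (using the $L^{2n}_x$ bound on $V_i$ to control $\iint \varphi_i^q V_i \eta^2$), this produces the reverse H\"older inequality
\[
\|\varphi_i\|_{L^{q(1+2/n)}(Q_{r^\prime})}\leq \bigl(Cq\bigr)^{2/q}(r-r^\prime)^{-C/q}\|\varphi_i\|_{L^q(Q_r)},\qquad 6/7<r^\prime<r\leq 1,
\]
with a constant independent of $i$.

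\medskip

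\noindent The final step is the usual Moser iteration: starting from $q=2$ and iterating $q\mapsto q(1+2/n)$ over a sequence of shrinking cylinders between $Q_1$ and $Q_{6/7}$, the product of the constants $(Cq_k)^{2/q_k}$ converges, yielding $\sup_{Q_{6/7}}\varphi_i\lesssim \|\varphi_i\|_{L^2(Q_1)}$. To upgrade from $L^2$ to $L^1$, I would invoke the standard interpolation trick: for $0<\theta<1$,
\[
\|\varphi_i\|_{L^2(Q_1)}\leq \|\varphi_i\|_{L^\infty(Q_1)}^{1/2}\|\varphi_i\|_{L^1(Q_1)}^{1/2},
\]
combined with the sup estimate on a slightly larger cylinder (say $Q_{(6+\theta)/7}$) and absorption, which converts the $L^2$-norm control into $L^1$-norm control after a covering argument.

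\medskip

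\noindent The main (minor) obstacle is verifying that the constants in each step genuinely depend only on universal quantities and not on $\varepsilon_i$; this is guaranteed by the $\varepsilon$-uniform pointwise bounds on $w_{\varepsilon_i}$ and $\nabla w_{\varepsilon_i}$ from Lemma~\ref{lem estimate on weight} and Corollary~\ref{coro gradient for weight fct}, together with the fact that $(p-1)\gamma$ can be taken so small that $|x|^{-(p-1)\gamma}$ lies in $L^{2n}$ with norm independent of $i$. Once this uniformity is in place, the Moser machinery is completely classical.
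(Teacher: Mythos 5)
Your proposal is correct and follows essentially the same route as the paper: the paper simply records that the equation for $\varphi_i$ is uniformly parabolic with bounded drift (via the $\varepsilon$-uniform bounds $1\leq w_{\varepsilon_i}\leq 2$ and $|\nabla w_{\varepsilon_i}|\lesssim 1$) and an $L^{2n}$ zeroth-order coefficient, and then invokes "standard Moser iteration." Your write-up just supplies the standard details (Caccioppoli, parabolic Sobolev, reverse H\"older, iteration, and the $L^2\to L^1$ absorption trick), with the correct identification of where the uniformity in $i$ comes from.
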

Combining this lemma with Lemma \ref{lem estimate on weight} and the definition of $\overline{\phi}_{i,1}$ (see \eqref{transformation of outer components}), we get
\[ \|\phi_{i,1}\|_{L^\infty(Q_{6/7})}\leq C.\]
Starting from this estimate, following the iteration argument in Section \ref{sec inner and outer 2}, we deduce that
\begin{equation}\label{estimate on D 2}
g(5/6)\lesssim \varepsilon_i^{\frac{n-2}{2}}.
\end{equation}
Substituting this estimate into Lemma \ref{lem estimate on phi 2,2}-Lemma \ref{lem estimate on phi 5,2}, and noting that now the heat kernel for $\partial_t-\Delta-\xi_i^\prime\cdot\nabla-V_i$ enjoys the standard Gaussian bound
\[ G_i(x,t;y,s)\lesssim (t-s)^{-\frac{n}{2}}e^{-c\frac{|x-y|^2}{t-s}},\]
i.e. we can take $\gamma=0$, we obtain
\begin{lem}
For any  $x\in B_{K\varepsilon}^c$ and $t\in(-5/6,5/6)$, we have
  \begin{equation*}
 |\phi_{i,2}(x,t)| + |\phi_{i,3}(x,t)| + |\phi_{i,4}(x,t)| +|\phi_{i,5}(x,t)|\lesssim_{K} \frac{\varepsilon^{n-4}}{|x|^{n-4}}.
\end{equation*}
\end{lem}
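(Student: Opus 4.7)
The plan is to revisit the heat kernel representations established in Lemmas~\ref{lem estimate on phi 2,2}--\ref{lem estimate on phi 5,2}, feeding in two improvements that have just become available: the standard Gaussian bound on the kernel $G_i$ (valid since we may now take $\gamma=0$) and the sharpened parameter bound $g(5/6)\lesssim\varepsilon_i^{(n-2)/2}$ of \eqref{estimate on D 2}. The latter simultaneously controls $|\lambda_i\lambda_i'|$, $|\lambda_i\xi_i'|$, $|a_i/\lambda_i|$ and $|\lambda_i a_i'-\mu_0 a_i/\lambda_i|$ by $\varepsilon_i^{(n-2)/2}$; combined with $\lambda_i(t)\sim\varepsilon_i$ from \eqref{bound on parameters 1}, this converts the source bounds for $F_{i,2},\ldots,F_{i,5}$ already recorded in Section~\ref{sec outer} into pointwise estimates involving only powers of $\varepsilon_i$ and $|y|$.

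I would begin with $\phi_{i,3}$, which turns out to saturate the asserted rate. From the source bound $|F_{i,3}|\lesssim\lambda_i^{(n-4)/2}|\lambda_i'|\,|y|^{2-n}\chi_{B_{L\lambda_i}^c}$, writing $\lambda_i^{(n-4)/2}|\lambda_i'|=\lambda_i^{(n-6)/2}\cdot|\lambda_i\lambda_i'|$ and inserting both inputs above gives $|F_{i,3}|\lesssim\varepsilon_i^{n-4}|y|^{2-n}$. Applying Duhamel's formula with the standard Gaussian kernel, followed by the spatial integral identity
\[
\int_{\R^n}(t-s)^{-n/2}e^{-c|x-y|^2/(t-s)}|y|^{2-n}\,dy\lesssim\bigl(\sqrt{t-s}+|x|\bigr)^{2-n}
\]
(the variant of the heat kernel integral used throughout Section~\ref{sec outer}) and integrating in time over $(-1,t)$, produces $|\phi_{i,3}(x,t)|\lesssim\varepsilon_i^{n-4}|x|^{4-n}$, which is exactly the target rate.

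The remaining three terms will be strictly smaller. For $\phi_{i,4}$ the analogous computation using $\lambda_i^{(n-2)/2}|\xi_i'|=\lambda_i^{(n-4)/2}\cdot|\lambda_i\xi_i'|\lesssim\varepsilon_i^{n-3}$ produces $|\phi_{i,4}|\lesssim\varepsilon_i^{n-3}|x|^{3-n}$, which is dominated by $\varepsilon_i^{n-4}|x|^{4-n}$ on $|x|\geq K\varepsilon_i$. For $\phi_{i,5}$ I would reuse the polynomial tail bound $\lambda_i^{-(n+2)/2}e^{-c|y|/\lambda_i}\lesssim\lambda_i^{(n-4)/2}|y|^{1-n}$ on $B_{L\lambda_i}^c$ already exploited in Lemma~\ref{lem estimate on phi 5,2}, together with the bracketed parameter bound $\lesssim\varepsilon_i^{(n-2)/2}$, to reach $|F_{i,5}|\lesssim\varepsilon_i^{n-3}|y|^{1-n}$ and hence the same bound as for $\phi_{i,4}$. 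Finally, the source of $\phi_{i,2}$ is supported in the thin annulus $B_{2L\lambda_i}\setminus B_{L\lambda_i}$: for $|x|\geq K\varepsilon_i/2$ the Gaussian forces $|x-y|\sim|x|$, and a direct integration then yields the much stronger bound $|\phi_{i,2}|\lesssim_{K,L}\varepsilon_i^{n-2}|x|^{2-n}$.

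No new mechanism is required beyond what is already in Sections~\ref{sec outer} and \ref{sec inner and outer 2}; the work is essentially arithmetic bookkeeping of $\lambda_i$- versus $\varepsilon_i$-powers. The exponent $n-4$ in the stated bound is dictated entirely by $\phi_{i,3}$, whose source carries the lowest power of $\lambda_i$, and any further improvement would require sharpening the bound on $|\lambda_i\lambda_i'|$ beyond the $\varepsilon_i^{(n-2)/2}$ scale produced by \eqref{estimate on D 2}.
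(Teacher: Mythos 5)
Your proposal is correct and follows essentially the same route as the paper, which simply substitutes the improved bound $g(5/6)\lesssim\varepsilon_i^{(n-2)/2}$ into the Duhamel computations of Lemmas \ref{lem estimate on phi 2,2}--\ref{lem estimate on phi 5,2} with the standard Gaussian kernel ($\gamma=0$); your exponent bookkeeping ($\varepsilon^{n-4}|x|^{4-n}$ from $\phi_{i,3}$, $\varepsilon^{n-3}|x|^{3-n}$ from $\phi_{i,4},\phi_{i,5}$, $\varepsilon^{n-2}|x|^{2-n}$ from $\phi_{i,2}$) matches what those lemmas yield. Your direct annulus computation for $\phi_{i,2}$, rather than the H\"older-based part of Lemma \ref{lem estimate on phi 2,2}, is the right way to read that step, since it is what produces genuine decay in $|x|$ away from the concentration point.
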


Combining \eqref{estimate on phi 1,2}, \eqref{estimate on D 2} with this lemma, we finish the proof of Proposition \ref{prop zeroth order on phi}.

Once we know that $\phi_i$ are uniformly bounded in $Q_1$ (after a scaling of the domain), the above estimates are improved to
\begin{prop}\label{prop uniform estimate}
\begin{eqnarray*}
   &\|\phi_{i,1}\|_{L^\infty(Q_{5/6})}\lesssim \|\phi_i\|_{L^\infty(Q_1)}, \\
  & g(5/6)\lesssim \varepsilon_i^{\frac{n-2}{2}}\left(\|\phi_i\|_{L^\infty(Q_1)}+e^{-c\varepsilon_i^{-2}}\right),
\end{eqnarray*}
and for any $(x,t)\in Q_{5/6}$,
   \[ |\phi_{i,2}(x,t)| + |\phi_{i,3}(x,t)| + |\phi_{i,4}(x,t)| +|\phi_{i,5}(x,t)|\lesssim_{K} \frac{\varepsilon^{n-4}}{|x|^{n-4}}\left(\|\phi_i\|_{L^\infty(Q_1)}+e^{-c\varepsilon_i^{-2}}\right).\]
\end{prop}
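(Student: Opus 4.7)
The plan is to rerun the three-stage bootstrap used in the proof of Proposition~\ref{prop zeroth order on phi}, but now tracking the linear dependence on $\|\phi_i\|_{L^\infty(Q_1)}$ through every step and counting iterations carefully in order to identify the source of the exponential remainder $e^{-c\varepsilon_i^{-2}}$.

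For \textbf{Step 1} I would establish $\|\phi_{i,1}\|_{L^\infty(Q_{5/6})}\lesssim\|\phi_i\|_{L^\infty(Q_1)}$ by repeating the weighted Moser iteration of Section~\ref{subsec uniform bound}. Since $\phi_{i,1}$ solves $\partial_t\phi_{i,1}-\Delta\phi_{i,1}=V_{*,i}\phi_{i,1}$ in $Q_{9/10}$ with boundary data $\phi_{out,i}=\phi_i(1-\eta_{out})$ on $\partial^pQ_{9/10}$, the trivial pointwise estimate $|\phi_{out,i}|\le\|\phi_i\|_{L^\infty(Q_1)}$ replaces the bound $|\phi_{out,i}|\lesssim|x|^{-\gamma}$ used in the previous proof. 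Normalizing by the super-solution $w_{\varepsilon_i}$ of Lemma~\ref{lem estimate on weight} once more produces a uniformly parabolic equation with bounded coefficients, and De Giorgi--Nash--Moser gives the desired linear bound.

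For \textbf{Step 2}, inserting the estimate of Step~1 into the refined outer bounds of Lemmas~\ref{lem estimate on phi 2,2}--\ref{lem estimate on phi 5,2} (applied with the standard Gaussian kernel, which is now legitimate because Proposition~\ref{prop zeroth order on phi} makes $V_{*,i}$ pointwise bounded) yields the improved outer-to-inner coupling
\[
\mathcal{O}(t)\le C(K,L)\,\varepsilon_i^{(n-2)/2}\|\phi_i\|_{L^\infty(Q_1)}+\sigma(K,L)\sup_{-1<s<t}\mathcal{D}(s).
\]
Combining with Proposition~\ref{prop decay estimate} exactly as in \eqref{decay relation 5} gives
\[
\mathcal{D}(t)\le \tfrac12\sup_{[t-C\varepsilon_i^2,\,t+C\varepsilon_i^2]}\mathcal{D}+C\,\varepsilon_i^{(n-2)/2}\|\phi_i\|_{L^\infty(Q_1)}.
\]
Iterating this inequality backward in time from $t=5/6$ toward $|t|=1$ takes $N\sim\varepsilon_i^{-2}$ steps (each step advances by $\sim\varepsilon_i^2$ in physical time; cf.\ \eqref{change of time variables}), and the compound contraction factor $2^{-N}=e^{-c\varepsilon_i^{-2}}$ multiplies the starting value $\sup_{|t|\le 1}\mathcal{D}$, which is universally bounded by Proposition~\ref{prop zeroth order on phi}. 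The geometric sum of the forcing gives the $\varepsilon_i^{(n-2)/2}\|\phi_i\|_{L^\infty(Q_1)}$ contribution, and adding the two parts yields the stated bound on $g(5/6)$.

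For \textbf{Step 3} I would plug the bound on $g(5/6)$ from Step~2 into the heat-kernel estimates of Lemmas~\ref{lem estimate on phi 2,2}--\ref{lem estimate on phi 5,2} with $\gamma=0$. The forcings $F_2,\ldots,F_5$ are linearly controlled by $\mathcal{I}$ and the parameter quantities $|\lambda\lambda'|,|\lambda\xi'|,|a/\lambda|,|\lambda a'-\mu_0 a/\lambda|$, all of which are dominated by $g(5/6)$, and integration against the standard Gaussian kernel reproduces the decay $\varepsilon_i^{n-4}/|x|^{n-4}$ times the bracket $\|\phi_i\|_{L^\infty(Q_1)}+e^{-c\varepsilon_i^{-2}}$. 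The hardest part will be the bookkeeping in Step~2: one has to arrange all constants so that the contraction coefficient remains at most $1/2$ uniformly in $i,K,L$, convert between inner time $\tau$ and physical time $t$ (related by $d\tau/dt\sim\varepsilon_i^{-2}$) consistently, and verify that the backward iteration reaches $|t|=1$ in exactly $N\sim\varepsilon_i^{-2}$ steps so as to produce the sharp exponential rate $e^{-c\varepsilon_i^{-2}}$ rather than a weaker polynomial remainder.
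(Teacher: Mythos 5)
Your proposal follows essentially the same three-stage bootstrap the paper intends: the weighted Moser iteration for $\phi_{i,1}$ with the trivial boundary bound $\|\phi_i\|_{L^\infty(Q_1)}$ replacing $|x|^{-\gamma}$, the inner--outer contraction inequality iterated over $\sim\varepsilon_i^{-2}$ steps to control $g(5/6)$, and the $\gamma=0$ heat-kernel lemmas for $\phi_{i,2},\dots,\phi_{i,5}$. The only difference is cosmetic bookkeeping of the remainder $e^{-c\varepsilon_i^{-2}}$: the paper attributes it to the boundary term $\frac{a_i(1)}{\lambda_i(1)}e^{-c[\tau(1)-\tau(t)]}$ arising when Lemma \ref{lem representation of a} is integrated back to $\tau(1)$ with $\tau(1)-\tau(t)\gtrsim\varepsilon_i^{-2}$, whereas you attribute it to the compounded contraction factor $2^{-N}$, $N\sim\varepsilon_i^{-2}$, acting on the bounded starting value of $\mathcal{D}$ --- both mechanisms produce a term of exactly this size, so the two accountings agree.
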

Here the term $e^{-c\varepsilon_i^{-2}}$ appears because when we apply Lemma \ref{lem representation of a} to estimate  $a/\lambda$, there is a boundary term
\[ \frac{a_i(1)}{\lambda_i(1)}e^{-c \left[\tau(1)-\tau(t)\right]},\]
where, because $d\tau/dt\sim \varepsilon_i^{-2}$,
\[ \tau(1)-\tau(t)\geq c\varepsilon_i^{-2}, \quad \mbox{for any } ~~ t<25/36.\]

Because $\xi_i(0)=0$, by the estimate of $g(5/6)$ in Proposition \ref{prop uniform estimate}, now \eqref{bound on parameters 3} is upgraded to
\begin{equation}\label{bound on parameters 4}
  |\xi_i(t)|\lesssim \varepsilon_i^{\frac{n-4}{2}}, \quad \mbox{for any} ~~ t\in[-25/36,25/36].
\end{equation}

\subsection{Gradient estimates}
In this subsection we establish a  uniform $C^{1+\theta, (1+\theta)/2}$ estimate on $\phi_i$.

With the estimates in Proposition \ref{prop uniform estimate}, now we have
\begin{lem}\label{lem improvement on RHS in error eqn}
 In $Q_{5/6}$,
  \begin{equation}\label{improvement of error eqn}
  \left|\partial_t\phi_i-\Delta\phi_i\right|\lesssim \left(\frac{\varepsilon_i^2}{\varepsilon_i^4+|x|^4}+1\right)\left(\left\|\phi_i\right\|_{L^\infty(Q_1)}+e^{c\varepsilon_i^{-2}}\right) .
\end{equation}
\end{lem}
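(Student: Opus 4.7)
The plan is to bound each term on the right-hand side of the error equation \eqref{error eqn},
\[
\partial_t\phi_i-\Delta\phi_i = pW_i^{p-1}\phi_i + \mathcal{N}_i + \left(-a_i^\prime+\mu_0\frac{a_i}{\lambda_i^2},\xi_i^\prime,\lambda_i^\prime\right)\cdot Z_{\ast,i} - a_i\partial_tZ_{0,\ast,i},
\]
by feeding in the sharp a priori bounds of Proposition \ref{prop uniform estimate} and the parameter estimates \eqref{bound on parameters 1}, \eqref{bound on parameters 2} and \eqref{bound on parameters 4}. Throughout, abbreviate $\mathcal{W}(x):=\varepsilon_i^2(\varepsilon_i^4+|x|^4)^{-1}+1$ and $\mathcal{E}_i:=\|\phi_i\|_{L^\infty(Q_1)}+e^{-c\varepsilon_i^{-2}}$, so that our goal is $|\partial_t\phi_i-\Delta\phi_i|\lesssim\mathcal{W}\mathcal{E}_i$.

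The linear term is the simplest. Since $\lambda_i(t)\sim\varepsilon_i$ and $|\xi_i(t)|\lesssim\varepsilon_i^{(n-4)/2}$ by \eqref{bound on parameters 4}, with $n\ge7$ giving $|\xi_i|^2\ll\varepsilon_i^2$, a case split on whether $|x|\ge 2|\xi_i|$ or $|x|<2|\xi_i|$ yields $W_i^{p-1}(x,t)\lesssim\varepsilon_i^2(\varepsilon_i^2+|x-\xi_i|^2)^{-2}\lesssim\mathcal{W}(x)$ uniformly on $Q_{5/6}$, and hence $|pW_i^{p-1}\phi_i|\lesssim\mathcal{W}\mathcal{E}_i$. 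For the parameter and $\partial_tZ_{0,\ast,i}$ contributions, Proposition \ref{prop uniform estimate} sharpens \eqref{bound on parameters 2} to
\[
|\lambda_i\xi_i^\prime|+|\lambda_i\lambda_i^\prime|+\bigl|\lambda_ia_i^\prime-\mu_0 a_i/\lambda_i\bigr|+|a_i/\lambda_i|\lesssim\varepsilon_i^{(n-2)/2}\mathcal{E}_i.
\]
Combined with the standard pointwise decay estimates on $Z_{j,\ast,i}$ ($j=0,\ldots,n+1$) and on $\partial_tZ_{0,\ast,i}$, one checks in the bubble region $|x-\xi_i|\lesssim\varepsilon_i$ (where $|Z_{j,\ast,i}|\lesssim\lambda_i^{-(n-2)/2}$) that each contribution is at worst $\varepsilon_i^{-1}\mathcal{E}_i\le\mathcal{W}\mathcal{E}_i$; outside, the exponential or polynomial decay of the kernels, together with the improved bound $|\xi_i|\lesssim \varepsilon_i^{(n-4)/2}$, gives enough powers of $\varepsilon_i/|x|$ to absorb them into $\mathcal{W}\mathcal{E}_i$.

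The hard step will be the nonlinear term $\mathcal{N}_i$. Since $p\in(1,2)$ for $n\ge7$, Lemma \ref{lem estimate on nonlinearity}'s naive estimate $|\mathcal{N}_i|\lesssim|\phi_i+a_iZ_{0,\ast,i}|^p$ does suffice in the outer region $|x-\xi_i|\ge L\varepsilon_i$: boundedness of $\phi_i$ and the exponential smallness of $a_iZ_{0,\ast,i}$ away from the bubble give $|\mathcal{N}_i|\lesssim\mathcal{E}_i$, dominated by $\mathcal{W}\mathcal{E}_i$. Near the bubble, however, this bound carries no $\varepsilon_i^{-2}$ singularity and thus fails to match $\mathcal{W}\mathcal{E}_i\sim\varepsilon_i^{-2}\mathcal{E}_i$. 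The resolution is to use instead the quadratic Taylor remainder
\[
|\mathcal{N}_i|\lesssim W_i^{p-2}\bigl(\phi_i+a_iZ_{0,\ast,i}\bigr)^2,
\]
which is justified because Proposition \ref{prop uniform estimate}'s bound $|a_i/\lambda_i|\lesssim\varepsilon_i^{(n-2)/2}\mathcal{E}_i$ implies $|a_iZ_{0,\ast,i}|\lesssim\varepsilon_i\mathcal{E}_i\ll W_i\sim\varepsilon_i^{-(n-2)/2}$ throughout $|x-\xi_i|\lesssim L\varepsilon_i$, so that $W_i+t(\phi_i+a_iZ_{0,\ast,i})\sim W_i$ for $t\in[0,1]$. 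The crucial observation is that $W_i^{p-2}\sim\varepsilon_i^{(n-6)/2}$ is actually \emph{small} for $n\ge7$; combined with $|\phi_i|^2\lesssim\|\phi_i\|_{L^\infty(Q_1)}\lesssim\mathcal{E}_i$ (by boundedness) and $|a_iZ_{0,\ast,i}|^2\lesssim\varepsilon_i^2\mathcal{E}_i$, this yields $|\mathcal{N}_i|\lesssim(\varepsilon_i^{(n-6)/2}+\varepsilon_i^{(n-2)/2})\mathcal{E}_i\lesssim\mathcal{E}_i\le\mathcal{W}\mathcal{E}_i$ in the bubble region. Summing the four contributions gives \eqref{improvement of error eqn}.
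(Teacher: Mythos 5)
Your proof is correct, and on the linear, parameter, and $a_i\partial_tZ_{0,\ast,i}$ terms it coincides with the paper's argument (the paper likewise upgrades \eqref{bound on parameters 4} via \eqref{estimate on D 2} and feeds the decay of $Z_{j,\ast,i}$ into the bounds of Proposition \ref{prop uniform estimate}). Where you diverge is the nonlinearity. The paper does not estimate $\mathcal{N}_i$ at all: it recombines $pW_i^{p-1}\phi_i+\mathcal{N}_i$ into $u_i^p-W_i^p-pW_i^{p-1}a_iZ_{0,i}$ and applies the single first-derivative inequality $|u_i^p-W_i^p|\lesssim(u_i^{p-1}+W_i^{p-1})(|\phi_i|+|a_i|Z_{0,i})$, with both prefactors controlled by $\varepsilon_i^2(\varepsilon_i^4+|x|^4)^{-1}+1$; this is shorter and avoids any second-order expansion. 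Your route — bounding $\mathcal{N}_i$ separately with a quadratic Taylor remainder $W_i^{p-2}(\phi_i+a_iZ_{0,\ast,i})^2$ near the bubble, exploiting $p-2<0$ for $n\ge7$ — is valid (the segment $(1-t)W_i+tu_i$ stays comparable to $W_i$ since $|\phi_i+a_iZ_{0,\ast,i}|\lesssim1\ll W_i$ there), but the detour is unnecessary: your own premise that the first-order bound ``fails to match $\mathcal{W}\mathcal{E}_i$'' near the bubble is backwards. One only needs $|\mathcal{N}_i|\le\mathcal{W}\mathcal{E}_i$, not $|\mathcal{N}_i|\sim\mathcal{W}\mathcal{E}_i$, and since $\|\phi_i\|_{L^\infty(Q_1)}\lesssim1$ and $|a_iZ_{0,\ast,i}|\lesssim\mathcal{E}_i$, the crude bound $|\mathcal{N}_i|\lesssim|\phi_i+a_iZ_{0,\ast,i}|^{p}\lesssim\mathcal{E}_i\le\mathcal{W}\mathcal{E}_i$ already suffices everywhere. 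Two further harmless slips: the paper normalizes $Z_{j,\xi,\lambda}=\lambda^{-n/2}Z_j((x-\xi)/\lambda)$, so in the bubble region $|Z_{j,\ast,i}|\lesssim\varepsilon_i^{-n/2}$ and the parameter terms are of size $\varepsilon_i^{-2}\mathcal{E}_i$ (not $\varepsilon_i^{-1}\mathcal{E}_i$) and $|a_iZ_{0,\ast,i}|\lesssim\mathcal{E}_i$ (not $\varepsilon_i\mathcal{E}_i$); both are still absorbed by $\mathcal{W}\mathcal{E}_i$, so the conclusion stands.
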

\begin{proof}
By \eqref{estimate on D 2}, now \eqref{bound on parameters 4} can be upgraded to
\begin{equation}\label{bound on parameters 5}
  |\xi_i(t)|\lesssim \varepsilon_i^{\frac{n-4}{2}}, \quad \mbox{for any } ~~ t\in[-25/36,25/36].
\end{equation}
Then by the definition of $\phi_i$, we have
\begin{eqnarray*}
  u_i(x,t) &\lesssim & W_i(x,t)+|a_i(t)|Z_{0,i}(x,t)+|\phi_i(x,t)| \\
   &\lesssim & \left(\frac{\varepsilon_i}{\varepsilon_i^2+|x|^2}\right)^{\frac{n-2}{2}}+ e^{-c|x|/\varepsilon_i}+1.
\end{eqnarray*}
Therefore
\begin{eqnarray*}
 &&\left|u_i^p-W_i^p\right|  \\
 &\lesssim& \left(u_i^{p-1}+W_i^{p-1}\right)\left(|\phi_i|+|a_i(t)|Z_{0,i}\right) \\
  &\lesssim &  \left[\left(\frac{\varepsilon_i}{\varepsilon_i^2+|x|^2}\right)^{\frac{n-2}{2}}+ e^{-c|x|/\varepsilon_i}+1\right]^{\frac{4}{n-2}}\left(1+ e^{-c|x|/\varepsilon_i}\right)\left(\left\|\phi_i\right\|_{L^\infty(Q_1)}+e^{c\varepsilon_i^{-2}}\right) \\
  &\lesssim &\left(\frac{\varepsilon_i^2}{\varepsilon_i^4+|x|^4}+1\right)\left(\left\|\phi_i\right\|_{L^\infty(Q_1)}+e^{c\varepsilon_i^{-2}}\right).
\end{eqnarray*}

By \eqref{estimate on D 2}, we have
\begin{eqnarray*}
   \left|\lambda_i^\prime(t) Z_{n+1,i}(x,t)\right| & \lesssim & \varepsilon_i^{-2}\left(1+\frac{|x-\xi_i(t)|}{\varepsilon_i}\right)^{2-n}
 \left(\left\|\phi_i\right\|_{L^\infty(Q_1)}+e^{c\varepsilon_i^{-2}}\right)\\
    &\lesssim &  \frac{\varepsilon_i^{n-4}}{\left(\varepsilon_i+|x|\right)^{n-2}}\left(\left\|\phi_i\right\|_{L^\infty(Q_1)}+e^{c\varepsilon_i^{-2}}\right)\\
    &\lesssim &\frac{\varepsilon_i^2}{\varepsilon_i^4+|x|^4}\left(\left\|\phi_i\right\|_{L^\infty(Q_1)}+e^{c\varepsilon_i^{-2}}\right).
\end{eqnarray*}
Similar estimates hold for $(-a_i^\prime+\mu_0a_i\lambda_i^{-2})Z_{0i}$, $\xi_{ji}^\prime\cdot Z_{ji}$, $j=1,\cdots, n$ and $a_i\partial_tZ_{0i}$.
\end{proof}

\begin{lem}\label{lem gradient estimate on error fct}
 For any $(x,t)\in Q_{4/5}$,
   \begin{equation}\label{gradient estimate on error fct}
    |\nabla\phi_i(x,t)|\lesssim \left(\frac{\varepsilon_i^2}{\varepsilon_i^3+|x|^3}+1\right)\left(\left\|\phi_i\right\|_{L^\infty(Q_1)}+e^{c\varepsilon_i^{-2}}\right) .
  \end{equation}
\end{lem}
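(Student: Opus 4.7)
My plan is to prove the gradient estimate by a two-scale argument: a direct rescaling in the inner region $|x_0|\lesssim \varepsilon_i$, combined with a Duhamel representation using Gaussian bounds on the heat kernel in the outer region $|x_0|\gtrsim \varepsilon_i$.

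First, for $(x_0,t_0)\in Q_{4/5}$ with $|x_0|\leq 10\varepsilon_i$, I would set $\tilde\phi_i(y,s):=\phi_i(x_0+\varepsilon_i y,\,t_0+\varepsilon_i^2 s)$ on the unit parabolic cylinder. By Proposition~\ref{prop zeroth order on phi}, $\|\tilde\phi_i\|_{L^\infty(Q_1)}\lesssim \|\phi_i\|_{L^\infty(Q_1)}+e^{c\varepsilon_i^{-2}}$, and by Lemma~\ref{lem improvement on RHS in error eqn} the rescaled right-hand side satisfies
\[
\bigl|\partial_s\tilde\phi_i-\Delta_y\tilde\phi_i\bigr|=\varepsilon_i^2|f_i|\lesssim \bigl(\|\phi_i\|_{L^\infty(Q_1)}+e^{c\varepsilon_i^{-2}}\bigr)
\]
uniformly on $Q_1$, since on $B_{\varepsilon_i}(x_0)$ one has $\varepsilon_i^2/(\varepsilon_i^4+|x|^4)\lesssim \varepsilon_i^{-2}$. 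Standard interior gradient bounds for the heat equation then give $|\nabla_y\tilde\phi_i(0,0)|\lesssim \|\phi_i\|_{L^\infty(Q_1)}+e^{c\varepsilon_i^{-2}}$, and scaling back produces the claimed bound since $\varepsilon_i^2/(\varepsilon_i^3+|x_0|^3)\sim \varepsilon_i^{-1}$ in this range.

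Next, for $|x_0|\geq 10\varepsilon_i$, a naive rescaling at scale $|x_0|$ would produce an unwanted $|x_0|^{-1}\|\phi_i\|_{L^\infty}$ term, so I would instead use a Duhamel representation. Choose $\chi\in C_0^\infty(Q_{5/6})$ with $\chi\equiv 1$ on $Q_{4/5}$, so that $\psi_i:=\chi\phi_i$ satisfies
\[
\partial_t\psi_i-\Delta\psi_i=\chi f_i+(\partial_t\chi+\Delta\chi)\phi_i-2\,\mathrm{div}(\phi_i\nabla\chi),
\]
after integrating by parts the $\nabla\chi\cdot\nabla\phi_i$ term. Representing $\psi_i$ by the free heat kernel $G_0$ and differentiating in $x$ gives
\[
\nabla\phi_i(x_0,t_0)=\int_{-1}^{t_0}\!\!\int_{\R^n}\Bigl[\nabla_x G_0\cdot\chi f_i+\nabla_x G_0(\partial_t\chi+\Delta\chi)\phi_i+\nabla_x^2 G_0\cdot\phi_i\nabla\chi\Bigr]\,dy\,ds.
\]
The last two terms are supported in $Q_{5/6}\setminus Q_{4/5}$, where $|x_0-y|\gtrsim 1$, so Gaussian decay of $G_0$ and its derivatives makes them $O(\|\phi_i\|_{L^\infty(Q_1)}+e^{c\varepsilon_i^{-2}})$. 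For the main term I would use the Gaussian bound $|\nabla_x G_0(x-y,t-s)|\lesssim (t-s)^{-1/2}\widetilde G_0(x-y,t-s)$ together with Lemma~\ref{lem improvement on RHS in error eqn}, reducing matters to bounding
\[
\int_{-1}^{t_0}\!\!\int_{B_1}(t_0-s)^{-1/2}\widetilde G_0(x_0-y,t_0-s)\Bigl(\frac{\varepsilon_i^2}{\varepsilon_i^4+|y|^4}+1\Bigr)dy\,ds\lesssim 1+\frac{\varepsilon_i^2}{\varepsilon_i^3+|x_0|^3}.
\]

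The main technical step, and the place the dimension hypothesis $n\geq 7$ enters naturally, is the Gaussian-convolution bound above. I would split the $s$-integral at $\sqrt{t_0-s}=\varepsilon_i+|x_0|$: for $\sqrt{t_0-s}\leq \varepsilon_i+|x_0|$ the spatial convolution is essentially the pointwise value $\varepsilon_i^2/(\varepsilon_i^4+|x_0|^4)$, and integrating $(t_0-s)^{-1/2}$ contributes a factor $\varepsilon_i+|x_0|$, giving $\varepsilon_i^2/(\varepsilon_i^3+|x_0|^3)$; for $\sqrt{t_0-s}>\varepsilon_i+|x_0|$ one uses $\int_{\R^n}\varepsilon_i^2/(\varepsilon_i^4+|y|^4)\,dy\sim \varepsilon_i^{n-2}$ (finite thanks to $n\geq 5$), and the tail integral $\int_{|x_0|^2}^\infty(t_0-s)^{-(n+1)/2}\,ds\cdot\varepsilon_i^{n-2}\sim \varepsilon_i^{n-2}|x_0|^{-(n-1)}$ is controlled by $\varepsilon_i^2/|x_0|^3$ whenever $|x_0|\geq \varepsilon_i$, which requires $n\geq 7$ to have room. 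Putting the two regions together yields \eqref{gradient estimate on error fct}.
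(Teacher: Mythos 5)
Your overall strategy coincides with the paper's: the paper also splits into an inner region (where it rescales at scale $\varepsilon_i$ and applies interior gradient estimates to the inner equation) and an outer region $\varepsilon_i\le|x|\le 1/4$ (where it writes $\phi_i=\phi_{h,i}+\phi_{n,i}$ and bounds the Duhamel term using the pointwise bound on $(\partial_t-\Delta)\phi_i$ from Lemma \ref{lem improvement on RHS in error eqn}). Your cutoff-plus-free-heat-kernel variant is an acceptable substitute for the paper's caloric extension, modulo the standard fix of taking $\chi\equiv1$ on a cylinder strictly larger than $Q_{4/5}$ so that the terms involving $\nabla\chi$, $\Delta\chi$, $\partial_t\chi$ (in particular the non-time-integrable $\nabla_x^2G_0$ term) are evaluated at a definite positive distance from $(x_0,t_0)$.

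However, your far-region estimate contains a genuine error. First, the spatial mass is miscomputed: $\int_{\R^n}\varepsilon_i^2(\varepsilon_i^4+|y|^4)^{-1}dy$ diverges for $n\ge4$, and over $B_1$ one has
\[
\int_{B_1}\frac{\varepsilon_i^2}{\varepsilon_i^4+|y|^4}\,dy\;\sim\;\varepsilon_i^2\int_{\varepsilon_i}^{1}r^{\,n-5}\,dr\;\sim\;\varepsilon_i^2\quad(n\ge6),
\]
not $\varepsilon_i^{\,n-2}$ (the mass lives at scale $|y|\sim1$, not $|y|\sim\varepsilon_i$). Second, with the correct mass, your ``sup of kernel times total mass'' bound in the regime $\sqrt{t_0-s}>\varepsilon_i+|x_0|$ yields $\varepsilon_i^2\int_{|x_0|^2}^{1}\tau^{-(n+1)/2}d\tau\sim\varepsilon_i^2|x_0|^{1-n}$, which for $|x_0|\le1$ and $n\ge5$ is \emph{larger} than the target $\varepsilon_i^2|x_0|^{-3}$, so the argument does not close; the apparent rescue via $n\ge7$ rested entirely on the wrong exponent $\varepsilon_i^{\,n-2}$. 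The correct (and simpler) route, which is what the paper does, is to use the pointwise convolution bound uniformly in $s$ (this is Lemma \ref{lem heat kernel integral 1} with $\nu=4$, $\gamma=0$, valid for $n>4$),
\[
\int_{B_1}(t_0-s)^{-\frac{n}{2}}e^{-c\frac{|x_0-y|^2}{t_0-s}}\frac{\varepsilon_i^2}{\varepsilon_i^4+|y|^4}\,dy\;\lesssim\;\frac{\varepsilon_i^2}{(\sqrt{t_0-s}+|x_0|)^4},
\]
and then integrate $(t_0-s)^{-1/2}(\sqrt{t_0-s}+|x_0|)^{-4}$ in $s$, which gives $\varepsilon_i^2|x_0|^{-3}$ with no splitting at all. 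Note also that this step only needs $n>4$; the restriction $n\ge7$ in the paper comes from elsewhere (the decay rates in the inner--outer gluing), not from this convolution estimate.
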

\begin{proof}
We consider three cases separately.

{\bf Case 1.} If $1/4\leq |x|\leq 4/5$, this estimate (in fact, a bound on the Lipschitz seminorm) follows from standard interior gradient estimates.

{\bf Case 2.} If $|x|\leq \varepsilon_i$, by looking at the inner equation \eqref{inner eqn} and using Proposition \ref{prop uniform estimate}, we obtain
\[|\nabla\varphi_i(y,\tau)|\lesssim \varepsilon_i^{\frac{n-2}{2}}\left(\left\|\phi_i\right\|_{L^\infty(Q_1)}+e^{c\varepsilon_i^{-2}}\right).\]
Scaling back to $\phi_i$, this is
\[|\nabla\phi_i(x,t)|\lesssim \varepsilon_i^{-1}\left(\left\|\phi_i\right\|_{L^\infty(Q_1)}+e^{c\varepsilon_i^{-2}}\right).\]
In fact, this gives a full Lipschitz estimate in space-time.

{\bf Case 3.} Finally, we consider the remaining case where $\varepsilon_i\leq |x|\leq 1/4$. Take the decomposition $\phi_i=\phi_{h,i}+\phi_{n,i}$, where
  \[
    \left\{\begin{aligned}
&\partial_t\phi_{h,i}-\Delta\phi_{h,i}=0, \quad  & \mbox{in }~~ Q_{3/4},\\
&\phi_{h,i}=\phi_i, \quad  & \mbox{on }~~ \partial^pQ_{3/4}.
\end{aligned}\right.
 \]

By standard interior gradient estimates for heat equation, we have
\begin{equation}\label{gradient estimate from boundary}
  \|\nabla\phi_{h,i}\|_{L^\infty(Q_{2/3})}\lesssim  \|\phi_i\|_{L^\infty(\partial^pQ_{3/4})}.
\end{equation}

Next, for the non-homogeneous part $\phi_{n,i}$, by the heat kernel representation, we have
\begin{eqnarray*}
  |\nabla\phi_{n,i}(x,t)| &=& \left|\int_{-\frac{81}{100}}^{t}\int_{B_{\frac{3}{4}}} \nabla_x G(x,t;y,s)\left(\partial_t-\Delta\right)\phi_i(y,s)dyds \right| \\
  &\lesssim&\left(\left\|\phi_i\right\|_{L^\infty(Q_1)}+e^{c\varepsilon_i^{-2}}\right) \int_{-\frac{9}{16}}^{t}\int_{B_{\frac{3}{4}}}\frac{|x-y|}{(t-s)^{\frac{n+2}{2}}} e^{-c\frac{|x-y|^2}{t-s}}\left(\frac{\varepsilon_i^2}{\varepsilon_i^4+|y|^4}+1\right) \\
   &\lesssim&\left(\left\|\phi_i\right\|_{L^\infty(Q_1)}+e^{c\varepsilon_i^{-2}}\right)\int_{-\frac{9}{16}}^{t}\int_{B_{\frac{3}{4}}}(t-s)^{-\frac{n+1}{2}} e^{-c\frac{|x-y|^2}{t-s}}\left(\frac{\varepsilon_i^2}{|y|^4}+1\right) \\
    &\lesssim& \left(\left\|\phi_i\right\|_{L^\infty(Q_1)}+e^{c\varepsilon_i^{-2}}\right)\int_{-\frac{9}{16}}^{t} (t-s)^{-\frac{1}{2}}\left[ \frac{\varepsilon_i^2}{\left(\sqrt{t-s}+|x|\right)^4}+1\right] \\
       &\lesssim& \left(\frac{\varepsilon_i^2}{|x|^3}+1\right) \left(\left\|\phi_i\right\|_{L^\infty(Q_1)}+e^{c\varepsilon_i^{-2}}\right).
\end{eqnarray*}
Combining this estimate with \eqref{gradient estimate from boundary}, we get \eqref{gradient estimate on error fct} in this case.
\end{proof}

Next we extend this Lipschitz estimate in spatial variables to the full Lipschitz estimate in space-time variables. For this we need a technical lemma transforming the spatial Lipschitz estimate  into a full Lipschitz estimate in space-time.
\begin{lem}\label{lem Lip estimate}
Given a constant $\sigma>0$, suppose $\psi\in C^\infty(Q_1^-)$ satisfies
\begin{enumerate}
    \item $|\nabla\psi|\leq\sigma$ in $Q_1^-$;
  \item $|\partial_t\psi-\Delta\psi|\leq \sigma$ in $Q_1^-$.
\end{enumerate}
Then the Lipschitz seminorm (with respect to the parabolic distance) of $\psi$ in $Q_{1/2}^-$ satisfies
\[ |\psi|_{Lip(Q_{1/2}^-)}\lesssim \sigma.\]
\end{lem}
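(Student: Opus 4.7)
The plan is to establish the time-Hölder estimate $|\psi(x,t_2)-\psi(x,t_1)|\lesssim \sigma\sqrt{t_2-t_1}$ for any $(x,t_i)\in Q_{1/2}^-$, which combined with the given spatial Lipschitz bound (1) yields the full parabolic Lipschitz estimate. The standard obstacle is that we do not control $\partial_t\psi$ pointwise; we only control the combination $\partial_t\psi-\Delta\psi$. The idea is to mollify in space so as to trade second spatial derivatives (which are expensive) for first spatial derivatives on the mollifier (which are cheap), using the gradient hypothesis.

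Concretely, fix $(x,t_1),(x,t_2)\in Q_{1/2}^-$ with $0<t_2-t_1\le 1/4$, and choose a standard mollifier $\eta_r\in C_0^\infty(B_r(x))$ with $\int\eta_r=1$, $|\nabla\eta_r|\lesssim r^{-n-1}$, where $r:=\sqrt{t_2-t_1}\le 1/2$ so that $B_r(x)\subset B_1$. Using the spatial Lipschitz bound,
\begin{equation*}
\Big|\psi(x,t_i)-\int \psi(y,t_i)\eta_r(y)\,dy\Big|\lesssim \sigma r.
\end{equation*}
On the other hand, by the differential inequality and integration by parts in $y$,
\begin{equation*}
\frac{d}{ds}\int \psi(y,s)\eta_r(y)\,dy
=\int (\partial_s\psi-\Delta\psi)\eta_r\,dy-\int \nabla\psi\cdot\nabla\eta_r\,dy,
\end{equation*}
whose absolute value is bounded by $\sigma+\sigma\|\nabla\eta_r\|_{L^1}\lesssim \sigma(1+r^{-1})$. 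Integrating on $[t_1,t_2]$ gives
\begin{equation*}
\Big|\int \psi(y,t_2)\eta_r\,dy-\int \psi(y,t_1)\eta_r\,dy\Big|\lesssim \sigma(t_2-t_1)(1+r^{-1})\lesssim \sigma r.
\end{equation*}

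Combining the three estimates yields $|\psi(x,t_2)-\psi(x,t_1)|\lesssim \sigma r=\sigma\sqrt{t_2-t_1}$, i.e., the desired $C^{1/2}$ bound in $t$. Finally, for any two points $(x_1,t_1),(x_2,t_2)\in Q_{1/2}^-$, a triangle inequality through $(x_1,t_2)$ combined with the spatial Lipschitz assumption produces
\begin{equation*}
|\psi(x_1,t_1)-\psi(x_2,t_2)|\lesssim \sigma\bigl(|x_1-x_2|+\sqrt{|t_1-t_2|}\bigr)=\sigma\,\mathrm{dist}_P\!\bigl((x_1,t_1),(x_2,t_2)\bigr),
\end{equation*}
which is exactly the Lipschitz bound in the parabolic metric. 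The main (minor) technical point is merely the choice $r=\sqrt{t_2-t_1}$ to balance the mollification error $\sigma r$ against the ``time'' error $\sigma(t_2-t_1)/r$; the rest is a direct integration by parts.
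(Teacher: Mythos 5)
Your proof is correct and uses essentially the same mechanism as the paper's: test the equation against a spatial bump at the parabolic scale $\sqrt{t_2-t_1}$, integrate by parts once so that the Laplacian is absorbed by the gradient hypothesis, and use the spatial Lipschitz bound to pass between the pointwise value and the spatial average. The only cosmetic difference is that the paper first performs a parabolic rescaling to unit scale and then tests against a fixed cutoff, whereas you work directly at scale $r=\sqrt{t_2-t_1}$ with a normalized mollifier.
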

\begin{proof}
  We need only to prove that, for any $x\in B_{1/2}$ and $-1/4<t_1<t_2<0$,
  \begin{equation}\label{half Holder in time}
    |\psi(x,t_1)-\psi(x,t_2)|\lesssim \sigma \sqrt{t_2-t_1}.
  \end{equation}

Denote $h:=\sqrt{t_2-t_1}$ and define
\[ \widetilde{\psi}(y,s):=\frac{1}{h}\left[\psi(x+hy,t_2+h^2s)-\psi(x,t_2)\right].\]
It satisfies
\begin{enumerate}
   \item $|\nabla\widetilde{\psi}|\leq\sigma$ in $Q_2^-$;
  \item  for any  $(y,s)\in Q_2^-$,
  \begin{equation}\label{scaled heat equation}
    |\partial_t\widetilde{\psi}(y,s)-\Delta\widetilde{\psi}(y,s)|\leq h\sigma;
  \end{equation}
  \item because $\widetilde{\psi}(0,0)=0$, by integrating (1) we obtain
    \begin{equation}\label{bound at time 0}
      \sup_{y\in B_2}|\widetilde{\psi}(y,0)|\leq 2\sigma.
    \end{equation}
\end{enumerate}
 Fix a function $\eta\in C_0^\infty(B_2)$. Multiplying \eqref{scaled heat equation} and integrating by parts, we obtain
  \begin{eqnarray*}
    \left|\frac{d}{ds}\int_{B_2}\widetilde{\psi}\eta\right| &\leq & h\sigma\int_{B_2}|\eta|+\int_{B_2}|\nabla\widetilde{\psi}||\nabla\eta| \\
   &\lesssim& \sigma.
  \end{eqnarray*}
  Combining this inequality with \eqref{bound at time 0}, we obtain
  \[ \left|\int_{B_2}\widetilde{\psi}(y,s)\eta(y)\right|\lesssim \sigma \quad \mbox{ for any} ~~ s\in(-4,0).\]
  With the Lipschitz estimate in (1), this implies that
  \begin{equation}\label{bound at time s}
      \sup_{y\in B_2}|\widetilde{\psi}(y,s)|\lesssim \sigma \quad \mbox{ for any} ~~ s\in(-4,0).
    \end{equation}
In particular,
\[ |\widetilde{\psi}(0,-1)|\lesssim \sigma.\]
Scaling back to $\psi$, this is \eqref{half Holder in time}.
\end{proof}

\begin{prop}\label{prop Lip estimate on error fct}
 For any $(x,t)\in Q_{3/4}$,
   \begin{equation}\label{Lip estimate on error fct}
    |\phi_i|_{Lip(Q_{|x|/4}(x,t))}\lesssim \left(\frac{\varepsilon_i^2}{\varepsilon_i^3+|x|^3}+1\right)\left(\left\|\phi_i\right\|_{L^\infty(Q_1)}+e^{c\varepsilon_i^{-2}}\right) .
  \end{equation}
\end{prop}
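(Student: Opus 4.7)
The idea is to reduce the full parabolic Lipschitz estimate to the spatial gradient bound of Lemma \ref{lem gradient estimate on error fct} and the heat operator bound of Lemma \ref{lem improvement on RHS in error eqn} via the scale-invariant interpolation device in Lemma \ref{lem Lip estimate}. Fix $(x,t)\in Q_{3/4}$ and set
\[
M:=\left(\frac{\varepsilon_i^2}{\varepsilon_i^3+|x|^3}+1\right)\left(\|\phi_i\|_{L^\infty(Q_1)}+e^{c\varepsilon_i^{-2}}\right),\qquad r:=|x|/c_0,
\]
with $c_0$ a sufficiently large absolute constant so that $Q_{|x|/4}(x,t)$ is contained in a parabolic neighborhood of radius $\sim r$ about $(x,t)$ on which Lemmas \ref{lem improvement on RHS in error eqn} and \ref{lem gradient estimate on error fct} both apply (this is automatic since $3|x|/2<1$ for $(x,t)\in Q_{3/4}$). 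Since every $z$ in this neighborhood satisfies $|z|\sim|x|$, both weights $\varepsilon_i^2/(\varepsilon_i^3+|z|^3)+1$ and $\varepsilon_i^2/(\varepsilon_i^4+|z|^4)+1$ are, up to universal constants, equal to their values at $z=x$.

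Consider the parabolic rescaling
\[
\psi(y,s):=\frac{1}{rM}\,\phi_i(x+ry,\,t+r^2s).
\]
By Lemma \ref{lem gradient estimate on error fct} the spatial bound transforms to $|\nabla_y\psi|\lesssim 1$. For the heat operator, using Lemma \ref{lem improvement on RHS in error eqn},
\[
|\partial_s\psi-\Delta_y\psi|\;=\;\frac{r}{M}\,|\partial_t\phi_i-\Delta\phi_i|\;\lesssim\;\frac{r}{M}\left(\frac{\varepsilon_i^2}{\varepsilon_i^4+|x|^4}+1\right)\left(\|\phi_i\|_{L^\infty(Q_1)}+e^{c\varepsilon_i^{-2}}\right).
\]
Splitting into the cases $|x|\le\varepsilon_i$ and $|x|\ge\varepsilon_i$ and comparing the ratio $(\varepsilon_i^2/(\varepsilon_i^4+|x|^4)+1)/(\varepsilon_i^2/(\varepsilon_i^3+|x|^3)+1)$, one checks directly that in both cases this is $\lesssim 1/r$; hence $|\partial_s\psi-\Delta_y\psi|\lesssim 1$ as well.

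Thus $\psi$ satisfies the hypotheses of Lemma \ref{lem Lip estimate} on $Q_1^-(0,s_0)$ for every admissible $s_0$ (after dividing by the universal constants to replace them by $\sigma=1$). The lemma gives $|\psi|_{\mathrm{Lip}(Q_{1/2}^-(0,s_0))}\lesssim 1$. Varying $s_0$ over a finite set so that the corresponding backward half-cylinders cover the symmetric cylinder that pulls back to $Q_{|x|/4}(x,t)$, and combining with the trivial spatial Lipschitz bound inherited from $|\nabla_y\psi|\lesssim 1$, we conclude $|\psi|_{\mathrm{Lip}}\lesssim 1$ on the relevant domain. Unscaling multiplies the Lipschitz seminorm by $M$ (since $\phi_i=rM\,\psi$ but space-time distances also scale by $r$), yielding \eqref{Lip estimate on error fct}.

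The only nontrivial step is the algebraic verification that the rescaled heat operator bound is a universal constant. This is precisely where the compatible scalings of Lemmas \ref{lem improvement on RHS in error eqn} and \ref{lem gradient estimate on error fct}---themselves consequences of the compatible weights produced by the inner--outer decomposition---come into play; no new analysis is required.
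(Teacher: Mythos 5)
Your proof is correct and follows essentially the same route as the paper's: a parabolic rescaling at scale $r\sim|x|$, combined with Lemma \ref{lem improvement on RHS in error eqn} and Lemma \ref{lem gradient estimate on error fct}, reduces the claim to the interpolation device of Lemma \ref{lem Lip estimate}, and your case check that the rescaled heat-operator bound is $\lesssim 1$ is exactly the algebra the paper performs. The only (immaterial) difference is that you normalize by $M$ and treat all ranges of $|x|$ uniformly, whereas the paper splits into the regimes $|x|<\varepsilon_i$, $\varepsilon_i<|x|<1/4$, $1/4<|x|<3/4$ and invokes the full space-time Lipschitz bounds already noted in Cases 1--2 of the proof of Lemma \ref{lem gradient estimate on error fct} for the two extreme regimes.
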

\begin{proof}
  When $|x|<\varepsilon_i$ and $1/4<|x|<3/4$, this was already covered in the proof of Lemma \ref{lem gradient estimate on error fct}, see Case 1 and Case 2 therein.

If $\varepsilon_1<|x|<1/4$, denote $r:=|x|/2$ and let
\[ \widetilde{\phi}_i(y,s):=\phi_i\left(x+r y, t+r^2s\right).\]
By Lemma \ref{lem improvement on RHS in error eqn} and Lemma \ref{lem gradient estimate on error fct}, it satisfies the assumptions in Lemma \ref{lem Lip estimate} with the constant
\[ \sigma\lesssim \left(\frac{\varepsilon_i^2}{r^2}+r\right)\left(\left\|\phi_i\right\|_{L^\infty(Q_1)}+e^{c\varepsilon_i^{-2}}\right) .\]
Hence this lemma implies that
\[  |\widetilde{\phi}_i |_{Lip(Q_{1/2}^-)}\lesssim \left(\frac{\varepsilon_i^2}{r^2}+r\right)\left(\left\|\phi_i\right\|_{L^\infty(Q_1)}+e^{c\varepsilon_i^{-2}}\right) .\]
Scaling back to $\phi_i$, this is \eqref{Lip estimate on error fct} in this case.
\end{proof}

\subsection{Estimate of time derivative}\label{sec Schauder}

In this subsection we establish the following  estimate on  $\partial_t\phi_i$. We can also obtain some estimates on $\nabla^2\phi_i$, but we do not need it.
\begin{prop}\label{prop Schauder estimate on error fct}
 For any $(x,t)\in Q_{2/3}$,
   \begin{equation}\label{Schauder estimate on error fct}
    |\partial_t\phi_i(x,t)|\lesssim  \left(\frac{\varepsilon_i^2}{\varepsilon_i^4+|x|^4}+\frac{1}{\varepsilon_i^2+|x|}\right)\left(\left\|\phi_i\right\|_{L^\infty(Q_1)}+e^{c\varepsilon_i^{-2}}\right) .
  \end{equation}
\end{prop}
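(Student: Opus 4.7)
The strategy is to split $Q_{2/3}$ into an inner region $\{|x-\xi_i(t)|\leq 2\varepsilon_i\}$ and an outer region and to apply interior parabolic Schauder estimates to the error equation $(\partial_t-\Delta)\phi_i=F_i$ in each, with $F_i$ satisfying $|F_i|\lesssim\bigl(\varepsilon_i^2/(\varepsilon_i^4+|x|^4)+1\bigr)A$ by Lemma \ref{lem improvement on RHS in error eqn}, where $A:=\|\phi_i\|_{L^\infty(Q_1)}+e^{c\varepsilon_i^{-2}}$. Note that \eqref{bound on parameters 4} makes $|x-\xi_i(t)|$ and $|x|$ comparable throughout $Q_{2/3}$, so I will not distinguish between the two.

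In the inner region I rescale by $r:=\varepsilon_i/4$ and set $\widetilde\phi(y,s):=\phi_i(x+ry,t+r^2s)$ on $Q_1^-$; Proposition \ref{prop uniform estimate} yields $\|\widetilde\phi\|_{L^\infty(Q_1^-)}\lesssim A$, while $\widetilde F(y,s):=r^2F_i(x+ry,t+r^2s)$ is bounded by $A$ on $Q_1^-$ since $\varepsilon_i^2\cdot\sup|F_i|\lesssim A$ there. Standard interior parabolic $C^{2+\theta,1+\theta/2}$ Schauder estimates then give $|\partial_s\widetilde\phi(0,0)|\lesssim A$, and undoing the rescaling yields $|\partial_t\phi_i(x,t)|\lesssim A/\varepsilon_i^2$; this fits into the claimed right-hand side since both $\varepsilon_i^2/(\varepsilon_i^4+|x|^4)\sim\varepsilon_i^{-2}$ and $1/(\varepsilon_i^2+|x|)\leq\varepsilon_i^{-2}$ when $|x|\lesssim\varepsilon_i$.

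For the outer region I take $r:=|x-\xi_i(t)|/8\geq\varepsilon_i/4$ and the modified rescaling $\widetilde\phi(y,s):=\phi_i(x+ry,t+r^2s)-\phi_i(x,t)$, subtracting off the value at $(x,t)$ so as to exploit the Lipschitz bound from Proposition \ref{prop Lip estimate on error fct} rather than just the pointwise bound on $\phi_i$. This yields
\[\|\widetilde\phi\|_{L^\infty(Q_1^-)}\lesssim r\cdot|\phi_i|_{Lip(Q_r(x,t))}\lesssim r\Bigl(\frac{\varepsilon_i^2}{r^3}+1\Bigr)A\sim\Bigl(\frac{\varepsilon_i^2}{r^2}+r\Bigr)A,\]
while $\widetilde F(y,s):=r^2F_i(x+ry,t+r^2s)$ satisfies $\|\widetilde F\|_{L^\infty(Q_1^-)}\lesssim(\varepsilon_i^2/r^2+r^2)A$. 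Interior Schauder then gives $|\partial_s\widetilde\phi(0,0)|\lesssim(\varepsilon_i^2/r^2+r)A$, and rescaling produces
\[|\partial_t\phi_i(x,t)|\lesssim r^{-2}\Bigl(\frac{\varepsilon_i^2}{r^2}+r\Bigr)A\sim\Bigl(\frac{\varepsilon_i^2}{|x|^4}+\frac{1}{|x|}\Bigr)A,\]
which is absorbed into the claimed bound. Crucially, without first subtracting $\phi_i(x,t)$, the standard rescaling would leave a term $r^{-2}\|\widetilde\phi\|_{L^\infty}\sim A/|x|^2$, giving only the strictly weaker $1/|x|^2$ decay.

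The main technical obstacle is justifying the Hölder estimate on $\widetilde F$ needed for Schauder, since Lemma \ref{lem improvement on RHS in error eqn} only furnishes an $L^\infty$ bound on $F_i$. However, $F_i$ is built from the smooth term $pW_i^{p-1}\phi_i$, the nonlinearity $\mathcal N$, and the parameter-drift terms $(-a_i'+\mu_0 a_i/\lambda_i^2,\xi_i',\lambda_i')\cdot Z_{\ast,i}-a_i\partial_tZ_{0,\ast,i}$. On the cylinder $Q_r(x,t)$ with $r\geq\varepsilon_i/4$ the bubble $W_i$ and the kernel functions $Z_{j,\ast,i}$ are smooth with explicit pointwise derivative bounds, $\phi_i$ is Lipschitz in space-time by Proposition \ref{prop Lip estimate on error fct}, the nonlinearity $\mathcal N$ is Lipschitz in its arguments on the relevant range, and the parameter derivatives $\dot\lambda_i,\dot\xi_i,\dot a_i$ are Lipschitz in $t$ by a direct differentiation of \eqref{bound on parameters 2} together with Lemma \ref{lem estiamte of parameters}. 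A careful combination of these bounds shows $[\widetilde F]_{C^{\theta,\theta/2}(Q_1^-)}$ is of the same order as $\|\widetilde F\|_{L^\infty(Q_1^-)}$ (with the extra powers of $r$ coming out correctly because the diameter of $Q_1^-$ is $1$), which is precisely what Schauder requires to produce the pointwise estimate on $\partial_s\widetilde\phi(0,0)$ asserted above.
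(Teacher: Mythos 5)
There is a genuine gap, and it sits at the point your argument leans on hardest: the H\"older (in fact Lipschitz) regularity in time of the modulation parameters' derivatives. You assert that $\dot\lambda_i,\dot\xi_i,\dot a_i$ are Lipschitz in $t$ ``by a direct differentiation of \eqref{bound on parameters 2} together with Lemma \ref{lem estiamte of parameters}.'' But \eqref{bound on parameters 2} and Lemma \ref{lem estiamte of parameters} are pointwise \emph{bounds} on $\lambda_i',\xi_i',a_i'$; an inequality cannot be differentiated, and nothing established up to this point controls $\lambda_i'',\xi_i''$ or even a H\"older seminorm of $\lambda_i'$. Since the right-hand side of \eqref{error eqn} contains $(-a_i'+\mu_0 a_i/\lambda_i^2,\xi_i',\lambda_i')\cdot Z_{\ast,i}$ and $a_i\partial_t Z_{0,\ast,i}$, interior Schauder cannot be invoked in either region without first producing this regularity. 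The paper resolves the circularity in its Case 1 by substituting the integral representations \eqref{solving parameter eqn}, $(\dot a_i-\mu_0 a_i,\dot\xi_i,\dot\lambda_i)/\lambda_i=\mathcal{J}(\nabla\varphi_i,\varphi_i,a_i/\lambda_i)$, back into the inner equation, so that the right-hand side becomes a smooth functional of $\varphi_i$ alone; only then does a parabolic bootstrap yield \eqref{Schauder for inner}, and only as a \emph{consequence} does one obtain the time-H\"older estimates \eqref{Holder estimate of parameters 1}--\eqref{Holder estimate of parameters 2} --- which, note, are $C^{\theta/2}$ with a loss $\varepsilon_i^{-\theta}$, not Lipschitz.

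The second problem is in your intermediate region $\varepsilon_i\ll |x|\ll 1$. Even granting the parameter regularity, your claim that $[\widetilde F]_{C^{\theta,\theta/2}(Q_1^-)}$ is of the same order as $\|\widetilde F\|_{L^\infty(Q_1^-)}$ is unsubstantiated, and for the drift terms it is doubtful: after rescaling, the time-H\"older seminorm of $\lambda_i'$ contributes an extra factor $r^\theta\varepsilon_i^{-\theta}$ relative to its sup, which does not disappear when $r\gg\varepsilon_i$ is small. The paper avoids Schauder entirely at these scales: it first gets an $L^1$ bound on $\partial_t\widetilde\phi_i$ from $W^{2,p}$ (Calder\'on--Zygmund) estimates, which need only $L^\infty$ control of the right-hand side, and then upgrades to $L^\infty$ by observing that $v=\partial_t\widetilde u_i$ solves the clean linear equation $(\partial_t-\Delta)v=p\widetilde u_i^{p-1}v$ with uniformly bounded coefficient on the annulus, so local $L^1\to L^\infty$ boundedness applies to $\partial_t\widetilde u_i$ directly; the bound on $\partial_t\widetilde\phi_i$ then follows by subtracting $\partial_tW_{\widetilde\xi_i,\widetilde\lambda_i}$ and $\partial_t(\widetilde a_iZ_{0,\widetilde\xi_i,\widetilde\lambda_i})$. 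Your scaling bookkeeping (the subtraction of $\phi_i(x,t)$ to exploit the Lipschitz bound, and the resulting $\varepsilon_i^2/|x|^4+1/|x|$ decay) is correct in form, but the route through Schauder is not justified as written.
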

\begin{proof}
 Take an arbitrary sequence of points $(x_i,t_i)\in Q_{2/3}$. Denote
$r_i:=|x_i|/2$.

{\bf Case 1.} First assume
\[\limsup_{i\to+\infty}\frac{r_i}{\varepsilon_i}<+\infty.\]
In this case we work in the inner coordinates introduced in Section \ref{sec inner}. By \eqref{orthogonal in inner} and \eqref{inner eqn with cut-off}, we have the following representation formulas:
\[
  \left\{\begin{aligned}
&\frac{\dot{a}_i(\tau)-\mu_0a_i(\tau)}{\lambda_i(\tau)}=\int_{\R^n}Z_0(y)E_{K,i}(y,\tau)dy,\\
&\frac{\dot{\xi}_i(\tau)}{\lambda_i(\tau)}=-\int_{\R^n}Z_j(y)E_{K,i}(y,\tau)dy, \quad j=1,\cdots, n,\\
& \frac{\dot{\lambda}_i(\tau)}{\lambda_i(\tau)}=-\int_{\R^n}Z_{n+1}(y)E_{K,i}(y,\tau)dy.
\end{aligned}\right.
\]
By the form of $E_K$ in Section \ref{sec inner}, and the smallness of
\[ \left|\frac{\dot{a}_i -\mu_0a_i }{\lambda_i}\right|+\left|\frac{\dot{\xi}_i}{\lambda_i}\right|+\left|\frac{\dot{\lambda}_i}{\lambda_i}\right|,\]
 the above three equations are solved as
\begin{equation}\label{solving parameter eqn}
  \left(\frac{\dot{a}_i(\tau)-\mu_0a_i(\tau)}{\lambda_i(\tau)},\frac{\dot{\xi}_i(\tau)}{\lambda_i(\tau)},\frac{\dot{\lambda}_i(\tau)}{\lambda_i(\tau)}\right)
=\mathcal{J}\left(\nabla\varphi_i(\tau),\varphi_i(\tau), \frac{a_i(\tau)}{\lambda_i(\tau)}\right),
\end{equation}
where $\mathcal{J}=(\mathcal{J}_0,\cdots, \mathcal{J}_{n+1})$ is a vector valued, nonlinear (but smooth) integral operator.
Plugging this back into  \eqref{inner eqn}, we get
\begin{eqnarray*}
 \partial_\tau\varphi_i-\Delta_y\varphi_i
 & =&\left(W+\varphi+a_iZ_0\right)^p-W^p+\mathcal{J}\cdot Z  \\
 &+&\left(\mathcal{J}_1,\cdots,\mathcal{J}_n\right)\cdot\nabla\varphi+\mathcal{J}_{n+1}\left(y\cdot\nabla\varphi_i+\frac{n-2}{2}\varphi_i\right)\\
 &+&\frac{a_i}{\lambda_i}\left[\left(\mathcal{J}_1,\cdots,\mathcal{J}_n\right)\cdot\nabla Z_0+\mathcal{J}_{n+1}\left(y\cdot\nabla Z_0+\frac{n}{2}Z_0\right)\right].
\end{eqnarray*}
Starting from the $L^\infty$ bound of $\varphi_i$ (by rescaling the estimates in Proposition \ref{prop uniform estimate}) and  bootstrapping parabolic estimates,  we get
\begin{equation}\label{Schauder for inner}
  \|\varphi_i\|_{C^{2+\theta,1+\theta/2}(B_{2K}\times [\tau-1,\tau]) }\lesssim \varepsilon_i^{\frac{n-2}{2}}, \quad \mbox{for any}~~ \tau.
\end{equation}
A byproduct of this estimate is the validity of \eqref{Schauder estimate on error fct} in this case.

{\bf Case 2.} Substituting \eqref{Schauder for inner} into \eqref{solving parameter eqn} gives, for any $\tau_1<\tau_2$,
\begin{eqnarray}\label{Holder estimate of parameters 1}
\left\|\left(\frac{\dot{a}_i-\mu_0a_i}{\lambda_i},\frac{\dot{\xi}_i}{\lambda_i},\frac{\dot{\lambda}_i}{\lambda_i}\right)\right\|_{C^{\theta/2}([\tau_1,\tau_2])}
 \lesssim  \varepsilon_i^{\frac{n-2}{2}} \left(\left\|\phi_i\right\|_{L^\infty(Q_1)}+e^{c\varepsilon_i^{-2}}\right).
\end{eqnarray}
After scaling back to the original coordinates, by noting that $d\tau/dt\sim \varepsilon_i^{-2}$, this estimate is transformed into
\begin{eqnarray}\label{Holder estimate of parameters 2}
\left\|\left(a_i^\prime-\mu_0\frac{a_i}{\lambda_i^2},\xi_i^{\prime},\lambda_i^{\prime}\right)\right\|_{C^{\theta/2}([-3/4,3/4])}
 \lesssim  \varepsilon_i^{\frac{n-4}{2}-\theta} \left(\left\|\phi_i\right\|_{L^\infty(Q_1)}+e^{c\varepsilon_i^{-2}}\right).
\end{eqnarray}

 By Proposition \ref{prop uniform estimate}, \eqref{Holder estimate of parameters 2} and Proposition \ref{prop Lip estimate on error fct}, we deduce that the terms in the right hand side of \eqref{error eqn} are uniformly bounded in $C^{\theta,\theta/2}\left((B_1\setminus B_{1/8})\times(-1,1)\right)$. Hence standard Schauder estimates for heat equation gives
\[ \sup_{\left((B_{2/3}\setminus B_{1/4})\times(-4/9,4/9)\right)}|\partial_t\phi_i|\lesssim \left\|\phi_i\right\|_{L^\infty(Q_1)}+e^{c\varepsilon_i^{-2}}.\]

{\bf Case 3.}  In this case we assume
  \[ \lim_{i\to+\infty}\left(r_i+\frac{\varepsilon_i}{r_i}\right)=0.\]
Define
\[\widetilde{u}_i(x,t):=r_i^{\frac{n-2}{2}}u_i\left(r_i x,t_i+r_i^2t\right).\]

The decomposition in Section \ref{sec decomposition} can be transferred to $\widetilde{u}_i$ as
\begin{equation}\label{decomosition scaled}
  \widetilde{u}_i=W_{\widetilde{\xi}_i,\widetilde{\lambda}_i}+\widetilde{a}_iZ_{0,\widetilde{\xi}_i,\widetilde{\lambda}_i}+\widetilde{\phi}_i,
\end{equation}
where
\[
  \left\{\begin{aligned}
&\widetilde{\xi}_i(t):=r_i^{-1} \xi_i(t_i+r_i^2t),\\
&\widetilde{\lambda}_i(t):=r_i^{-1} \lambda_i(t_i+r_i^2t),\\
& \widetilde{a}_i(t):=r_i^{-1}a_i(t_i+r_i^2t),\\
& \widetilde{\phi}_i(x,t):=r_i^{(n-2)/2}\phi_i(r_ix, t_i+r_i^2t).
\end{aligned}\right.
\]
Because for any $t\in[-3/4,3/4]$,
\[ r_i>2K\varepsilon_i>K\lambda_i(t), \]
the orthogonal relation in Proposition \ref{prop orthogonal decompostion} still holds between $\widetilde{\phi}_i$ and $Z_{j,\widetilde{\xi}_i,\widetilde{\lambda}_i}$, $j=0,\cdots, n+1$.

Furthermore, the error equation \eqref{error eqn} now reads as
\begin{eqnarray}\label{error eqn scaled}
  \partial_t\widetilde{\phi}_i-\Delta\widetilde{\phi}_i&=&\left(W_{\widetilde{\xi}_i,\widetilde{\lambda}_i}
  +\widetilde{a}_iZ_{0,\widetilde{\xi}_i,\widetilde{\lambda}_i}+\widetilde{\phi}_i\right)^p
  -W_{\widetilde{\xi}_i,\widetilde{\lambda}_i}^p-pW_{\widetilde{\xi}_i,\widetilde{\lambda}_i}^{p-1}\widetilde{a}_iZ_{0,\widetilde{\xi}_i,\widetilde{\lambda}_i} \nonumber \\
  &+&\left(-\widetilde{a}_i^\prime+\mu_0\frac{\widetilde{a}_i}{\widetilde{\lambda}_i^2},\widetilde{\xi}_i^\prime ,\widetilde{\lambda}_i^\prime \right)\cdot Z_{\widetilde{\xi}_i,\widetilde{\lambda}_i}-\widetilde{a}_i\partial_tZ_{0,\widetilde{\xi}_i,\widetilde{\lambda}_i}.
\end{eqnarray}

By \eqref{bound on parameters 5} and \eqref{Holder estimate of parameters 2}, we get the following  estimates for those terms in the right hand side of \eqref{error eqn scaled}:
\begin{equation}\label{estimate of LHS scaled}
  \left\{\begin{aligned}
   &  \|W_{\widetilde{\xi}_i,\widetilde{\lambda}_i}\|_{L^\infty((B_2\setminus B_{1/4})\times(-4,4))}\lesssim \left(\frac{\varepsilon_i}{r_i}\right)^{\frac{n-2}{2}};\\
   & \|\widetilde{a}_iZ_{0,\widetilde{\xi}_i,\widetilde{\lambda}_i}\|_{L^\infty((B_2\setminus B_{1/4})\times(-4,4))}\lesssim \varepsilon_i^{\frac{n-2}{2}} e^{-c\frac{r_i}{\varepsilon_i}}\left(\left\|\phi_i\right\|_{L^\infty(Q_1)}+e^{c\varepsilon_i^{-2}}\right);\\
   & \|\widetilde{\phi}_i\|_{L^\infty((B_1\setminus B_{1/4})\times(-4,4))}\lesssim r_i^{\frac{n-2}{2}}\left(\left\|\phi_i\right\|_{L^\infty(Q_1)}+e^{c\varepsilon_i^{-2}}\right);\\
  & \left\|\left(\widetilde{a}_i^\prime-\mu_0 \widetilde{a}_i \widetilde{\lambda}_i^{-2},\widetilde{\xi}_i^{\prime},\widetilde{\lambda}_i^{\prime}\right)\cdot Z_{\widetilde{\xi}_i,\widetilde{\lambda}_i}\right\|_{L^\infty((B_2\setminus B_{1/4})\times(-4,4))}\lesssim \frac{  \varepsilon_i^{n-4}}{r_i^{\frac{n}{2}-3}}\left(\left\|\phi_i\right\|_{L^\infty(Q_1)}+e^{c\varepsilon_i^{-2}}\right);\\
  & \left\|\widetilde{a}_i\partial_tZ_{0,\widetilde{\xi}_i,\widetilde{\lambda}_i}\right\|_{C^{\theta,\theta/2}((B_2\setminus B_{1/4})\times(-4,4))}\lesssim \varepsilon_i^{n-2}  e^{-c\frac{r_i}{\varepsilon_i}}\left(\left\|\phi_i\right\|_{L^\infty(Q_1)}+e^{c\varepsilon_i^{-2}}\right).
\end{aligned}\right.
\end{equation}

These estimates imply that in $\left(B_2\setminus B_{1/4}\right)\times(-4,4)$,
\begin{equation}\label{estimate of u scaled}
  \widetilde{u}_i\lesssim  \left(\frac{\varepsilon_i}{r_i}\right)^{\frac{n-2}{2}}+r_i^{\frac{n-2}{2}}\left(\left\|\phi_i\right\|_{L^\infty(Q_1)}+e^{c\varepsilon_i^{-2}}\right)
\end{equation}
and
\begin{eqnarray*}
  &&\left| \left(W_{\widetilde{\xi}_i,\widetilde{\lambda}_i}
  +\widetilde{a}_iZ_{0,\widetilde{\xi}_i,\widetilde{\lambda}_i}+\widetilde{\phi}_i\right)^p
  -W_{\widetilde{\xi}_i,\widetilde{\lambda}_i}^p-pW_{\widetilde{\xi}_i,\widetilde{\lambda}_i}^{p-1}\widetilde{a}_iZ_{0,\widetilde{\xi}_i,\widetilde{\lambda}_i} \right|\\
  &\lesssim& \left[W_{\widetilde{\xi}_i,\widetilde{\lambda}_i}^{p-1}+\widetilde{u}_i^{p-1}\right] \left( \left|\widetilde{\phi_i}\right|+\left| \widetilde{a}_iZ_{0,\widetilde{\xi}_i,\widetilde{\lambda}_i} \right|\right)\\
  &\lesssim& \left(\frac{\varepsilon_i^2}{r_i^2}+r_i^2\right)r_i^{\frac{n-2}{2}} \left(\left\|\phi_i\right\|_{L^\infty(Q_1)}+e^{c\varepsilon_i^{-2}}\right) .
\end{eqnarray*}

By Proposition \ref{prop Lip estimate on error fct} and a rescaling, we know that the Lipschitz seminorm of $\widetilde{\phi}_i$ t in $\left(B_2\setminus B_{1/4}\right)\times(-4,4)$ is bounded by
\[ r_i^{\frac{n}{2}}\left(\frac{\varepsilon_i^2}{r_i^3}+1\right)\left(\left\|\phi_i\right\|_{L^\infty(Q_1)}+e^{c\varepsilon_i^{-2}}\right) .\]
By these estimates and standard $W^{2,p}$ estimates for heat equation, we obtain
\begin{eqnarray}\label{integral estimate for phi t}
  \int_{-3}^{3}\int_{B_{9/5}\setminus B_{1/3}}|\partial_t\widetilde{\phi}_i| &\lesssim & \left[r_i^{\frac{n}{2}}\left(\frac{\varepsilon_i^2}{r_i^3}+1\right)
  +\varepsilon_i^{\frac{n-2}{2}}\left(\frac{\varepsilon_i}{r_i}\right)^{\frac{n-3}{2}}
  +r_i^{\frac{n-2}{2}}\left(\frac{\varepsilon_i^2}{r_i^2}+r_i^2\right)\right] \nonumber \\
  && \quad \quad \times \left(\left\|\phi_i\right\|_{L^\infty(Q_1)}+e^{c\varepsilon_i^{-2}}\right).
\end{eqnarray}

Because
\[\left(\partial_t -\Delta\right)\partial_t\widetilde{u}_i=p\widetilde{u}_i^{p-1}\partial_t\widetilde{u}_i,\]
and $\widetilde{u}_i$ are uniformly bounded in $\left(B_{9/5}\setminus B_{1/3}\right)\times(-3,3)$, we get
\begin{eqnarray}\label{estimate of u t}
   &&\left\|\partial_t\widetilde{u}_i\right\|_{L^\infty((B_{3/2}\setminus B_{1/2})\times(-1,1))} \nonumber\\
  &\lesssim &  \int_{-3}^{3}\int_{B_{9/5}\setminus B_{1/3}}|\partial_t\widetilde{u}_i|\\
  &\lesssim&\left(\int_{-3}^{3}\int_{B_{9/5}\setminus B_{1/3}}|\partial_t\widetilde{\phi}_i|\right) + \|\partial_tW_{\widetilde{\xi}_i,\widetilde{\lambda}_i}\|_{L^\infty((B_2\setminus B_{1/4})\times(-4,4))}\nonumber\\
  && \quad \quad \quad \quad +\left\|\partial_t\left(\widetilde{a}_iZ_{0,\widetilde{\xi}_i,\widetilde{\lambda}_i}\right)\right\|_{L^\infty((B_2\setminus B_{1/4})\times(-4,4))}, \nonumber
\end{eqnarray}
where we have used the expansion from \eqref{decomosition scaled},
\[\partial_t\widetilde{u}_i=\partial_tW_{\widetilde{\xi}_i,\widetilde{\lambda}_i}+\partial_t\left(
\widetilde{a}_iZ_{0,\widetilde{\xi}_i,\widetilde{\lambda}_i}\right)+\partial_t\widetilde{\phi}_i.\]

Similar to \eqref{estimate of LHS scaled}, we also have
\[
 \left\{\begin{aligned}
   &  \|\partial_tW_{\widetilde{\xi}_i,\widetilde{\lambda}_i}\|_{L^\infty((B_2\setminus B_{1/4})\times(-4,4))}\lesssim \varepsilon_i^{n-4} r_i^{-\frac{n}{2}-3} \left(\left\|\phi_i\right\|_{L^\infty(Q_1)}+e^{c\varepsilon_i^{-2}}\right);\\
   &\left\|\partial_t\left(\widetilde{a}_iZ_{0,\widetilde{\xi}_i,\widetilde{\lambda}_i}\right)\right\|_{L^\infty((B_2\setminus B_{1/4})\times(-4,4))}\lesssim \varepsilon_i^{\frac{n-2}{2}} e^{-c\frac{r_i}{\varepsilon_i}}\left(\left\|\phi_i\right\|_{L^\infty(Q_1)}+e^{c\varepsilon_i^{-2}}\right).
\end{aligned}\right.
\]

Substituting these estimates and \eqref{integral estimate for phi t} into \eqref{estimate of u t}, we obtain
\begin{eqnarray*}
    &&\left\|\partial_t\widetilde{\phi}_i\right\|_{L^\infty((B_{3/2}\setminus B_{1/2})\times(-1,1))}\\
    &\lesssim &  \left\|\partial_t\widetilde{u}_i\right\|_{L^\infty((B_{3/2}\setminus B_{1/2})\times(-1,1))}\\
    &+&  \|\partial_tW_{\widetilde{\xi}_i,\widetilde{\lambda}_i}\|_{L^\infty((B_2\setminus B_{1/4})\times(-4,4))}+\left\|\partial_t\left(\widetilde{a}_iZ_{0,\widetilde{\xi}_i,\widetilde{\lambda}_i}\right)\right\|_{L^\infty((B_2\setminus B_{1/4})\times(-4,4))}\\
  &\lesssim&  \int_{-3}^{3}\int_{B_{9/5}\setminus B_{1/3}}|\partial_t\widetilde{\phi}_i|\\
  &+&  \|\partial_tW_{\widetilde{\xi}_i,\widetilde{\lambda}_i}\|_{L^\infty((B_2\setminus B_{1/4})\times(-4,4))}+\left\|\partial_t\left(\widetilde{a}_iZ_{0,\widetilde{\xi}_i,\widetilde{\lambda}_i}\right)\right\|_{L^\infty((B_2\setminus B_{1/4})\times(-4,4))}\\
  &\lesssim& \left[r_i^{\frac{n}{2}}\left(\frac{\varepsilon_i^2}{r_i^3}+1\right)
  +\varepsilon_i^{\frac{n-2}{2}}\left(\frac{\varepsilon_i}{r_i}\right)^{\frac{n-3}{2}}
  +r_i^{\frac{n-2}{2}}\left(\frac{\varepsilon_i^2}{r_i^2}+r_i^2\right)\right]
\left(\left\|\phi_i\right\|_{L^\infty(Q_1)}+e^{c\varepsilon_i^{-2}}\right) \\
&+&\left[  \frac{\varepsilon_i^{n-4}}{r_i^{\frac{n}{2}-3}}+\varepsilon_i^{\frac{n-2}{2}} e^{-c\frac{r_i}{\varepsilon_i}}\right] \left(\left\|\phi_i\right\|_{L^\infty(Q_1)}+e^{c\varepsilon_i^{-2}}\right).
\end{eqnarray*}
Scaling back to $\phi_i$, we get \eqref{Schauder estimate on error fct}.
\end{proof}

\section{Linearization of Pohozaev identity}\label{sec Pohozaev}
\setcounter{equation}{0}

 Given a smooth function $v$ and a sphere $\partial B_r$, define the Pohozaev invariant as
\[\mathcal{P}_v(r):= \int_{\partial B_r}\left[\frac{|\nabla v|^2 }{2}-\left(\partial_rv\right)^2-\frac{n-2}{2r}v\partial_rv\right].\]

Choose of a sequence of $r_i$ satisfying
\[ \lim_{i\to+\infty}\frac{r_i}{\varepsilon_i}=+\infty.\]
Multiplying \eqref{eqn} by $x\cdot\nabla u_i(x,0)$ and integrating in $B_{r_i}$, after integrating by parts we obtain a Pohozaev  identity
\begin{equation}\label{Pohozaev}
\mathcal{P}_{u_i(0)}(r_i)-\int_{\partial B_{r_i}}\frac{u_i(0)^{p+1}}{p+1} =-\frac{1}{r_i}\int_{B_{r_i}}\partial_tu_i(0)\left[x\cdot\nabla u_i(0)+\frac{n-2}{2}u_i(0)\right].
\end{equation}
Substitute the decomposition
\eqref{decomposition} into this identity, and take a expansion. Let us estimate each term in this expansion. We will see that the zeroth order terms cancel with each other, because these terms form exactly the Pohozaev invariant of the bubble. The next order term then gives us some information.

\subsection{The left hand side}
For the left hand side, we have the expansion
\begin{eqnarray*}
&&\mathcal{P}_{u_i(0)}(r_i)-\int_{\partial B_{r_i}}\frac{u_i(0)^{p+1}}{p+1} \\
 &=& \mathcal{P}_{W_i(0)}(r_i)-\int_{\partial B_{r_i}}\frac{W_i(0)^{p+1}}{p+1} +\mathcal{P}_{\phi_i(0)}(r_i)+\mathcal{P}_{a_i(0) Z_{0,i}(0)}(r_i)\\
&+&\underbrace{\int_{\partial B_{r_i}}\nabla W_i(0)\cdot\nabla\phi_i(0)-2\partial_rW_i(0)\partial_r\phi_i(0)-\frac{n-2}{2r_i}
\left[W_i(0)\partial_r\phi_i(0)+\phi_i(0)\partial_rW_i(0)\right]}_{\mbox{Main order term}}\\
&-&\frac{1}{p+1}\int_{\partial B_{r_i}}\left[u_i(0)^{p+1}- W_i(0)^{p+1}\right]+\mbox{cross terms involving } a_i(0)Z_{0,i}(0).
\end{eqnarray*}

Let us estimate each term in this expansion.
\begin{enumerate}
  \item Because $W_i(0)$ is a smooth solution of \eqref{stationary eqn}, it satisfies the standard Pohozaev identity
  \[\mathcal{P}_{W_i(0)}(r_i)-\int_{\partial B_{r_i}}\frac{W_i(0)^{p+1}}{p+1} =0.\]

  \item  By Proposition \ref{prop zeroth order on phi} and Proposition \ref{prop Lip estimate on error fct}, we get
  \[ \mathcal{P}_{\phi_i(0)}(r_i)=O\left(r_i^{n-3}\right)\left(\left\|\phi_i\right\|_{L^\infty(Q_1)}+e^{c\varepsilon_i^{-2}}\right)^2.\]
Here  the factor $\left(\left\|\phi_i\right\|_{L^\infty(Q_1)}+e^{c\varepsilon_i^{-2}}\right)$ is kept for later use.

  \item By estimates on $a_i$ and $\lambda_i$  in Proposition \ref{prop uniform estimate}, on $\partial B_{r_i}$ we have
\begin{equation}\label{estimate of Z0 on sphere}
 \left\{\begin{aligned}
 &|a_i(t)Z_{0,i}(t)|\lesssim e^{-cr_i /\varepsilon_i}\left(\left\|\phi_i\right\|_{L^\infty(Q_1)}+e^{c\varepsilon_i^{-2}}\right), \\
 & \left|\nabla\left(a_i(t)Z_{0,i}(t)\right)\right|\lesssim \varepsilon_i^{-1}e^{-cr_i/\varepsilon_i}\left(\left\|\phi_i\right\|_{L^\infty(Q_1)}+e^{c\varepsilon_i^{-2}}\right).
 \end{aligned}\right.
\end{equation}
By these estimates we get
  \[ \left|\mathcal{P}_{a_i(0)Z_{0,i}(0)}(r)\right|\lesssim \varepsilon_i^{-2}r_i^{n-1}e^{-cr_i/\varepsilon_i} \left(\left\|\phi_i\right\|_{L^\infty(Q_1)}+e^{c\varepsilon_i^{-2}}\right)^2.\]

\item First by integrating \eqref{gradient estimate on error fct}, for any $x\in\partial B_{r_i}$, we have
    \[\phi_i(x,0)=\frac{1}{|\partial B_\rho|}\int_{\partial B_{\rho}}\phi_i(0)+O\left(\frac{\varepsilon_i^2}{r_i^2}+\rho\right)\left(\left\|\phi_i\right\|_{L^\infty(Q_1)}+e^{c\varepsilon_i^{-2}}\right).\]
Next,   because $\xi_i(0)=0$, on   $\partial B_{r_i}$, by Proposition \ref{prop uniform estimate} and \eqref{bound on parameters 5},  we obtain
 \[
 \left\{\begin{aligned}
& W_i(0)\lesssim \varepsilon_i^{\frac{n-2}{2}}r_i^{2-n} ,\\
& \partial_rW_i(0)= -c(n)\varepsilon_i^{\frac{n-2}{2}}r_i^{1-n}+O\left(\varepsilon_i^{\frac{n+2}{2}}r_i^{-1-n}\right),  \\
& \nabla W_i(0)=\partial_rW_i(0)\frac{x}{|x|}, \\
&   |\nabla\phi_i(t)|\lesssim \left(\varepsilon_i^2 r_i^{-3}+1\right)\left(\left\|\phi_i\right\|_{L^\infty(Q_1)}+e^{c\varepsilon_i^{-2}}\right).
\end{aligned}\right.
\]
 Therefore the main order term equals
  \begin{eqnarray*}
   &&-c(n)\left[\frac{1}{|\partial B_\rho|}\int_{\partial B_{\rho}}\phi_i+O\left(\frac{\varepsilon_i^2}{r_i^2}+\rho\right)\left(\left\|\phi_i\right\|_{L^\infty(Q_1)}+e^{c\varepsilon_i^{-2}}\right)\right]
   \frac{\varepsilon_i^{\frac{n-2}{2}}}{r_i}
 \\
 & +&O\left(\varepsilon_i^{\frac{n-2}{2}}\right)\left(\left\|\phi_i\right\|_{L^\infty(Q_1)}+e^{c\varepsilon_i^{-2}}\right).
\end{eqnarray*}

   \item On $\partial B_{r_i}$, we have
 \[ W_i(0)\lesssim  \varepsilon_i^{\frac{n-2}{2}}r_i^{2-n}, \quad  |a_i Z_{0,i}(0)|\lesssim e^{-cr_i/\varepsilon_i} \left(\left\|\phi_i\right\|_{L^\infty(Q_1)}+e^{c\varepsilon_i^{-2}}\right).\]
 Therefore
  \[ \left|\int_{\partial B_{r_i}}\left[u_i(t)^{p+1}- W_i(t)^{p+1}\right]\right|
 \lesssim \left(\varepsilon_i^{\frac{n+2}{2}}r_i^{-3}+r_i^{n-1}\right)\left(\left\|\phi_i\right\|_{L^\infty(Q_1)}+e^{c\varepsilon_i^{-2}}\right). \]

\item Finally, by \eqref{estimate of Z0 on sphere} and estimates of $W_i(0)$ and $\phi_i(0)$, the cross terms involving $a_i Z_{0,i}(0)$ are of the order
\[ O\left(\varepsilon_i^{\frac{n-4}{2}}+ r_i^{n-3}+\varepsilon_i^{-1}r_i^{n-2}\right)e^{-cr_i/\varepsilon_i}
\left(\left\|\phi_i\right\|_{L^\infty(Q_1)}+e^{c\varepsilon_i^{-2}}\right).\]
\end{enumerate}

Putting these estimates together, we see that the left hand side of \eqref{Pohozaev} equals
\begin{eqnarray}\label{LHS of Pohozaev}
  &&-c(n)\left[\frac{1}{|\partial B_\rho|}\int_{\partial B_{\rho}}\phi_i+O\left(\frac{\varepsilon_i^2}{r_i^2}+\rho\right)\left(\left\|\phi_i\right\|_{L^\infty(Q_1)}+e^{c\varepsilon_i^{-2}}\right)\right]
   \frac{\varepsilon_i^{\frac{n-2}{2}}}{r_i} \nonumber
 \\
 & +&O\left(r_i^{n-3}+\varepsilon_i^{-2}r_i^{n-1}e^{-cr_i/\varepsilon_i} +\varepsilon_i^{\frac{n-2}{2}}\right)\left(\left\|\phi_i\right\|_{L^\infty(Q_1)}+e^{c\varepsilon_i^{-2}}\right)\\
 &+&\left(\varepsilon_i^{\frac{n+2}{2}}r_i^{-3}+r_i^{n-1}+\varepsilon_i^{\frac{n-4}{2}}e^{-cr_i/\varepsilon_i}\right)\left(\left\|\phi_i\right\|_{L^\infty(Q_1)}+e^{c\varepsilon_i^{-2}}\right). \nonumber
\end{eqnarray}

\subsection{The right hand side}\label{subsection RHS of Pohozaev}

By \eqref{decomposition}, we get
\begin{equation}\label{local decomposition of time derivative 1}
  \partial_tu_i=\left(a_{i}^\prime,\xi_{i}^\prime,\lambda_{i}^\prime\right)\cdot Z_{\ast,i}+\partial_t\phi_{i}+a_{i} \partial_tZ_{0,i}
\end{equation}
and
\begin{eqnarray}\label{local decomposition of scaling derivative 1}
  x\cdot\nabla u_i+\frac{n-2}{2}u_i&=&\lambda_{i}Z_{n+1,i}+\left(x\cdot\nabla\phi_{i}+\frac{n-2}{2}\phi_{i}\right)\\
  &+&a_{i}\left(x\cdot Z_{0,i}+\frac{n-2}{2}Z_{0,i}\right). \nonumber
\end{eqnarray}
Therefore for any $r$, we have
\begin{eqnarray*}
   && \int_{B_r}\partial_tu_i(x,0)\left[x\cdot\nabla u_i(x,0)+\frac{n-2}{2}u_i(x,0)\right] dx\\
 &=& \lambda_{i}(0)\left(a_{i}^\prime(0),\xi_{i}^\prime(0),\lambda_{i}^\prime(0)\right)\cdot\int_{B_r}  Z_{\ast,i}(x,0)Z_{n+1,i}(x,0)
dx\\
 &+&\left(a_{i}^\prime(0),\xi_{i}^\prime(0),\lambda_{i}^\prime(0)\right)\cdot\int_{B_r} Z_{
 \ast,i}(x,0)\left[x\cdot\nabla\phi_{i}(x,0)+\frac{n-2}{2}\phi_{i}(x,0)\right]dx\\
  &+&a_{i}(0)\left(a_{i}^\prime(0),\xi_{i}^\prime(0),\lambda_{i}^\prime(0)\right)\cdot\int_{B_r} Z_{\ast,i}(x,0)\left[x\cdot Z_{0,i}(x,0)+\frac{n-2}{2}Z_{0,i}(x,0)\right]dx\\
 &+&\lambda_{i}(0)\int_{B_r}Z_{n+1,i}(x,0)\partial_t\phi_{i}(x,0)dx\\
 &+&\int_{B_r}\partial_t\phi_i(x,0)
 \left[x\cdot\nabla\phi_{i}(x,0)+\frac{n-2}{2}\phi_{i}(x,0)\right]dx\\
 &+&a_{i}(0)\int_{B_r}\partial_t\phi_i(x,0) \left[x\cdot Z_{0,i}(x,0)+\frac{n-2}{2}Z_{0,i}(x,0)\right]dx\\
 &+& a_{i}(0) \lambda_{i}(0)\int_{B_r} Z_{n+1,i}(x,0)\partial_tZ_{0,i}(x,0)dx\\
 &+&a_{i}(0)\int_{B_r} \partial_tZ_{0,i}(x,0)\left[x\cdot\nabla\phi_{i}(x,0)+\frac{n-2}{2}\phi_{i}(x,0)\right]dx\\
 &+&a_{i}(0)^2 \int_{B_r}\partial_t Z_{0,i}(x,0)\left[x\cdot Z_{0,i}(x,0)+\frac{n-2}{2}Z_{0,i}(x,0)\right]dx\\
 &=:&\mathrm{I}+\mathrm{II}+\mathrm{III}+\mathrm{IV}+\mathrm{V}+\mathrm{VI}+\mathrm{VII}+\mathrm{VIII}+\mathrm{IX}.
\end{eqnarray*}
Let us take $r=r_i$ and estimate each term one by one.
\begin{enumerate}
  \item By \eqref{bound on parameters 1}, Proposition \ref{prop uniform estimate} and the orthogonal relation between $Z_0$ and $Z_{n+1}$,
  \[ \left| \lambda_i(0) a_i^\prime(0)\int_{B_{r_i}} Z_{0,i}(x,t)Z_{n+1,i}(x,0)dx\right|\lesssim \varepsilon_i^{\frac{n-2}{2}}e^{-cr_i/\varepsilon_i}\left(\left\|\phi_i\right\|_{L^\infty(Q_1)}+e^{c\varepsilon_i^{-2}}\right) .\]

 Similarly, for each $j=1,\cdots, n$,
  \[ \left| \lambda_i(0) \xi_{j,i}^\prime(0)\int_{B_{r_i}} Z_{j,i}(x,t)Z_{n+1,i}(x,0)dx\right|\lesssim \varepsilon_i^{\frac{3n}{2}-4}r_i^{3-n}\left(\left\|\phi_i\right\|_{L^\infty(Q_1)}+e^{c\varepsilon_i^{-2}}\right) .\]

  By Lemma \ref{lem estiamte of parameters}, \eqref{bound on parameters 1} and Proposition \ref{prop uniform estimate},
  \[  \left|\lambda_i(0) \lambda_i(0)^\prime\int_{B_{r_i}}Z_{i,n+1}(x,0)^2dx\right|\lesssim K^{-\frac{n-2}{2}}\varepsilon_i^{\frac{n-2}{2}}\left(\left\|\phi_i\right\|_{L^\infty(Q_1)}+e^{c\varepsilon_i^{-2}}\right) .\]

  Putting these three estimates together we obtain
\[|\mathrm{I}|\lesssim \left[ e^{-cr_i/\varepsilon_i}+\left(\frac{\varepsilon_i}{r_i}\right)^{n-3}+ K^{-\frac{n-2}{2}}\right]\varepsilon_i^{\frac{n-2}{2}}\left(\left\|\phi_i\right\|_{L^\infty(Q_1)}+e^{c\varepsilon_i^{-2}}\right) .\]

  \item By Proposition \ref{prop uniform estimate}, Proposition \ref{prop zeroth order on phi} and Lemma \ref{lem gradient estimate on error fct}, we get
  \[|\mathrm{II}|\lesssim   \varepsilon_i^{n-4}r_i^2\left(\left\|\phi_i\right\|_{L^\infty(Q_1)}+e^{c\varepsilon_i^{-2}}\right)^2.\]

 \item By \eqref{bound on parameters 1} and \eqref{estimate on D 2},
 \[ |\mathrm{III}|\lesssim   \varepsilon_i^{n-2}\left(\left\|\phi_i\right\|_{L^\infty(Q_1)}+e^{c\varepsilon_i^{-2}}\right) ^2. \]

 \item To estimate $\mathrm{IV}$,  first note that because $r_i\gg \varepsilon_i$, the orthogonal condition \eqref{orthogonal condition} for $\phi_i$ reads as
 \[ \int_{B_{r_i}}\phi_i(x,t)\eta_{in}(x,t)Z_{n+1,i}(x,t)dx=0.\]
 Differentiating this equation in $t$ gives
 \begin{eqnarray*}
  &&  \left|\int_{B_{r_i}}Z_{n+1,i}(x,0)\partial_t\phi_{i}(x,0) \eta_{in}(x,0)dx \right|\\
  &\leq & \left| \int_{B_{r_i}}\phi_i(x,0) \partial_t\eta_{in}(x,0) Z_{n+1,i}(x,0)dx\right|+\left|\int_{B_{r_i}} \phi_i(x,0) \eta_{in}(x,0) \partial_tZ_{n+1,i}(x,0) dx\right|\\
  &\lesssim & K^2\varepsilon_i^{n-3}\left(\left\|\phi_i\right\|_{L^\infty(Q_1)}+e^{c\varepsilon_i^{-2}}\right).
 \end{eqnarray*}
 On the other hand, a direct calculation using Proposition \ref{prop Schauder estimate on error fct} gives
 \begin{eqnarray*}
 &&\left|\int_{B_{r_i}}Z_{n+1,i}(x,0)\partial_t\phi_{i}(x,0) \left[1-\eta_{in}(x,0)\right]dx \right|  \\
 &\lesssim&   \left(K^{-2}+r_i\right)\varepsilon_i^{\frac{n-4}{2}}\left(\left\|\phi_i\right\|_{L^\infty(Q_1)}+e^{c\varepsilon_i^{-2}}\right).
 \end{eqnarray*}
Putting these two estimates together we get
\[ |\mathrm{IV}|
   \lesssim  \left[\left(K^{-2}+r_i\right)\varepsilon_i^{\frac{n-2}{2}}+K^2\varepsilon_i^{n-2}\right]\left(\left\|\phi_i\right\|_{L^\infty(Q_1)}+e^{c\varepsilon_i^{-2}}\right).
 \]

 \item By  Proposition \ref{prop zeroth order on phi}, Lemma \ref{lem gradient estimate on error fct}  and Proposition \ref{prop Schauder estimate on error fct},
\[ |\mathrm{V}|\lesssim \left(\varepsilon_i^2r_i^{n-4}+r_i^{n-1}\right)\left(\left\|\phi_i\right\|_{L^\infty(Q_1)}+e^{c\varepsilon_i^{-2}}\right) ^2.\]

 \item By Proposition \ref{prop uniform estimate} and Proposition \ref{prop Schauder estimate on error fct},
\[ |\mathrm{VI}|\lesssim \varepsilon_i^{n-2}\left(\left\|\phi_i\right\|_{L^\infty(Q_1)}+e^{c\varepsilon_i^{-2}}\right) ^2.\]

 \item By Proposition \ref{prop uniform estimate},
\[ |\mathrm{VII}|\lesssim \varepsilon_i^{n-2}\left(\left\|\phi_i\right\|_{L^\infty(Q_1)}+e^{c\varepsilon_i^{-2}}\right)^2.\]

 \item By Proposition \ref{prop uniform estimate}, Proposition \ref{prop zeroth order on phi} and Lemma \ref{lem gradient estimate on error fct},
\[ |\mathrm{VIII}|\lesssim \varepsilon_i^{\frac{3n}{2}-3}\left(\left\|\phi_i\right\|_{L^\infty(Q_1)}+e^{c\varepsilon_i^{-2}}\right)^2.\]

 \item By Proposition \ref{prop uniform estimate},
\[|\mathrm{IX}|\lesssim \varepsilon_i^{2n-3}\left(\left\|\phi_i\right\|_{L^\infty(Q_1)}+e^{c\varepsilon_i^{-2}}\right)^2.\]
\end{enumerate}

Putting these estimates together, we get
\begin{equation}\label{RHS of Pohozaev}
  \mbox{RHS}  \lesssim r_i^{-1}\left[\left(K^{-2}+r_i\right)\varepsilon_i^{\frac{n-2}{2}}+\varepsilon_i^2r_i^{n-4}+r_i^{n-1}\right]
  \left(\left\|\phi_i\right\|_{L^\infty(Q_1)}+e^{c\varepsilon_i^{-2}}\right).
\end{equation}

\section{A weak form of Schoen's Harnack inequality}\label{sec Schoen's Harnack inequality}
\setcounter{equation}{0}

In this section, we establish a weak form of Schoen's Harnack inequality, which then finishes the proof of Theorem \ref{thm no bubble towering}. In the Yamabe problem, this Harnack inequality was first introduced in Schoen \cite{Schoen-course}, see also Li \cite{LiYanYan-Harnack} and Li-Zhang \cite{Li-Zhang1} for a proof using the method of moving plane and moving sphere. The following proof instead is mainly a consequence of the Pohozaev identity calculation in Section \ref{sec Pohozaev}.

Combining \eqref{LHS of Pohozaev} and \eqref{RHS of Pohozaev} in the previous section, we get
\begin{eqnarray}\label{recursive relation}
  &&\frac{1}{|\partial B_\rho|}\int_{\partial B_{\rho}}\phi_i(0) \nonumber\\
   & \lesssim&\left(\rho+\frac{\varepsilon_i^2}{r_i^2}+ \frac{r_i^{n-2}}{\varepsilon_i^{(n-2)/2}}+\frac{\varepsilon_i^2}{r_i^2}+K^{-2}+r_i+\frac{r_i^{n-3}}{\varepsilon_i^{\frac{n}{2}-3}}+e^{-c\frac{r_i}{\varepsilon_i}}\right) \left(\left\|\phi_i\right\|_{L^\infty(Q_1)}+e^{c\varepsilon_i^{-2}}\right).\nonumber
\end{eqnarray}

By choosing $r_i=\varepsilon_i^{(n-1)/n}$,  we obtain a constant $\delta(n)>0$ such that
\begin{equation}\label{recursive relation 1}
 \frac{1}{|\partial B_\rho|}\int_{\partial B_{\rho}}\phi_i(0)\lesssim \left(\rho+K^{-2}+\varepsilon_i^{\delta(n)}\right)\left(\left\|\phi_i\right\|_{L^\infty(Q_1)}+e^{c\varepsilon_i^{-2}}\right).
\end{equation}

On $\partial B_\rho$, $u_i(0)\to u_\infty(0)$ uniformly. In view of the estimates \eqref{determination of the scaling parameter size}-\eqref{bound on parameters 2}, we deduce that $\phi_i(0)\to u_\infty(0)$ uniformly on $\partial B_\rho$, too. Passing to the limit in \eqref{recursive relation 1}, we obtain
\begin{equation}\label{limit condition from Pohozaev}
   \frac{1}{|\partial B_\rho|}\int_{\partial B_{\rho}}u_\infty(x,0)\lesssim \left(\rho+K^{-2}\right)\|u_\infty\|_{L^\infty(Q_1)}\lesssim \rho+K^{-2}.
\end{equation}
  Since $u_\infty$ is a smooth, nonnegative solution of \eqref{eqn}, if $u_\infty$ is not identically zero, by the standard Harnack ineqaulity,
\begin{equation}\label{Harnack for limit fct I}
  \inf_{Q_{1/4}}u_\infty>0.
\end{equation}
On the other hand, if we have chosen $K$ large enough at the beginning (in Proposition \ref{prop orthogonal decompostion}) and taken $\rho$ arbitrarily small,  \eqref{limit condition from Pohozaev} contradicts  \eqref{Harnack for limit fct I}. \footnote{We can also avoid the use of Harnack inequality, and use only the strong maximum principle. For this approach, we need to take a sequence of $K\to+\infty$ in Proposition \ref{prop orthogonal decompostion}. This changes the error function $\phi_i$, but it does not affect the argument. This is because both \eqref{limit condition from Pohozaev} and \eqref{Harnack for limit fct I} involve only $u_\infty$, which is the weak limit of $u_i$ and does not depend on the construction of $\phi_i$.}

 Hence in the setting of Section \ref{sec setting I}, we must have
$u_\infty=0$. This finishes the proof of Theorem \ref{thm no bubble towering}.

For the study of bubble clusterings (see Section \ref{sec bubble cluster} below), we need  a quantitative version of the above qualitative description. If the solution exists for a sufficiently large time, an iteration of the following proposition backwardly in time will lead to an quantitative upper bound on the error function, see Section \ref{sec bubble cluster} for details. In particular, if the solution exists globally in time (e.g. a solution independent of time), we can recover Schoen's Harnack inequality.
\begin{prop}\label{prop weak Schoen Harnack}
  There exist two universal constants $\varepsilon_0$ and $M_0$ so that the following holds. Suppose $u_\varepsilon$ is a positive solution of \eqref{eqn} in $Q_1$, satisfying {\bf(II.a')-(I.d')} and the decomposition \eqref{decomposition} with parameters $(a_\varepsilon(t),\xi_\varepsilon(t),\lambda_\varepsilon(t))$, where
  \[\lambda_\varepsilon(0)=\varepsilon, \quad \xi_\varepsilon(0)=0.\]
  If $\varepsilon\leq \varepsilon_0$ and
  \begin{equation}\label{larger than natural}
    \|\phi_\varepsilon\|_{L^\infty(Q_1)}\geq M_0\varepsilon^{\frac{n-2}{2}},
  \end{equation}
then
\[ \|\phi_\varepsilon\|_{L^\infty(B_1\times(-1/4,1/8))}\leq \frac{1}{2}\|\phi_\varepsilon\|_{L^\infty(Q_1)}.\]
\end{prop}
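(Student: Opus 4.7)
The plan is to argue by contradiction, following closely the proof of Theorem \ref{thm no bubble towering} in Section \ref{sec Schoen's Harnack inequality} but keeping track of quantitative rates. Suppose sequences $\varepsilon_i\downarrow 0$ and positive solutions $u_{\varepsilon_i}$ satisfying the hypotheses exist with $\|\phi_i\|_\infty := \|\phi_i\|_{L^\infty(Q_1)} \geq M_0\varepsilon_i^{(n-2)/2}$ yet $\|\phi_i\|_{L^\infty(B_1\times(-1/4,1/8))} > \tfrac{1}{2}\|\phi_i\|_\infty$. I would normalize by setting $\tilde{\phi}_i := \phi_i/\|\phi_i\|_\infty$, so that $\|\tilde{\phi}_i\|_{L^\infty(Q_1)}=1$ and $\|\tilde{\phi}_i\|_{L^\infty(B_1\times(-1/4,1/8))}>1/2$. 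The structural input making the whole scheme work is that the lower bound $\|\phi_i\|_\infty\geq M_0\varepsilon_i^{(n-2)/2}$ dwarfs the exponentially small parasite $e^{-c\varepsilon_i^{-2}}$, so that $e^{c\varepsilon_i^{-2}}/\|\phi_i\|_\infty\to 0$.

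Next I would divide the uniform estimates of Proposition~\ref{prop uniform estimate}, Proposition~\ref{prop Lip estimate on error fct}, and Proposition~\ref{prop Schauder estimate on error fct} by $\|\phi_i\|_\infty$. All ``absolute'' error terms on the right-hand sides become $o(1)$, giving uniform $C^{1,\theta}$ bounds on $\tilde{\phi}_i$ away from $(0,0)$. A subsequence converges in $C^1_{\mathrm{loc}}(Q_{5/6}\setminus\{(0,0)\})$ to a limit $\tilde{\phi}_\infty$ with $|\tilde{\phi}_\infty|\leq 1$ and $\|\tilde{\phi}_\infty\|_{L^\infty(B_1\times(-1/4,1/8))}\geq 1/2$. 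Off the axis, $\tilde{\phi}_\infty$ solves a linear parabolic equation $\partial_t\tilde{\phi}_\infty-\Delta\tilde{\phi}_\infty=V_\infty\tilde{\phi}_\infty$ with bounded $V_\infty$ (the nonlinearity $\mathcal{N}$ in \eqref{error eqn} is at least quadratic, the modulation terms vanish using the bounds on $g$, and $W_i^{p-1}\to 0$ locally off the origin); the inner rescaled profile $\tilde{\varphi}_i$ is uniformly bounded by $\lambda_i^{(n-2)/2}/\|\phi_i\|_\infty\leq 2/M_0$.

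Now I would run the Pohozaev chain of Section~\ref{sec Pohozaev}, which culminated in the inequality after \eqref{recursive relation}:
\[
\frac{1}{|\partial B_\rho|}\int_{\partial B_\rho}\phi_i(x,0)\,d\sigma \lesssim \bigl(\rho+K^{-2}+\varepsilon_i^{\delta(n)}\bigr)\bigl(\|\phi_i\|_\infty+e^{c\varepsilon_i^{-2}}\bigr),
\]
divide by $\|\phi_i\|_\infty$ and let $i\to\infty$, then $\rho\to 0$ and $K\to\infty$, using the $C^0$-convergence of $\tilde{\phi}_i$ on $\partial B_\rho$ to pass the averaged trace to a pointwise value $\tilde{\phi}_\infty(0,0)=0$. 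Repeating this argument at an arbitrary reference time $t^*\in[-1/4,1/8]$, with the Pohozaev identity centered at $\xi_i(t^*)$ (which tends to $0$ by \eqref{bound on parameters 5}), yields $\tilde{\phi}_\infty(0,t^*)=0$. Thus $\tilde{\phi}_\infty$ vanishes on the entire time axis $\{(0,t):t\in[-1/4,1/8]\}$.

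The main obstacle is the final step: promoting the axial vanishing to $\tilde{\phi}_\infty\equiv 0$ on $B_1\times(-1/4,1/8)$, which would contradict $\|\tilde{\phi}_\infty\|_{L^\infty(B_1\times(-1/4,1/8))}\geq 1/2$. Because the time axis has parabolic codimension $n\geq 7$, direct unique continuation does not apply. To overcome this I would exploit the inner smallness $\|\tilde{\varphi}_i\|_{L^\infty}\leq 2/M_0$, which combined with the inner-outer feedback estimates \eqref{decay relation 4}--\eqref{decay relation 5} (rewritten for $\tilde{\phi}_i$, with the $\mathcal{O}$-to-$\mathcal{D}$ transfer constant shrinking under $\|\phi_i\|_\infty\geq M_0\varepsilon_i^{(n-2)/2}$) produces an iteration scheme controlling $\mathcal{D}$ backwards in time. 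Choosing $M_0$ large enough, the iteration forces an exponential-in-$M_0$ bound on $\tilde{\phi}_i$ on $B_1\times(-1/4,1/8)$ that is strictly below $1/2$ for all sufficiently large $i$, which is the sought contradiction. Equivalently, one can view this final step as upgrading the Pohozaev-based qualitative assertion $u_\infty\equiv 0$ of Theorem~\ref{thm no bubble towering} to a quantitative $C^0$-contraction by combining it with the inner decay, which is the analogue of Schoen's Harnack improvement of axial vanishing in the elliptic Yamabe setting.
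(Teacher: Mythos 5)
Your overall architecture (contradiction, normalization $\widetilde{\phi}_i=\phi_i/\|\phi_i\|_{L^\infty(Q_1)}$, passing the Pohozaev inequality \eqref{recursive relation} to the limit to force vanishing at the concentration point) matches the paper's, but there are two genuine gaps. First, you miss the one simple observation that makes the limit argument close: since $u_i=\phi_i+O(\varepsilon_i^{\frac{n-2}{2}}|x|^{2-n})$ away from the origin and $\|\phi_i\|_{L^\infty(Q_1)}\gg\varepsilon_i^{\frac{n-2}{2}}$, the quotient $u_i/\|\phi_i\|_{L^\infty(Q_1)}$ converges to the \emph{same} limit $\widetilde{\phi}$ as $\widetilde{\phi}_i$, and positivity of $u_i$ forces $\widetilde{\phi}\geq 0$. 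The limit is caloric across the origin (removable singularity), so nonnegativity plus the single-point vanishing $\widetilde{\phi}(0,0)=0$ from the Pohozaev step gives $\widetilde{\phi}\equiv 0$ by the strong maximum principle. There is no need to establish vanishing along the whole time axis, and no unique-continuation-from-a-codimension-$n$-set issue arises; your long detour through axial vanishing and an unsubstantiated ``exponential-in-$M_0$ iteration'' is replacing a one-line maximum-principle argument with something that is not actually proved (in particular, the claim that the $\mathcal{O}$-to-$\mathcal{D}$ transfer constants in \eqref{decay relation 4}--\eqref{decay relation 5} shrink as $M_0$ grows has no basis; those constants depend on $K,L$, not on $M_0$).

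Second, your contradiction does not close even granting $\widetilde{\phi}_\infty\equiv 0$: the convergence $\widetilde{\phi}_i\to\widetilde{\phi}_\infty$ is only locally uniform on compact subsets of $Q_{5/6}\setminus\{(0,0)\}$, so the hypothesis $\|\widetilde{\phi}_i\|_{L^\infty(B_1\times(-1/4,1/8))}>1/2$ does \emph{not} pass to the limit — the supremum may be attained at points drifting to $\partial B_1$ or to the concentration point. The paper instead converts the qualitative vanishing into smallness of $\phi_i$ itself: the local-uniform vanishing gives $|\phi_i|\ll\delta_i$ on most of $\partial^pQ_{2/3}$, one re-runs the outer decomposition $\phi_{i,1},\dots,\phi_{i,5}$ and the inner estimate with this improved boundary data to get $|\phi_i|\lesssim\sigma(\rho)\delta_i$ in $Q_{1/2}$, and then uses the parabolic Harnack inequality for $u_i$ on the annulus $B_1\setminus B_{1/8}$ to propagate the bound out to all of $B_1\times(-1/4,1/8)$, yielding $|\widetilde{\phi}_i|\leq 1/4$ there for large $i$. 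Without this last propagation step your argument says nothing about the region near $\partial B_1$, which is part of the set on which the conclusion is asserted.
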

\begin{proof}
We prove this proposition by contradiction. Assume
\begin{itemize}
  \item $u_i$ is a sequence of solutions satisfying  {\bf (II.a)-(II.c)} and \eqref{Lip assumption};
  \item  the decomposition given in \eqref{decomposition} holds, where
\[ \varepsilon_i:=\lambda_i(0)\to 0, \quad \xi_i(0)=0;\]
  \item $\phi_i$ satisfies
  \begin{equation}\label{larger than natural 2}
  \lim_{i\to+\infty} \frac{\|\phi_i\|_{L^\infty(Q_1)}}{\varepsilon_i^{\frac{n-2}{2}}}=+\infty,
  \end{equation}
     but
     \begin{equation}\label{larger than natural not true}
   \|\phi_i\|_{L^\infty(B_1\times(-1/4,1/8))}>\frac{1}{2}\|\phi_i\|_{L^\infty(Q_1)}.
  \end{equation}
\end{itemize}
We show that this leads to a contradiction.

{\bf Step 1.} Denote $\delta_i:=\|\phi_i\|_{L^\infty(Q_1)}$. Let $\widetilde{\phi}_i:=\phi_i/\delta_i$. In this step we prove
\begin{equation}\label{quanlitative for decay}
  \widetilde{\phi}_i\to 0 \quad \mbox{uniformly in any compact set of} ~~ Q_1.
\end{equation}

First by Proposition \ref{prop Lip estimate on error fct}, for any $\rho>0$, $\widetilde{\phi}_i$ are uniformly Lipschitz in $(B_{2/3}\setminus B_\rho)\times (-4/9,4/9)$. Hence after passing to a subsequence, we may assume $\widetilde{\phi}_i$ converges to a limit $\widetilde{\phi}$, uniformly in any compact set of $(B_{2/3}\setminus\{0\})\times(-4/9,4/9)$.

Because
\[ u_i=\phi_i+O\left(\varepsilon_i^{\frac{n-2}{2}}|x|^{2-n}\right),\]
in view of \eqref{larger than natural 2}, $u_i/\delta_i$ also converges uniformly to $\widetilde{\phi}$ in any compact set of $(B_{2/3}\setminus\{0\})\times(-4/9,4/9)$. As a consequence,
\begin{equation}\label{limit nonnegative}
  \widetilde{\phi}\geq 0 \quad \mbox{in } \left(B_{2/3}\setminus\{0\}\right)\times(-4/9,4/9).
\end{equation}

Dividing \eqref{eqn} by $\delta_i$, and then letting $i\to +\infty$, by the above uniform convergence of $u_i/\delta_i$, we get
\[\partial_t\widetilde{\phi}-\Delta\widetilde{\phi}=0 \quad \mbox{in } \left(B_{2/3}\setminus\{0\}\right)\times(-4/9,4/9).\]
Since $\left|\widetilde{\phi}\right|\leq 1$ in $Q_1$, removable singularity theorem for heat equation implies that $\widetilde{\phi}$ is smooth and satisfies the heat equation in $Q_{2/3}$.

By \eqref{limit nonnegative} and the strong maximum principle, either $\widetilde{\phi}\equiv 0$ or $\widetilde{\phi}>0$ everywhere in $Q_{2/3}$.
We claim that the first case must happen.

Indeed, dividing both sides of \eqref{recursive relation} by $\delta_i$ and letting $i\to +\infty$, by \eqref{larger than natural 2} and the above uniform convergence of $\widetilde{\phi}_i$, we obtain
\[\frac{1}{|\partial B_\rho|}\int_{\partial B_{\rho}}\widetilde{\phi}(x,0)\lesssim  \rho+K^{-2}.\]
Sending $\rho\to0$ and $K\to+\infty$ as before, we get $\widetilde{\phi}(0,0)=0$. By the strong maximum principle (or Harnack inequality), $\widetilde{\phi}\equiv 0$ in $Q_{2/3}$.

{\bf Step 2.} Choose a small radius $\rho$. By results obtained in Step 1, for all $i$ large,
\begin{equation}\label{smallness improved outside}
  |\phi_i|\ll \delta_i \quad \mbox{on} ~~ \partial^pQ_{2/3}\setminus \left(B_\rho\times\{-2/3\}\right).
\end{equation}
On $B_\rho\times\{-2/3\}$, we have the trivial bound
\begin{equation}\label{}
  |\phi_i|\leq \delta_i.
\end{equation}

As in Subsection \ref{subsec uniform bound}, we obtain\\
{\bf Claim.} If $\rho$ is sufficiently small, there exists a constant $\sigma(\rho)\ll 1$ (independent of $\varepsilon_i$) such that
\begin{equation}\label{}
   |\phi_{i,1}|\leq \sigma(\rho)\delta_i  \quad \mbox{in } Q_{7/12}.
\end{equation}
Here $\phi_{i,1}$ denotes the decomposition of the outer component taken in Section \ref{sec outer}.

By this claim, repeating the estimates of  other terms in the decomposition of the outer component, $\phi_{i,2}$-$\phi_{i,5}$, and the corresponding estimate for the inner component $\phi_{in,i}$ in Section \ref{sec inner and outer 2}, we obtain
\begin{equation}\label{improvement in Q half}
  |\phi_i|\lesssim \sigma(\rho)\delta_i  \quad \mbox{in } Q_{1/2}.
\end{equation}

{\bf Step 3.} By \eqref{improvement in Q half}, we get
\[ u_i\lesssim \varepsilon_i^{\frac{n-2}{2}}+\sigma(\rho)\delta_i \quad \mbox{in } \left(B_{1/2}\setminus B_{1/4}\right)\times(-1/4,1/4).\]
Because $u_i$ satisfies the standard parabolic Harnack inequality in $\left(B_1\setminus B_{1/8}\right)\times (-1,1)$, we get
\[ u_i\lesssim \varepsilon_i^{\frac{n-2}{2}}+\sigma(\rho)\delta_i \quad \mbox{in } \left(B_1\setminus B_{1/2}\right)\times(-1/4,1/8).\]
By the definition of $\phi_i$, we get
\[-\varepsilon_i^{\frac{n-2}{2}}  \lesssim \phi_i  \lesssim \varepsilon_i^{\frac{n-2}{2}}+\sigma(\rho)\delta_i \quad \mbox{in } \left(B_1\setminus B_{1/2}\right)\times(-1/4,1/8).\]
Combining these two estimates with \eqref{improvement in Q half}, for all $i$ sufficiently large, we have
\[ \left|\widetilde{\phi}_i\right|\leq 1/4  \quad \mbox{in }  B_1\times(-1/4,1/8).\]
This is a contradiction with \eqref{larger than natural not true}.
\end{proof}

\section{A conditional exclusion of bubble clustering}\label{sec bubble cluster}
\setcounter{equation}{0}

In this section, we work in the following settings. (This will appear in a suitable rescalings of bubble clustering from Theorem \ref{thm bubble clustering convergence} in Part \ref{part many bubbles}.)
\begin{enumerate}
  \item There exist two sequences $R_i$ and $T_i$, both diverging to $+\infty$ as $i\to+\infty$;

  \item $u_i$ is a sequence of positive, smooth solution of \eqref{eqn} in $\Omega_i:=B_{R_i}\times(-T_i,T_i)$;

   \item  as $i\to +\infty$,
  \begin{equation}\label{time derivative to 0 II}
    \int_{-T_i}^{T_i}\int_{B_{R_i}}\partial_tu_i(x,t)^2dxdt\to0;
  \end{equation}

   \item there exists an $N\in\mathbb{N}$ such that for each  $i$ and $t\in(-T_i,T_i)$, in any compact set of $\R^n$, we have the bubble decomposition
  \begin{equation}\label{bubble decomposition at t=0}
    u_i(x,t)=\sum_{j=1}^{N}W_{\xi_{ij}^\ast(t),\lambda_{ij}^\ast(t)}(x)+o_i(1),
  \end{equation}
  where $o_i(1)$ are measured in $H^1_{loc}(\R^n)$;

  \item as $i\to+\infty$,
  \begin{equation}\label{uniform bound on bubble height}
  \max_{j=1,\cdots, N}\sup_{-T_i<t<T_i}\lambda_{ij}^\ast(t)\to 0;
  \end{equation}

  \item for any $t\in(-T_i,T_i)$,
  \begin{equation}\label{assumption on bubble location}
 \min_{1\leq j \neq k \leq N}|\xi_{ij}^\ast(t)-\xi_{ik}^\ast(t)|\geq 2,
  \end{equation}
and at $t=0$,
  \begin{equation}\label{normalization on bubble distance}
    \xi_{i1}^\ast(0)=0, \quad |\xi_{i2}^\ast(0)|=1;
  \end{equation}

\item after relabelling indices,  assume for some $N^\prime\leq N$ and any $j=1,\cdots, N^\prime$,
\[\xi_{ij}^\ast(0)\to P_j, \quad \mbox{as}~~ i\to+\infty,\]
while for any $j=N^\prime+1,\cdots, N$,
\[|\xi_{ij}^\ast(0)|\to +\infty, \quad \mbox{as}~~ i\to+\infty,\]
\end{enumerate}
By \eqref{normalization on bubble distance}, $P_1=0$ and $P_2\in\partial B_1$, so $N^\prime\geq 2$.

The main result of  this section is
\begin{prop}\label{prop exclusion of bubble clusetering}
  Under the above assumptions, we have
  \begin{equation}\label{exclusion of bubble clustering}
   T_i\leq 100 \max_{1\leq j \leq N^\prime}|\log\lambda_{ij}^\ast(0)|.
  \end{equation}
\end{prop}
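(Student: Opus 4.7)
The strategy is to iterate Proposition~\ref{prop weak Schoen Harnack} in a tubular neighborhood of the smallest bubble. In a cluster the other bubbles force a lower bound on the local error that already exceeds the single-bubble threshold $M_0\varepsilon^{(n-2)/2}$, so each application of the weak Harnack doubles the error. The uniform $L^\infty$ bound on $\phi_i$ off the bubble cores then makes the iteration terminate after $O(|\log\min_j\lambda_{ij}^\ast(0)|)$ steps, which yields the time bound.

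\textbf{Setup and a priori bounds on the error.} Relabel so that $\lambda_{i1}^\ast(0)=\min_{1\le j\le N'}\lambda_{ij}^\ast(0)=:\varepsilon_i$; by \eqref{normalization on bubble distance}, $\xi_{i1}^\ast(0)=0$ and $|\xi_{i2}^\ast(0)|=1$. Fix the universal radius $r_0:=\tfrac18\min_{j\ne k\le N'}|P_j-P_k|$. Using the separation assumption \eqref{assumption on bubble location} together with the short-time Lipschitz estimate on bubble parameters (the analogue of \eqref{bound on parameters 1}--\eqref{bound on parameters 2} applied to each bubble individually), for $i$ large the tube $\mathcal T_i:=\{(x,t):|x-\xi_{i1}^\ast(t)|<r_0,\;|t|<T_i\}$ contains only bubble~$1$. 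In $\mathcal T_i$ perform the orthogonal decomposition of Section~\ref{sec decomposition},
\[
u_i(x,t)=W_{\xi_{i1}^\ast(t),\lambda_{i1}^\ast(t)}(x)+a_i(t)Z_{0,\xi_{i1}^\ast(t),\lambda_{i1}^\ast(t)}(x)+\phi_i(x,t).
\]
Two bounds on $\phi_i$ are crucial. First, the bubble-tree expansion \eqref{bubble decomposition at t=0} gives, at the origin of $B_{r_0}$ and $t=0$, a main-order contribution $\sum_{j=2}^{N'}W_{\xi_{ij}^\ast(0),\lambda_{ij}^\ast(0)}(0)\asymp\max_{2\le j\le N'}\lambda_{ij}^\ast(0)^{(n-2)/2}$ which cannot be absorbed by the $Z_0$-mode or the $H^1$-smallness of the residual, so
\[
\|\phi_i(\cdot,0)\|_{L^\infty(B_{r_0})}\ge c_0\max_{2\le j\le N'}\lambda_{ij}^\ast(0)^{(n-2)/2}=:\delta_0\ge c_0\varepsilon_i^{(n-2)/2}.
\]
Second, since every bubble $j\ne 1$ lies at distance $\ge 3r_0/2$ from $\mathcal T_i$ and has height bounded in view of \eqref{uniform bound on bubble height}, its contribution is $O(\lambda_{ij}^\ast(t)^{(n-2)/2})=O(1)$ throughout $\mathcal T_i$, giving the uniform upper bound $\|\phi_i\|_{L^\infty(\mathcal T_i)}\le M_1$.

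\textbf{Iteration of Proposition~\ref{prop weak Schoen Harnack}.} Parabolically rescale a time-$t_0$ cross-section of $\mathcal T_i$ to $Q_1$, centered at the moving bubble location $\xi_{i1}^\ast(t_0)$. The rescaled sequence satisfies (II.a')--(II.d'): on the tube the weak limit is smooth, the defect measure equals $\Lambda\delta_0\otimes dt$ (only bubble~$1$ concentrates there), and $\partial_tu_i\to 0$ strongly in $L^2$ by \eqref{time derivative to 0 II}, while \eqref{Lip assumption} follows as in Section~\ref{sec Lipschitz hypothesis}. Since $\delta_0\ge c_0\varepsilon_i^{(n-2)/2}$, Proposition~\ref{prop weak Schoen Harnack} at $t_0=0$ forces the maximum of $\phi_i$ on $Q_1$ to be attained outside $B_1\times(-\tfrac14,\tfrac18)$; choosing the past branch yields $t_1$ with $t_0-t_1\in[\tfrac14,1]$ and $\|\phi_i(\cdot,t_1)\|_{L^\infty(B_{r_0}(\xi_{i1}^\ast(t_1)))}\ge 2\delta_0$. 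The doubled bound still exceeds $M_0\lambda_{i1}^\ast(t_1)^{(n-2)/2}$ by \eqref{uniform bound on bubble height}, and by the Harnack inequality \eqref{Harnack} of Section~\ref{sec Harnack for lambda} the parameters $(\xi_{i1}^\ast,\lambda_{i1}^\ast,a_{i1})$ stay in the admissible range on the unit time interval around $t_1$, so the proposition remains applicable. Iterating $k$ times we obtain $t_k$ with $-k\le t_k\le -k/4$ and
\[
\|\phi_i(\cdot,t_k)\|_{L^\infty(B_{r_0}(\xi_{i1}^\ast(t_k)))}\ge 2^k\delta_0.
\]
The iteration proceeds while $(t_k-1,t_k+1)\subset(-T_i,T_i)$, hence for at least $k_{\max}\gtrsim T_i$ steps. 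Combined with the uniform upper bound,
\[
2^{k_{\max}}\delta_0\le M_1\quad\Longrightarrow\quad T_i\le C+C_n\log_2(1/\delta_0)\le C+C_n\max_{1\le j\le N'}|\log\lambda_{ij}^\ast(0)|,
\]
which for $i$ large is bounded by $100\max_{1\le j\le N'}|\log\lambda_{ij}^\ast(0)|$.

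\textbf{Main obstacle.} The delicate point is a genuinely local, moving-center version of Proposition~\ref{prop weak Schoen Harnack}: although bubble~$1$ is the only concentration inside $\mathcal T_i$, its center $\xi_{i1}^\ast(t)$ and scale $\lambda_{i1}^\ast(t)$ drift over the long time interval, and one must verify that the rescaled single-bubble hypotheses (II.a')--(II.d') and the admissibility of $(a,\xi,\lambda)$ in the decomposition \eqref{decomposition} are preserved uniformly along the whole iteration path. Controlling this drift reduces, as in Section~\ref{sec Harnack for lambda}, to the parabolic Harnack-type bound \eqref{Harnack} for $\lambda_{i1}^\ast$, together with a quantitative transfer of the bubble-tree lower bound for $\phi_i$ from $t=0$ to the later times $t_k$ reached by the iteration.
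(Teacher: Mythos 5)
There is a genuine gap, and it comes from inverting the logic of Proposition~\ref{prop weak Schoen Harnack}. That proposition is a \emph{conditional halving} statement: \emph{if} $\|\phi\|_{L^\infty(Q_1)}\geq M_0\varepsilon^{(n-2)/2}$ with $M_0$ a \emph{large} universal constant, then the sup over the middle time slab is at most half the sup over $Q_1$. You try to run it as a doubling mechanism backward in time, seeded by the lower bound $\delta_0\geq c_0\max_{2\le j\le N'}\lambda_{ij}^\ast(0)^{(n-2)/2}$. But the error forced on bubble $1$ by the other bubbles of the cluster is exactly of the \emph{natural} size $\sim\varepsilon^{(n-2)/2}$ (in the limit it is $\varepsilon_i^{(n-2)/2}h(x)$ with $h$ the regular part of a Green function, and $\|h\|_\infty$ is $O(1)$ but by no means $\geq M_0$). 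So the hypothesis \eqref{larger than natural} need not hold at $t=0$, the proposition gives no information, and the doubling never starts; when all $\lambda_{ij}^\ast(0)$ are comparable your $\delta_0\ge c_0\varepsilon_i^{(n-2)/2}$ sits strictly below the threshold $M_0\lambda_{i1}^\ast(0)^{(n-2)/2}$. In addition, the seed bound itself is not justified: the decomposition \eqref{bubble decomposition at t=0} carries an unquantified $o_i(1)$ error in $H^1_{loc}$, which can be much larger than $\lambda_{ij}^\ast(0)^{(n-2)/2}$, so you cannot extract a pointwise lower bound of that size from it. (Establishing that lower bound rigorously, in the guise of $h(0)>0$, is in fact the hard content of the paper's argument, and it requires the sharp upper bound on $\phi$ first.)

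The paper's proof uses Proposition~\ref{prop weak Schoen Harnack} in the opposite direction and then needs a second, independent mechanism to reach the contradiction. Assuming \eqref{exclusion of bubble clustering 2}, the recursion $\sup_{Q_1(\xi_{ij}(t),t)}|\phi_{ij}|\le M_0\varepsilon_i^{(n-2)/2}+\tfrac12\sup_{Q_1(\xi_{ij}(t+1),t+1)}|\phi_{ij}|$ is iterated over the long interval (of length $\gtrsim 50|\log\varepsilon_{ij}|$, guaranteed precisely by the contradiction hypothesis together with the Harnack inequality \eqref{Harnack II} for $\lambda_{ij}$) to obtain the \emph{upper} bound $\sup|\phi_{ij}|\lesssim\varepsilon_i^{(n-2)/2}$ uniformly on bounded time intervals. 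This upper bound is then used to show that $\varepsilon_i^{-(n-2)/2}u_i$ converges to the static function $c(n)\sum_j m_j^{(n-2)/2}|x-P_j|^{2-n}$, whose regular part at $P_1$ satisfies $h(0)>0$ because of the bubble at $P_2$; finally the Pohozaev identity \eqref{Pohozaev}, rescaled and passed to the limit, forces $h(0)=0$, giving the contradiction. Your proposal contains none of this second half, and the halving/doubling step alone cannot replace it.
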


We prove this proposition by contradiction, so assume \eqref{exclusion of bubble clustering} does not hold, that is,
\begin{equation}\label{exclusion of bubble clustering 2}
   T_i>100 \max_{1\leq j \leq N^\prime}|\log\lambda_{ij}^\ast(0)|.
\end{equation}
With this bound in hand,  we can iterate Proposition \ref{prop weak Schoen Harnack} backwardly in time, leading to an optimal upper bound on the error function as Schoen's Harnack inequality in Yamabe problem. This bound allows us to define a Green function from $u_i$. This will be done in Subsection \ref{subsec construction of Green fct}. Then in Subsection \ref{subsec local Pohozaev}, still similar to the treatment in Yamabe problem, we employ Pohozaev identity again to give a sign restriction on the next order term in the expansion of this Green function at a pole. This then leads to a contradiction with the assumption that there is one bubble located at $P_1$ and $P_2$ respectively.

\subsection{Construction of Green function}\label{subsec construction of Green fct}

Under the above assumptions, for any $i,j$ and $t\in(-T_i+1,T_i-1)$, Proposition \ref{prop orthogonal decompostion} can be applied to $u_i$ in $Q_1(\xi_{ij}^\ast(t),t)$. We denote the corresponding parameters by $\xi_{ij}(t)$, $\lambda_{ij}(t)$ and $a_{ij}(t)$, and the error function
\begin{equation}\label{local decomposition}
  \phi_{ij}(x,t):= u_i(x,t)- W_{\xi_{ij}(t),\lambda_{ij}(t)}(x)-a_{ij}(t) Z_{0,\xi_{ij}(t),\lambda_{ij}(t)}(x).
\end{equation}

Denote $\varepsilon_{ij}:=\lambda_{ij}(0)$, $j=1,\cdots, N^\prime$ and $\varepsilon_i=\max_{1\leq j\leq N^\prime}\varepsilon_{ij}$. By Proposition \ref{prop orthogonal decompostion} and \eqref{exclusion of bubble clustering 2}, we get
\begin{equation}\label{exclusion of bubble clustering 3}
 T_i>50 \max_{1\leq j \leq N^\prime}|\log\varepsilon_{ij}|.
\end{equation}

\begin{lem}
  For each $j=1,\cdots, N^\prime$ and any $t\in(-50|\log\varepsilon_{ij}|, 50|\log\varepsilon_{ij}|)$,
  \begin{equation}\label{Harnack II}
   \frac{1}{2}\varepsilon_{ij}\leq  \lambda_{ij}(t)\leq 2\varepsilon_{ij}.
  \end{equation}
\end{lem}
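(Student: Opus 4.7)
My plan is a continuity–contradiction argument. Define
\[
T^* := \sup\bigl\{T \in [0,\,50|\log\varepsilon_{ij}|] : \tfrac{1}{2}\varepsilon_{ij} \le \lambda_{ij}(t) \le 2\varepsilon_{ij} \text{ for all } |t|\le T\bigr\}.
\]
By continuity of $\lambda_{ij}$ and the fact that $\lambda_{ij}(0)=\varepsilon_{ij}$, one has $T^*>0$, and throughout $[-T^*,T^*]$ the local bubble scale satisfies $\lambda_{ij}(t)\sim\varepsilon_{ij}$. I will assume toward contradiction that $T^*<50|\log\varepsilon_{ij}|$, so that by continuity $\lambda_{ij}(\pm T^*)$ hits one of the endpoints $\varepsilon_{ij}/2$ or $2\varepsilon_{ij}$. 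The separation assumption \eqref{assumption on bubble location} guarantees that in the unit ball $B_1(\xi_{ij}^*(t))$ only the $j$-th bubble contributes, so the single-bubble assumptions (II.a)--(II.c) hold locally and Proposition \ref{prop weak Schoen Harnack} is applicable to the local decomposition \eqref{local decomposition} at every center time $t_0\in[-T^*+1, T^*-1]$.

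The core step is to iterate Proposition \ref{prop weak Schoen Harnack} backwards in time to obtain the uniform error bound
\[
\|\phi_{ij}(\cdot,t)\|_{L^\infty(B_1(\xi_{ij}^*(t)))}\lesssim \varepsilon_{ij}^{(n-2)/2}\qquad\text{for all } t\in[-T^*,T^*].
\]
Concretely, let $f(s):=\|\phi_{ij}(\cdot,s)\|_{L^\infty(B_1(\xi_{ij}^*(s)))}$. Applying Proposition \ref{prop weak Schoen Harnack} at center $t_0$ gives the dichotomy: either $\sup_{(t_0-1,t_0+1)}f\le M_0\varepsilon_{ij}^{(n-2)/2}$, or
\[
\sup_{(t_0-1/4,\,t_0+1/8)} f \le \tfrac{1}{2}\sup_{(t_0-1,\,t_0+1)} f.
\]
Starting from the a priori bound $f\lesssim 1$ from Proposition \ref{prop zeroth order on phi}, a dyadic iteration of this dichotomy for $k\sim |\log\varepsilon_{ij}|$ steps drives the factor $2^{-k}$ below $\varepsilon_{ij}^{(n-2)/2}$, forcing the natural-scale alternative at every base point. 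The backward iteration fits inside the solution's domain because $T_i>50|\log\varepsilon_{ij}|$ by \eqref{exclusion of bubble clustering 3}, providing the required room.

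With this error bound, the locally rescaled form of Proposition \ref{prop uniform estimate} gives
\[
|\lambda_{ij}(t)\lambda_{ij}'(t)|\lesssim \varepsilon_{ij}^{(n-2)/2}\bigl(\|\phi_{ij}\|_{L^\infty}+e^{-c\varepsilon_{ij}^{-2}}\bigr)\lesssim \varepsilon_{ij}^{n-2},
\]
hence $|\lambda_{ij}'(t)|\lesssim \varepsilon_{ij}^{n-3}$. Integrating over $[-T^*,T^*]$ yields
\[
|\lambda_{ij}(t)-\varepsilon_{ij}|\lesssim T^*\,\varepsilon_{ij}^{n-3}\le 50C|\log\varepsilon_{ij}|\,\varepsilon_{ij}^{n-3}\ll \varepsilon_{ij}
\]
since $n\ge 7$ and $\varepsilon_{ij}$ is small. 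Consequently $\lambda_{ij}(t)$ stays strictly inside $(\varepsilon_{ij}/2,\,2\varepsilon_{ij})$ throughout $[-T^*,T^*]$, contradicting the attainment of an endpoint at $\pm T^*$. The principal obstacle in this plan is the iteration step: one must verify that the hypotheses of Proposition \ref{prop weak Schoen Harnack} (in particular single-bubble concentration and $\lambda_{ij}(s)\sim\varepsilon_{ij}$) hold at every intermediate center time, which is exactly where the defining continuity bound of $T^*$ is used, and one must confirm that the geometric $2^{-k}$ decay competes successfully with the logarithmic length of the time interval.
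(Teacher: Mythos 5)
Your overall mechanism — a continuity/first-exit argument combined with integrating the reduction ODE for $\lambda_{ij}$ over a time window of length $O(|\log\varepsilon_{ij}|)$ — is exactly the paper's proof. However, you insert an extra step, the backward iteration of Proposition \ref{prop weak Schoen Harnack} to upgrade $\|\phi_{ij}\|_{L^\infty}$ to $O(\varepsilon_{ij}^{(n-2)/2})$, which is both unnecessary and, as written, not fully justified. It is unnecessary because Proposition \ref{prop zeroth order on phi} already gives the a priori bound $\|\phi_{ij}\|_{L^\infty}\lesssim 1$, and Proposition \ref{prop uniform estimate} then yields $|\lambda_{ij}'(t)|\lesssim \lambda_{ij}(t)^{(n-4)/2}$ directly; since $n\ge 7$ gives $(n-4)/2>1$, integrating over $[0,\widetilde T]$ with $\widetilde T\le 50|\log\varepsilon_{ij}|$ already produces $\sup\lambda_{ij}\le\varepsilon_{ij}\exp(C\varepsilon_{ij}^{(n-6)/2}|\log\varepsilon_{ij}|)<\tfrac32\varepsilon_{ij}$, which is all the continuity argument needs. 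It is not fully justified because the threshold in Proposition \ref{prop weak Schoen Harnack} at center time $s$ is $M_0\lambda_{ij}(s)^{(n-2)/2}$, so controlling it by $\varepsilon_{ij}^{(n-2)/2}$ requires $\lambda_{ij}(s)\sim\varepsilon_{ij}$, which your bootstrap only guarantees on $[-T^*,T^*]$; starting the iteration at $t\approx T^*$ from the crude bound $f\lesssim1$, the improved estimate only becomes available at times $t\le T^*-c\,|\log\varepsilon_{ij}|$, so it cannot be asserted "for all $t\in[-T^*,T^*]$" (and you may not iterate from times beyond $T^*$, where the comparability is unknown). This is precisely why the paper proves the present lemma \emph{first} with the crude bound, and only afterwards runs the Schoen--Harnack iteration \eqref{iteration of Harnack decay} to obtain \eqref{Schoen's Harnack 1}. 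Deleting the iteration step from your argument leaves a correct proof identical to the paper's.
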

\begin{proof}
Applying Proposition \ref{prop uniform estimate}  to each $\phi_{ij}$ in $Q_1(\xi_{ij}(t),t)$, we obtain
 \begin{equation}\label{differential eqn for lambda}
   \left|\lambda_{ij}^\prime(t)\right|\lesssim \lambda_{ij}(t)^{\frac{n-4}{2}}.
 \end{equation}
 Define
\[ \widetilde{T}_{ij}:=\sup\left\{T: ~~ T\leq 50|\log\varepsilon_{ij}|, ~~ \lambda_{ij}(t)\leq 2\varepsilon_{ij} \quad \mbox{in } ~[0,T]\right\}.\]
Integrating \eqref{differential eqn for lambda} on $[0,\widetilde{T}_{ij}]$  leads to
\begin{equation}\label{Harnack for lambda II}
 \sup_{t\in[0,\widetilde{T}_{ij}]}\lambda_{ij}(t)\leq \varepsilon_{ij} e^{C\varepsilon_{ij}^{(n-6)/2} |\log\varepsilon_{ij}|}\leq \varepsilon_{ij} e^{C\varepsilon_{ij}^{(n-6)/4} }<\frac{3}{2}\varepsilon_{ij}.
\end{equation}
Hence we must have $\widetilde{T}_{ij}=50|\log\varepsilon_{ij}|$.  This gives the upper bound of \eqref{Harnack II} in the positive side. The lower bound and the negative side follow in the same way.
\end{proof}

For each $t\in(-50|\log\varepsilon_{ij}|, 50|\log\varepsilon_{ij}|)$, applying Proposition \ref{prop weak Schoen Harnack} to $Q_1(\xi_{ij}(t),t)$, we obtain
\begin{equation}\label{iteration of Harnack decay}
 \sup_{Q_1(\xi_{ij}(t),t)}|\phi_{ij}|\leq M_0\varepsilon_{i}^{\frac{n-2}{2}}+\frac{1}{2}\sup_{Q_1(\xi_{ij}(t+1),t+1)}|\phi_{ij}|.
\end{equation}
For any $R>1$ fixed, an iteration of this estimate from $t=50|\log\varepsilon_{ij}|$ to any $t\in[-R,R]$ leads to
\begin{equation}\label{Schoen's Harnack 1}
  \sup_{B_1(\xi_{ij}(0))\times(-R,R)}|\phi_{ij}|\lesssim \varepsilon_{i}^{\frac{n-2}{2}}.
\end{equation}
This bound depends only on the constant $M_0$ in Proposition \ref{prop weak Schoen Harnack}, and it is independent of $R$.

Substituting \eqref{Schoen's Harnack 1} and  estimates of parameters in Proposition \ref{prop uniform estimate} into \eqref{local decomposition}, we get
\[ \sup_{B_1(\xi_{ij}(0))\times(-R,R)}u_i\lesssim \varepsilon_{i}^{\frac{n-2}{2}}.\]
 In $\left(B_{7/4}(\xi_{ij}(0))\setminus B_{3/4}(\xi_{ij}(0))\right)\times(-R,R)$, by applying standard Harnack inequality to $u_i$,  \eqref{Schoen's Harnack 1} is extended to
\begin{equation}\label{Schoen's Harnack 2}
  \sup_{B_{3/2}(\xi_{ij}(0))\times(-R,R)}|\phi_{ij}|\lesssim \varepsilon_{i}^{\frac{n-2}{2}}.
\end{equation}
This estimate gives us the expansion
\begin{equation}\label{expansion near isotaled simple pt}
 u_i(x,t)= \varepsilon_{ij}^{-\frac{n-2}{2}}W\left(\frac{x-\xi_{ij}(t)}{\varepsilon_{ij}}\right)+O\left(\varepsilon_{i}^{\frac{n-2}{2}}\right) \quad \mbox{in} ~~   B_{3/2}(\xi_{ij}(0))\times(-R,R).
\end{equation}
Here we note that, by Proposition \ref{prop uniform estimate},
\[ |\xi_{ij}^\prime(t)|\lesssim \lambda_{ij}(t)^{\frac{n-4}{2}}\lesssim \varepsilon_i^{\frac{n-4}{2}}.\]
Hence
\begin{equation}\label{variation of bubble center}
 |\xi_{ij}(t)-\xi_{ij}(0)|\lesssim_R  \varepsilon_i^{\frac{n-4}{2}}, \quad \mbox{in} ~~ [-R,R].
\end{equation}

Assume
\[\lim_{i\to+\infty}\frac{\varepsilon_{ij}}{\varepsilon_i}=m_j\in[0,1] \quad \mbox{for any }~~ j=1,\cdots, N^\prime.\]
By the definition of $\varepsilon_i$, there exists at least one $j$ satisfying $m_j=1$.

It is directly verified that for some dimensional constant $c(n)>0$,
\begin{equation}\label{construction of Diracs}
  \varepsilon_{i}^{-\frac{n-2}{2}}u_i^pdxdt \rightharpoonup c(n)m_j^{\frac{n-2}{2}} \delta_{P_j}\otimes dt
\end{equation}
weakly as Radon measures in $B_{3/2}(P_j)\times(-R,R)$.

Set
\[\widehat{u}_{i}:=\varepsilon_{i}^{-\frac{n-2}{2}}u_i.\]
It satisfies
\begin{equation}\label{finding of the heat eqn}
  \partial_t\widehat{u}_{i}-\Delta\widehat{u}_{i}=  \varepsilon_{i}^{-\frac{n-2}{2}}u_i^p=\varepsilon_{i}^2\widehat{u}_i^p.
\end{equation}
By \eqref{Schoen's Harnack 2}, in any compact set of $\left(B_{3/2}(P_j)\setminus\{P_j\}\right)\times\R$, $\widehat{u}_{i}$ are uniformly bounded.
Because $u_i$ satisfies the standard parabolic Harnack inequality in any compact set of $\left(\R^n\setminus\cup_{j=1}^{N^\prime}\{P_j\}\right)\times\R$, we deduce that $\widehat{u}_{i}$ are also uniformly bounded in any compact set of $\left(\R^n\setminus\cup_{j=1}^{N^\prime}\{P_j\}\right)\times\R$. Then by standard parabolic regularity theory, $\widehat{u}_{i}$ converges to $\widehat{u}_\infty$ smoothly in any compact set of $\left(\R^n\setminus\cup_{j=1}^{N^\prime}\{P_j\}\right)\times\R$.

By \eqref{construction of Diracs},  $\widehat{u}_\infty$ satisfies
\begin{equation}\label{limiting heat eqn}
  \partial_t\widehat{u}_\infty-\Delta\widehat{u}_\infty=c(n)\sum_{j=1}^{N^\prime}m_j^{\frac{n-2}{2}}\delta_{P_j}\otimes dt  \quad \mbox{in} ~~  \R^n \times\R.
\end{equation}

\begin{lem}\label{lem comparison of heights}
  For each $j=1\cdots, N^\prime$,
  \[m_j>0.\]
\end{lem}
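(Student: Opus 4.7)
The plan is to argue by contradiction: assume $m_{j_0} = 0$ for some $j_0 \in \{1,\ldots,N'\}$. After passing to a subsequence, at least one other index $j_1 \neq j_0$ achieves $\varepsilon_{ij_1} = \varepsilon_i$, so $m_{j_1} = 1$. First I would establish that $\widehat{u}_\infty(P_{j_0}, 0) > 0$. Since $m_{j_0} = 0$, equation \eqref{limiting heat eqn} has no Dirac source at $P_{j_0}$, so by removable-singularity for distributional solutions of the heat equation, $\widehat{u}_\infty$ extends smoothly across $(P_{j_0}, t)$ for every $t$. On the other hand, the source at $P_{j_1}$ is present, so $\widehat{u}_\infty \not\equiv 0$. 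The strong maximum principle applied on the open set $(\R^n \setminus \{P_j : m_j > 0\}) \times \R$ then forces $\widehat{u}_\infty > 0$ everywhere on this set; in particular $\widehat{u}_\infty(P_{j_0},0) > 0$.

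The heart of the argument is to adapt the Pohozaev calculation of Section~\ref{sec Pohozaev} to the bubble at $\xi_{ij_0}(0)$. By \eqref{assumption on bubble location}, the ball $B_1(\xi_{ij_0}(0))$ is disjoint from every other bubble center, so inside $Q_1(\xi_{ij_0}(0),0)$ the decomposition \eqref{local decomposition} with local scale $\varepsilon_{ij_0}$ realizes exactly the single-bubble configuration of Part~\ref{part one bubble}. Crucially, \eqref{Schoen's Harnack 2} supplies the uniform bound $\|\phi_{ij_0}\|_{L^\infty(Q_1(\xi_{ij_0}(0),0))} \lesssim \varepsilon_i^{(n-2)/2}$, which plays the role of the a priori $L^\infty$ control in Proposition~\ref{prop uniform estimate}. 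Running the Pohozaev expansion of Section~\ref{sec Pohozaev} verbatim with $(\varepsilon,\phi) := (\varepsilon_{ij_0},\phi_{ij_0})$ then yields, for every fixed small $\rho>0$ and large $K$, the analogue of \eqref{recursive relation 1}:
\begin{equation*}
\frac{1}{|\partial B_\rho(\xi_{ij_0}(0))|}\int_{\partial B_\rho(\xi_{ij_0}(0))} \phi_{ij_0}(x,0)\, d\sigma(x) \lesssim \left(\rho + K^{-2} + \varepsilon_{ij_0}^{\delta(n)}\right)\varepsilon_i^{(n-2)/2}
\end{equation*}
for some dimensional $\delta(n)>0$.

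To close the argument I would identify the leading behavior of this sphere average. On $\partial B_\rho(\xi_{ij_0}(0))$ the bubble term satisfies $W_{\xi_{ij_0}(0),\varepsilon_{ij_0}} = O(\varepsilon_{ij_0}^{(n-2)/2}\rho^{2-n}) = o(\varepsilon_i^{(n-2)/2})$, since $m_{j_0}=0$ forces $\varepsilon_{ij_0}/\varepsilon_i \to 0$, while $a_{ij_0}(0)Z_{0,\xi_{ij_0}(0),\varepsilon_{ij_0}}$ is exponentially small in $\rho/\varepsilon_{ij_0}$. Combined with the smooth convergence $\widehat{u}_i = \varepsilon_i^{-(n-2)/2}u_i \to \widehat{u}_\infty$ on any compact set of $(\R^n \setminus \{P_j\}) \times \R$ and the smoothness of $\widehat{u}_\infty$ at $P_{j_0}$, the sphere average equals
\begin{equation*}
\frac{1}{|\partial B_\rho|}\int_{\partial B_\rho}\phi_{ij_0}(x,0)\,d\sigma = \varepsilon_i^{(n-2)/2}\bigl[\widehat{u}_\infty(P_{j_0},0) + O(\rho)\bigr] + o(\varepsilon_i^{(n-2)/2}).
\end{equation*}
Dividing both displayed relations by $\varepsilon_i^{(n-2)/2}$, letting $i\to\infty$ and then $\rho\to 0$, $K\to\infty$, produces $\widehat{u}_\infty(P_{j_0},0)\leq 0$, contradicting the first step. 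The main technical obstacle will be to verify that every ingredient of the lengthy Pohozaev expansion of Section~\ref{sec Pohozaev}—in particular the exponentially small boundary contributions of the form $e^{-c\varepsilon_{ij_0}^{-2}}$—remains under uniform control in the bubble-cluster setting; this is precisely where the lower bound \eqref{exclusion of bubble clustering 2} on $T_i$ is needed, since it guarantees enough backward iterations of Proposition~\ref{prop weak Schoen Harnack} to secure the $L^\infty$ estimate on $\phi_{ij_0}$.
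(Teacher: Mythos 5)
Your opening paragraph already contains the whole proof, and the paper stops there: once the strong maximum principle gives $\widehat{u}_\infty>0$ on $\left(\R^n\setminus\cup_j\{P_j\}\right)\times\R$, one simply evaluates the single-bubble expansion \eqref{expansion near isotaled simple pt} on the \emph{fixed} annulus $\left(B_{3/2}(P_{j_0})\setminus B_{1/2}(P_{j_0})\right)\times(-1,1)$. There the bubble term is $\lesssim\varepsilon_{ij_0}^{(n-2)/2}$ and the error $\phi_{ij_0}$ is $\lesssim\varepsilon_{ij_0}^{(n-2)/2}$ as well (iterate Proposition \ref{prop weak Schoen Harnack} at the \emph{local} scale $\lambda_{ij_0}(t)\sim\varepsilon_{ij_0}$ from \eqref{Harnack II}, which sharpens \eqref{Schoen's Harnack 2} to $\varepsilon_{ij_0}^{(n-2)/2}$ in place of $\varepsilon_i^{(n-2)/2}$). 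Hence $\widehat{u}_i\lesssim(\varepsilon_{ij_0}/\varepsilon_i)^{(n-2)/2}\to m_{j_0}^{(n-2)/2}=0$ on that annulus, contradicting positivity of $\widehat{u}_\infty$. No Pohozaev identity is needed.

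Your Pohozaev detour is not only unnecessary but, as written, has a genuine gap: Section \ref{sec Pohozaev} cannot be run ``verbatim'' with $(\varepsilon,\phi)=(\varepsilon_{ij_0},\phi_{ij_0})$, because the expansion there is calibrated to a \emph{single} scale, with $\|\phi\|_{L^\infty}$ measured against powers of the same $\varepsilon$ that sets the bubble height. Here the two scales decouple: the main-order coefficient in \eqref{LHS of Pohozaev} is $\varepsilon_{ij_0}^{(n-2)/2}/r_i$ (it comes from $\partial_rW_{\xi_{ij_0},\varepsilon_{ij_0}}$), while $\|\phi_{ij_0}\|_{L^\infty}\sim\varepsilon_i^{(n-2)/2}$, and the ratio $\varepsilon_i/\varepsilon_{ij_0}\to\infty$ is precisely the hypothesis you are trying to contradict. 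Consequently the terms quadratic in $\phi$ (e.g.\ $\mathcal{P}_{\phi_{ij_0}}(r_i)=O(r_i^{n-3})\|\phi_{ij_0}\|^2$ and term $\mathrm{V}$), once normalized by $\varepsilon_{ij_0}^{(n-2)/2}/r_i$, acquire the divergent factor $r_i^{n-2}\left(\varepsilon_i/\varepsilon_{ij_0}\right)^{(n-2)/2}$; the paper's choice $r_i=\varepsilon^{(n-1)/n}$ no longer controls them, and you would have to choose $r_i$ in the window $\varepsilon_{ij_0}\ll r_i\ll\left(\varepsilon_{ij_0}/\varepsilon_i\right)^{1/2}$ and re-estimate every term, replacing $\varepsilon^{\delta(n)}$ in \eqref{recursive relation 1} by an $o_i(1)$ depending on the unknown ratio. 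This is probably repairable, but it is exactly the kind of bookkeeping the paper's two-line argument avoids.
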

\begin{proof}
Because there is one $m_j$ satisfying $m_j=1$, $\widehat{u}_\infty\neq 0$. By the strong maximum principle,
\begin{equation}\label{nontrivial caloric fct}
  \widehat{u}_\infty>0 \quad    \mbox{in} ~~ \left(\R^n\setminus\cup_{j=1}^{N^\prime}\{P_j\}\right)\times\R.
\end{equation}
By \eqref{expansion near isotaled simple pt}, for each $j=1,\cdots, N^\prime$,
\[\widehat{u}_i\lesssim\left(\frac{\varepsilon_{ij}}{\varepsilon_i}\right)^{\frac{n-2}{2}} \quad \mbox{in} ~~  \left( B_{3/2}(P_j)\setminus B_{1/2}(P_j)\right)\times(-1,1).\]
If $m_j=0$, we would have
\[ \widehat{u}_\infty=\lim_{i\to+\infty}\widehat{u}_i=0  \quad \mbox{in} ~~   \left( B_{3/2}(P_j)\setminus B_{1/2}(P_j)\right)\times(-1,1).\]
This is a contradiction with \eqref{nontrivial caloric fct}.
\end{proof}

\begin{lem}
For any  $(x,t)\in \left(\R^n\setminus\cup_{j=1}^{N^\prime}\{P_j\}\right)\times\R$,
 \begin{equation}\label{sums of Green functions}
  \widehat{u}_\infty(x,t)\equiv c(n)\sum_{j=1}^{N^\prime}m_j^{\frac{n-2}{2}}|x-P_j|^{2-n}.
 \end{equation}
\end{lem}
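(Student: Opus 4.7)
The plan is to show that $\widehat{u}_\infty$ agrees with the static Green-function ansatz $V(x):=\widetilde c(n)\sum_{j=1}^{N^\prime}m_j^{(n-2)/2}|x-P_j|^{2-n}$ (with $\widetilde c(n)$ chosen so that $-\Delta V$ produces the Diracs on the right-hand side of \eqref{limiting heat eqn}) by proving that their difference is a bounded entire caloric function on $\R^n\times\R$ and then invoking the parabolic Liouville theorem.

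First I would extract the precise singular profile of $\widehat{u}_\infty$ at each pole $P_j$. Recalling the Aubin--Talenti asymptotics $W_{\xi,\lambda}(x)=(n(n-2))^{(n-2)/2}\lambda^{(n-2)/2}|x-\xi|^{2-n}(1+o(1))$ as $|x-\xi|/\lambda\to\infty$, and combining $\lambda_{ij}(0)/\varepsilon_i\to m_j$ (Lemma \ref{lem comparison of heights}) with the displacement bound \eqref{variation of bubble center}, the decomposition \eqref{local decomposition}, the exponential smallness of $a_{ij}Z_{0,ij}$ away from $\xi_{ij}$, and the Schoen--Harnack bound $|\phi_{ij}|\lesssim\varepsilon_i^{(n-2)/2}$ from \eqref{Schoen's Harnack 2}, one obtains the pointwise limit
\[
\widehat{u}_\infty(x,t)=(n(n-2))^{(n-2)/2}\,m_j^{(n-2)/2}\,|x-P_j|^{2-n}+O(1)
\]
as $x\to P_j$, uniformly in $t$ on any bounded time interval. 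Matching this singular part against $V$ forces $\widetilde c(n)=(n(n-2))^{(n-2)/2}$; an explicit evaluation of $\int_{\R^n}W^p$ identifies the $c(n)$ of \eqref{construction of Diracs} as $(n(n-2))^{(n-2)/2}(n-2)\omega_{n-1}$, so this is the consistent normalization with \eqref{limiting heat eqn}. Consequently $w:=\widehat{u}_\infty-V$ extends continuously across each $P_j$, and by \eqref{limiting heat eqn} $w$ satisfies $\partial_tw-\Delta w=0$ distributionally on all of $\R^n\times\R$.

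Next I would verify that $w$ is bounded on $\R^n\times\R$. Local boundedness away from $\{P_j\}$ is immediate from the smooth convergence $\widehat{u}_i\to\widehat{u}_\infty$; local boundedness near each $P_j$ follows from the singular-part matching just established. For the behavior as $|x|\to\infty$, the bubble tail $W_{\xi_{ij}(t),\lambda_{ij}(t)}(x)\lesssim\lambda_{ij}^{(n-2)/2}|x|^{2-n}$, the exponential smallness of $a_{ij}Z_{0,ij}$ far from $\xi_{ij}$, the bound on $\phi_{ij}$, and standard parabolic Harnack applied to $u_i$ on annular regions together give that $\widehat{u}_\infty(x,t)\to 0$ uniformly in $t\in[-T,T]$ as $|x|\to\infty$ for any fixed $T$; likewise $V(x)\to 0$ at spatial infinity. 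Hence $w$ is a bounded global solution of the heat equation on $\R^n\times\R$. The classical parabolic Liouville theorem then forces $w\equiv c_0$, and the decay $w(x,t)\to 0$ at spatial infinity yields $c_0=0$, producing $\widehat{u}_\infty\equiv V$, which is \eqref{sums of Green functions}.

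The main obstacle will be the matching of singular profiles at each $P_j$ uniformly in $t\in\R$: one has to show simultaneously that the linear error $a_{ij}(t)Z_{0,ij}(x,t)$ and the nonlinear error $\phi_{ij}(x,t)$ are $O(\varepsilon_i^{(n-2)/2})$ for every $t$ in an interval of length $\gtrsim|\log\varepsilon_i|$, so that after multiplication by $\varepsilon_i^{-(n-2)/2}$ they pass to a bounded limit uniformly in time. This is exactly the output of the iterated Schoen--Harnack mechanism \eqref{iteration of Harnack decay}--\eqref{Schoen's Harnack 2}, which crucially requires the long-time existence guaranteed by the contradiction hypothesis \eqref{exclusion of bubble clustering 2}; without that hypothesis, one cannot iterate Proposition \ref{prop weak Schoen Harnack} backwards in time far enough to reach the sharp bound $|\phi_{ij}|=O(\varepsilon_i^{(n-2)/2})$. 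A secondary subtlety is proving spatial decay of $\widehat{u}_\infty$ uniformly in $t$, which requires controlling the contribution of bubbles whose centers $\xi_{ij}^\ast(t)$ diverge to infinity (assumption (8)); since for any fixed $x$ these contributions become $o(1)$ after rescaling by $\varepsilon_i^{-(n-2)/2}$, they do not obstruct the Liouville step.
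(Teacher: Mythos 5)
Your construction of the comparison function $V$ and the matching of the singular profiles at each $P_j$ (via \eqref{expansion near isotaled simple pt}, the normalization $c(n)=\int_{\R^n}W^p$, and the cancellation of the Dirac terms so that $w:=\widehat{u}_\infty-V$ is caloric across the poles) is sound and consistent with what the paper establishes. The gap is in the Liouville step: you need $w$ to be bounded on all of $\R^n\times\R$, and in particular you assert that $\widehat{u}_\infty(x,t)\to0$ as $|x|\to\infty$ "by standard parabolic Harnack applied on annular regions." This is not justified. The uniform bound $\widehat{u}_\infty\lesssim 1$ is only available on fixed annuli around the $P_j$ (from \eqref{Schoen's Harnack 2}, whose constant is independent of the time window), and Harnack chaining propagates it to other points only with a constant that degrades (exponentially) in the spatial distance travelled. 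There is no a priori control of $\widehat{u}_\infty$ at spatial infinity — a priori it could contain growing caloric modes such as $e^{\lambda t+\xi\cdot x}$ with $\lambda=|\xi|^2$ — so the hypothesis of the classical Liouville theorem for bounded eternal caloric functions is exactly what is missing. Note also that even granting boundedness on bounded spatial sets for all $t\in\R$, without a sign condition on $w$ one cannot invoke the positive-caloric machinery.

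The paper's proof is designed around this difficulty and takes a different route. It first proves only the one-sided inequality $\widehat{u}_\infty\geq c(n)\sum_j m_j^{(n-2)/2}|x-P_j|^{2-n}$, by the comparison principle and the Duhamel representation with cut-offs $\zeta_R$, letting $R\to+\infty$ by monotone convergence and then $s\to-\infty$. The difference $\widehat{v}_\infty=\widehat{u}_\infty-V$ is then a \emph{nonnegative} eternal caloric function, to which the Widder representation theorem applies with no growth assumption at spatial infinity: $\widehat{v}_\infty=\int e^{\lambda t+\xi\cdot x}\,d\nu(\xi,\lambda)$ over $\{\lambda=|\xi|^2\}$. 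The boundedness of $\widehat{u}_\infty$ on the single annulus $\left(B_1(P_1)\setminus B_{1/2}(P_1)\right)\times\R$ — which is exactly what \eqref{expansion near isotaled simple pt} provides uniformly in the time window — then forces $\nu=0$. To repair your argument you would either have to supply this missing positivity-plus-representation step, or independently establish a genuine two-sided bound on $\widehat{u}_\infty$ that is uniform as $|x|\to\infty$ and $|t|\to\infty$, which the available estimates do not give.
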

\begin{proof}
Take two arbitrary $s<t$. Take a sequence of cut-off functions $\zeta_R\in C_0^\infty(\R^n)$ such that
\[
   \left\{\begin{aligned}
&0\leq \zeta_R\leq 1,\\
&\zeta_R\equiv 1 \quad \mbox{in} ~~ B_R\setminus \cup_{j=1}^{N^\prime} B_{1/R}(P_j),\\
& \zeta_R\equiv 0 \quad \mbox{in} ~~ B_{2R}^c \cup \cup_{j=1}^{N^\prime} B_{1/2R}(P_j).
\end{aligned}\right.
\]
By the comparison principle, for any $x\in\R^n\setminus\{P_1,\cdots, P_{N^\prime}\}$,
\begin{eqnarray*}
   \widehat{u}_\infty(x,t) &\geq &  \int_{\R^n} \widehat{u}_\infty(y,s)\zeta_R(y)G(x-y,t-s) dy \\
  &+&c(n)\sum_{j=1}^{N^\prime}m_j^{\frac{n-2}{2}}\int_{s}^{t}G(x-P_j,t-s)ds.
\end{eqnarray*}
Here $G$ denotes the standard heat kernel on $\R^n$.

Letting $R\to+\infty$, 
by the monotone convergence theorem we obtain
\[  \widehat{u}_\infty(x,t)\geq \int_{\R^n} \widehat{u}_\infty(y,s)G(x-y,t-s) dy+c(n)\sum_{j=1}^{N^\prime}m_j^{\frac{n-2}{2}}\int_{s}^{t}G(x-P_j,t-s)ds.\]
Letting $s\to-\infty$, we get
\[  \widehat{u}_\infty(x,t)\geq c(n)\sum_{j=1}^{N^\prime}m_j^{\frac{n-2}{2}}|x-P_j|^{2-n}.\]
Thus their difference
\[\widehat{v}_\infty(x,t):=\widehat{u}_\infty(x,t)-c(n)\sum_{j=1}^{N^\prime}m_j^{\frac{n-2}{2}}|x-P_j|^{2-n}\]
is a positive caloric function on $\R^n\times\R$. By \cite{Lin2019ancient} or \cite{Widder1963heat}, there exists a nonnegative Radon measure $\nu$ on $\{(\xi,\lambda):\lambda=|\xi|^2\}\subset \R^n\times\R$ such that
\[\widehat{v}_\infty(x,t)=\int_{\{\lambda=|\xi|^2\}}e^{\lambda t+\xi\cdot x}d\nu(\xi,\lambda).\]
Because \eqref{expansion near isotaled simple pt} holds for any $R$, $\widehat{u}_\infty$ is bounded in $\left(B_1(P_1)\setminus B_{1/2}(P_1)\right)\times\R$, so $\widehat{v}_\infty$ is also bounded $\left(B_1(P_1)\setminus B_{1/2}(P_1)\right)\times\R$. This is possible only if $\nu=0$, and \eqref{sums of Green functions} follows.
\end{proof}

By this lemam, near $P_1=0$, there exists a smooth harmonic function $h$ such that
\[\widehat{u}_\infty(x)=c(n)m_1|x|^{2-n}+h(x).\]
Because  $P_2\in\partial B_1$,   we deduce that
\begin{equation}\label{positive harmonic}
  h(0)>0.
\end{equation}

\subsection{Local Pohozaev invariants}\label{subsec local Pohozaev}

By the expansion of $\widehat{u}_\infty(x)$ near $0$, in particular, \eqref{positive harmonic}, and a direct calculation, we find a dimensional constant $c(n)>0$ such that for all $r$ small,
\begin{equation}\label{Pohozaev asymptotics}
  \mathcal{P}_{\widehat{u}_\infty}(r)=\frac{c(n)m_1 h(0)}{r}+O(1).
\end{equation}

On the other hand, because $\widehat{u}_\infty$ comes from $u_i$, we claim that
\begin{lem}\label{lem limiting Pohozaev}
For all $r$ small,
 \begin{equation}\label{limiting Pohozaev}
 \mathcal{P}_{\widehat{u}_\infty}(r)\lesssim K^{-2}+r.
 \end{equation}
\end{lem}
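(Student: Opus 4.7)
The plan is to apply the Pohozaev identity from Section \ref{sec Pohozaev} to $u_i$ at time $t=0$ on the ball $B_r$, divide by $\varepsilon_i^{n-2}$, and pass to the limit $i\to\infty$. First fix $r<1/2$ so that $\overline{B_r}$ is disjoint from the other concentration points $P_2,\dots,P_{N'}$; then by the smooth convergence $\widehat{u}_i\to\widehat{u}_\infty$ in a neighbourhood of $\partial B_r$, the scaling $\mathcal{P}_{c\widehat{u}}(r)=c^2\mathcal{P}_{\widehat{u}}(r)$ of the quadratic form $\mathcal{P}_\cdot(r)$ yields $\mathcal{P}_{\widehat{u}_i}(r)\to\mathcal{P}_{\widehat{u}_\infty}(r)$. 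Thus it suffices to prove the uniform bound $\mathcal{P}_{\widehat{u}_i}(r)\lesssim K^{-2}+r+o_i(1)$.

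The key preliminary step is to iterate Proposition \ref{prop weak Schoen Harnack} backwards in time starting from $t=50|\log\varepsilon_{i1}|$, exactly as was done to establish \eqref{Schoen's Harnack 1}. Because by \eqref{exclusion of bubble clustering 3} the solution $u_i$ exists on the asymptotically infinite time interval $(-50|\log\varepsilon_{i1}|,50|\log\varepsilon_{i1}|)$, this iteration produces the Schoen-type estimate
\[
\|\phi_{i1}\|_{L^\infty(B_{3/2}(0)\times(-R,R))}\lesssim \varepsilon_i^{(n-2)/2}
\]
for any fixed $R>0$, with \emph{no} $e^{c\varepsilon_i^{-2}}$ boundary correction (the correction is killed by the backward iteration over a time interval of length $\gg \varepsilon_i^{-2}$). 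This sharp $L^\infty$ bound propagates, via Propositions \ref{prop Lip estimate on error fct} and \ref{prop Schauder estimate on error fct}, to the corresponding sharp bounds on $\nabla\phi_{i1}$ and $\partial_t\phi_{i1}$.

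Now apply the Pohozaev identity for $u_i(\cdot,0)$ on $B_r$ and divide by $\varepsilon_i^{n-2}$. The surface integral becomes $\varepsilon_i^2\int_{\partial B_r}\widehat{u}_i^{p+1}/(p+1)\to 0$ because $\widehat{u}_i$ is uniformly bounded on $\partial B_r$. For the volume integral one inserts the orthogonal decomposition $u_i=W_{\xi_{i1}(t),\lambda_{i1}(t)}+a_{i1}(t)Z_{0,\xi_{i1}(t),\lambda_{i1}(t)}+\phi_{i1}$ centred at $P_1$ (valid on $B_1\supset B_r$) and expands the RHS exactly as in Section \ref{subsection RHS of Pohozaev}, into the nine pieces $\mathrm{I},\dots,\mathrm{IX}$. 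Feeding the sharp $\varepsilon_i^{(n-2)/2}$ bound on $\phi_{i1}$ into those estimates, $\mathrm{I}$ is rendered negligible by the $L^2$-orthogonality $\int_{\R^n}Z_0 Z_{n+1}=\int_{\R^n}Z_j Z_{n+1}=0$ together with the polynomial/exponential decay of $Z_0,\dots,Z_n$ on $B_r^c$; the $K^{-2}+r$ contribution in $\mathrm{IV}$ arises from the integral $\int_{K\varepsilon_i}^{r}(\varepsilon_i^2/s^3+1)\,ds\sim K^{-2}+r$ obtained when the pointwise estimate $|\partial_t\phi_{i1}|\lesssim(\varepsilon_i^2/|x|^4+1/|x|)\varepsilon_i^{(n-2)/2}$ is integrated against $|Z_{n+1,i}|\lesssim \varepsilon_i^{(n-2)/2}/|x|^{n-2}$; the piece $\mathrm{V}$ is quadratic in $\phi_{i1}$ and contributes $\lesssim r^{n-2}\lesssim r$; and the remaining pieces are of lower order in $\varepsilon_i$. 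Collecting everything yields $\varepsilon_i^{-(n-2)}|\text{RHS}|\lesssim K^{-2}+r+o_i(1)$, and the limit gives \eqref{limiting Pohozaev}.

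The hard part is the last expansion. Without the Schoen-type improvement $\|\phi_{i1}\|_\infty\lesssim\varepsilon_i^{(n-2)/2}$, the direct bound \eqref{RHS of Pohozaev} would only give $\varepsilon_i^{-(n-2)}|\text{RHS}|\lesssim 1/r$, which is useless for the intended contradiction as $r\to 0$. The availability of the sharper $\varepsilon_i^{(n-2)/2}$ bound depends crucially on the asymptotically infinite time interval \eqref{exclusion of bubble clustering 3}; this is the only place in the proof where the failure of \eqref{exclusion of bubble clustering} is used. Once Lemma \ref{lem limiting Pohozaev} is established, combining with \eqref{Pohozaev asymptotics} gives $c(n)m_1 h(0)/r+O(1)\lesssim K^{-2}+r$, which, for $r$ small and $K$ large, contradicts \eqref{positive harmonic}; this contradiction completes the proof of Proposition \ref{prop exclusion of bubble clusetering}.
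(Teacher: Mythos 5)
Your proposal is correct and follows essentially the same route as the paper: pass to the limit in the rescaled Pohozaev identity (the surface $u^{p+1}$ term contributing $O(\varepsilon_i^2)$ and the Pohozaev invariant converging by the smooth convergence of $\widehat{u}_i$ away from the poles), and control the right-hand side by feeding the Schoen-type bound $\|\phi_{i1}\|_{L^\infty}\lesssim\varepsilon_i^{(n-2)/2}$ — obtained exactly as you describe, by backward iteration of Proposition \ref{prop weak Schoen Harnack} over the long time interval guaranteed by \eqref{exclusion of bubble clustering 3} — into the nine-term expansion of Subsection \ref{subsection RHS of Pohozaev}, with term $\mathrm{IV}$ producing the $K^{-2}+r$ contribution. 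The only cosmetic difference is that the paper establishes the sharp bound \eqref{Schoen's Harnack 1} once in the preceding subsection (where it is also used to build the Green function) rather than inside the proof of the lemma.
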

\begin{proof}
In \eqref{Pohozaev}, choose $r_i$ to be a fixed, small  $r>0$.  Multiply both sides of \eqref{Pohozaev} by $\varepsilon_i^{2-n}$. Let us analyse the convergence of both sides.

{\bf Step 1. The left hand side of \eqref{Pohozaev}.}

 By the  convergence of $\widehat{u}_i$ in $C^\infty_{loc}\left(\left(\R^n\setminus\cup_{j=1}^{N^\prime}\{P_j\}\right)\times\R\right)$, we get
\begin{equation}\label{limiting Pohozaev 1}
\varepsilon_i^{2-n}\mathcal{P}_{u_i(0)}(r)=\mathcal{P}_{\widehat{u}_i(0)}(r)\to \mathcal{P}_{\widehat{u}_\infty(0)}(r), \quad \mbox{as } ~~ i\to+\infty
\end{equation}
and
\begin{equation}\label{limiting Pohozaev 2}
\varepsilon_i^{2-n}\int_{\partial B_r}u_i^{p+1}\lesssim \varepsilon_i^2\to0, \quad \mbox{as } ~~ i\to+\infty.
\end{equation}
Hence the left hand side converges to $\mathcal{P}_{\widehat{u}_\infty}(r)$.

{\bf Step 2. The right hand side of \eqref{Pohozaev}.}

For simplicity of notations, denote the parameters $(a_{i1}, \xi_{i1}, \lambda_{i1})$ just by $(a_i,\xi_i,\lambda_i)$, and $\phi_{i1}$, the error function in $Q_1$, just by $\phi_i$. Plugging the $L^\infty$ estimate  \eqref{Schoen's Harnack 1} into Proposition \ref{prop uniform estimate}, we get the following estimates:
\begin{equation}\label{estimate on parameters from Harnack 1}
 \sup_{t\in(-1,1)} |a_{i}(t)|\lesssim \varepsilon_i^{n-1},
\end{equation}
\begin{equation}\label{estimate on parameters from Harnack 2}
 \sup_{t\in(-1,1)}\left(\left|a_{i}^\prime(t)-\mu_0\frac{a_{i}(t)}{\lambda_{i}(t)^2}\right|+\left|\xi_{i}^\prime(t)\right|+
 \left|\lambda_{i}^\prime(t)\right|\right)\lesssim K^{-\frac{n-2}{2}}\varepsilon_i^{n-3},
\end{equation}
\begin{equation}\label{estimate on error fct from Harnack 1}
  |\nabla\phi_{i}(x,t)|\lesssim \frac{\varepsilon_i^{(n+2)/2}}{\varepsilon_i^3+|x|^3}+\varepsilon_i^{\frac{n-2}{2}} \quad \mbox{for any} ~~ (x,t)\in Q_1,
\end{equation}
and
\begin{equation}\label{estimate on error fct from Harnack 2}
  |\partial_t\phi_{i}(x,t)|\lesssim \frac{\varepsilon_i^{(n+2)/2}}{\varepsilon_i^4+|x|^4}+\frac{\varepsilon_i^{(n-2)/2}}{\varepsilon_i+|x|} \quad \mbox{for any} ~~ (x,t)\in Q_1.
\end{equation}
The factor $K^{-(n-2)/2}$ in \eqref{estimate on parameters from Harnack 2} comes from Lemma \ref{lem estiamte of parameters}.

As in Subsection \ref{subsection RHS of Pohozaev}, we expand the right hand side of \eqref{Pohozaev} into nine terms, $\mathrm{I}$-$\mathrm{IX}$.
Let us estimate them one by one.
\begin{enumerate}
  \item By \eqref{estimate on parameters from Harnack 2} and the orthogonal relation between $Z_0$ and $Z_{n+1}$,
  \[ \left| \lambda_i(0) a_i^\prime(0)\int_{B_r} Z_{0,i}(x,0)Z_{n+1,i}(x,0)dx\right|\lesssim \varepsilon_i^{n-2}e^{-cr/\varepsilon_i}.\]
Similarly, for each $j=1,\cdots, n$,
  \[ \left| \lambda_i(0) \xi_{i}^\prime(0)\int_{B_r} Z_{j,i}(x,0)Z_{n+1,i}(x,0)dx\right|\lesssim \varepsilon_i^{2n-5}r^{3-n},\]
and
  \[  \left|\lambda_i(0) \lambda_i^\prime(0)\int_{B_r}Z_{n+1,i}(0)^2\right|\lesssim K^{-\frac{n-2}{2}}\varepsilon_i^{n-2}.\]
Combining these three estimates, we obtain
\[|\mathrm{I}|\lesssim \left[K^{-\frac{n-2}{2}}+\left(\frac{\varepsilon_i}{r}\right)^{n-3}+e^{-cr/\varepsilon_i}\right]\varepsilon_i^{n-2}.\]

  \item By \eqref{Schoen's Harnack 1}, \eqref{estimate on parameters from Harnack 2} and \eqref{estimate on error fct from Harnack 1},
  \[ |\mathrm{II}| \lesssim \varepsilon_i^{2n-6}r^2.\]

 \item By \eqref{estimate on parameters from Harnack 1}  and \eqref{estimate on parameters from Harnack 2},
 \[ |\mathrm{III}| \lesssim \varepsilon_i^{2n-4}.\]

 \item As in the treatment of $\mathrm{IV}$ in Subsection \ref{subsection RHS of Pohozaev}, we get
 \[ |\mathrm{IV}|\lesssim K^2\varepsilon_i^{\frac{3n}{2}-3}+\left(K^{-2}+r\right)\varepsilon_i^{n-2}.\]

 \item By \eqref{Schoen's Harnack 1}, \eqref{estimate on parameters from Harnack 1} and \eqref{estimate on parameters from Harnack 2},
\[ |\mathrm{V}|\lesssim \varepsilon_i^nr^{n-4}+\varepsilon_i^{n-2}r^{n-1}.\]

 \item By \eqref{estimate on parameters from Harnack 1}  and \eqref{estimate on error fct from Harnack 2},
\[ |\mathrm{VI}|\lesssim \varepsilon_i^{2n-4}.\]

 \item By \eqref{estimate on parameters from Harnack 1}  and \eqref{estimate on parameters from Harnack 2},
\[|\mathrm{VII}|\lesssim \varepsilon_i^{2n-4}.\]

 \item By \eqref{estimate on parameters from Harnack 1}, \eqref{estimate on parameters from Harnack 2} and \eqref{estimate on error fct from Harnack 1},
\[ |\mathrm{VIII}|\lesssim \varepsilon_i^{3n-6}.\]

 \item By \eqref{estimate on parameters from Harnack 1} and \eqref{estimate on parameters from Harnack 2}
\[ |\mathrm{IX}|\lesssim \varepsilon_i^{3n-6}.\]
\end{enumerate}
Putting these estimates together and using \eqref{limiting Pohozaev 1}, we obtain \eqref{limiting Pohozaev}.
\end{proof}
 Combining this lemma with \eqref{Pohozaev asymptotics}, after letting $r\to0$, we deduce that $h(0)=0$. This  is a contradiction with \eqref{positive harmonic}, so \eqref{exclusion of bubble clustering 2} cannot be true. The proof of Proposition \ref{prop exclusion of bubble clusetering} is thus complete.

\begin{appendices}

\section{Linearization estimates around bubbles}\label{sec bubbles}
\setcounter{equation}{0}

In this appendix, we collect some estimates about the linearized equation around the standard bubble. This is mainly used to study the inner problem in Section \ref{sec inner}.

Recall that the standard positive bubble is
\[
  W(x):= \left(1+\frac{|x|^2}{n(n-2)}\right)^{-\frac{n-2}{2}}.
\]
Concerning eigenvalues and eigenfunctions for the linearized operator $-\Delta-pW^{p-1}$, we have (see for example \cite[Proposition 2.2]{Collot2017ground})
\begin{thm}\label{thm spectrum}
  \begin{itemize}
    \item[(i)] There exists one and only one negative eigenvalue for $-\Delta-pW^{p-1}$, denoted by $-\mu_0$, for which there exists a unique (up to a constant), positive, radially symmetric and exponentially decaying eigenfunction $Z_0$.
    \item[(ii)] There exist exactly $(n+1)$-eigenfunctions $Z_i$ in $L^\infty(\R^n)$ corresponding to eigenvalue $0$, given by
    \[
    \left\{\begin{aligned}
&Z_i=\frac{\partial W}{\partial x_i}, \quad i=1,\cdots, n,\\
&     Z_{n+1}=\frac{n-2}{2}W+x\cdot\nabla W.
\end{aligned}\right.
 \]
    \end{itemize}
\end{thm}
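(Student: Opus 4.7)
The plan is to reduce the eigenvalue problem $L\phi := -\Delta\phi - pW^{p-1}\phi = \lambda\phi$ to a family of one-dimensional Sturm-Liouville problems indexed by spherical harmonic degree. Writing $\phi(x) = \sum_{k\geq 0}\psi_k(r)Y_k(\theta)$ with $Y_k$ a degree-$k$ spherical harmonic on $S^{n-1}$, the radial profile satisfies
$$L_k\psi_k := -\psi_k'' - \frac{n-1}{r}\psi_k' + \frac{k(k+n-2)}{r^2}\psi_k - pW(r)^{p-1}\psi_k = \lambda\psi_k$$
on $(0,\infty)$. Because $W^{p-1}(r) = O(r^{-4})$ at infinity and $n\geq 3$, each $L_k$ is essentially self-adjoint in $L^2(r^{n-1}dr)$ with essential spectrum $[0,\infty)$, and the min-max principle gives the monotonicity $\lambda_{k,0}<\lambda_{k+1,0}$ thanks to the positivity of the centrifugal term.

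For part (ii), I would first verify by differentiating $-\Delta W = W^p$ in its translation and scaling parameters that $\partial_i W \in \ker L$ (contributing to mode $k=1$) and $Z_{n+1}\in \ker L$ (mode $k=0$). Conversely, one shows the $L^\infty$ kernel is exhausted by these: for each fixed $k$, $L_k\psi=0$ is a second-order ODE whose two linearly independent solutions have distinct indicial behaviors at the regular singular point $r=0$ and at $r=\infty$, so only a one-dimensional subspace is bounded at both ends. Identifying this bounded solution for $k=0$ and $k=1$ gives $Z_{n+1}$ and the radial profile of $\partial_i W$ respectively. For $k\geq 2$, the monotonicity above combined with $\lambda_{1,0}=0$ (realized by the nodeless radial profile of $\partial_i W$, which is therefore a ground state in mode $k=1$) yields $\lambda_{k,0}>0$, so these modes contribute no bounded kernel element.

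For part (i), the test-function computation
$$\langle LW,W\rangle = \int_{\mathbb{R}^n}\bigl(|\nabla W|^2 - pW^{p+1}\bigr) = (1-p)\int_{\mathbb{R}^n}W^{p+1} < 0$$
shows that the ground-state eigenvalue $\lambda_{0,0}$ of the radial operator $L_0$ is strictly negative; denote it by $-\mu_0$. Standard Sturm-Liouville ground-state theory then produces a positive, radial eigenfunction $Z_0$, unique up to scalar. Exponential decay of $Z_0$ follows from ODE asymptotics at $r\to\infty$: since $pW^{p-1}$ is integrable, the limiting equation is $-\psi''-\frac{n-1}{r}\psi' = -\mu_0\psi$, whose decaying solution behaves like $r^{-(n-1)/2}e^{-\sqrt{\mu_0}\,r}$. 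The delicate point is uniqueness of the negative eigenvalue: one must rule out any other eigenvalue in $(-\mu_0,0)$. This will follow because the explicit computation $Z_{n+1}(r)\propto \bigl((n-2)/2 - r^2/(2n)\bigr)(1+r^2/(n(n-2)))^{-n/2}$ shows that $Z_{n+1}$ has exactly one nodal sphere, hence by Sturm oscillation is the \emph{second} radial eigenfunction; therefore $\lambda_{0,1}=0$ with no radial eigenvalue in $(-\mu_0,0)$, while the nonradial sectors contribute only $\lambda_{k,0}\geq 0$.

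The main technical obstacle is making the Sturm-Liouville oscillation/comparison machinery rigorous on $(0,\infty)$ with the irregular point $r=0$ (for $k\geq 1$) and the decaying potential at infinity, in particular verifying the limit-point condition at both endpoints so that self-adjointness and the oscillation theorem apply without imposing extra boundary conditions. A slicker alternative for part (ii) alone is conformal conjugation by stereographic projection, under which $L$ is intertwined, on the zero-eigenvalue level set, with a constant multiple of $-\Delta_{S^n}-n$, whose kernel is the $(n+1)$-dimensional space of degree-one spherical harmonics on $S^n$; this directly yields the dimension count in part (ii) but does not give the spectral uniqueness needed in part (i) because the conjugation introduces a conformal weight for $\lambda\neq 0$.
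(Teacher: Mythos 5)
The paper does not prove this statement at all: it is quoted verbatim from Collot--Merle--Rapha\"el (\cite[Proposition 2.2]{Collot2017ground}), so there is no in-paper argument to compare against. Your spherical-harmonics/Sturm--Liouville proof is the standard self-contained route and its outline is correct: the kernel computation via the translation and dilation symmetries, the exclusion of modes $k\geq 2$ through the strict monotonicity of the sectorial ground-state energies together with the positivity of $-W'(r)$ (which forces $\lambda_{1,0}=0$), the test function $W$ to produce a negative direction, and the nodal count of $Z_{n+1}$ to cap the radial Morse index at one. Two points deserve explicit care if you write this up. First, $Z_{n+1}\sim r^{2-n}$ at infinity, so it lies in $L^2(\R^n)$ only for $n\geq 5$; since $0$ is the bottom of the essential spectrum rather than an isolated eigenvalue, you should not invoke the ``$m$-th eigenfunction has $m-1$ nodes'' theorem directly but rather run the Sturm comparison argument you sketch: a hypothetical second negative eigenvalue would have a radial eigenfunction with exactly one interior zero, and comparison with the zero-energy principal solution $Z_{n+1}$ would force $Z_{n+1}$ to have at least two zeros, contradicting the explicit formula. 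Second, the limit-point verification at $r=0$ is harmless here because after the Liouville substitution the effective inverse-square coefficient is $\tfrac{(n-1)(n-3)}{4}+k(k+n-2)\geq \tfrac{3}{4}$ in the paper's range $n\geq 7$ (it would fail for $k=0$, $n=3$). With those two points made precise the argument is complete, and it in fact yields slightly more than the cited statement (the full sectorial decomposition), at the cost of being longer than the citation the authors chose.
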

\begin{rmk}
  The following decay as $|x|\to\infty$ holds for these eigenfunctions:
  \[
  \left\{\begin{aligned}
&Z_0(x)\lesssim e^{-c|x|},\\
&|Z_i(x)|\lesssim |x|^{1-n}, \quad i=1,\cdots, n,\\
&     |Z_{n+1}(x)|\lesssim |x|^{2-n}.
\end{aligned}\right.
\]
\end{rmk}
Throughout the paper $Z_0$ is normalized so that
\[\int_{\R^n}Z_0^2=1.\]
For any $\xi\in\R^n$ and $\lambda\in\R^+$, in accordance with the scalings for $W$, define
\[Z_{i,\xi,\lambda}(x):=\lambda^{-\frac{n}{2}}Z_i\left(\frac{x-\xi}{\lambda}\right).\]
This scaling preserves the $L^2(\R^n)$ norm of $Z_i$.

Next we study the linearized parabolic equation around $W$. The first one is a nondegeneracy result.
\begin{prop}[Nondegeneracy]\label{prop nondegeneracy}
Suppose $\alpha>2$ and $\varphi\in L^\infty(-\infty,0;\mathcal{X}_\alpha)$ is a solution of
 \begin{equation}\label{linear eqn}
  \partial_t\varphi-\Delta\varphi=pW^{p-1}\varphi, \quad \mbox{in } ~~ \R^n\times\R^-.
\end{equation}
Then there exist $(N+2)$ constants $c_0,\cdots, c_{n+1}$ such that
\[\varphi(x,t)\equiv c_0e^{\mu_0t}Z_0(x)+\sum_{i=1}^{n+1}c_iZ_i(x).\]
\end{prop}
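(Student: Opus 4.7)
\smallskip

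\textbf{Proof plan for Proposition \ref{prop nondegeneracy}.}
The plan is to decompose $\varphi$ along the spectral structure of the time-independent linearized operator $L := -\Delta - pW^{p-1}$ described in Theorem \ref{thm spectrum}, so that the ancient boundedness assumption isolates precisely the unstable exponential $e^{\mu_0 t}Z_0$ and the stationary kernel spanned by $Z_1,\dots,Z_{n+1}$. Rewriting the equation as $\partial_t\varphi = -L\varphi$, I would begin by defining the projections
\begin{equation*}
a(t):=\int_{\R^n}\varphi(x,t)Z_0(x)\,dx,\qquad b_i(t):=\frac{1}{\|Z_i\|_{L^2}^2}\int_{\R^n}\varphi(x,t)Z_i(x)\,dx,\quad i=1,\dots,n+1.
\end{equation*}
These are finite for every $t<0$: $a(t)$ converges by the exponential decay of $Z_0$, while for $i=1,\dots,n$ (resp.\ $i=n+1$) the integrand decays like $(1+|x|)^{1-n-\alpha}$ (resp.\ $(1+|x|)^{2-n-\alpha}$), which is in $L^1(\R^n)$ precisely because $\alpha>2$ and $n\geq 7$; note this is exactly the role of the hypothesis $\alpha>2$, since $Z_{n+1}$ is the slowest-decaying kernel element.

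Next I would differentiate these scalar functions in $t$. Using the equation together with $LZ_0=-\mu_0 Z_0$ and $LZ_i=0$, an integration by parts (justified by the decay of $\varphi$, $\nabla\varphi$ from weighted parabolic Schauder estimates applied to \eqref{linear eqn}) yields the ODEs $\dot a=\mu_0 a$ and $\dot b_i=0$. Therefore
\begin{equation*}
a(t)=c_0\,e^{\mu_0 t},\qquad b_i(t)\equiv c_i,
\end{equation*}
for some constants $c_0,c_1,\dots,c_{n+1}$. Define the remainder
\begin{equation*}
\psi(x,t):=\varphi(x,t)-c_0\,e^{\mu_0 t}Z_0(x)-\sum_{i=1}^{n+1}c_i\,Z_i(x).
\end{equation*}
Then $\psi$ still satisfies $\partial_t\psi=-L\psi$, remains in $L^\infty_t\mathcal{X}_\alpha$ (after absorbing the constants $Z_i\in\mathcal{X}_\alpha$ for $n\geq 7$), and by construction is $L^2$-orthogonal to $Z_0,Z_1,\dots,Z_{n+1}$ at every time.

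The final and most delicate step is to conclude $\psi\equiv 0$. On the $L^2$-orthogonal complement of $\mathrm{span}\{Z_0,\dots,Z_{n+1}\}$, the self-adjoint operator $L$ has spectrum contained in $[\lambda_1,+\infty)$ for some $\lambda_1>0$ (by Theorem \ref{thm spectrum}, all non-zero non-negative eigenvalues of $L$ are strictly positive, and the essential spectrum is $[0,+\infty)$, but coercivity against all modes with eigenvalue in $[0,\lambda_1)$ is ensured by the orthogonality since those lie in the discrete part). Assuming for the moment $\psi(t)\in L^2(\R^n)$, the coercivity estimate $\langle L\psi,\psi\rangle\geq\lambda_1\|\psi\|_{L^2}^2$ and the identity $\tfrac{1}{2}\tfrac{d}{dt}\|\psi(t)\|_{L^2}^2=-\langle L\psi(t),\psi(t)\rangle$ give
\begin{equation*}
\|\psi(t)\|_{L^2}^2\geq e^{2\lambda_1(s-t)}\|\psi(s)\|_{L^2}^2,\qquad t<s<0,
\end{equation*}
which forces exponential blow-up of $\|\psi(t)\|_{L^2}$ as $t\to-\infty$ unless $\psi(s)\equiv 0$; choosing any $s$ and letting $t\to-\infty$ contradicts the uniform $L^\infty_t\mathcal{X}_\alpha$ bound (hence uniform $L^\infty_t L^2_x$ bound on bounded subsets) combined with the fact that $\|\psi\|_{L^2(B_R^c)}$ is uniformly small in $t$ by the weighted decay. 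This concludes $\psi\equiv 0$ and hence the claim.

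\textbf{The main obstacle} is precisely the last paragraph: the weighted-$L^\infty$ decay rate $\alpha>2$ does not directly yield $\psi(t)\in L^2(\R^n)$ when $n\geq 7$ (which requires $\alpha>n/2$). I would handle this either by (i) running the coercivity argument in a weighted Hilbert space adapted to $L$ (for instance with weight $\rho(x)=(1+|x|)^{-2\alpha}$ or the Gaussian weight arising from backward self-similar variables), in which $L$ remains self-adjoint with the same spectral structure on the orthogonal complement and for which the finite-norm condition follows from the hypothesis; or (ii) first upgrading the spatial decay of $\psi$ via a bootstrap using the equation and the pointwise decay of $W^{p-1}\sim |x|^{-4}$ at infinity (so the equation for $\psi$ is essentially the heat equation outside a large ball, and parabolic regularity with the ancient time condition propagates faster decay). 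Option (i) is cleaner and is the route I would pursue, since the weighted coercivity is the mechanism implicitly used in the inner-problem linear estimates (Lemma \ref{lem linear decay estimate 1}--Lemma \ref{lem linear decay estimate 2}) invoked earlier in this part.
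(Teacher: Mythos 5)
Your overall architecture — project onto $Z_0,\dots,Z_{n+1}$, derive the ODEs $\dot a=\mu_0 a$, $\dot b_i=0$, subtract the explicit solutions, and kill the orthogonal remainder $\psi$ using coercivity of $L=-\Delta-pW^{p-1}$ plus the ancient-in-time bound — is the same as the paper's. But the final step contains a genuine error: the spectral gap $\langle L\psi,\psi\rangle\geq\lambda_1\|\psi\|_{L^2}^2$ with $\lambda_1>0$ on the orthogonal complement of $\mathrm{span}\{Z_0,\dots,Z_{n+1}\}$ is false. Since $W^{p-1}(x)=O(|x|^{-4})$, the essential spectrum of $L$ on $L^2(\R^n)$ is exactly $[0,+\infty)$; the spectrum in $[0,\lambda_1)$ is \emph{not} "in the discrete part", it is a continuum, and orthogonality to finitely many functions cannot remove it. The correct coercivity (the one in Collot's Lemma 2.3, which the paper invokes) bounds $\langle L\psi,\psi\rangle$ from below by $c\|\psi\|_{\dot H^1}^2$ (equivalently a weighted $L^2$ norm), not by $c\|\psi\|_{L^2}^2$. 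Consequently you do not get exponential backward-in-time growth of $\|\psi(t)\|_{L^2}$. The argument can be repaired, but it is a different one: $\tfrac{d}{dt}\langle L\psi,\psi\rangle=-2\|L\psi\|_{L^2}^2\leq0$ and $\tfrac12\tfrac{d}{dt}\|\psi\|_{L^2}^2=-\langle L\psi,\psi\rangle$, so if $\langle L\psi(s),\psi(s)\rangle=\delta>0$ for some $s$ then $\|\psi(t)\|_{L^2}^2$ grows at least \emph{linearly} as $t\to-\infty$, contradicting the uniform bound; hence $\langle L\psi,\psi\rangle\equiv0$ and the $\dot H^1$-coercivity gives $\psi\equiv0$.

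Your "main obstacle" (that $\mathcal{X}_\alpha$ with $\alpha>2$ does not embed in $L^2(\R^n)$ for $n\geq7$) is real, and the paper's resolution is essentially your option (ii), but applied one derivative up: parabolic regularity together with the $O(|x|^{-4})$ decay of the potential shows $\partial_t\varphi\in L^\infty(-\infty,0;\mathcal{X}_{2+\alpha})$, which gains two powers of spatial decay. The paper then runs the coercivity/monotonicity argument on $v=\partial_t\varphi$ (which solves the same equation and inherits the orthogonality from differentiating your identities $\dot a=\mu_0a$, $\dot b_i=0$), concludes $\partial_t\varphi\equiv0$, and finishes by observing that a stationary solution in $\mathcal{X}_\alpha$ orthogonal to the kernel must vanish by Theorem \ref{thm spectrum}. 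This detour through $\partial_t\varphi$ is what makes all the $L^2$ pairings legitimate; running the energy identity directly on your $\psi\in\mathcal{X}_\alpha$ is not justified as written.
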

\begin{proof}
  By standard parabolic regularity theory and the $O(|x|^{-4})$ decay of $W^{p-1}$ at infinity, we deduce that $\varphi\in L^\infty(-\infty,0;\mathcal{X}_\alpha^{2+\theta})$ and $\partial_t\varphi\in L^\infty(-\infty,0;\mathcal{X}_{2+\alpha})$.
Because $\alpha>2$, we may assume
\[ \int_{\R^n}\varphi(x,t)Z_i(x)dx\equiv 0, \quad \forall t<0, \quad i=0,\cdots, n+1.\]
Then $\partial_t\varphi$ satisfy these orthogonal conditions, too. Since it is also a solution of \eqref{linear eqn}, by the method in \cite[Section 2]{Collot2017ground} (in particular,  the coercivity estimate in \cite[Lemma 2.3]{Collot2017ground}), we deduce that $\partial_t\varphi\equiv 0$. Therefore $\varphi$ is a stationary solution of \eqref{linear eqn}. Since $\varphi\in\mathcal{X}_\alpha$ and it is orthogonal to $Z_i$, by Theorem \ref{thm spectrum}, $\varphi\equiv 0$.
\end{proof}

Now we state two a priori estimates for this linearized equation.

\begin{lem}\label{lem linear decay estimate 1}
  Suppose $\alpha^\prime>\max\{2,n/2\}$,  $\varphi_0\in\mathcal{X}_{\alpha^\prime}$. If $\varphi_0$ satisfies the orthogonal condition
  \begin{equation}\label{ortghogonal condition model 0}
\int_{\R^n} \varphi_0(x)  Z_i(x)dx=0, \quad \forall  i=0,\cdots, n+1,
\end{equation}
   then there exists a unique $\varphi\in L^\infty_{loc}\left(\R^+;\mathcal{X}_{\alpha^\prime}\right)$ solving the equation
\begin{equation}\label{linear eqn 1}
   \left\{\begin{aligned}
&    \partial_t\varphi-\Delta \varphi=pW^{p-1}\varphi   \quad \mbox{in}~~ \R^n\times(0,2T_D),\\
&  \varphi(x,0)=\varphi_0(x)  \quad \mbox{on}~~ \R^n,
\end{aligned}\right.
  \end{equation}
Moreover, $\varphi$ satisfies the orthogonal condition
\begin{equation}\label{ortghogonal condition model 1}
\int_{\R^n} \varphi(x,t)  Z_i(x)dx=0, \quad \forall t>0, \quad \forall i=0,\cdots, n+1.
\end{equation}
and the decay property
\begin{equation}\label{local uniform convergence}
 \varphi(\cdot,\cdot+t)\to 0 \quad \mbox{in} ~~ C_{loc}^{(1+\theta)/2}(\R;C_{loc}^{1,\theta}(\R^n)) ~~ \mbox{as} ~~  t\to+\infty.
\end{equation}
\end{lem}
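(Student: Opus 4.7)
The plan is to treat existence/uniqueness together with preservation of the orthogonality conditions by standard linear parabolic means, and then reduce the decay \eqref{local uniform convergence} to Proposition \ref{prop nondegeneracy} via a blow-up/contradiction argument; the crux will lie in obtaining uniform-in-time weighted estimates on $\varphi$.

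For existence and uniqueness in $L^\infty_{loc}(\R^+;\mathcal{X}_{\alpha'})$, I would use Duhamel's formula $\varphi(t)=e^{t\Delta}\varphi_0+\int_0^t e^{(t-s)\Delta}\bigl(pW^{p-1}\varphi(s)\bigr)\,ds$ and a contraction-mapping argument on short time intervals. The weight $\langle x\rangle^{-\alpha'}$ is preserved by the heat semigroup (Gaussian bounds), while $pW^{p-1}\sim|x|^{-4}$ decays fast enough to make the Duhamel map a strict contraction; linearity yields global existence, and standard parabolic regularity upgrades $\varphi$ to $C^{(2+\theta)/2,2+\theta}_{loc}(\R^n\times\R^+)$. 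Preservation of \eqref{ortghogonal condition model 1} is then a direct computation: integrating by parts, justified by $\alpha'>n/2$ together with the at-least $|x|^{2-n}$ decay of $Z_i$ from Theorem \ref{thm spectrum}, gives $\tfrac{d}{dt}\int\varphi(t)Z_i\,dx=\int\varphi(t)\bigl(\Delta Z_i+pW^{p-1}Z_i\bigr)\,dx$, which equals $\mu_0\int\varphi(t)Z_0\,dx$ for $i=0$ and vanishes for $i=1,\ldots,n+1$; since the initial values vanish, so do all of these integrals for all $t>0$.

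For the local uniform decay \eqref{local uniform convergence}, I would argue by contradiction. Suppose there exist a compact interval $I\subset\R$, a compact set $K\subset\R^n$, $\sigma>0$, and a sequence $t_k\to+\infty$ with $\|\varphi(\cdot,\cdot+t_k)\|_{C^{(1+\theta)/2}(I;C^{1,\theta}(K))}\ge\sigma$. Setting $\varphi_k(x,t):=\varphi(x,t+t_k)$, and assuming for the moment the uniform weighted bound $\|\varphi_k\|_{L^\infty((-t_k,0);\mathcal{X}_\alpha^{1+\theta})}\le C$ for some $\alpha>2$, the Arzel\`a--Ascoli theorem produces a subsequential limit $\varphi_\infty\in L^\infty(-\infty,0;\mathcal{X}_\alpha)$ solving \eqref{linear eqn} on $\R^n\times\R^-$. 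Because the orthogonality conditions pass to $\varphi_\infty$, Proposition \ref{prop nondegeneracy} forces $\varphi_\infty(x,t)=c_0 e^{\mu_0 t}Z_0(x)+\sum_{i=1}^{n+1}c_i Z_i(x)$; pairing in $L^2$ against each $Z_j$ and using the $L^2$-orthogonality of the eigenfunctions (valid since $n\ge 7$ makes every $Z_i\in L^2(\R^n)$) kills every coefficient, so $\varphi_\infty\equiv 0$. This contradicts the uniform lower bound on $I\times K$ inherited from the defining sequence.

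The main obstacle is the uniform weighted bound used in the contradiction step: showing that $\sup_{t>0}\|\varphi(t)\|_{\mathcal{X}_\alpha^{1+\theta}}<\infty$ for some $\alpha\in(\max\{2,n/2\},\alpha')$. I would establish this in two stages. First, the coercivity of the quadratic form $\varphi\mapsto\int(|\nabla\varphi|^2-pW^{p-1}\varphi^2)$ on the $L^2$-orthogonal complement of $\mathrm{span}\{Z_0,\ldots,Z_{n+1}\}$, in the spirit of \cite[Lemma 2.3]{Collot2017ground}, combined with the energy identity $\tfrac{d}{dt}\int\varphi^2=-2\int(|\nabla\varphi|^2-pW^{p-1}\varphi^2)$, yields exponential decay of $\|\varphi(t)\|_{L^2(\R^n)}$. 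Second, a Duhamel bootstrap reinstates the spatial weight: splitting the kernel integral into the regions $\{|x-y|\le\sqrt{t-s}\}$ and $\{|x-y|>\sqrt{t-s}\}$ and using the Gaussian decay of $G$ together with the $|x|^{-4}$ decay of $W^{p-1}$, one converts the unweighted $L^2$ information into a uniform $\mathcal{X}_\alpha$ bound (for any $\alpha$ slightly smaller than $\alpha'$), which standard interior Schauder estimates for the heat operator upgrade to $\mathcal{X}_\alpha^{1+\theta}$. This weighted bootstrap is routine but technical, paralleling the outer-region estimates performed in Section \ref{sec outer}.
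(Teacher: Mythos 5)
Your existence/uniqueness and orthogonality-propagation steps are fine and coincide with the paper's. The gap is in the decay step, specifically in the claim that coercivity of $Q(\varphi):=\int_{\R^n}\left(|\nabla\varphi|^2-pW^{p-1}\varphi^2\right)$ on the orthogonal complement of $\mathrm{span}\{Z_0,\dots,Z_{n+1}\}$, combined with $\frac{d}{dt}\int\varphi^2=-2Q(\varphi)$, yields \emph{exponential} decay of $\|\varphi(t)\|_{L^2(\R^n)}$. It does not: the operator $-\Delta-pW^{p-1}$ has essential spectrum $[0,+\infty)$, so there is no spectral gap above zero, and the coercivity estimate of \cite[Lemma 2.3]{Collot2017ground} controls only $\|\nabla\varphi\|_{L^2}^2$ together with a weighted quantity such as $\int\varphi^2(1+|x|)^{-4}$ --- not $\|\varphi\|_{L^2}^2$. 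From $\frac{d}{dt}\int\varphi^2\le -c\int|\nabla\varphi|^2$ one cannot run Gronwall, since there is no Poincar\'e inequality on $\R^n$ (dilated bumps $R^{-n/2}\chi(x/R)$, corrected by a small amount to satisfy the orthogonality, have fixed $L^2$ norm but vanishing $\dot H^1$ norm); even for the free heat equation the $L^2$ decay is at best polynomial. Since this exponential decay is the input to your Duhamel bootstrap, the uniform-in-time bound $\sup_{t>0}\|\varphi(t)\|_{\mathcal{X}_\alpha^{1+\theta}}<\infty$ --- which your compactness argument needs in order to place the limit $\varphi_\infty$ in $L^\infty(-\infty,0;\mathcal{X}_\alpha)$ and invoke Proposition \ref{prop nondegeneracy} --- is not established, and the contradiction argument does not close.

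The paper's route avoids any uniform weighted bound by exploiting that there are \emph{two} monotone quantities: $H(t)=\int\varphi(t)^2$ and $E(t)=Q(\varphi(t))$. By the orthogonality and Theorem \ref{thm spectrum}, $E\ge 0$; testing the equation with $\partial_t\varphi$ shows $E$ is nonincreasing; and $H'=-2E$ together with $H\ge0$ shows $E$ is integrable on $(0,\infty)$. A nonnegative, nonincreasing, integrable function tends to zero, so $E(t)\to0$, whence $\|\nabla\varphi(t)\|_{L^2}\to0$ by coercivity, and \eqref{local uniform convergence} follows from interior parabolic estimates on compact space-time sets. If you wish to keep your nondegeneracy/compactness scheme, you should replace the exponential $L^2$ decay by this Lyapunov argument (or otherwise prove the uniform weighted bound directly), since the convergence $\|\nabla\varphi(t)\|_{L^2}\to0$ is what actually forces the local limit to vanish.
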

\begin{proof}
Existence of a global solution  $\varphi$ to the problem \eqref{linear eqn 1} follows from standard parabolic theory, if we note that the heat semigroup $e^{t\Delta}$ is uniformly bounded as an operator from $\mathcal{X}_{\alpha^\prime}$ into itself.

By standard parabolic regularity theory, $\partial_t\varphi$, $\nabla\varphi$ and $\Delta\varphi$  belong to $L^\infty_{loc}\left(\R^+;\mathcal{X}_{\alpha^\prime}\right)$. The orthogonal condition \eqref{ortghogonal condition model 1} then follows by testing \eqref{linear eqn 1} with $Z_i$.

Testing \eqref{linear eqn 1} with $\varphi$ and then applying Theorem \ref{thm spectrum}, we obtain
\begin{equation}\label{Lyapunov functional 1}
 \frac{1}{2} \frac{d}{dt}\int_{\R^n}\varphi(x,t)^2dx=-\int_{\R^n}\left[|\nabla\varphi(x,t)|^2-pW(x)^{p-1}\varphi(x,t)^2\right]dx\leq 0.
\end{equation}
Testing   \eqref{linear eqn 1} with $\partial_t\varphi$, we also obtain
\begin{equation}\label{Lyapunov functional 2}
 \frac{1}{2} \frac{d}{dt}\int_{\R^n}\left[|\nabla\varphi(x,t)|^2-pW(x)^{p-1}\varphi(x,t)^2\right]dx=-\int_{\R^n}\partial_t\varphi(x,t)^2dx\leq 0.
\end{equation}
Combining these two identities with Proposition \ref{prop nondegeneracy}, we deduce that
\[ \lim_{t\to+\infty}\|\varphi(t)\|_{H^1(\R^n)}=0.\]
Then  \eqref{local uniform convergence} follows by an application of  standard parabolic regularity theory.
\end{proof}

The second one is inspired by \cite[Lemma 5.1]{wei-signchanging}.
\begin{lem}\label{lem linear decay estimate 2}
If $\alpha>2$, there exists a positive constant $C(\alpha)$ so that the following holds. Given $T>1$,  assume $\varphi\in L^\infty\left(0,T;\mathcal{X}_{\alpha}\right)$, $a_i\in L^\infty([0,T])$ and $E\in L^\infty\left(0,T;\mathcal{X}_{2+\alpha}\right)$  solve  the problem
\begin{equation}\label{linear eqn 2}
    \left\{\begin{aligned}
&    \partial_t\varphi-\Delta \varphi=pW^{p-1}\varphi+ \sum_{i=0}^{n+1}a_i Z_i+E   \quad \mbox{in}~~ \R^n\times(0,T),\\
&  \varphi(x,0)=0,
\end{aligned}\right.
  \end{equation}
and $\varphi$ satisfies the orthogonal condition
\begin{equation}\label{ortghogonal condition model 2}
\int_{\R^n} \varphi(x,t)  Z_i(x)dx=0, \quad \forall t\in[0,T], \quad \forall i=0,\cdots, n+1.
\end{equation}
Then
\[  \left\{\begin{aligned}
&\sum_{i=0}^{n+1}|a_i(t)|\leq C\|E(t)\|_{2+\alpha}, \quad \forall t \in[0,T],\\
&    \|\varphi\|_{C^{(1+\theta)/2}\left(0,T;\mathcal{X}^{1+\theta}_\alpha\right)}\leq C(\alpha)\|E\|_{L^{\infty}\left(0,T;\mathcal{X}_{2+\alpha}\right)}.
\end{aligned}\right.
\]
\end{lem}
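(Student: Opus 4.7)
\emph{Strategy.} The plan has two parts: first derive the pointwise estimate on the multipliers $a_i(t)$ by testing \eqref{linear eqn 2} against each $Z_j$, and then obtain the a priori bound on $\varphi$ by a blow-up/compactness argument anchored on the nondegeneracy statement (Proposition \ref{prop nondegeneracy}).

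\emph{Step 1: Bound for $a_i$.} Differentiating the orthogonality constraint $\int_{\R^n}\varphi(\cdot,t)Z_j = 0$ in $t$ and substituting \eqref{linear eqn 2} yields
\[
0 \;=\; \int_{\R^n}\varphi\,(\Delta Z_j + pW^{p-1}Z_j)\,dx + \sum_i a_i(t)\int_{\R^n}Z_iZ_j\,dx + \int_{\R^n}E(\cdot,t)Z_j\,dx.
\]
By Theorem \ref{thm spectrum} the first integrand reduces to $\mu_0\varphi Z_0$ when $j=0$ (killed again by the orthogonality) and vanishes identically when $j\geq 1$. The $Z_i$'s being mutually $L^2$-orthogonal (by parity together with the distinct eigenvalues), one concludes $a_j(t)\|Z_j\|_{L^2}^2 = -\int E(\cdot,t)Z_j$. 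Pairing the pointwise bound $|E(\cdot,t)|\leq \|E(t)\|_{2+\alpha}(1+|x|)^{-2-\alpha}$ with the decay of $Z_j$ (exponential for $j=0$, $O(|x|^{1-n})$ for $j=1,\dots,n$, and $O(|x|^{2-n})$ for $j=n+1$) shows each integral converges absolutely and is bounded by $\|E(t)\|_{2+\alpha}$; the threshold $\alpha>2$ is precisely what the case $j=n+1$ demands (with $n\geq 7$ ensuring $Z_{n+1}\in L^2$).

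\emph{Step 2: Bound for $\varphi$.} I would first establish the weaker statement $\|\varphi\|_{L^\infty(0,T;\mathcal{X}_\alpha)}\lesssim \|E\|_{L^\infty(0,T;\mathcal{X}_{2+\alpha})}$ by contradiction. If no universal constant worked, a rescaled sequence $\varphi_k$ on $[0,T_k]$ would satisfy $\|\varphi_k\|_{L^\infty(\mathcal{X}_\alpha)}=1$ while $\|E_k\|_{L^\infty(\mathcal{X}_{2+\alpha})}\to 0$, forcing $\|a_{i,k}\|_\infty\to 0$ by Step 1. Choose $(x_k,t_k)$ with $(1+|x_k|)^\alpha|\varphi_k(x_k,t_k)|\geq 1/2$ and, after extracting a subsequence, analyse three regimes. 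If $(x_k,t_k)$ stays bounded, parabolic compactness produces a limit $\varphi_\infty$ on a bounded time interval solving the homogeneous linearized equation with vanishing initial data and the full orthogonalities; uniqueness for this Cauchy problem (obtained via the energy identities behind Lemma \ref{lem linear decay estimate 1}) forces $\varphi_\infty\equiv 0$, contradicting $(1+|x_\infty|)^\alpha|\varphi_\infty(x_\infty,t_\infty)|\geq 1/2$. If $|x_k|$ stays bounded but $t_k\to\infty$, time-translating yields an eternal limit on $\R^n\times(-\infty,0]$ bounded in $\mathcal{X}_\alpha$ and orthogonal to every $Z_i$; Proposition \ref{prop nondegeneracy} forces it to vanish. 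If instead $|x_k|\to\infty$, the rescaled function $w_k(y,s):=(1+|x_k|)^\alpha\varphi_k(x_k+y,t_k+s)$ converges on compact space-time sets to a bounded caloric function $w_\infty$ (since $pW^{p-1}(x_k+y)\to 0$); the gradient piece of the weighted norm forces $\nabla w_\infty\equiv 0$, so $w_\infty$ is a spatial constant, which is ruled out either by the initial condition $w_k|_{s=-t_k}\equiv 0$ (bounded $t_k$) or by comparing with the orthogonality condition transported to the $w_k$-scale (unbounded $t_k$).

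\emph{Upgrade and main obstacle.} Once the $L^\infty_t(\mathcal{X}_\alpha)$ bound is in hand, standard weighted parabolic Schauder theory applied to \eqref{linear eqn 2} upgrades it to the claimed $C^{(1+\theta)/2}(0,T;\mathcal{X}_\alpha^{1+\theta})$ control: the source $pW^{p-1}\varphi + \sum_i a_i Z_i + E$ is now bounded in the appropriate weighted $C^0$ class (Step 1 plus the pointwise decay of $W$ and the $Z_i$), and a bootstrap on local parabolic scales produces the full weighted H\"older regularity in both space and time. The main technical obstruction is Regime~C above: because $pW^{p-1}$ is negligible far from the origin, the nondegeneracy of Proposition \ref{prop nondegeneracy} cannot be invoked directly, and eliminating the constant limiting profile $w_\infty$ requires combining the weighted gradient decay with either the initial data or the orthogonality relation pushed out to spatial infinity---this is the most delicate piece of the case analysis.
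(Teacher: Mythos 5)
Your Step 1 is essentially the paper's argument (test \eqref{linear eqn 2} against $Z_j$, use the eigenvalue relations, the mutual $L^2$-orthogonality of the $Z_j$, and the fact that $\int\partial_t\varphi\,Z_j=\frac{d}{dt}\int\varphi Z_j=0$), and your reduction of the second estimate to the bound $\|\varphi\|_{L^\infty(0,T;\mathcal{X}_\alpha)}\lesssim\|E\|_{L^\infty(0,T;\mathcal{X}_{2+\alpha})}$ followed by a compactness argument, with the two bounded-$|x_k|$ regimes killed by uniqueness for the Cauchy problem and by Proposition \ref{prop nondegeneracy} respectively, is exactly the paper's scheme. (Minor point: the restriction $\alpha>2$ in Step 1 is needed for the orthogonality integral $\int\varphi Z_{n+1}$ to converge, not for $\int E Z_{n+1}$, which only needs $\alpha>0$.)

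The genuine gap is your Regime C ($|x_k|\to\infty$), and you have correctly identified it as the delicate point but your proposed resolution does not work. First, the normalization is taken in $\mathcal{X}_\alpha=\mathcal{X}_\alpha^{0+0}$, which is a weighted $C^0$ space with no gradient weight; interior parabolic estimates only yield $|\nabla\varphi_k(x)|\lesssim(1+|x|)^{-\alpha}$, one power short of what is needed, so after rescaling one only gets $|\nabla w_k|\lesssim 1$ and the conclusion $\nabla w_\infty\equiv0$ is unjustified. Second, even granting that $w_\infty$ is a nonzero constant, "transporting the orthogonality to the $w_k$-scale" cannot exclude it: the kernels $Z_i$ decay at least like $|x|^{2-n}$, so the contribution of $\varphi_k$ on a ball $B_R(x_k)$ to $\int\varphi_kZ_i$ is $O\bigl((1+|x_k|)^{-\alpha}|x_k|^{2-n}R^n\bigr)\to0$, and the orthogonality relations are blind to the behavior of $\varphi_k$ near $|x|\sim|x_k|$. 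The paper closes this regime by a completely different, quantitative mechanism: since $\varphi_k(\cdot,0)=0$, the Duhamel representation
\[
\varphi_k(x,t_k)=\int_0^{t_k}\!\!\int_{\R^n}G(x-y,t_k-s)\Bigl[pW^{p-1}\varphi_k+\sum_ia_{k,i}Z_i+E_k\Bigr]\,dy\,ds
\]
combined with the weighted convolution estimates of \cite{Wei-Yan} gives the pointwise improvement $|\varphi_k(x,t_k)|\lesssim(1+|x|)^{-2-\alpha}+\tfrac1k(1+|x|)^{-\alpha}$ (the potential $pW^{p-1}=O(|x|^{-4})$ gains two powers of decay). This is directly incompatible with $(1+|x_k|)^\alpha|\varphi_k(x_k,t_k)|\geq 1/4$ once $|x_k|\to\infty$, so the regime never occurs. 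Some such decay-improvement step, exploiting the zero initial datum through the representation formula, is needed to make your argument complete.
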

\begin{proof}
First, for each $i$, multiplying \eqref{linear eqn 2} by $Z_i$ and integrating on $\R^n$, we obtain
\begin{equation}\label{form of a}
  a_{i}(t)=-\frac{\int_{\R^n}E(x,t)Z_i(x)}{\int_{\R^n}Z_i(x)^2}.
\end{equation}
Because $\alpha>2$, this gives the estimate on $a_i(t)$.

For the second estimate, by standard parabolic regularity theory, it suffices to prove
\begin{equation}\label{linear decay estimate}
   \|\varphi\|_{L^{\infty}\left(0,T;\mathcal{X}_\alpha\right)}\leq C(\alpha)\|E\|_{L^{\infty}\left(0,T;\mathcal{X}_{2+\alpha}\right)}.
\end{equation}
We will argue by contradiction.  Assume  there exists  a sequence of $T_k>1$, a sequence of  $\varphi_k\in L^\infty\left( 0,T_k;\mathcal{X}_{\alpha}\right)$, $a_{k,i}\in  L^\infty\left( [0,T_k]\right)$ and $E_k \in L^{\infty}\left( 0,2T_k;\mathcal{X}_{2+\alpha}\right)$, satisfying \eqref{linear eqn 2} and \eqref{ortghogonal condition model 2}, with
\begin{equation}\label{normalization of varphi}
  \|\varphi_k\|_{L^\infty\left(0,T_k;\mathcal{X}_{\alpha}\right)}=1
\end{equation}
  but
\begin{equation}\label{absurd assumption in Appendix A}
\|E_k\|_{L^{\infty}\left( 0,2T_k;\mathcal{X}_{2+\alpha}\right)}\leq \frac{1}{k}.
\end{equation}

Take a $t_k\in(0,T_k)$ with $\|\varphi_k(t_k)\|_\alpha\geq 1/2$ and a point $x_k\in\R^n$ such that
\begin{equation}\label{weigthed norm attained at finite pt}
  \left(1+|x_k|\right)^\alpha|\varphi_k(x_k,t_k)|\geq 1/4.
\end{equation}

We claim that
\begin{sublem}
$\limsup_{k\to\infty}|x_k|<+\infty$.
\end{sublem}
 \begin{proof}
By the heat kernel representation,
\begin{eqnarray*}
 \varphi(x,t_k)&=&\int_{0}^{t_k} \int_{\R^n} \left[4\pi(t_k-t)\right]^{-\frac{n}{2}}e^{-\frac{|x-y|^2}{4(t_k-t)}}\\
 && \quad \quad \quad \quad \times \left[pW(y)^{p-1}\varphi_k(y,t)+ \sum_{i=0}^{n+1}a_{k,i}(t) Z_i(y)+E_k(y,t)\right]dydt.
 \end{eqnarray*}
Following the calculation in   \cite[Appendix B]{Wei-Yan}, we get
\begin{eqnarray*}
   && \left|\int_{0}^{t_k} \left[4\pi(t_k-t)\right]^{-\frac{n}{2}}e^{-\frac{|x-y|^2}{4(t_k-t)}} pW(y)^{p-1}\varphi_k(y,t)dydt \right| \\
   &\lesssim & \int_{\R^n}|x-y|^{2-n} \left(1+|y|\right)^{-4-\alpha}\\
   &\lesssim& \left(1+|x|\right)^{-2-\alpha}
\end{eqnarray*}
and
\begin{eqnarray*}
   && \left|\int_{0}^{t_k} \left[4\pi(t_k-t)\right]^{-\frac{n}{2}}e^{-\frac{|x-y|^2}{4(t_k-t)}} \left[ \sum_{i=0}^{n+1}a_{k,i}(t) Z_i(y)+E_k(y,t)\right]dydt \right| \\
   &\lesssim & \frac{1}{k}\int_{\R^n}|x-y|^{2-n} \left(1+|y|\right)^{-2-\alpha}\\
   &\lesssim& \frac{1}{k}\left(1+|x|\right)^{-\alpha}.
\end{eqnarray*}
Putting these two estimates together we get
\[ |\varphi_k(x_k,t_k)|\leq C\left(1+|x_k|\right)^{-2-\alpha}+\frac{1}{k}\left(1+|x_k|\right)^{-\alpha}.\]
Combining this inequality with \eqref{weigthed norm attained at finite pt} we finish the proof of this sub-lemma.
 \end{proof}

Next we divide the proof into two cases.

{\bf Case 1.} $t_k\to +\infty$.

Let
\[\widetilde{\varphi}_k(x,t):=\varphi_k(x,t_k+t).\]
An application of standard parabolic regularity theory shows that $\widetilde{\varphi}_k$ are uniformly bounded in $C^{1+\theta,(1+\theta)/2}_{loc}(\R^n\times (-\infty,0])$. After passing to a subsequence, $\widetilde{\varphi}_k$ converges to a limit $\widetilde{\varphi}_\infty$, which satisfies the following conditions.
\begin{itemize}
  \item Passing to the limit in \eqref{normalization of varphi} gives
\begin{equation}\label{normalization of varphi 1}
  \|\widetilde{\varphi}_\infty\|_{L^\infty\left( -\infty,0;\mathcal{X}_\alpha\right)}\leq 1.
\end{equation}
\item Since $\alpha>2$, we can pass to the limit in \eqref{ortghogonal condition model 2} for $\varphi_k$, obtaining
\begin{equation}\label{ortghogonal condition 2}
  \int_{\R^n}\widetilde{ \varphi}_\infty(x,t)  Z_i(x)dx=0, \quad  \quad \mbox{for any} ~ i=0,\cdots, n+1, \quad t\leq 0.
\end{equation}
\item Passing to the limit in \eqref{linear eqn 2} for $\widetilde{\varphi}_k$ and noting \eqref{absurd assumption in Appendix A} as well as the estimate on $a_{k,i}$, we see $\widetilde{\varphi}_\infty$ is a solution of
\eqref{linear eqn}.
  \end{itemize}
 By  Proposition \ref{prop nondegeneracy}, these three conditions imply that $\widetilde{\varphi}_\infty\equiv 0$.

On the other hand, passing to the limit in \eqref{weigthed norm attained at finite pt} leads to
  \begin{equation*}\label{nonzero 1}
\left(1+|x_\infty|\right)^\alpha|\widetilde{\varphi}_\infty(x_\infty,0)|\geq 1/4.
\end{equation*}
This is a contradiction with the fact that $\widetilde{\varphi}_\infty\equiv 0$.

{\bf Case 2.} $t_k\to t_\infty$.

As in the previous case, now $\varphi_k$ itself converges to a limit $\varphi_\infty$, which solves the equation
\begin{equation*}
    \left\{\begin{aligned}
&    \partial_t\varphi_\infty-\Delta \varphi_\infty=pW^{p-1}\varphi_\infty   \quad \mbox{in}~~ \R^n\times(0,t_\infty),\\
&  \varphi_\infty(x,0)=0.
\end{aligned}\right.
  \end{equation*}
  Passing to the limit in \eqref{normalization of varphi} still gives
\begin{equation}\label{normalization of varphi 1}
  \|\varphi_\infty\|_{L^\infty\left( 0,t_\infty;\mathcal{X}_\alpha\right)}\leq 1.
\end{equation}
By standard parabolic theory, we also get $\varphi_\infty\equiv 0$. Then we get the same contradiction as in Case 1.
\end{proof}

\section{Estimates on some integrals}\label{sec estimate of integrals}
\setcounter{equation}{0}

In this appendix we give some technical integral estimates involving the heat kernel associated to the outer equation in Section \ref{sec outer}.

\begin{lem}\label{lem heat kernel integral 1}
 Assume $0<\nu<n$, $0\leq \gamma<n-\nu$, $t>s$. Then
\[\int_{\R^n}\left(t-s\right)^{-\frac{n}{2}}e^{-c\frac{|x-y|^2}{t-s}} \left(1+\frac{\sqrt{t-s}}{|y|}\right)^\gamma |y|^{-\nu}dy\lesssim \left(|x|+\sqrt{t-s}\right)^{-\nu}\left(1+\frac{\sqrt{t-s}}{|x|}\right)^{\gamma}. \]
\end{lem}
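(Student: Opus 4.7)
The plan is to reduce to a scale-free inequality by parabolic rescaling, then split the integral by a dyadic pointwise majorant that separates the local singularity of $|y|^{-\nu-\gamma}$ from its tail behavior $|y|^{-\nu}$ at infinity.

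\textbf{Step 1: Parabolic rescaling.} Let $h := \sqrt{t-s}$ and substitute $y = hz$, $x = h\xi$. A direct computation, using $1 + \sqrt{t-s}/|y| = 1 + 1/|z|$, shows
\[
\text{LHS} \;=\; h^{-\nu}\int_{\R^n}e^{-c|\xi-z|^2}\Bigl(1+\tfrac{1}{|z|}\Bigr)^\gamma |z|^{-\nu}\,dz, \qquad \text{RHS} \;=\; h^{-\nu}(1+|\xi|)^{-\nu}(1+1/|\xi|)^\gamma,
\]
so the $h$-dependence cancels. Using the identity $(1+1/|z|)^\gamma|z|^{-\nu} = (1+|z|)^\gamma|z|^{-\nu-\gamma}$, it suffices to prove
\[
I(\xi) \;:=\; \int_{\R^n} e^{-c|\xi-z|^2}(1+|z|)^\gamma |z|^{-\nu-\gamma}\,dz \;\lesssim\; (1+|\xi|)^{-\nu}(1+1/|\xi|)^\gamma.
\]

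\textbf{Step 2: Dyadic splitting of the weight.} From $(1+|z|)^\gamma \lesssim 1 + |z|^\gamma$ one gets the pointwise majorant
\[
(1+|z|)^\gamma|z|^{-\nu-\gamma} \;\lesssim\; |z|^{-\nu-\gamma}\chi_{\{|z|\le 1\}} + |z|^{-\nu}\chi_{\{|z|\ge 1\}},
\]
so $I(\xi) \le A(\xi) + B(\xi)$ with $A$ containing the singular weight supported in $B_1$ and $B$ containing the long-range weight supported outside $B_1$.

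\textbf{Step 3: Estimating $A$ and $B$.} The hypothesis $\gamma<n-\nu$ is exactly the condition making $|z|^{-\nu-\gamma}$ locally integrable. For $|\xi|\le 2$, bound the Gaussian by $1$ to get $A(\xi)\lesssim \int_{B_1}|z|^{-\nu-\gamma}dz \lesssim 1$; for $|\xi|\ge 2$, use $|\xi-z|\ge |\xi|/2$ on $B_1$ to obtain $A(\xi) \lesssim e^{-c|\xi|^2/4}$. For $B$, when $|\xi|\le 2$ the Gaussian has finite integral so $B(\xi)\lesssim 1$; when $|\xi|\ge 2$, split $\{|z|\ge 1\}$ into $\{|z-\xi|\le |\xi|/2\}$ and its complement. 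On the former, $|z|\sim|\xi|$ so $|z|^{-\nu}\lesssim |\xi|^{-\nu}$ and the Gaussian contributes $O(1)$; on the latter, $e^{-c|\xi-z|^2}\le e^{-c|\xi|^2/8}e^{-c|\xi-z|^2/2}$ yields super-polynomial decay. Thus $B(\xi)\lesssim (1+|\xi|)^{-\nu}$ for $|\xi|\ge 2$.

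\textbf{Step 4: Assembly.} The previous step gives $I(\xi)\lesssim 1$ on $\{|\xi|\le 2\}$ and $I(\xi)\lesssim (1+|\xi|)^{-\nu}$ on $\{|\xi|\ge 2\}$. Since $(1+1/|\xi|)^\gamma \ge 1$, the second bound is already $\lesssim (1+|\xi|)^{-\nu}(1+1/|\xi|)^\gamma$; the first is also, because on $\{|\xi|\le 2\}$ the quantity $(1+|\xi|)^{-\nu}(1+1/|\xi|)^\gamma$ is bounded below by the positive constant $3^{-\nu}$.

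The only subtlety is ensuring that the singularity at $z=0$ does not cause $I(\xi)$ to blow up as $\xi\to 0$; this is precisely enforced by the assumption $\nu+\gamma<n$ in Step 3. The factor $(1+1/|\xi|)^\gamma$ on the right is not sharp near the origin but is the convenient form needed in the applications of the lemma in Section \ref{sec outer}.
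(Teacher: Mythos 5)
Your proof is correct, and it follows essentially the same route as the paper: parabolic rescaling to a scale-free integral, then a near/far splitting in which the hypothesis $\nu+\gamma<n$ supplies local integrability of the singular weight at the origin and the Gaussian supplies decay away from $\xi$ (the paper splits the domain at radius $|\hat x|/3$ rather than splitting the weight at radius $1$, but this is only a bookkeeping difference). No gaps.
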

\begin{proof}
After a change of variables $\hat{x}:=x/\sqrt{t-s}$, $\hat{y}:=y/\sqrt{t-s}$, this integral is transformed into
\[
  (t-s)^{-\nu/2} \int_{\R^n}e^{-c|\hat{x}-\hat{y}|^2} \left(1+\frac{1}{|\hat{y}|}\right)^\gamma |\hat{y}|^{-\nu}d\hat{y}.
\]
To estimate the integral in this formula, we consider two cases separately.

{\bf Case 1.} If $|\hat{x}|\leq 1$, this  integral is bounded by a universal constant.

{\bf Case 2.} If $|\hat{x}|\geq 1$, we divide this integral into two parts, $\mathrm{I}$ outside $B_{|\hat{x}|/3}(0)$, and $\mathrm{II}$ in $B_{|\hat{x}|/3}(0)$.

Outside $B_{|\hat{x}|/3}(0)$,
  \[
      \left(1+\frac{1}{|\hat{y}|}\right)^\gamma|\hat{y}|^{-\nu} \lesssim \left(1+\frac{1}{|\hat{x}|}\right)^\gamma |\hat{x}|^{-\nu}.
 \]
Hence
\begin{eqnarray*}
    \mathrm{I}&\lesssim&\left(1+\frac{1}{|\hat{x}|}\right)^\gamma |\hat{x}|^{-\nu} \int_{B_{|\hat{x}|/3}(0)^c}e^{-c|\hat{x}-\hat{y}|^2} d\hat{y} \\
  &\lesssim & (t-s)^{\nu/2}|x|^{-\nu}\left(1+\frac{\sqrt{t-s}}{|x|}\right)^{\gamma}.
\end{eqnarray*}

For $\mathrm{II}$, by noting that for  $\hat{y}\in B_{|\hat{x}|/3}(0)$,
\[e^{-c|\hat{x}-\hat{y}|^2}\lesssim e^{-c|\hat{x}|^2/9}.\]
we obtain
\begin{eqnarray*}
    \mathrm{II}&\lesssim&e^{-c|\hat{x}|^2/9} \int_{B_{|\hat{x}|/3}(0)}|\hat{y}|^{-\nu-\gamma} d\hat{y} \\
  &\lesssim & e^{-c\frac{|x|^2}{t-s}}\\
  &\lesssim &  (t-s)^{\nu/2}|x|^{-\nu}.
\end{eqnarray*}

Adding $\mathrm{I}$ and $\mathrm{II}$ together,  we get
\[\int_{\R^n}e^{-c|\hat{x}-\hat{y}|^2} \left(1+\frac{1}{|\hat{y}|}\right)^\gamma |\hat{y}|^{-\nu}d\hat{y} \lesssim  (t-s)^{\nu/2} |x|^{-\nu}\left(1+\frac{\sqrt{t-s}}{|x|}\right)^{\gamma}. \qedhere\]
\end{proof}

\begin{lem}\label{lem heat kernel integral 2}
 Assume   $t>s$,   $c_1$, $c_2$, $L$ and $\lambda$ are four positive constants. Then
\begin{eqnarray*}
     &&  \int_{B_{L\lambda}^c}\left(t-s\right)^{-\frac{n}{2}}e^{-c_1\frac{|x-y|^2}{t-s}} \left(1+\frac{\sqrt{t-s}}{|y|}\right)^\gamma e^{-c_2\frac{|y|}{\lambda}}dy\\
   &\lesssim & e^{-\frac{c_2}{3}\frac{|x|}{\lambda}}\left(1+\frac{\sqrt{t-s}}{|x|}\right)^{\gamma}
   +e^{-\frac{c_2L}{2}-\frac{c_1}{9}\frac{|x|^2}{t-s}} \left(\frac{\lambda}{\sqrt{t-s}}\right)^{n-\gamma} \left(1+\frac{\lambda}{\sqrt{t-s}}\right)^{\gamma}.
  \end{eqnarray*}
\end{lem}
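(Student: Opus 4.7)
The plan is to split the domain of integration into two regions according to whether $|y|$ is comparable to $|x|$ or much smaller, and to use the two exponential weights $e^{-c_1|x-y|^2/(t-s)}$ and $e^{-c_2|y|/\lambda}$ in opposite ways in the two regions. Specifically, I will write
\[
  B_{L\lambda}^c = \bigl(B_{L\lambda}^c \cap \{|y|\ge |x|/3\}\bigr) \cup \bigl(B_{L\lambda}^c \cap \{|y|< |x|/3\}\bigr) =: \mathrm{I} \cup \mathrm{II},
\]
and estimate the contributions of $\mathrm{I}$ and $\mathrm{II}$ to the target integral separately. Contribution $\mathrm{I}$ is expected to yield the first term $e^{-(c_2/3)|x|/\lambda}(1+\sqrt{t-s}/|x|)^\gamma$, while contribution $\mathrm{II}$ is expected to yield the second term involving $e^{-c_2L/2-(c_1/9)|x|^2/(t-s)}(\lambda/\sqrt{t-s})^{n-\gamma}(1+\lambda/\sqrt{t-s})^\gamma$.

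On region $\mathrm{I}$, the key observation is that $e^{-c_2|y|/\lambda}\le e^{-c_2|x|/(3\lambda)}$ uniformly, and moreover $(1+\sqrt{t-s}/|y|)^\gamma \le (1+3\sqrt{t-s}/|x|)^\gamma \lesssim (1+\sqrt{t-s}/|x|)^\gamma$ pointwise. Pulling these out, the remaining Gaussian integral $\int_{\R^n}(t-s)^{-n/2}e^{-c_1|x-y|^2/(t-s)}dy$ is a universal constant, giving exactly the first claimed term.

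On region $\mathrm{II}$, the triangle inequality yields $|x-y|\ge 2|x|/3$, so I will split the Gaussian exponent as
\[
  -\frac{c_1|x-y|^2}{t-s} \le -\frac{3c_1|x-y|^2}{4(t-s)} - \frac{c_1|x|^2}{9(t-s)}
\]
(using the coefficient $1/4$ on one half and $|x-y|^2\ge 4|x|^2/9$ on the other), and split the exponential weight as $e^{-c_2|y|/\lambda}\le e^{-c_2L/2}e^{-c_2|y|/(2\lambda)}$ since $|y|\ge L\lambda$ on the integration domain. Pulling out $e^{-c_1|x|^2/(9(t-s))}e^{-c_2L/2}$ and using the trivial bound $e^{-3c_1|x-y|^2/(4(t-s))}\le 1$, what remains is to bound
\[
  \int_{\R^n}(t-s)^{-n/2}\left(1+\frac{\sqrt{t-s}}{|y|}\right)^\gamma e^{-c_2|y|/(2\lambda)}\,dy.
\]
This integral is estimated by splitting once more at $|y|=\lambda$: the exponential $e^{-c_2|y|/(2\lambda)}$ effectively localizes to $|y|\lesssim\lambda$ at the cost of a factor $\lambda^n$ from the volume, while the factor $(1+\sqrt{t-s}/|y|)^\gamma$ introduces either $(\sqrt{t-s})^\gamma\lambda^{n-\gamma}$ (when $\sqrt{t-s}\ge\lambda$, using $\int_{|y|\le\lambda}|y|^{-\gamma}dy \sim \lambda^{n-\gamma}$, valid since $\gamma<n$) or $\lambda^n$ (when $\sqrt{t-s}\le\lambda$). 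In either case the outcome is $\lesssim (\lambda/\sqrt{t-s})^{n-\gamma}(1+\lambda/\sqrt{t-s})^\gamma$, yielding the second claimed term.

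No step looks like a serious obstacle: once the geometric split $|y|\gtrless |x|/3$ is in place, each piece reduces to elementary Gaussian and exponential integrals. The only mild subtlety is the last integral bound, where keeping track of the two regimes $\sqrt{t-s}\ge \lambda$ and $\sqrt{t-s}\le\lambda$ is needed to produce the factor $(1+\lambda/\sqrt{t-s})^\gamma$ cleanly; this is purely routine bookkeeping.
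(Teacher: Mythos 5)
Your proposal is correct and follows essentially the same route as the paper's proof: the paper also splits the domain at $|y|=|x|/3$ (after a cosmetic rescaling $\hat y=y/\sqrt{t-s}$), pulls out $e^{-c_2|x|/(3\lambda)}$ and $(1+\sqrt{t-s}/|x|)^\gamma$ on the outer region, and on the inner region uses $e^{-c_1|x-y|^2/(t-s)}\le e^{-c_1|x|^2/(9(t-s))}$ together with $e^{-c_2|y|/\lambda}\le e^{-c_2L/2}e^{-c_2|y|/(2\lambda)}$ before evaluating the remaining weighted exponential integral. Your final bookkeeping with the two regimes $\sqrt{t-s}\gtrless\lambda$ reproduces the paper's closing identity $(\lambda/\sqrt{t-s})^{n}(1+\sqrt{t-s}/\lambda)^\gamma=(\lambda/\sqrt{t-s})^{n-\gamma}(1+\lambda/\sqrt{t-s})^\gamma$, so nothing is missing.
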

\begin{proof}
After a change of variables $\hat{x}:=x/\sqrt{t-s}$, $\hat{y}:=y/\sqrt{t-s}$, this integral is transformed into
  \[ \int_{B_{L\lambda/\sqrt{t-s}}^c}e^{-c_1|\hat{x}-\hat{y}|^2} \left(1+\frac{1}{|\hat{y}|}\right)^\gamma e^{-c_2\frac{\sqrt{t-s}}{\lambda}|\hat{y}|}d\hat{y}.
  \]
  We divide this integral into two parts, $\mathrm{I}$ outside $B_{|\hat{x}|/3}(0)$, and $\mathrm{II}$ in $B_{|\hat{x}|/3}(0)$.

Outside $B_{|\hat{x}|/3}(0)$,
 \[
    \left\{\begin{aligned}
&e^{-c_2\frac{\sqrt{t-s}}{\lambda}|\hat{y}|}\leq e^{-\frac{c_2}{3}\frac{\sqrt{t-s}}{\lambda}|\hat{x}|}, \\
&     \left(1+\frac{1}{|\hat{y}|}\right)^\gamma \lesssim \left(1+\frac{1}{|\hat{x}|}\right)^\gamma .
\end{aligned}\right.
 \]
Therefore
\begin{eqnarray*}
    \mathrm{I}&\lesssim& e^{-\frac{c_2}{3}\frac{\sqrt{t-s}}{\lambda}|\hat{x}|}\left(1+\frac{1}{|\hat{x}|}\right)^\gamma \int_{B_{|\hat{x}|/3}(0)^c}e^{-c_1|\hat{x}-\hat{y}|^2} d\hat{y} \\
  &\lesssim &  e^{-\frac{c_2}{3}\frac{|x|}{\lambda}}\left(1+\frac{\sqrt{t-s}}{|x|}\right)^{\gamma} .
\end{eqnarray*}

In $B_{|\hat{x}|/3}(0)$,
\[e^{-c_1|\hat{x}-\hat{y}|^2}\leq e^{-c_1|\hat{x}|^2/9}.\]
Hence
\begin{eqnarray*}
    \mathrm{II}&\lesssim& e^{-c_1|\hat{x}|^2/9} \int_{B_{|\hat{x}|/3}\setminus B_{L\lambda/\sqrt{t-s}}}  \left(1+\frac{1}{|\hat{y}|}\right)^\gamma e^{-c_2\frac{\sqrt{t-s}}{\lambda}|\hat{y}|}d\hat{y} \\
  &\lesssim &  e^{-\frac{c_2L}{2}-\frac{c_1}{9}\frac{|x|^2}{t-s}}\left(\frac{\lambda}{\sqrt{t-s}}\right)^{n}\left(1+\frac{\sqrt{t-s}}{\lambda}\right)^\gamma\\
   &\lesssim &  e^{-\frac{c_2L}{2}-\frac{c_1}{9}\frac{|x|^2}{t-s}} \left( \frac{\lambda}{\sqrt{t-s}}\right)^{n}\left(1+\frac{\lambda}{\sqrt{t-s}}\right)^{ \gamma}. \qedhere
\end{eqnarray*}
\end{proof}

\end{appendices}

\newpage


\part{Energy concentration in the general case}\label{part many bubbles}

\section{Setting}\label{sec setting II}
\setcounter{equation}{0}

In this part, we still consider a sequence of \emph{smooth, positive solutions} $u_i$ to the nonlinear heat equation
\eqref{eqn} (with $p=(n+2)/(n-2)$) in $Q_1$, but now satisfying the following three assumptions.
\begin{description}
\item [(IIII.a) Weak limit] $u_i$ converges weakly to $u_\infty$ in $L^{p+1}(Q_1)$, and  $\nabla u_i$ converges weakly to $\nabla u_\infty$ in $L^2(Q_1)$. Here $u_\infty$ is a \emph{smooth solution} of \eqref{eqn} in $Q_1$.

  \item[(IIII.b) Energy concentration behavior] there exists an $N\geq 2$ such that
  \[ \left\{\begin{aligned}
& |\nabla u_i|^2dxdt\rightharpoonup |\nabla u_\infty|^2dxdt+N\Lambda\delta_0\otimes dt,\\
& u_i^{p+1}dxdt\rightharpoonup u_\infty^{p+1}dxdt+N\Lambda\delta_0\otimes dt,
\end{aligned}\right.
\]
 weakly as Radon measures.

 \item[(IIII.c) Convergence of time derivatives] as $i\to\infty$, $\partial_tu_i$ converges to $\partial_tu_\infty$ strongly in $L^2(Q_1)$.
 \end{description}
The only difference with the assumptions in Part \ref{part one bubble} is {\bf (IIII.b)}, where now we do not assume there is only one bubble, in other words, now we are in \emph{the higher multiplicity case} instead of \emph{the multiplicity one case}.

The main result in this part is
\begin{thm}\label{thm bubble clustering convergence}
After passing to a subsequence, the followings hold for $u_i$.
  \begin{enumerate}
  \item  For all $i$   and any $t\in[-9/16,9/16]$,  there exist exactly $N$ local maximal point of $u_i(\cdot,t)$ in the interior of $B_1(0)$.

  Denote these points by $\xi_{ij}^\ast(t)$ ($j=1,\cdots, N$) and let
$\lambda_{ij}^\ast(t):=u_i(\xi_{ij}^\ast(t),t)^{-\frac{2}{n-2}}$.

\item Both $\lambda_{ij}^\ast$ and $\xi_{ij}^\ast$ are continuous functions on $[-9/16,9/16]$.

\item There exists a constant $C_2$ such that, for all $i$ and any $x\in B_1$,
\begin{eqnarray}\label{scaling invariant estimate III}
  && \min_{1\leq j \leq N}|x-\xi_{ij}^\ast(t)|^{\frac{n-2}{2}}u_i(x,t)+\min_{1\leq j \leq N}|x-\xi_{ij}^\ast(t)|^{\frac{n}{2}}|\nabla u_i(x,t)|\\
   && \quad \quad \quad \quad +\min_{1\leq j \leq N}|x-\xi_{ij}^\ast(t)|^{\frac{n+2}{2}}\left(|\nabla^2 u_i(x,t)|+|\partial_tu_i(x,t)|\right)\leq C_2. \nonumber
\end{eqnarray}

  \item For each $j=1,\cdots, N$, as $i\to \infty$,
\[\lambda_{ij}^\ast(t)\to0, \quad \xi_{ij}^\ast(t)\to 0,  \quad \mbox{uniformly on } [-9/16,9/16],\]
and  the function
\[u_{ij}^t(y,s):=\lambda_{ij}^\ast(t)^{\frac{n-2}{2}}u\left(\xi_{ij}^\ast(t)+\lambda_{ij}^\ast(t)y , t+\lambda_{ij}^\ast(t)^2s \right),\]
converges to $W(y)$ in $C^\infty_{loc}( \R^n\times\R)$.

  \item  For any $1\leq j\neq k \leq N$ and $t\in[-9/16,9/16]$,
\begin{equation}\label{bubbles separating II}
   \lim_{i\to+\infty}\frac{|\xi_{ij}^\ast(t)-\xi_{ik}^\ast(t)|}{\max\left\{\lambda_{ij}^\ast(t),\lambda_{ik}^\ast(t)\right\}}=+\infty.
\end{equation}

\item  $u_\infty\equiv 0$.
\end{enumerate}
\end{thm}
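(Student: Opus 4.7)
The plan is to reduce the $N$-bubble case to the multiplicity-one setting of Part \ref{part one bubble} by a careful localization argument at each bubble, and to exclude both bubble towering and bubble clustering by appealing respectively to Theorem \ref{thm no bubble towering} and Proposition \ref{prop exclusion of bubble clusetering}. First I would establish the bubble-tree structure in each time slice. Arguing as in Lemma \ref{lem close to one bubble} and Proposition \ref{prop bubble tree convergence I} but with the total energy $N\Lambda$ in place of $\Lambda$, for a.e.\ $t\in(-3/4,3/4)$ one gets
\[
u_i(\cdot,t) = u_\infty(\cdot,t) + \sum_{j=1}^{N_i(t)} W_{\xi_{ij}^\ast(t),\lambda_{ij}^\ast(t)} + o_i(1) \quad \text{in } H^1(B_1),
\]
where by Remark \ref{rmk bubble tree for positive solutions} all bubbles are standard Aubin--Talenti bubbles; the energy accounting forces $N_i(t)\to N$.

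Next I would exclude bubble towering at each time. If two bubbles $j\neq k$ at time $t$ satisfy $|\xi_{ij}^\ast(t)-\xi_{ik}^\ast(t)|\lesssim \max\{\lambda_{ij}^\ast,\lambda_{ik}^\ast\}$, rescaling at the scale of the larger bubble produces a new sequence whose weak limit contains a smooth (rescaled) bubble profile, reducing the situation to the multiplicity-one setting of Part \ref{part one bubble}. Since the residual still concentrates a smaller bubble, the resulting nontrivial weak limit contradicts the conclusion $u_\infty\equiv 0$ of Theorem \ref{thm no bubble towering}. Bubble clustering is excluded similarly: if $|\xi_{ij}^\ast(t)-\xi_{ik}^\ast(t)|\to 0$ but their ratio to $\max\{\lambda_{ij}^\ast,\lambda_{ik}^\ast\}$ blows up, a parabolic rescaling by $r_i:=|\xi_{ij}^\ast(t)-\xi_{ik}^\ast(t)|$ lands in the setting of Section \ref{sec bubble cluster}, with $R_i,T_i\to\infty$ because $r_i\to 0$ while the ambient domain $Q_1$ rescales to exhaust space-time, and \textbf{(IIII.c)} rescales to \eqref{time derivative to 0 II}. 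Proposition \ref{prop exclusion of bubble clusetering} then yields $T_i\lesssim \max_j |\log \lambda_{ij}^\ast(0)|$, which is violated by choosing the rescaling along a diagonal subsequence where $T_i/|\log\varepsilon_i|$ is arbitrarily large.

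Having ruled out towering and clustering, statements (1)--(5) follow by standard bookkeeping. The scaling-invariant estimate \eqref{scaling invariant estimate III} is obtained by neck analysis exactly as in Lemma \ref{lem scaling invariant estimate I}: on annular regions between bubbles and outside fixed bubble-radius multiples the local parabolic energy is small (by the energy balance $N\Lambda-\sum \Lambda + o_i(1)=o_i(1)$), so Theorem \ref{thm ep regularity I} gives the pointwise bound. Continuity of $\xi_{ij}^\ast(t),\lambda_{ij}^\ast(t)$ and the constancy $N_i(t)\equiv N$ on $[-9/16,9/16]$ follow from the implicit function theorem applied to $\nabla u_i(\xi,t)=0$ near each bubble (using the nondegeneracy of $\nabla^2 W$), combined with the upper semi-continuity of $\Theta$ (Lemma \ref{lem u.s.c.}) and the $\varepsilon$-regularity theorem to prevent sudden creation or annihilation of bubbles. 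Finally, statement (6) follows by zooming in at any individual bubble $(\xi_{ij}^\ast(0),0)$ at its own scale: the local decomposition puts us in the setting of Part \ref{part one bubble} with $u_\infty$ appearing as the bounded additive weak limit, and Theorem \ref{thm no bubble towering} forces $u_\infty\equiv 0$ in a neighborhood; unique continuation for the semilinear heat equation \eqref{eqn} (or simply repeating the argument at an arbitrary point in $B_1$) extends this to all of $B_1\times(-9/16,9/16)$.

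The main obstacle is the constancy $N_i(t)\equiv N$ and continuity of the parameters on the whole time interval, since a priori a bubble could continuously shed or absorb mass from a neighbor along the flow without violating energy conservation. Overcoming this requires combining the exclusion of towering/clustering with the local inner-outer ODE control of $(\xi_{ij}^\ast,\lambda_{ij}^\ast)$ from Part \ref{part one bubble} (suitably localized at each bubble using the scaling estimate \eqref{scaling invariant estimate III}), together with a connectedness argument in $t$ showing that the set of times at which $N_i(t)=N$ is both open and closed in $[-9/16,9/16]$.
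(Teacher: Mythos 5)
Your proposal has the right ingredients for items (1)--(5) (local maxima, $\varepsilon$-regularity on necks, implicit function theorem for continuity), but two structural points are missing and one step is wrong. First, the paper's proof is an \emph{induction on the multiplicity} $N$, and this is not optional: when you rescale at a towering or clustered configuration to "reduce to the multiplicity-one setting," the rescaled residual can still carry up to $N-1$ bubbles, so Theorem \ref{thm no bubble towering} (whose hypothesis \textbf{(II.b)} demands exactly one bubble of mass $\Lambda$) does not apply; one needs the inductive hypothesis that the full theorem, including the triviality of the weak limit, holds for all multiplicities $\le N-1$. This is exactly how the paper handles Steps 5 and 6 of the bubble count (via the monotonicity quantity $\Theta$ evaluated at the scale of the lowest bubble or of the maximal inter-bubble distance). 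Second, your exclusion of clustering is both misplaced and unjustified: the theorem does \emph{not} exclude clustering --- hypothesis \textbf{(IIII.b)} forces all $N$ bubbles to concentrate at the origin, and item (5) only rules out towering. Proposition \ref{prop exclusion of bubble clusetering} requires the inter-bubble separation to stay $\ge 2$ on the whole rescaled time interval $(-T_i,T_i)$ (hypothesis \eqref{assumption on bubble location}), which a single-time rescaling by $r_i=|\xi_{ij}^\ast(t)-\xi_{ik}^\ast(t)|$ does not deliver; that proposition is used only in Part \ref{part first time singularity}, where the backward dynamics supply the long separation interval.

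The most serious gap is your argument for item (6). Zooming in at a bubble at its own scale $\lambda_{ij}^\ast$ produces a sequence converging smoothly to $W$ with \emph{no} defect measure, so \textbf{(II.b)} fails and Theorem \ref{thm no bubble towering} says nothing; and the weak limit of that rescaled sequence is $W$, not $u_\infty$, so no conclusion about $u_\infty$ can be extracted. At the original scale, $N\ge 2$ again prevents a direct application of Theorem \ref{thm no bubble towering}. The paper's actual proof of (6) (Section \ref{sec bubble towering II}) is a genuinely dynamical argument: if $u_\infty\not\equiv 0$, the weak Harnack inequality gives a uniform lower bound $u_i\ge c_\infty$ away from the bubbles; iterating Proposition \ref{prop weak Schoen Harnack} \emph{backward in time} at the scale of the minimal inter-bubble distance $\rho_k$ forces $u_i^k\lesssim e^{-c(t_{k+1}-t_k)/\rho_k^2}$ on the neck, and comparing with the lower bound yields $t_{k+1}-t_k\lesssim\rho_k$, so the distances halve infinitely often within a finite time --- contradicting $\rho_i(t)>0$ for each fixed $i$. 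Without this (or an equivalent mechanism coupling the nontrivial weak limit to the bubble dynamics), item (6) remains unproven in your outline.
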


\begin{rmk}
Item (5) says there is no bubble towering. Item (6) will   be used (in an inductive way) to prove this property.
\end{rmk}

The proof of this theorem uses an induction argument on $N$. This part is organized in the following way.
\begin{enumerate}
  \item In Section \ref{sec preliminary III}, we establish several preliminary results.
  \item In Section \ref{sec bubble tree construction}, we find all bubbles under the inductive assumption that Theorem \ref{thm bubble clustering convergence} holds up to $N-1$.
  \item In Section \ref{sec Lipschitz hypothesis}, we prove the Lipschitz hypothesis \eqref{Lip assumption} in Part \ref{part one bubble}, under the assumptions {\bf (II.a)-(II.c)}. This also provides   the first step of our inductive argument.
 \item In Section \ref{sec bubble cluster}, we finish the proof of Theorem \ref{thm bubble clustering convergence} by employing Proposition \ref{prop weak Schoen Harnack} in Part \ref{part one bubble}.
\end{enumerate}

\section{Preliminaries}\label{sec preliminary III}
\setcounter{equation}{0}

Before starting the bubble tree construction, we recall several technical results.

The first one is a uniform Morrey space estimate on $u_i$, which is just a special case of Corollary \ref{coro Morrey} in Part \ref{part energy concentration}
\begin{lem}\label{lem Morrey bound}
There exists a constant $M>0$ such that for each $i$ and  $Q_r(x,t)\subset Q_{3/4}$,
\begin{equation*}
\int_{Q_r(x,t)}\left(|\nabla u_i|^2+u_i^{p+1}\right) \leq Mr^2.
\end{equation*}
\end{lem}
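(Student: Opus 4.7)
The plan is to deduce this lemma directly from Corollary \ref{coro Morrey} of Part \ref{part energy concentration}, combined with the uniformity of the relevant energy quantities across the sequence $u_i$. In the critical case $p=(n+2)/(n-2)$ we have
\[
m=\frac{2(p+1)}{p-1}=n,\qquad m-2-n=-2,
\]
so Corollary \ref{coro Morrey} specializes precisely to
\[
\int_{Q_r(x,t)}\bigl(|\nabla u|^2+|u|^{p+1}\bigr)\leq M r^{2}
\]
for each suitable weak solution $u$, with the constant $M$ depending only on $\int_{Q_1}(|\nabla u|^2+|u|^{p+1})$. Thus the desired bound is essentially built into the monotonicity machinery of Part \ref{part energy concentration}.

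The only thing to verify is that the constant $M$ can be chosen independent of $i$. This is immediate from the standing assumptions: by \textbf{(IIII.a)} the sequences $\{\nabla u_i\}$ and $\{u_i\}$ converge weakly in $L^2(Q_1)$ and $L^{p+1}(Q_1)$ respectively, hence are uniformly bounded in those norms; together with \textbf{(IIII.c)} this gives the uniform energy bound \eqref{energy bound I}. Consequently, the monotonicity quantity $\Theta_{1/4}(x,t;u_i)$ appearing in Proposition \ref{prop Morrey} is uniformly bounded for $(x,t)$ in any slightly shrunken subcylinder, and the constant $M$ produced by Corollary \ref{coro Morrey} can be taken independent of $i$.

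A minor technical point is that Corollary \ref{coro Morrey} is stated for $(x,t)\in Q_{1/2}$ and $r\in(0,1/4)$, whereas the present lemma requires the estimate throughout $Q_{3/4}$. The simplest remedy is to enlarge the cutoff function $\psi$ in the localized monotonicity formula (Proposition \ref{prop monotoncity formula}) so that it is supported in $B_{15/16}$ and equals $1$ on $B_{7/8}$; the same proof then yields the monotonicity formula for $(x,t)\in Q_{7/8}$ and $s\in(0,1/16)$, and hence Proposition \ref{prop Morrey} and Corollary \ref{coro Morrey} extend to $(x,t)\in Q_{3/4}$. Alternatively one may simply rescale $u_i$ parabolically onto a larger reference cylinder and apply the already-stated corollary. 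Either route keeps $M$ uniform in $i$.

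There is no genuine obstacle here; the entire content of the lemma is contained in Part \ref{part energy concentration}. The only substantive check is the $i$-uniformity of the constant, which is why the hypotheses \textbf{(IIII.a)}--\textbf{(IIII.c)} were stated with weak convergence (rather than merely the existence of a limit) in the first place.
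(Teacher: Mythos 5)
Your proposal is correct and follows the same route as the paper, which simply invokes Corollary \ref{coro Morrey} (noting that $m=n$ in the critical case so the exponent $n+2-m$ equals $2$), with the constant uniform in $i$ because of the uniform energy bound coming from the weak convergence hypotheses. Your extra remarks on extending the estimate from $Q_{1/2}$ to $Q_{3/4}$ via a larger cutoff or a rescaling are a reasonable way to handle the one technical point the paper leaves implicit.
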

The second one is the non-concentration estimate of $\partial_tu_i$ in Lemma \ref{lem nonconcentration of time derivative} from Part \ref{part one bubble}, which still holds in the current setting (thanks to {\bf (IIII.c)}).

The following lemma is about the convergence of rescalings of $u_i$.
\begin{lem}\label{lem convergence of rescaled functions}
Given a sequence $(x_i,t_i)\in Q_{1/2}$, $r_i\to 0$, define
\begin{equation}\label{definition of rescaling}
   u_i^{r_i}(x,t):=r_i^{\frac{n-2}{2}} u_i\left(x_i+r_i x, t_i+r_i^2 t\right).
\end{equation}
Then we have
\begin{enumerate}
  \item  After passing to a subsequence, $u_i^{r_i}$ converges weakly to $0$ or $W_{\xi,\lambda}$ for some $(\xi,\lambda)\in\R^n\times\R^+$.
    \item If the defect measure associated to $u_i^{r_i}$ is nontrivial, it must be of the form
\[\sum_{j}k_j\Lambda\delta_{P_j}\otimes dt,\]
where  $P_j$ are distinct points in $\R^n$ and
\[ \sum_{j}k_j\leq M/\Lambda, \quad k_j\in\mathbb{N}.\]
\item  If there is no defect measure, then $u_i^{r_i}$ converges in $C^\infty_{loc}(\R^n\times \R)$.
\end{enumerate}

\end{lem}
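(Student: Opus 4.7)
The plan is to deduce this lemma as a special case of the general energy concentration machinery established in Part I, exploiting the two features of the present setting: the uniform Morrey bound of Lemma \ref{lem Morrey bound} (which scales correctly) and the strong $L^2$ convergence of $\partial_t u_i$ from assumption (IIII.c). First I would verify that the rescaled sequence $u_i^{r_i}$ is a sequence of smooth, positive suitable weak solutions of \eqref{eqn} on $Q_R$ for any fixed $R$, once $i$ is large. Scaling Lemma \ref{lem Morrey bound} gives
\[
\int_{Q_R}\bigl(|\nabla u_i^{r_i}|^2+(u_i^{r_i})^{p+1}\bigr)\le MR^2,
\]
so the uniform energy bound \eqref{energy bound I} holds on each $Q_R$. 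Rescaling the $L^2$ integral of $\partial_tu_i$ and using Lemma \ref{lem nonconcentration of time derivative} (which is still valid here, by (IIII.c)), one finds
\[
\int_{Q_R}|\partial_tu_i^{r_i}|^2\,dx\,dt=r_i^2\int_{Q_{Rr_i}(x_i,t_i)}|\partial_tu_i|^2\,dx\,dt\longrightarrow 0.
\]
Hence along a subsequence $u_i^{r_i}\rightharpoonup u_\infty^0$ weakly in $L_t^2H_x^1\cap L^{p+1}$, $\partial_tu_i^{r_i}\to 0$ strongly in $L^2_{\mathrm{loc}}$, and the defect measures $\mu^0,\nu^0$ are well defined on $\R^n\times\R$ with $\nu^0=0$.

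Next I would apply the structural results of Section \ref{sec critical exponent} of Part I. Because $\partial_tu_i^{r_i}\to 0$, $u_\infty^0$ is time-independent on the regular set, and by the localized energy inequality \eqref{energy identity limit} together with the vanishing of $\nu^0$ and of the mean curvature $\mathbf{H}^0$ (Lemma \ref{lem mean curvature zero}), the defect measure decouples in time: $\mu^0=\mu^0_{\mathrm{space}}\otimes dt$. Lemma \ref{lem time slice isolated} then says $\mu^0_{\mathrm{space}}$ is supported on a discrete set $\{P_1,P_2,\dots\}\subset\R^n$. Since all approximating solutions are positive, the special case of the energy quantization result (Corollary \ref{coro relation between densities} together with Lemma \ref{lem atoms in nu}) forces the density at each $P_j$ to be an integer multiple $k_j\Lambda$ of the bubble energy. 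The global Morrey bound $\mu^0(Q_R)\le MR^2$ applied at large $R$ gives $\sum_jk_j\le M/\Lambda<\infty$, establishing (2).

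To prove (1), I would work on $\R^n\setminus\{P_j\}$, where by Theorem \ref{thm energy concentration set}(3), $u_i^{r_i}\to u_\infty^0$ in $C^\infty_{\mathrm{loc}}$, so $u_\infty^0$ is a smooth, non-negative, time-independent solution of $-\Delta u_\infty^0=(u_\infty^0)^{p}$ there. The Morrey bound forces
\[
\int_{B_r(P_j)}|\nabla u_\infty^0|^2+\int_{B_r(P_j)}(u_\infty^0)^{p+1}\lesssim r^2,
\]
and since $u_\infty^0\in H^1_{\mathrm{loc}}\cap L^{p+1}_{\mathrm{loc}}$ with the singular set $\{P_j\}$ of zero $H^1$-capacity in dimension $n\ge 3$, a standard removable-singularity argument (test against cut-off functions $\eta_\varepsilon$ vanishing near $P_j$ and pass $\varepsilon\to 0$) shows $u_\infty^0$ is a weak, hence classical, positive solution of the critical stationary equation on all of $\R^n$. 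The Liouville-type theorem of Caffarelli--Gidas--Spruck \cite{Caffarelli-Gidas-Spruck} then gives $u_\infty^0\equiv 0$ or $u_\infty^0=W_{\xi,\lambda}$ for some $\xi\in\R^n$, $\lambda>0$.

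Finally, for (3), if the defect measure is absent, then by definition of $\Sigma^\ast$ every point $(x,t)\in\R^n\times\R$ admits a parabolic neighborhood on which the integrated energy stays below $\varepsilon_\ast r^{n+2-m}=\varepsilon_\ast r^2$ for small $r$. Theorem \ref{thm ep regularity II} applies uniformly in $i$, giving an $L^\infty$ bound on $u_i^{r_i}$ in a slightly smaller neighborhood, and standard parabolic regularity upgrades this to uniform $C^k$ bounds for every $k$. Arzel\`a--Ascoli then yields convergence in $C^\infty_{\mathrm{loc}}(\R^n\times\R)$. The main obstacle I anticipate is the removable-singularity step in the classification of $u_\infty^0$: one must ensure that the weak limit does not develop a point singularity of the form $c|x-P_j|^{-(n-2)/2}$ at the concentration points, which is ruled out by the Morrey decay on each ball around $P_j$, but this requires a careful integration-by-parts argument against truncated cut-offs.
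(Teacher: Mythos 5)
Your proof is correct and follows essentially the same route as the paper's: the rescaled Morrey bound, the vanishing of $\int|\partial_tu_i^{r_i}|^2$ via Lemma \ref{lem nonconcentration of time derivative}, time-independence of the weak limit and of the defect measure, Caffarelli--Gidas--Spruck for (1), the quantization machinery of Part \ref{part energy concentration} for (2), and $\varepsilon$-regularity plus parabolic estimates for (3). One small slip worth correcting: since $m=n$ at the critical exponent, the time-derivative integral is scale-invariant, i.e. $\int_{Q_R}|\partial_tu_i^{r_i}|^2\,dx\,dt=\int_{Q_{Rr_i}(x_i,t_i)}|\partial_tu_i|^2\,dy\,ds$ with no factor $r_i^2$, which is exactly why the non-concentration lemma (rather than the mere $L^2$ bound) is needed there, as you correctly invoke.
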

\begin{proof}
As in Part \ref{part energy concentration}, $u_i^{r_i}$ converges weakly to a nonnegative solution of \eqref{eqn} in $L^{p+1}_{loc}(\R^n\times\R)$. By  Lemma \ref{lem nonconcentration of time derivative} (non-concentration of time derivatives), $\partial_tu_i^{r_i}$ converges to $0$ in $L^2_{loc}(\R^n\times\R)$. Hence this weak limit is independent of time. By Caffarelli-Gidas-Spruck \cite{Caffarelli-Gidas-Spruck}, it must be $0$ or $W_{\xi,\lambda}$ for some $(\xi,\lambda)\in\R^n\times\R^+$. Furthermore, the defect measure is also independent of time, that is, there exists a Radon measure $\mu_0$ on $\R^n$ such that the defect measure equals $\mu_0\otimes dt$. By Lemma \ref{lem Morrey bound}, for any $R>0$,
\[  \int_{-R^2}^{R^2}\int_{B_R}d\mu_0 dt \leq MR^2.\]
Hence
\begin{equation}\label{measure bound}
  \mu_0(\R^n)\leq M.
\end{equation}

By Lemma \ref{lem energy quantization I}, there exist finitely many distinct points $P_j\in \R^n$ and constants $k_j\in\mathbb{N}$ such that
\[ \mu_0= \sum_{j}k_j\Lambda\delta_{P_j}.\]

Finally, if there is no defect measure, the smooth convergence of $u_i^{r_i}$ follows from a direct application of the $\varepsilon$-regularity theorem, Theorem \ref{thm ep regularity II}, by noting the smoothness of its weak limit.
\end{proof}

We also need a technical result about the blow up of each time slice of $u_i$. This will be used to find the first bubble in the next section.
\begin{lem}\label{lem blow up at all time}
  For each $t\in[-9/16,9/16]$,
  \[\lim_{i\to+\infty}\max_{B_1}u_i(x,t)=+\infty.\]
\end{lem}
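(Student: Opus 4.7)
The plan is to argue by contradiction. Fix $t_0\in[-9/16,9/16]$ and suppose, along a subsequence, that $\max_{\overline{B}_1} u_i(\cdot,t_0)\le C$ for some constant $C$ independent of $i$. From this initial bound I will propagate a uniform $L^\infty$ bound for $u_i$ on a small forward space-time cylinder based at $(0,t_0)$, and then read off smoothness of the limit on that cylinder, in direct contradiction with the defect measure prescribed by \textbf{(IIII.b)}.

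The first step is to identify the blow up locus. Since $u_\infty$ is smooth by \textbf{(IIII.a)} and the defect measure $\mu = N\Lambda\,\delta_0\otimes dt$ has $\mbox{spt}(\mu) = \{0\}\times[-1,1]$, a direct check of the definition of $\Sigma^\ast$ in Section \ref{sec defect measure} shows that $\Sigma^\ast = \Sigma = \{0\}\times[-1,1]$ in $Q_1$: off $\{0\}\times[-1,1]$ the defect measure vanishes on small backward cylinders and $u_\infty$ contributes only $O(r^{n+2})$ to the Morrey density. Theorem \ref{thm energy concentration set}(3) then gives smooth convergence $u_i \to u_\infty$ on every compact subset of $Q_1\setminus(\{0\}\times[-1,1])$; in particular, there exists $C_1>0$ depending only on $u_\infty$ with $u_i\le C_1$ on $\partial B_{1/2}\times[-9/16,9/16]$ for all sufficiently large $i$.

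With uniformly bounded initial data on $\overline{B}_{1/2}\times\{t_0\}$ and uniformly bounded lateral data on $\partial B_{1/2}\times[t_0,t_0+\tau]$, the second step invokes the parabolic maximum principle, comparing $u_i$ with the spatially constant supersolution $v(t)$ determined by $v^\prime = v^p$, $v(t_0) = C+C_1$. Choosing $\tau=\tau(C,C_1)>0$ small enough that $v$ stays finite and $t_0+\tau<1$, this yields the uniform bound $u_i\le 2(C+C_1)$ on $\overline{B}_{1/2}\times[t_0,t_0+\tau]$. Standard interior parabolic regularity upgrades this to uniform $C^k$ bounds on $\overline{B}_{1/4}\times[t_0+\tau/2,t_0+3\tau/4]$, and hence (via Arzela--Ascoli together with uniqueness of weak limits) to smooth convergence $u_i\to u_\infty$ on that sub-cylinder. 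In particular $|\nabla u_i|^2\,dx\,dt \rightharpoonup |\nabla u_\infty|^2\,dx\,dt$ there, with no defect.

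This produces the sought contradiction: the measure $N\Lambda\,\delta_0\otimes dt$ assigns strictly positive mass to the open set $B_{1/4}\times(t_0+\tau/2,t_0+3\tau/4)$, which is incompatible with the smooth convergence just established. The one delicate point in the plan is the first step: the ODE comparison, the elevation to higher regularity, and the final contradiction are all routine once one knows that the blow up locus is exactly $\{0\}\times[-1,1]$, so that the lateral data on $\partial B_{1/2}\times[-9/16,9/16]$ remain uniformly controlled independent of $i$ and the forward propagation of the $L^\infty$ bound is admissible.
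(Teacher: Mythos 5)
Your argument is correct, but it is genuinely different from the one in the paper. The paper works directly at the fixed time $t$: it introduces the regularity scale $\rho_i(x,t)$ defined by $\Theta_{\rho_i^2}(x,t;u_i)=\varepsilon_\ast$, shows via \textbf{(IIII.b)} that $\rho_i(0,t)\to 0$ while $\rho_i(\cdot,t)$ is bounded below away from the origin, rescales $u_i$ at an interior minimizer of $\rho_i(\cdot,t)$, and proves (using the $\varepsilon$-regularity theorems, the Caffarelli--Gidas--Spruck classification, and the monotonicity formula to rule out a trivial limit) that the rescalings converge smoothly to a nontrivial bubble $W_{\xi,\lambda}$; this forces $\max_{B_1}u_i(\cdot,t)\gtrsim \rho_i(x_i(t),t)^{-(n-2)/2}\to+\infty$ and, as a by-product, produces the bubble at the minimal regularity scale, which is reused in the subsequent bubble-tree construction. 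You instead argue by contradiction and propagate a hypothetical $L^\infty$ bound at time $t_0$ \emph{forward} in time: smooth convergence off $\Sigma^\ast=\{0\}\times(-1,1)$ (Theorem \ref{thm energy concentration set}(3), available since the smooth $u_i$ satisfy \eqref{energy bound I} by \textbf{(IIII.a)--(IIII.c)}) controls the lateral data, the spatially constant ODE supersolution gives a uniform bound on $\overline{B}_{1/2}\times[t_0,t_0+\tau]$, interior parabolic regularity yields smooth convergence on a forward sub-cylinder containing a piece of $\{0\}\times\R$, and this is incompatible with the time-translation-invariant Dirac part $N\Lambda\,\delta_0\otimes dt$ of the defect measure. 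Your route is more elementary (no monotonicity formula, no Liouville theorem) and exploits precisely the persistence in time of the concentration prescribed by \textbf{(IIII.b)}; its trade-off is that it is purely a nonexistence argument and does not extract the bubble profile or the relation between $\max u_i(\cdot,t)$ and the concentration scale that the paper's proof delivers and later uses. Two cosmetic points: the lateral bound should be stated on $\partial B_{1/2}\times[-9/16,\,9/16+\tau]$ so that it covers $[t_0,t_0+\tau]$ when $t_0=9/16$ (the smooth convergence on that slightly larger compact set is equally available), and the justification of the comparison $u_i\le v$ should note that for each fixed $i$ the function $u_i$ is bounded on the closed cylinder, so the linearized zeroth-order coefficient is bounded and the standard maximum principle applies.
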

\begin{proof}
For each $u_i$ and $(x,t)\in Q_{3/4}$, define $\rho_i(x,t)$ to be the unique (if it exists) solution to the equation
\[  \Theta_{r^2}(x,t;u_i)=  \varepsilon_\ast.\]
Here $\varepsilon_\ast$ is the small constant in the $\varepsilon$-regularity theorem, Theorem \ref{thm ep regularity I}.

By {\bf(IIII.b)}, if $x\neq 0$, there is a uniform, positive lower bound for $\rho_i(x,t)$. By {\bf(IIII.a)} and {\bf (IIII.b)}, for any $s>0$ fixed,
\[ \lim_{i\to+\infty}\Theta_s(0,t;u_i)=\left(4\pi\right)^{-\frac{n}{2}}\frac{\Lambda}{n}N+o_s(1).\]
Therefore
\[\lim_{i\to+\infty}\rho_i(0,t)=0.\]
As a consequence, $\rho_i(\cdot,t)$ has an interior minimal point in $B_1$, say $x_i(t)$. Denote $r_i:=\rho_i(x_i(t),t)$.
Define $u_i^{r_i}$ as in \eqref{definition of rescaling}, with base point at $(x_i(t),t)$.

{\bf Claim.} For some $(\xi,\lambda)\in\R^n\times\R^+$, $u_i^{r_i}$ converges to $W_{\xi,\lambda}$ in $C^\infty_{loc}(\R^n\times\R)$.

By this claim we get
\[\lim_{i\to+\infty} \rho_i(x_i(t),t)^{\frac{n-2}{2}} u_i(x_i(t),t)=W_{\xi,\lambda}(0)>0,\]
and the proof of this lemma is complete.

{\bf Proof of the Claim.} By the definition of $r_i$ and the scaling invariance of $\Theta_s(\cdot)$, for any $y\in B_{r_i^{-1}}$,
\[\Theta_1(y,0;u_i^{r_i})\leq\varepsilon_\ast.\]
Moreover, the equality is attained at $y=0$, that is,
 \begin{eqnarray}\label{normalization condition of Theta}
 \varepsilon_\ast&=&\int_{\R^n} \left[\frac{|\nabla u_i^{r_i}(y,-1)|^2}{2}-\frac{u_i^{r_i}(y,-1)^{p+1}}{p+1}\right]\psi\left(\frac{y-x_i(t)}{r_i}\right)^2 G(y,1)dy \nonumber\\
  &+&\frac{1}{2(p-1)}\int_{\R^n} u_i^{r_i}(y,-1)^2\psi\left(\frac{x-x_i(t)}{r_i}\right)^2 G(y,1)dy+Ce^{-cr_i^{-2}}.
\end{eqnarray}

By Theorem \ref{thm ep regularity II}, $u_i^{r_i}$ are uniformly bounded in $C^2(B_{r_i^{-1}-1}\times(-\delta_\ast,\delta_\ast))$. This then implies that there is no defect measure appearing when we apply Lemma \ref{lem convergence of rescaled functions} to $u_i^{r_i}$. In fact, if the defect measure is nontrivial, by Lemma \ref{lem convergence of rescaled functions}, it has the form
\[\sum_{j}k_j\Lambda\delta_{P_j}\otimes dt, \quad P_j\in\R^n, ~~ k_j\in\mathbb{N}.\]
Then by Lemma \ref{lem concentration of time slice}, for a.e. $t\in(-\delta_\ast,\delta_\ast)$, $u_i^{r_i}(\cdot, t)$ should develop nontrivial Dirac measures at $P_j$. This is a contradiction with the  uniform $C^2(B_{r_i^{-1}-1}\times(-\delta_\ast,\delta_\ast))$ regularity of $u_i^{r_i}$.

Since there is no defect measure,  Lemma \ref{lem convergence of rescaled functions} implies  that $u_i^{r_i}$ converges to $u_\infty$ in $C^\infty_{loc}(\R^n\times\R)$. 
 In view of the Liouville theorem in Caffarelli-Gidas-Spruck   \cite{Caffarelli-Gidas-Spruck}, it suffices to show that $u_\infty\neq 0$. (By the strong maximum principle, either $u_\infty\equiv 0$ or $u_\infty>0$ everywhere.)

First by the monotonicity formula (Proposition \ref{prop monotoncity formula}), we have
 \begin{eqnarray*}
 \varepsilon_\ast&\leq&\int_{-2}^{-1}(-s)^{\frac{p+1}{p-1}}\int_{\R^n} \left[\frac{|\nabla u_i^{r_i}(y,s)|^2}{2}-\frac{u_i^{r_i}(y,s)^{p+1}}{p+1}\right]\psi\left(\frac{y-x_i(t)}{r_i}\right)^2 G(y,-s)\nonumber\\
  &+&\frac{1}{2(p-1)}\int_{-2}^{-1}(-s)^{\frac{2}{p-1}}\int_{\R^n} u_i^{r_i}(y,s)^2\psi\left(\frac{x-x_i(t)}{r_i}\right)^2 G(y,-s)+Ce^{-cr_i^{-2}/4}.
\end{eqnarray*}
By the $C^\infty_{loc}(\R^n\times\R)$ convergence of $u_i^{r_i}$, the Morrey space bound in Lemma \ref{lem Morrey bound} for $u_i^{r_i}$ and the exponential decay of $G(y,-s)$ as $|y|\to+\infty$, we can let $i\to+\infty$ in the above inequality to get
 \begin{eqnarray*}
 \varepsilon_\ast&\leq&\int_{-2}^{-1}(-s)^{\frac{p+1}{p-1}}\int_{\R^n} \left[\frac{|\nabla u_\infty(y,s)|^2}{2}-\frac{u_\infty(y,s)^{p+1}}{p+1}\right]  G(y,-s)\nonumber\\
  &+&\frac{1}{2(p-1)}\int_{-2}^{-1}(-s)^{\frac{2}{p-1}}\int_{\R^n}u_\infty(y,s)^2 G(y,-s).
\end{eqnarray*}
Hence it is impossible that $u_\infty\equiv 0$.
\end{proof}

\section{Bubble tree construction}\label{sec bubble tree construction}
\setcounter{equation}{0}

In this section, we   construct bubbles by finding local maximal points of $u_i(\cdot,t)$. The construction is divided into six steps. During the course of this construction, we will also prove Theorem \ref{thm bubble clustering convergence} except the last point (6), under the inductive assumption that this theorem holds when the multiplicity is not larger than $N-1$ ($N\geq2$).

{\bf Step 1. Construction of the first maximal point.} By {\bf(IIII.a)} and Lemma \ref{lem blow up at all time}, for each $t$, $\max_{B_1}u_i(x,t)$ is attained at an interior point, say $\xi_{i1}^\ast(t)$. Denote
\[ \lambda_{i1}^\ast(t):=u_i\left(\xi_{i1}^\ast(t),t\right)^{-\frac{2}{n-2}}.\]
By Lemma \ref{lem blow up at all time},
\[ \lim_{i\to+\infty}\lambda_{i1}^\ast(t)=0.\]
Let
\[ u_{i1}(y,s):=\lambda_{i1}^\ast(t)^{\frac{n-2}{2}} u_i\left(\xi_{i1}^\ast(t)+\lambda_{i1}^\ast(t)y, t+\lambda_{i1}^\ast(t)^2 s\right).\]
It satisfies the following conditions.
\begin{itemize}
    \item By Lemma \ref{lem Morrey bound}, for any $R>0$,
    \begin{equation}\label{Morrey bound III.2}
    \int_{Q_R}\left[|\nabla u_{i1}|^2+u_{i1}^{p+1}\right]\leq MR^2.
    \end{equation}

 \item By  Lemma \ref{lem nonconcentration of time derivative}, for any $R>0$,
    \begin{equation}\label{Morrey bound III.3}
   \lim_{i\to+\infty}\int_{Q_R}|\partial_tu_{i1}|^2=0.
    \end{equation}

    \item At $s=0$,
    \begin{equation}\label{max at 0}
\max_{|y|\leq \lambda_{i1}^\ast(0)^{-1}/2}u_{i1}(y,0)=u_{i1}(0,0)=1.
    \end{equation}
\end{itemize}

Using these conditions we show
\begin{lem}\label{lem smooth convergence of scalings}
 As $i\to+\infty$, $u_{i1}$ are uniformly bounded in $C^\infty_{loc}(\R^n\times\R)$, and it converges to $W$ in  $C^\infty_{loc}(\R^n\times\R)$.
\end{lem}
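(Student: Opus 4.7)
The plan is to apply Lemma \ref{lem convergence of rescaled functions} to $u_{i1}$, then use the maximality normalization \eqref{max at 0} to eliminate every possibility except $C^\infty_{loc}$ convergence to $W$.

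First, I would invoke Lemma \ref{lem convergence of rescaled functions} with base point $(\xi_{i1}^\ast(0),0)$ and scale $\lambda_{i1}^\ast(0)$. After passing to a subsequence, this gives a weak limit $u_\infty$ equal to $0$ or to some $W_{\xi,\lambda}$, together with a (possibly trivial) defect measure of the form $\mu_0\otimes dt$, where $\mu_0=\sum_j k_j\Lambda\delta_{P_j}$ with $k_j\in\mathbb{N}$ and $\sum_j k_j\le M/\Lambda$.

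Second, I would rule out every atom $P_j$ of $\mu_0$ using \eqref{max at 0}. The heuristic is that an atom in the rescaled defect measure reflects an extra bubble in the original $u_i$ at the point $\xi_{i1}^\ast(0)+\lambda_{i1}^\ast(0)P_j$ at a scale strictly smaller than $\lambda_{i1}^\ast(0)$, hence of height strictly larger than $u_i(\xi_{i1}^\ast(0),0)$, contradicting maximality. To make this precise, I would first establish the time-slice $H^1$ bound
\[
\int_{B_R}\bigl[|\nabla u_{i1}(\cdot,s)|^2+u_{i1}(\cdot,s)^{p+1}\bigr]\,dy\ \le\ C(R),\qquad \forall\,|s|\le R^2/2,
\]
by the argument of Lemma \ref{lem close to one bubble} in Part \ref{part one bubble}: the localized energy identity together with \eqref{Morrey bound III.3} forces the oscillation of $E_\eta(s)=\int[\tfrac12|\nabla u_{i1}|^2-\tfrac{1}{p+1}u_{i1}^{p+1}]\eta^2$ to vanish as $i\to\infty$, and testing the equation against $u_{i1}\eta^2$ relates $\int|\nabla u_{i1}|^2\eta^2$ to $\int u_{i1}^{p+1}\eta^2$ modulo $o(1)$. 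With this bound, Struwe's global compactness (Proposition \ref{prop bubble tree convergence I}) applied to the slice $u_{i1}(\cdot,0)$ on each $B_R$ produces, for every atom $P_j$, concentrating bubbles $W_{\tilde\xi_{ik},\tilde\lambda_{ik}}$ with $\tilde\lambda_{ik}\to 0$ and $\tilde\xi_{ik}\to P_j$; but then $u_{i1}(\tilde\xi_{ik},0)=\tilde\lambda_{ik}^{-(n-2)/2}\to\infty$ while $\tilde\xi_{ik}\in B_{\lambda_{i1}^\ast(0)^{-1}/2}$ for all large $i$, contradicting \eqref{max at 0}. Hence $\mu_0\equiv 0$.

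Third, with no defect measure, Lemma \ref{lem convergence of rescaled functions}(3) gives $C^\infty_{loc}(\R^n\times\R)$ convergence of $u_{i1}$ to its weak limit $u_\infty$. The limit cannot be $0$, since $u_{i1}(0,0)=1$ passes to the limit under $C^0_{loc}$ convergence. Hence $u_\infty=W_{\xi,\lambda}$ for some $(\xi,\lambda)$, and passing \eqref{max at 0} to the limit forces $u_\infty$ to attain its global spatial maximum $1$ at $y=0$. Since $W_{\xi,\lambda}$ attains its maximum $\lambda^{-(n-2)/2}$ only at $\xi$, this pins down $(\xi,\lambda)=(0,1)$ and gives $u_\infty\equiv W$. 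The main obstacle is the time-slice $H^1$ bound invoked in the second step: it is not an immediate consequence of the space-time Morrey estimate of Lemma \ref{lem Morrey bound}, and its derivation relies crucially on the vanishing of $\int_{Q_R}|\partial_t u_{i1}|^2$ and on a careful use of the localized energy identity, precisely as in the multiplicity-one analysis of Part \ref{part one bubble}.
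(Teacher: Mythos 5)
Your overall strategy is exactly the paper's: invoke Lemma \ref{lem convergence of rescaled functions}, reduce everything to showing the defect measure vanishes, and then read off the limit $W$ from the normalization \eqref{max at 0}. The first and third steps are fine (in particular, pinning down $(\xi,\lambda)=(0,1)$ from the maximality at $y=0$ is exactly what the paper does). The difference is entirely in how you rule out an atom $P_j$, and there your route is both heavier than necessary and not airtight as written. The paper disposes of this in one stroke: Lemma \ref{lem blow up at all time} applies to $u_{i1}$ in a small cylinder around $(P_j,0)$ (its hypotheses hold there, since the weak limit $0$ or $W_{\xi,\lambda}$ is smooth, the defect measure is locally $k_j\Lambda\delta_{P_j}\otimes dt$, and $\partial_t u_{i1}\to 0$ in $L^2_{loc}$), and it yields $\max_{B_1(P_j)}u_{i1}(\cdot,0)\to+\infty$ at the \emph{exact} time $s=0$, contradicting \eqref{max at 0}.

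Your substitute for that lemma has two soft spots. First, the time-slice energy bound and the time-slice concentration statement (the analogues of \eqref{H1 bound for time slice} and Lemma \ref{lem concentration of time slice}) are obtained only for a.e.\ $s$ — they require the slice bound $\int_{B_R}|\partial_t u_{i1}(\cdot,s)|^2\to 0$, which Fatou gives only almost everywhere — whereas the normalization \eqref{max at 0} lives precisely at $s=0$; you would still have to transfer the concentration from nearby good slices to $s=0$. Second, Struwe's decomposition is an $H^1$ statement with an $o_i(1)$ error in $H^1$: it does \emph{not} assert $u_{i1}(\tilde\xi_{ik},0)=\tilde\lambda_{ik}^{-(n-2)/2}$, so the pointwise divergence of $u_{i1}$ at the bubble centers does not follow directly from the decomposition; one needs an additional $\varepsilon$-regularity/monotonicity argument to convert energy concentration into pointwise blow-up. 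That extra argument is exactly what the proof of Lemma \ref{lem blow up at all time} supplies (it produces a point $x_i$ and a scale $r_i\to 0$ at which the rescaled solution converges smoothly to a nontrivial bubble, forcing $u_{i1}(x_i,0)\to\infty$). I would therefore replace your second step by a direct appeal to that lemma; with that change the argument coincides with the paper's.
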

\begin{proof}
We  use Lemma \ref{lem convergence of rescaled functions} to study the convergence of $u_{i1}$. We need only to show that there is no defect measure  appearing. Once this is established,  $u_{i1}$ would converge in $C^\infty_{loc}(\R^n\times\R)$. Then by \eqref{max at 0}, this limit must be $W$.

Assume by the contrary that the defect measure is nonzero.  By Lemma \ref{lem convergence of rescaled functions}, it has the form
\[\sum_{j}k_j\Lambda\delta_{P_j}\otimes dt, \quad P_j\in\R^n, ~~ k_j\in\mathbb{N}.\]
Then applying Lemma \ref{lem blow up at all time} to $u_{i1}$ at $t=0$, we obtain
\[\lim_{i\to+\infty}\max_{B_1(P_j)}u_{i1}(x,0)=+\infty.\]
This is a contradiction with \eqref{max at 0}.
\end{proof}
By this lemma, there exists a sequence $R_{i1}\to+\infty$ such that
\begin{equation}\label{decay away from bubble 1}
  \lim_{i\to+\infty} \sup_{y\in B_{R_{i1}}} |y|^{\frac{n-2}{2}}u_{i1}(y,0)<+\infty.
\end{equation}

{\bf Step 2. Iterative construction.} Suppose  $\xi_{i1}^\ast(t)$,$\cdots$,$\xi_{i,j-1}^\ast(t)$ have been constructed.  If
 \[\max_{x\in B_1}\min_{1\leq k \leq j-1}|x-\xi_{ik}^\ast(t)|^{\frac{n-2}{2}}u_i(x,t)\]
are uniformly bounded in $B_1$, we stop at this step. Otherwise,
\begin{equation}\label{inductive assumption}
\lim_{i\to \infty}\max_{x\in B_1} \min_{1\leq k \leq j-1}|x-\xi_{ik}^\ast(t
)|^{\frac{n-2}{2}}u_i(x,t)=+\infty.
\end{equation}
By {\bf(IIII.a)}, this function has an interior maximal point, say $\xi_{ij}^\ast(t)$. Define $\lambda_{ij}^\ast(t)$, $u_{ij}$ as in Step 1. The estimates \eqref{Morrey bound III.2} and \eqref{Morrey bound III.3} still hold for $u_{ij}$, while \eqref{max at 0} now reads as
\begin{equation}\label{max at 0 new}
   \left\{\begin{aligned}
&u_{ij}(0,0)=1  ,\\
& u_{ij}(y,0)\leq \frac{\min_{1\leq k\leq j-1}|\xi_{ij}^\ast(t)-\xi_{ik}^\ast(t)|^{\frac{n-2}{2}}}{\min_{1\leq k\leq j-1}|\xi_{ij}^\ast(t)-\xi_{ik}^\ast(t)+\lambda_{ij}^\ast(t)y|^{\frac{n-2}{2}}},
\end{aligned}\right.
    \end{equation}
 the right hand side of which converges to $1$ uniformly in any compact set of $\R^n$, by noting that we have
\[ \min_{1\leq k \leq j-1}|\xi_{ij}^\ast(t)-\xi_{ik}^\ast(t)|^{\frac{n-2}{2}} u_i\left(\xi_{ij}^\ast(t),t\right)=+\infty.\]
(This follows by letting $x=\xi_{ij}^\ast(t)$ in \eqref{inductive assumption}.)

We can argue as in Step 1 to deduce that $u_{ij}$ converges to $W$ in  $C^\infty_{loc}(\R^n\times\R)$. Moreover, there exists a sequence $R_{ij}\to+\infty$ such that
\begin{equation}\label{decay away from bubble 2}
  \lim_{i\to+\infty} \sup_{y\in B_{R_{ij}}} |y|^{\frac{n-2}{2}}u_{ij}(y,0)<+\infty.
\end{equation}

For any $k<j$, combining \eqref{inductive assumption} (evaluated at $x=\xi_{ij}^\ast(t)$) and \eqref{decay away from bubble 1} (with $j$ replaced by $k$) together, we obtain \eqref{bubbles separating II}.

{\bf Step 3. $\xi_{ij}^\ast(t)$ are local maximal points of $u_i(\cdot,t)$.} Because $0$ is the maximal point of $W$ and the Hessian $\nabla^2W(0)$ is strictly negative definite, by the above smooth convergence of $u_{ij}$ in  $C^\infty_{loc}(\R^n\times\R)$, we deduce that $\xi_{ij}^\ast(t)$ are local maximal points of $u_i(\cdot,t)$.

The strict concavity of $u_i$ near $\xi_{ij}^\ast(t)$ (with the help of the implicit function theorem) also implies the continuous dependence of $\xi_{ij}^\ast(t)$ with $t$. The continuity of $\lambda_{ij}^\ast(t)$ then follows from its definition.

{\bf Step 4. } Fix a large positive constant $R$. For each $t\in(-9/16,9/16)$, let
\[\Omega_i(t):= \cup_{j} B_{R\lambda_{ij}^\ast(t)}\left(\xi_{ij}^\ast(t)\right).\]
By \eqref{bubbles separating II}, for all $i$ large enough, these balls are disjoint from each other, and all of them are contained in $B_{1/2}$. Hence the above iterative construction of $\xi_{ij}^\ast(t)$ must stop in finitely many steps. (At this stage, we do not claim any uniform in $i$ bound on the number of these points.)

By our construction, there exists a constant $C_2$ such that  for any $i$ and $x\in B_1\setminus \Omega_i(t)$,
\begin{equation}\label{decay away from bubble 3}
  u_i(x,t)\leq C_2 \max_{j}|x-\xi_{ij}^\ast(t)|^{-\frac{n-2}{2}}.
\end{equation}
Then \eqref{scaling invariant estimate III} follows by applying standard parabolic regularity theory.

{\bf Step 5. }  In this step and the next one we show that there is no bubble other than those constructed in the previous steps, and the number of bubbles is exactly $N$.

In this step we consider the case that there are at least two bubbles.
For each $t$, let
\begin{equation}\label{diameter of bubble set}
   d_i(t):=\max_{j\neq k}|\xi_{ij}^\ast(t)-\xi_{ik}^\ast(t)|.
\end{equation}
Without loss of generality, assume this is attained between $\xi_{i1}^\ast(t)$ and $\xi_{i2}^\ast(t)$. By \eqref{bubbles separating II},
\begin{equation}\label{distance and bubble scale}
  \lim_{i\to+\infty}\frac{\max_{j}\lambda_{ij}^\ast(t)}{d_i(t)}=0.
\end{equation}

\begin{lem}\label{lem concentration of Theta 1}
  For any $\sigma\in(0,1)$, there exists an $R(\sigma)$ such that for each $i,j$ and $t$,
  \[\Theta_{R(\sigma)^2d_i(t)^2}\left(\xi_{ij}^\ast(t),t;u_i\right)\geq \left(4\pi\right)^{-\frac{n}{2}}\frac{\Lambda}{n}\left(N-\sigma\right).\]
\end{lem}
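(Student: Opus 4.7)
The plan is to combine a direct Gaussian calculation of $\Theta_s$ at the shrinking scale $s = R(\sigma)^2 d_i(t)^2$ with an energy-counting argument identifying $M(t)$, the number of bubbles extracted at time $t$, with $N$. First, using the identity $s^{n/2} G(y,s) = (4\pi)^{-n/2} e^{-|y|^2/(4s)}$ and the fact that the $L^2$ correction term and the $Ce^{-cs^{-1}}$ term are negligible at this scale, I would reduce the lower bound for $\Theta_s(\xi_{ij}^\ast(t), t; u_i)$ to a lower bound on
\[
(4\pi)^{-n/2} \int_{B_1}\!\left[\tfrac{1}{2}|\nabla u_i(y,t-s)|^2 - \tfrac{1}{p+1}u_i(y,t-s)^{p+1}\right] e^{-|y-\xi_{ij}^\ast(t)|^2/(4s)} \psi(y)^2 dy.
\]
By the smooth convergence $u_{ik} \to W$ established in Step 2, the energy integral over each bubble ball $B_{R'\lambda_{ik}^\ast(t-s)}(\xi_{ik}^\ast(t-s))$ would converge to $\Lambda/n$ up to $o_{R'}(1)$. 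The Gaussian weight is essentially constant on this ball and bounded below by $e^{-1/(4R^2)}$, since $|\xi_{ik}^\ast(t-s) - \xi_{ij}^\ast(t)| \leq d_i(t)(1+o_i(1))$ by \eqref{distance and bubble scale} and the continuity of bubble centers in $t$. Contributions from outside the bubble balls vanish in the limit: the bubble tails contribute $O(M(t) R'^{2-n})$ each, the smooth region involving $u_\infty$ contributes $O(s^{n/2})$, and the region far from the bubble cluster is exponentially suppressed by the Gaussian.

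Summing over the $M(t)$ bubbles would then yield
\[
\Theta_s(\xi_{ij}^\ast(t), t; u_i) \geq (4\pi)^{-n/2}\,\tfrac{\Lambda}{n}\, M(t)\, e^{-1/(4R^2)} - o_{R'}(1) - o_i(1).
\]
To conclude, I would establish $M(t) \geq N$ via an energy-counting argument. By the iterative bubble-tree construction combined with the scale-invariant bound \eqref{decay away from bubble 3}, the bubble regions at time $t$ account for exactly $M(t)\Lambda$ of concentrated energy, while the residual region contains only the smooth energy of $u_\infty(t)$. On the other hand, combining the hypotheses \textbf{(IIII.a)--(IIII.c)} with the nonconcentration of $\partial_t u_i$ (Lemma \ref{lem nonconcentration of time derivative}) would allow me to transfer the spacetime defect measure to each individual time slice, as in the proof of Lemma \ref{lem close to one bubble} of Part \ref{part one bubble}, yielding $\int_{B_\delta} u_i(y,t)^{p+1} dy \to \int_{B_\delta} u_\infty(y,t)^{p+1} dy + N\Lambda$ for small $\delta$. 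Matching the two expressions for the concentrated energy gives $M(t)\Lambda = N\Lambda$.

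The conclusion then follows by choosing $R(\sigma)$ large enough that $e^{-1/(4R(\sigma)^2)} \geq 1 - \sigma/N$ and letting $R' \to \infty$ and $i \to \infty$. The main obstacle is the identification $M(t) = N$ valid at \emph{each} $t \in (-9/16, 9/16)$ rather than almost every $t$; this requires careful regularity of the time-slice energy, relying on the nonconcentration of $\partial_t u_i$ and the uniform outer decay \eqref{decay away from bubble 3} to ensure the bubble-tree decomposition accurately captures the concentrated energy at every time.
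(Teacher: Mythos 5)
Your proposal takes a genuinely different, ``bottom-up'' route: a direct Gaussian computation of $\Theta_s$ at the scale $s=R^2d_i(t)^2$ plus a count of the bubbles present at time $t$. The paper instead argues ``top-down'' by contradiction: if the density dropped below $(4\pi)^{-n/2}\frac{\Lambda}{n}(N-\sigma)$ at some scale, then by monotonicity and continuity of $s\mapsto\Theta_s$ one can pick an intermediate scale $r_i$ with $d_i(t)\ll r_i\ll 1$ at which the density \emph{equals} $(4\pi)^{-n/2}\frac{\Lambda}{n}(N-\sigma)$; rescaling at that scale and invoking the inductive hypothesis together with the quantization Lemma \ref{lem energy quantization I} forces that density to be $(4\pi)^{-n/2}\frac{\Lambda}{n}N'$ for an integer $N'\le N-1$, which is incompatible with the non-integer value $N-\sigma$. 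Unfortunately your direct route has two genuine gaps. First, the integrand in $\Theta_{R^2d_i(t)^2}(\xi_{ij}^\ast(t),t;u_i)$ is evaluated at the \emph{earlier} time $t-R^2d_i(t)^2$, while the Gaussian is centered at $\xi_{ij}^\ast(t)$ with width $Rd_i(t)$. Your lower bound $e^{-1/(4R^2)}$ on the weight at the $k$-th bubble requires $|\xi_{ik}^\ast(t-s)-\xi_{ij}^\ast(t)|\lesssim d_i(t)$, and at this stage of the construction there is no control on the drift $|\xi_{ik}^\ast(t-s)-\xi_{ik}^\ast(t)|$ over a time interval of length $R^2d_i(t)^2$: all that is known is continuity and uniform convergence to $0$, so the drift could be $o_i(1)\gg d_i(t)$ and kill the Gaussian weight entirely. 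The reduction ODEs for $\xi_{ik}$ that would give such control are only derived later, after the bubble-tree structure is established.

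Second, the identification $M(t)=N$ at \emph{every} $t$ is not merely a technical obstacle but essentially begs the question: the lemma under discussion is precisely one of the two ingredients (together with the upper bound from monotonicity) that the paper uses in \eqref{no energy loss} to conclude $\sum_k m_k=N$. Your proposed fix — transferring the spacetime defect measure to each time slice as in Lemma \ref{lem close to one bubble} — relies on testing the equation with $u_i\eta^2$ at a fixed time, which requires the time-slice bound \eqref{Lip assumption} on $\int_{B_1}|\partial_tu_i(\cdot,t)|^2$; for the $N$-bubble case this Lipschitz hypothesis is only proved in Section \ref{sec Lipschitz hypothesis}, \emph{after} the bubble-tree construction (and hence after this lemma) is complete. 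Moreover, concluding that the residual region carries only the smooth energy of $u_\infty$ needs a small-neck-energy estimate; the pointwise bound \eqref{decay away from bubble 3} alone gives $u_i^{p+1}\lesssim\min_j|x-\xi_{ij}^\ast|^{-n}$, whose integral over the necks is logarithmically divergent. The paper's contradiction argument sidesteps all of this by importing the value $N$ directly from hypothesis \textbf{(IIII.b)} at a fixed macroscopic scale and never tracking the bubbles in time.
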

\begin{proof}
  Assume by the contrary, there exists a $\sigma\in(0,1)$, a sequence of $r_i$ satisfying
  \begin{equation}\label{absurd assumption II}
    \lim_{i\to+\infty} \frac{d_i(t)}{r_i}=0,
  \end{equation}
  but
\[\limsup_{i\to+\infty}\Theta_{r_i^2}\left(\xi_{ij}^\ast(t),t;u_i\right)\leq \left(4\pi\right)^{-\frac{n}{2}} \frac{\Lambda}{n}\left(N-\sigma\right).\]

By the weak convergence of $u_i$ and {\bf (III.a)} and {\bf(III.b)}, for any $s>0$ fixed,
\[\lim_{i\to+\infty}\Theta_s\left(\xi_{ij}^\ast(t),t;u_i\right)=\left(4\pi\right)^{-\frac{n}{2}}\frac{\Lambda}{n}N+\Theta_s(0,t;u_\infty).\]
Because $u_\infty$ is smooth,
\[\lim_{s\to0}\Theta_s(0,t;u_\infty)=0.\]
Therefore by choosing $s$   sufficiently small, for all $i$ large
\begin{equation}\label{limit of Theta at fixed scale}
\left(4\pi\right)^{-\frac{n}{2}} \frac{\Lambda}{n}N\leq \Theta_s\left(\xi_{ij}^\ast(t),t;u_i\right)\leq \left(4\pi\right)^{-\frac{n}{2}} \frac{\Lambda}{n}N+\sigma.
\end{equation}
By  the monotonicity and continuity of $\Theta_s(\cdot;u_i)$  in $s$, we can enlarge $r_i$ so that
\begin{equation}\label{choice of a large scale}
\Theta_{r_i^2}\left(\xi_{ij}^\ast(t),t;u_i\right)=\left(4\pi\right)^{-\frac{n}{2}}\frac{\Lambda}{n}\left(N-\sigma\right).
\end{equation}
By this choice, \eqref{absurd assumption II} still holds, while in view of \eqref{limit of Theta at fixed scale}, we must have $r_i\to0$.

 Define $u_i^{r_i}$ as in \eqref{definition of rescaling}, with respect to the base point $(\xi_{ij}^\ast(t),t)$ for some $j$. A scaling of \eqref{choice of a large scale} gives
  \begin{eqnarray}\label{scaling of Theta}
&& \left(4\pi\right)^{-\frac{n}{2}} \frac{\Lambda}{n}\left(N-\sigma\right) \nonumber\\
 &=& \Theta_{1}\left(0,0;u_i^{r_i}\right)  \\
  &=&\int_{\R^n} \left[\frac{|\nabla u_i^{r_i}(y,-1)|^2}{2}-\frac{u_i^{r_i}(y,-1)^{p+1}}{p+1}\right]\psi\left(\frac{y-\xi_{ij}^\ast(t)}{r_i}\right)^2 G(y,1)dy \nonumber \\
  &+&\frac{1}{2(p-1)}\int_{\R^n} u_i^{r_i}(y,-1)^2\psi\left(\frac{y-\xi_{ij}^\ast(t)}{r_i}\right)^2 G(y,1)dy+Ce^{-cr_i^{-2}}.\nonumber
\end{eqnarray}

By \eqref{decay away from bubble 3} and \eqref{absurd assumption II}, for any $\rho>0$, if $i$ is large enough,
\[u_i^{r_i}(y,0)\leq C_2|y|^{-\frac{n-2}{2}}, \quad \mbox{for any} ~~ y\in B_{r_i^{-1}/2}\setminus B_\rho.\]
Arguing as in the proof of Lemma \ref{lem smooth convergence of scalings}, we deduce  that the defect measure of $u_i^{r_i}$ is $N^\prime\delta_0\otimes dt$, for some $N^\prime\in\mathbb{N}$. Letting $i\to+\infty$ in \eqref{scaling of Theta}, we see $N^\prime\leq N-1$.

By our inductive assumption, $\nabla u_i^{r_i}$ converges to $0$ weakly in $L^2_{loc}(\R^n\times\R)$, $u_i^{r_i}$ converges to $0$ weakly in $L^{p+1}_{loc}(\R^n\times \R)$ and strongly in $C_{loc}(\R; L^2_{loc}(\R^n))$. Then by Lemma \ref{lem energy quantization I}, both $|\nabla u_i^{r_i}|^2dxdt$ and $\left(u_i^{r_i}\right)^{p+1}dxdt$ converges to $N^\prime\delta_0\otimes dt$ weakly as Radon measures. These convergence imply that the right hand side of \eqref{scaling of Theta} converges to
\[\left(4\pi\right)^{-\frac{n}{2}} N^\prime\Lambda/n.\] (Here we need to use the monotonicity formula, integrate in time $s$ and then argue as in the last step of the proof of Lemma \ref{lem blow up at all time}.) This is a contradiction with \eqref{choice of a large scale}.
\end{proof}

For $(y,s)\in Q_{d_i(t)^{-1}/2}$, define
\begin{equation}\label{a special rescaling}
 \widetilde{u}_i(y,s):=d_i(t)^{\frac{n-2}{2}} u_i\left(\xi_{i1}^\ast(t)+d_i(t)y, t+d_i(t)^2s\right)
\end{equation}
and
\[ \widetilde{\xi}_{ij}^\ast(t):=\frac{\xi_{ij}^\ast(t)-\xi_{i1}^\ast(t)}{d_i(t)}\]
Then we have
\begin{itemize}
  \item $\widetilde{\xi}_{i1}^\ast(t)=0$;
  \item $|\widetilde{\xi}_{i2}^\ast(t)|=1$;
  \item for each $j$, $|\widetilde{\xi}_{ij}^\ast(t)|\leq 1$.
\end{itemize}
After passing to a subsequence, we may assume each $\widetilde{\xi}_{ij}^\ast(t)$ converges to a point $\widetilde{\xi}_j^\ast(t)\in \overline{B_1}$, and $\widetilde{\xi}_{i2}^\ast(t)$ converges to a point $\widetilde{\xi}_2^\ast(t)\in \partial B_1$. (These limiting points need not to be distinct.)

By \eqref{distance and bubble scale}, $\widetilde{u}_i$ develops bubbles at each $\xi_j^\ast(t)$. By a scaling of \eqref{decay away from bubble 3},
$\widetilde{u}_i$ does not develop any bubble outside $\cup_j\{\xi_j^\ast(t)\}$. Therefore the defect measure of $\widetilde{u}_i$ is
\[\Lambda\sum_{j}\delta_{\xi_j^\ast(t)}\otimes dt=\Lambda\sum_{k}m_k\delta_{P_k}.\]
In the above, we write these $\widetilde{\xi}_j^\ast(t)$ as distinct points $P_k$, each one with multiplicity $m_k\in\mathbb{N}$. There are at least two different points, $P_1=0$ and $P_2\in\partial B_1$. Therefore for each $k$, $m_k\leq N-1$. By our inductive assumption, Theorem \ref{thm bubble clustering convergence} holds for $\widetilde{u}_i$. In particular, $\widetilde{u}_i$ converges to $0$ weakly in $L^{p+1}_{loc}(\R^n\times\R)$, and there is no bubble towering for $\widetilde{u}_i$.

By \eqref{limit of Theta at fixed scale} and Proposition \ref{prop monotoncity formula}, we   have
\[\limsup_{i\to+\infty}\Theta_{R(\sigma)^2d_i(t)^2}\left(\xi_{i1}^\ast(t),t;u_i\right)\leq \left(4\pi\right)^{-\frac{n}{2}} \frac{\Lambda}{n}\left(N+\sigma\right).\]
Combining this inequality with Lemma \ref{lem concentration of Theta 1}, we get
\begin{eqnarray}\label{no energy loss}
&& \left(4\pi\right)^{-\frac{n}{2}} \frac{\Lambda}{n} \left(N-\sigma\right) \nonumber\\
&\leq&R(\sigma)^{\frac{n}{2}}\int_{\R^n} \left[\frac{|\nabla \widetilde{u}_i(x,-1)|^2}{2}-\frac{ \widetilde{u}_i(x,-1)^{p+1}}{p+1}\right]\psi\left(\frac{x-\xi_{i1}^\ast(t)}{r_i}\right)^2 G(x,R(\sigma)^2)dx \nonumber\\
  &+&\frac{R(\sigma)^{\frac{n-2}{2}}}{2(p-1)}\int_{\R^n}  \widetilde{u}_i(x,-1)^2\psi\left(\frac{x-\xi_{i1}^\ast(t)}{r_i}\right)^2 G(x,R(\sigma)^2)dx+Ce^{-cd_i(t)^{-2}}\\
  &\leq& \left(4\pi\right)^{-\frac{n}{2}}  \frac{\Lambda}{n} \left(N+\sigma\right). \nonumber
\end{eqnarray}
Letting $i\to+\infty$, we deduce that the defect measure from $\widetilde{u}_i$ satisfies
\[ N-\sigma \leq \sum_{k}m_k  e^{-\frac{|P_k|^2}{4R(\sigma)^2}}\leq N+\sigma.\]
Letting $\sigma\to0$, which  implies that $R(\sigma)\to+\infty$, we get
\[ \sum_{k}m_k=N.\]
By Theorem \ref{thm bubble clustering convergence} and our inductive assumption, for all $i$ large, there are exactly $N$ bubbles located at $\widetilde{\xi}_{ij}^\ast(t)$ for $\widetilde{u}_i(\cdot,0)$. Coming back to $u_i$, this says for each $t$, there are exactly $N$ bubbles located at $\xi_{ij}^\ast(t)$,  $j=1,\cdots, N$.

{\bf Step 6. }
In this step we consider the case where there is only one $\xi_{ij}^\ast(t)$ at time $t$. We show that this is impossible, because we have assumed the multiplicity $N\geq 2$.

In this case, \eqref{decay away from bubble 3} reads as
\begin{equation}\label{decay away from bubble 4}
  u_i(x,t)\leq C_2 |x-\xi_{i1}^\ast(t)|^{-\frac{n-2}{2}}, \quad \mbox{for any} ~~ x\in B_1.
\end{equation}

Under this assumption,   we expect that there are $N$ bubbles towering at $\xi_{i1}^\ast(t)$. We will determine the scale for the lowest bubble, and then perform a rescaling at this scale. This gives a sequence of solutions weakly converging to some nontrivial $W_{\xi,\lambda}$ and exhibiting at most $N-1$ bubbles. By our inductive assumption, this is impossible.

 More precisely, similar to Lemma \ref{lem concentration of Theta 1}, we have
\begin{lem}\label{lem concentration of Theta 2}
  For any $\sigma\in(0,1)$, there exists an $R(\sigma)$ such that
  \begin{equation}\label{concentration of Theta 2}
    \Theta_{R(\sigma)^2\lambda_{i1}^\ast(t)^2}\left(\xi_{i1}^\ast(t),t;u_i\right)\geq \left(4\pi\right)^{-\frac{n}{2}}\frac{\Lambda}{n}\left(N-\sigma\right).
  \end{equation}
\end{lem}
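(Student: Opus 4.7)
The plan is to argue by contradiction, paralleling the structure of the proof of Lemma~\ref{lem concentration of Theta 1}: locate an intermediate scale at which the density takes a critical value, rescale there, and analyze the limiting profile. The decisive new ingredient will be an appeal to the inductive hypothesis (Theorem~\ref{thm bubble clustering convergence} at multiplicity $\leq N-1$) in order to eliminate the surviving case. Suppose the conclusion fails for some $\sigma\in(0,1)$; along a subsequence there exist $R_i\to\infty$ with $R_i\lambda_{i1}^\ast(t)\to 0$ and $\Theta_{R_i^2\lambda_{i1}^\ast(t)^2}(\xi_{i1}^\ast(t),t;u_i)<(4\pi)^{-n/2}(\Lambda/n)(N-\sigma)$. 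The assumptions \textbf{(IIII.a)}--\textbf{(IIII.b)} together with the smoothness of $u_\infty$ give, for any fixed small $s_0>0$, the limit $\Theta_{s_0}(\xi_{i1}^\ast(t),t;u_i)\to (4\pi)^{-n/2}N\Lambda/n+O(s_0)$ as $i\to\infty$; combined with the monotonicity of $\Theta_s$ in $s$ (Proposition~\ref{prop monotoncity formula}) and the continuity furnished by its time-averaged version (Remark~\ref{rmk on monotonicity quantity}), I can intercalate a scale $r_i\in(R_i\lambda_{i1}^\ast(t),s_0^{1/2})$ with equality $\Theta_{r_i^2}(\xi_{i1}^\ast(t),t;u_i)=(4\pi)^{-n/2}(\Lambda/n)(N-\sigma)$, and the same limiting computation forces $r_i\to 0$.

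Now rescale: set $\widetilde{u}_i(y,s):=r_i^{(n-2)/2}u_i(\xi_{i1}^\ast(t)+r_iy,\,t+r_i^2s)$, so that by scale invariance $\Theta_1(0,0;\widetilde{u}_i)=(4\pi)^{-n/2}(\Lambda/n)(N-\sigma)$. The estimate \eqref{decay away from bubble 4} rescales to $\widetilde{u}_i(y,\cdot)\leq C_2|y|^{-(n-2)/2}$, with analogous control on derivatives via \eqref{scaling invariant estimate III}, so Theorem~\ref{thm ep regularity II} prohibits any bubble formation in $\widetilde{u}_i$ off the axis $\{0\}\times\R$. Lemma~\ref{lem convergence of rescaled functions} then produces, along a further subsequence, a weak limit $\widetilde{u}_\infty\in\{0,\,W_{\xi,\lambda}\}$ and a defect measure of the form $N'\Lambda\delta_0\otimes dt$ with $N'\in\mathbb{N}\cup\{0\}$. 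Matching $\int_{Q_R}|\nabla\widetilde{u}_i|^2$ with the corresponding integral $\int_{Q_{Rr_i}(\xi_{i1}^\ast(t),t)}|\nabla u_i|^2$ and passing to the limit using \textbf{(IIII.b)} yields the dichotomy $(\widetilde{u}_\infty,N')=(0,N)$ or $(W_{\xi,\lambda},N-1)$.

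The alternative $(0,N)$ is ruled out directly: taking $i\to\infty$ in $\Theta_1(0,0;\widetilde{u}_i)$ via the limiting monotonicity formula (Proposition~\ref{prop monotoncity formula II}) gives $\Theta_1(0,0;\,0,\,N\Lambda\delta_0\otimes dt)=(4\pi)^{-n/2}N\Lambda/n$, contradicting the constructed value $(4\pi)^{-n/2}(N-\sigma)\Lambda/n$ for $\sigma>0$. For the alternative $(W_{\xi,\lambda},N-1)$, the rescaled sequence $\widetilde{u}_i$ verifies \textbf{(IIII.a)}--\textbf{(IIII.c)} on $Q_1$ at multiplicity $N-1$: smoothness of $\widetilde{u}_\infty=W_{\xi,\lambda}$ is automatic, and \textbf{(IIII.c)} follows from Lemma~\ref{lem nonconcentration of time derivative} after rescaling. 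The inductive hypothesis, Theorem~\ref{thm bubble clustering convergence} at multiplicity $N-1$, therefore applies, and its conclusion~(6) forces $\widetilde{u}_\infty\equiv 0$, contradicting $W_{\xi,\lambda}\neq 0$. The main technical obstacle will be certifying the exact identity $N'+\mathbf{1}_{\widetilde{u}_\infty\neq 0}=N$: one must verify that no defect mass escapes to infinity or leaks out of $Q_R$ under the $r_i$-rescaling, and that all concentration in $\widetilde{u}_i$ localizes exactly at the origin so that the inductive hypothesis with a single bubbling site is applicable. Both facts rest on the single-maximum-path assumption behind \eqref{decay away from bubble 4}.
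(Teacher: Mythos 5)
Your overall strategy---contradiction, intercalation of a scale $r_i$ with $\Theta_{r_i^2}(\xi_{i1}^\ast(t),t;u_i)=(4\pi)^{-n/2}(\Lambda/n)(N-\sigma)$ and $r_i/\lambda_{i1}^\ast(t)\to\infty$, rescaling at $r_i$, identification of the defect measure as $N'\Lambda\delta_0\otimes dt$ via \eqref{decay away from bubble 4}, the inductive hypothesis to kill the weak limit, and the integrality of $N'$ against the non-integer $N-\sigma$---is exactly the paper's. The one step that would not survive scrutiny as written is the ``exact dichotomy'' $N'+\mathbf{1}_{\widetilde u_\infty\neq 0}=N$. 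You cannot obtain it by matching $\int_{Q_R}|\nabla\widetilde u_i|^2$ with $\int_{Q_{Rr_i}}|\nabla u_i|^2$ and ``passing to the limit using \textbf{(IIII.b)}'': that is a diagonal limit over shrinking cylinders, and the issue is precisely whether energy is lost in the neck region $B_\rho\setminus B_{Rr_i}(\xi_{i1}^\ast(t))$. The pointwise bound \eqref{decay away from bubble 4} (with its derivative analogues) only gives $|\nabla u_i|^2\lesssim |x-\xi_{i1}^\ast(t)|^{-n}$ there, whose integral over the neck is of order $\log\left(\rho/(Rr_i)\right)\to+\infty$; so the ``single-maximum-path assumption'' you invoke is not sufficient, and one would instead need the monotonicity-formula squeeze between the two scales that produces \eqref{no energy loss} in Step 5.

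Fortunately the exact identity is not needed, and the paper dispenses with it. All that is required is $1\le N'\le N-1$: the lower bound is your observation that the bubble at scale $\lambda_{i1}^\ast(t)\ll r_i$ (cf.\ \eqref{choice of a large scale 3}) must concentrate into the Dirac, and the upper bound follows directly from the normalization \eqref{choice of a large scale 2}, since in the limit of $\Theta_1(0,0;u_i^{r_i})=(4\pi)^{-n/2}(\Lambda/n)(N-\sigma)$ the Dirac contributes $(4\pi)^{-n/2}(\Lambda/n)N'$ while the weak limit contributes nonnegatively (by the monotonicity formula and its smoothness), forcing $N'\le N-\sigma<N$. With $1\le N'\le N-1$ in hand, the inductive hypothesis (Theorem \ref{thm no bubble towering} when $N'=1$, Theorem \ref{thm bubble clustering convergence} when $2\le N'\le N-1$) applies unconditionally and gives $\widetilde u_\infty\equiv 0$; then $\Theta_1(0,0;u_i^{r_i})\to(4\pi)^{-n/2}(\Lambda/n)N'$ yields $N'=N-\sigma$, which is impossible for $\sigma\in(0,1)$. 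In particular your case $(0,N)$ never arises, and your two-case analysis collapses to the paper's single line of argument.
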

\begin{proof}
Assume by the contrary that \eqref{concentration of Theta 2} does not hold. Arguing as in the proof of Lemma \ref{lem concentration of Theta 1}, we find a sequence of $r_i\to0$, satisfying
\begin{equation}\label{choice of a large scale 3}
 \lim_{i\to+\infty}\frac{\lambda_{i1}^\ast(t)}{r_i}=0,
\end{equation}
and
\begin{equation}\label{choice of a large scale 2}
  \Theta_{r_i^2}\left(\xi_{i1}^\ast(t),t;u_i\right)=\left(4\pi\right)^{-\frac{n}{2}} \frac{\Lambda}{n}\left(N-\sigma\right).
\end{equation}
Define $u_i^{r_i}$ as before, with base point at $(\xi_{i1}^\ast(t),t)$. By \eqref{decay away from bubble 4}, the defect measure for $u_i^{r_i}$ is $N^\prime\delta_0\otimes dt$. By \eqref{choice of a large scale 2}, $N^\prime\leq N-1$, while by \eqref{choice of a large scale 3}, $N^\prime\geq 1$.  By our inductive assumption, $u_i^{r_i}$ converges weakly to $0$. Then as in the proof of Lemma \ref{lem concentration of Theta 1}, we get
\[  \Theta_{r_i^2}\left(\xi_{i1}^\ast(t),t;u_i\right)=\Theta_{1}\left(0,0;u_i^{r_i}\right)\to  \left(4\pi\right)^{-\frac{n}{2}}\frac{\Lambda}{n}N^\prime, \quad \mbox{as } i\to+\infty.\]
This is a contradiction with \eqref{choice of a large scale 2}, because $N^\prime\leq N-1$ and $\sigma\in(0,1)$.
\end{proof}
\begin{rmk}
  The scale $r_i$ satisfying \eqref{choice of a large scale 2} is the scale of the lowest bubble.
\end{rmk}

Choose a $\sigma\in(0,1)$ and set $r_i:= R(\sigma)\lambda_{i1}^\ast(t)$ according to the previous lemma. Define $u_i^{r_i}$ as before, with respect to the base point $(\xi_{i1}^\ast(t),t)$. By definition,
\[ u_i^{r_i}(0,0)=\max_{y\in B_{r_i^{-1}/2}(0)}u_i^{r_i}(y,0)=R(\sigma)^{-\frac{n-2}{2}}.\]
Similar to Lemma \ref{lem smooth convergence of scalings}, this implies that $u_i^{r_i}$ converges to $W_{0,R(\sigma)}$ in $C^\infty_{loc}(\R^n\times\R)$. Arguing as in the proof of Lemma \ref{lem blow up at all time}, we deduce that
\begin{eqnarray*}
&& \Theta_{R(\sigma)^2\lambda_{i1}^\ast(t)^2}\left(\xi_{i1}^\ast(t),t;u_i\right)\\
&=&R(\sigma)^{\frac{p+1}{p-1}}\int_{\R^n} \left[\frac{|\nabla \widetilde{u}_i(x,-1)|^2}{2}-\frac{ \widetilde{u}_i(x,-1)^{p+1}}{p+1}\right]\psi\left(\frac{x-\xi_{ij}^\ast(t)}{r_i}\right)^2 G(x,R(\sigma)^2)  \\
  &+&\frac{1}{2(p-1)}R(\sigma)^{\frac{2}{p-1}}\int_{\R^n}  \widetilde{u}_i(x,-1)^2\psi\left(\frac{x-\xi_{ij}^\ast(t)}{r_i}\right)^2 G(x,R(\sigma)^2) +Ce^{-cR(\sigma)^{-2}r_i^{-2}}\\
  &\to& R(\sigma)^{\frac{p+1}{p-1}}\int_{\R^n} \left[\frac{|\nabla W_{0,R(\sigma)}(x)|^2}{2}-\frac{ W_{0,R(\sigma)}(x)^{p+1}}{p+1}\right]  G(x,R(\sigma)^2) dx\\
  &+&\frac{1}{2(p-1)}R(\sigma)^{\frac{2}{p-1}}\int_{\R^n}  W_{0,R(\sigma)}(x)^2 G(x,R(\sigma)^2)dx\\
&<& \left(4\pi\right)^{-\frac{n}{2}}\frac{\Lambda}{n}.
\end{eqnarray*}
This is a contradiction with Lemma \ref{lem concentration of Theta 2}.

\section{A remark on the Lipschitz hypothesis in Part \ref{part one bubble}}\label{sec Lipschitz hypothesis}
\setcounter{equation}{0}

In this section, under the hypothesis {\bf(II.a)-(II.c)} in Part \ref{part one bubble}, we prove the Lipschitz hypothesis \eqref{Lip assumption}.

First we claim that
Proposition \ref{prop blow up profile I} and Lemma \ref{lem scaling invariant estimate I} still hold, even now we do not assume \eqref{Lip assumption}. This follows from the general analysis in the previous section. Here we give more details.
\begin{proof}[Proof of the claim]
The proof is divided into three steps.

 {\bf Step 1.} By Lemma \ref{lem blow up at all time}, for any $t\in(-1,1)$, $u_i(\cdot,t)$ blows up as $i\to+\infty$. This allows us to find the first blow up point $\xi_{i}^\ast(t)$.

 {\bf Step 2.} With $u_i^{r_i}$ defined as in \eqref{definition of rescaling} (with $x_i\to0$), we claim that
  \begin{itemize}
    \item   either there is no defect measure and $u_i^{r_i}$ converges in $C^\infty_{loc}(\R^n\times\R)$,
    \item   or there exists a point    $P\in\R^n$ such that
 \[|\nabla u_i^{r_i}(x,t)|^2dxdt\rightharpoonup \Lambda\delta_P\otimes dt.\]
  \end{itemize}

First, similar to \eqref{limit of Theta at fixed scale}, for any $\sigma\in(0,1)$, there exists an $s$ such that
\[
 \left(4\pi\right)^{-\frac{n}{2}} \frac{\Lambda}{n}\leq  \lim_{i\to+\infty}\Theta_s\left(\xi_{i}^\ast(t),t;u_i\right)\leq \left(4\pi\right)^{-\frac{n}{2}} \frac{\Lambda}{n}+\sigma.
\]
Then because $r_i\to0$, by the monotonicity formula we get, for any $R>0$,
   \begin{eqnarray*}
&& R^{\frac{p+1}{p-1}}\int_{\R^n} \left[\frac{|\nabla u_i^{r_i}(x,-R^2)|^2}{2}-\frac{ u_i^{r_i}(x,-R^2)^{p+1}}{p+1}\right]\psi\left(\frac{x-x_i}{r_i}\right)^2 G(x,R^2)dx  \\
  &&\quad\quad +\frac{R^{\frac{p-1}{2}}}{2(p-1)}\int_{\R^n}  u_i^{r_i}(x,-R^2)^2\psi\left(\frac{x-x_i}{r_i}\right)^2 G(x,R^2)dx+Ce^{-cR^2r_i^{-2}} \\
 &=& \Theta_{R^2r_i^2}(\xi_{i}^\ast(t),t;u_i)\\
&\leq& \Theta_s\left(\xi_{i}^\ast(t),t;u_i\right)\\
  &\leq & \left(4\pi\right)^{-\frac{n}{2}}\frac{\Lambda}{n}+\sigma.
\end{eqnarray*}

Denote the weak limit of  $u_i^{r_i}$ by $u_\infty$ (which, by Lemma \ref{lem convergence of rescaled functions}, is $0$ or $W_{\xi,\lambda}$ for some $(\xi,\lambda)\in\R^n\times\R^+$), the defect measure associated to them by $\Lambda\sum_{j}k_j\delta_{P_j}\otimes dt$ (with notations as in Lemma \ref{lem convergence of rescaled functions}). Passing to the limit in the above inequality leads to
\[R^{-2}\int_{-2R^2}^{-R^2}\Theta_s(0,0;u_\infty)ds+\left(4\pi\right)^{-\frac{n}{2}}\frac{\Lambda}{n} R^{-2} \int_{-2R^2}^{-R^2}\sum_j k_je^{\frac{|P_j|^2}{4s}} \leq \left(4\pi\right)^{-\frac{n}{2}}\frac{\Lambda}{n}+\sigma.\]
By the monotonicity formula and the smoothness of $u_\infty$, we deduce that
\[\Theta_s(0,0;u_\infty)\geq \lim_{s\to0}\Theta_s(0,0;u_\infty)=0.\]
Hence we have
\[R^{-2} \int_{-2R^2}^{-R^2}\sum_j k_je^{\frac{|P_j|^2}{4s}}\leq 1+O(\sigma).\]
Letting $R\to+\infty$, we deduce that there exists at most one blow up point, whose multiplicity is exactly $1$.

If there is no defect measure, then Lemma \ref{lem convergence of rescaled functions} implies that $u_i^{r_i}$ converges to $u_\infty$ in $C^\infty_{loc}(\R^n\times\R)$.

Finally, we show that if the defect measure is nontrivial, then $u_\infty\equiv 0$. Assume this is not the case. By Lemma \ref{lem convergence of rescaled functions}, $u_\infty=W_{\xi,\lambda}$ for some $(\xi,\lambda)\in\R^n\times\R^+$.  By the weak convergence of $|\nabla u_i^{r_i}|^2dxdt$, there exists a sequence $R_i$ satisfying
\[R_i\to+\infty \quad \mbox{and} \quad R_ir_i\to0,\]
 such that
the defect measure associated to $u_i^{R_ir_i}$ is $2\Lambda\delta_0\otimes dt$. This is a contradiction with the claim that the multiplicity of this defect measure is $1$.

{\bf Step 3.} Denote $\lambda_i^\ast(t):=u_i(\xi_i^\ast(t),t)$. Similar to Lemma \ref{lem smooth convergence of scalings}, we deduce that
$u_i^{\lambda_i^\ast(t)}$ (with base point at $(\xi_i^\ast(t),t)$) converges to $W$ in $C^\infty_{loc}(\R^n\times\R)$. On the other hand, if the sequence $r_i$ satisfies
 \[r_i\to 0 \quad \mbox{and} \quad \frac{r_i}{\lambda_i^\ast(t)}\to+\infty,\]
 then by results in Step 2, we deduce that the sequence $u_i^{r_i}$ satisfies
 \[ |\nabla u_i^{r_i}(x,t)|^2dxdt\rightharpoonup \Lambda\delta_0\otimes dt.\]
 Combining this fact with Theorem \ref{thm ep regularity I}, we deduce that
  \begin{eqnarray}\label{scaling invariant estimate 16}
  && |x-\xi_{i}^\ast(t)|^{\frac{n-2}{2}}u_i(x,t)+|x-\xi_{i}^\ast(t)|^{\frac{n}{2}}|\nabla u_i(x,t)|\\
   && \quad \quad \quad \quad +|x-\xi_{i}^\ast(t)|^{\frac{n+2}{2}}\left(|\nabla^2 u_i(x,t)|+|\partial_tu_i(x,t)|\right)\leq C_2. \nonumber \qedhere
\end{eqnarray}
\end{proof}

Now we come to the proof of the Lipschitz hypothesis \eqref{Lip assumption}.
Assume there exist a sequence of smooth solutions $u_i$ to \eqref{eqn}, satisfying {\bf (II.a-II.c)}, but
\begin{equation}\label{16.1}
  \max_{|t|\leq 1/2}\int_{B_1}\left|\partial_tu_i(x,t)\right|^2dx\to+\infty.
\end{equation}
For each $i$, take a $t_i\in(-1,1)$ attaining
\[ \max_{|t|\leq 1}\left(1-|t|\right) \int_{B_1}\left|\partial_tu_i(x,t)\right|^2dx.\]

Denote
\[\lambda_i^{-2}:= \int_{B_1}\left|\partial_tu_i(x,t_i)\right|^2dx.\]
By \eqref{12.1}, as $i\to+\infty$,
\begin{equation}\label{16.2}
 \frac{1-|t_i|}{\lambda_i^2}\geq \frac{1}{2} \max_{|t|\leq 1/2}\int_{B_1}\left|\partial_tu_i(x,t)\right|^2dx\to +\infty.
\end{equation}
As a consequence,
\[\lim_{i\to+\infty}\lambda_i=0.\]

If $|t-t_i|<(1-|t_i|)/2$, by the definition of $t_i$, we get
\begin{equation}\label{16.3}
   \int_{B_1}\left|\partial_tu_i(x,t)\right|^2dx\leq 2\lambda_i^{-2}.
\end{equation}

At time $t_i$, there exists a unique maximal point of $u_i(\cdot, t_i)$ in $B_1$, denoted by $\xi_i(t_i)$.
Define
\[ \widetilde{u}_i(x,t):=\lambda_i^{\frac{n-2}{2}} u_i\left(\xi_i(t_i)+\lambda_i x, t_i+\lambda_i^2t\right).\]
By a scaling, we have
\begin{equation}\label{16.4}
   \left\{\begin{aligned}
& \int_{B_{\lambda_i^{-1}}}\left|\partial_t\widetilde{u}_i(x,0)\right|^2dx=1,\\
& \int_{B_{\lambda_i^{-1}}}\left|\partial_t\widetilde{u}_i(x,t)\right|^2dx\leq  2, \quad \mbox{for any} ~|t|<\frac{1-|t_i|}{2\lambda_i^2}
\end{aligned}\right.
\end{equation}

On the other hand, we claim that if $i$ is large,
\begin{equation}\label{16.5}
\int_{B_{\lambda_i^{-1}}}\left|\partial_t\widetilde{u}_i(x,0)\right|^2dx\leq \frac{1}{2}.
\end{equation}
This contradiction implies that \eqref{16.1} cannot be true. In other words, if $u_i$ satisfies {\bf (II.a-II.c)}, then there must exist a constant $L$ such that
\[ \limsup_{i\to+\infty}\sup_{|t|\leq 1/2}\int_{B_1}\partial_tu_i(x,t)^2dx\leq L,\]
that is, the Lipschitz hypothesis \eqref{Lip assumption} holds.

To prove \eqref{16.5}, first note that for $u_i$, by \eqref{scaling invariant estimate 16} we have
\[ \left|\partial_tu_i(x,t_i)\right|\lesssim |x-\xi_i(t_i)|^{-\frac{n+2}{2}}.\]
For $\widetilde{u}_i$, this reads as
\[ \left|\partial_t\widetilde{u}_i(x,0)\right|\lesssim |x|^{-\frac{n+2}{2}}.\]
Therefore there exists a large $R$ (independent of $i$) such that
\begin{equation}\label{12.6}
  \int_{B_{\lambda_i^{-1}}\setminus B_R}\left|\partial_t\widetilde{u}_i(x,0)\right|^2dx \leq C\int_{B_R^c}|x|^{-n-2}\leq \frac{1}{4}.
\end{equation}
After establishing this inequality, \eqref{16.5} will follow from
\begin{lem}
  For all $i$ large,
  \begin{equation}\label{12.7}
  \int_{B_R}\left|\partial_t\widetilde{u}_i(x,0)\right|^2dx \leq   \frac{1}{4}.
\end{equation}
\end{lem}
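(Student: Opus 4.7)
The inner bound \eqref{12.7} is the last step needed in the contradiction scheme: combined with the exterior bound \eqref{12.6} it gives $\int_{B_{\lambda_i^{-1}}}|\partial_t\widetilde{u}_i(x,0)|^2\,dx \leq 1/2$, contradicting the equality $=1$ in \eqref{16.4} and hence refuting \eqref{16.1}. My plan is to exploit both the strong $L^2$-convergence hypothesis (II.c) and the smooth convergence of the bubble-scale rescaling from Proposition~\ref{prop blow up profile I}, which applies to $u_i$ by the claim proved earlier in this section.

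First, by a direct change of variables in the definition of $\widetilde{u}_i$, one has for any fixed $T>0$
\[
\int_{-T}^{T}\!\int_{B_R}|\partial_s\widetilde{u}_i(y,s)|^2\,dy\,ds
=\int_{t_i-T\lambda_i^2}^{t_i+T\lambda_i^2}\!\int_{B_{R\lambda_i}(\xi_i(t_i))}|\partial_tu_i|^2\,dx\,dt.
\]
Since $\lambda_i\to 0$, the space-time domain on the right has vanishing measure; because $\{|\partial_tu_i|^2\}_i$ is uniformly integrable on $Q_1$ by the strong convergence in (II.c), the right-hand side tends to $0$ as $i\to\infty$. Setting $f_i(s):=\int_{B_R}|\partial_s\widetilde{u}_i(y,s)|^2\,dy$, this says $f_i\to 0$ in $L^1(-T,T)$, while \eqref{16.4} gives the uniform upper bound $f_i(s)\le 2$ on the relevant range.

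Second, I would upgrade this time-averaged smallness to the pointwise bound $f_i(0)\le 1/4$ by introducing the bubble-scale rescaling
\[
\hat u_i(z,\sigma):=\lambda_i^\ast(t_i)^{(n-2)/2}u_i\bigl(\xi_i^\ast(t_i)+\lambda_i^\ast(t_i)z,\ t_i+\lambda_i^\ast(t_i)^2\sigma\bigr).
\]
By Proposition~\ref{prop blow up profile I} (applied to $u_i$), $\hat u_i\to W$ in $C^\infty_{\mathrm{loc}}(\mathbb{R}^n\times\mathbb{R})$, and since $W$ is stationary, $\partial_\sigma\hat u_i\to 0$ smoothly on every compact set. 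With $\rho_i:=\lambda_i^\ast(t_i)/\lambda_i$ one has the identity
\[
f_i(0)=\rho_i^{-2}\!\int_{B_{R/\rho_i}}|\partial_\sigma\hat u_i(z,0)|^2\,dz,
\]
and splitting $B_{R/\rho_i}=B_{R_0}\cup(B_{R/\rho_i}\setminus B_{R_0})$ one controls the inner piece by smooth convergence and the outer piece by $CR_0^{-2}$ via the scaling-invariant estimate $|\partial_\sigma\hat u_i(z,0)|\le C_2|z|^{-(n+2)/2}$ from Lemma~\ref{lem scaling invariant estimate I} applied to $\hat u_i$.

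Third, to combine the two steps quantitatively, I would use parabolic regularity for $v:=\partial_s\widetilde{u}_i$, which solves the linearized equation $(\partial_s-\Delta)v=p\widetilde{u}_i^{p-1}v$. Working in the bubble coordinates $(z,\sigma)$, where the coefficient $p\hat u_i^{p-1}$ is uniformly smooth on compact sets thanks to $\hat u_i\to W$, standard parabolic Schauder estimates yield a uniform H\"older modulus of continuity of $s\mapsto f_i(s)$ on bounded time intervals. Combined with the $L^1$-smallness from the first step, this forces $f_i(0)\to 0$, in particular $f_i(0)\le 1/4$ for $i$ large, completing the proof of \eqref{12.7} and hence of \eqref{16.5}. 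The main obstacle I anticipate is in this third step: the factor $\rho_i^{-2}$ coming from the rescaling means that the smooth-convergence rate on the bubble core must beat $\rho_i^2$, which, since $\rho_i\to 0$ a priori, is not available directly from the abstract $C^\infty_{\mathrm{loc}}$ convergence and must be supplied by the time-integrated smallness coming from (II.c).
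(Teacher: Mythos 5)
Your step 1 (the space--time $L^2$ smallness of $\partial_t\widetilde{u}_i$ on $B_R\times(-T,T)$ via the change of variables and the uniform integrability coming from {\bf (II.c)}) is correct and is exactly what the paper uses; if $\widetilde{u}_i$ happened to converge in $C^\infty_{loc}$ this would already finish the proof, since uniform smoothness plus $L^1_t$-smallness of $f_i$ forces $\partial_t\widetilde{u}_i\to0$ locally uniformly. The genuine gap is in your steps 2 and 3, i.e.\ precisely in the regime $\rho_i=\lambda_i^\ast(t_i)/\lambda_i\to0$, which you correctly identify as the problematic one but do not resolve. First, your outer estimate is not $CR_0^{-2}$: with the identity $f_i(0)=\rho_i^{-2}\int_{B_{R/\rho_i}}|\partial_\sigma\hat u_i(z,0)|^2dz$ and the bound $|\partial_\sigma\hat u_i(z,0)|\lesssim|z|^{-(n+2)/2}$, the outer piece is $\rho_i^{-2}\int_{R_0}^{R/\rho_i}r^{-3}dr\lesssim\rho_i^{-2}R_0^{-2}$, which is unbounded. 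The point is that $|x|^{-(n+2)}$ is not integrable near the bubble in dimension $n$; the crude scaling-invariant decay of Lemma \ref{lem scaling invariant estimate I} is consistent with each bubble carrying an $O(1)$ amount of $\int|\partial_t u|^2$, so it can never yield \eqref{12.7}. Second, your proposed repair via parabolic Schauder estimates for $v=\partial_s\widetilde{u}_i$ does not go through: the potential $p\,\widetilde{u}_i^{\,p-1}$ in the linearized equation is of size $\rho_i^{-2}$ on the bubble core, so there is no uniform (in $i$) modulus of continuity of $s\mapsto f_i(s)$ on $B_R$, and a H\"older modulus obtained in the bubble time variable $\sigma=s/\rho_i^2$ degenerates when translated back to $s$.

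What is actually needed --- and what the paper supplies --- is the refined one-bubble analysis of Part \ref{part one bubble}. In the non-compact case the paper first applies Theorem \ref{thm no bubble towering} to $\widetilde{u}_i$ to conclude $|\nabla\widetilde{u}_i|^2dxdt\rightharpoonup\Lambda\delta_0\otimes dt$ with zero weak limit, and then uses the decomposition \eqref{local decomposition of time derivative 1} together with the reduced equations ($|\lambda_i'|\lesssim\varepsilon_i^{(n-4)/2}$, etc., Proposition \ref{prop uniform estimate}) and the time-derivative estimate of Proposition \ref{prop Schauder estimate on error fct}. These give the much better pointwise bound $|\partial_t\widetilde{u}_i(x,0)|\lesssim|x|^{-2}$ away from the core (rather than $|x|^{-(n+2)/2}$), whose square \emph{is} integrable for $n\geq5$, so that $\int_{B_\rho}|\partial_t\widetilde{u}_i(x,0)|^2dx\lesssim\rho^{n-4}+o_i(1)\leq1/8$ for $\rho$ small; the slow modulation of the bubble parameters is exactly the information that defeats the $\rho_i^{-2}$ factor. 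Outside $B_\rho$ one then uses the smooth convergence to the (zero) weak limit to get the remaining $1/8$. Without invoking these inner--outer gluing estimates, your argument cannot close.
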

\begin{proof}

{\bf Case 1.} $\widetilde{u}_i$ converges in $C^\infty_{loc}(\R^n\times\R)$.

By Lemma \ref{lem nonconcentration of time derivative},
\[ \lim_{i\to+\infty}\int_{-1}^{1}\int_{B_R}|\partial_t\widetilde{u}_i|^2=0.\]
Hence by the smooth convergence of $\widetilde{u}_i$, we deduce that $\partial_t\widetilde{u}_i$ converges to $0$ in $C_{loc}(\R^n\times\R)$ and the conclusion follows.

{\bf Case 2.} $\widetilde{u}_i$ does not converge  in $C^\infty_{loc}(\R^n\times\R)$.

In this case, in view of the estimates in \eqref{16.4},  Theorem \ref{thm no bubble towering} is applicable to $\widetilde{u}_i$, which gives
\begin{equation}\label{defect measure III}
 |\nabla\widetilde{u}_i|^2dxdt\rightharpoonup \Lambda\delta_0\otimes dt.
\end{equation}
By \eqref{local decomposition of time derivative 1}, estimates on the scaling parameters etc. in Proposition \ref{prop uniform estimate} and the estimate on $\partial_t\phi_i$ in  Proposition \ref{prop Schauder estimate on error fct}, there exists an $\rho>0$ such that for all $i$ large,
\begin{eqnarray*}
  \int_{B_\rho}\left|\partial_t\widetilde{u}_i(x,0)\right|^2dx &\leq & C \int_{B_\rho} |x|^{-4}+o(1)  \\
    &\leq &C\rho^{n-4}+o(1)\leq 1/8.
\end{eqnarray*}

Next, in view of \eqref{defect measure III}, we deduce that as $i\to+\infty$,
$\partial_t\widetilde{u}_i(x,0)\to 0$ uniformly in $B_R\setminus B_\rho$. Hence for all $i$ large, we also have
\[ \int_{B_R\setminus B_\rho}\left|\partial_t\widetilde{u}_i(x,0)\right|^2dx \leq   \frac{1}{8}.\]
Putting these two inequalities together we get \eqref{12.7}.
\end{proof}

\section{Exclusion of bubble towering}\label{sec bubble towering II}
\setcounter{equation}{0}

In this section, we prove Item (6) in Theorem \ref{thm bubble clustering convergence}. If $N=1$, in view of the analysis in the previous section, this is exactly Theorem \ref{thm no bubble towering}, so here we consider the   case  $N\geq 2$.

We will prove this by a contradiction argument. Recall that we have assumed  $u_\infty$ is smooth, so it is a classical solution of \eqref{eqn}. If  $u_\infty\neq 0$, by standard Harnack inequality,
\begin{equation}\label{Harnack for limit fct}
  \inf_{Q_{7/8}}u_\infty>0.
\end{equation}

First we note that
\begin{lem}\label{lem lower bound from weak limit}
  There exists a constant $c_\infty>0$, depending only on $\inf_{Q_{7/8}}u_\infty$, such that
  \[ u_i\geq c_\infty \quad \mbox{in } \quad Q_{6/7}.\]
\end{lem}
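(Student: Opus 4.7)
The plan is to reduce this to a comparison argument with a fixed, purely geometric heat-equation solution. The key observations are: (i) $u_i$ is a positive smooth supersolution of the heat equation, since $\partial_t u_i-\Delta u_i=u_i^p\geq0$; and (ii) by Theorem~\ref{thm energy concentration set}(3) combined with assumption \textbf{(IIII.b)}, which forces $\Sigma^{\ast}\subseteq\{0\}\times[-1,1]$, the convergence $u_i\to u_\infty$ is smooth on every compact subset of $(B_1\setminus\{0\})\times(-1,1)$. Hence $u_i$ is pointwise close to $u_\infty$ wherever we stay away from the vertical segment $\{0\}\times(-1,1)$; the bubble formation at the origin only makes $u_i$ larger (not smaller), so a pointwise lower bound is conceptually unobstructed.

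First I would enlarge to $\widetilde{Q}:=B_{7/8}\times(-(7/8)^2,(7/8)^2)$ and use the smooth convergence on the compact lateral surface $\partial B_{7/8}\times[-(7/8)^2,(7/8)^2]$, which lies at positive distance from $\{0\}\times[-1,1]$. Writing $m:=\inf_{Q_{7/8}}u_\infty>0$, I obtain for all $i$ sufficiently large
\[
u_i\geq \tfrac{1}{2}m\qquad\text{on }\partial B_{7/8}\times[-(7/8)^2,(7/8)^2].
\]
Next let $v$ be the unique classical solution of the linear problem
\[
\left\{\begin{aligned}
&\partial_t v-\Delta v=0 &&\text{in }\widetilde{Q},\\
&v\equiv \tfrac{1}{2}m &&\text{on }\partial B_{7/8}\times(-(7/8)^2,(7/8)^2),\\
&v\equiv 0 &&\text{on }B_{7/8}\times\{-(7/8)^2\}.
\end{aligned}\right.
\]
On $\partial^p\widetilde{Q}$ we have $u_i\geq v$ (trivially on the bottom, where $u_i\geq 0=v$; and on the lateral side by the estimate above), so the parabolic minimum principle applied to the supersolution $u_i-v\geq 0$ on $\partial^p\widetilde Q$ gives $u_i\geq v$ throughout $\widetilde{Q}$.

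Finally I would observe that $\overline{Q_{6/7}}$ sits at positive parabolic distance from $\partial^p\widetilde{Q}$, with lateral gap $\tfrac{1}{56}$ and temporal gap $(7/8)^2-(6/7)^2>0$ from the initial slice. On this compact set $v$ is smooth, and by the strong minimum principle together with the explicit representation in terms of the caloric measure of the lateral side of $\widetilde{Q}$, there is a purely geometric constant $c_0>0$ with $v\geq c_0 m$ on $\overline{Q_{6/7}}$. Setting $c_\infty:=c_0 m$ yields the desired bound $u_i\geq c_\infty$ in $Q_{6/7}$. No step presents a real obstacle; the only subtlety is the geometric choice of $\widetilde{Q}$, which ensures that $Q_{6/7}$ is insulated from the unknown (and possibly concentrated) behavior of $u_i$ on the initial slice while still being reachable from the lateral boundary, where smooth convergence delivers a uniform positive lower bound.
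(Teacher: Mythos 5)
Your proof is correct, but it takes a genuinely different route from the paper's. The paper's argument is essentially two lines: since $\partial_t u_i-\Delta u_i=u_i^p\geq 0$, the weak Harnack (mean--value) inequality for nonnegative supersolutions gives $u_i(x,t)\geq c\int_{Q_{1/100}^{-}(x,t-1/50)}u_i$ for every $(x,t)\in Q_{6/7}$, and because $u_i\to u_\infty$ strongly in $L^q_{loc}(Q_1)$ for $q<p+1$, the right-hand side converges to $c\int_{Q_{1/100}^{-}(x,t-1/50)}u_\infty\geq c'\,\inf_{Q_{7/8}}u_\infty>0$. Your barrier argument replaces this with (i) uniform convergence of $u_i$ to $u_\infty$ on the lateral surface $\partial B_{7/8}\times[-(7/8)^2,(7/8)^2]$ — which is legitimate, since under \textbf{(IIII.b)} the blow-up locus is contained in $\{0\}\times(-1,1)$, so that surface is a compact subset of $Q_1\setminus\Sigma^\ast$ where Theorem \ref{thm energy concentration set}(3) gives smooth convergence — and (ii) a comparison with the caloric function $v$ carrying data $m/2$ on the lateral side and $0$ on the bottom, plus positivity of the corresponding caloric measure on $\overline{Q_{6/7}}$. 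Both routes are sound and both yield $c_\infty$ linear in $m=\inf_{Q_{7/8}}u_\infty$. The paper's version is softer: it uses only $L^1_{loc}$ convergence and needs no information about where the concentration set sits. Yours invokes the $\varepsilon$-regularity machinery to control $u_i$ on the lateral boundary, but in exchange makes the mechanism completely explicit (positivity propagates inward from the side, insulated from the unknown initial slice). The one technical point to tidy in your write-up is the discontinuity of the boundary data of $v$ at the corner $\partial B_{7/8}\times\{-(7/8)^2\}$; this is harmless if you phrase the comparison directly through the caloric-measure representation $u_i(x,t)\geq\frac{m}{2}\,\omega^{(x,t)}\bigl(\partial B_{7/8}\times(-(7/8)^2,t)\bigr)$, as you indicate at the end, or if you approximate the lateral data from below by continuous data vanishing near the corner.
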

\begin{proof}
  Because
  \[\partial_tu_i-\Delta u_i>0 \quad \mbox{in} \quad Q_1,\]
  the following weak Harnack inequality holds for $u_i$: for any $(x,t)\in Q_{6/7}$,
  \[ u_i(x,t)\geq c\int_{Q_{1/100}^-(x,t-1/50)}u_i.\]
Because $u_i\to u_\infty$ in $L^2_{loc}(Q_1)$, by \eqref{Harnack for limit fct} we get
\[\lim_{i\to+\infty}\int_{Q_{1/100}^-(x,t-1/50)}u_i=\int_{Q_{1/100}^-(x,t-1/50)}u_\infty \geq c  \inf_{Q_{7/8}}u_\infty>0.\]
The conclusion follows by combining these two inequalities.
\end{proof}

For each $t\in(-1,1)$, let
\[ \rho_i(t):=\min_{1\leq j\neq k\leq N}|\xi_{ij}^\ast(t)-\xi_{ik}^\ast(t)|.\]
Set $t_0=0$ and $\rho_0=\rho_i(t_0)$, which is very small if $i$ is large enough. Fix a large index $i$. For each $k\in\mathbb{N}$, set
\[ t_k:= \sup\left\{t:  \rho_i(s)\geq \frac{1}{2}\rho_i(t_{k-1}) \quad \mbox{for any} ~~s\in [t_{k-1},t]\right\},\]
and
\begin{equation}\label{induction on bubble distance}
  \rho_k:=\rho_i(t_k)=2^{-1}\rho_i(t_{k-1})=\cdots=2^{-k}\rho_0.
\end{equation}
For each $k$, assume $\rho_i(t_k)$ is attained between $\xi_{i1}^\ast(t_k)$ and $\xi_{i2}^\ast(t_k)$. Consider
\[ u_i^k(x,t):= \rho_k^{\frac{n-2}{2}} u_i\left(\xi_{i1}^\ast(t_k)+\rho_k x, t_k+\rho_k^2 t\right).\]
By Lemma \ref{lem lower bound from weak limit}, we have
\[ u_i^k(x,0)\geq c_\infty \rho_k^{\frac{n-2}{2}}, \quad \mbox{for} ~~ x\in B_1\setminus B_{1/2}.\]
By our construction, for any $t\in[0,(t_{k+1}-t_k)/\rho_k^2]$, the distance between different bubble points of $u_i^k$ is not smaller than $1/2$. This allows us to iteratively apply Proposition \ref{prop weak Schoen Harnack} (backwardly in time), which gives
\[ u_i^k(x,0)\lesssim e^{-c\frac{t_{k+1}-t_k}{\rho_k^2}}, \quad \mbox{for} ~~ x\in B_1\setminus B_{1/2}.\]
Combining these two inequalities, we obtain
\[ t_{k+1}-t_k\lesssim \rho_k^2|\log\rho_k|\lesssim \rho_k.\]
Combining this inequality with \eqref{induction on bubble distance}, we get
\[ t_\infty:=\lim_{k\to\infty}t_k\lesssim \sum_{k=0}^{\infty}\rho_k\lesssim \rho_0\ll 1.\]
This then implies $\rho_i(t_\infty)=0$ (by the continuity of $\rho_i(t)$, which follows from the continuity of $\xi_{ij}^\ast(t)$). This is a contradiction. In other words, we must have $u_\infty\equiv 0$.

\newpage


\part{Analysis of first time singularity}\label{part first time singularity}

\section{Setting}\label{sec setting IV}
\setcounter{equation}{0}

In this part we assume $u\in C^\infty(Q_1^-)$ is a \emph{smooth} solution of \eqref{eqn}, satisfying
\begin{equation}\label{integral bound up to blow up time}
  \int_{-1}^{t}\int_{B_1} \left(|\nabla u|^2+|u|^{p+1}\right)\leq  K(T-t)^{-K}
\end{equation}
for some constant $K$. In particular, $u$ may not be smoothly extended to $B_1\times\{0\}$.

Define
\[\mathcal{R}(u):=\left\{a\in B_1: \exists ~ r>0  \mbox{  such  that } u\in L^\infty(Q_r^-(a,T))\right\}\]
to be the regular set, and $\mathcal{S}(u):=B_1\setminus\mathcal{R}(u)$ to be the set of blow up points. By standard parabolic estimates, if $a\in\mathcal{R}(u)$, $u$ can be extended smoothly up to $t=0$ in a small backward parabolic cylinder centered at $(a,0)$.

The main result of this part is
\begin{thm}\label{thm local analysis for first time singularity}
  If $n\geq 7$, $p=\frac{n+2}{n-2}$ and $u>0$, then there exists a constant $C$ such that
\begin{equation}\label{interior Type I}
  \|u(t)\|_{L^\infty(B_{1/2})}\leq C(T-t)^{-\frac{1}{p-1}}.
\end{equation}
\end{thm}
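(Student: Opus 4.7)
The strategy is a contradiction argument combining tangent-flow analysis at the first singular time with the refined one-bubble and many-bubble analyses of Parts \ref{part one bubble} and \ref{part many bubbles}. Suppose (\ref{interior Type I}) fails. Then there exist sequences $t_i \nearrow T$ and $x_i \to x_\infty \in \overline{B_{1/2}}$ with
\[ M_i := \|u(t_i)\|_{L^\infty(B_{1/2})} = u(x_i, t_i), \qquad (T - t_i)^{1/(p-1)} M_i \to +\infty. \]
Setting $\lambda_i := M_i^{-(p-1)/2} = o\bigl(\sqrt{T - t_i}\bigr)$ and
\[ u_i(y, s) := \lambda_i^{2/(p-1)}\, u\bigl(x_i + \lambda_i y, \, t_i + \lambda_i^2 s\bigr), \]
one obtains a sequence of positive smooth solutions of (\ref{eqn}) on enlarging parabolic cylinders $Q_{R_i}$, normalized by $u_i(0, 0) = 1$, extending forward in time to $s = (T - t_i)/\lambda_i^2 \to +\infty$. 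The integral bound (\ref{integral bound up to blow up time}) supplies the uniform Morrey/energy hypothesis (\ref{energy bound I}).

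First I would apply the tangent-flow analysis of Section \ref{sec tangent flow} to $\{u_i\}$ and extract a weak limit $u_\infty$ together with a defect measure $\mu$. The Liouville theorem \ref{thm Liouville for backward self-similar sol.}, together with the Type II relation $\lambda_i = o\bigl(\sqrt{T - t_i}\bigr)$, rules out a nontrivial backward self-similar profile, so the limit mass is carried entirely by $\mu$. By Theorem \ref{thm energy quantization I} and positivity, the atom of $\mu$ at the origin has weight $N\Lambda$ for some $N \geq 1$. After passing to a subsequence, $\{u_i\}$ satisfies on any fixed parabolic cylinder either the hypotheses (II.a)-(II.c) of Part \ref{part one bubble} when $N = 1$, or (IIII.a)-(IIII.c) of Part \ref{part many bubbles} when $N \geq 2$.

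If $N = 1$, Theorem \ref{thm no bubble towering} forces $u_\infty \equiv 0$, which activates the full Part \ref{part one bubble} machinery: orthogonal decomposition (Proposition \ref{prop orthogonal decompostion}), inner-outer gluing, and the Harnack-type bound \eqref{Harnack} for the scaling parameter $\lambda(t)$. Iterating the weak Schoen-type Harnack inequality (Proposition \ref{prop weak Schoen Harnack}) forward in time, one obtains $\lambda(t) \sim \lambda_i$ uniformly for $t \in [t_i, T)$, so $\|u(t)\|_{L^\infty}$ remains comparable to $M_i$ up to time $T$; this prevents blow-up at $T$ and contradicts the hypothesis. If $N \geq 2$, Theorem \ref{thm bubble clustering convergence} produces $N$ well-separated local maxima $\xi_{ij}^\ast(t)$ and rules out bubble towering, so only bubble clustering remains. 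Rescaling at the cluster diameter $d_i := \min_{j \neq k} |\xi_{ij}^\ast(0) - \xi_{ik}^\ast(0)| \to 0$ places us exactly in the setup of Proposition \ref{prop exclusion of bubble clusetering}, whose upper bound on the time interval of existence contradicts the essentially infinite backward time available in the tangent flow at $(x_\infty, T)$.

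The principal obstacle is the $N = 1$ case: one must propagate the one-bubble structure, including the validity of the orthogonality conditions in Proposition \ref{prop orthogonal decompostion}, all the way up to the blow-up time $T$. I expect this to proceed via a continuity argument in $t$, with openness coming from the inverse function theorem used in Proposition \ref{prop orthogonal decompostion} and closedness from the conclusion $u_\infty \equiv 0$ combined with the $\varepsilon$-regularity theorem, which together prevent secondary bubble formation as $t \nearrow T$. Converting the uniform scale bound $\lambda(t) \sim \lambda_i$ into the Type I estimate (\ref{interior Type I}) then closes the argument.
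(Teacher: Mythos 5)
Your overall architecture matches the paper's (tangent-flow analysis, quantization, the dichotomy between one bubble and several, exclusion of clustering via Proposition \ref{prop exclusion of bubble clusetering}), but there are two genuine gaps. First, you blow up at \emph{moving} points $(x_i,t_i)$ with the scale $\lambda_i=M_i^{-(p-1)/2}$ dictated by the sup norm. The resulting limit is an eternal solution, not a backward self-similar one, so Theorem \ref{thm Liouville for backward self-similar sol.} does not apply to it, and you cannot conclude that the smooth part of the limit vanishes or that the defect measure is a single static Dirac line; verifying hypotheses {\bf(II.a)--(II.c)} or {\bf(IIII.a)--(IIII.c)} for this sequence (in particular the strong $L^2$ convergence of $\partial_t u_i$, which in the paper comes from $\nu=0$ in Proposition \ref{prop tangent flow structure}, a consequence of self-similarity) is exactly what is missing. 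The paper instead fixes a singular point $(a,T)$, classifies its tangent flows, proves the Type I/Type II dichotomy and that Type II points are isolated (Lemmas \ref{lem alternative}--\ref{lem isolated for Type II}), and only then analyzes $u$ itself near an isolated Type II point via Proposition \ref{prop blow up profile IV}. It then still needs a separate stability/covering argument — the gap \eqref{difference between Type I and II} plus upper semicontinuity of $\Theta$ — to upgrade the pointwise Type I classification to the uniform bound \eqref{interior Type I}; your setup would bypass that step only if the first gap were closed.

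Second, your exclusion of the $N=1$ case misidentifies the mechanism. Proposition \ref{prop weak Schoen Harnack} controls the decay of the error function $\phi$, not the scaling parameter, and iterating it forward in time does not yield $\lambda(t)\sim\lambda_i$ up to $T$; in the paper it is iterated \emph{backward} in time and only in the clustering analysis of Section \ref{sec bubble cluster}. The decisive step in Section \ref{sec completion of proof} is the reduction ODE
\begin{equation*}
|\lambda'(t)|\lesssim (-t)^{\frac{n-10}{4}}\lambda(t)^{\frac{n-4}{2}},
\end{equation*}
obtained by applying the Part \ref{part one bubble} estimates to the self-similar rescalings $\widetilde u^t(y,s)=(-t)^{(n-2)/4}u(\sqrt{-t}\,y,-ts)$ across all dyadic scales approaching $T$. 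Integrating $\frac{d}{dt}\lambda^{-(n-6)/2}$ and using that $(n-10)/4>-1$ precisely when $n\geq 7$ shows $\lambda(t)$ cannot tend to $0$, contradicting blow-up. This integrability is where the dimension restriction enters the endgame, and your sketch does not capture it.
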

We will also show how to get Theorem \ref{main result for Cauchy} and Theorem \ref{main result for Cauchy-Dirichlet} from this theorem.
The proof of this theorem consists of the following four steps.
\begin{enumerate}
  \item After some preliminary estimates about the monotonicity formula and Morrey space bounds in Section \ref{sec integral estimates}, we perform the tangent flow analysis in Section \ref{sec tangent flow analysis} at a possible singular point $(a,T)$. Here we mainly use results from Part \ref{part energy concentration}. This tangent flow analysis  shows that Type II blow up points are isolated in $\mathcal{S}(u)$, see Section \ref{sec alternative}, which allows us to apply results in Part \ref{part one bubble} and Part \ref{part many bubbles}.
  \item In Section \ref{sec bubble cluster IV}, we apply Theorem \ref{thm bubble clustering convergence} in Part \ref{part many bubbles} and Proposition \ref{prop exclusion of bubble clusetering} in Section \ref{sec bubble cluster} to exclude the bubble clustering formation.
  \item In Section \ref{sec completion of proof}, we  exclude the formation of only one bubble. Here we mainly use the differential inequality on the scaling parameter $\lambda$ derived in Part \ref{part one bubble}.
  \item Finally, we use a stability result on Type I blow ups to derive the Type I estimate \eqref{interior Type I local}.
\end{enumerate}

\section{Some integral estimates}\label{sec integral estimates}
\setcounter{equation}{0}

In this section and the next one, we assume only  $p>1$, and $u$ needs not to be positive. This section is devoted to showing a Morrey space bound  on $u$, which will be used in the next section to  perform the tangent flow analysis. This is similar to Section \ref{sec tools} in Part \ref{part energy concentration}.

Denote the standard heat kernel on $\R^n$ by $G$.  Take a function $\psi\in C^\infty(B_{1/4})$, which satisfies $0\leq\psi\leq 1$, $\psi\equiv 1$ in $
B_{1/8}$ and $|\nabla\psi|+|\nabla^2\psi|\leq C$. Take an arbitrary point $a\in B_{1/2}$.  For any $s\in(0,1)$, set
\begin{eqnarray*}
 \Theta_s(a)&:= &s^{\frac{p+1}{p-1}}\int_{B_1}\left[\frac{|\nabla u(y,-s)|^2 }{2}-\frac{|u(y,-s)|^{p+1}}{p+1}\right]G(y-a,s)\psi(y)^2 dy\\
   &+&\frac{1}{2(p-1)}s^{\frac{2}{p-1}}\int_{B_1}u(y,-s)^2 G(y-a,s)\psi(y)^2 dy+Ce^{-cs^{-1}}.
\end{eqnarray*}

The following is another localized version of the monotonicity formula, which is a little different from Proposition \ref{prop monotoncity formula}.
\begin{prop}[Localized monotonicity formula II]\label{prop monotoncity formula IV}
For any $0<s_1<s_2<1$,
\begin{eqnarray*}
 && \Theta_{s_2}(x)-\Theta_{s_1}(x)\\
  &\geq & \int_{s_1}^{s_2} \tau^{\frac{2}{p-1}-1}\int_{B_1}\Big|(-\tau)\partial_tu(y,-\tau)+\frac{u(y,-\tau)}{p-1}+\frac{y\nabla u(y,-\tau)}{2}\Big|^2\\
&&  \quad \quad \quad \quad  \quad \quad  \times G(y-x,-\tau)\psi(y)^2dy d\tau.
\end{eqnarray*}
\end{prop}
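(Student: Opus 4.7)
The proof is a direct computation following the classical Giga--Kohn monotonicity formula for \eqref{eqn}, which already appeared, for suitable weak solutions, as Proposition~\ref{prop monotoncity formula} in Part~\ref{part energy concentration}. Since $u$ is now a smooth solution on $Q_1^-$, the only real issue is the localization via the cutoff $\psi$, and the plan is to show that all cutoff errors are exponentially small in $\tau^{-1}$, so they can be absorbed into the additive $Ce^{-cs^{-1}}$ term built into the definition of $\Theta_s(a)$.

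First, I would switch to backward time $\tau=-t$ and set $v(y,\tau):=u(y,-\tau)$, so that $v$ solves $\partial_\tau v=-\Delta v-|v|^{p-1}v$. Differentiating each of the three pieces of $\Theta_\tau(a)$ in $\tau$, I would use this equation to replace $\partial_\tau v$, the heat equation $\partial_\tau G(y-a,\tau)=\Delta_y G(y-a,\tau)$ to rewrite the $\tau$-derivative of the Gaussian, and then integrate by parts in $y$ against $\psi^2$. The interplay between the powers $\tau^{(p+1)/(p-1)}$, $\tau^{2/(p-1)}$ and the Gaussian identity $\nabla_y G(y-a,\tau)=-\tfrac{y-a}{2\tau}G(y-a,\tau)$ is precisely what allows the bulk contributions (those where $\psi\equiv 1$) to be reorganized, in the standard Giga--Kohn manner, into the perfect square
\begin{equation*}
\tau^{\frac{2}{p-1}-1}\Bigl|(-\tau)\partial_t u(y,-\tau)+\frac{u(y,-\tau)}{p-1}+\frac{y\cdot\nabla u(y,-\tau)}{2}\Bigr|^2 G(y-a,\tau)\psi(y)^2
\end{equation*}
that appears in the statement.

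All remaining terms in $\tfrac{d}{d\tau}\Theta_\tau(a)$ carry a factor of $\nabla\psi^2$ or $\Delta\psi^2$, and so they are supported on the fixed annulus $\mathcal{A}:=B_{1/4}\setminus B_{1/8}$. For $a$ kept at a uniformly positive distance from $\mathcal{A}$, one has $G(y-a,\tau)\lesssim \tau^{-n/2}e^{-c_0/\tau}$ on $\mathcal{A}$ for some $c_0>0$. Combined with hypothesis \eqref{integral bound up to blow up time}, which controls $\int_{-1}^{t}\int_{\mathcal{A}}(|\nabla u|^2+|u|^{p+1})$ by $K(T-t)^{-K}$, and with the analogous bound on $\int|\partial_\tau v|^2$ coming from the standard local energy identity for $u$, each cutoff error is bounded by a polynomial in $\tau^{-1}$ times $e^{-c_0/\tau}$. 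Choosing $C$ large and $c<c_0$ small, the derivative $\tfrac{d}{d\tau}(Ce^{-c/\tau})=Cc\tau^{-2}e^{-c/\tau}$ dominates these error contributions, yielding the claimed monotonicity after integrating from $s_1$ to $s_2$.

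The main technical point is this absorption step: the right-hand side $K(T-t)^{-K}$ of \eqref{integral bound up to blow up time} diverges as $t\to T$ (that is, as $\tau\to 0$), so one must exploit the Gaussian factor $e^{-c_0/\tau}$ on $\mathcal{A}$ to beat any polynomial growth in $\tau^{-1}$. This works only while $a$ stays at a uniformly positive distance from $\mathrm{supp}(\nabla\psi)$, and it forces $C$ and $c$ to depend on $n$, $p$, $K$ and on that distance; this is the standard reason such localized monotonicity formulas come with ``universal'' constants and are stated only for $a$ lying well inside the region where $\psi\equiv 1$.
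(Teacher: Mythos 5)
The paper gives no proof of this proposition: it is presented as a localized variant of the Giga--Kohn monotonicity formula, the Part I analogue (Proposition \ref{prop monotoncity formula}) being attributed directly to \cite{Giga-Kohn1,Giga-Kohn3,Giga04}. Your sketch is exactly the standard argument being invoked --- the Giga--Kohn perfect-square computation in the bulk together with absorption of all cutoff error terms (supported on $\mathrm{supp}(\nabla\psi)$) into the additive $Ce^{-cs^{-1}}$ term --- and it is correct, including the essential point that the Gaussian decay of $G(\cdot-a,\tau)$ away from $a$ beats the polynomial divergence allowed by \eqref{integral bound up to blow up time} as $\tau\to0$. The caveat you isolate, that the absorption requires $a$ to stay at a positive distance from $\mathrm{supp}(\nabla\psi)$, in fact exposes a radius mismatch in the paper's setup here ($\psi\equiv1$ only on $B_{1/8}$ while $a$ ranges over all of $B_{1/2}$); the radii are evidently intended to be as in Proposition \ref{prop monotoncity formula}, and with that correction your argument goes through.
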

This almost monotonicity formula allows us to define
\[\Theta(x):=\lim_{s\to 0}\Theta_s(x).\]
As in Part \ref{part energy concentration}, we still have
\begin{lem}\label{lem u.s.c. IV}
  $\Theta$ is nonnegative and it is upper semi-continuous in $x$.
\end{lem}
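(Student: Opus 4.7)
The proof splits into two independent steps.

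\textbf{Step 1 (Upper semi-continuity).} For each fixed $s\in(0,1)$ the map $x\mapsto \Theta_s(x)$ is smooth on $B_{1/2}$: the variable $x$ enters the integrand only through the heat-kernel factor $G(y-x,s)\psi(y)^2$, paired with the fixed $L^2/L^{p+1}$ functions $|\nabla u(\cdot,-s)|^2$, $|u(\cdot,-s)|^{p+1}$, $u(\cdot,-s)^2$, and differentiation in $x$ passes under the integral. Proposition~\ref{prop monotoncity formula IV} states that $s\mapsto \Theta_s(x)$ is monotone nondecreasing (the $Ce^{-cs^{-1}}$ correction in the definition is tuned precisely to absorb the localization error from the cut-off $\psi$ and yield honest monotonicity). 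Consequently
$$\Theta(x)=\lim_{s\to 0^+}\Theta_s(x)=\inf_{0<s<1}\Theta_s(x),$$
and the pointwise infimum of a family of continuous functions is upper semi-continuous (its sublevel sets are unions of open sets), giving the second assertion.

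\textbf{Step 2 (Nonnegativity).} I rewrite $\Theta_s(x)$ in Giga--Kohn similarity variables. Setting $\lambda=\sqrt{s}$ and
$$u^\lambda(y,\tau):=\lambda^{\frac{2}{p-1}}u\bigl(x+\lambda y,\,-\lambda^2\tau\bigr),\qquad y\in\mathbb{R}^n,~\tau>0,$$
a direct change of variables $z=x+\lambda y$ identifies
$$\Theta_s(x)=\int_{\mathbb{R}^n}\!\left[\frac{|\nabla u^\lambda(y,1)|^2}{2}-\frac{|u^\lambda(y,1)|^{p+1}}{p+1}+\frac{u^\lambda(y,1)^2}{2(p-1)}\right]\!G(y,1)\,\psi(x+\lambda y)^2\,dy\;+\;Ce^{-cs^{-1}},$$
so $\Theta_s(x)$ is, up to exponentially small error, a localized version of the classical Giga--Kohn similarity-variable energy evaluated on $u^\lambda(\cdot,1)$.

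Along any sequence $s_k\to 0$ (equivalently $\lambda_k=\sqrt{s_k}\to 0$), the Morrey-type bounds that follow from \eqref{integral bound up to blow up time} together with Proposition~\ref{prop monotoncity formula IV} verify, for the rescaled family $u^{\lambda_k}$, all the hypotheses needed to apply the tangent-flow machinery of Part~\ref{part energy concentration}. Passing to a subsequence, $u^{\lambda_k}$ produces a tangent flow $(u^0_\infty,\mu^0)$; $u^0_\infty$ is backwardly self-similar (Lemma~\ref{lem backwardly self-similar}), and passing to the limit in the identity above yields
$$\Theta(x)=\int_{\mathbb{R}^n}\!\left[\frac{|\nabla u^0_\infty(y,1)|^2}{2}-\frac{|u^0_\infty(y,1)|^{p+1}}{p+1}+\frac{u^0_\infty(y,1)^2}{2(p-1)}\right]\!G(y,1)\,dy\;+\;c_n\!\int_{\mathbb{R}^n}G(y,1)\,d\mu^0_{-1}(y),$$
with $c_n>0$. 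The second (defect) term is manifestly nonnegative. For the first, observe that since $u^0_\infty$ is a backwardly self-similar solution of \eqref{eqn}, the corresponding similarity-variable profile satisfies the stationary Giga--Kohn equation, and testing that equation against the profile in the Gaussian-weighted space shows that this functional is nonnegative (this is the standard positivity of the Giga--Kohn energy for self-similar profiles, cf.~\cite{Giga-Kohn1}). Hence $\Theta(x)\ge 0$.

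\textbf{Main obstacle.} The principal technical difficulty is the legitimacy of passing to the tangent-flow limit at the critical exponent, where strong $L^{p+1}_{\mathrm{loc}}$ compactness of $u^{\lambda_k}$ can genuinely fail and the defect measure $\mu^0$ may be nonzero; this is exactly the situation engineered in Part~\ref{part energy concentration}, and the two separate nonnegative contributions (profile energy plus weighted defect mass) in the display above are precisely the mechanism by which the defect is absorbed without spoiling the sign.
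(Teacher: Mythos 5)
Your Step 1 is exactly the paper's argument, inherited verbatim from Lemma \ref{lem u.s.c.} of Part \ref{part energy concentration}: for fixed $s$ the map $x\mapsto\Theta_s(x)$ is continuous (here $u$ is smooth on each time slice, so you do not even need the time-averaging device of Remark \ref{rmk on monotonicity quantity} for this part), the localized monotonicity formula makes $s\mapsto\Theta_s(x)$ nondecreasing, and an infimum of continuous functions is upper semi-continuous. For nonnegativity the paper gives no explicit proof, but the route it sets up is much lighter than yours: by the $\varepsilon$-regularity theorem (Theorem \ref{thm ep regularity I}), either $a\in\mathcal{S}(u)$, in which case $\Theta_{r^2}(a)>\varepsilon_\ast$ for every small $r$ and hence $\Theta(a)\geq\varepsilon_\ast>0$, or $a\in\mathcal{R}(u)$, in which case $u$ is bounded on $Q_r^-(a,T)$ and the prefactors $s^{\frac{p+1}{p-1}}$, $s^{\frac{2}{p-1}}$ together with the Gaussian concentration force $\Theta_s(a)\to0$; this is precisely the dichotomy $\mathcal{R}(u)=\{\Theta=0\}$, $\mathcal{S}(u)=\{\Theta\geq\varepsilon_\ast\}$ recorded in the proposition immediately after the lemma. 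Your tangent-flow argument for nonnegativity is correct in substance — the limit splits into the Gaussian-weighted energy of a backward self-similar profile, which is nonnegative because testing the profile equation against $w\rho$ gives $E[w]=\bigl(\frac12-\frac1{p+1}\bigr)\int\bigl(|\nabla w|^2+\frac{w^2}{p-1}\bigr)\rho\geq0$, plus a manifestly nonnegative defect contribution — and it has the virtue of working for any $p>1$ without invoking the classification of profiles. But it is considerably heavier than necessary, and it contains one technical wrinkle you should patch: weak space-time convergence of $|\nabla u^{\lambda_k}|^2\,dxdt$ and of the defect measures does not allow you to evaluate the limit on the single time slice $\tau=1$. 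You must time-average, e.g. replace $\Theta_{\lambda^2}$ by $\int_1^2\Theta_{\lambda^2\sigma}\,d\sigma$, which by monotonicity has the same limit as $\lambda\to0$; this is exactly the point of Remark \ref{rmk on monotonicity quantity} and of the computation carried out in the proof of Theorem \ref{thm energy quantization I}.
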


\begin{prop}\label{prop Morrey IV}
  For any $x\in B_{1/2}$, $r<1/4$ and $\theta\in(0,1/2)$, there exists a constant $C(\theta,u)$ such that
\begin{equation}\label{Morrey IV}
  r^{-2}\int_{Q_r^-(x,-\theta r^2)}\left(|\nabla u|^2+|u|^{p+1}\right)+\int_{Q_r^-(x,-\theta r^2)}|\partial_t u|^2 \leq C(\theta,u).
\end{equation}
\end{prop}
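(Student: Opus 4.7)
I would adapt the Giga--Kohn Morrey estimate of Proposition~\ref{prop Morrey} (Part~\ref{part energy concentration}) to the present setting, where the solution is defined only up to $t=T=0$, by shifting the base point of the monotonicity quantity off the blowup time and then covering. The argument has three steps, and crucially uses the dimensional cancellation $m:=\frac{2(p+1)}{p-1}=n$ coming from the critical exponent.

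First, I would extend the monotonicity quantity to arbitrary base points: for $(a,t_0)\in B_{3/4}\times[-1/4,0]$, set $\Theta_s(a,t_0)$ as in Section~\ref{sec integral estimates} but with $u(y,-s)$ replaced by $u(y,t_0-s)$. The proof of Proposition~\ref{prop monotoncity formula IV} adapts verbatim, so $s\mapsto\Theta_s(a,t_0)$ is non-decreasing, as is its time-averaged version $\overline{\Theta}_s(a,t_0):=s^{-1}\int_s^{2s}\Theta_\tau(a,t_0)\,d\tau$ from Remark~\ref{rmk on monotonicity quantity}. Evaluated at $s=1/32$, the Gaussian bound $G(\cdot,\tau)\lesssim \tau^{-n/2}$ together with the critical-exponent identity $\frac{p+1}{p-1}=\frac{n}{2}$ controls $\overline{\Theta}_{1/32}(a,t_0)$ by $\int_{t_0-1/16}^{t_0-1/32}\!\int_{B_1}(|\nabla u|^2+|u|^{p+1}+u^2)+C$. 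Since this slab stays bounded away from $T=0$, hypothesis \eqref{integral bound up to blow up time} bounds the integral by $C(u)$, and monotonicity then yields $\Theta_s(a,t_0)\leq C(u)$ for every $(a,t_0)\in B_{3/4}\times[-1/4,0]$ and every $s\in(0,1/32]$.

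Second, since $u\in C^\infty(Q_1^-)$ it is in particular a suitable weak solution, so Proposition~\ref{prop Morrey} applies with base point $(y,t_0)\in B_{3/4}\times[-1/4,0]$ and any radius $\rho\in(0,1/8)$; using $m=n$, the upshot is
\[
\rho^{-2}\!\int_{Q_\rho^-(y,t_0-\rho^2)}\!(|\nabla u|^2+|u|^{p+1})+\int_{Q_\rho^-(y,t_0-\rho^2)}\!|\partial_t u|^2\;\lesssim\;\max\bigl\{\Theta_{4\rho^2}(y,t_0),\Theta_{4\rho^2}(y,t_0)^{\frac{2}{p+1}}\bigr\}\leq C(u).
\]

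Third, I would fix a small absolute constant $c_0\leq 1$, set $\rho:=c_0\sqrt{\theta}\,r$, and cover the target $Q_r^-(x,-\theta r^2)=B_r(x)\times(-(1+\theta)r^2,-\theta r^2)$ by at most $N(\theta,n)\lesssim \theta^{-n/2-1}$ backward cylinders $Q_\rho^-(y_\alpha,t_{0,\alpha}-\rho^2)$ with $y_\alpha\in B_r(x)$ and base times $t_{0,\alpha}\in[-r^2,0]$. The choice $\rho^2\leq \theta r^2/2$ ensures every $t_{0,\alpha}\leq 0$, so step two applies to each covering cylinder; summing over $\alpha$ and dividing by $r^2$ gives \eqref{Morrey IV} with $C(\theta,u)\lesssim \theta^{-n/2}\,C(u)$. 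The main obstacle is the bookkeeping at this covering step: $\rho$ must be chosen small enough (as a function of $\theta$) that every covering base time stays in the valid range, yet the number of covering cylinders must remain bounded by a power of $\theta^{-1}$; balancing these two requirements is exactly what forces the $\theta$-dependence of the constant $C(\theta,u)$.
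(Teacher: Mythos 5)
Your proof is correct and is essentially the paper's own route: the paper states this proposition without writing out a proof, as the direct analogue of Proposition~\ref{prop Morrey} and Corollary~\ref{coro Morrey} from Part~\ref{part energy concentration} (Giga--Kohn's local estimate combined with the localized monotonicity formula), and your three steps --- a uniform bound $\Theta_s(a,t_0)\le C(u)$ at shifted base points via \eqref{integral bound up to blow up time} and monotonicity, Proposition~\ref{prop Morrey} at each such base point, and a covering of $Q_r^-(x,-\theta r^2)$ by backward cylinders of radius $\sim\sqrt{\theta}\,r$ --- are exactly how that is carried out, with the covering count correctly accounting for the $\theta$-dependence of $C(\theta,u)$. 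The only slips are harmless bookkeeping: the threshold $s\le 1/32$ in your first step should be enlarged (say to $1/16$) so that it also covers $s=4\rho^2$ for every admissible $\rho$, and the $\partial_t u$ term actually yields $C(\theta,u)\sim\theta^{-n/2-1}$ rather than $\theta^{-n/2}$.
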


By the $\varepsilon$-regularity theorem   (see Theorem \ref{thm ep regularity II}), we also get
\begin{prop}
  $\mathcal{R}(u)=\{\Theta=0\}$ and $\mathcal{S}(u)=\{\Theta\geq \varepsilon_{\ast}\}$.
\end{prop}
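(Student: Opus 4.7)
The plan is to prove the two characterizations in turn, leveraging the $\varepsilon$-regularity theorem (Theorem \ref{thm ep regularity I}) together with the localized monotonicity formula (Proposition \ref{prop monotoncity formula IV}) and upper semi-continuity (Lemma \ref{lem u.s.c. IV}). Notice that the two statements are equivalent provided we show that $\Theta$ takes values in $\{0\}\cup[\varepsilon_\ast,+\infty)$ and that $\{\Theta=0\}=\mathcal{R}(u)$, so I will focus on these.

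First I would handle the easier inclusion $\mathcal{R}(u)\subseteq\{\Theta=0\}$. If $a\in\mathcal{R}(u)$, then by definition $u\in L^\infty(Q_r^-(a,0))$ for some $r>0$, and standard parabolic regularity theory shows $u$ extends smoothly up to $t=0$ in a backward parabolic neighborhood of $(a,0)$. Plugging directly into the definition of $\Theta_s(a)$, the Gaussian $G(y-a,s)\psi(y)^2$ concentrates near $a$, where $|u|$, $|\nabla u|$ are bounded, so the inner integrals stay uniformly bounded as $s\to 0$. The overall prefactors $s^{(p+1)/(p-1)}$ and $s^{2/(p-1)}$ then force $\Theta_s(a)\to 0$, hence $\Theta(a)=0$.

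The converse direction, $\{\Theta<\varepsilon_\ast\}\subseteq\mathcal{R}(u)$, is where the $\varepsilon$-regularity theorem enters. Suppose $\Theta(a)<\varepsilon_\ast$. Since $\Theta_s(a)\downarrow\Theta(a)$ by the monotonicity formula, there exists $s_0>0$ with $\Theta_{s_0}(a)<\varepsilon_\ast$. Next I would invoke the analog of the change-of-base-point corollary (Corollary \ref{coro change of base point}): for any $\eta>0$, one can choose $\delta,\theta$ small enough so that
\[\Theta_{\theta s_0}(x,t)\leq C\max\bigl\{\Theta_{s_0}(a),\,\Theta_{s_0}(a)^{2/(p+1)}\bigr\}+\eta\]
for every $(x,t)\in Q_{\delta\sqrt{s_0}}^-(a,0)$, where the monotonicity quantity on the left is the one from Section \ref{sec tools}. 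Fixing $\eta$ small enough that the right-hand side is strictly below $\varepsilon_\ast$, Theorem \ref{thm ep regularity I} applies at each such $(x,t)$ with $r=\sqrt{\theta s_0}$, giving a uniform pointwise bound $|u|\leq c_\ast r^{-2/(p-1)}$ on $Q_{\delta_\ast r}(x,t)$. Patching these bounds together yields $u\in L^\infty(Q_{r'}^-(a,0))$ for some $r'>0$, so $a\in\mathcal{R}(u)$. In particular, the density $\Theta$ cannot take any value in $(0,\varepsilon_\ast)$, which is exactly the two-valued dichotomy we need.

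The main obstacle is a bookkeeping issue rather than a conceptual one: the monotonicity quantity $\Theta_s(a)$ defined in Section \ref{sec integral estimates} is associated to a base point on the singular time slice $\{t=0\}$, whereas the $\varepsilon$-regularity theorem in Part \ref{part energy concentration} is formulated for an arbitrary interior space-time base point. I therefore need to justify carefully the change-of-base-point comparison with $(x,t)$ slightly below the blowup time, checking that the spatial cut-off $\psi$ does not obstruct the comparison in a small neighborhood of $a$, and that the forward parabolic cylinder required by Theorem \ref{thm ep regularity I} still fits inside $Q_1$ whenever $(x,t)$ stays strictly below time zero with $r$ small. Once this is set up, taking complements in $B_1$ yields $\mathcal{S}(u)=\{\Theta\geq\varepsilon_\ast\}$ and the proof is complete.
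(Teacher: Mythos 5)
Your overall strategy (the easy inclusion $\mathcal{R}(u)\subseteq\{\Theta=0\}$ from the scaling prefactors, plus $\{\Theta<\varepsilon_\ast\}\subseteq\mathcal{R}(u)$ from $\varepsilon$-regularity) is the right one, but the converse step as you have written it has a genuine gap. Corollary \ref{coro change of base point} only yields
\[
\Theta_{\theta s_0}(x,t)\leq C\max\left\{\Theta_{s_0}(a),\Theta_{s_0}(a)^{\frac{2}{p+1}}\right\}+\eta
\]
with a universal constant $C>1$, so from $\Theta_{s_0}(a)<\varepsilon_\ast$ alone you cannot ``fix $\eta$ small enough that the right-hand side is strictly below $\varepsilon_\ast$'': if, say, $\Theta(a)=\varepsilon_\ast/2$, the right-hand side is of order $C\varepsilon_\ast^{2/(p+1)}$, which exceeds $\varepsilon_\ast$. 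Your argument therefore only proves regularity of points whose density lies below some smaller, $C$-dependent threshold, not of the full set $\{\Theta<\varepsilon_\ast\}$; the latter is exactly what $\mathcal{S}(u)=\{\Theta\geq\varepsilon_\ast\}$ (equivalently, the statement that $\Theta$ takes no value in $(0,\varepsilon_\ast)$) demands.

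The detour that causes the loss is unnecessary. Since $\Theta_s(a)$ is non-decreasing in $s$ and tends to $\Theta(a)<\varepsilon_\ast$, choose $s_0$ with $\Theta_{s_0}(a)<\varepsilon_\ast$ and apply Theorem \ref{thm ep regularity I} \emph{directly at the base point} $(a,T)$ with $r=\sqrt{s_0}$: the theorem allows base points on the top time slice, its hypothesis only involves $u$ at times strictly before the singular time, and its conclusion (a reformulation of Du's result, which is tailored to the first singular time) gives $|u|\leq c_\ast r^{-2/(p-1)}$ on the backward cylinder $Q^-_{\delta_\ast r}(a,T)$ — precisely $a\in\mathcal{R}(u)$. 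If you insist on moving the base point strictly below the singular time, use the continuity of $(x,t)\mapsto s_0^{-1}\int_{s_0}^{2s_0}\Theta_\tau(x,t)\,d\tau$ at the \emph{fixed} scale $s_0$ (noted before Lemma \ref{lem u.s.c.}), which preserves the strict inequality $<\varepsilon_\ast$ with no multiplicative constant; Corollary \ref{coro change of base point} is built for changing scales and is the wrong tool here. A minor further point: in the inclusion $\mathcal{R}(u)\subseteq\{\Theta=0\}$ you should also control the far-field part of the integrals, where $u$ may be unbounded near other singular points; this is absorbed by the Gaussian decay together with \eqref{integral bound up to blow up time} and the $Ce^{-cs^{-1}}$ term, after averaging once in $s$ as in Remark \ref{rmk on monotonicity quantity}.
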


\section{Tangent flow analysis, II}\label{sec tangent flow analysis}
\setcounter{equation}{0}

In this section, we perform the tangent flow analysis for $u$. This is similar to the one in Section \ref{sec tangent flow}, with the only difference that now we work   in backward parabolic cylinders, instead of the full parabolic cylinders. Some special consequences will also follow from the assumption that $p=(n+2)/(n-2)$, which will be used in the next section to classify Type I and Type II blow up points.

Take an arbitrary point $a\in B_{1/2}$. For any $\lambda>0$ sufficiently small, define
\[u_\lambda(x,t):=\lambda^{\frac{2}{p-1}}u(a+\lambda x, T+\lambda^2 t).\]

By Proposition \ref{prop Morrey IV}, for any $R>0$ and $\theta\in(0,1/2)$, we have
\begin{equation}\label{4.1}
 R^{-2}\int_{Q_R^-(0,-\theta R^2)}\left[|\nabla u_\lambda|^2+|u_\lambda|^{p+1}\right]+\int_{Q_R^-(0,-\theta R^2)}|\partial_t u_\lambda|^2 \leq C(\theta,u),
\end{equation}
where $C(\theta,u)$ is the constant in Proposition \ref{prop Morrey IV}. Therefore, as in Section \ref{sec defect measure}, for any $\lambda_i\to 0$, we can subtract a  subsequence $\lambda_i$ (not relabelling) such that
\begin{itemize}
  \item  $u_{\lambda_i}$ converges to  $u_\infty$, weakly in  $L^{p+1}_{loc}(\R^n\times\R^-)$ and strongly in $L^q_{loc}(\R^n\times\R^-)$ for any $q<p+1$;
  \item $\nabla u_{\lambda_i}$ converges to  $\nabla u_\infty$ weakly in $L^2_{loc}(\R^n\times\R^-)$;
  \item $\partial_tu_{\lambda_i}$ converges to  $\partial_tu_\infty$ weakly in $L^2_{loc}(\R^n\times\R^-)$;
  \item there exist two Radon measures $\mu$ and $\nu$ such that in any compact set of $\R^n\times \R^-$,
  \[  \left\{\begin{aligned}
&|\nabla u_i|^2 dxdt \rightharpoonup |\nabla u_\infty|^2dxdt+\mu,\\
& |u_i|^{p+1}dxdt \rightharpoonup |u_\infty|^{p+1}dxdt+\mu,\\
&\ |\partial_t u_i|^2 dxdt\rightharpoonup |\partial_tu_\infty|^2dxdt+\nu
\end{aligned}\right.
\]
weakly as Radon measures.
\end{itemize}

Passing to the limit in the equation for $u_{\lambda_i}$, we deduce that $u_\infty$ is a weak solution of \eqref{eqn} in $\R^n\times\R^-$. Passing to the limit in \eqref{4.1} gives
\begin{equation}\label{global Morrey bound}
  R^{-2}\int_{Q_R^-(0,-\theta R^2)}\left[|\nabla u_\infty|^2+|u_\infty|^{p+1}\right]+\int_{Q_R^-(0,-\theta R^2)}|\partial_t u_\infty|^2 \leq C(\theta,u),
\end{equation}
where $C(\theta,u)$ is the constant in Proposition \ref{prop Morrey IV}.

Similar to Lemma \ref{lem backwardly self-similar} in Part \ref{part energy concentration}, we deduce that both $u_\infty$ and $\mu$ are  backwardly self-similar.

Results obtained so far hold for any $p>1$. If   $p=(n+2)/(n-2)$, we can say more about $(u_\infty,\mu)$.
 \begin{prop}\label{prop tangent flow structure}
 If $p=(n+2)/(n-2)$, then the followings hold.
 \begin{enumerate}
   \item  Either $u_\infty\equiv 0$ or $u_\infty\equiv \pm\left[-(p-1)t\right]^{-\frac{1}{p-1}}$.
   \item   There exists a constant $M>0$ such that
  \[\mu=M\delta_0\otimes dt.\]
   \item  $\nu=0$.
 \end{enumerate}
 \end{prop}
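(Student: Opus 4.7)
My plan is to exploit the backward self-similarity of $(u_\infty, \mu)$ established in Lemma~\ref{lem backwardly self-similar} via the limiting monotonicity formula. Since $u_\infty^\lambda = u_\infty$ and $\mu^\lambda = \mu$ for every $\lambda > 0$, the quantity $\Theta_s(0,0;u_\infty,\mu)$ is independent of $s$, so the monotonicity formula (Proposition~\ref{prop monotoncity formula II}, in the global, cutoff-free form appropriate to the tangent flow set up in Section~\ref{sec tangent flow}) yields $\Theta_{s_2} - \Theta_{s_1} = 0$ for every $0 < s_1 < s_2$, and thus forces each of the four nonnegative integrands on its right-hand side to vanish identically.

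At the critical exponent $m = 2(p+1)/(p-1) = n$, so $\mu$ scales with parabolic exponent $n+2-m = 2$. The vanishing of the $\nu$-term, whose weight $\tau^{2/(p-1)+1}G(y,\tau)$ is strictly positive on the support of $\nu$, forces $\nu \equiv 0$ on $\R^n \times \R^-$, which is (3). The vanishing of the $\mu$-term with weight $\tau^{2/(p-1)-1}|y|^2 G$ forces each time slice $\mu_t$ to be spatially supported at the origin, so $\mu_t = f(t)\delta_0$ for some measurable $f \geq 0$; applying the self-similar scaling $\mu(\lambda A) = \lambda^2 \mu(A)$ to the disintegration $\mu = \mu_t \otimes dt$ yields $f(\lambda^2 t) = f(t)$ for a.e.\ $t < 0$, so $f$ is a constant $M$, proving (2).

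For (1), once $\nu = 0$ and $\mu$ is concentrated on $\{0\} \times \R^-$, the $\varepsilon$-regularity theorem (Theorem~\ref{thm ep regularity II}) applied to the blow-up sequence $u_{\lambda_i}$ gives smooth convergence $u_{\lambda_i} \to u_\infty$ on compact subsets of $(\R^n \setminus \{0\}) \times \R^-$, so $u_\infty$ is a classical, backward self-similar solution there. Combining the global Morrey bound \eqref{global Morrey bound} with the self-similar scaling transfers integral control uniformly to every spatial base point and verifies the integrability hypothesis \eqref{bounded integral condition}; Giga-Kohn's Liouville theorem (Theorem~\ref{thm Liouville for backward self-similar sol.}) then yields exactly the trichotomy claimed in (1). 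The step I expect to be most delicate is rigorously passing from the localized monotonicity formula to the global, cutoff-free version on the tangent flow, and then transferring the Morrey bound uniformly in the base point; both ultimately follow from the homogeneity structure of $(u_\infty, \mu)$, but require careful bookkeeping with the cutoff $\psi$ when passing to the scaling limit.
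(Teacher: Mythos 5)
Your argument is correct in substance, and for part (1) it essentially coincides with the paper's: rescale the Morrey bound \eqref{global Morrey bound} to verify \eqref{bounded integral condition} and invoke Theorem \ref{thm Liouville for backward self-similar sol.} (your extra $\varepsilon$-regularity step to upgrade $u_\infty$ to a classical solution away from $\{0\}\times\R^-$ is harmless but not needed, since the Liouville theorem is stated for backward self-similar solutions under the integral hypothesis alone). For parts (2) and (3), however, you take a genuinely different route. The paper first uses self-similarity together with the energy quantization machinery of Section \ref{sec critical exponent} (Theorem \ref{thm energy quantization I}, Lemma \ref{lem time slice isolated}) to write $\mu_t=\sum_j M_j\delta_{\sqrt{-t}\xi_j}$ with $\{\xi_j\}$ discrete, and then exploits that the $u_{\lambda_i}$ are smooth, so the limiting energy inequality \eqref{energy identity limit} is an identity, yielding $\partial_t\mu=-n\nu$; since $\nu\geq 0$, the Dirac masses cannot move, forcing $\xi_j=0$ and then $\nu=0$. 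You instead derive everything from the constancy in $s$ of $\Theta_s(0,0;u_\infty,\mu)$ plugged into the limiting monotonicity formula, killing the defect terms directly. Your route is more self-contained (it avoids the quantization theorem entirely and treats (2) and (3) in one stroke), while the paper's route packages the spatial structure of $\mu_t$ through results it needs elsewhere anyway and isolates the clean mechanism ``monotone mass plus moving atoms is impossible.''

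One imprecision you should repair: the right-hand side of Proposition \ref{prop monotoncity formula II} does \emph{not} consist of four nonnegative terms. The last term, involving $(t-\tau)\,y\cdot\mathbf{H}$, has no sign. What is true is that the first term is nonnegative and the sum of the three defect terms is nonnegative (it is the weak limit of a square), so constancy of $\Theta$ only gives that this \emph{sum} vanishes. To conclude that the $\nu$-term and the $\mu$-term vanish individually you must either control the cross term by Cauchy--Schwarz together with Lemma \ref{lem relation between H and nu} (using $m=n>1$ to absorb it), or, more simply, invoke Lemma \ref{lem mean curvature zero}, which in the critical case gives $\mathbf{H}=0$ $\mu$-a.e.\ and makes the cross term identically zero. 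With that one-line fix your argument goes through.
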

\begin{proof}
A rescaling of  \eqref{global Morrey bound} gives
\begin{equation}\label{polynomial growth condition}
  \int_{B_1(x)}\left(|\nabla w_\infty|^2+|w_\infty|^{p+1}\right)\leq C,  \quad \forall x\in\R^n.
\end{equation}
 Because $p=(n+2)/(n-2)$, by Theorem \ref{thm Liouville for backward self-similar sol.},  we obtain (1).

Because   $\mu$ is self-similar, by Theorem \ref{thm energy quantization I}, we find a discrete set $\{\xi_j\}\subset\R^n$,  $M_j>0$ such that
\begin{equation}\label{tangent flow structure 2}
 \mu=\mu_tdt, \quad \mu_t=\sum_{j}M_j \delta_{\sqrt{-t}\xi_j}.
\end{equation}

  Because  $u_{\lambda_i}$ are all smooth in $\R^n\times\R^-$, the energy inequalities \eqref{energy inequality I} for them are in fact identities.  Letting $\lambda_i\to0$, we see the limiting energy inequality  \eqref{energy identity limit} for $(u_\infty,\mu)$ is also an identity.  Furthermore, because $u_\infty$ is smooth in $\R^n\times\R^-$,  it also satisfies the energy identity. From these facts we deduce that
  \[ \partial_t\mu=-n\nu \quad \mbox{in the distributional sense in }~~ \R^n\times\R^-.\]
Combining this relation with \eqref{tangent flow structure 2}, we get (2) and (3).
\end{proof}

Furthermore, if $u$ is positive, this proposition implies that the blowing up sequence $u_\lambda$ satisfy the assumptions {\bf (III.a-III.c)} in Part \ref{part many bubbles}.

\section{No coexistence of Type I and Type II blow ups}\label{sec alternative}
\setcounter{equation}{0}

From now on it is always assumed that $u$ is a positive solution, $p=(n+2)/(n-2)$ and $n\geq 7$.  In this section we classify Type I and Type II blow up points, and give some qualitative description of the set of Type I and Type II blow up points.

\begin{lem}\label{lem alternative}
  Either $u_\infty=0$ or $\mu=0$.
\end{lem}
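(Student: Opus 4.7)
The plan is to argue by contradiction: assume both $u_\infty \not\equiv 0$ and $\mu \neq 0$. By Proposition \ref{prop tangent flow structure} together with the positivity $u \geq 0$, the first assumption forces
\[
u_\infty(x,t) \equiv \left[-(p-1)t\right]^{-\frac{1}{p-1}},
\]
a smooth strictly positive solution of \eqref{eqn} on $\R^n \times \R^-$, while the second gives $\mu = M\,\delta_0 \otimes dt$ with $M>0$ and $\nu \equiv 0$. Applying the energy quantization for positive solutions (Corollary \ref{coro relation between densities}) to the tangent flow, the mass must be an integer multiple of the bubble energy: $M = N\Lambda$ for some $N \in \mathbb{N}$, $N\geq 1$.

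The next step is to fit the blowup sequence $u_{\lambda_i}$ into the framework of Part \ref{part one bubble} (when $N=1$) or Part \ref{part many bubbles} (when $N\geq 2$). Since $u_\infty$ is singular at $t=0$, I cannot work in a cylinder centered at $(0,0)$; instead I would rescale and recenter at the interior point $(0,-1)\in \R^n \times \R^-$. Concretely, fix some $\rho \in (0,1)$ and put
\[
\tilde u_i(x,t) := \rho^{\frac{n-2}{2}}\, u_{\lambda_i}\bigl(\rho x,\, -1 + \rho^2 t\bigr), \qquad (x,t) \in Q_1.
\]
For $i$ large each $\tilde u_i$ is a smooth positive solution of \eqref{eqn} in $Q_1$. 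A direct change of variables, combined with the $t$-translation invariance of the defect measure $\mu = M\,\delta_0 \otimes dt$ and the vanishing of $\nu$, yields
\[
|\nabla \tilde u_i|^2 dx dt \rightharpoonup |\nabla \tilde u_\infty|^2 dx dt + N\Lambda\, \delta_0 \otimes dt,
\]
together with the analogous statement for $\tilde u_i^{p+1}$, and the strong $L^2(Q_1)$ convergence $\partial_t \tilde u_i \to \partial_t \tilde u_\infty$; here the smooth weak limit
\[
\tilde u_\infty(x,t) := \rho^{\frac{n-2}{2}}\bigl[-(p-1)(-1+\rho^2 t)\bigr]^{-\frac{1}{p-1}}
\]
is strictly positive throughout $Q_1$. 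Hence $\tilde u_i$ satisfies \textbf{(II.a)}--\textbf{(II.c)} if $N=1$, and \textbf{(III.a)}--\textbf{(III.c)} if $N\geq 2$.

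At this point I would invoke the main nonexistence statements of the preceding two parts: Theorem \ref{thm no bubble towering} in the one-bubble case, and item (6) of Theorem \ref{thm bubble clustering convergence} in the higher multiplicity case. Each forces the smooth weak limit to vanish, i.e.\ $\tilde u_\infty \equiv 0$ in $Q_1$, which contradicts the manifest positivity of the explicit $\tilde u_\infty$ above. Therefore the initial assumption is false and either $u_\infty\equiv 0$ or $\mu = 0$, as claimed. The main technical step is the careful bookkeeping of the parabolic rescaling of the defect measures (so that the $\delta_0 \otimes dt$ factor survives with the correct normalization $N\Lambda$ rather than some $\rho$-dependent constant), together with quoting Corollary \ref{coro relation between densities} to guarantee that $M/\Lambda$ is an integer; everything else is routine.
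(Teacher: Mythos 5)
Your proposal is correct and follows essentially the same route as the paper: quantization ($M=N\Lambda$ via Corollary \ref{coro relation between densities}), verification that the blow-up sequence satisfies the hypotheses of Parts \ref{part one bubble}/\ref{part many bubbles}, and then the no-towering theorems forcing the smooth weak limit to vanish. Your explicit recentering at $(0,-1)$ (needed because the tangent flow lives only on $\R^n\times\R^-$) and your case split between $N=1$ (Theorem \ref{thm no bubble towering}) and $N\geq 2$ (Theorem \ref{thm bubble clustering convergence}(6), whose hypothesis \textbf{(IIII.b)} requires $N\geq 2$) are details the paper's one-line proof leaves implicit, and you handle them correctly.
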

\begin{proof}
If $\mu\neq 0$, by the positivity, there exists an $N\in\mathbb{N}$ such that the constant in \eqref{tangent flow structure 2} satisfies
\[ M=N\Lambda.\]
Since $u_\infty$ is smooth in $\R^n\times\R^-$, {\bf (III.a-III.c)} are satisfied by any blow up sequence $u_{\lambda_i}$ at $(a,T)$. Then an application of  Theorem \ref{thm bubble clustering convergence} implies that $u_\infty=0$. Alternatively, if $u_\infty\neq 0$, then $\mu=0$.
\end{proof}
A direct consequence of this lemma is
\begin{prop}\label{prop dichotomy}
Any point $a\in B_{1/2}$ belongs to one of the following three classes:
\begin{description}
  \item [Regular point]   $\Theta(a)=0$;
  \item [Type I blow up point] $\Theta(a)=n^{-1}\left(\frac{n-2}{4}\right)^{\frac{n}{2}}$;
  \item [Type II blow up point]   $\Theta(a)=n^{-1}\left(4\pi\right)^{-\frac{n}{2}}N\Lambda$ for some $N\in\mathbb{N}$.
\end{description}
\end{prop}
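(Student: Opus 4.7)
The plan is to partition the analysis into the three cases according to the dichotomy already established: regular points, blow up points with $\mu = 0$, and blow up points with $u_\infty = 0$. By the $\varepsilon$-regularity characterization $\mathcal{R}(u) = \{\Theta = 0\}$ already recorded in the paper, if $a$ is regular then $\Theta(a) = 0$ and there is nothing to prove. For singular $a$, I would fix a sequence $\lambda_i \to 0$ and extract a tangent flow $(u_\infty, \mu, \nu)$ as in Section \ref{sec tangent flow analysis}, then invoke Lemma \ref{lem alternative} to split into the two mutually exclusive sub-cases $\mu = 0$ (Type I) and $u_\infty = 0$ (Type II).

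The key identity is the scaling invariance of the monotonicity quantity, which yields $\Theta(a) = \Theta_s(0,0; u_\infty, \mu)$ for every $s > 0$ (the tangent-flow analogue of \eqref{constancy of monotonocity quantity}); this reduces the proposition to an explicit computation of $\Theta_s$ on the tangent flow, for one conveniently chosen $s$ (say $s = 1$). In the Type I sub-case, Proposition \ref{prop tangent flow structure}(1) forces $u_\infty \equiv [-(p-1)t]^{-1/(p-1)}$ (positivity excludes the minus sign), and since $\mu = 0$ and $\nabla u_\infty = 0$, only the $u_\infty^{p+1}$ and $u_\infty^2$ terms survive in $\Theta_s$; a direct calculation with $\int_{\R^n} G(y,s)\,dy = 1$ gives
\[
\Theta_s(0,0; u_\infty, 0) = \frac{1}{2(p+1)}(p-1)^{-2/(p-1)},
\]
which, after substituting $p - 1 = 4/(n-2)$ and $p + 1 = 2n/(n-2)$, simplifies to $n^{-1}\bigl(\tfrac{n-2}{4}\bigr)^{n/2}$.

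In the Type II sub-case, Proposition \ref{prop tangent flow structure}(2) gives $\mu = M\delta_0 \otimes dt$; since $u_\infty = 0$, only the defect-measure term contributes, and using $(p+1)/(p-1) = n/2$ and $G(0,s) = (4\pi s)^{-n/2}$ we get
\[
\Theta_s(0,0; 0, \mu) = \frac{1}{n} s^{n/2} M (4\pi s)^{-n/2} = \frac{M}{n(4\pi)^{n/2}}.
\]
Finally, because $u$ is positive, Theorem \ref{thm energy quantization I} (or the positive-solution addendum in Corollary \ref{coro relation between densities}) forces $M = N\Lambda$ for some $N \in \mathbb{N}$, yielding the stated formula.

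The main conceptual step—not a computational obstacle, but the one to handle carefully—is the independence of the answer on the choice of blowup sequence $\lambda_i$: the value $\Theta(a)$ is defined intrinsically via the monotonicity formula on $u$, so even though different subsequences could a priori produce different tangent flows, the number $\Theta_s(0,0; u_\infty, \mu)$ equals $\Theta(a)$ for each of them; this, combined with Lemma \ref{lem alternative}, guarantees that $a$ belongs to exactly one of the three classes. The integer $N$ in the Type II case may depend on the blowup sequence, but this is irrelevant for the dichotomy, which asserts only that $\Theta(a)$ lies in the discrete set $n^{-1}(4\pi)^{-n/2}\Lambda\,\mathbb{N}$.
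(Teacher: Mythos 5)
Your proposal is correct and follows essentially the same route as the paper: the paper derives the proposition as a direct consequence of Lemma \ref{lem alternative} together with Proposition \ref{prop tangent flow structure} and the quantization result, leaving implicit exactly the two evaluations of $\Theta_s$ on the tangent flow that you write out (and your arithmetic for both the Giga--Kohn constant $n^{-1}((n-2)/4)^{n/2}$ and the quantized value $M/(n(4\pi)^{n/2})$ checks out). The only point worth tightening is that the two sub-cases of Lemma \ref{lem alternative} are made genuinely exclusive for a singular point by the lower bound $\Theta(a)\geq\varepsilon_\ast$, which rules out $u_\infty=0$ and $\mu=0$ holding simultaneously.
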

Since all of the possible tangent flows form a discrete set (classified according to the value of $\Theta(a)$), we obtain
\begin{coro}
  The tangent flow of $u$ at any point $a\in B_{1/2}$ is unique.
\end{coro}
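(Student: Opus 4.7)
The plan is to show that the pair $(u_\infty,\mu)$ extracted from any blow-up subsequence $\lambda_i\to 0^+$ is completely determined by the single number $\Theta(a)$, which does not depend on the chosen subsequence. No new analysis is needed: the corollary is essentially a bookkeeping of the three ingredients already established in this section.

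First I would invoke Proposition~\ref{prop tangent flow structure} to record that every tangent flow at $a$ satisfies $\nu=0$, $\mu=M\delta_0\otimes dt$ for some $M\ge 0$, and $u_\infty$ is either $0$ or $\pm[-(p-1)t]^{-1/(p-1)}$. Since $u>0$ and $u_{\lambda_i}\to u_\infty$ strongly in $L^q_{\mathrm{loc}}(\mathbb R^n\times\mathbb R^-)$ for every $q<p+1$, the limit $u_\infty$ is nonnegative, so the minus branch is ruled out and the only nontrivial backwardly self-similar profile compatible with positivity is the explicit constant-in-space solution $(p-1)^{-1/(p-1)}(-t)^{-1/(p-1)}$, which contains no free parameter.

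Next, by Lemma~\ref{lem alternative} at least one of $u_\infty$ and $\mu$ vanishes, and by Proposition~\ref{prop dichotomy} the value $\Theta(a)$ lies in one of three disjoint ranges. Cross-referencing these two dichotomies yields exactly three options:
\begin{itemize}
\item $\Theta(a)=0$: then both $u_\infty=0$ and $\mu=0$;
\item $\Theta(a)=n^{-1}\bigl(\tfrac{n-2}{4}\bigr)^{n/2}$: then $\mu=0$ and $u_\infty=(p-1)^{-1/(p-1)}(-t)^{-1/(p-1)}$;
\item $\Theta(a)=n^{-1}(4\pi)^{-n/2}N\Lambda$ for some $N\in\mathbb N$: then $u_\infty=0$ and $\mu=N\Lambda\,\delta_0\otimes dt$, with $N$ recovered from the density identity $\Theta(a)=M/[n(4\pi)^{n/2}]$ used in Corollary~\ref{coro relation between densities}.
\end{itemize}

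In each of the three cases the pair $(u_\infty,\mu)$ is read off directly from $\Theta(a)$. Since $\Theta(a)=\lim_{s\to 0^+}\Theta_s(a)$ is a single intrinsic quantity attached to the point $a$, independent of any rescaling sequence, two subsequences $\lambda_i\to 0$ and $\lambda_i'\to 0$ must produce the same tangent flow. This gives the uniqueness. I do not anticipate any real obstacle here; the only step that deserves an explicit sentence is the positivity argument that eliminates the minus sign in the Type I profile, which is immediate from the strong $L^q_{\mathrm{loc}}$ convergence and the assumption $u>0$.
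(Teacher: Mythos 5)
Your proof is correct and follows essentially the same route as the paper, which deduces uniqueness in one line from the fact that the possible tangent flows form a discrete set classified by the subsequence-independent quantity $\Theta(a)$ (Proposition \ref{prop dichotomy}). Your version merely spells out the case analysis and the elimination of the negative Type I branch, which the paper leaves implicit.
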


Next we study the sets of Type I and Type II blow up points.
\begin{lem}\label{lem open for Type I}
  The set of Type I blow up points is relatively open in $\mathcal{S}(u)$.
\end{lem}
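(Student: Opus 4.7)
The plan is to combine the trichotomy in Proposition~\ref{prop dichotomy} with the upper semi-continuity of the density $\Theta$ from Lemma~\ref{lem u.s.c. IV}. First I will record the key observation that on $B_{1/2}$ the density $\Theta$ takes only the discrete values $0$ (regular), $\Theta_I := n^{-1}\left(\tfrac{n-2}{4}\right)^{n/2}$ (Type I), and $N\Theta_{II,1}$ for some $N\in\mathbb{N}$ (Type II of multiplicity $N$), where $\Theta_{II,1} := n^{-1}(4\pi)^{-n/2}\Lambda$. In particular all Type II densities are bounded below by $\Theta_{II,1}$, so once a strict gap between $\Theta_I$ and $\Theta_{II,1}$ is established, Type I points will be detected purely by the size of $\Theta$.

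The main quantitative ingredient is therefore the strict separation
\[
\Theta_I < \Theta_{II,1},
\]
equivalently $\Lambda > \pi^{n/2}(n-2)^{n/2}$. Using the explicit Aubin--Talenti bubble \eqref{bubble} together with the beta-type integral $\int_0^\infty (1+s^2)^{-n}s^{n-1}\,ds = \Gamma(n/2)^2/(2\Gamma(n))$, I will compute $\Lambda = \pi^{n/2}\,\Gamma(n/2)\,[n(n-2)]^{n/2}/\Gamma(n)$, so the gap reduces to $n^{n/2}\Gamma(n/2) > \Gamma(n)$. Since $\Gamma(n)/\Gamma(n/2)$ is a product of at most $n/2$ factors each strictly less than $n$, this bound holds in every dimension $n\geq 3$, and in particular throughout the range $n\geq 7$ at which this part is stated.

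With these two ingredients the rest is soft. Given a Type I point $a\in\mathcal{S}(u)$, set $\varepsilon := (\Theta_{II,1}-\Theta_I)/2 > 0$. By Lemma~\ref{lem u.s.c. IV} there is a neighborhood $U$ of $a$ on which $\Theta(x) < \Theta(a)+\varepsilon = \Theta_I + \varepsilon < \Theta_{II,1}$. For any $x\in U\cap\mathcal{S}(u)$, Proposition~\ref{prop dichotomy} constrains $\Theta(x)$ to lie in $\{\Theta_I\}\cup\{N\Theta_{II,1}:N\geq 1\}$; the upper bound excludes every Type II value and leaves $\Theta(x)=\Theta_I$, i.e., $x$ is itself a Type I blow-up point. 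Thus $a$ has a neighborhood in $\mathcal{S}(u)$ consisting entirely of Type I points, which is the desired relative openness.

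The only step requiring any calculation is the inequality $\Theta_I < \Theta_{II,1}$, and even there the computation amounts to evaluating the Aubin--Talenti energy and a transparent estimate on the ratio of gamma functions; no substantive obstacle is anticipated.
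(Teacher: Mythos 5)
Your proof follows the paper's argument exactly: the paper's one-line proof likewise deduces the lemma from the gap inequality $n^{-1}\left(\frac{n-2}{4}\right)^{n/2}<n^{-1}(4\pi)^{-n/2}\Lambda$ (which it states only as ``directly verified'') together with the upper semi-continuity of $\Theta$ from Lemma~\ref{lem u.s.c. IV}, and your appeal to Proposition~\ref{prop dichotomy} to exclude every Type II density value is precisely the implicit content of that argument. The only caveat is in your verification of the gap: the bound $\Gamma(n)/\Gamma(n/2)<n^{n/2}$ via ``a product of at most $n/2$ factors each less than $n$'' is literal only for even $n$ (for odd $n$ the ratio is not a product of integer recursion steps and needs, say, the duplication formula or a direct check), but the inequality itself is true and this is a step the paper also leaves unverified.
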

\begin{proof}
It is directly verified that
 \begin{equation}\label{difference between Type I and II}
 n^{-1}\left(\frac{n-2}{4}\right)^{\frac{n}{2}}<n^{-1}\left(4\pi\right)^{-\frac{n}{2}}\Lambda.
 \end{equation}
The conclusion then follows from the upper semi-continuity of $\Theta$, see Lemma \ref{lem u.s.c. IV}.
\end{proof}

\begin{lem}\label{lem isolated for Type II}
  The set of Type II blow up points is isolated in $\mathcal{S}(u)$.
\end{lem}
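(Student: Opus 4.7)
The plan is to argue by contradiction. Suppose $a \in B_{1/2}$ is a Type~II blow up point but is not isolated in $\mathcal{S}(u)$, so there exists a sequence of distinct points $a_j \in \mathcal{S}(u)\setminus\{a\}$ with $a_j \to a$. Set $\lambda_j := |a_j-a| \to 0$ and $b_j := (a_j-a)/\lambda_j \in \partial B_1$; after passing to a subsequence, $b_j \to b_\infty \in \partial B_1$, and the rescaled sequence $u_{\lambda_j}(x,t):= \lambda_j^{2/(p-1)}u(a+\lambda_j x, T+\lambda_j^2 t)$ converges (in the sense of Section~\ref{sec tangent flow analysis}) to a tangent flow $(u_\infty,\mu)$ on $\R^n\times\R^-$.

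Next I would identify this tangent flow. By Proposition~\ref{prop tangent flow structure}, either $u_\infty \equiv 0$ or $u_\infty \equiv \pm[-(p-1)t]^{-1/(p-1)}$, and $\mu = M\,\delta_0 \otimes dt$ for some $M\geq 0$. The scaling invariance $\Theta_s(0,0;u_{\lambda_j}) = \Theta_{\lambda_j^2 s}(a;u) \to \Theta(a)$ as $\lambda_j\to 0$ forces the density of the tangent flow at the origin to equal $\Theta(a) = n^{-1}(4\pi)^{-n/2}N\Lambda$. Since the Type~I profile $\pm[-(p-1)t]^{-1/(p-1)}$ carries density $n^{-1}((n-2)/4)^{n/2}$, which is strictly smaller than $n^{-1}(4\pi)^{-n/2}\Lambda$ by the inequality \eqref{difference between Type I and II}, Lemma~\ref{lem alternative} then pins down $u_\infty \equiv 0$ and $\mu = N\Lambda\,\delta_0\otimes dt$.

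Now I would compute the density of this tangent flow at the point $(b_\infty,0)$. Since $u_\infty\equiv 0$ and $\mu_{-s} = N\Lambda\,\delta_0$, a direct calculation gives
\[
\Theta_s(b_\infty,0;u_\infty,\mu) \;=\; \frac{N\Lambda}{n(4\pi)^{n/2}}\, e^{-|b_\infty|^2/(4s)},
\]
which tends to $0$ as $s\to 0^+$ because $|b_\infty|=1$. Hence $\Theta(b_\infty,0;u_\infty,\mu)=0$.

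Finally I would transport this vanishing back to the original flow. By scaling invariance, $\Theta(a_j) = \Theta(b_j,0;u_{\lambda_j})$, and since each $a_j$ is a blow up point, $\Theta(a_j)\geq \varepsilon_\ast$. Monotonicity of $\Theta_s$ (Proposition~\ref{prop monotoncity formula IV}) together with the convergence of $u_{\lambda_j}$ to $(u_\infty,\mu)$ yields, for each fixed $s>0$,
\[
\limsup_{j\to\infty}\Theta(a_j) \;\leq\; \limsup_{j\to\infty}\Theta_s(b_j,0;u_{\lambda_j}) \;=\; \Theta_s(b_\infty,0;u_\infty,\mu),
\]
so letting $s\to 0$ gives $\limsup_j \Theta(a_j)\leq 0$, contradicting $\Theta(a_j)\geq\varepsilon_\ast$. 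The main technical obstacle is the second equality above, which is the passage to the limit of a monotonicity quantity evaluated at a fixed time slice along a weakly convergent sequence. I would handle it by working with the time-integrated quantity $\overline{\Theta}_s := s^{-1}\int_s^{2s}\Theta_\tau\,d\tau$ (cf.~Remark~\ref{rmk on monotonicity quantity}), which is continuous in the base point, and by combining the weak convergence of the energy measures with the smooth convergence $u_{\lambda_j}\to 0$ on compact subsets of $(\R^n\setminus\{0\})\times\R^-$ provided by the $\varepsilon$-regularity theorem (Theorem~\ref{thm ep regularity II}); near the concentration locus $\{0\}\times\R^-$ the contribution is controlled by the exponential Gaussian factor in $G(\cdot-b_j,s)$ and the Morrey space bound.
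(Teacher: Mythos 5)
Your proposal is correct and follows essentially the same route as the paper: rescale at the separation scale $\lambda_j=|a_j-a|$, use that the Type~II tangent flow concentrates entirely on $\{0\}\times\R^-$ so that $\Theta_s$ at the unit-distance points $b_j$ drops below $\varepsilon_\ast$ for a fixed small $s$ and large $j$, and contradict the lower bound $\Theta_s(b_j,0;u_{\lambda_j})=\Theta_{\lambda_j^2 s}(a_j;u)\geq\Theta(a_j)\geq\varepsilon_\ast$ coming from monotonicity and $a_j\in\mathcal{S}(u)$. Your extra care with the time-averaged quantity $\overline{\Theta}_s$ and the smooth convergence away from $\{0\}\times\R^-$ just makes explicit a step the paper leaves implicit.
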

\begin{proof}
  Assume $x_0$ is Type II, that is, $\Theta(x_0)=n^{-1}\left(4\pi\right)^{-\frac{n}{2}}N\Lambda$ for some $N\in\mathbb{N}$. Assume by the contrary that there exists a sequence of $x_i\in\mathcal{S}(u)$ converging to $x_0$.

Denote
\[ \lambda_i:=|x_i-x_0|, \quad  \widehat{x}_i:=\frac{x_i-x_0}{\lambda_i}.\]
Define the blow up sequence
\[ u_i(x,t):=\lambda_i^{\frac{2}{p-1}}u(x_0+\lambda_i x, T+\lambda_i^2 t).\]
Then the tangent flow analysis together with the fact that $x_0$ is Type II implies that
\[ |\nabla u_i|^2dxdt\rightharpoonup N\Lambda \delta_0\otimes dt.\]
As a consequence, there exists a fixed, small $s>0$ such that
\[\Theta_s\left(\widehat{x}_i;u_i\right)\leq \varepsilon_\ast/2.\]
However, by the monotonicity formula (Proposition \ref{prop monotoncity formula IV}), the fact that $x_i\in\mathcal{S}$ and the scaling invariance of $\Theta_s$, we always have
\[ \Theta_s\left(\widehat{x}_i;u_i\right)=\Theta_{\lambda_i^2s}(x_i;u)\geq \Theta(x_i;u_i)\geq  \varepsilon_\ast.\]
This is a contradiction.
\end{proof}

\section{Exclusion of bubble clustering}\label{sec bubble cluster IV}
\setcounter{equation}{0}

In this section we exclude Type II blow ups with bubble clustering. From now on we assume there exists a Type II blow up point. By Lemma \ref{lem isolated for Type II}, this blow up point is isolated in $\mathcal{S}(u)$. Hence after a translation and a scaling, we may assume $u\in C^\infty(\overline{Q_1^-}\setminus\{(0,0)\})$, and $(0,0)$ is a Type II blow up point.

Under these assumptions, Theorem \ref{thm bubble clustering convergence} is applicable to any rescaling of $u$ at $(0,0)$. Combining this theorem with a continuity argument in time, we obtain
\begin{prop}\label{prop blow up profile IV}
There exists an $N\in\mathbb{N}$ so that for all $t$ sufficiently close to $0$, the following results hold.
  \begin{enumerate}
  \item  There exist exactly $N$ local maximal point of $u(\cdot,t)$ in the interior of $B_1(0)$.

  Denote this point by $\xi_{j}^\ast(t)$ ($j=1,\cdots, N$) and let
$\lambda_{j}^\ast(t):=u(\xi_{ij}^\ast(t),t)^{-\frac{2}{n-2}}$.

\item Both $\lambda_{j}^\ast$ and $\xi_{j}^\ast$ depend continuously on $t$.

\item There exists a constant $K$ such that, for all $i$ and any $x\in B_1$,
\begin{equation}\label{scaling invariant estimate IV}
    u(x,t)\leq K \max_{1\leq j \leq N}|x-\xi_{j}^\ast(t)|^{-\frac{n-2}{2}}.
\end{equation}

  \item
As $t\to 0$,
\[\frac{\lambda_{j}^\ast(t)}{\sqrt{-t}}\to0, \quad \frac{|\xi_{j}^\ast(t)|}{\sqrt{-t}}\to 0,\]
and  the function
\[u_{j}^t(y,s):=\lambda_{j}^\ast(t)^{\frac{n-2}{2}}u\left(\xi_{j}^\ast(t)+\lambda_{j}^\ast(t)y , t+\lambda_{j}^\ast(t)^2s \right),\]
converges to $W(y)$ in $C^\infty_{loc}( \R^n\times\R)$.

  \item  For any $1\leq j\neq k \leq N$ and $t\in[-9/16,0)$,
\begin{equation}\label{bubbles separating}
   \lim_{t\to 0}\frac{|\xi_{j}^\ast(t)-\xi_{k}^\ast(t)|}{\max\left\{\lambda_{j}^\ast(t),\lambda_{k}^\ast(t)\right\}}=+\infty.
\end{equation}
\end{enumerate}
\end{prop}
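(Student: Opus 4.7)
The plan is to apply Theorem \ref{thm bubble clustering convergence} of Part \ref{part many bubbles} to a family of parabolic rescalings of $u$ at times approaching $0$, and to glue the resulting local bubble descriptions into a single family, continuous in $t$, on a left-punctured neighborhood of $0$.

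For each $t_0 \in (-\delta, 0)$ with $\delta > 0$ small, set $\lambda_{t_0} := \sqrt{-t_0/2}$ and
\[
u_{t_0}(x, \tau) := \lambda_{t_0}^{(n-2)/2}\, u\bigl(\lambda_{t_0}\, x,\; t_0 + \lambda_{t_0}^2\, \tau\bigr),
\]
a smooth positive solution of \eqref{eqn} on a parabolic cylinder strictly containing $Q_1$ (the pullback of $t=0$ is $\tau = 2$). A direct change of variables gives $u_{t_0}(x,\tau) = u_{\lambda_{t_0}}(x, \tau - 2)$, where $u_\lambda$ is the tangent-flow rescaling of Section \ref{sec tangent flow analysis}. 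Hence, along any sequence $t_k \to 0^-$, the sequence $u_k := u_{t_k}$ inherits from the tangent flow at $(0,0)$ all three assumptions (III.a--III.c) of Part \ref{part many bubbles} with multiplicity equal to the integer $N$ from Proposition \ref{prop dichotomy}: the weak $L^{p+1}$-limit is zero (Proposition \ref{prop tangent flow structure}(1) together with Lemma \ref{lem alternative}, using $\mu \neq 0$); the defect measure on $Q_1$ is $N\Lambda\, \delta_0 \otimes dt$ (Proposition \ref{prop tangent flow structure}(2), with total mass fixed by positivity and the quantization \eqref{energy quantization I, positive}); and $\partial_t u_k \to 0$ strongly in $L^2(Q_1)$, obtained by upgrading weak $L^2$-convergence via $\nu = 0$ (Proposition \ref{prop tangent flow structure}(3)), since $\|\partial_t u_k\|_{L^2(Q_1)}^2 \to \int_{Q_1}|\partial_t u_\infty|^2 + \nu(Q_1) = 0$. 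Applying Theorem \ref{thm bubble clustering convergence} to $u_k$ then yields, after a subsequence, exactly $N$ continuous local maxima $\widetilde{\xi}_j(\tau), \widetilde{\lambda}_j(\tau)$ of $u_k(\cdot,\tau)$ in $B_1$ for $\tau \in [-9/16, 9/16]$, satisfying the scaling-invariant decay estimate \eqref{scaling invariant estimate III}, the no-towering separation \eqref{bubbles separating II}, and smoothly converging $W$-profiles.

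Unscaling via the definition of $u_{t_0}$ delivers, on the time window $I_{t_0} := [(41/32)\, t_0,\ (23/32)\, t_0]$, exactly $N$ local maxima $\xi_j(t), \lambda_j(t)$ of $u(\cdot, t)$ fulfilling items (1)--(5) of the proposition restricted to $I_{t_0}$. Choosing $t_k := (23/32)^k\, t_1$ for some $t_1 \in (-\delta, 0)$ provides a covering of $((41/32) t_1, 0)$ by overlapping intervals $\{I_{t_k}\}$; on each overlap the two identifications of local maxima must agree as unordered sets (both enumerate the local maxima of $u(\cdot, t)$ in the same region, and by Item (4) of Theorem \ref{thm bubble clustering convergence} all of them sit well inside the smaller ball $B_{\lambda_{t_{k+1}}}$). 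After relabelling, the local descriptions glue into the single continuous family $\{\xi_j^*(t), \lambda_j^*(t)\}_{j=1}^N$ on $((41/32) t_1, 0)$; the rates $\lambda_j^*(t)/\sqrt{-t}, |\xi_j^*(t)|/\sqrt{-t} \to 0$ follow from $\lambda_{t_0} = \sqrt{-t_0/2}$ together with Item (4) of Theorem \ref{thm bubble clustering convergence}.

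The main delicate step is verifying assumption (III.b) for $u_k$ with the full mass $N\Lambda$ concentrated precisely at the spatial origin (as opposed to being distributed across several spatial points in $B_1$, which would violate (III.b)). Spatial concentration at $0$ is ensured by Lemma \ref{lem isolated for Type II}: since $(0,0)$ is isolated in $\mathcal{S}(u)$, the rescaled blow-up locus $\Sigma^*_{u_k}$ collapses to $\{0\} \times [-1, 1]$ in the limit. The total mass $N\Lambda$ is then pinned down by matching $\Theta(0,0) = n^{-1}(4\pi)^{-n/2}\, N\Lambda$ from Proposition \ref{prop dichotomy} against the monotonicity formula (Proposition \ref{prop monotoncity formula IV}) applied to the tangent-flow sequence $u_{\lambda_{t_k}}$; alternatively, this agrees with the general multiplicity classification via Theorem \ref{thm energy quantization I}.
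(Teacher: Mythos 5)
Your proposal is correct and takes essentially the same route as the paper, which disposes of this proposition in one line (``Theorem~\ref{thm bubble clustering convergence} is applicable to any rescaling of $u$ at $(0,0)$\dots combining this theorem with a continuity argument in time''): your verification of the hypotheses of Part~\ref{part many bubbles} via the tangent-flow structure (Proposition~\ref{prop tangent flow structure}, Lemma~\ref{lem alternative}, Lemma~\ref{lem isolated for Type II}), the upgrade of $\partial_t u_k$ to strong $L^2$ convergence from $\nu=0$, and the overlapping-interval gluing supply precisely the details the paper omits. The one point to tighten is that Theorem~\ref{thm bubble clustering convergence} is stated only after passage to a subsequence, while your covering argument needs its conclusions for \emph{every} large $k$ (and uniformly in $\tau\in[-9/16,9/16]$ for item (5)); this is repaired by the standard device of assuming failure along a subsequence of rescaling times, applying the theorem to that subsequence, and contradicting the resulting limit description at $\tau=0$.
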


The main result of  this section  is
\begin{prop}\label{prop no bubble clustering}
  The multiplicity $N$ in Proposition \ref{prop blow up profile IV} must be $1$.
\end{prop}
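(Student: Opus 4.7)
The plan is to argue by contradiction with Proposition \ref{prop exclusion of bubble clusetering} from Section \ref{sec bubble cluster}, applied to a suitable parabolic rescaling. Assume $N\ge 2$, pick a sequence $t_i\to 0^-$, and relabel the bubbles of Proposition \ref{prop blow up profile IV} so that $|\xi_1^\ast(t_i)-\xi_2^\ast(t_i)|=\min_{j\neq k}|\xi_j^\ast(t_i)-\xi_k^\ast(t_i)|=:2d_i$. Consider
\[
\tilde u_i(x,t):=d_i^{(n-2)/2}\, u\bigl(\xi_1^\ast(t_i)+d_i x,\; t_i+d_i^2 t\bigr).
\]
By item (5) of Proposition \ref{prop blow up profile IV} the rescaled scales $\tilde\lambda_j^\ast(t):=\lambda_j^\ast(t_i+d_i^2 t)/d_i$ tend to $0$ and the rescaled centers $\tilde\xi_j^\ast$ have minimum pairwise separation exactly $2$ at rescaled time $0$, with $\tilde\xi_1^\ast(0)=0$ and $|\tilde\xi_2^\ast(0)|=2$. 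Passing to a subsequence, each $\tilde\xi_j^\ast(0)$ either converges to some $P_j\in\R^n$ or diverges to infinity, producing at least two distinct limit points $P_1=0$, $|P_2|=2$.

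The next step verifies the hypotheses of Proposition \ref{prop exclusion of bubble clusetering} for $\tilde u_i$ on $B_{R_i}\times(-T_i,T_i)$ with $R_i,T_i\to\infty$. The spatial radius $R_i=(1-|\xi_1^\ast(t_i)|)/d_i$ diverges since $\xi_1^\ast(t_i)\to 0$. The time radius $T_i$ is taken as the minimum of the forward budget $|t_i|/(2d_i^2)$ (which keeps the rescaled cylinder away from the blow-up point $(0,T)$), the backward budget to $t=-1$, and the maximal time over which the rescaled bubble separation remains $\ge 2$; this last quantity is controlled by the $C^{1,\theta/2}$ velocity bound $|\xi_j^{\ast\prime}(t)|\lesssim \lambda_j^\ast(t)^{(n-4)/2}$ obtained from Lemma \ref{lem estiamte of parameters}, applied locally around each isolated bubble via \eqref{bubbles separating} and the Harnack inequality for $\lambda$ of Section \ref{sec Harnack for lambda}. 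The smallness of $\int_{B_{R_i}\times(-T_i,T_i)}|\partial_t\tilde u_i|^2$ follows from $\nu=0$ in the Type II tangent flow (Proposition \ref{prop tangent flow structure}) combined with $d_i/\sqrt{|t_i|}\to 0$, which makes $\tilde u_i$ a rescaling of the tangent flow at a sub-scale. The uniform $N$-bubble decomposition and the smallness of $\tilde\lambda_j^\ast$ transfer directly from Proposition \ref{prop blow up profile IV}.

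With the hypotheses verified, Proposition \ref{prop exclusion of bubble clusetering} yields $T_i\le 100\max_j|\log\tilde\lambda_j^\ast(0)|$. The contradiction is then closed by showing, along a suitable subsequence of $t_i$, that $T_i$ actually exceeds this logarithm. For $n\ge 7$ the exponent $(n-4)/2\ge 3/2$ in the velocity bound is strictly positive, so polynomial powers of $\lambda$ dominate any $|\log\lambda|$ factor, and the Type II decay $d_i/\sqrt{|t_i|}\to 0$ gives $|t_i|/d_i^2\to\infty$ at a rate that, after extracting a further subsequence, beats $|\log\tilde\lambda_j^\ast(0)|$ and forces the contradiction.

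\textbf{Main obstacle.} The delicate point is precisely this last quantitative comparison, because the Type II rate is only qualitative and both sides of $T_i\le 100\max_j|\log\tilde\lambda_j^\ast(0)|$ diverge: in marginal scenarios where $s(t)^2|\log(\lambda/s)|/|t|\not\to 0$ the forward budget and the logarithmic bound are of the same order, and a naive choice of $d_i$ does not suffice. I anticipate that handling this will require combining an iterative rescaling procedure (in the spirit of Section \ref{sec bubble tree construction} and Theorem \ref{thm bubble clustering convergence}) to reduce to a truly isolated and simple multi-bubble configuration, together with the $L^2$ integrability of $\partial_t u$ toward the blow-up time, so as to extract a subsequence $t_i\to 0^-$ along which $|t_i|/d_i^2$ strictly dominates $|\log\tilde\lambda_j^\ast(0)|$.
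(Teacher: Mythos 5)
Your setup — rescaling by the minimal bubble separation and invoking Proposition \ref{prop exclusion of bubble clusetering} — is the same as the paper's, but the way you try to close the contradiction is not the right mechanism, and the obstacle you flag at the end is genuine and unresolved in your argument. The paper does not attempt to show that the rescaled solution lives longer than $100\max_j|\log\tilde\lambda_j^\ast(0)|$ by comparing with the forward budget $|t_i|/d_i^2$; indeed no quantitative Type II rate is available to make $|t_i|/d_i^2$ dominate $|\log\lambda|$, and moreover the $T_i$ in Proposition \ref{prop exclusion of bubble clusetering} is constrained by the requirement that the rescaled separation stay bounded below on \emph{all} of $(-T_i,T_i)$, which forward in time is violated long before the blow-up time is reached (the separation is tending to $0$). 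So the binding constraint on $T_i$ is precisely the separation-maintenance time, and your proposed comparison never gets off the ground.

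The paper's resolution turns the logic around. It introduces the dyadic stopping times $t_i$ at which $\rho(t):=\min_{j\neq k}|\xi_j^\ast(t)-\xi_k^\ast(t)|$ first equals $2^{-i}$, rescales by $2^{-i}$, and lets $T_i$ be exactly the (rescaled) time it takes for the separation between the two closest bubbles to drop from $1$ to $1/2$. Proposition \ref{prop exclusion of bubble clusetering} then gives the \emph{upper} bound $T_i\leq 100|\log\lambda_{ij}^\ast(0)|$. The contradiction comes from the center-velocity bound $|\xi_{ij}'(t)|\lesssim\lambda_{ij}(t)^{(n-4)/2}$ (together with the Harnack-type control $\lambda_{ij}(t)\leq 2\lambda_{ij}(0)$ on $[0,T_i]$, obtained by integrating $|\lambda_{ij}'|\lesssim\lambda_{ij}^{(n-4)/2}$ over a time interval of length $\lesssim|\log\lambda_{ij}(0)|$): over a time $T_i\lesssim|\log\lambda_{ij}(0)|$ each center moves by at most $C\lambda_{ij}(0)^{(n-4)/2}|\log\lambda_{ij}(0)|\ll 1/3$, so the separation at time $T_i$ is still $\geq 2/3$, contradicting the fact that by definition it equals $1/2$ there. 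Equivalently, $\rho(t_{i+1})\geq\tfrac23\rho(t_i)$ while the construction forces $\rho(t_{i+1})=\tfrac12\rho(t_i)$. You have both ingredients on the table (you cite the velocity bound and the logarithmic upper bound on $T_i$), but you use the velocity bound only to \emph{define} one of the candidate time budgets rather than to show that the separation cannot halve within the allotted logarithmic time; that assembly is the missing step, and without it the argument does not conclude.
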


The proof is by a contradiction argument, so assume $N\geq 2$. For each $t\in(-1,0)$, denote
\[\rho(t):=\min_{1\leq k \neq \ell\leq N} |\xi_k^\ast(t)-\xi_\ell^\ast(t)|.\]
By Point (4) in Proposition \ref{prop blow up profile IV}, for any $t<0$, $\rho(t)>0$, and
\begin{equation}\label{inter-disctance convergence}
  \lim_{t\to0} \rho(t) =0.
\end{equation}
Then by the continuity of $\rho(t)$ (thanks to the continuity of $\xi_j^\ast(t)$), there exists a large positive integer $i_0$ so that the following is well-defined.
For each $i\geq i_0$, define $t_i$ to be the first time satisfying $\rho(t)=2^{-i}$. In other words,
\begin{equation}\label{choice of time}
 \rho(t_i)=2^{-i} \quad \mbox{and} \quad  \rho(t)>2^{-i} ~~ \mbox{in}~~ [-1,t_i).
\end{equation}

On the other hand, we claim that
\begin{lem}\label{lem 30.1}
 For all $i$ large,
  \begin{equation}\label{30.2}
\rho(t_{i+1})\geq \frac{2}{3}\rho(t_i).
  \end{equation}
\end{lem}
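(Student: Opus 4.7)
The plan is to rescale around bubble~$1$ at time $t_i$ so that the minimum inter-bubble separation is normalized to $1$, and then combine a velocity-based lower bound on the ``halving time'' with the upper bound from Proposition~\ref{prop exclusion of bubble clusetering} to reach a contradiction for $i$ large. Concretely, set $\rho_i:=\rho(t_i)=2^{-i}$ and
\[
\tilde u_i(y,s) := \rho_i^{(n-2)/2}\, u\bigl(\xi_1^{\ast}(t_i)+\rho_i\, y,\ t_i+\rho_i^2 s\bigr),
\]
with rescaled centers $\tilde\xi_j^{\ast}(s)$, scales $\tilde\lambda_j^{\ast}(s)$, and minimum separation $\tilde\rho(s)$. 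By construction $\tilde\rho(0)=1$, and $\tilde\rho(s)\geq 1$ on the past window $s\in[-(1+t_i)/\rho_i^2,\,0]$, whose length tends to $+\infty$. Property \eqref{bubbles separating} of Proposition~\ref{prop blow up profile IV} forces the rescaled initial scales $\varepsilon_{ij}:=\tilde\lambda_j^{\ast}(0)=\lambda_j^{\ast}(t_i)/\rho_i$ of the clustering bubbles to tend to zero, so $\varepsilon_i:=\max_j\varepsilon_{ij}\to 0$ as $i\to\infty$.

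Next I would install a local orthogonal decomposition around each $\tilde\xi_j^{\ast}$ as in Section~\ref{sec bubble cluster}, and apply the uniform estimates of Proposition~\ref{prop uniform estimate} together with the Harnack-type bound \eqref{Harnack II} to obtain
\[
|\dot{\tilde\xi}_j^{\ast}(s)|+|\dot{\tilde\lambda}_j^{\ast}(s)|\;\lesssim\;\tilde\lambda_j^{\ast}(s)^{(n-4)/2},
\]
with $\tilde\lambda_j^{\ast}(s)$ comparable to $\varepsilon_{ij}$ on its stability interval. Setting $\tilde T_i := (t_{i+1}-t_i)/\rho_i^2$, the definition of $t_{i+1}$ forces $\tilde\rho(\tilde T_i)=1/2$, so the pair of centers realizing $\tilde\rho$ must separate by at least $1/2$ over $[0,\tilde T_i]$. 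Integrating the velocity bound yields the lower bound
\[
\tilde T_i\;\gtrsim\;\varepsilon_i^{-(n-4)/2}.
\]

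For the matching upper bound, suppose toward a contradiction that $\tilde\rho$ first drops to $2/3$ at a rescaled time $\tilde T_i^{+}\in(0,\tilde T_i]$. The further rescaling $\check u_i(y,s):=3^{-(n-2)/2}\,\tilde u_i(y/3,\,s/9)$ has minimum bubble separation $\geq 2$ on the symmetric window $(-T_i^{\ast},T_i^{\ast})$ with $T_i^{\ast}:=\min\bigl((1+t_i)/(9\rho_i^2),\,\tilde T_i^{+}/9\bigr)\to\infty$, rescaled scales $3\varepsilon_{ij}\to 0$, and $\int|\partial_s\check u_i|^2\to 0$ (the latter by scaling \eqref{integral bound up to blow up time} and invoking Lemma~\ref{lem nonconcentration of time derivative}). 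Proposition~\ref{prop exclusion of bubble clusetering} then gives $T_i^{\ast}\lesssim \max_j|\log\varepsilon_{ij}|$, whence $\tilde T_i^{+}\lesssim|\log\varepsilon_i|$ once the future endpoint is binding. Since $n\geq 7$ gives $(n-4)/2\geq 3/2$, polynomial decay dominates logarithmic growth and $\varepsilon_i^{-(n-4)/2}\gg|\log\varepsilon_i|$; combined with the lower bound $\tilde T_i^{+}\gtrsim \varepsilon_i^{-(n-4)/2}$, this is a contradiction. Hence $\tilde\rho\geq 2/3$ on $[0,\tilde T_i]$, which is precisely \eqref{30.2}.

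The hard part will be carrying out the multi-bubble localization rigorously: installing the local orthogonal decomposition around each $\tilde\xi_j^{\ast}$ uniformly in $i$ so that Proposition~\ref{prop uniform estimate} delivers the velocity bound in this setting, verifying the non-concentration hypothesis $\int|\partial_s\check u_i|^2\to 0$ from \eqref{integral bound up to blow up time}, and controlling the disparity between the various bubble scales $\varepsilon_{ij}$ so that the polynomial-vs-logarithm comparison closes the argument uniformly in $i$.
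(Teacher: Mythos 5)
Your proposal is correct and takes essentially the same route as the paper: rescale to unit inter-bubble separation, bound the available time by $O(|\log\varepsilon|)$ via Proposition \ref{prop exclusion of bubble clusetering}, bound the bubble velocities by $O(\varepsilon^{(n-4)/2})$ via the reduction equations together with the Harnack control \eqref{Harnack II} on $\lambda$, and conclude that the separation cannot drop from $1$ to $1/2$ (the paper states this as a displacement bound $\lesssim\varepsilon^{(n-4)/2}|\log\varepsilon|\ll 1/3$ rather than as incompatible lower and upper bounds on the halving time, and performs the rescaling at the intermediate time $\widetilde t_i$ rather than at $t_i$). The one loose end you flag --- that Proposition \ref{prop exclusion of bubble clusetering} controls the time only by $\max_j|\log\varepsilon_{ij}|$, which exceeds $|\log\varepsilon_i|$ when the bubble scales are disparate --- is present in the paper's own proof as well, which quietly replaces this $\max$ by a $\min$ in \eqref{estimate of time leap}.
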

This is clearly a contradiction with the definition of  $\rho(t_i)$. The proof of Proposition \ref{prop no bubble clustering} is thus complete. (This contradiction in fact shows that if $N\geq 2$, $\xi_i^\ast(t)$ cannot converges to the same point as $t\to 0^-$, that is, \eqref{inter-disctance convergence} does not hold.)

The proof of Lemma \ref{lem 30.1} uses two facts: the first one is the reduction equation for scaling parameters, which will be applied in a forward in time manner; the second one is the weak form of Schoen's Harnack inequality, Proposition \ref{prop weak Schoen Harnack}, which will applied in a backward in time manner.
\begin{proof}[Proof of Lemma \ref{lem 30.1}]
Without loss of generality, assume
 \[\rho(t_{i+1})=|\xi_1^\ast(t_{i+1})-\xi_2^\ast(t_{i+1})|=2^{-i-1}.\]
Choose $\widetilde{t}_i$ to be the first time in $[t_i,t_{i+1}]$ satisfying
\[ |\xi_1^\ast(t)-\xi_2^\ast(t)|=2^{-i},\]
that is,
\[ |\xi_1^\ast(\widetilde{t}_i)-\xi_2^\ast(\widetilde{t}_i)|=2^{-i}, \quad \mbox{and} \quad |\xi_1^\ast(t)-\xi_2^\ast(t)|>2^{-i}
 ~~ \mbox{for any} ~~ t\in[t_i,\widetilde{t}_i).\]
 This is well defined, if we note the continuous dependence of $|\xi_1^\ast(t)-\xi_2^\ast(t)|$ with respect to $t$, and the fact that
 \[  |\xi_1^\ast(t_i)-\xi_2^\ast(t_i)|\geq \rho(t_i)=2^{-i}.\]

Let
\[u_i(x,t):=2^{-\frac{n-2}{2}i} u\left(\xi_1^\ast(\widetilde{t}_i)+2^{-i} x, \widetilde{t}_i+4^{-i} t\right).\]
It is well defined for $x\in B_{2^{i-1}}$ and $t\in(-4^{i-1}, - 4^i\widetilde{t}_i)$.

For each $t\in(-4^{i-1}, - 4^i\widetilde{t}_i)$, $u_i$ has $N$ bubbles, located at
\[ \xi^\ast_{ij}(t):=2^i \left[\xi_j^\ast\left(\widetilde{t_i}+4^it\right)-\xi_1^\ast\left(\widetilde{t}_i\right)\right],\]
 with bubble scale
 \[ \lambda^\ast_{ij}(t):=2^i\lambda_j^\ast\left(\widetilde{t_i}+4^it\right).\]

Denote
\[ T_i:= 4^i \left(t_{i+1}-\widetilde{t}_i\right).\]

By the definition of $\widetilde{t}_i$ and $t_{i+1}$,  for any $1\leq j\neq k\leq N$, we have
\begin{equation}\label{distance lower bound 2}
  |\xi^\ast_{ij}(t)-\xi^\ast_{ik} (t)|\geq \frac{1}{2}, \quad \forall t\in \left[-4^{i-1},T_i\right].
\end{equation}
We also have the normalization condition
\begin{equation}\label{distance lower bound 3}
|\xi^\ast_{i1}(0)-\xi^\ast_{i2}(0)|=1.
\end{equation}
Moreover, by Point (5) in Proposition \ref{prop blow up profile IV}, as $i\to+\infty$, if $|\xi_{ij}^\ast(t)|$ does not escape to infinity, then
\[ \lambda_{ij}^\ast(t) \to 0 \quad \mbox{uniformly in any compact set of}~~ \R.\]
Then Proposition \ref{prop exclusion of bubble clusetering} is applicable if $i$ is large enough, which gives
  \begin{equation}\label{estimate of time leap}
    T_i  \leq 100 \min\left\{\left|\log \lambda_{i1}^\ast(0)\right|, \left|\log \lambda_{i2}^\ast(0)\right|\right\}.
  \end{equation}

For any $t\in[0, T_i]$,   Proposition  \ref{prop orthogonal decompostion} can be applied to $u_i$ in $Q_{1/2}(\xi^\ast_{i1}(t),t)$ and $Q_{1/2}(\xi^\ast_{i2}(t),t)$. Denote the scaling parameter by $ \lambda_{i1}(t)$ and $ \lambda_{i2}(t)$.
By Proposition \ref{prop orthogonal decompostion},
\begin{equation}\label{changing the scaling parameter}
    \lambda_{i1}(t)=\left[1+o(1)\right] \lambda^\ast_{i1}(t), \quad |\xi_{i1}^\ast(t)-\xi_{i1}(t)|=o(\lambda^\ast_{i1}(t)).
\end{equation}
The reduction equation for $ \lambda_{i1}(t)$ (see Proposition \ref{prop uniform estimate}) gives
\begin{equation}\label{differential Harnack for lambda}
   \left| \lambda_{i1}^\prime(t)\right|\lesssim   \lambda_{i1}(t)^{\frac{n-4}{2}}.
\end{equation}
For any $t\in(0,T_i]$, integrating \eqref{differential Harnack for lambda} on $[0, t]$   leads to
\[
  \lambda_{i1}(t)^{-\frac{n-6}{2}}\geq\lambda_{i1}(0)^{-\frac{n-6}{2}}-CT_i.
\]
Plugging \eqref{estimate of time leap} and \eqref{changing the scaling parameter} into this inequality, after a simplification we get
 \begin{eqnarray}\label{Harnack for lambda IV}
 \lambda_{i1}(t)&\leq & \lambda_{i1}(0)\left[1+C\lambda_{i1}(0)^{\frac{n-6}{2}}\left|\log \lambda_{i1}(0)\right|\right]\\
&\leq  & 2\lambda_{i1}(0), \nonumber
\end{eqnarray}
where we have used the fact that $ \lambda_{i1}(0)\ll 1$ in the last step.

The reduction equation for $\xi_{i1}(t)$ is
\[ \left|\xi_{i1}^\prime(t)\right|\lesssim   \lambda_{i1}(t)^{\frac{n-4}{2}}.\]
Integrating this on $[0,T_i]$ and using \eqref{Harnack for lambda IV} and \eqref{estimate of time leap} again, we get
\[
 |\xi_{i1}(T_i)-\xi_{i1}(0)|\lesssim \lambda_{i1}(0)^{\frac{n-4}{2}}\left|\log\lambda_{i1}(0)\right|\ll 1.
\]
The same estimates hold for $\xi_{i2}$. Combining these two estimates with the second equality in \eqref{changing the scaling parameter} (and a similar estimate for $\xi_{i2}$), we obtain
\[ |\xi_{i1}^\ast(T_i)-\xi_{i2}^\ast(T_i)|\geq |\xi_{i1}^\ast(0)-\xi_{i2}^\ast(0)|-\frac{1}{3}=\frac{2}{3}|\xi_{i1}^\ast(0)-\xi_{i2}^\ast(0)|.\]
Scaling back to $u$, this is
\[ \rho(t_{i+1})=|\xi_1(t_{i+1})-\xi_2(t_{i+1})|\geq \frac{2}{3}|\xi_1(\widetilde{t}_i)-\xi_2(\widetilde{t}_i)|=\frac{2}{3}\rho(t_i).\qedhere\]
\end{proof}

\section{Exclusion of blow up with only one bubble}\label{sec completion of proof}
\setcounter{equation}{0}

In this section, we  prove
\begin{prop}\label{prop no Type II blow up}
  Under the setting of Theorem \ref{thm local analysis for first time singularity}, any point $a\in\mathcal{S}(u)$ must be Type I, in the sense that
\[\Theta(a)=n^{-1}\left(\frac{n-2}{4}\right)^{\frac{n}{2}}.\]
\end{prop}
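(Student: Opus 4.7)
I argue by contradiction, assuming some point $a \in \mathcal{S}(u) \cap B_{1/2}$ is of Type II. After translating $a$ to the origin and applying Lemma \ref{lem isolated for Type II}, I may assume $u \in C^\infty(\overline{Q_1^-} \setminus \{(0,0)\})$ with $(0,0)$ an isolated Type II singularity. Proposition \ref{prop no bubble clustering} forces the multiplicity $N = 1$, so Proposition \ref{prop blow up profile IV} supplies a single continuous bubble trajectory $(\xi^\ast(t), \lambda^\ast(t))$ defined for $t$ near $0^-$. Writing $\eta(t) := \lambda^\ast(t)/\sqrt{|t|}$, the Type II hypothesis becomes $\eta(t) \to 0$ as $t \to 0^-$; I plan to derive a differential inequality for $\mu(t) := \lambda^\ast(t)^2$ that, together with a short ODE argument, contradicts $\nu := \mu/|t| = \eta^2 \to 0$.

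For a sequence $t_i \to 0^-$ I set $r_i := \sqrt{|t_i|/2}$ and rescale
\[
\tilde u_i(x,t) := r_i^{(n-2)/2}\, u\bigl(r_i x,\; t_i + r_i^2 t\bigr), \qquad (x,t) \in Q_1,
\]
so that the blow-up sits at rescaled time $t = 2$ and $\tilde u_i$ is smooth on $Q_1$ for $|t_i|$ small. I would verify the Part II hypotheses \textbf{(II.a)--(II.c)} for $\{\tilde u_i\}$: the weak limit $u_\infty \equiv 0$ and the vanishing time-derivative defect $\nu = 0$ both follow from Proposition \ref{prop tangent flow structure} combined with Lemma \ref{lem alternative}; the energy defect measure on $Q_1$ equals $\Lambda\,\delta_0 \otimes dt$ because $N = 1$; and the Lipschitz hypothesis \eqref{Lip assumption} needed for Part II is supplied by Section \ref{sec Lipschitz hypothesis}. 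The rescaled bubble scale is $\tilde\varepsilon_i = \lambda^\ast(t_i)/r_i = \sqrt{2}\,\eta(t_i) \to 0$. Proposition \ref{prop inner estimate 2} then yields, for every $\gamma > 0$,
\[
\sup_{|t| \le 8/9}\bigl|\tilde\lambda_i(t)\,\tilde\lambda_i'(t)\bigr| \le C(\gamma)\,\tilde\varepsilon_i^{(n-2)/2 - \gamma}.
\]
Using the estimates \eqref{deviation form max pt} of Proposition \ref{prop orthogonal decompostion} to identify $\tilde\lambda_i(t) = \lambda^\ast(t_i + r_i^2 t)/r_i$ up to lower-order corrections, the product $\tilde\lambda_i\,\tilde\lambda_i'$ equals $\lambda^\ast\,\lambda^{\ast\prime}$ evaluated at $t_i + r_i^2 t$. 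Taking $t = 0$ and varying $t_i$ produces the pointwise differential inequality
\[
|\mu'(t)| \le C'(\gamma)\,\nu(t)^{p(\gamma)}, \qquad p(\gamma) := \tfrac{n-2}{4} - \tfrac{\gamma}{2},
\]
valid for all $t$ sufficiently close to $0^-$.

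Since $n \ge 7$ gives $(n-6)/2 \ge 1/2$, I pick $\gamma \in \bigl(0,(n-6)/2\bigr)$ so that $p := p(\gamma) > 1$. Passing to $\tau := -t > 0$ and using $\mu = \nu\,\tau$, so that $d\mu/d\tau = \nu'\tau + \nu$, the inequality rewrites as $|\nu'(\tau)\tau + \nu(\tau)| \le C\,\nu(\tau)^p$. Type II gives $\nu(\tau) \to 0$ as $\tau \to 0^+$, and since $p > 1$ the right-hand side is bounded by $\nu(\tau)/2$ for $\tau$ small, forcing $\nu'(\tau)\,\tau \le -\nu(\tau)/2$, i.e., $(\log\nu)'(\tau) \le -1/(2\tau)$. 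Integrating from $\tau_1 < \tau_0$ with both small yields
\[
\nu(\tau_1) \ge \nu(\tau_0)\,(\tau_0/\tau_1)^{1/2},
\]
which diverges as $\tau_1 \to 0^+$, contradicting $\nu(\tau_1) \to 0$. The main technical obstacle I anticipate is the careful verification that the rescaled sequence $\tilde u_i$ genuinely sits inside the Part II framework---in particular the strong $L^2$ convergence of $\partial_t \tilde u_i$ and the matching between the geometric parameters $(\xi^\ast, \lambda^\ast)$ furnished by Proposition \ref{prop blow up profile IV} and the orthogonal parameters $(\tilde\xi_i, \tilde\lambda_i, \tilde a_i)$ produced by Proposition \ref{prop orthogonal decompostion}; the errors in this matching must be shown to be lower order so that the pointwise bound on $\mu'$ survives. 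With those identifications in hand, the dimension-critical role of the restriction $n \ge 7$ is transparent at the ODE level, since $p(\gamma) > 1$ holds precisely in this range.
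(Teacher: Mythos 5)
Your proposal is correct and follows essentially the same route as the paper: after reducing to a single bubble via Proposition \ref{prop no bubble clustering}, both arguments apply the Part \ref{part one bubble} modulation estimates to the rescaling of $u$ at the self-similar scale $\sqrt{|t|}$ and then integrate the resulting differential inequality for the modulation parameter $\lambda$, with the restriction $n\ge 7$ entering at exactly the same point. The only difference is in how the ODE is closed: the paper integrates $\frac{d}{dt}\lambda^{-(n-6)/2}$ directly to conclude that $\lambda$ stays bounded away from zero, whereas you run a logarithmic comparison for $\nu=\lambda^2/|t|$ to show it must blow up; both yield the desired contradiction with the Type II hypothesis.
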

In view of Proposition \ref{prop no bubble clustering}, we need to consider the remaining case in Proposition \ref{prop blow up profile IV}, that is, when there is only one bubble, $N=1$.
Under this assumption, Proposition \ref{prop blow up profile IV} now reads as
\begin{prop}\label{prop blow up profile with one bubble}
For any $t$ sufficiently close to $0$, there exists a unique maxima point of $u(\cdot,t)$ in  $B_1(0)$. Denote this point by $\xi^\ast(t)$ and let
$\lambda^\ast(t):=u(\xi^\ast(t),t)^{-\frac{2}{n-2}}$.

As $t\to 0$,
\[\lambda^\ast(t)\to0, \quad |\xi^\ast(t)|\to 0,\]
and   the function
\[u^t(y,s):=\lambda^\ast(t)^{\frac{n-2}{2}}u\left(\xi^\ast(t)+\lambda^\ast(t)y , t+\lambda^\ast(t)^2s  \right),\]
converges to $W(y)$ in $C^\infty_{loc}( \R^n\times \R)$.
\end{prop}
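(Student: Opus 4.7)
This statement is essentially Proposition \ref{prop blow up profile IV} specialized to multiplicity $N=1$, which is now forced by Proposition \ref{prop no bubble clustering}. My plan is to combine these two results with the tangent flow analysis of Section \ref{sec tangent flow analysis} and a routine uniformization in $t$.

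For an arbitrary sequence $t_k\to 0^-$, I would introduce the parabolic rescaling
\[
v_k(x,s):=(-t_k)^{\frac{1}{p-1}}\,u\bigl(\sqrt{-t_k}\,x,\;t_k+(-t_k)s\bigr),
\]
which solves \eqref{eqn} on parabolic cylinders exhausting $\R^n\times(-\infty,0)$. My first task is to verify the hypotheses {\bf (III.a)--(III.c)} of Part \ref{part many bubbles} for $\{v_k\}$ on any fixed cylinder $Q_R$. Proposition \ref{prop tangent flow structure} together with Lemma \ref{lem alternative} identifies the tangent flow at the Type II point $(0,0)$: the weak limit $u_\infty$ vanishes, the defect measure of $|\nabla v_k|^2$ and of $v_k^{p+1}$ is $N\Lambda\delta_0\otimes dt$, and $\nu=0$. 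Proposition \ref{prop no bubble clustering} forces $N=1$, while $\nu=0$ upgrades the weak convergence of $\partial_t v_k$ to strong $L^2$-convergence, so {\bf (III.a)--(III.c)} hold.

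Next I would apply Proposition \ref{prop blow up profile IV} with $N=1$ to $\{v_k\}$ on $Q_1$: for all $k$ large, $v_k(\cdot,s)$ has a unique interior local maximum $\tilde\xi_k^\ast(s)\in B_1$ for $s\in[-9/16,9/16]$; the scaling-invariant bound $v_k(x,s)\lesssim |x-\tilde\xi_k^\ast(s)|^{-(n-2)/2}$ holds; $\tilde\xi_k^\ast(s),\tilde\lambda_k^\ast(s)\to 0$; and the inner rescaling of $v_k$ around $(\tilde\xi_k^\ast(s),s)$ converges to the Aubin--Talenti bubble $W$ in $C^\infty_{loc}$. Unscaling via $\xi^\ast(t_k):=\sqrt{-t_k}\,\tilde\xi_k^\ast(0)$ and $\lambda^\ast(t_k):=\sqrt{-t_k}\,\tilde\lambda_k^\ast(0)$ converts each of these into the asserted property for $u(\cdot,t_k)$; since the sequence $t_k$ was arbitrary, a standard contradiction argument promotes the conclusion to all $t$ sufficiently close to $0$.

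The only point that needs a separate (trivial) argument is promoting the unique \emph{local} maximum provided by Proposition \ref{prop blow up profile IV}(1) to the unique \emph{global} maximum of $u(\cdot,t)$ on $\overline{B_1}$: the scaling-invariant bound uniformly controls $u(\cdot,t)$ away from any prescribed neighborhood of $\xi^\ast(t)$, while $u(\xi^\ast(t),t)=\lambda^\ast(t)^{-(n-2)/2}\to+\infty$, so for $t$ close to $0$ the global maximum on $\overline{B_1}$ must lie in that neighborhood, where $\xi^\ast(t)$ is the sole local maximum. There is no conceptual obstacle; the main point is bookkeeping---ensuring the critical-exponent inputs ($\nu=0$, $N\in\mathbb{N}$, the Liouville theorem for backward self-similar solutions) are invoked correctly, all of which is packaged in Proposition \ref{prop tangent flow structure}.
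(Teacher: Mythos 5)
Your proposal is correct and follows exactly the paper's route: the paper obtains this proposition as the literal specialization of Proposition \ref{prop blow up profile IV} to $N=1$, with $N=1$ forced by Proposition \ref{prop no bubble clustering}, and Proposition \ref{prop blow up profile IV} itself is established by applying Theorem \ref{thm bubble clustering convergence} to rescalings at $(0,0)$ (whose hypotheses are verified via the tangent flow analysis and Proposition \ref{prop tangent flow structure}) together with a continuity-in-time argument. Your extra remark on passing from the unique interior local maximum to the unique maximum is a harmless bookkeeping point that the paper elides.
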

Next, applying Proposition \ref{prop orthogonal decompostion} (with the notation of $\eta_K$ etc. as used in Part \ref{part one bubble}) to a suitable rescaling of $u$ at $(\xi^\ast(t),t)$ gives
\begin{prop}[Orthogonal condition]\label{prop orthogonal decompostion III}
For any $t$ sufficiently close $0$, there exists a unique $(\xi(t),\lambda(t),a(t))\in\R^n\times \R_+\times\R$ with
\begin{equation}\label{deviation form max pt III}
\frac{|\xi(t)-\xi^\ast(t)|}{\lambda(t)}+\Big|\frac{\lambda(t)}{\lambda^\ast(t)}-1\Big|+\Big|\frac{a(t)}{\lambda(t)}\Big|=o(1),
\end{equation}
such that for any $i=0,\cdots, n+1$,
\begin{equation}\label{orthogonal III}
  \int_{B_1}\left[u(x,t)-W_{\xi(t),\lambda(t)}(x)-a(t)Z_{0,\xi(t),\lambda(t)}(x)\right]\eta_K\left(\frac{x-\xi(t)}{\lambda(t)}\right) Z_{i,\xi(t),\lambda(t)}(x)dx =0.
\end{equation}
\end{prop}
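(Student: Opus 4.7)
The plan is to reduce Proposition \ref{prop orthogonal decompostion III} to the argument already carried out for Proposition \ref{prop orthogonal decompostion} (which was stated for a sequence $u_i$ of solutions, while here we work with a fixed solution $u$ at times $t$ near $T$). Proposition \ref{prop blow up profile with one bubble} already provides the approximate decomposition: in the rescaled coordinates
\[
\widetilde{u}(y,t):=\lambda^\ast(t)^{\frac{n-2}{2}}u\bigl(\xi^\ast(t)+\lambda^\ast(t)y,t\bigr),
\]
we have $\widetilde{u}(\cdot,t)\to W$ in $C^\infty_{loc}(\R^n)$ as $t\to 0^-$. So with the ansatz centered on $(\xi^\ast(t),\lambda^\ast(t),0)$, the twelve orthogonality integrals in \eqref{orthogonal III} are all $o(1)$ as $t\to 0^-$.

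Concretely, first I would define, for $(\xi,\lambda,a)$ in a small ball $B_{\rho_\ast}^{n+2}\subset\R^n\times\R\times\R$, the smooth map
\[
F_t(\xi,\lambda,a):=\left(\int_{\R^n}\bigl[\widetilde{u}(y,t)-W_{\xi,1+\lambda}(y)-aZ_{0,\xi,1+\lambda}(y)\bigr]\eta_K\!\left(\tfrac{y-\xi}{1+\lambda}\right) Z_{i,\xi,1+\lambda}(y)\,dy\right)_{i=0}^{n+1}.
\]
By the $C^\infty_{loc}$ convergence, $|F_t(0,0,0)|=o(1)$ as $t\to 0^-$. Then I would compute the differential $DF_t$ at a general point in $B_{\rho_\ast}^{n+2}$: writing $\widetilde{u}=W+(\widetilde{u}-W)$ and using $\widetilde{u}-W=o(1)$ in $C^\infty_{loc}(\R^n)$, the main order part of $DF_t$ comes from the variation of $W_{\xi,1+\lambda}+aZ_{0,\xi,1+\lambda}$ against the kernel $Z_{i,\xi,1+\lambda}\eta_K$. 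The mutual $L^2(\R^n)$ orthogonality of $Z_0,\ldots,Z_{n+1}$, truncated by $\eta_K$ (with a small error which is controlled by the tails of $Z_j$ outside $B_K$), yields a diagonally dominated matrix whose diagonal entries are positive universal constants (coming from $\int Z_i^2$). For the dimension restriction $n\ge 7$, the integrals involving $\nabla\eta_K$ against $Z_{n+1}$ are absolutely convergent because $\alpha=(n-2)/2>2$ in this range, so the error produced by the cutoff is $O(K^{3+\alpha-n})+o(1)$, which can be absorbed after choosing $K$ large and $t$ close enough to $T$. Consequently $DF_t$ is uniformly invertible in $B_{\rho_\ast}^{n+2}$ for all $t$ near $T$.

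The inverse function theorem (or the standard fixed-point argument applied to $(\xi,\lambda,a)\mapsto (\xi,\lambda,a)-DF_t(0)^{-1}F_t(\xi,\lambda,a)$) then yields, for $t$ close enough to $T$, a unique $(\widehat\xi(t),\widehat\lambda(t),\widehat a(t))\in B_{\rho_\ast}^{n+2}$ solving $F_t=0$. Undoing the rescaling at $(\xi^\ast(t),\lambda^\ast(t))$ gives the desired triple
\[
\xi(t):=\xi^\ast(t)+\lambda^\ast(t)\widehat\xi(t),\quad \lambda(t):=\lambda^\ast(t)(1+\widehat\lambda(t)),\quad a(t):=\lambda^\ast(t)\widehat a(t),
\]
which satisfies \eqref{orthogonal III} together with the smallness estimate \eqref{deviation form max pt III}, the latter being inherited from $|(\widehat\xi(t),\widehat\lambda(t),\widehat a(t))|=o(1)$.

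The main (and essentially only) obstacle is checking that $DF_t$ is invertible uniformly in $t$ and in the base point of the ball, because it requires careful handling of the cut-off $\eta_K$ against the slowly-decaying kernel $Z_{n+1}(y)=O(|y|^{2-n})$; this is precisely where the hypothesis $n\ge 7$ (hence $\alpha>2$) enters. Everything else is a direct transcription of the argument for Proposition \ref{prop orthogonal decompostion}, now run pointwise in $t$ rather than along the subsequence index $i$.
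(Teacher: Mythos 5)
Your proposal is correct and is essentially the paper's own argument: the paper simply invokes Proposition \ref{prop orthogonal decompostion} applied to a rescaling of $u$ at $(\xi^\ast(t),t)$, and your write-up reproduces that proposition's proof (smallness of $F_t(0,0,0)$ from the blow-up profile, diagonal dominance of $DF_t$ using the $L^2$-orthogonality of the $Z_i$ and the $n\ge 7$ control of the $\nabla\eta_K$ tail terms, then the inverse function theorem) pointwise in $t$ instead of along a sequence index. No gaps.
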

Starting with this decomposition, the analysis in Part \ref{part one bubble} is applicable to
\[ \widetilde{u}^t(y,s):=(-t)^{\frac{n-2}{4}}u\left(\sqrt{-t}y, -ts\right), \quad \forall t\in(-1/4,0).\]
This  gives an ordinary differential inequality for $\lambda(t)$:
\begin{equation}\label{inequality for lambda 1}
  \left|\lambda^\prime(t)\right|\lesssim (-t)^{\frac{n-10}{4}}\lambda(t)^{\frac{n-4}{2}}.
\end{equation}
This inequality can be rewritten as
\begin{equation}\label{inequality for lambda 2}
  \left|\frac{d}{dt}\lambda(t)^{-\frac{n-6}{2}}\right|\lesssim (-t)^{\frac{n-10}{4}}
\end{equation}
Because $n\geq 7$,
\[\frac{n-10}{4}>-1.\]
Integrating \eqref{inequality for lambda 2} leads to
\[\lim_{t\to 0}\lambda(t)^{-\frac{n-6}{2}}<+\infty.\]
Thus as $t\to 0^-$, $\lambda(t)$ does not converges to $0$. By \eqref{deviation form max pt III} and Proposition \ref{prop blow up profile with one bubble},
\[ \limsup_{t\to 0^-}\sup_{B_1}u(x,t)<+\infty.\]
Hence $u$ does not blow up at time $t=0$. This is a contradiction.

\section{Proof of main results}\label{sec proof of main results}
\setcounter{equation}{0}

In this section we first finish the proof of Theorem \ref{thm local analysis for first time singularity}, and then show how Theorem \ref{main result for Cauchy}, Theorem \ref{main result for Cauchy-Dirichlet} and Corollary \ref{coro energy collapsing} follow from this theorem.

\subsection{Proof of Theorem \ref{thm local analysis for first time singularity}}

Combining   Proposition \ref{prop no Type II blow up} with the tangent flow analysis in Section \ref{sec tangent flow analysis}, we deduce that for any  $a\in\mathcal{S}(u)$,
  \begin{equation}\label{weak Type I}
    \limsup_{t\to T}(T-t)^{\frac{1}{p-1}}u\left(a+\sqrt{T-t}y,t+(T-t)s\right)=(p-1)^{-\frac{1}{p-1}}(-s)^{\frac{1}{p-1}}
  \end{equation}
 uniformly in any compact set of  $\R^n\times\R^-$.

However, this is only a qualitative description and we cannot obtain \eqref{interior Type I} from it. Now we show how to obtain a quantitative estimate. We will mainly use the stability of Type I blow ups, as characterized in \eqref{difference between Type I and II}. (For other notions of stability of Type I blow ups, see Merle-Zaag \cite{Merle-Zaag-stability}, Kammerer -Merle-Zaag \cite{Merle-Zaag-dynamical},   Collot-Merle--Rapha\"{e}l \cite{Collot2017stability}, Collot-Rapha\"{e}l-Szeftel \cite{Collot2019stability}.)

\begin{lem}
For each $a\in B_{1/2}$,  there exist two  constants $C(a)>1$ and $\rho(a)<1/8$ such that for any $0<s<\rho(a)^2$,
\begin{equation}\label{interior Type I local}
  \sup_{x\in B_{\rho(a)}(a)}u(x,T-s)\leq C(a)s^{-\frac{1}{p-1}}.
\end{equation}
\end{lem}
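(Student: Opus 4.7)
The strategy is a contradiction argument that exploits the spectral gap \eqref{difference between Type I and II} between the Type I and Type II density values, together with the fact (Proposition \ref{prop no Type II blow up} and Lemma \ref{lem isolated for Type II}) that in any sufficiently small ball around $a$, every blow-up point is Type I. If the conclusion fails, one extracts sequences $x_k\to a_\infty$ (lying in a small closed ball around $a$) and $s_k\to 0^+$ with
\[
 s_k^{1/(p-1)}u(x_k,T-s_k)\to+\infty.
\]
Since $u$ is smooth outside $\mathcal{S}(u)\times\{T\}$, $a_\infty$ must be a blow-up point, hence by the previous reduction a Type I one: $\Theta(a_\infty)=n^{-1}\bigl(\tfrac{n-2}{4}\bigr)^{n/2}$.

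\textbf{Rescaling and bubble extraction.} Set $\mu_k:=u(x_k,T-s_k)^{-2/(n-2)}$, so that the hypothesis reads $\mu_k/\sqrt{s_k}\to 0$. Define
\[
 u_k(y,t):=\mu_k^{(n-2)/2}u\bigl(x_k+\mu_k y,\,T-s_k+\mu_k^2 t\bigr),
\]
which solves \eqref{eqn} on a parabolic domain exhausting $\R^n\times\R$ and satisfies $u_k(0,0)=1$. Scaling Proposition \ref{prop Morrey IV} gives uniform Morrey-type energy bounds, and $\partial_t u_k$ decays in $L^2_{\mathrm{loc}}$ thanks to Lemma \ref{lem nonconcentration of time derivative}. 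Thus, after passing to a subsequence, the analysis of Part \ref{part energy concentration} applies and a nontrivial concentration appears: either $u_k$ converges smoothly to a standard bubble $W_{\xi_0,\lambda_0}$, or a defect measure $\Lambda\sum_j k_j\,\delta_{P_j}\otimes dt$ with $\sum_jk_j\ge 1$ develops. In either case we obtain at least one bubble of energy $\Lambda$, concentrated at scale $\mu_k$ about some point $\tilde x_k=x_k+O(\mu_k)$.

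\textbf{Density contradiction.} Pick an intermediate scale $\nu_k$ with $\mu_k\ll \nu_k\ll\sqrt{s_k}$. Because the bubble is supported essentially inside $B_{C\mu_k}(\tilde x_k)$, the localized Gaussian weight in $\Theta_{\nu_k^2}$ captures almost all its energy, so
\[
 \Theta_{\nu_k^2}\bigl(\tilde x_k,\,T-s_k+\nu_k^2\bigr)\ \ge\ \frac{\Lambda}{n(4\pi)^{n/2}}-o(1).
\]
Applying the monotonicity formula (Proposition \ref{prop monotoncity formula IV}) forward in the scale variable up to any fixed small $\tau$, and using upper semicontinuity of $\Theta_\tau$ as $(\tilde x_k,T-s_k+\nu_k^2)\to (a_\infty,T)$ (which is guaranteed by $\nu_k\gg\sqrt{s_k}$ being false — here we instead use that $\sqrt{s_k}\gg\nu_k$, making the base time approach $T$ and the base point approach $a_\infty$), we pass to the limit and obtain
\[
 \Theta(a_\infty)\ \ge\ \frac{\Lambda}{n(4\pi)^{n/2}}\ >\ \frac{1}{n}\Bigl(\frac{n-2}{4}\Bigr)^{n/2},
\]
by \eqref{difference between Type I and II}. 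This contradicts $\Theta(a_\infty)=n^{-1}\bigl(\tfrac{n-2}{4}\bigr)^{n/2}$ and finishes the proof.

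\textbf{Main obstacle.} The delicate point is the bookkeeping in the third step: one must choose the intermediate scale $\nu_k$ and the base point $\tilde x_k$ for the monotonicity comparison so that (i) almost the full bubble energy is captured by the Gaussian of width $\nu_k$, (ii) the base point converges to $a_\infty$ and the base time to $T$ in a way compatible with upper semicontinuity of $\Theta$, and (iii) the error terms in the monotonicity formula across the scales $\nu_k^2$ up to a fixed $\tau$ are controlled. The compatibility of the three smallness conditions $\mu_k\ll\nu_k\ll\sqrt{s_k}$ together with the tangent flow structure at $a_\infty$ (Proposition \ref{prop tangent flow structure}) is what makes the argument close.
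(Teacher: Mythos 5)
Your overall strategy (exploit the gap \eqref{difference between Type I and II} through the monotonicity formula) is the right one, but two steps have genuine gaps. First, the bubble extraction: you rescale at the pointwise scale $\mu_k=u(x_k,T-s_k)^{-2/(n-2)}$ about the \emph{regular} time $T-s_k$ and invoke Lemma \ref{lem nonconcentration of time derivative} to get $\partial_t u_k\to 0$ in $L^2_{loc}$. That lemma is proved under hypothesis {\bf(II.c)} of Part \ref{part one bubble}; in the present setting the only available control is Proposition \ref{prop Morrey IV}, which gives the scale-invariant \emph{bound} $\int_{Q_r^-}|\partial_t u|^2\le C$, not decay. Without $\partial_t u_k\to 0$ you cannot conclude that the weak limit is a time-independent bubble, nor invoke the quantization of Lemma \ref{lem energy quantization I} to say a nontrivial defect measure carries at least $\Lambda$. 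Second, the inequality $\Theta_{\nu_k^2}(\tilde x_k,T-s_k+\nu_k^2)\ge \Lambda\,n^{-1}(4\pi)^{-n/2}-o(1)$ is not justified: $\Theta$ is built from the sign-indefinite density $\tfrac12|\nabla u|^2-\tfrac{1}{p+1}u^{p+1}$ on a single time slice, so the bubble core's positive contribution could in principle be cancelled by the neck region $B_{\nu_k}\setminus B_{R\mu_k}$; controlling this requires the time-averaged Morrey/Gaussian estimates (cf.\ Remark \ref{rmk on monotonicity quantity} and the last step of the proof of Lemma \ref{lem blow up at all time}), which you acknowledge as the ``main obstacle'' but do not supply. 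Also, the continuity you need to pass the lower bound from $(\tilde x_k,T-s_k+\nu_k^2)$ to $(a_\infty,T)$ is continuity of $\Theta_\tau$ at \emph{fixed} $\tau>0$ (valid because $u$ is smooth at time $T-\tau$), not upper semicontinuity of $\Theta$, which points in the wrong direction.

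The paper runs the argument the other way around, which avoids both difficulties: it first upgrades the Type I identity $\Theta(a)=n^{-1}\left(\frac{n-2}{4}\right)^{n/2}$ to a \emph{uniform upper} density bound $\Theta_{r(a)^2}(x)<n^{-1}(4\pi)^{-n/2}\Lambda-2\varepsilon$ for all $x\in B_{2\rho(a)}(a)$, using only the monotonicity formula and continuity of $\Theta_{r(a)^2}$; it then blows up at the parabolic scale $\sqrt{s_k}$ about time $T$. The inherited upper bound, combined with Corollary \ref{coro relation between densities}, rules out any defect measure and any singular point of the limit, forcing smooth convergence of $u_k$ and contradicting $u_k(0,-1)\ge k$. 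You should either adopt that route or close the two gaps above.
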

\begin{proof}
If $a\in\mathcal{R}(u)$, by definition, there exists an $\rho(a)>0$ such that
$u\in L^\infty(Q_{\rho(a)}^-(a,T))$. Then \eqref{interior Type I local} holds trivially by choosing a suitable constant $C(a)$.

Next, assume $a\in\mathcal{S}(u)$.
By \eqref{difference between Type I and II}, we can take a small $\varepsilon>0$ so that
\begin{equation}\label{gap between Type I and II}
 n^{-1}\left(\frac{n-2}{4}\right)^{\frac{n}{2}}+4\varepsilon< n^{-1}\left(4\pi\right)^{-\frac{n}{2}}\Lambda.
\end{equation}
By \eqref{weak Type I}, there exists an $r(a)>0$ such that
\begin{equation}\label{close to Type I}
  \Theta_{r(a)^2}(a)<n^{-1}\left(\frac{n-2}{4}\right)^{\frac{n}{2}}+\varepsilon.
\end{equation}
Next, combining \eqref{weak Type I} with Proposition \ref{prop Morrey IV} and the monotonicity formula (Proposition \ref{prop monotoncity formula IV}), we find another   constant  $\rho(a)<r(a)$ such that
\[\Theta_{r(a)^2}(x)\leq \Theta_{r(a)^2}(a)+\varepsilon, \quad \forall  x\in B_{2\rho(a)}(a).\]
Substituting this inequality into \eqref{close to Type I} and noting \eqref{gap between Type I and II}, we get
\begin{equation}\label{control on monotonicity quantity}
 \Theta_{r(a)^2}(x)< n^{-1}\left(4\pi\right)^{-\frac{n}{2}}\Lambda-2\varepsilon, \quad \forall  x\in B_{2\rho(a)}(a).
\end{equation}

In the following, we argue by contradiction. Assume with $\rho(a)$ defined as above, there does not exist such a constant $C(a)$ so that
\eqref{interior Type I local} can hold, that is, there exists a sequence of $x_k\in B_{\rho(a)}(a)$, $s_k\to 0$ with
\begin{equation}\label{absurd assumption IV}
  u(x_k,T-s_k)\geq k s_k^{-\frac{1}{p-1}}.
\end{equation}
Define the blow up sequence
\[ u_k(x,t):=s_k^{\frac{1}{p-1}}u(x_k+ \sqrt{s_k}y, T+ s_k t).\]
As in Section \ref{sec tangent flow analysis},  by applying results in Part \ref{part energy concentration}, we may assume $u_k$ converges weakly to $u_\infty$, and
\[ |\nabla u_k|^2dxdt \rightharpoonup |\nabla u_\infty|^2dxdt+\mu\]
weakly as Radon measures in $\R^n\times\R^-$. By \eqref{absurd assumption IV},
\[ u_k(0,-1)\geq k.\]
Hence $u_k$ cannot converge smoothly to $u_\infty$. On the other hand, we claim that $u_k$  does converge smoothly to $u_\infty$. This contradiction then finishes the proof of this lemma.

{\bf Proof of the claim.} We first prove $\mu=0$. By scaling \eqref{control on monotonicity quantity} and passing to the limit, we get
\begin{equation}\label{control on monotonicity quantity, 2}
  \Theta_s(x,0;u_\infty,\mu)\leq  n^{-1}\left(4\pi\right)^{-\frac{n}{2}}\Lambda-2\varepsilon, \quad \forall  x\in\R^n, \quad s>0.
\end{equation}

By Corollary \ref{coro relation between densities}, for $\mu$-a.e. $(x,t)$,
\begin{equation}\label{control on monotonicity quantity, 3}
    \Theta(x,t;u_\infty,\mu)\geq  n^{-1}\left(4\pi\right)^{-\frac{n}{2}}\Lambda.
 \end{equation}
On the other hand, if $s$ is large enough, by the Morrey space bound as in \eqref{Morrey scaled}, we get
\begin{equation}\label{control on monotonicity quantity, 4}
\Theta_s(x,t;u_\infty,\mu)\leq \Theta_s(x,0;u_\infty,\mu)+\varepsilon.
 \end{equation}
By the monotonicity of $\Theta_s(\cdot)$, these three inequalities lead to a contradiction unless $\mu=0$.

Next we show that $u_\infty\in C^\infty(\R^n\times\R^-)$. First, because $\mu=0$, as in Theorem \ref{thm Marstrand}, we deduce that $u_\infty$ is a suitable weak solution of \eqref{eqn} in $\R^n\times\R^-$.
If there is a singular point of $u_\infty$, say $(x,t)$, by the analysis in Subsection \ref{subsec proof of energy quantization} and Theorem \ref{thm energy quantization I}, any tangent flow of $u_\infty$ at $(x,t)$ would be of the form
\[ N_1\Lambda\delta_0\otimes dt\lfloor_{\R^-}+N_2\Lambda\delta_0\otimes dt\lfloor_{\R^+},\]
where $N_1\geq 1$, $N_2\geq 0$. Then by Corollary \ref{coro relation between densities}, we get
\[\Theta(x,t;u_\infty)\geq n^{-1}\left(4\pi\right)^{-\frac{n}{2}}\Lambda.\]
Using this inequality to replace \eqref{control on monotonicity quantity, 3} and aruging as above,   we get a contradiction with \eqref{control on monotonicity quantity, 2}. In conclusion, the singular set of $u_\infty$ is empty.

Now $\mu=0$ implies that $u_k$ converges to $u_\infty$ strongly. The fact that $u_\infty$ is smooth, implies that, for any $(x,t)\in\R^n\times\R^-$, we can apply the $\varepsilon$-regularity theorem, Theorem \ref{thm ep regularity II}, to $u_k$ in a sufficiently small cylinder $Q_r^-(x,t)$. This   implies that  $u_k$ converges  to $u_\infty$ in $C^\infty(\R^n\times\R^-)$.
\end{proof}

These balls $\{B_{\rho(a)}(a)\}$ form a covering of $B_{1/2}$. Take a finite sub-cover of $B_{1/2}$ from $\{B_{\rho(a)}(a)\}$, $\{B_{\rho_i}(a_i) \}$. For
\[ t\geq T-\min_i \rho_i^2,\]
\eqref{interior Type I local} is a direct consequence of \eqref{interior Type I}, if we choose the constant in \eqref{interior Type I local} to be
\[ C:= \max_{i}C(a_i).\]
Since \eqref{interior Type I} holds trivially for $ t\leq T-\min_i \rho_i^2$, we finish the proof of Theorem \ref{thm local analysis for first time singularity}.

\subsection{Cauchy-Dirichlet problems}\label{subsec Cauchy-Dirichlet}

Define
\[H(t):=\int_{\Omega}u(x,t)^2dx, \quad E(t):=\int_{\Omega}\left(\frac{|\nabla u(x,t)|^2}{2}-\frac{|u(x,t)|^{p+1}}{p+1}\right)dx.\]
The following estimate is   Blatt-Struwe's \cite[Lemma 6.3]{Struwe1}.
\begin{lem}\label{lem integral estiamte}
There exists a constant $C$ such that for any $t\in(T/2,T)$,
\[
\left\{\begin{aligned}
& H(t)\leq C(T-t)^{-\frac{2}{p-1}},\\
& \int_{T/2}^{t}\int_{\Omega}\left[|\nabla u(y,s)|^2+|u(y,s)|^{p+1} \right]dyds\leq C(T-t)^{-\frac{2}{p-1}}.
\end{aligned}\right.
\]
\end{lem}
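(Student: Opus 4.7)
The argument is a classical energy-method calculation (following Blatt--Struwe) that I would carry out in three steps.

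First, testing the equation against $u$ and against $\partial_tu$ and integrating by parts (using the Dirichlet condition) gives the two fundamental identities
\[\tfrac12 H'(t)=-\int_\Omega|\nabla u|^2+\int_\Omega u^{p+1}=\tfrac{p-1}{2}\int_\Omega|\nabla u|^2-(p+1)E(t),\qquad E'(t)=-\int_\Omega|\partial_tu|^2\le 0.\]
In particular $E$ is monotonically decreasing, and $H'$ is expressed purely in terms of $\int|\nabla u|^2$ and $E$.

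Second, to get the pointwise rate $H(t)\le C(T-t)^{-2/(p-1)}$, I would introduce $\phi(t):=(T-t)^{2/(p-1)}H(t)$ and show that $\phi$ stays bounded as $t\to T$. Differentiating and using the identity above yields
\[\phi'(t)=(T-t)^{2/(p-1)}\bigl[(p-1)\!\int|\nabla u|^2-2(p+1)E(t)\bigr]-\tfrac{2}{p-1}(T-t)^{2/(p-1)-1}H(t),\]
so the task reduces to absorbing the first bracket into the $H$-term. This is where the crude hypothesis~\eqref{integral bound up to blow up time} enters: parabolically rescaling $u$ at scale $\lambda=\sqrt{T-t}$ puts~\eqref{integral bound up to blow up time} into a form that, on a unit-size backward cylinder in scaled coordinates, controls $\int|\nabla u_\lambda|^2+u_\lambda^{p+1}$ by a constant; unscaling gives averaged control of $\int|\nabla u(s)|^2$ and of $-E(s)$ on time slabs of length $\sim T-t$. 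Combined with the monotonicity of $E$ and a Gr\"onwall-type step applied to $\phi$, this closes up to give the desired pointwise bound on $H$.

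Third, once the bound on $H$ is known, the integral estimate on $\iint(|\nabla u|^2+u^{p+1})$ is immediate: integrating $\tfrac12 H'=-\|\nabla u\|_2^2+\|u\|_{p+1}^{p+1}$ from $T/2$ to $t$ and using the $H$ bound together with $E(s)\le E(T/2)$ controls each of $\iint|\nabla u|^2$ and $\iint u^{p+1}$ separately by $C(T-t)^{-2/(p-1)}$.

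The main obstacle is the second step: the a~priori hypothesis~\eqref{integral bound up to blow up time} contains a possibly very large unspecified exponent $K$, far worse than the self-similar Type~I scaling, so the argument cannot close in a single Gr\"onwall application. One must iterate, progressively improving the exponent of $(T-t)$ toward $-2/(p-1)$, by using the parabolic rescalings from Section~\ref{sec tangent flow analysis}. This is delicate in the critical case because Gagliardo--Nirenberg interpolation between $L^2$ and $L^{p+1}$ degenerates (the interpolation parameter is $1$), so the improvement must come from the geometric structure of the rescaled flows rather than from purely functional-analytic interpolation.
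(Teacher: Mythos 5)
The paper does not prove this lemma at all: it is quoted directly from Blatt--Struwe \cite[Lemma 6.3]{Struwe1}, and in the paper's logic it is precisely the tool used to \emph{verify} the standing hypothesis \eqref{integral bound up to blow up time} for the Cauchy--Dirichlet problem. Your Steps 1 and 3 are correct and match the standard argument: the two identities $\tfrac12 H'=-\int|\nabla u|^2+\int|u|^{p+1}$ and $E'=-\int|\partial_tu|^2\le 0$, and then, once the pointwise bound on $H$ is known, integrating $\tfrac12 H'=-2E+\tfrac{p-1}{p+1}\int|u|^{p+1}$ over $(T/2,t)$ together with $E(s)\le E(T/2)$ and $\int|\nabla u|^2=2E+\tfrac{2}{p+1}\int|u|^{p+1}$ does give both space-time bounds.

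The gap is in your Step 2, and it is a real one. First, invoking \eqref{integral bound up to blow up time} here is logically backwards: that bound is an assumption of Part V whose validity for the Dirichlet problem is established \emph{by} this lemma, so it is not available as an input. Second, the Gr\"onwall-plus-iteration scheme you sketch (and yourself flag as problematic) is not needed and would not close: the correct mechanism is the concavity/ODE-comparison argument of Giga--Kohn and Blatt--Struwe, which runs in the opposite direction from Gr\"onwall. Since $\Omega$ is bounded, H\"older gives $\int_\Omega|u|^{p+1}\ge |\Omega|^{-\frac{p-1}{2}}H^{\frac{p+1}{2}}$ (no Gagliardo--Nirenberg interpolation is involved, so the "critical degeneracy" you worry about is a red herring), hence
\[
\tfrac12 H'(t)=-2E(t)+\tfrac{p-1}{p+1}\int_\Omega|u|^{p+1}\ \ge\ -2E(T/2)+c\,H(t)^{\frac{p+1}{2}} .
\]
A lower differential inequality with superlinear growth forces finite-time blow-up of $H$: if $H(t_0)$ exceeded $C(T-t_0)^{-2/(p-1)}$ for a suitable $C$, then $H$ would become infinite strictly before $T$, contradicting the finiteness of $H(t)$ for every $t<T$. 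This yields $H(t)\le C(T-t)^{-2/(p-1)}$ directly, with no rescaling, no use of \eqref{integral bound up to blow up time}, and no iteration on exponents.
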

With this estimate in hand, we know that $u$ locally satisfies the assumptions in Section \ref{sec setting IV}. Theorem \ref{main result for Cauchy-Dirichlet} then follows from Theorem \ref{thm local analysis for first time singularity} and a covering argument.

\subsection{Cauchy problem}\label{subsec Cauchy problem}

Now we come to the proof of Theorem \ref{main result for Cauchy}. As in \cite{Giga04},
by the regularizing effect of parabolic equations, we may assume $u_0\in C^2(\R^n)$.

 For each $a\in\R^n$ and $s\in[0,T)$, define
\begin{eqnarray*}
 \Theta_s(a)&:= &s^{\frac{p+1}{p-1}}\int_{\R^n}\left[\frac{|\nabla u(y,T-s)|^2 }{2}-\frac{|u(y,T-s)|^{p+1}}{p+1}\right]G(y-x,s) dy\\
   &+&\frac{1}{2(p-1)}s^{\frac{2}{p-1}}\int_{\R^n}u(y,T-s)^2 G(y-x,s)dy.
\end{eqnarray*}
Then we have (see the summarization in \cite[Section 3.2]{Giga04})
\begin{enumerate}
  \item $\Theta_s(a)$ is non-decreasing in $s$;
  \item for any $a\in\R^n$, $r\in(0,\sqrt{T}/2)$ and $\theta\in(0,1/2)$,
  \[  \left\{\begin{aligned}
& \int_{Q_r(a, T-\theta r^2)}|\nabla u|^2+|u|^{p+1} \leq C(\theta)\max\left\{\Theta_T(a), \Theta_T(a)^{\frac{2}{p+1}}\right\}r^2,\\
& \int_{Q_r(a, T-\theta r^2)}|\partial_t u|^2  \leq C(\theta)\max\left\{\Theta_T(a), \Theta_T(a)^{\frac{2}{p+1}}\right\}.
\end{aligned}\right.
\]
\end{enumerate}
With these estimates in hand, we can repeat the proof of Theorem \ref{thm local analysis for first time singularity} to get Theorem \ref{main result for Cauchy}.

\subsection{Energy collapsing and the blow up of $L^{p+1}$ norm}
In this subsection, we prove Corollary \ref{coro energy collapsing}.

Recall that we have assumed that there exists a blow up point  $a\in\Omega$. We define the blow up sequence $u_\lambda$ at $(a,T)$ as in Section \ref{sec tangent flow analysis}. Combining Proposition \ref{prop tangent flow structure} with Proposition \ref{prop no Type II blow up}, we see
\[ u_\lambda(x,t)\to u_\infty(x,t)=(p-1)^{-\frac{1}{p-1}}(-t)^{-\frac{1}{p-1}} \quad \mbox{in} ~~ C^\infty_{loc}(\R^n\times\R^-).\]
Hence there exists a universal constant $c>0$ such that
\[
  \lim_{\lambda\to0}  \int_{Q^-_{\lambda}(a,T-\lambda^2)}|\partial_tu|^2= \lim_{\lambda\to0}\int_{Q^-_1(0,-1)}|\partial_t u_{\lambda}|^2\geq \int_{Q_1(0,-1)}|\partial_t u_{\infty}|^2\geq c.
\]
Taking a   subsequence  $\lambda_i$ so that $Q^-_{\lambda_i}(a,T-\lambda_i^2)$ are  disjoint from each other. Then
\[\lim_{t\to T}\int_{0}^{t}\int_{\Omega}|\partial_tu|^2\geq \sum_{i=1}^{+\infty}\int_{Q^-_{\lambda_i}(a,T-\lambda_i^2)}|\partial_tu|^2=+\infty.\]
Then by the energy identity for $u$, we get \eqref{energy collapsing}.

In the same way, there exists a universal constant $c>0$ such that, for any $R>1$,
\[\lim_{\lambda\to0}  \int_{B_{\lambda R}(a,T-\lambda^2)}u(x,t)^{p+1}dx= \lim_{\lambda\to0}\int_{B_R}  u_{\lambda}(x,t)^{p+1}dx\geq cR^n.\]
Therefore
\[\lim_{t\to T} \int_{\Omega}u(x,t)^{p+1}dx\geq \lim_{t\to T}  \int_{B_{R\sqrt{T-t}}(a,T)}u(x,t)^{p+1}dx\geq cR^n.\]
Because $R$ can be arbitrarily large, \eqref{blow up of p+1 norm} follows.

\bibliography{parabolic-bubbling-2021-01-17}
\bibliographystyle{plain}
\end{document}